\newtheorem{thm}{Theorem}[section]
\newtheorem{prop}[thm]{Proposition} 
\newtheorem{cor}[thm]{Corollary}
\newtheorem{lemma}[thm]{Lemma}
\theoremstyle{definition}
\newtheorem{exa}[thm]{Example}
\newtheorem{remark}[thm]{Remark}
\def\iid{i.i.d.\ }
\def\bl{\boldsymbol{\lambda}}
\def\Rstar{R^\star_{k^\star, l^\star}}
\def\lsnew{\lambda_{\gamma,\delta}^\star}
\def\blsnew{\boldsymbol{\lambda}_{\gamma, \delta}^\star}
\def\gade{1 + \gamma + \delta}
\def\bpis{\boldsymbol{\pi}^\star}
\def\ls{\lambda^\star}
\def\bls{\boldsymbol{\lambda}^\star}
\def\bz{\mathbf{z}}
\def\As{A^\star}
\begin{document}
\frenchspacing 
\begin{titlepage}
\vfill
\begin{center}
\setlength{\baselineskip}{20pt}
\begin{LARGE}{Stein's Method for Multivariate Extremes}\end{LARGE}
\\\vspace{+2cm}
\small{ {\bf Dissertation}\vspace{+0.5cm}
\\zur
\\Erlangung der naturwissenschaftlichen Doktorw\"urde
\\(Dr.\ sc.\ nat.)\vspace{+0.5cm}\\
vorgelegt der
\\Mathematisch-naturwissenschaftlichen Fakult\"at
\\der
\\Universit\"at Z\"urich\vspace{+0.5cm}
\\von\vspace{+0.5cm}
\\{\bf Anne Feidt}\vspace{+0.5cm}
\\aus
\\Luxemburg\vspace{1cm}
\\{\bf Promotionskomitee}\vspace{+0.3cm}
\\Prof.\ Dr.\ Erwin Bolthausen (Vorsitz)
\\Prof.\ Dr.\ Andrew Barbour \vspace{+0.5cm}
\\ Z\"urich, 2013}
\end{center}
\vfill
\end{titlepage}
\chapter*{\normalsize{Abstract}}
We apply the Stein-Chen method to problems from extreme value theory. On the one hand, the Stein-Chen method 
for Poisson approximation allows us to obtain bounds on the Kolmogorov distance between the law of the maximum 
of \iid random variables, following certain well known distributions, and an extreme value distribution. On 
the other hand, we introduce marked point processes of exceedances (MPPE's) whose \iid marks can be either 
univariate or multivariate. We use the Stein-Chen method for Poisson process approximation to determine 
bounds on the error of the approximation, in some appropriate probability metric, of the law of the MPPE 
by that of a Poisson process. The Poisson process that we approximate by has intensity measure equal to 
that of the MPPE. In some cases, this intensity measure is difficult to work with, or varies with the 
sample size; we then approximate by a further easier-to-use Poisson process and estimate the error of this additional approximation. 
\vspace{0.3cm}
\section*{\normalsize{Zusammenfassung}}
\selectlanguage{ngerman}
Wir wenden die Stein-Chen Methode auf Fragestellungen der Extremwerttheorie an. Einerseits 
erlaubt uns die Stein-Chen Methode f\"ur Poisson-Approxima\-tion die Kol\-mo\-go\-row-Dis\-tanz zwischen 
der Verteilung des Maximums von \iid Zufallsvariablen und einer Extremwertverteilung nach oben abzusch\"atzen. 
Andererseits f\"uhren wir markierte Punktprozesse von Grenwert\"uberschreitungen (MPPE genannt) ein, deren 
Markierungen so\-wohl univariat als auch multivariat sein k\"onnen. Wir verwenden die Stein-Chen Methode zur 
Approximation durch Punktprozesse, um den Fehler der Approximation der Verteilung eines MPPE durch die 
Verteilung eines Poissonprozesses in einer geeigneten Metrik abzusch\"atzen. Das Intensit\"atsmass des 
approximierenden Poissonprozesses ist dasselbe wie das des MPPEs. In manchen F\"allen ist dieses 
Intensit\"atsmass schwierig anwendbar; wir approximieren dann durch einen weiteren, einfacheren 
Poissonprozess und bestimmen den Fehler dieser zus\"atzlichen Approximation.
\vspace{0.5cm}
\phantom{\cite{Barbour/Hall:1984}} 
\selectlanguage{english}
\chapter*{\normalsize{Acknowledgements}}
Special thanks go to Prof.~A.~D.~Barbour for taking me on as PhD student and introducing me to Stein's method. 
I am most grateful for the time, patience and care he devoted to my benefit. 
His advice and continued support were essential toward the completion of this thesis. 
I would also like to thank Prof.~E.~Bolthausen for taking over my project and Prof.~J.~H\"usler for acting as external referee. 
Further thanks go to Prof.~J.~Ne{\v{s}}lehov\'a and Prof.~P.~
Embrechts for first introducing me to the study of extremes.  \\

I am grateful to my officemates, Philipp Thomann, Dominik Tasnady, Rajat Hazra and Martin Gallauer
for their support, both mathematical and non-mathematical, throughout the years. I likewise thank Anna P\'osfai, as well as my fellow assistants at the 
University of Zurich, in particular Alessandra Cipriani, Kim Dang, Noemi Kurt, Erich Baur, Felix Fontein, Dominik Heinzmann and Felix Rubin. \\ 

Moreover, I would like to thank Caroline, Fran\c{c}oise, Lynn, Martha, Brian, Claude, Georges and Josy for being my Luxembourgish family 
in Zurich. Finally, special thanks go to my parents for their constant support, and to Sigi, for everything.
\tableofcontents
\chapter{Introduction}\label{Chap: Introduction}
\setcounter{thm}{0}
Rare events of unexpected magnitude and momentous impact are playing an ever larger role all over the globe. 
In addition to ever-present threats of natural disasters such as earthquakes, volcanic eruptions or tsunamis, the last few
decades have seen more and more natural catastrophes that may have been induced by a changing climate due to global warming. Prime examples are
devastating cyclones like North Atlantic Hurricanes Andrew in 1992, Katrina in 2005, Sandy in 2012, or 
like North Indian Ocean Cyclone Aila in 2009. Other natural disasters are extreme rainfalls and floods
such as the 2005 European or 2010 Pakistan floods, as well as heat waves as experienced in Europe in 2003 or in North America in 2012. 
Further examples of extreme events are given by environmental disasters such as the 2010 Gulf of Mexico oil spill or the 2011 Fukushima Daiichi nuclear 
disaster.
Such events translate directly into extreme financial losses as damages need to be repaired. Extremely high financial losses can of course also
arise from other causes, most prominently from stock market crashes such as the Wall Street Crash of 1929 or the 2008 financial crisis.  
It is clear that it is of paramount importance to be able to model and predict the size and frequency of such events in order to both
establish adequate emergency measures to warrant the safety of people in danger from looming disasters, and to correctly determine 
minimum capital requirements of financial institutions such as insurance and reinsurance companies to prevent bankruptcy and ensure coverage of damages. 

The study of extremes has become a well-established field in probability theory and statistics. 
The classical approach in Extreme Value Theory (EVT)
is to model extremes as the maximum or minimum values of a sample of $n$ independent and identically distributed ({i.}{i.}d.) random variables
$X_1, \ldots, X_n$. Since $\min_{1 \le i \le n} X_i = -\max_{1 \le i \le n} (-X_i)$, it suffices to study maxima. A first major contribution by 
\cite{Fisher/Tippett:1928} assures that if there exists an affine transformation under which the maximum of \iid random variables converges in distribution to a 
non-degenerate distribution function $H$, then $H$ is an extreme value distribution, that is, 
either a Fr\'echet, Weibull or Gumbel distribution. Another approach in EVT, among others, is to consider point processes, for instance,
point processes of exceedances that indicate the points exceeding a threshold, or marked point processes that are random configurations of points in space. 
Such point processes have been treated in detail in, for instance \cite{Leadbetter_et_al:1983} and \cite{Resnick:1987}, respectively. 
The main result is that under certain conditions they asymptotically, for $n \to \infty$, behave like Poisson processes. 

Extreme value distributions and Poisson processes are generally used as approximations for the distributions of maxima and of point processes of exceedances, 
respectively, for any finite sample size $n$. However, such approximations only make sense if we have some estimate of the errors involved in terms of $n$. 
Underlying both kinds of approximations is Poisson approximation, since the number of those among the $n$ random variables of the sample that exceed a threshold
is binomial and may thus be approximated by a Poisson distribution, provided that the probability of a threshold exceedance is small. 
It thus makes sense to attempt to determine the accuracy of the above approximations in some probability metric by way of Stein's method for Poisson approximation.

Stein's method is a way to determine explicit bounds on the error involved in approximating one probability distribution by another.  
It was first introduced by \cite{Stein:1972} for approximation by the normal distribution. 
The method is however applicable to approximations by other distributions. Stein's method for Poisson approximation was developed 
by \cite{Chen:1975a,Chen:1975b} and is therefore often called the Stein-Chen method. The method compares expectations of test functions under the two distributions, 
with the choice of test functions determining the probability metric in which the error will be expressed. For example, with the choice of indicator functions of measurable
subsets of the state space, the difference between the expectations gives rise to the total variation distance. It is linked to an identity 
which characterises the approximating distribution via an equation, called the \textit{Stein equation}, whose solution needs to be determined in terms of the test functions. 
Instead of bounding the difference of the expectations, it turns out to be easier to bound the other side of the equation by using smoothness estimates of the solution.  
 
More concretely, the Stein-Chen method can be briefly sketched as follows. An integer-valued random variable $Z$ is Poisson distributed with mean $\lambda >0$ if and 
only if 
\[
E[\lambda g(Z+1) - Zg(Z)]=0 
\]
for any bounded function $g:\, \mathbb{Z}_+ \to \mathbb{R}$. Thus $E[\lambda g(Z+1) - Zg(Z)]=0$ is an identity 
(called \textit{Stein identity}) 
characterising the Poisson distribution. Let $Z \sim \mathrm{Poi}(\lambda)$ and let $W$ be another integer-valued random variable that we suppose
to be almost Poisson distributed. Then we expect that $\mathbb{E}h(W) \approx \mathbb{E}h(Z)$ for a class of test functions $h$. 
By the Stein equation whose solution $g$ needs to be determined in terms of $h$, 
we have 
\[
\mathbb{E}h(W) - \mathbb{E}h(Z)= E[\lambda g(Z+1) - Zg(Z)],
\]
and it is clear that instead of trying to find a uniform bound on $\mathbb{E}h(W) - \mathbb{E}h(Z)$,
we may instead attempt to do so for $E[\lambda g(Z+1) - Zg(Z)]$ by using properties of $g$. 
Note that with the choice $h:= I_A$, where $A \subseteq\mathbb{Z}_+$, 
a uniform upper bound on the difference of the expectations 
$\mathbb{E}h(W) - \mathbb{E}h(Z)$ results in a bound on the total variation distance between the laws of $W$ and $Z$, 
i.e.\ on $d_{TV}(\mathcal{L}(W),\mathrm{Poi}(\lambda))$ $=$ $ \sup_{A} |P(W \in A) - P(Z \in A)|$.

In standard examples it is assumed that $W= \sum_{i=1}^n I_i$, where the $I_i$ are independent Bernoulli random variables with 
success probabilities $p_i$, $i=1, \ldots, n$, or, more simply, \iid Bernoulli random variables with success probability $p$. 
Chapter 1 in \cite{Barbour_et_al.:1992} reviews earlier results for bounds on $d_{TV}(\mathcal{L}(W), \mathrm{Poi}(\lambda))$ which were established without
the use of Stein's method. One of these is the bound $\sum_{i=1}^n p_i^2$ by \cite{LeCam:1960}. It was also obtained by \cite{Serfling:1975} by using the 
simple coupling inequality $d_{TV}(\mathcal{L}(W),\mathcal{L}(Z)) \le P(W \neq Z)$. 
Another bound by \cite{LeCam:1960} is $8\lambda^{-1}\sum_{i=1}^n p_i^2$ which was sharpened by \cite{Kerstan:1964} 
to $1.05\lambda^{-1}\sum_{i=1}^n p_i^2$. Compared to these, the Stein-Chen method gives the sharpest bound:
\[
(1-e^{-\lambda})\lambda^{-1}\sum_{i=1}^n p_i^2 \le \min(1, \lambda^{-1})\sum_{i=1}^n p_i^2.
\]
The main strength of the Stein-Chen method however lies in that it is easily 
adapted to the case where the Bernoulli variables are no longer independent; see, e.g. \cite{Chen:1975b} or \cite{Barbour_et_al.:1992}. 

By writing the solution $g$ of the Stein equation as the first backward difference of a function $\upgamma$, i.e.\ $g(k)= \upgamma(k)-\upgamma(k-1)$ for all $k \in \mathbb{Z}_+$,
\cite{Barbour:1988} noticed that the Stein identity for Poisson approximation could be rephrased in terms of the infinitesimal generator
of a Markov process with Poisson equilibrium distribution. More precisely, 
\[
\lambda g(k+1)- kg(k) = \lambda \upgamma(k+1)+ k\upgamma(k-1) -(\lambda + k)\upgamma(k) = (\mathcal{A}\upgamma)(k), 
\]
for all $k \in \mathbb{Z}_+$, where $\mathcal{A}$ is the infinitesimal generator of an immigration-death process on $\mathbb{Z}_+$ with constant immigration rate $\lambda$ and unit per-capita death rate. 
The solution $\upgamma$ to the reformulated Stein equation can then be interpreted in a probabilistic way and coupling 
arguments may be used to determine smoothness estimates. 
\cite{Barbour/Brown:1992} used this approach to develop the Stein-Chen method for Poisson process
approximation. For approximation of the law of a point process $\Xi$ with finite mean measure $\bl$ on a locally compact separable metric space $E$, the Markov process used is a 
spatial immigration-death process with immigration intensity $\bl$ and unit per-capita death rate,
whose equilibrium distribution is a Poisson process with mean measure $\bl$ (that we denote by $\mathrm{PRM}(\bl)$).
\cite{Barbour/Brown:1992} established bounds on $d_{TV}(\mathcal{L}(\Xi), \mathrm{PRM}(\bl))$ that resemble the results obtained for 
Poisson approximation, but lack the multiplicative factors depending on $\lambda=\bl(E)$ which decrease, and thereby reduce the 
error, as $\lambda$ increases. The reason for this is that if there are small shifts in the positions of the points of the two processes,
the total variation distance takes its maximum value, i.e.\ $1$. 
An example is given by the following: let
$I_1, \ldots, I_n$ be independent Bernoulli random variables with success probability $p \in (0,1)$ and let $W = \sum_{i=1}^n I_i$. Let
$\Xi_n = \sum_{i=1}^n I_i \delta_{i/n}$ be a point process
on $[0,1]$ with $\mathbb{E}\Xi_n([0,1]) = np$. It would be natural to want to compare this process by a Poisson process $\Xi$ with rate $np$ on $[0,1]$,
which can be realised by $\sum_{j=1}^{W^\star} \delta_j$, where $W^\star \sim \mathrm{Poi}(np)$. However, the Poisson process then takes no points in $n^{-1}\mathbb{Z}_+$ with probability $1$,
whereas the Bernoulli process $\Xi_n$ takes no points in $n^{-1}\mathbb{Z}_+$ with probability 
$P(I_1 =0, \ldots, I_n=0) = (1-p)^n = \exp\{{n\log (1-p)}\} \sim e^{-\lambda}$. Thus, 
for $R:= \{\sum_{j \in n^{-1}\mathbb{Z}_+} \delta_j \}$,
\[
d_{TV}(\mathcal{L}(\Xi_n),\mathcal{L}(\Xi) ) \ge \left| P(\Xi_n \notin R) - P(\Xi \notin R)\right| \sim |1-e^{-\lambda}| \to 1, 
\]
as $\lambda$ (and $n$) $\to \infty$. The total variation distance is thus not suited to approximate a point process with points
on a lattice, for instance $\mathbb{Z}^d$, by a Poisson process with continuous intensity on $\mathbb{R}^d$, where $d \ge 1$. 
\cite{Barbour/Brown:1992} therefore construct the weaker $d_2$-distance which 
recovers factors in $\lambda$ that reduce the error. 
For the special case of marked point processes $\Xi= \sum_{i=1}^n I_i \delta_{X_i}$, smaller errors may yet be obtained in the total variation distance.
Indeed, if the marks $X_1, \ldots, X_n$ are \iid (and independent of the $I_i$'s), 
$\Xi$ can be realised as $\sum_{j=1}^{W} \delta_{Z_j}$, with $Z_j$ i.i.d., distributed as $\mathcal{L}(X_1)$
and independent of $W= \sum_{i=1}^n I_i$, whereas a Poisson process with mean measure $\bl$ can be realised as $\sum_{j=1}^{W^\star} \delta_{Z_j}$, with 
$W^\star \sim \mathrm{Poi}(\lambda)$ independent of the $Z_j$'s.  
An argument made by \cite{Michel:1988} shows that the bounds obtained for Poisson approximation may then be reused:
\begin{equation}\label{intro: Michel}
d_{TV}(\mathcal{L}(\Xi), \mathrm{PRM(\bl)}) \le d_{TV}(\mathcal{L}(W), \mathrm{Poi}(\lambda)).
\end{equation}

The aim of this thesis is to apply the Stein-Chen method for Poisson and for Poisson process approximation to problems from EVT, with a focus on multivariate extremes. 
In particular, we are interested in studying random configurations of ``extreme points'' in space. By ``extreme point''
we always mean an atypical realisation of a random variable $X$ or a $d$-dimensional random vector $\mathbf{X}= (X_1, \ldots, X_d)$. Throughout the thesis, we suppose that
we have a sample of $n$ \iid random vectors $\mathbf{X}_1, \ldots, \mathbf{X}_n$ with state space $E \subseteq \mathbb{R}^d$, where $d \ge 1$, and we denote 
by $A=A_n$ a measurable subset of $E$ containing the extreme points. For $d=1$, $A$ is typically of the form $(u_n, \infty)$, i.e.\ it contains exceedances of a threshold
$u_n$. For $d>1$, there is more flexibility as to the choice of ``extreme region'' $A$. 
We might, for instance, set $A:= [u_1, \infty) \times \ldots \times [u_d, \infty)$ 
which implies that points in $A$ are extreme in \textit{all} 
components. We might also define $A$ as the complement of $(-\infty, u_1) \times \ldots \times (-\infty, u_d) $;
then $A$ not only contains jointly extreme points but also points that might have only one extreme component. 
We denote the random number of points in $A$ by $W_A = \sum_{i=1}^n I_{\{\mathbf{X}_i \in A\}}$, where $I_{\{\mathbf{X}_i \in A\}}$, $i=1, \ldots, n$, are Bernoulli random variables with probability of success $P(\mathbf{X} \in A)$.
Furthermore, we define \textit{marked point processes of exceedances} (MPPE's) by
\[
\Xi_A = \sum_{i=1}^n I_{\{\mathbf{X}_i \in A\}} \delta_{\mathbf{X}_i}, 
\]
where $\delta_{\mathbf{x}}$ denotes Dirac measure on $\mathcal{B}(E)$, and we call $\mathbf{X}_1, \ldots, \mathbf{X}_n$ the \textit{marks} of the MPPE.
Using (\ref{intro: Michel}) and results from the Stein-Chen method for Poisson approximation, we establish the following general error estimate:
\begin{equation}\label{intro: Michel_MPPE}
d_{TV}(\mathcal{L}(\Xi_A), \mathrm{PRM}(\mathbb{E}\Xi_A))\le d_{TV}(\mathcal{L}(W_A), \mathrm{Poi}(\mathbb{E}W_A))
\le P(\mathbf{X} \in A). 
\end{equation}
This result serves as the basis for a wide range of applications.

For a first application, let $X_{(n)}$ be the maximum of a sample of $n$ random variables whose distribution function and survival function 
we denote by $F$ and $\overline{F}$, respectively. Then (\ref{intro: Michel_MPPE}) in particular gives
\[
\left| P\left(W_A = 0\right) - \mathrm{Poi}(\mathbb{E}W_A)\{0\}\right| = \left| P\left(X_{(n)} \le u_n \right) - e^{-n\overline{F}(u_n)}\right|  \le \overline{F}(u_n),
\]
which we use to determine the error of the approximation in the Kolmogorov distance of the law of $X_{(n)}$ by that of an extreme value distribution. We achieve this
for random variables whose distribution is one of the following: exponential, Pareto, uniform, standard normal, standard Cauchy, geometric. 
Sometimes the approximation to an extreme value distribution has to be done in more than one step; we then give error bounds for each step.
The geometric distribution is a special case here, as it is well known that there is no non-degenerate limit distribution for the distribution of 
the maximum unless $\lim_{x \uparrow x_F}\bar{F}(x)/\bar{F}(x-) =1$, where 
$x_F$ denotes the right endpoint of $F$. This condition is not satisfied 
for some of the most well known discrete distributions, in particular, the 
geometric distribution. However, by letting the parameter(s) of the distribution 
vary with the sample size $n$ at a suitable rate, it is possible to find a limiting 
distribution for the law of the maximum. For geometric random variables, we show that 
we do not actually need the success probability to be varying with $n$ in order to 
determine a limit law for their maximum, if we allow this limit law to be discrete. 
More precisely, we approximate the law of the maximum by a discretised Gumbel distribution 
and determine a bound on the error of this approximation. We also establish an estimate of the error that arises from the approximation by the 
continuous Gumbel distribution, which will clarify in what way 
the success probability needs to vary with $n$ for a small error. Furthermore, we determine 
a better choice of normalising constants than the ones used by \cite{Nadarajah/Mitov:2002} 
for their asymptotic result.

We further apply (\ref{intro: Michel_MPPE}) to MPPE's whose marks are distributed according to any of the univariate distributions mentioned above.
The error will be given in terms of the chosen threshold, and we discuss the relation between the expected number of threshold exceedances 
and the size of the error of the approximation in the total variation distance. 
The bigger the threshold, the smaller the error will be and the fewer exceedances will be expected. We can thus regulate the size of the threshold according to how many of
the biggest order statistics we want the MPPE to capture. 
Moreover, as the aim should be to approximate $\mathcal{L}(\Xi_A)$
by a Poisson process that is easy to use, we determine in each case a practicable intensity function for the approximating Poisson process. 
Sometimes we would like to approximate $\mathcal{L}(\Xi_A)$ by a Poisson process with a different 
intensity measure than $\mathbb{E}\Xi_A$, say, $\tilde{\bl}_A$, which we suppose 
has a ``nicer'' intensity function. We then add an error estimate for 
$d_{TV}(\mathrm{PRM}(\mathbb{E}\Xi_A), \mathrm{PRM}(\tilde{\bl}_A))$ to the previous error. Again, the geometric distribution is a special case,
as the intensity measure of an MPPE with geometric marks is only defined on a lattice. 
We would prefer to approximate the law of the MPPE with geometric marks by a Poisson process 
with continuous intensity function equal to that obtained for the MPPE with exponential marks. As the total variation distance is too strong to
achieve this, we instead use the weaker $d_2$-distance. For a sharp error estimate, we again need the condition that the success probability 
of the geometric distribution varies with $n$ at a suitable rate. 

We next suppose that $d \ge 2$, i.e.\ that the marks of the MPPE's are multivariate. We distinguish between sets $A$ that contain only points 
which exceed thresholds in all $d$ components (which we call ``\textit{joint threshold exceedances}''), 
and sets $A$ that also contain points for which there might be a threshold exceedance in only one
component (which we dub ``\textit{single-component threshold exceedances}'').
It only makes sense to study joint threshold exceedances if the common 
multivariate distribution of the marks is likely to give rise to joint extremes, though it is 
not always clear how to find out whether this is the case. One possibility, for $d=2$, is to 
compute the coefficient of upper tail dependence. 
An issue that arises for MPPE's with multivariate marks is thus the choice of the set $A$. 
Another issue is that there are infinitely many choices for the dependence structure between the marginal distributions of the marks. 
Though (\ref{intro: Michel_MPPE}) gives a very general result,  
it might be difficult to understand in what way the error estimate 
$P(\mathbf{X} \in A)$ behaves and how many threshold exceedances to expect if the joint distribution function has a complicated structure.
For both kinds of sets $A$, we therefore determine easier error bounds for 
$d_{TV}(\mathcal{L}(\Xi_A), \mathrm{PRM}(\mathbb{E}\Xi_A))$, which are valid for any joint 
distribution function of the marks. Again, the aim should be to approximate an MPPE by a Poisson process with 
a practicable intensity function. 
Also, the Poisson process should, if possible, be independent of the sample size $n$. In cases where the intensity function of the Poisson process does not
meet these requirements, the goal is to approximate by a further Poisson process with a nicer intensity and to determine an error bound on this approximation. 
Ad hoc considerations are needed to determine whether this is needed, 
and we therefore have to restrict ourselves to examples.
We treat two main examples of MPPE's $\Xi_A$ for which the intensity function of the approximating $\mathrm{PRM}(\mathbb{E}\Xi_A)$ is both too complicated
to handle with ease, and varies with $n$. 

First, we study the example of MPPE's whose marks are distributed according to any one out of a subclass of 
Archimedean copulas that exhibit upper tail dependence.
More precisely, we suppose that the generator function $\phi$ of the Archimedean copulas that we use satisfies 
$\lim_{r\downarrow 0} r\phi'(1-r)/\phi(1-r) = \theta \in (1, \infty)$. 
\cite{Charpentier/Segers:2009} showed that these copulas have a certain kind of asymptotic tail behaviour, that we make use of to determine 
a more suitable intensity function. We proceed by establishing a bound on the error in the total variation distance of the approximation
of the Poisson process with mean measure $\mathbb{E}\Xi_A$ by the Poisson process with the new intensity function. 
Secondly, we consider an MPPE whose marks follow the bivariate Marshall-Olkin geometric distribution. As for univariate geometric marks, the 
approximating Poisson process lives on a lattice and the aim is to further approximate by a Poisson process with a continuous intensity. We construct a suitable
continuous intensity function by spreading the point probabilities of the Marshall-Olkin geometric distribution uniformly over the coordinate squares of the lattice.
This intensity function still has to depend on the sample size $n$ if we want the corresponding error to be sharp. We therefore determine its pointwise limit
as $n \to \infty$, under certain conditions on the parameters of the Marshall-Olkin geometric distribution, and use this limit as the new intensity function. 
We prove error bounds for each step in the $d_2$-distance, and, whenever possible, in the total variation distance. 

Many more examples could be studied and we hope that this thesis serves as a starting point for more research on the application of the Stein-Chen method for
Poisson and Poisson process approximation to topics in EVT. A noticeable limitation of our work is the focus on \iid samples, as one of the strengths of Stein's method is that it is applicable also to random variables that display some dependence on each other; see Section \ref{s: Remarks_on_choice} for a brief discussion.

The structure of the thesis is as follows: 
\textbf{Chapter \ref{Chap: Stein-Chen}} introduces the Stein-Chen method in detail, both for Poisson approximation and 
for Poisson process approximation. \textbf{Chapter \ref{Chap: Univariate_extremes}} then treats univariate extremes, 
that is, the approximation of the maximum law of univariate random variables by an extreme value distribution in the 
Kolmogorov distance, as well as the approximation of MPPE's by suitable Poisson processes in the total variation distance, or, 
if necessary, the $d_2$-distance. \textbf{Chapter \ref{Chap: Multivariate_extremes}} studies Poisson process approximation for MPPE's with multivariate marks that either have independent components, or components following a certain dependence structure which we describe by way of copulas.



\chapter{The Stein-Chen method}\label{Chap: Stein-Chen}
\setcounter{thm}{0}

Stein's method provides a way to determine bounds on the errors that arise when 
approximating one probability distribution by another. This chapter gives an introduction to Stein's method for approximation by a Poisson distribution, as developed by \cite{Chen:1975a}, as well as for approximation by a Poisson process, as studied by \cite{Barbour/Brown:1992}.  
Section \ref{Sec: Poi_approx_Bin} first discusses the error involved in the law of small numbers, that is, in the approximation of the binomial distribution 
by the Poisson distribution. 
Section \ref{Sec: distances} lists some distances between probability measures: 
the total variation, Kolmogorov, and Wasserstein distances. We later always express the errors of the approximations that we study in one of these distances. 
Section \ref{Sec: LeCam} gives an error bound for the approximation, in the total variation distance, of a sum of independent indicator variables by a Poisson distribution. 
This result by \cite{LeCam:1960} only uses a simple coupling argument. 
Section \ref{Sec: PoiAppr_dTV} develops the Stein-Chen method for Poisson approximation in the total variation distance and gives an improvement on the 
result obtained in Section \ref{Sec: LeCam}. It also treats a result by \cite{Chen:1975b} for sums of dependent indicator variables, and outlines two more general procedures proposed by \cite{Stein:1986} and \cite{Barbour:1988}, respectively. 
Section \ref{Sec: Poi_proc_approx} introduces point processes and Poisson processes, and proceeds to develop the Stein-Chen method for approximation by
a Poisson process, again in the total variation distance, as achieved by \cite{Barbour/Brown:1992}. Since errors are worse than for Poisson approximation, 
Section \ref{Sec: Improved_rates} introduces the $d_2$-distance by \cite{Barbour/Brown:1992}, which is weaker
than the total variation distance and yields sharper estimates.

\section{Poisson approximation of the binomial distribution}\label{Sec: Poi_approx_Bin}
The Poisson distribution describes the probability of a given number of independent events occurring within a fixed interval of time (or space) 
when the average rate of occurrence of such events is known from previous observation. 
It is named after Sim\'eon Denis Poisson who introduced it in his 1837 treatise
\textit{Recherches sur la probabilit\'e des jugements en mati\`ere criminelle et en mati\`ere civile}, where he showed that the Poisson distribution arises
as the limit of a binomial distribution whose probability of success $p=p_n$ varies with the sample size $n$ in such a way that $p_n \to 0$ and
$np_n \to \lambda>0$ as $n \to \infty$. This asymptotic result is called the ``law of small numbers'' or
``law of rare events''. 
We show that the binomial converges pointwise to the Poisson by considering the probability mass function of a $\mathrm{Bin}(n,p_n)$-random variable $W$ 
for any fixed $k \in \mathbb{Z}_+$, and for any integer $n \ge k$:
\begin{align}\label{d: Binomial_point_prob}
P(W=k) &= \binom{n}{k} p_n^k (1-p_n)^{n-k} \nonumber  \\
&=  \prod_{j=0}^{k-1} (n-j)  \,\frac{p_n^k}{k!} \, e^{(n-k) \log(1-p_n)} \nonumber \\
& = \prod_{j=0}^{k-1} \left( 1- \frac{j}{n}\right) \, \frac{(np_n)^k}{k!} \, e^{n\log(1-p_n)} \cdot e^{-k\log(1-p_n)}\\
&\sim \frac{(np_n)^k}{k!} \,  e^{-np_n}  \to \frac{\lambda^k}{k!}\,  e^{-\lambda}, \quad \textnormal{ as } n \to \infty, \nonumber
\end{align}
since $\log(1-p_n) \sim -p_n$, as $p_n \to 0$.
In view of this result, it is natural to think of approximating a binomial distribution by a Poisson distribution even for a fixed sample size $n$,
as long as the sample size is large and the success probability is small. But what is the error resulting from such an approximation of a $\mathrm{Bin}(n,p_n)$ distribution 
by a $\mathrm{Poi}(np_n)$? 
We may assume that $np_n^2 \le 1/2$, i.e.\ $p_n \le (2n)^{-1/2}$, since else $np_n > (2p_n)^{-1} \to \infty$ as $p_n \to 0$. 
On the one hand, note that
\[
\prod_{j=0}^{k-1}\left(1-\frac{j}{n}\right) \le e^{-\frac{k(k-1)}{2n}},
\]
since $1-j/n \le e^{-j/n}$ for all $j\ge 0$, and $\sum_{j=0}^{k-1} j/n = k(k-1)/2n$.
Moreover, 
\[
(1-p_n)^{n-k} \le e^{-(n-k)\left( p_n + \frac{p_n^2}{2}\right)}, 
\]
since $\log(1-p_n) \le -p_n - p_n^2/2$.
It follows that $P(W=k)$ from (\ref{d: Binomial_point_prob}) is smaller than
\[
\frac{(np_n)^k}{k!}\, e^{-np_n}  \cdot \exp\left\{ -\frac{1}{2} \left( \frac{k^2}{n} + np_n^2\right) + kp_n + \frac{1}{2} \left( \frac{k}{n} + kp_n^2\right)\right\}. 
\]
By the inequality of the arithmetic and geometric means, we have 
\[
\frac{1}{2} \left( \frac{k^2}{n} + np_n^2\right) \ge \sqrt{\frac{k^2}{n} \cdot np_n^2} = kp_n, 
\]
and thus
\[
P(W=k) -  \mathrm{Poi}(np_n)\{k\} \le \frac{(np_n)^k}{k!} \, e^{-np_n} \left[ \exp\left\{\frac{1}{2} \left( \frac{k}{n} + kp_n^2\right) \right\} - 1 \right].
\]
Since, for $n \ge k$ and $np_n^2 \le \frac{1}{2}$, we have $\exp\left\{\frac{1}{2} \left( \frac{k}{n} + kp_n^2\right) \right\} \le 1+e^{3/4}\left(\frac{k}{n} + np_n^2\right)$ 
(using $e^{z/2} \le 1+ ze^{z/2}$ for $z \ge 0$), it holds that
\begin{equation}\label{p: upp_bound_bin_minus_Poi}
P(W=k) -  \mathrm{Poi}(np_n)\{k\}  \le  \frac{(np_n)^k}{k!}\, e^{-np_n}\cdot e^{3/4} \left(\frac{k}{n} + np_n^2\right).
\end{equation}
On the other hand, since $np_n^2 \le 1/2$, we have $p_n \le 1/2$ for all $n \ge 2$, and therefore
$(1-p_n)^{-1} \le 1+2p_n$. It follows that 
\[
e^{n \log(1-p_n)}  \ge e^{-\frac{np_n}{1-p_n}} \ge e^{-np_n - 2np_n^2} \ge e^{-np_n} \left(1-2np_n^2\right).
\]
Moreover, an induction proof readily shows that $\prod_{j=0}^{k-1}(1-j/n)\ge 1 - k^2/n$. Thus,
\begin{align}\label{p: upp_bound_Poi_minus_bin}
\mathrm{Poi}(np_n)\{k\} - P(W=k) 
&\le \frac{(np_n)^k}{k!}\, e^{-np_n} \left\{ 1-\left(1-2np_n^2 \right)\left(1-\frac{k^2}{n}\right)\right\} \nonumber\\
&\le \frac{(np_n)^k}{k!}\, e^{-np_n} \left( \frac{k^2}{n} + 2np_n^2\right).
\end{align}
It follows from (\ref{p: upp_bound_bin_minus_Poi}) and (\ref{p: upp_bound_Poi_minus_bin}) that, for $k \le n$ and $np_n^2 \le 1/2$,
\begin{equation}\label{t: Bound_Bin_Poi}
\left| P(W=k) - \mathrm{Poi}(np_n)\{k\}\right| \le  e^{3/4} \, \frac{(np_n)^k}{k!} \,e^{-np_n} \left(\frac{k^2}{n}  + np_n^2\right).
\end{equation}
Note that we have not used $np_n \to \lambda$, as $n \to \infty$, in order to establish (\ref{t: Bound_Bin_Poi}), and that (\ref{t: Bound_Bin_Poi})
is stronger than just a limit result: it gives an explicit estimate of the error of the approximation of a binomial by a Poisson point probability at $k$ for any 
sample size $n \ge k$. By (\ref{t: Bound_Bin_Poi}), the approximation is good so long as the terms $np_n^2$ and $k^2n^{-1}$ are small.
We can aim for an even stronger statement than (\ref{t: Bound_Bin_Poi}) by investigating the accuracy of the approximation 
of one probability distribution by another in a probability metric, such as the total variation distance that we will define below.
For instance, for any subset $A \subseteq \{0, 1, \ldots, n\}$, (\ref{t: Bound_Bin_Poi}) gives
\begin{align}
&\left| P(W \in A) - \mathrm{Poi}(np_n)\{A\}\right| \nonumber \\
&\le e^{3/4} e^{-np_n} \sum_{k \in A} \frac{(np_n)^k}{k!} \left(\frac{k^2}{n}  + np_n^2\right)\nonumber \\
& \le  e^{3/4}e^{-np_n} \left\{np_n^2\sum_{k=0}^\infty \frac{(np_n)^k}{k!} + p_n \sum_{k=1}^\infty \frac{(np_n)^{k-1}}{(k-1)!} \, k \right\}\nonumber \\
& = e^{3/4}\left\{2np_n^2 +p_n\right\},\label{p: bin_Poi_dTV_firsthalf}
\end{align}
where we used 
\[
\sum_{k=1}^\infty \frac{(np_n)^{k-1}}{(k-1)!} \, k = \sum_{l=0}^\infty \frac{(np_n)^{l}}{l!} \, (l+1) 
= np_n \sum_{l=1}^\infty \frac{(np_n)^{l-1}}{(l-1)!} + \sum_{l=0}^\infty \frac{(np_n)^l}{l!} 
\]
and $\sum_{k=0}^\infty (np_n)^k/k! = e^{np_n}$. Furthermore, note that $P(W \in [n+1,\infty))=0$, and that for all $n \ge 2$, $\mathrm{Poi}(np_n)\{[n+1,\infty)\}$ 
is of smaller order than the bound in (\ref{p: bin_Poi_dTV_firsthalf}). More precisely, Proposition A.2.3 in
\cite{Barbour_et_al.:1992}, gives, for all $n \ge 2$, 
\begin{align*}
\mathrm{Poi}(np_n)\{[n+1,\infty)\} &\le \frac{(n+2)e^{-\frac{(n+1-np_n)^2}{2(n+1+np_n)}}}{(n+2-np_n)\sqrt{2\pi (n+1)}} 
 \le \frac{e^{-\frac{n}{2}\left(\frac{1}{2} + \frac{1}{n}\right)^2/ \left(\frac{3}{2} + \frac{1}{n}\right)}}{\left( 1-\frac{np_n}{n+2}\right)\sqrt{2\pi n}}  \\
& \le \left( 1 + \frac{n}{n+2}\right) \frac{e^{-\frac{n}{4} \left( \frac{1}{4} + \frac{1}{n} + \frac{1}{n^2} \right)}}{\sqrt{2\pi n}}
\le \frac{e^{-\frac{n}{16}}}{\sqrt{ n}}\,,
\end{align*}
where we also used that $p_n \le 1/2$ for $n \ge 2$, and $(1-z)^{-1} \le 1+2z$ for $z= np_n/(n+2)$. 
For any $A \subseteq \mathbb{Z}_+$, the error $|P(W \in A) - \mathrm{Poi}(np_n)\{A\}|$ (and thereby the total variation distance between
$\mathcal{L}(W)$ and $\mathrm{Poi}(np_n)$; see Section \ref{Sec: distances} below) is then at most of order $\max(np_n^2, p_n)$.  
The approximation between the $\mathrm{Bin}(n, p_n)$ and the $\mathrm{Poi}(np_n)$ is therefore sharp if 
$p_n = o\left(n^{-1/2}\right)$.
\section{Distances between probability measures}\label{Sec: distances}
In general the aim is to find an upper bound for the difference between the expectations of a test function 
from a predetermined family of test functions under the two distributions. Each family of test functions determines an associated metric. We list
three important examples of such distances between probability measures: the total variation, the Kolmogorov and the Wasserstein distances. 
For each of these, suppose that $\mu$ and $\nu$
 are two probability measures on a measurable space $(E, \mathcal{E})$.
\subsection*{Total variation distance} The \textit{total variation distance} between $\mu$ and $\nu$ is defined as follows:
\[
d_{TV}(\mu,\nu) = \sup_{h \in H} \left| \int_E hd\mu - \int_E h d\nu  \right| = \sup_{B \in \mathcal{E}} |\mu(B)-\nu(B)|,
\]
where the test functions $h$ are indicators of measurable subsets of $E$, i.e.\ $H :=  \{I_B;\, B \in \mathcal{E}\}$, where, for any $x \in E$,
$I_B(x)=1$ if $x \in B$ and $I_B(x)=0$ if $x \notin B$. 
Note that for any set $B \in \mathcal{E}$, we have $\mu(B) - \nu(B) = \nu(B^C) - \mu(B^C)$, with $B^C \in \mathcal{E}$ the complement of $B$. Therefore,
\begin{equation}\label{d: dTV_without_mod}
d_{TV}(\mu,\nu) =  \sup_{B \in \mathcal{E}} \left\{\mu(B)-\nu(B)\right\}.
\end{equation}
An equivalent definition of the total variation distance is given by
\begin{equation}\label{d: equiv_def_dTV}
d_{TV}(\mu,\nu) = \sup_{\tilde{h} \in \widetilde{\mathcal{H}}}  \left| \int_E \tilde{h}d\mu - \int_E \tilde{h} d\nu  \right|,
\end{equation}
where $\widetilde{\mathcal{H}}:= \{\tilde{h}: E \to \mathbb{R};\, 0 \le h(x) \le 1, \text{ for all }x \in E \}$. 
In order to see that the two definitions are equivalent, note first that any $\tilde{h} \in \widetilde{\mathcal{H}}$ may be defined as $\tilde{h}(x):= h(x)- q$, 
for any $x \in E$, $h \in \mathcal{H}$, and for any choice of $q \in [0,1]$.
Then, 
\begin{align*}
\sup_{\tilde{h} \in \widetilde{\mathcal{H}}} \left| \int_E \tilde{h}d\mu - \int_E \tilde{h} d\nu  \right|
&= \sup_{h \in H} \left| \int_E (h - q)d\mu - \int_E (h - q) d\nu  \right|\\
&= \sup_{h \in H}  \left| \int_E hd\mu - q \mu(E) - \int_E h  d\nu + q\nu(E) \right|\\
&= \sup_{h \in H} \left| \int_E hd\mu - \int_E h d\nu  \right|,
\end{align*}
since $\mu(E)=\nu(E)=1$. Further equivalent definitions can be found in \cite{Barbour_et_al.:1992}, pp. 253-254.
Note also that if $E$ is a separable metric space 
and if $X$ and $Y$ are two $E$-valued random variables defined on the same probability space with distributions $\mu$ and $\nu$, respectively,
then, for $B \in \mathcal{E}$,
\begin{align*}
\mu(B)-\nu(B) &= P(X \in B) - P(Y \in B)
 = \mathbb{E}\left[I_B(X)-I_B(Y)\right]\\
 &\le \mathbb{E}I_{\{X \neq Y\}}
 = P(X \neq Y), 
\end{align*}
and therefore
\begin{equation}\label{t: coupling_dTV}
d_{TV}(\mu,\nu) = \sup_{B \in \mathcal{E}} \left|P(X \in B) - P(Y \in B)\right| \le P(X \neq Y).
\end{equation}
\begin{remark}
We used a \textit{coupling} of the two distributions $\mu$ and $\nu$ in order to establish (\ref{t: coupling_dTV}). Throughout this work, we will time and again
use couplings. In general, coupling means the joint construction of two random variables (or processes) $X$ and $Y$ which (marginally) 
follow two distributions $\mu$ and $\nu$
of interest. Its purpose is to relate the two previously unrelated distributions $\mu$ and $\nu$ in some way so as to be able to compare them. For details on the coupling
method, consult \cite{Lindvall:2002} or \cite{Thorisson:2000}. 
\end{remark}
\begin{remark}
Suppose that $(X_n)_{n \in \mathbb{N}}$ is a sequence of random variables with a discrete state space $E$, for instance $E=\mathbb{Z}_+$. This sequence then converges
in total variation to a random variable $X$, i.e.\ $d_{TV}(\mathcal{L}(X_n),\mathcal{L}(X)) \to 0$ as $n \to \infty$, if and only if $P(X_n=k) \to P(X=k)$ for all 
$k \in E$ as $n \to \infty$, that is, if and only if it converges in distribution (or weakly) to $X$. 
We can see this by noting that, on the one hand, convergence in distribution follows from convergence in total variation, since, for any $k \in E$,
\begin{align*}
0 &\le |P(X_n =k) - P(X=k)| \\
&\le \sup_{B \subseteq E} |P(X_n \in B) - P(X \in B)|\\
&= d_{TV}(\mathcal{L}(X_n),\mathcal{L}(X)).
\end{align*}
On the other hand, suppose that $P(X_n=k) \to P(X=k)$ for all $k \in E$, as $n \to \infty$, define $\tilde{B}:= \{k \in E:\, P(X_n = k) \ge P(X=k)\}$,
and note that $d_{TV}(\mathcal{L}(X_n),\mathcal{L}(X))$ equals
\begin{align*}
&P(X_n \in \tilde{B}) - P(X \in \tilde{B}) \\
&= \frac{1}{2} \left\{ P(X_n \in \tilde{B}) - P(X \in \tilde{B}) + P(X \notin \tilde{B}) - P(X_n \notin \tilde{B}) \right\}\\
&= \frac{1}{2} \left\{ \sum_{k \in \tilde{B}} \left[P(X_n=k) - P(X=k)\right] + \sum_{k \in \tilde{B}^C} \left[P(X=k) - P(X_n=k)\right] \right\}\\
& = \frac{1}{2} \sum_{k \in E} \left|P(X_n = k) - P(X=k)\right|. 
\end{align*}
For each $\epsilon > 0$, there exists a finite set $K \subseteq E$ such that $\sum_{k \in K^C} P(X=k) \le \epsilon/4$, and there exists $n_0 \in \mathbb{N}$ such that
for all $n \ge n_0$, $\sum_{k \in K} |P(X_n = k) - P(X=k)| \le \epsilon/4$. Convergence in total variation then follows from convergence in distribution, since, for all
$n \ge n_0$, 
\begin{align*}
&\sum_{k \in E} |P(X_n = k) - P(X = k)|\\ 
& = \sum_{k \in K} |P(X_n = k) - P(X = k)| + \sum_{k \in K^C} |P(X_n = k) - P(X = k)|\\
& \le \frac{\epsilon}{4} + \sum_{k \in K^C} P(X_n = k) + \sum_{k \in K^C}P(X=k)
 = \frac{\epsilon}{4} + 1 - \sum_{k \in K} P(X_n = k) +  \frac{\epsilon}{4}\\
& \le \frac{\epsilon}{4} +  1- \sum_{k \in K}P(X=k) + \sum_{k \in K} |P(X = k) - P(X_n=k)| +  \frac{\epsilon}{4}\\
& \le  \frac{\epsilon}{4} +  \frac{\epsilon}{4} +  \frac{\epsilon}{4} +  \frac{\epsilon}{4} = \epsilon.
\end{align*}
If $E$ is not discrete, for example, if $E=\mathbb{R}$, then 
convergence in total variation is stronger than convergence in distribution, and it might occasionally be even too strong to be of use. 
\end{remark}
\subsection*{Kolmogorov distance} Suppose that $E=\mathbb{R}$ and that the test functions are the indicators of half-lines in $\mathbb{R}$, i.e.\ 
$\mathcal{H}=\{I_{(-\infty,x]};\, x \in \mathbb{R}\}$. The \textit{Kolmogorov distance} is then defined as follows:
\[
d_K(\mu,\nu) = \sup_{h \in \mathcal{H}} \left| \int_E hd\mu - \int_E h d\nu\right| = \sup_{x \in \mathbb{R}} |\mu\{(-\infty,x]\} - \nu\{(-\infty,x]\}|.
\]
For two random variables $X \sim \mu$ and $Y \sim \nu$, the Kolmogorov distance is thus given by
$\sup_{x \in \mathbb{R}} |P(X \le x) - P(Y \le x)|$.
\subsection*{Wasserstein distance induced by $\boldsymbol{d}$} 
Suppose that $E$ is a separable metric space
with associated metric $d$ and equipped with its Borel $\sigma$-field. 
For the \textit{Wasserstein distance}, which is also known as the \textit{Dudley}, \textit{Fortet-Mourier} or \textit{Kantorovich distance}, 
we only consider probability measures $\mu$ such that for some, and then for
any, $x_0 \in E$, $\mathbb{E}d(X,x_0) = \int_E d(x,x_0) d\mu(x) < \infty$, where $X \sim \mu$. 
For probability measures $\mu$ and $\nu$ satisfying this condition, the Wasserstein distance induced by $d$ is given by
\[
d_W(\mu, \nu) = \sup_{h \in \mathcal{H}} \left| \int_E hd\mu - \int_E hd\nu \right|,  
\]
where the test functions $h$ are uniformly Lipschitz with constant $1$, i.e.\ $\mathcal{H}=\{h:E \to \mathbb{R}, \, |h(x)-h(y)| \le d(x,y)\}$.
It can be shown that
\begin{equation}\label{p: Wasserstein_coupling}
d_W(\mu, \nu) = \inf \mathbb{E}d(X,Y), 
\end{equation}
where the infimum is taken over all couplings $(X,Y)$ of $\mu$ and $\nu$ (see, for instance, Section 7.1 in \cite{Ambrosio_et_al:2005}).

Chapter \ref{Chap: Stein-Chen} mainly uses the total variation distance. The Wasserstein distance will make its first appearance only in Section \ref{Sec: Improved_rates} 
where we will use it to construct a distance between probability measures over a set of point measures, in the context of approximation by Poisson processes. The Kolmogorov
distance will be widely used in Section \ref{Sec: univ_max} where we approximate the law of the maximum of \iid random variables by an extreme value distribution. 
\section{Le Cam's result for Poisson approximation 
}\label{Sec: LeCam}
\noindent 
\cite{LeCam:1960} determined an upper bound for the accuracy in total variation of the approximation of a sum of independent Bernoulli random variables 
by a Poisson distribution with the same mean. We give the argument by \cite{Serfling:1975} that uses a simple coupling inequality.
\begin{thm}\label{t: LeCam}(Le Cam, 1960) Let $I_1, \ldots, I_n$ be independent Bernoulli random variables with $P(I_i=1)=p_i$ and
$P(I_i=0) =1-p_i$, where $0 < p_i <1$, for all $i=1, \ldots, n$. Let $W := \sum_{i=1}^n I_i$ and $\lambda := \mathbb{E}W = \sum_{i=1}^n p_i$. Then
\[
d_{TV}(\mathcal{L}(W), \mathrm{Poi}(\lambda)) \le \sum_{i=1}^{n} p_i^2.
\]
\end{thm}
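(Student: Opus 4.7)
The plan is to prove the bound by exhibiting an explicit coupling of $W$ with a Poisson random variable of mean $\lambda$ and applying the coupling inequality (\ref{t: coupling_dTV}). Since we have independence of the $I_i$ at our disposal, and since independent Poissons sum to a Poisson whose mean is the sum of the means, the construction reduces to coupling each Bernoulli $I_i$ individually with a Poisson variable $Y_i \sim \mathrm{Poi}(p_i)$.

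First I would construct, for each $i$ separately, a coupling $(I_i, Y_i)$ on a common probability space with the correct marginals such that $P(I_i \neq Y_i)$ is as small as possible. The natural candidate is the maximal coupling: put the mass on the diagonal $\{I_i = Y_i = 0\}$ equal to $\min(1-p_i, e^{-p_i}) = 1-p_i$ (using $1-p_i \le e^{-p_i}$) and the mass on $\{I_i = Y_i = 1\}$ equal to $\min(p_i, p_i e^{-p_i}) = p_i e^{-p_i}$; the remaining mass $p_i(1-e^{-p_i})$ is distributed off the diagonal to complete the marginals. Then
\[
P(I_i \neq Y_i) = p_i(1 - e^{-p_i}) \le p_i^2,
\]
where the inequality uses $1-e^{-x} \le x$ for $x \ge 0$. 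One could alternatively construct this coupling from a single uniform random variable, but the maximal coupling is conceptually cleanest.

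Next I would make the pairs $(I_1, Y_1), \ldots, (I_n, Y_n)$ independent across $i$. Set $Z := \sum_{i=1}^n Y_i$. By independence and additivity of the Poisson distribution, $Z \sim \mathrm{Poi}(\lambda)$ while $W = \sum_{i=1}^n I_i$ has the prescribed law. Since $\{W \neq Z\} \subseteq \bigcup_{i=1}^n \{I_i \neq Y_i\}$, a union bound gives
\[
P(W \neq Z) \le \sum_{i=1}^n P(I_i \neq Y_i) \le \sum_{i=1}^n p_i^2.
\]
Applying the coupling inequality (\ref{t: coupling_dTV}) to the pair $(W, Z)$ yields the desired bound $d_{TV}(\mathcal{L}(W), \mathrm{Poi}(\lambda)) \le \sum_{i=1}^n p_i^2$.

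There is no real obstacle here: everything rests on the one-dimensional computation $d_{TV}(\mathrm{Ber}(p), \mathrm{Poi}(p)) \le p^2$ combined with the sub-additivity of total variation distance under independent convolutions, itself a direct consequence of the coupling inequality. The only mildly delicate point is verifying that the maximal coupling on $\mathbb{Z}_+$ indeed attains $P(I_i \neq Y_i) = p_i(1-e^{-p_i})$; this is straightforward because $\mathrm{Ber}(p_i)$ is supported on $\{0,1\}$, so writing out the joint mass function and checking that the marginals agree is a short calculation.
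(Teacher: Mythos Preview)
Your proof is correct and follows essentially the same approach as the paper: both construct a coupling of each $I_i$ with a $\mathrm{Poi}(p_i)$ variable $Y_i$ achieving $P(I_i \neq Y_i) = p_i(1-e^{-p_i})$, make the pairs independent, and apply a union bound together with the coupling inequality (\ref{t: coupling_dTV}). The only cosmetic difference is that the paper gives the maximal coupling via an explicit auxiliary sample space $\Omega_i = \{-1,0,1,2,\ldots\}$, whereas you invoke the maximal coupling abstractly.
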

\begin{proof}
We perform a coupling of the two distributions $\mathcal{L}(W)$ and $ \mathrm{Poi}(\lambda)$ by defining random variables 
$J_i$ and $Y_i$ on probability spaces $(\Omega_i, P_i)$, for each $i=1, \ldots, n$, such that $\sum_{i=1}^n J_i$ and $\sum_{i=1}^n Y_i$ follow the distributions
$\mathcal{L}(W)$ and $\mathrm{Poi}(\lambda)$, respectively. To achieve this, define 
\begin{align*}
\Omega_i &:= \{-1, 0, 1, 2, \ldots\}, \\
P_i(0)   &:= 1-p_i,\\
P_i(k)   &:= e^{-p_i} p_i^k / k!, \textnormal{ for all } k\ge 1,\\
P_i(-1)  &:= 1- P_i(0) - \sum_{k\ge 1}P_i(k) = e^{-p_i} -(1-p_i), 
\end{align*}
for all $i=1, \ldots, n$. By construction, $(\Omega_i, P_i)$ are probability spaces. Let $(\Omega, P)$ be the product space of the probability spaces $(\Omega_i, P_i)$, i.e.\ let
$\Omega := \Omega_1 \times \ldots \times \Omega_n$ and, for any $\omega = (\omega_1, \ldots, \omega_n) \in \Omega$, define 
$P(\omega) := P_1(\omega_1) \cdot \ldots \cdot P_n(\omega_n)$. Then $\sum_{\omega \in \Omega} P(\omega) = 1$. Now define, for any $\omega \in \Omega$,
\[
J_i(\omega) :=  
\left\{ 
\begin{array}{ll} 0, & \textnormal{ if } \omega_i = 0, \\ 1, & \textnormal{ else}, \end{array} 
\right.
\]
and 
\[
Y_i(\omega) :=  
\left\{ 
\begin{array}{ll} k, & \textnormal{ if } \omega_i = k, \, k \ge 1, \\ 0, & \textnormal{ else}.\end{array} 
\right.
\]
By definition, the random variables $J_i$ have the same distribution as the Ber\-noulli random variables $I_i$, i.e.\ $P(J_i=1)=p_i=1-P(J_i=0)$, for all $i=1, \ldots, n$,
and, by definition of the product space, they are independent. Moreover, the random variables $Y_i$ are independent and follow the $\mathrm{Poi}(p_i)$-distribution,
since $P(Y_i = k) = P_i(k)$ for all $k \ge 1$, and $P(Y_i=0)= P_i(-1)+P_i(0) = e^{-p_i}$. It follows that the random variable $Y:= Y_1 + \ldots+ Y_n$ is 
$\mathrm{Poi}(\lambda)$-distributed. The random variables $J_i$ and $Y_i$ now take the same values if $\omega_i \in \{0,1\}$ and thus,
$P(I_i = Y_i) = P_i(0) + P_i(1) = (1-p_i) + p_i e^{-p_i}$, and 
\[
P(I_i \neq Y_i) = p_i\left(1-e^{-p_i}\right) \le p_i^2, 
\]
for all $i=1, \ldots, n$, since $1-e^{-z} \le z$ for $z > 0$. From (\ref{t: coupling_dTV}), it now follows that
\[
d_{TV}(\mathcal{L}(W), \mathrm{Poi}(\lambda)) \le P(W \neq Y ) \le \sum_{i=1}^n P(I_i \neq Y_i) \le \sum_{i=1}^n p_i^2.
\]
\end{proof}
\noindent For the accuracy in total variation between the binomial distribution and a Poisson distribution with the same mean, it then follows immediately:
\begin{cor}\label{t: Cor_Barbour_Hall}
\[
d_{TV}(\mathrm{Bin}(n,p), \mathrm{Poi}(np)) \le np^2. 
\]
\end{cor}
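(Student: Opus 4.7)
The plan is to apply Theorem \ref{t: LeCam} directly in the special case of identically distributed Bernoulli summands. More precisely, I would take $I_1, \ldots, I_n$ to be \iid Bernoulli random variables with common success probability $p_i = p \in (0,1)$ for every $i$. Then $W := \sum_{i=1}^n I_i$ has the $\mathrm{Bin}(n,p)$ distribution, and its mean is $\lambda = \mathbb{E}W = \sum_{i=1}^n p_i = np$, so $\mathcal{L}(W) = \mathrm{Bin}(n,p)$ and the approximating Poisson distribution is $\mathrm{Poi}(np)$.

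The conclusion of Theorem \ref{t: LeCam} then reads
\[
d_{TV}(\mathrm{Bin}(n,p), \mathrm{Poi}(np)) \le \sum_{i=1}^n p_i^2 = \sum_{i=1}^n p^2 = np^2,
\]
which is the desired bound. There is no real obstacle here: the corollary is nothing more than specialising the independent-but-not-identically-distributed bound of Le Cam to the homogeneous case, and the coupling argument used in the proof of Theorem \ref{t: LeCam} already handles the identically distributed case as a sub-case. The only minor point worth noting is that the boundary cases $p \in \{0,1\}$, excluded in the hypotheses of Theorem \ref{t: LeCam}, are trivial: for $p=0$ both distributions are $\delta_0$, and for $p=1$ we have $\mathrm{Bin}(n,1)=\delta_n$ while the total variation distance to $\mathrm{Poi}(n)$ is certainly at most $1 \le n = np^2$, so the bound still holds.
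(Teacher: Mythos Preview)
Your proof is correct and matches the paper's approach exactly: the paper simply states that the corollary ``follows immediately'' from Theorem \ref{t: LeCam}, which is precisely the specialisation $p_i \equiv p$ that you carry out. Your remark on the boundary cases $p\in\{0,1\}$ is a harmless bonus, though the context implicitly assumes $p\in(0,1)$.
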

\noindent 
For a good approximation of a binomial distribution by a Poisson, we thus need
the probability of success $p$ to vary with the sample size $n$ such that $np^2 \to 0$ as $n \to \infty$, i.e.\ we need $p=p_n=o(n^{-1/2})$ as $n \to \infty$. 
We already noticed this in Section \ref{Sec: Poi_approx_Bin}.
The error $np^2$ can, however, still be improved. 
In order to get an inkling of why this is the case, note first that for large $np$, most realisations of the $\mathrm{Bin}(n,p)$ and $\mathrm{Poi}(np)$ distributions
lie in the vicinity of $np$. With the following inequalities that can be established for $n \ge 2$ and $p=p_n \le 1/2$, and that are (in part) 
more precise than the ones used in Section \ref{Sec: Poi_approx_Bin},
\begin{align*}
e^{-\frac{k^2}{2n} + \frac{k}{2n} - \frac{2k^3}{3n^2}} 
\le  &\,\,\prod_{j=0}^{k-1} \left( 1- \frac{j}{n}\right) \le e^{-\frac{k^2}{2n} + \frac{k}{2n}},\\
e^{-np}\cdot e^{-\frac{np^2}{2} - 8np^3} \le &\,\, (1-p)^n \le e^{-np}\cdot e^{-\frac{np^2}{2}},\\
e^{kp} \le &\,\,(1-p)^{-k} \le e^{kp + \frac{kp^2}{2}},
\end{align*}
it turns out that for $k=np$, 
\[
P(W=k)= \left\{ 1+ O\left(p, np^3\right) \right\}\mathrm{Poi}(np)\{k\}, 
\]
which suggests that the error estimate from Corollary \ref{t: Cor_Barbour_Hall} may be reduced.
With Stein's method we indeed find an improved result; see Corollary \ref{t: Barbour_Hall_1984} in the next section.

\section{The Stein-Chen method for Poisson approximation 
}\label{Sec: PoiAppr_dTV}
Stein's method for Poisson approximation was first worked out by \cite{Chen:1975a} and is therefore usually named the Stein-Chen method. We demonstrate the method 
for the example of sums of independent, but non-identically distributed indicator variables, which will provide us with an improvement on the result by Le Cam in Theorem
\ref{t: LeCam}. First note the following two observations, summarised in Theorems \ref{t: Chen:1975} and \ref{t: Char_of_Poi}:

\begin{thm}\label{t: Chen:1975}(Chen, 1975a) Let $f:\, \mathbb{Z}_+ \to \mathbb{R}$ be a bounded function. The following are equivalent:\\
(i) There exists a bounded solution $g = g_{f,\lambda}:\, \mathbb{Z}_+ \to \mathbb{R}$ of  
\begin{equation}\label{t: steineq}
\lambda g(k+1) - kg(k) = f(k),\quad \textnormal{for all }k \in \mathbb{Z}_+.
\end{equation}
(ii) $\mathbb{E}f(Z) = 0$, for $Z \sim \mathrm{Poi}(\lambda)$.\\
Furthermore, if either is satisfied, then the solution $g=g_{f,\lambda}$ to equation (\ref{t: steineq}) is 
\begin{equation}\label{gsolution}
g(k+1) = \frac{k!}{\lambda^{k+1}}\sum_{j=0}^k p_{j,\lambda}e^{\lambda} f(j) = - \frac{k!}{\lambda^{k+1}}\sum_{j=k+1}^{\infty} p_{j,\lambda}e^{\lambda} f(j),
\end{equation}
for all $k\in \mathbb{Z}_+$, where $\displaystyle p_{j,\lambda}= \frac{\lambda^j}{j!}e^{-\lambda}$.
\end{thm}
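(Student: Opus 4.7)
The plan is to prove the forward direction (i) $\Rightarrow$ (ii) first since it is just an expectation identity, then to construct $g$ explicitly by iterating the recursion (\ref{t: steineq}), and finally to use the zero-mean hypothesis both to justify the alternative tail form of the solution and to establish its boundedness, which I expect to be the delicate step.

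For (i) $\Rightarrow$ (ii): assuming a bounded solution $g$ exists, evaluate (\ref{t: steineq}) at $k = Z$ with $Z \sim \mathrm{Poi}(\lambda)$ and take expectations. The workhorse is the Poisson size-biasing identity
\[
\mathbb{E}[Z g(Z)] \;=\; \sum_{k=1}^\infty k\, g(k)\, e^{-\lambda}\frac{\lambda^k}{k!} \;=\; \lambda \sum_{j=0}^\infty g(j+1)\, e^{-\lambda}\frac{\lambda^j}{j!} \;=\; \lambda\, \mathbb{E}[g(Z+1)],
\]
where boundedness of $g$ legitimises the rearrangement. Subtracting gives $\mathbb{E}f(Z) = \mathbb{E}[\lambda g(Z+1) - Z g(Z)] = 0$.

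For (ii) $\Rightarrow$ (i) together with formula (\ref{gsolution}): note that (\ref{t: steineq}) at $k=0$ reads $\lambda g(1) = f(0)$ and does not constrain $g(0)$, so set $g(0):=0$. Solving the recursion $g(k+1) = \{k g(k) + f(k)\}/\lambda$ by a straightforward induction on $k$ yields
\[
g(k+1) \;=\; \frac{k!}{\lambda^{k+1}}\sum_{j=0}^k \frac{\lambda^j}{j!} f(j) \;=\; \frac{k!}{\lambda^{k+1}}\sum_{j=0}^k p_{j,\lambda}\, e^{\lambda} f(j),
\]
which is the first expression in (\ref{gsolution}). Hypothesis (ii) says $\sum_{j=0}^\infty p_{j,\lambda} f(j) = 0$, so the partial sum up to $k$ equals minus the tail sum from $k+1$ to $\infty$, producing the second expression.

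The main obstacle is to check that $g$ so defined is bounded; this is exactly where the two equivalent forms pay off. With $M := \|f\|_\infty < \infty$, the tail representation gives
\[
|g(k+1)| \;\le\; M \cdot \frac{k!\, e^\lambda}{\lambda^{k+1}}\, P(Z \ge k+1).
\]
For $k \le \lceil\lambda\rceil$ the factor $k!/\lambda^{k+1}$ is clearly bounded on this finite range, while for $k > \lambda$ a Chernoff-type estimate $P(Z \ge k+1) \le e^{-\lambda}(e\lambda/(k+1))^{k+1}$ reduces the bound to $M\, k!\, e^{k+1}/(k+1)^{k+1}$, which Stirling shows decays in $k$. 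Combining the two regimes supplies a uniform bound $\|g\|_\infty \le c(\lambda)\, M$ and completes the argument.
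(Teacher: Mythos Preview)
Your proof is correct and follows essentially the same architecture as the paper: the size-biasing identity for (i)$\Rightarrow$(ii), recursive construction of $g$ with $g(0)=0$, and the zero-mean condition to pass between the two representations in (\ref{gsolution}).

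The only substantive difference is in the boundedness step. You split into small $k$ versus large $k$ and invoke a Chernoff tail bound plus Stirling; this works, but the paper's argument is shorter and yields an explicit constant. From the tail form of (\ref{gsolution}) one has
\[
|g(k+1)| \;\le\; \|f\| \sum_{j=k+1}^\infty \frac{k!\,\lambda^{j-k-1}}{j!} \;=\; \|f\|\sum_{m=1}^\infty \frac{\lambda^{m-1}}{(k+1)(k+2)\cdots(k+m)} \;\le\; \|f\|\sum_{m=1}^\infty \frac{\lambda^{m-1}}{m!} \;\le\; e^\lambda\|f\|,
\]
using only $(k+1)\cdots(k+m)\ge m!$. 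Your route establishes additionally that $|g(k+1)|\to 0$ as $k\to\infty$, which is more than needed here; the paper's route trades that extra information for a one-line uniform bound.
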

\begin{proof} 
$(i) \Rightarrow (ii)$:  Let $g: \,\mathbb{Z}_+ \to \mathbb{R}$ be a bounded function and let $Z\sim \mathrm{Poi}(\lambda)$, with $\lambda >0$. Then
\[
\mathbb{E}  \lambda g(Z+1) = \sum_{k\ge0} \lambda g(k+1) \frac{\lambda^k}{k!} e^{-\lambda}, 
\]
and 
\begin{align*}
\mathbb{E}Zg(Z) &= \sum_{l \ge 1} lg(l) \frac{\lambda^l}{l!} e^{-\lambda} \\
&= \sum_{k \ge 0} \lambda (k+1)g(k+1)\frac{\lambda^k}{(k+1)!} e^{-\lambda} \\
&= \sum_{k\ge0} \lambda g(k+1) \frac{\lambda^k}{k!} e^{-\lambda}.
\end{align*}
It follows from (\ref{t: steineq}) that
\[
\mathbb{E}f(Z)=\mathbb{E}[\lambda g(Z+1) - Zg(Z)] =0.
\]
Note that we have not needed $g$ to be of the form (\ref{gsolution}) here.
$(ii) \Rightarrow (i)$:  Suppose $f: \, \mathbb{Z}_+ \to \mathbb{R}$ is a bounded function such that $\mathbb{E}f(Z) =0$ for $Z \sim \mathrm{Poi}(\lambda)$. We use an induction proof to show that $g=g_{f,\lambda}$ given by (\ref{gsolution}) solves 
equation (\ref{t: steineq}). Without loss of generality we set $g(0):=0$. Then, for $k=0$, (\ref{gsolution}) gives
$g(1)=f(0) p_{0,\lambda}e^\lambda/\lambda=f(0)/\lambda$, which solves (\ref{t: steineq}). Assume that (\ref{gsolution}) solves (\ref{t: steineq})
for an integer $k \ge 0$. Then the induction step
\begin{align*}
g(k+2) &= \frac{(k+1)!}{\lambda^{k+2}} \sum_{j=0}^{k+1} p_{j,\lambda} e^{\lambda}f(j) \\
&= \frac{k+1}{\lambda} \cdot \frac{k!}{\lambda^{k+1}} \left\{ \sum_{j=0}^k p_{j,\lambda} e^\lambda f(j) + p_{k+1,\lambda} e^\lambda f(k+1)\right\}\\
&= \lambda^{-1} \left\{ (k+1)g(k+1) +f(k+1) \right\}
\end{align*}
completes the argument. Note that we have not yet needed the condition $\mathbb{E}f(Z)=0$, which implies that the solution $g$ 
we found so far always exists. The condition is needed, however, for the alternative representation of $g(k+1)$ as will be made clear by the following:
\begin{align*}
0 &= \mathbb{E}f(Z) = \sum_{j=0}^k \frac{\lambda^j}{j!}e^{-\lambda}f(j)
+ \sum_{j=k+1}^\infty \frac{\lambda^j}{j!}e^{-\lambda}f(j).
\end{align*}
Multiplication of both sides by $k!e^\lambda /\lambda^{k+1}$ now yields
\[
\frac{k!}{\lambda^{k+1}} \sum_{j=0}^k p_{j,\lambda}e^\lambda f(j)
= -\frac{k!}{\lambda^{k+1}} \sum_{j=k+1}^\infty p_{j,\lambda}e^\lambda f(j).
\]
Furthermore, it follows from (\ref{gsolution}) that 
\begin{align*}
|g(k+1)| &\le ||f|| \sum_{j=k+1}^\infty \frac{k! \lambda^{j-(k+1)}}{j!} \\
&= ||f||\sum_{m=1}^\infty \frac{\lambda^{m-1}}{(k+m)\cdot \ldots \cdot (k+1)} \\
&\le ||f|| \sum_{m=1}^\infty \frac{\lambda^{m-1}}{m!},
\end{align*}
since, obviously, $k+m \ge m$ for all $m \ge 1$. Moreover, 
\[
\sum_{m=1}^\infty \frac{\lambda^{m-1}}{m!} \le \sum_{m=1}^\infty \frac{\lambda^{m-1}}{(m-1)!} = e^\lambda,
\]
and thus $||g|| \le e^\lambda ||f||$, i.e.\ $g$ is bounded.
\end{proof}
\noindent We can in fact give a characterisation of the Poisson distribution:
\begin{thm}\label{t: Char_of_Poi}\textit{(Characterisation of the Poisson distribution)}
Let $Z$ be a random variable taking values in $\mathbb{Z}_+$. The following are equivalent:\\
(i) $Z \sim \mathrm{Poi}(\lambda)$.\\
(ii) For every bounded function $g:\, \mathbb{Z}_+ \to \mathbb{R}$, we have 
\begin{equation}\label{t: ii}
\mathbb{E}[\lambda g(Z+1) - Zg(Z)] =0.
\end{equation}
\end{thm}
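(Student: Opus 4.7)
The plan is to prove the two implications separately, with the forward direction already essentially handled by the computation in Theorem \ref{t: Chen:1975}, and the reverse direction obtained by plugging carefully chosen test functions into the identity (\ref{t: ii}).

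For $(i) \Rightarrow (ii)$: I would observe that this is exactly the computation already carried out in the first half of the proof of Theorem \ref{t: Chen:1975}. Given any bounded $g:\mathbb{Z}_+ \to \mathbb{R}$, the index-shift argument $\mathbb{E}[Zg(Z)] = \sum_{l\ge 1} lg(l) e^{-\lambda}\lambda^l/l! = \lambda \sum_{k\ge 0} g(k+1) e^{-\lambda}\lambda^k/k! = \mathbb{E}[\lambda g(Z+1)]$ yields (\ref{t: ii}) immediately for $Z \sim \mathrm{Poi}(\lambda)$, with no appeal to the explicit solution formula (\ref{gsolution}).

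For $(ii) \Rightarrow (i)$: The strategy is to use Theorem \ref{t: Chen:1975} in reverse. For each fixed $k \in \mathbb{Z}_+$, define the bounded function
\[
f_k(j) := I_{\{j = k\}} - p_{k,\lambda}, \qquad j \in \mathbb{Z}_+,
\]
which satisfies $\mathbb{E}f_k(Z') = P(Z' = k) - p_{k,\lambda} = 0$ when $Z' \sim \mathrm{Poi}(\lambda)$. By the implication $(ii)\Rightarrow(i)$ of Theorem \ref{t: Chen:1975}, there exists a bounded solution $g_k = g_{f_k,\lambda}$ of $\lambda g_k(j+1) - j g_k(j) = f_k(j)$. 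Applying the assumed identity (\ref{t: ii}) to this bounded $g_k$, evaluated at the random variable $Z$ satisfying $(ii)$, gives
\[
0 = \mathbb{E}[\lambda g_k(Z+1) - Z g_k(Z)] = \mathbb{E} f_k(Z) = P(Z = k) - p_{k,\lambda}.
\]
Since $k \in \mathbb{Z}_+$ was arbitrary, the point probabilities of $Z$ coincide with those of $\mathrm{Poi}(\lambda)$, hence $Z \sim \mathrm{Poi}(\lambda)$.

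There is no real obstacle here: the whole content of the reverse implication has been pre-packaged in Theorem \ref{t: Chen:1975}, so the only work is the choice of test function. The only point that deserves a sentence of care is to check that the solutions $g_k$ produced by (\ref{gsolution}) are indeed bounded, which is precisely the bound $\|g_k\| \le e^{\lambda}\|f_k\| \le 2 e^{\lambda}$ obtained at the end of the proof of Theorem \ref{t: Chen:1975}; this is what legitimises substituting $g_k$ into the hypothesis (\ref{t: ii}).
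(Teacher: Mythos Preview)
Your proposal is correct and follows essentially the same approach as the paper. For $(ii)\Rightarrow(i)$ the paper applies Theorem~\ref{t: Chen:1975} to $f_A(k)=I_{\{k\in A\}}-\mathrm{Poi}(\lambda)\{A\}$ for arbitrary $A\subseteq\mathbb{Z}_+$, while you specialise to singletons $A=\{k\}$; both choices yield the same conclusion. For $(i)\Rightarrow(ii)$ your direct citation of the index-shift computation from the proof of Theorem~\ref{t: Chen:1975} is actually cleaner than the paper's own argument, which routes the claim back through the existence of a Stein solution in a somewhat roundabout way.
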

\begin{proof} $(i) \Rightarrow (ii)$: 
Suppose that $Z \sim \mathrm{Poi}(\lambda)$. Let $h:\, \mathbb{Z}_+ \to \mathbb{R}$ be any bounded function and define 
\[
f(k):= h(k) - e^{-\lambda}\sum_{k=0}^\infty h(k) \,\frac{\lambda^k}{k!}, \quad \text{for all } k \in \mathbb{Z}_+.
\]
Then $f$ is bounded and $\mathbb{E}f(Z) = 0$. By Theorem \ref{t: Chen:1975} there exists a bounded function $g=g_{f, \lambda}: \mathbb{Z}_+ \to \mathbb{R}$ satisfying
(\ref{t: steineq}), and thus,
\[
\mathbb{E}\left[ \lambda g(Z + 1) - Z g(Z)\right] = \mathbb{E}f(Z) = 0.  
\]
$(ii) \Rightarrow (i)$: 
Suppose that $Z$ is a random variable with state space  $\mathbb{Z}_+$ satisfying (\ref{t: ii}). 
Let $X \sim \mathrm{Poi}(\lambda)$, choose any subset $A \subseteq \mathbb{Z}_+$, and define the bounded function $h(k):=h_{A}(k):= I_{\{k\in A\}}$
for all $k \in \mathbb{Z}_+$. Define
\[
f(k):= f_A(k):= h_A(k) - \mathbb{E}h_A(X), \quad \text{for all }k \in \mathbb{Z}_+.
\]
Then $f$ is bounded, $\mathbb{E}f(X)=0$, and by 
Theorem \ref{t: Chen:1975} there exists a bounded function $g:=g_{f,\lambda}:\, \mathbb{Z}_+ \to \mathbb{R}$ satisfying (\ref{t: steineq}) for all
$k \in \mathbb{Z}_+$. By (\ref{t: ii}), we thus obtain
\begin{align*}
0 &= \mathbb{E}[\lambda g(Z+1) - Zg(Z)] = \mathbb{E}f(Z) \\ 
&= \mathbb{E}h_A(Z)- \mathbb{E}h_A(X) = P(Z \in A)-P(X \in A),
\end{align*}
and therefore $Z \stackrel{d}{=} X$, i.e.\ $Z \sim \mathrm{Poi}(\lambda)$.
\end{proof}
\noindent In Section \ref{s: Stein_eq_Construction} below, we construct the Stein equation for Poisson approximation, by using Theorem \ref{t: Chen:1975}, and 
give smoothness estimates of its solution. Section \ref{s: Independent_indicators} then applies the results to the problem of determining 
a bound on the error that arises with the approximation of the law of a sum of independent Bernoulli random variables
by a Poisson distribution. 
Section \ref{s: Dependent_indicators} does the same for sums of dependent Bernoulli variables. 
Section \ref{s: antisymmetric_function} introduces a general procedure that uses an exchangeable pair of random variables 
and an antisymmetric function for establishing a Stein equation.  
Section \ref{s: generator_interpretation} relates the Stein equation for Poisson approximation to generators of immigration-death processes,
which allows for a probabilistic interpretation of its solution. 
\subsection{Construction of the Stein equation and smoothness estimates}\label{s: Stein_eq_Construction} Let $Z \sim \mathrm{Poi}(\lambda)$ and let $A \subseteq \mathbb{Z}_+$. Define
\[
f(k) := f_A(k):= I_{\{k \in A\}} - \mathrm{Poi}(\lambda)\{A\}, \quad k \in \mathbb{Z}_+, 
\]
where $\mathrm{Poi}(\lambda)\{A\} = P(Z \in A)$. The function $f_A$ is obviously
bounded by $1$ and we have
\[
\mathbb{E}f_A(Z) = P(Z \in A) - \mathrm{Poi}(\lambda)\{A\} =0. 
\]
Thus, by Theorem \ref{t: Chen:1975}, there exists a bounded solution $g:= g_{f_A, \lambda} := g_{A, \lambda}: \mathbb{Z}_+ \to \mathbb{R}$ to the \textit{Stein equation}
\begin{equation}\label{univ_Stein_equation}
\lambda g(k+1) - kg(k) = I_{\{k \in A\}} - \mathrm{Poi}(\lambda)\{A\}, \quad k \in \mathbb{Z}_+,
\end{equation}
and the \textit{Stein solution} is given by plugging $f_A(k)$ into (\ref{gsolution}):
\begin{align*}
g(k+1) &= \frac{k!}{\lambda^{k+1}}e^{\lambda}   \{P(Z \in A, Z \le k) - P(Z \in A)P(Z \le k)\}\\
&= \frac{k!}{\lambda^{k+1}}e^{\lambda}  \{\mathrm{Poi}(\lambda)\{A \cap U_k\} - \mathrm{Poi}(\lambda)\{A\}\mathrm{Poi}(\lambda)\{U_k\}\},
\end{align*}
where $U_k := \{0,1, \ldots, k\}$, for all $k \in \mathbb{Z}_+$. Let $W$ be a random variable taking values in $\mathbb{Z}_+$. 
By the Stein equation (\ref{univ_Stein_equation}), taking expectations, we have
\begin{equation}\label{univ_Stein_equation_E}
\mathbb{E}[\lambda g(W+1) - Wg(W)] = P(W \in A) - \mathrm{Poi}(\lambda)\{A\}. 
\end{equation}
In order to find an upper bound for the error in total variation of the approximation of the law of $W$ by that of a Poisson distribution with mean $\lambda >0$, 
it suffices, by (\ref{univ_Stein_equation_E}), to bound $\mathbb{E}[\lambda g(W+1) - Wg(W)]$ uniformly in $A \subseteq \mathbb{Z}_+$. To achieve this, we first need smoothness estimates of the function 
$g=g_{A, \lambda}$ as given in Lemma \ref{t: B_Eagleson}. For the proofs of (i) and (ii) of Lemma \ref{t: B_Eagleson}, we refer to \cite{Barbour_et_al.:1992} (Remark 10.2.4)
and \cite{Barbour/Eagleson:1983}, respectively. 
\begin{lemma}\label{t: B_Eagleson} For the solution $g:= g_{A, \lambda}$ of the Stein equation, it holds that
\begin{enumerate}
\item[(i)] $\quad  \displaystyle ||g|| := \sup_{k \ge 0} |g(k)| \le \min\left(1, \sqrt{\frac{2}{e\lambda}}\right),$
\item[(ii)] $\quad \displaystyle \Delta g := \sup_{k \ge 0} |g(k+1)-g(k)| \le \frac{1-e^{-\lambda}}{\lambda} \le \min\left(1, \frac{1}{\lambda}\right).$
\end{enumerate}
\end{lemma}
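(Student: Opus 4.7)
The plan is to exploit the explicit representation
\[
g_A(k+1) = \frac{k!\, e^{\lambda}}{\lambda^{k+1}} \bigl[\mathrm{Poi}(\lambda)\{A \cap U_k\} - \mathrm{Poi}(\lambda)\{A\}\, \mathrm{Poi}(\lambda)\{U_k\}\bigr]
\]
derived just above the statement, together with two structural facts: the map $A \mapsto g_A$ is additive, so $g_A = \sum_{j \in A} g_{\{j\}}$; and $g_{\mathbb{Z}_+} \equiv 0$, since the zero function solves the Stein equation when $f \equiv 0$. Consequently, for each fixed $k$, the extrema over $A \subseteq \mathbb{Z}_+$ of both $g_A(k+1)$ and of the first difference $g_A(k+1) - g_A(k)$ are attained when $A$ is precisely the set of indices $j$ on which the corresponding singleton quantity is positive.

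For (i), I would first read off from the formula that $g_{\{j\}}(k+1) = \mathrm{Poi}(\lambda)\{j\}\, \bigl[I_{\{j \le k\}} - \mathrm{Poi}(\lambda)\{U_k\}\bigr] / \bigl(\lambda\, \mathrm{Poi}(\lambda)\{k\}\bigr)$, which is positive iff $j \le k$. Summing over such $j$ yields
\[
\sup_{A \subseteq \mathbb{Z}_+} |g_A(k+1)| \;=\; g_{U_k}(k+1) \;=\; \frac{\mathrm{Poi}(\lambda)\{U_k\}\, \mathrm{Poi}(\lambda)\{U_k^c\}}{\lambda\, \mathrm{Poi}(\lambda)\{k\}}.
\]
The bound $||g|| \le 1$ then reduces to the Poisson inequality $\mathrm{Poi}(\lambda)\{U_k\}\, \mathrm{Poi}(\lambda)\{U_k^c\} \le \lambda\, \mathrm{Poi}(\lambda)\{k\}$ for all $k$, which I would verify by rewriting $\lambda\, \mathrm{Poi}(\lambda)\{U_k\} = \mathbb{E}[Z\, I_{\{Z \le k+1\}}]$ (using $\lambda\, \mathrm{Poi}(\lambda)\{j\} = (j+1)\, \mathrm{Poi}(\lambda)\{j+1\}$) and comparing with the identity $\lambda\, \mathrm{Poi}(\lambda)\{k\} = (k+1)\, \mathrm{Poi}(\lambda)\{k+1\}$. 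The refined bound $||g|| \le \sqrt{2/(e\lambda)}$, which is the binding one for large $\lambda$, follows by applying Stirling's formula to the denominator (the maximum of $\mathrm{Poi}(\lambda)\{k\}$ near $k \approx \lambda$ is of order $1/\sqrt{2\pi\lambda}$), using $\mathrm{Poi}(\lambda)\{U_k\}\, \mathrm{Poi}(\lambda)\{U_k^c\} \le 1/4$, and optimising carefully over $k$.

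For (ii), the same linearity reduction applies to the first difference $g_A(k+1) - g_A(k)$. One analyses the sign of $g_{\{j\}}(k+1) - g_{\{j\}}(k)$ as a function of $j$, which changes on at most one interval, identifies the extremising $A$, and evaluates directly. The resulting closed form is $(1-e^{-\lambda})/\lambda$, and the final bound $\min(1, 1/\lambda)$ is immediate from the inequality $1 - e^{-\lambda} \le \min(\lambda, 1)$.

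The main obstacle is the sharp constant $\sqrt{2/e}$ in (i). The linearity reduction quickly pins the supremum down to an explicit ratio of Poisson probabilities, and a crude Stirling estimate readily gives a bound of the correct order $1/\sqrt{\lambda}$; matching the exact constant, however, requires a more delicate non-asymptotic estimate that is uniform in $k$, rather than merely tight at the Poisson mode. Part (ii) is comparatively direct once the sign structure of $g_{\{j\}}(k+1) - g_{\{j\}}(k)$ in $j$ has been identified.
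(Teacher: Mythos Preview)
The paper does not actually give a proof of this lemma: it simply cites \cite{Barbour_et_al.:1992} (Remark~10.2.4) for part~(i) and \cite{Barbour/Eagleson:1983} for part~(ii), and closes with a $\qed$. Your proposal is therefore not being compared against anything the paper does itself, but against the classical arguments in those references.

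Your outline is essentially that classical argument. The linearity reduction $g_A = \sum_{j\in A} g_{\{j\}}$ together with the sign identification $g_{\{j\}}(k+1) > 0 \iff j\le k$ is exactly how one pins down $\sup_A |g_A(k+1)| = g_{U_k}(k+1)$, and the explicit ratio you write is correct. Your self-assessment is also accurate: the order $1/\sqrt{\lambda}$ in~(i) drops out of a crude Stirling bound on the mode, but matching the constant $\sqrt{2/e}$ requires the more careful uniform-in-$k$ estimate carried out in \cite{Barbour_et_al.:1992}. For~(ii), the sign analysis of $g_{\{j\}}(k+1)-g_{\{j\}}(k)$ in $j$ (it changes sign exactly once) and the subsequent evaluation is again the standard route from \cite{Barbour/Eagleson:1983}, and the supremum $(1-e^{-\lambda})/\lambda$ is indeed attained, so your description of that part as ``comparatively direct'' is fair.

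One small point: your verification sketch for the crude bound $\|g\|\le 1$ via $\lambda\,\mathrm{Poi}(\lambda)\{U_k\} = \mathbb{E}[Z\,I_{\{Z\le k+1\}}]$ is correct as an identity but does not by itself complete the comparison with $\mathrm{Poi}(\lambda)\{U_k\}\,\mathrm{Poi}(\lambda)\{U_k^c\}$; you would still need an additional inequality step there. This is minor and is handled in the cited references.
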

\begin{flushright}
$\qed$ 
\end{flushright}


\subsection{
Independent indicator variables}\label{s: Independent_indicators}
We use (\ref{univ_Stein_equation_E}) to find a sharper bound than the one given by Le Cam in Theorem \ref{t: LeCam}. We assume the setting of Theorem \ref{t: LeCam}, 
i.e.\ let $I_1, \ldots, I_n$ be independent Bernoulli random variables with $P(I_i=1)=p_i$ and
$P(I_i=0) =1-p_i$, where $0 < p_i <1$, for all $i=1, \ldots, n$. Let $W = \sum_{i=1}^n I_i$ and $\lambda = \mathbb{E}W = \sum_{i=1}^n p_i$.
Then
\begin{align}
&\label{p: g}|P(W \in A) - \mathrm{Poi}(\lambda)\{A\}|  \le 2 ||g||\sum_{i=1}^n p_i^2,\\
&\label{p: Deltag}|P(W \in A) - \mathrm{Poi}(\lambda)\{A\}|  \le  \Delta g \sum_{i=1}^n p_i^2,
\end{align}
where $||g|| := \sup_{k \ge 0} |g(k)|$ and $\Delta g := \sup_{k \ge 0} |g(k+1)-g(k)|$.
We may show (\ref{p: g}) and (\ref{p: Deltag}) as follows:
for each $i=1, \ldots,n$, define $W_i := \sum_{j=1, j\neq i}^n I_j$.
We have 
\begin{equation}\label{p: use_of_independence}
\mathbb{E}[I_ig(W)] = \mathbb{E}[I_i g(W_i+1)] = p_i \mathbb{E}[g(W_i+1)], 
\end{equation}
since $I_i$ and $W_i$ are independent. We may thus write the left hand side of (\ref{univ_Stein_equation_E}) as
\begin{align*}
\mathbb{E}[\lambda g(W+1) - Wg(W)] 
&= \sum_{i=1}^n p_i \left\{ \mathbb{E}[g(W+1)] - \mathbb{E}[g(W_i+1)]\right\}\\
&= \sum_{i=1}^n p_i \sum_{k \ge 0} g(k+1)\left\{ P(W=k) - P(W_i=k)\right\},
\end{align*}
where, using independence between $W_i$ and $I_i$, we find that
\begin{align*}
P(W=k) &= P(W_i+I_i=k, I_i=0) + P(W_i+I_i=k, I_i=1) \\
& = (1-p_i)P(W_i=k) + p_iP(W_i+1 =k). 
\end{align*}
We thus obtain
\begin{equation}\label{p: intermediate}
\mathbb{E}[\lambda g(W+1) - Wg(W)] = \sum_{i=1}^n p_i^2 \sum_{k \ge 0} g(k+1) \left\{ P(W_i+1=k) - P(W_i=k) \right\}.
\end{equation}
On the one hand, for (\ref{p: g}), this may be bounded from above by
\begin{align*} 
&||g|| \sum_{i=1}^n p_i^2 \left\{ \sum_{k\ge 0} P(W_i+1=k) + \sum_{k\ge 0} P(W_i=k) \right\} \le 2||g|| \sum_{i=1}^n p_i^2. 
\end{align*}
On the other hand, for (\ref{p: Deltag}), note that 
\begin{align*}
\sum_{k \ge 0}g(k+1)P(W_i=k-1) &= \sum_{k \ge 1}g(k+1)P(W_i=k-1) \\
&= \sum_{k' \ge 0}g(k'+2) P\left(W_i=k'\right). 
\end{align*}
Then, (\ref{p: intermediate}) gives
\begin{align*}
\left| \sum_{i=1}^n p_i^2 \sum_{k \ge 0} P(W_i=k)\left\{ g(k+2) - g(k+1)\right\} \right| 
&\le \Delta g \sum_{i=1}^n p_i^2 \sum_{k \ge 0} P(W_i=k) \\
&\le \Delta g \sum_{i=1}^n p_i^2.
\end{align*}
The results now follow using (\ref{univ_Stein_equation_E}).
With (\ref{univ_Stein_equation_E}), (\ref{p: Deltag}) and Lemma \ref{t: B_Eagleson}, we get an improvement on Le Cam's result and thereby also better rates for the approximation of a binomial by a Poisson distribution:
\begin{thm}\textit{(Barbour and Hall, 1984)}\label{t: Barbour_Hall_1984}
 Let $I_1, \ldots, I_n$ be independent Ber\-noulli random variables with $P(I_i=1)=p_i$ and
$P(I_i=0) =1-p_i$, where $0 < p_i <1$, for all $i=1, \ldots, n$. Let $W = \sum_{i=1}^n I_i$ and $\lambda = \mathbb{E}W = \sum_{i=1}^n p_i$. Then
\begin{equation}\label{t: bound_Barbour_Hall_1984}
d_{TV}(\mathcal{L}(W), \mathrm{Poi}(\lambda)) \le \frac{1-e^{-\lambda}}{\lambda}\sum_{i=1}^{n} p_i^2 \le \min\left(1, \frac{1}{\lambda}\right)\sum_{i=1}^{n} p_i^2.
\end{equation}
\qed
\end{thm}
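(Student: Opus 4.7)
The proof essentially assembles three ingredients that have already been established in the excerpt, so my plan is mainly to splice them together correctly and then verify the final numerical inequality.

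First, I would start from the identity (\ref{univ_Stein_equation_E}), namely
\[
\mathbb{E}[\lambda g(W+1) - W g(W)] = P(W \in A) - \mathrm{Poi}(\lambda)\{A\},
\]
valid for $g = g_{A,\lambda}$ solving the Stein equation. Since the Bernoulli summands are independent, the $W_i = W - I_i$ trick together with the decomposition
\[
P(W=k) = (1-p_i) P(W_i = k) + p_i P(W_i + 1 = k)
\]
gives the bound (\ref{p: Deltag}):
\[
|P(W \in A) - \mathrm{Poi}(\lambda)\{A\}| \le \Delta g \sum_{i=1}^n p_i^2,
\]
uniformly in $A$. This step is already done in the text, so I would just invoke it.

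Next, taking the supremum over $A \subseteq \mathbb{Z}_+$ on the left converts the pointwise bound into a total variation bound. Then I apply Lemma \ref{t: B_Eagleson}(ii), which states $\Delta g \le (1-e^{-\lambda})/\lambda$, to obtain the first inequality of (\ref{t: bound_Barbour_Hall_1984}):
\[
d_{TV}(\mathcal{L}(W), \mathrm{Poi}(\lambda)) \le \frac{1-e^{-\lambda}}{\lambda} \sum_{i=1}^n p_i^2.
\]

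Finally, for the second inequality, I would verify the elementary estimate $(1-e^{-\lambda})/\lambda \le \min(1, 1/\lambda)$. The bound $(1-e^{-\lambda})/\lambda \le 1/\lambda$ is immediate from $1 - e^{-\lambda} \le 1$, and $(1-e^{-\lambda})/\lambda \le 1$ follows from the standard inequality $e^{-\lambda} \ge 1 - \lambda$. There is no real obstacle here; the only thing to be careful about is making sure the test-function class used to define $d_{TV}$ matches the indicators $I_{\{k \in A\}}$ for which Lemma \ref{t: B_Eagleson} supplies the smoothness estimate. Since the lemma is stated precisely for $g_{A,\lambda}$, the argument closes without further work.
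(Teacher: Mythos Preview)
Your proposal is correct and follows exactly the route indicated in the paper: the theorem is presented there as an immediate consequence of (\ref{univ_Stein_equation_E}), the bound (\ref{p: Deltag}), and Lemma \ref{t: B_Eagleson}(ii), which is precisely what you assemble. There is nothing to add.
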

\begin{cor}
$ \displaystyle d_{TV}(\mathrm{Bin}(n,p), \mathrm{Poi}(np)) \le np^2 \min\left( 1, \frac{1}{np}\right) = O(p).$
\end{cor}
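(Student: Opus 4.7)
The plan is to obtain this corollary as an immediate specialization of Theorem \ref{t: Barbour_Hall_1984} to the i.i.d.\ case. First I would observe that if $I_1,\ldots,I_n$ are i.i.d.\ Bernoulli random variables with success probability $p$, then $W = \sum_{i=1}^n I_i \sim \mathrm{Bin}(n,p)$ and $\lambda = \mathbb{E}W = np$. Substituting $p_i = p$ into the bound (\ref{t: bound_Barbour_Hall_1984}) yields
\[
d_{TV}(\mathrm{Bin}(n,p), \mathrm{Poi}(np)) \le \min\!\left(1, \frac{1}{np}\right) \sum_{i=1}^{n} p^2 = np^2 \min\!\left(1, \frac{1}{np}\right),
\]
which is the stated inequality.

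It then remains to verify the asymptotic claim $np^2\min(1, 1/(np)) = O(p)$. This is straightforward: distributing the minimum inside,
\[
np^2 \min\!\left(1, \frac{1}{np}\right) = \min(np^2, p) \le p,
\]
so the bound is indeed $O(p)$ uniformly in $n$.

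There is no real obstacle here, since all the work has already been done in establishing Theorem \ref{t: Barbour_Hall_1984}; the only thing to note is the qualitative point that, in contrast to Corollary \ref{t: Cor_Barbour_Hall} from Le Cam's result (which gives the weaker bound $np^2$), the Stein--Chen factor $\min(1, 1/\lambda) = \min(1, 1/(np))$ saves a factor of $np$ whenever $np \ge 1$, so that the error is always at most of order $p$ regardless of the size of $\lambda$.
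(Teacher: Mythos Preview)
Your proof is correct and follows exactly the intended approach: the paper states this corollary immediately after Theorem~\ref{t: Barbour_Hall_1984} without an explicit proof, since it is just the specialization $p_i \equiv p$, $\lambda = np$. Your verification of the $O(p)$ claim via $np^2\min(1,1/(np)) = \min(np^2,p) \le p$ is the right observation.
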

\subsection{
Dependent indicator variables -- The local approach}\label{s: Dependent_indicators}
Theorems \ref{t: LeCam} and \ref{t: Barbour_Hall_1984} generalise the problem of the approximation of a binomial distribution by a Poisson in the sense that the indicator variables 
need no longer be identically distributed. 
One of the strengths of the Stein-Chen method is that we can further relax the conditions put on the indicator variables by dropping the assumption of independence.
Indeed, independence is used only once, in Equation (\ref{p: use_of_independence}) and thus it is only (\ref{p: use_of_independence}) that needs to be modified 
in a way to allow for some kind of dependence. One of the ways to do this was 
suggested by \cite{Chen:1975b}. For each of the indicator variables $I_i$, $i=1, \ldots, n$, the idea is to classify the 
$n-1$ remaining indicator variables into two different categories, those ``strongly'' dependent on $I_i$ and those ``weakly'' dependent on $I_i$.
\begin{thm}\label{t: dTV_bounds_Poi_localapproach}  Let $I_1, \ldots, I_n$ be Bernoulli random variables with $P(I_i=1)=\mathbb{E}I_i=p_i$ and
$P(I_i=0) =1-p_i$, where $0 < p_i <1$, for all $i=1, \ldots, n$. Let $W = \sum_{i=1}^n I_i$ and $\lambda = \mathbb{E}W = \sum_{i=1}^n p_i$.
For any choice of index $i \in \{1, \ldots, n\}$, let
$\Gamma_i^s \subseteq \{1, \ldots, n\} \smallsetminus \{i\}$ be the set of indices comprising all  $j \neq i$ for which $I_j$ is strongly dependent on 
$I_i$, and let $\Gamma_i^w$ similarly be the set of indices $j$ for which $I_j$ is weakly dependent on $I_i$. Furthermore,
let 
\[
W_i= \sum_{j=1, j\neq i}^n I_j, \quad Z_i = \sum_{j \in \Gamma_i^s} I_j, \quad \textnormal{and } Y_i= \sum_{j \in \Gamma_i^w} I_j = W-I_i-Z_i = W_i-Z_i.
\]
Then
\begin{align*}
&d_{TV}(\mathcal{L}(W),\mathrm{Poi}(\lambda)) \\
&\le \sum_{i=1}^n \left[ \left( p_i^2 + p_i \mathbb{E}Z_i + \mathbb{E}(I_i Z_i) \right)\right] \min\left(1, \frac{1}{\lambda}\right)
+ \sum_{i=1}^n \eta_i \min\left(1, \sqrt{\frac{2}{e\lambda}}\right),
\end{align*} 
where $\eta_i$ is chosen such that
\begin{equation}\label{t: eta_local_approach}
\left| \mathbb{E}[I_i g(Y_i+1) -p_i \mathbb{E}g(Y_i+1)]\right| \le \eta_i ||g||. 
\end{equation}
For instance, $\eta_i$ may be chosen as follows:
\[
\eta_i = \mathbb{E} \left|\mathbb{E}\{I_i|(I_j, j \in \Gamma_i^w)\} - p_i\right|. 
\]
\end{thm}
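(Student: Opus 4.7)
The plan is to start from the Stein equation identity (\ref{univ_Stein_equation_E}) and decompose the left-hand side into per-index contributions by plugging in an intermediate ``pivot'' term $g(Y_i+1)$ that only depends on the weakly dependent neighbourhood. The key observation is that $I_i g(W) = I_i g(W_i+1)$, so
\[
\mathbb{E}[\lambda g(W+1) - Wg(W)] = \sum_{i=1}^n \bigl\{ p_i\mathbb{E}g(W+1) - \mathbb{E}[I_i g(W_i+1)] \bigr\}.
\]
For each $i$ I would insert the pivot $p_i \mathbb{E}g(Y_i+1)$ (and also $\mathbb{E}[I_i g(Y_i+1)]$) and regroup the summand as the three-piece sum
\begin{align*}
p_i\mathbb{E}g(W+1) - \mathbb{E}[I_i g(W_i+1)]
&= p_i\mathbb{E}[g(W+1)-g(Y_i+1)] \\
&\quad + \bigl\{ p_i\mathbb{E}g(Y_i+1) - \mathbb{E}[I_i g(Y_i+1)] \bigr\} \\
&\quad + \mathbb{E}\bigl[I_i\{g(Y_i+1)-g(W_i+1)\}\bigr].
\end{align*}

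Next I would bound each piece using the smoothness estimates from Lemma \ref{t: B_Eagleson}. Since $W+1 = Y_i + Z_i + I_i + 1$ and $W_i + 1 = Y_i + Z_i + 1$, monotone telescoping gives $|g(W+1)-g(Y_i+1)| \le (Z_i+I_i)\Delta g$ and $|g(Y_i+1)-g(W_i+1)| \le Z_i \Delta g$, so the first and third pieces are controlled respectively by $p_i(p_i+\mathbb{E}Z_i)\Delta g$ and $\mathbb{E}[I_i Z_i]\Delta g$. The middle piece is precisely the quantity defining $\eta_i$, hence bounded by $\eta_i ||g||$. Summing over $i$, taking the supremum over $A \subseteq \mathbb{Z}_+$, and using $\Delta g \le \min(1, 1/\lambda)$ and $||g|| \le \min(1, \sqrt{2/(e\lambda)})$ yields the stated bound.

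Finally, to justify the concrete choice $\eta_i = \mathbb{E}|\mathbb{E}(I_i \mid I_j, j \in \Gamma_i^w) - p_i|$, observe that $Y_i$ is measurable with respect to $\mathcal{F}_i := \sigma(I_j, j \in \Gamma_i^w)$, so conditioning gives
\[
\mathbb{E}[I_i g(Y_i+1)] - p_i\mathbb{E}g(Y_i+1) = \mathbb{E}\bigl[g(Y_i+1)\{\mathbb{E}(I_i\mid \mathcal{F}_i) - p_i\}\bigr],
\]
whose absolute value is at most $||g||\,\mathbb{E}|\mathbb{E}(I_i\mid\mathcal{F}_i)-p_i|$, verifying (\ref{t: eta_local_approach}).

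The main obstacle is not computational but conceptual: identifying the correct pivot. Inserting $g(Y_i+1)$ at exactly the right spots cleanly separates the ``strong dependence'' contribution (handled by $\Delta g$ times the sizes of $Z_i$ and $I_i Z_i$) from the ``weak dependence'' contribution (handled by $||g||$ times the conditional-expectation discrepancy $\eta_i$); without this two-sided pivoting, the argument from the independent case (\ref{p: use_of_independence}) has no analogue here, and the two smoothness constants cannot be used optimally.
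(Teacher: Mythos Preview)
Your proof is correct and follows essentially the same approach as the paper: the same three-term decomposition obtained by inserting the pivot $g(Y_i+1)$, the same telescoping bounds via $\Delta g$ on the first and third pieces, and the same use of $\|g\|$ on the middle piece. Your verification of the concrete choice of $\eta_i$ via conditioning on $\sigma(I_j, j \in \Gamma_i^w)$ is in fact slightly cleaner than the paper's write-up.
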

\begin{remark}
In the case that $Y_i$ is precisely independent of $I_i$ the second error term $\sum_{i=1}^n \eta_i $ $\min\left\{1, (2/e\lambda)^{1/2}\right\}$ disappears.
\end{remark}
\begin{proof} From (\ref{univ_Stein_equation_E}), we have that
\[
d_{TV}(\mathcal{L}(W), \mathrm{Poi}(\lambda)) = \sup_{A \subseteq \mathbb{Z}_+} \left| \mathbb{E}\left[ \lambda g(W+1) -Wg(W) \right] \right|,
\]
where $g=g_{\lambda,A}$. We replace Equation (\ref{p: use_of_independence}) by the following:
\begin{align*}
\mathbb{E}\left[I_ig(W) \right] &= \mathbb{E}\left[ I_i g(W_i +1)\right] \\
&= \mathbb{E}\left[ I_i g(Y_i +1)\right] + \mathbb{E}\left[ I_i(g(Z_i + Y_i+1) - g(Y_i+1))\right].
\end{align*}
Then,
\begin{align*}
\sum_{i=1}^n \mathbb{E}[p_ig(W+1) - I_i g(W)] 
&= \sum_{i=1}^n \left\{ \mathbb{E}[p_ig(I_i+Z_i+Y_i+1)] -\mathbb{E}[I_ig(Y_i+1)]\right. \\
& \left. \qquad \quad  - \mathbb{E}[I_i(g(Z_i + Y_i +1) - g(Y_i+1))]\right\}, 
\end{align*}
and therefore, by adding and subtracting $p_i\mathbb{E}[g(Y_i+1)]$, 
\begin{align}
\begin{split}\label{p: E_mess_local_approach}
&|\mathbb{E}[\lambda g(W+1) - Wg(W)]| \\
&\le \sum_{i=1}^n \left|p_i \mathbb{E}[g(I_i+Z_i+Y_i+1) - g(Y_i+1)]\right|
 + \left|p_iE[g(Y_i+1)] - \mathbb{E}[I_i g(Y_i+1)]\right|\\
&\qquad \quad + \left| \mathbb{E}\left[ I_i(g(Z_i + Y_i+1) - g(Y_i+1))\right]\right|.
\end{split}
\end{align}
With Lemma \ref{t: B_Eagleson} (ii), we have $\left|g(j+k) - g(j)\right| \le k\Delta g \le k \min\left(1, \lambda^{-1}\right)$.
We thus find for the first and third terms in (\ref{p: E_mess_local_approach}),
\begin{align*}
\left|p_i \mathbb{E}[g(I_i+Z_i+Y_i+1) - g(Y_i+1)]\right| &\le p_i(\mathbb{E}I_i + \mathbb{E}Z_i) \min\left(1, \lambda^{-1}\right),\\
\left| \mathbb{E}\left[ I_i(g(Z_i + Y_i+1) - g(Y_i+1))\right]\right|&\le \mathbb{E}[I_iZ_i] \min\left(1, \lambda^{-1}\right),
\end{align*}
respectively. For the second term in (\ref{p: E_mess_local_approach}) we choose $\eta_i$ such that (\ref{t: eta_local_approach}) is satisfied and use Lemma \ref{t: B_Eagleson} (i). 
Finally, we show that (\ref{t: eta_local_approach}) holds for the choice $\eta_i $ $= \mathbb{E} \left|\mathbb{E} \right.$ $\left.\{I_i|(I_j, j \in \Gamma_i^w)\} - p_i\right|$:
\begin{align*}
\left| \mathbb{E}\left[ g(Y_i+1)(p_i-I_i)\right]\right|
 &\le ||g|| \cdot\left| \mathbb{E}\left[p_i-I_i  \right]\right| \\
&= ||g|| \cdot \left| \mathbb{E} \left[ \mathbb{E} \left[ (p_i-I_i)| (I_j, j\in \Gamma_i^w)\right]\right]\right|
\\ & \le \eta_i||g||.
\end{align*}
\end{proof}

\subsection{The antisymmetric function approach}\label{s: antisymmetric_function}
Stein's method for normal and for Poisson approximation can be put into a broader framework. A general approach using an exchangeable pair of random variables 
and an antisymmetric function was first proposed by \cite{Stein:1986} and later also discussed by 
\cite{Chen:1998} and \cite{Erhardsson:2005}. 
We give a brief summary that closely follows \cite{Erhardsson:2005}. 
Let $(S, \mathcal{S}, \mu)$ be a probability space, denote by $\mathcal{H}$ the set of measurable functions 
$h:\,S \to \mathbb{R}$ and by $\mathcal{H}_0 \subset \mathcal{H}$ a set of $\mu$-integrable functions. The goal is to compute $\int_S h d\mu$ for 
all $h \in \mathcal{H}_0$, but the structure of $\mu$ might be too complicated to do this. It could, for instance, be the distribution of a sum of a large number of
dependent random variables. An idea to circumvent the problem of evaluating $\int_S h d\mu$ precisely is to instead replace $\mu$ by a probability measure 
$\mu_0$ that is close to $\mu$, with the advantage of being better known and easier to handle, classical examples for $\mu_0$ being the normal and the Poisson distributions. 
So the new probability measure $\mu_0$ should be chosen on $(S, \mathcal{S})$ such that the $\mu$-integrable functions $h \in \mathcal{H}_0$ are 
also $\mu_0$-integrable and $\int_S hd\mu_0$ is easily computed for any $h \in \mathcal{H}_0$.
It then remains to estimate (preferably uniformly over all $h \in \mathcal{H}_0$) the error of the approximation of $\int_S h d\mu$ by $\int_S h d\mu_0$. 
To that end, we have to find a set of functions $\mathcal{G}_0$ and a mapping 
$T_0:\, \mathcal{G}_0 \to \mathcal{H}$ such that for all $h \in \mathcal{H}_0$, the 
equation 
\[
T_0 g = h - \int_S hd\mu_0 ,
\]
that we call the \textit{Stein equation}, has a solution $g \in \mathcal{G}_0$. We call $T_0$ a \textit{Stein operator} for the distribution $\mu_0$. 
If the above equation holds, then
\[
\int_S (T_0 g ) d\mu = \int_S h d\mu - \int_S h d\mu_0.
\]
The hope is then that it is easier to estimate $|\int_S (T_0 g) d\mu|$ than the actual approximation error $|\int_S h d\mu - \int_S h d\mu_0|$. 
But how to find a suitable operator $T_0$? Note that by the above Stein equation a necessary property for the Stein operator is that
\[
\int_S (T_0g) d\mu_0 = \int_S h d\mu_0 - \int_S hd\mu_0 = 0.
\]
\cite{Stein:1986} proposed the following way to construct $T_0$:
\begin{enumerate}
\item[(a)] Let $(\Omega, \mathcal{F}, P)$ be a probability space with associated expectation operator $\mathbb{E}$. Let $(X,Y)$ be an exchangeable pair of 
mappings of $(\Omega, \mathcal{F},P)$ into the probability space $(S, \mathcal{S}, \mu_0)$ in the sense that $P(X \in A)=P(Y \in A)=\mu_0(A)$ for all 
$A \in \mathcal{S}$ and $P(X \in A, Y \in A') = P(X \in A', Y \in A)$ for all $A, A' \in \mathcal{S}$. 
\item[(b)] Choose a mapping $\alpha:\, \mathcal{G}_0 \to \mathcal{G}$, where $\mathcal{G}$ is the space of antisymmetric functions $G:\, S^2 \to \mathbb{R}$ 
(i.e.\ $G(s,s')=-G(s',s)$ for all $s,s' \in S$) such that
$\mathbb{E}|G(X,Y)| 
< \infty$.
\item[(c)] Take $T_0 = T \circ \alpha$, where the linear mapping $T:\,\mathcal{G} \to \mathcal{H}$ is defined by
$TG:= $ $\mathbb{E}^X G(X,Y)$, 
with $\mathbb{E}^X$ denoting conditional expectation given $X$. 
\end{enumerate}
Then
\[
\int_S (T_0g)d\mu_0 = \int_S (TG) d\mu_0 = \mathbb{E}\mathbb{E}^X G(X,Y)=\mathbb{E}G(X,Y),  
\]
for all $g\in \mathcal{G}_0$, where $G := \alpha g$. 
Moreover,
\[
\mathbb{E}G(X,Y) = \mathbb{E}G(Y,X) = \mathbb{E}[-G(X,Y)] = - \mathbb{E}G(X,Y), 
\]
where we use exchangeability for the first equality and the antisymmetry of $G$ for the second. It follows that
\begin{equation}\label{d: Stein_identity}
\mathbb{E}G(X,Y) = \int_S (T_0g)d\mu_0 =0, \quad \textnormal{for all } g \in \mathcal{G}_0, 
\end{equation}
and thus the necessary property for the Stein operator, called \textit{Stein identity} for the target distribution $\mu_0$, is satisfied by the above choice of $T_0$. 
The following subsection will go into further detail on how to apply this procedure for the example of approximation by a Poisson distribution. In general, there is 
unfortunately no guarantee that the procedure will yield a Stein operator $T_0$ giving sharp estimates for the approximation error. 
Additional considerations have to be made for each case.

\subsection{Immigration-death processes and the generator interpretation}\label{s: generator_interpretation}
\cite{Barbour:1988} discovered a way to relate the Stein equation by \cite{Chen:1975a} to the generator of a Markov process whose equilibrium distribution 
is $\mu_0=\mathrm{Poi}(\lambda)$. 
This section gives a brief outline, while details will be discussed in a more general setting in Section \ref{s: Generator_process} below.
The Markov process in question is a stationary immigration-death process $Z:=\{Z_t, \, t \in \mathbb{R}_+\}$ on $\mathbb{Z}_+$ 
with constant immigration rate $\lambda>0$ and unit per-capita death rate, where
$Z_t$ describes the number of particles in a population at time $t$. 
For this process, immigrations of particles into the population and deaths of particles already in the population occur independently of one another. 
Also, each of the particles in the population dies after an $\mathrm{Exp}(1)$ lifetime,
independently of the others. As illustrated in Figure \ref{f: Immigration_death_process}, when the population has size $k \in \mathbb{Z}_+$, 
i.e.\ when the process $Z$ is in state $k$, transitions can only be to one of the adjoining states $k+1$ (immigration of one particle with constant 
rate $\lambda$) or $k-1$ (death of one particle with rate $k$, the current population size). 
\begin{figure}[!ht]
\begin{center}{\footnotesize \input{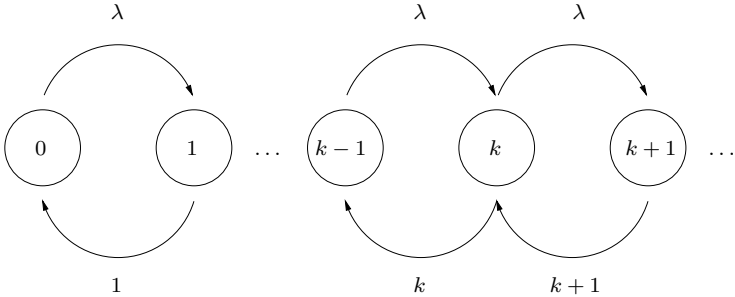}}\end{center}
\caption{Transitions of the immigration-death process can only be to one of the neighbouring states. Immigration of one particle occurs with constant rate $\lambda$, whereas 
the death rate depends on the current population size as each particle in the population has unit per-capita death rate.}
\label{f: Immigration_death_process}
\end{figure}
Now remember from (\ref{univ_Stein_equation}) that the Stein equation by \cite{Chen:1975a} is given by
\[
\lambda g(k+1) - kg(k) = I_{\{k \in A\}} - \mathrm{Poi}(\lambda)\{A\} \quad (=: f_A(k) =: f(k)),\quad \textnormal{for all }k \in \mathbb{Z}_+.
\] 
This is a first-order equation. \cite{Barbour:1988} noted that if the solution $g$ is written as the first backward difference 
$\nabla \upgamma$ of a function $\upgamma$, i.e.\ if $g(k)=\upgamma(k)-\upgamma(k-1)$ for all $k \in \mathbb{Z}_+$
(where $\upgamma(-1):=0$),
the left-hand side of the Stein equation may be written as 
\[
\lambda g(k+1) - kg(k) = \lambda \upgamma(k+1) + k\upgamma(k-1) - (\lambda + k)\upgamma(k) =: (\mathcal{A}\upgamma)(k), \,\, \textnormal{for all }k \in \mathbb{Z}_+. 
\]
Here, $\mathcal{A}$ is the infinitesimal generator of the process $Z$, and the Stein equation may now be reformulated by way of the second-order equation 
\begin{equation}\label{d: Stein_eq_generator}
\mathcal{A}\upgamma = f.
\end{equation}

In order to determine $\upgamma$, note that a solution $x$ to an equation of the form $\mathcal{A}x=f$ is typically given by $x(z)$ $=$  $-\int_0^\infty \mathbb{E}^zf(Z_t)dt$, for all bounded $f$ 
such that $\int_S fd\mu_0=0$, where $Z$ is an immigration-death process on $S$ with infinitesimal generator $\mathcal{A}$ and equilibrium distribution $\mu_0$, and where 
$\mathbb{E}^z$ denotes the distribution of the process conditioned on $Z_0 = z \in S$. In the case of the Stein equation (\ref{d: Stein_eq_generator}), 
where the equilibrium distribution of the immigration-death process $Z$ is $\mu_0 = \mathrm{Poi}(\lambda)$ and the function 
$f(j)= I_{\{j\in A\}} - \mathrm{Poi}(\lambda)\{A\}$ is obviously bounded for all $j \in S:=\mathbb{Z}_+$, we indeed have
\[
\int_S fd\mu_0 = \sum_{j \in \mathbb{Z}_+}\left[ I_{\{j\in A\}} - \mathrm{Poi}(\lambda)\{A\}\right] \mathrm{Poi}(\lambda)\{j\} = \mathrm{Poi}(\lambda)\{A\} - \mathrm{Poi}(\lambda)\{A\}=0, 
\]
and the solution $\upgamma$ of the Stein equation is thus given by
\begin{align*}
\gamma(k) &= -\int_0^\infty \mathbb{E}^k f(Z_t)dt \\
&= -\int_0^\infty \sum_{j \in \mathbb{Z}_+} \left[ I_{\{j \in A\}} - \mathrm{Poi}(\lambda)\{A\}\right]P\left(Z_t = j| Z_0 = k\right) dt\\
&= -\int_0^\infty \left[ P(Z_t\in A |Z_0=k) - \mathrm{Poi}(\lambda)\{A\}\right]dt,
\end{align*}
for all $k \in \mathbb{Z}_+$ (see also, for instance, Theorem 2.4 in \cite{Erhardsson:2005}). 

One of the advantages of the above approach is that it provides a probabilistic interpretation of the solution of the Stein equation, 
thus enabling the use of probabilistic arguments to determine smoothness estimates of the 
solution. 
Another advantage is that it
is applicable to a wide range of approximation problems; most importantly for us, to the problem of
approximating a point process by a Poisson process with the same mean measure.
We refer to Section \ref{s: Generator_process} for more details. 

We now delineate the connection between the antisymmetric function approach from Section \ref{s: antisymmetric_function}, applied to Poisson approximation, 
and the above generator interpretation.  
Let $(S, \mathcal{S}, \mu) = (\mathbb{Z}_+, \mathcal{P}(\mathbb{Z}_+), \mu)$, 
where $\mathcal{P}(\mathbb{Z}_+)$ is the power $\sigma$-algebra of $\mathbb{Z}_+$.
The aim is to approximate 
$\mu(A)=\int hd\mu$ by
$\mathrm{Poi}(\lambda)\{A\}$ $=$ $\int h d\mu_0$, where $h=I_A$, for any $A \in \mathcal{P}(\mathbb{Z}_+)$.
Since the immigration-death process $Z$ that we introduced above is reversible and has stationary distribution $\mu_0$, $(Z_0,Z_t)$ is an exchangeable pair 
with marginal distribution $\mu_0$.
Let $\mathcal{G}_0= \mathcal{H}$ be the set of real-valued functions on $\mathbb{Z}_+$ and let $\mathcal{G}$ be the set of antisymmetric functions
$\mathbb{Z}_+^2 \to \mathbb{R}$. Define $\alpha: \, \mathcal{H} \to \mathcal{G}$ by $(\alpha \upgamma)(k,l) = \upgamma(l)-\upgamma(k)$ 
for functions $\upgamma$ that do not grow too fast and note that
$\alpha$ is antisymmetric, since $\alpha \upgamma(k,l) = -\alpha \upgamma(l,k)$.
For all $t \ge 0$, take $T_0^t = T_t \circ \alpha$, where
we define $T_t:\, \mathcal{G} \to \mathcal{H}$ by
$ T_tG := \mathbb{E}^{Z_0}G(Z_0,Z_t)$. 
With $G := \alpha \upgamma$, we then have 
\[
\mathbb{E}[T_0^t\upgamma] = \mathbb{E}[T_tG] = \mathbb{E}G(Z_0,Z_t) = \mathbb{E}[\upgamma(Z_t) - \upgamma(Z_0)], 
\]
and, following the arguments from Section \ref{s: antisymmetric_function}, we obtain $\mathbb{E}[T_0^t \upgamma] = 0$ for all $t \ge 0$. 
In order to see the connection to the generator $\mathcal{A}$ of the immigration-death process $Z$, note that for all $k \in \mathbb{Z}_+$,
\begin{align*}
(T_0^t \upgamma)(k)
&=\mathbb{E}[\alpha \upgamma (Z_0,Z_t)|Z_0=k]
=  \mathbb{E}[\upgamma(Z_t)|Z_0=k]-\upgamma(k)\\
&=  \sum_{l =k-1}^{k+1}\upgamma(l)P(Z_t=l|Z_0=k)-\upgamma(k).
\end{align*}
As it can easily be shown that $P(Z_t=k+1|Z_0=k) = \lambda t + o(t)$, $P(Z_t = k-1 | Z_0=k) = kt + o(t)$, and 
$P(Z_t = k | Z_0 = k) = 1 - (\lambda + k)t + o(t)$, we have that 
\[
\lim_{t \downarrow 0 } \frac{1}{t}\,(T_0^t \upgamma)(k) =  \lambda \upgamma(k+1) + k\upgamma(k-1) -(\lambda + k)\upgamma(k) =  (\mathcal{A}\upgamma)(k).
\]
Informally (supposing that the limit and the expectation may be interchanged), we then observe the following connection between the Stein operators $T_0^t$ and
the generator $\mathcal{A}$:
\[
\lim_{t \downarrow 0} \frac{1}{t}\, \mathbb{E}[T_0^t \upgamma] = 0 = \mathbb{E}[(\mathcal{A}\upgamma)(Z_0)].  
\]
\section{Poisson process approximation in the total variation distance}\label{Sec: Poi_proc_approx}
So far, we have been concerned with determining the error in total variation of the approximation of the law of a random variable $W = \sum_{i=1}^n I_i$
by a Poisson distribution with the same mean, the $I_i$'s being possibly non-identically distributed and/or dependent indicator variables.
We may generalise $W$ by instead considering point processes. Loosely speaking, these not only give a random number of ``points'' but 
also the random configuration of such points in space. As $\mathcal{L}(W)$ may be approximated by $\mathrm{Poi}(\mathbb{E}W)$, 
we will show that the law $\mathcal{L}(\Xi)$ may similarly be approximated by the law of a Poisson point process with mean measure 
$\mathbb{E}\Xi$. \cite{Barbour/Brown:1992} and \cite{Barbour_et_al.:1992} extended the generator approach from the previous section to this problem. 
We formally define point processes in Section \ref{s: Point_process} and introduce the particular example of Poisson processes in Section \ref{s: Poisson_process}.
Section \ref{s: Palm_Janossy} gives short introductions to some tools from point process theory, namely Palm processes and Janossy densities, that we will need in Section 
\ref{s: Generator_process} to study approximation errors in the total variation distance using the generator approach. Finally, Section \ref{s: Michel_MPP} gives an easier way to determine 
approximation errors for the special case of marked point processes.  

Throughout, let $E$ be a locally compact separable metric space. 
In later applications, that is, starting from Chapter \ref{Chap: Univariate_extremes}, we simply use $E\subseteq \mathbb{R}^d$, $d \ge 1$. 
Let $E$ be equipped with its Borel $\sigma$-algebra $\mathcal{E}:= \mathcal{B}(E)$, i.e.\ the $\sigma$-algebra generated by the open sets.
The \textit{Dirac measure} $\delta_z$ on $\mathcal{E}$ for a point $z \in E$ is given by
\[
\delta_z(B) = \left\{ \begin{array}{ll}
1  \quad \textnormal{if } z \in B,\\
0  \quad \textnormal{if } z \notin B,
\end{array}\right.
\]
for any  $B \in \mathcal{E}$. For a countable collection $\{z_i\}$, $i\ge 1$, of not necessarily distinct points in $E$, consider the counting measure 
$\xi:= \sum_{i=1}^\infty \delta_{z_i}$ on $\mathcal{E}$, which assigns values in $\{0, 1, \dots\} \cup \{\infty\}$ to the sets that it measures. 
Suppose that $\xi$ is a Radon measure, i.e.\ suppose that $\xi(K) < \infty$ 
for compact sets $K \in \mathcal{E}$. Integer-valued Radon measures such as $\xi$ are called \textit{point measures}, and sometimes also \textit{point configurations}
on $E$. Denote by $\overline{M}_p(E)$ the space of all point 
measures on $E$ and equip $\overline{M}_p(E)$ with the $\sigma$-algebra $\overline{\mathcal{M}}_p(E)$ that is the smallest $\sigma$-algebra containing all sets of the form 
$\{ \xi \in \overline{M}_p(E): \, \xi(B) \in M\}$ for any $B \in \mathcal{E}$ and for any Borel set $M \subset [0, \infty]$. In other words, $\overline{\mathcal{M}}_p(E)$ is 
the smallest $\sigma$-algebra making the evaluation maps $\xi \to \xi(B)$ from $\overline{M}_p(E)$ to $[0, \infty]$ measurable for any set $B \in \mathcal{E}$. 
Furthermore, denote by $M_p(E) \subset \overline{M}_p(E)$ the space of all \textit{finite} point measures, i.e. of all point measures that assign values 
in $\{0,1,\ldots\}$ to the sets that they measure. Equip $M_p(E)$ with the $\sigma$-algebra $\mathcal{M}_p(E)$ that is the 
smallest $\sigma$-algebra making the evaluation maps $ \xi \to \xi(B)$ from $M_p(E)$ to $[0,\infty)$ measurable for any set $B \in \mathcal{E}$.  

\subsection{Point process}\label{s: Point_process}

Let $(\Omega, \mathcal{F}, P)$ be a probability space. A \textit{point process} $\Xi$ on $E$ is a measurable map from a probability space to the space of 
point measures, 
\begin{align*}
\Xi: \, (\Omega, \mathcal{F}, P) &\to (\overline{M}_p(E), \overline{\mathcal{M}}_p(E))\\
\omega &\mapsto \Xi(\omega) = \xi.
\end{align*}
A point process is thus a random element of $\overline{M}_p(E)$, i.e., for fixed $\omega \in \Omega$, a realisation 
$\Xi(\omega) := \Xi(\omega,\,. \,)$ is a point measure $\xi(\,.\,) \in \overline{M}_p(E)$. 
For fixed $B \in \mathcal{E}$, $\Xi(B):= \Xi(\,.\,,B)$ is a random variable taking values in  $\{0, 1, \dots\} \cup \{\infty\}$.
The space $E$ that the point process lives on is called \textit{state space}.

\begin{exa}
There are numerous ways to represent a point process by way of the Dirac measure and random variables. Suppose, for instance, that $\Gamma$ is a finite or
countable index set, and that $I_\alpha$, $\alpha \in \Gamma$, are \iid Bernoulli random variables with probability of success $P(I_\alpha=1)=p_\alpha \in (0,1)$. Also, let
$X_\alpha$, $\alpha \in \Gamma$, be \iid $E$-valued random variables, defined on the same probability space as the $I_\alpha$'s, but independent of these. We give some examples
of point processes:
\begin{enumerate}
\item[(a)] $\sum_{\alpha \in \Gamma } I_\alpha \delta_\alpha$ is a point process with state space $\Gamma$. 
\item[(b)] For any integer $n \ge 1$ and for $\Gamma = \{1, \ldots, n\}$, $\sum_{i=1}^n I_i \delta_{i/n}$ is a point process with state space $[0,1]$.
\item[(c)] $\sum_{\alpha \in \Gamma} I_\alpha \delta_{X_\alpha}$ is a point process with state space $E$. It is called \textit{marked point process} and the $X_\alpha$'s are 
called \textit{marks}. 
\item[(d)] With $P(I_\alpha=1)=1$ in (c), for all $\alpha \in \Gamma$, the marked point process is $\Xi:=\sum_{\alpha \in \Gamma} \delta_{X_\alpha}$.
Here, $\Xi(\omega) =  \sum_{\alpha \in \Gamma} \delta_{x_\alpha}$ 
gives the configuration of the points $x_\alpha = X_\alpha(\omega)$ in $E$, whereas 
$\Xi(B)$ gives the random number of points in the subset $B$ of $E$. $\Xi(\omega, B)$ gives the number of 
points $x_\alpha$ lying in the set $B$; see Figure \ref{f: Point_process} for an illustration.
\begin{figure}[!ht]
\begin{center}{\footnotesize \input{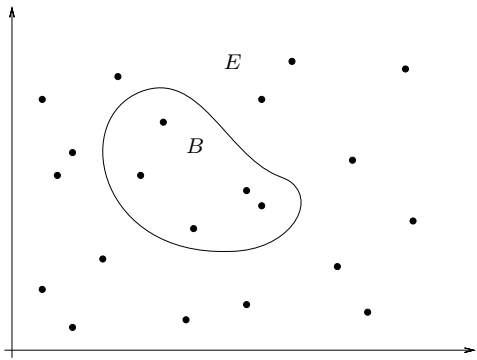}}\end{center}
\caption{A configuration of points $x_\alpha = X_\alpha(\omega)$ in $E=\mathbb{R}^2_+$. The number of points falling into the set $B$ is given by $\Xi(\omega,B)$.}
\label{f: Point_process}
\end{figure}
\end{enumerate}
\end{exa}

The probability law $P_\Xi$ of the point process $\Xi$ is the measure $P \circ \Xi^{-1}(\,.\,) = P(\Xi \in \,.\,)$ on $\overline{\mathcal{M}}_p(E)$.
It is uniquely determined by the set of \textit{finite-dimensional distributions}, i.e. the distributions of random vectors $(\Xi(B_1), \ldots, \Xi(B_m))$ for any choice of
$m \ge 1$ and $B_1, \ldots, B_m \in \mathcal{E}$. The \textit{intensity measure} or \textit{mean measure} of $\Xi$ is the measure $\bl$ on $\mathcal{E}$ defined,
for any $B \in \mathcal{E}$, by 
\[
\bl(B) = \mathbb{E}\Xi(B) = \int_{\Omega} \Xi(\omega,B) P(d\omega) = \int_{\overline{M}_p(E)} \xi(B)P_{\Xi}(d\xi).
\]
(Note that $\bl$ need not be Radon.) A point process $\Xi$ is called \textit{simple} if $\Xi(\omega, \{z\}) \le 1$ for all $z \in E$ and for all $\omega \in \Omega$, 
i.e. if an element $z \in E$ can be hit by at most one point of the process. We call the point process $\Xi$ \textit{finite}, or more precisely, 
\textit{almost surely finite}, if $P(|\Xi| < \infty) = 1$, where $|\Xi|$ denotes the random total number of points of $\Xi$.
\subsection{Poisson process}\label{s: Poisson_process}
Let $\bl$ be a $\sigma$-finite measure on $(E,\mathcal{E})$. By this we mean that $E$ may be written as a countable union of sets, each of which has a finite measure.
It also implies that $\bl$ is locally finite, i.e. every point $z \in E$ has a neighbourhood with finite measure.
An example of a $\sigma$-finite measure is the Lebesgue measure on Euclidean spaces.

A point process $\Xi$ on $E$ is called a \textit{Poisson process} or \textit{Poisson random measure (PRM)}
with mean measure $\bl$ if $\Xi$ satisfies:
\begin{enumerate}
\item[(a)] For any $B \in \mathcal{E}$, we have that $\Xi(B) \sim \mathrm{Poi}(\bl(B))$, i.e.
\[
P(\Xi(B) = k) = \left\{ \begin{array}{ll} \frac{(\bl(B))^k}{k!} \,e^{-\bl(B)}, & \bl(B)  < \infty, \\
0, & \bl(B) = \infty,
\end{array}\right. \textnormal{ for any } k\in \mathbb{Z}_+.
\]
\item[(b)] For any $m \ge 1$, if $B_1,\ldots, B_m$ are mutually disjoint sets in $\mathcal{E}$, then
$\Xi(B_1),  \ldots, $ $\Xi(B_m)$ are independent random variables.
\end{enumerate}
It follows from (a) that $\bl(B) = \infty$ implies $P(\Xi(B) = \infty)=1$.  By Proposition 3.6(i) in \cite{Resnick:1987} we know that, given a $\sigma$-finite measure $\bl$, a Poisson process 
with mean measure $\bl$ exists and its law is uniquely determined by (a) and (b).
We denote the law of a Poisson process $\Xi$ with mean measure $\bl$ by $\mathrm{PRM}(\bl)$,
i.e. $\Xi \sim \mathrm{PRM}(\bl)$. 
\begin{exa}
Suppose that $E \subseteq \mathbb{R}^d$, $d \ge 1$.
\begin{enumerate} 
\item[(a)] Let $\bl = \lambda |\,.\,|$, where $\lambda >0$ and $|\,.\,|$ denotes Lebesgue measure on $E$. Then
$\Xi \sim \mathrm{PRM}(\lambda |\,.\,|)$ is called a \textit{homogeneous Poisson process with intensity} $\lambda$. 
\item[(b)] Alternatively,
suppose that the mean measure $\bl$ of a Poisson process $\Xi$ is absolutely continuous with respect to Lebesgue measure, i.e. that there exists a non-negative function $\lambda$
such that for any $B \in \mathcal{E}$,
\[
\bl(B) = \int_B \lambda(x)dx.
\]
Then $\Xi$ is called \textit{inhomogeneous Poisson process} with \textit{rate} or \textit{intensity function} $\lambda(\,.\,)$.
\end{enumerate} 
\end{exa}
\subsection{Palm processes and Janossy densities}\label{s: Palm_Janossy}
Suppose $\Xi$ is a point process on $E$ with $\sigma$-finite mean measure $\bl$.
For any $z \in E$, a point process $\Xi_z$ is called \textit{Palm process
associated with $\Xi$ at $z$} if, for any measurable function $f: E \times \overline{M}_p(E) \to \mathbb{R}_+$,
\begin{equation}\label{d: Palm_equality}
\mathbb{E} \left[ \int_E f(z, \Xi) \Xi(dz)\right] = \mathbb{E} \left[ \int_E f(z,\Xi_z) \bl(dz)\right]. 
\end{equation}
We may define probability measures $\{P_z, z \in E\}$, called \textit{Palm distributions}, by setting
\[
P_z(R) := P\left( \Xi_z \in R \right) := \frac{\mathbb{E}\left[ I_{\{\Xi \in R\}} \Xi(dz)\right]}{\bl(dz)}, 
\]
for all $R \in \overline{\mathcal{M}}_p(E)$. A point process $\Xi_z$ on $E$ is then called a Palm process associated with $\Xi$ at $z$
if it has the Palm distribution $P_z$ of $\Xi$ at $z$. Palm processes can be used to give a characterisation of Poisson processes: 
\begin{equation}\label{d: Char_PP_Palm}
\textnormal{A process } \Xi \textnormal{ is a Poisson process if and only if }
\mathcal{L}(\Xi_z) = \mathcal{L}(\Xi + \delta_z) \textnormal{ $\bl$-a.s.}
\end{equation}
For more details on Palm theory, see Chapter 10 in \cite{Kallenberg:1983} or Chapter 13 in \cite{Daley/VereJones:2008}.

Another important tool in point process theory is given by the so-called Janossy measures. These are used to express the probability of a point process having a certain number
of points and these points being located in a certain region. Suppose that $\Xi$ is a finite point process on $E$. Then there exist measures $\{J_m\}$, $m \ge 0$,
called \textit{Janossy measures}, such that for measurable functions $f: {M}_p(E) \to \mathbb{R}_+$,
\[
\mathbb{E}f(\Xi) = \sum_{m \ge 0} \int_{E^m}  f\left(\sum_{i=1}^m \delta_{z_i}\right) \frac{1}{m! }\, J_m(dz_1, \ldots, dz_m), 
\]
where $J_m(dz_1, \ldots, dz_m)/m!$ describes the probability of the process having $m$ points lying close to $z_1, \ldots, z_m$. Suppose there exists a fixed $\sigma$-finite measure $\boldsymbol{\nu}$ on $E$. For instance, let $\boldsymbol{\nu}$ be the counting measure in case $E$ is a finite set, 
or let it be Lebesgue measure for $E$ a compact subset of Euclidean space. Suppose furthermore that for each $m \ge 0$, $J_m$ is absolutely continuous with respect
to $\boldsymbol{\nu}$. The Radon-Nikodym theorem then ensures the existence of derivatives $j_m: E^m \to [0, \infty)$ of $J_m$ with respect to $\boldsymbol{\nu}^m$, so that
\[
\mathbb{E}f(\Xi) =  \sum_{m \ge 0} \int_{E^m}  f\left(\sum_{i=1}^m \delta_{z_i}\right) \frac{1}{m! }\, j_m(z_1, \ldots, z_m) \boldsymbol{\nu}^m(dz_1, \ldots, dz_m).
\]
The derivatives $\{j_m\}$, $m \ge 0$, are called \textit{Janossy densities}. In the above expression for $\mathbb{E}f(\Xi)$, the term with $m=0$ is
interpreted as $j_0 f(\emptyset)$. By Lemma 5.4.III in 
\cite{Daley/VereJones:2003}, 
the density $\mu$ of the first moment measure of $\Xi$, i.e. of the intensity measure
$\bl$ of $\Xi$, may then be expressed in terms of the Janossy densities:
\begin{equation}\label{d: density_mean_measure}
\mu(z)=  \sum_{m \ge 0} \int_{E^m} \frac{1}{m!} \, j_{m+1}(z, z_1, \ldots, z_m)\boldsymbol{\nu}^m(dz_1, \ldots, dz_m),
\end{equation}
where the term with $m=0$ is interpreted as $j_1(z)$. We then have $\bl(dz)= \mu(z)\boldsymbol{\nu}(dz)$. 
Janossy densities may furthermore be used to express the conditional probability density of a point 
being located at $z$ given the configuration $\Xi^z$ of $\Xi$ outside a neighbourhood $N_z$ of $z$. More precisely, suppose that the point process 
$\Xi$ is simple and that for each $z \in E$,
$N_z \in \mathcal{E}$ is a neighbourhood of $z$, with $z \in N_z$, such that the following two mappings
are product measurable:
\begin{align}\label{d: neighbourhood_structure}
\begin{split}
& \mathcal{M}_p(E) \times E \to [0, \infty): (\xi, z) \mapsto \xi(N_z), \phantom{blaaaaaaaaaaaaaaaaaaaaaaaaaaaaaaa}\\
& \mathcal{M}_p(E) \times E \to \mathcal{M}_p(E): (\xi, z) \mapsto \xi \textnormal{ restricted to } N_z^c. 
\end{split}
\end{align}
For any $z \in E$ and for some fixed integer $m$, fix $m$ points $x_1, \ldots, x_m \in N_z^c$ and let $\boldsymbol{x}=(x_1, \ldots, x_m)$.
Define
\begin{equation}\label{d: conditional_density_Janossy}
g(z, \boldsymbol{x}) = \frac{\sum_{k \ge 0} \int_{N_z^k} j_{m+k+1}(z, \boldsymbol{x},\boldsymbol{y}) (k!)^{-1} \boldsymbol{\nu}^k(d\boldsymbol{y})}
{\sum_{l \ge 0}\int_{N_z^l} j_{m+l}(\boldsymbol{x},\boldsymbol{w}) (l!)^{-1} \boldsymbol{\nu}^l(d\boldsymbol{w})}.
\end{equation}
$g(z, \boldsymbol{x})$ is the conditional density of a point at $z$ given that $\Xi^z$ is $\sum_{i=1}^m \delta_{x_i}$ 
(the term with $k=0$ is interpreted as $j_{m+1}(z, \boldsymbol{x})$ and the term with $l=0$ similarly; moreover, if the denominator is zero, we interpret
$g(z, \boldsymbol{x})$ as zero.)
 \cite{Barbour/Brown:1992} (see (2.7) on p. 16) show
that, for a bounded measurable function $f: M_p(E) \to \mathbb{R}_+$, we then have
\begin{equation}\label{d: equality_cond_dens_Janossy}
\mathbb{E} \left[ \int_E f(\Xi^z) \Xi(dz)\right] = \mathbb{E}\left[ \int_E f(\Xi^z) g(z, \Xi^z) \boldsymbol{\nu}(dz)\right]. 
\end{equation}
Finally, like Palm processes, Janossy densities may also be used to give a characterisation of Poisson processes (see Theorem 2.11 in \cite{Xia:2005}):
\begin{thm}
A point process $\Xi$ on $E$ with mean measure $\bl$ and $\lambda = \bl(E) < \infty$ 
is a Poisson process if and only if, with respect to $\bl$, its Janossy densities $j_m$
are constant and equal to $e^{-\lambda}$, for all $m \in \mathbb{Z}_+$.
\end{thm}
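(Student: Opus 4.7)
The plan is to establish both directions by directly computing expectations via the Janossy representation
\[
\mathbb{E}f(\Xi) = \sum_{m \ge 0} \frac{1}{m!} \int_{E^m} f\!\left(\sum_{i=1}^m \delta_{z_i}\right) j_m(z_1,\ldots,z_m)\,\bl^m(dz_1,\ldots,dz_m)
\]
and matching the result with the known structure of a Poisson process.

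For the forward implication, I would start from the defining property of a Poisson process with finite mean measure $\bl$. Since $\Xi(E)\sim\mathrm{Poi}(\lambda)$ and, conditional on $\{|\Xi|=m\}$, the $m$ points of $\Xi$ are i.i.d.\ with common law $\bl/\lambda$ (a standard consequence of the independence of counts over disjoint sets, which can be proved by computing the joint law of $(\Xi(B_1),\ldots,\Xi(B_n))$ conditional on $\Xi(E)=m$), one gets
\[
\mathbb{E}f(\Xi) = \sum_{m\ge 0} \frac{\lambda^m e^{-\lambda}}{m!}\int_{E^m} f\!\left(\sum_{i=1}^m \delta_{z_i}\right)\left(\frac{\bl}{\lambda}\right)^{\!m}\!(dz_1,\ldots,dz_m),
\]
and the $\lambda^m$ factors cancel, yielding $j_m\equiv e^{-\lambda}$ with respect to $\bl^m$ for every $m\in\mathbb{Z}_+$.

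For the converse, I would use the joint probability generating function argument. Fix disjoint $B_1,\ldots,B_n\in\mathcal{E}$, set $B_0:=E\setminus\bigcup_{j=1}^n B_j$, choose $s_1,\ldots,s_n\in[0,1]$, and apply the Janossy representation to
\[
f(\xi) = \prod_{j=1}^n s_j^{\xi(B_j)}, \qquad f\!\left(\sum_{i=1}^m \delta_{z_i}\right) = \prod_{i=1}^m h(z_i),
\]
where $h(z)=s_j$ on $B_j$ and $h(z)=1$ on $B_0$. Plugging in $j_m\equiv e^{-\lambda}$ gives
\[
\mathbb{E}f(\Xi) = e^{-\lambda}\sum_{m\ge 0}\frac{1}{m!}\left(\int_E h\,d\bl\right)^{\!m} = e^{-\lambda}\exp\!\left(\bl(B_0)+\sum_{j=1}^n s_j\bl(B_j)\right) = \prod_{j=1}^n e^{\bl(B_j)(s_j-1)},
\]
using $\bl(B_0)+\sum_j\bl(B_j)=\lambda$. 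This is precisely the joint p.g.f.\ of independent $\mathrm{Poi}(\bl(B_j))$ random variables, so $(\Xi(B_1),\ldots,\Xi(B_n))$ have the Poisson finite-dimensional distributions required; since these determine the law of $\Xi$, one concludes $\Xi\sim\mathrm{PRM}(\bl)$.

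The main obstacle is cosmetic rather than conceptual: one must be careful that the $m=0$ term contributes the factor $e^{-\lambda}$ (interpreting the empty product and the empty integral as $1$), and that the interchange of sum and integral is legitimate, which is immediate here because $|s_j|\le 1$ makes every integrand nonnegative and bounded by $1$, so Tonelli applies. Once this bookkeeping is in order, both directions reduce to the same clean computation.
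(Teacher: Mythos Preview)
The paper does not prove this theorem; it merely states it with a reference to Theorem~2.11 in Xia~(2005), so there is no in-text argument to compare against. Your proof is correct and entirely standard: the forward direction follows from the conditional i.i.d.\ structure of a finite Poisson process given its total count, and the converse is handled cleanly via the joint probability generating function, which pins down the finite-dimensional distributions over disjoint sets and hence the law of $\Xi$.
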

\noindent For more details on Janossy measures and densities, consult Sections 5.3 and 5.4 in 
\cite{Daley/VereJones:2003}.

\subsection{Approximation of point processes -- the generator interpretation}\label{s: Generator_process}
Let $\Xi$ be a finite point process on $E$ with finite intensity measure $\bl$, where $\lambda := \bl(E) < \infty$. 
Let $Z:= \{Z_t, t \in \mathbb{R}_+\}$ be an immigration-death process on $E$ with immigration intensity $\bl$ and unit per-capita death rate.
This process is called \textit{spatial immigration-death process} by \cite{Preston:1975} and \cite{Xia:2005}. 
$Z_t$ takes values in $M_p(E)$ and describes the point configuration of particles of a population on $E$ at time $t$. 
Given that the process takes a configuration $\xi \in M_p(E)$, the process stays in state $\xi$ for an $\mathrm{Exp}(1/(|\xi| + \lambda))$-distributed
period of time. Then, with probability $\lambda/(|\xi| + \lambda)$, a new particle immigrates to the population and puts itself on $z \in E$, which is chosen
from the distribution $\bl/\lambda$, independently of the existing configuration. The new configuration is then $\xi + \delta_z$. Or, with probability 
$|\xi|/(|\xi| + \lambda)$, one particle from the population dies, that is, a point, say, $\delta_w$, is chosen uniformly at random from the existing configuration
and is erased. The new configuration is then $\xi - \delta_w$. 
See Figure \ref{f: Process_Imm_death} for an illustration.
\begin{figure}[!ht]
\begin{center}{\footnotesize \input{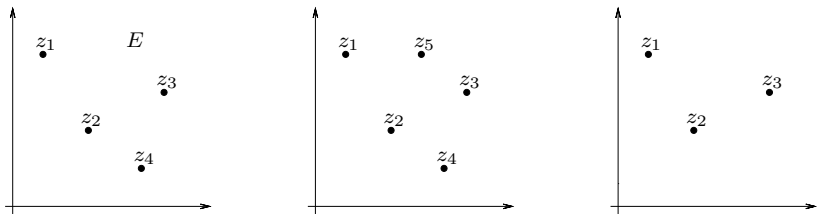}}\end{center}
\caption{(Left) A configuration $\xi$ of four points $z_1, z_2, z_3, z_4$ in $E = \mathbb{R}^2_+$. (Middle) An additional point $z_5$ is added to $\xi$ so that the 
new configuration is $\xi + \delta_{z_5}$. (Right) Point $z_4$ disappears from $\xi$. The new configuration is $\xi-\delta_{z_4}$.}
\label{f: Process_Imm_death}
\end{figure}

\noindent The generator of $Z$ is given by
\begin{align}\label{d: generator}
&(\mathcal{A}\upgamma)(\xi) \nonumber \\
&=  \lim_{t \downarrow 0} \frac{1}{t}\left\{ \mathbb{E}\left[ \upgamma(Z_t) | Z_0 = \xi\right] - \upgamma(\xi)\right\} \nonumber \\
&= \lim_{t \downarrow 0}\frac{1}{t} \left\{\int_E \sum_{\zeta \in \{\xi + \delta_z, \xi-\delta_z, \xi\}} \upgamma(\zeta) 
P(Z_t = \zeta|Z_0 = \xi)dz - \upgamma(\xi) \right\} \nonumber \\
&= \lim_{t \downarrow 0}\frac{1}{t} \left\{ 
\int_E \upgamma(\xi + \delta_z)t\bl(dz) + \int_E \upgamma(\xi - \delta_z)t\xi(dz)  \right. \nonumber\\
& \phantom{\lim_{t \to 0} \frac{1}{t} blaaaaaaaaaaaaaa}\left.+ \upgamma(\xi) \left[1 - \int_E t(\bl+\xi)(dz)\right] - \upgamma(\xi)\right\}\nonumber\\
& = \int_E [\upgamma(\xi + \delta_z) - \upgamma(\xi)]\bl(dz) + \int_E [\upgamma(\xi - \delta_z) - \upgamma(\xi)]\xi(dz),
\end{align}
for all $\xi \in M_p(E)$  and for $\upgamma$ a suitable function $M_p(E) \to \mathbb{R}$; see also (3.6) in \cite{Xia:2005}.
Let $\Xi_{\bl}$ be a Poisson process with intensity measure $\bl$, i.e. $\Xi_{\bl} \sim \mathrm{PRM}(\bl)$. 
By (\ref{d: generator}),
\begin{align*}
&\mathbb{E}\left[(\mathcal{A}\upgamma)(\Xi_{\bl})\right] \\
&= \mathbb{E} \left\{ \int_E \left[ \upgamma(\Xi_{\bl} + \delta_z) - \upgamma(\Xi_{\bl})\right] \bl(dz)
+ \int_E \left[ \upgamma(\Xi_{\bl} - \delta_z) - \upgamma(\Xi_{\bl})\right]\Xi_{\bl} (dz) \right\}.  
\end{align*}
For any $z \in E$, let $\Xi_{\bl,z}$ be the Palm process associated with $\Xi_{\bl}$ at $z$. Then, by (\ref{d: Palm_equality}) and (\ref{d: Char_PP_Palm}),
\begin{align*}
\mathbb{E}\int_E [\upgamma(\Xi_{\bl} - \delta_z) - \upgamma(\Xi_{\bl})] \Xi_{\bl}(dz) 
&= \mathbb{E}\int_E [\upgamma(\Xi_{\bl,z} - \delta_z) - \upgamma(\Xi_{\bl,z})] \bl(dz) \\
&= \mathbb{E} \int_E [\upgamma(\Xi_{\bl}) - \upgamma(\Xi_{\bl} + \delta_z)]\bl(dz),
\end{align*}
and it follows that 
\begin{align}
\begin{split}\label{p: Stein_id_generator_proc}
\mathbb{E}\left[(\mathcal{A}\upgamma)(\Xi_{\bl})\right] 
= &\,\,\mathbb{E} \int_E [\upgamma(\Xi_{\bl} + \delta_z) - \upgamma(\Xi_{\bl})]\bl(dz)\\ 
&+ \mathbb{E} \int_E [\upgamma(\Xi_{\bl}) - \upgamma(\Xi_{\bl} + \delta_z)] \bl(dz)
= 0.
\end{split}
\end{align}
Together with Theorem 7.1 in \cite{Preston:1975}, this implies that $\mathrm{PRM}(\bl)$ is the unique equilibrium distribution of $Z$ 
(see also Proposition 3.4 in \cite{Xia:2005}). 
We next fix some notation: 
\begin{itemize}
\item 
$ \displaystyle \mathrm{PRM}(\bl)\{R\} := P_{\Xi_{\bl}}(R) = P\left(\Xi_{\bl} \in R\right),$ for any $R \in \mathcal{M}_p(E)$. 
\item 
$ \displaystyle
\mathrm{PRM}(\bl)(h) := \int_{M_p(E)} h(\xi) \mathrm{PRM}(\bl)\{d\xi\}$, for a function $h: M_p(E) \to \mathbb{R}$.
\item For a fixed set $R \in \mathcal{M}_p(E)$ and the particular choice $h(\xi) := I_{\{\xi \in R\}}$, we then have
\[
\mathrm{PRM}(\bl)(h) = \mathrm{PRM}(\bl)\{R\}.
\]
\item Let $\mathbb{P}^\xi$ denote the distribution of the immigration-death 
process $Z$ given that it has the initial configuration
$\xi \in M_p(E)$, i.e. 
\[
\mathbb{P}^\xi(Z_t=\zeta) = P(Z_t = \zeta| Z_0=\xi),
\]
for any $\zeta \in M_p(E)$ and $t \ge 0$. Likewise,
$\mathbb{E}^{\xi}f(Z_t) = \int_{M_p(E)} f(\zeta) P(Z_t = \zeta| Z_0=\xi) d\zeta$.
\end{itemize}
The process analogue of the Stein equation is given by
\begin{equation}\label{d: Stein_equation_process}
(\mathcal{A}\upgamma)(\xi) = h(\xi) - \mathrm{PRM}(\bl)(h),
\end{equation}
for any $\xi \in M_p(E)$.  We construct a solution $\upgamma$ to this equation: by Proposition \ref{t: upgamma_solves_Stein} below, the function $\upgamma$, 
given in Proposition \ref{t: upgamma_welldefined}, 
solves the Stein equation. 
\begin{prop}\label{t: upgamma_welldefined}
Let $Z= \{Z_t, t \in \mathbb{R}_+\}$ be an immigration-death process on $E$ with immigration intensity $\bl$ and unit per-capita death rate. For any bounded $h: M_p(E) \to \mathbb{R}$, the function $\upgamma: M_p(E) \to \mathbb{R}$ given by
\[
\upgamma(\xi) = - \int_0^\infty \left\{ \mathbb{E}^\xi h(Z_t) - \mathrm{PRM}(\bl)(h) \right\} dt
\]
is well defined, and $\sup_{\xi:\, \xi(E)=k} |\upgamma(\xi)| < \infty$ for each $k \in \mathbb{Z}_+$.
\end{prop}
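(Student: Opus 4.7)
The plan is to show that the integrand $\mathbb{E}^\xi h(Z_t) - \mathrm{PRM}(\bl)(h)$ decays exponentially fast in $t$, uniformly on $\{\xi:\,\xi(E)=k\}$, which immediately implies the integral converges and yields the claimed uniform bound. The natural tool is a coupling between the process started from $\xi$ and the stationary regime $\mathrm{PRM}(\bl)$.

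First I would construct a two-step coupling. Starting from $\xi$ with $\xi(E)=k$, one may realise $Z_t$ as the superposition $Z_t = Y_t + Z_t^\emptyset$, where $Y_t$ consists of those initial $k$ particles that are still alive at time $t$ (each surviving independently with probability $e^{-t}$, since lifetimes are $\mathrm{Exp}(1)$), and $Z_t^\emptyset$ is the configuration of immigrants that have arrived in $[0,t]$ and not yet died. By the thinning and mapping properties of Poisson processes, $Z_t^\emptyset \sim \mathrm{PRM}\bigl((1-e^{-t})\bl\bigr)$, independently of $Y_t$. In turn, $Z_t^\emptyset$ can be coupled to $\Xi_\bl\sim\mathrm{PRM}(\bl)$ by adding an independent $\mathrm{PRM}(e^{-t}\bl)$; call the resulting random element $\widetilde\Xi_t$.

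From this coupling I would estimate
\[
\bigl|\mathbb{E}^\xi h(Z_t) - \mathrm{PRM}(\bl)(h)\bigr|
\le \|h\|\,\bigl\{ P(Y_t\neq 0) + P(Z_t^\emptyset \neq \widetilde\Xi_t)\bigr\}.
\]
The first probability is $1-(1-e^{-t})^k \le k e^{-t}$, and the second is the probability that the auxiliary $\mathrm{PRM}(e^{-t}\bl)$ has at least one point, namely $1-e^{-\lambda e^{-t}} \le \lambda e^{-t}$, with $\lambda = \bl(E)$. Hence
\[
\bigl|\mathbb{E}^\xi h(Z_t) - \mathrm{PRM}(\bl)(h)\bigr| \le \|h\|\,(k+\lambda)\,e^{-t},
\]
so the integral defining $\upgamma(\xi)$ converges absolutely, and $|\upgamma(\xi)| \le \|h\|(k+\lambda)$, uniformly over $\{\xi:\xi(E)=k\}$.

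The only delicate point is justifying the decomposition $Z_t = Y_t + Z_t^\emptyset$ with $Y_t$ and $Z_t^\emptyset$ independent and with the stated marginal laws; this is a standard consequence of the construction of spatial immigration-death processes (immigrations form an independent Poisson point process on $E\times\mathbb{R}_+$, each particle's lifetime is an independent $\mathrm{Exp}(1)$), so this step should go through without further complications. With the exponential bound in hand, the remaining conclusions of the proposition follow directly.
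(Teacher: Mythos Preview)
Your argument is correct (up to a harmless missing factor of~$2$ in the coupling inequality: from $|h(Z_t)-h(\widetilde\Xi_t)|\le 2\|h\|$ on $\{Z_t\neq\widetilde\Xi_t\}$ one gets $2\|h\|\{P(Y_t\neq 0)+P(W_t\neq 0)\}$, not $\|h\|\{\cdots\}$). The decomposition $Z_t=Y_t+Z_t^\emptyset$ you invoke is exactly what the paper uses (it appears there as $Z=Z^0+D$, and is later recorded as a separate lemma), so the core idea is the same.

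Where you diverge is in how you couple to the stationary regime. The paper realises $\mathrm{PRM}(\bl)$ as $Z^0+\tilde D$ with $\tilde D$ a pure death process started from an independent $\mathrm{PRM}(\bl)$ configuration, shares the immigrant stream $Z^0$ between the two processes, and then bounds $|\upgamma(\xi)|$ by $2\|h\|\,\mathbb{E}\tau$, where $\tau$ is the time at which both death processes have emptied; computing $\mathbb{E}\tau$ leads to a harmonic-sum bound $2\|h\|\,\mathbb{E}\psi(k+|\tilde D_0|)$ with $\psi(m)=\sum_{i=1}^m 1/i$. You instead couple the time-$t$ marginals directly via Poisson superposition, obtaining the explicit integrand bound $2\|h\|(k+\lambda)e^{-t}$ and hence $|\upgamma(\xi)|\le 2\|h\|(k+\lambda)$. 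Your route is shorter and more transparent; the paper's bound is logarithmic in $k+\lambda$ rather than linear, hence asymptotically sharper, but either suffices for the proposition as stated.
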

\begin{proof}
We consider a coupling of the immigration-death process $Z$ under $\mathbb{P}^\xi$ with another immigration-death process $\tilde{Z}$ 
under $\mathbb{P}^{\mathrm{PRM}(\bl)}$, setting $Z=Z^0+D$, $\tilde{Z}=Z^0+\tilde{D}$, where $Z^0$, $D$ and $\tilde{D}$ are independent, 
$Z^0$ is an immigration-death process under $\mathbb{P}^0$, i.e. having no initial particles, and $D$ and $\tilde{D}$ are both pure death processes 
with unit per-capita death rate, such that $D_0=\xi$, and $\tilde{D}_0 \sim \mathrm{PRM}(\bl)$. 
Let $\tau$ denote the earliest time at which both pure death processes
have lost all of their particles:
\[
\tau = \inf\{ u \ge 0:\, D_u=\tilde{D}_u=0\}. 
\]
After time $\tau$, the two processes $Z$ and $\tilde{Z}$ behave identically, i.e. $Z_t=\tilde{Z}_t$ for all $t \ge \tau$. Then,
\begin{align}\label{p: boundupgamma}
\begin{split}
|\upgamma(\xi)| &\le \int_0^\infty \left| \mathbb{E}^\xi h(Z_t) - \mathrm{PRM}(\bl)(h)\right|dt 
=  \int_0^\infty \left| \mathbb{E}^\xi h(Z_t) - \mathbb{E}h(\tilde{Z}_t)\right|dt\\
& \le \int_0^\infty \mathbb{E}\left(| h(Z_t) - h(\tilde{Z}_t) | \cdot I_{\{\tau > t\}} \,|\,  D_0=\xi\right)\\
& \le 2 ||h|| \int_0^\infty P\left( \tau > t \,|\, D_0=\xi\right)dt = 2||h||\mathbb{E}\left( \tau \,|\, D_0=\xi\right).
\end{split}
\end{align}
To determine $\mathbb{E}\left( \tau \,|\, D_0=\xi\right)$, note that the total number of points that the two processes $D$ and $\tilde{D}$ have to lose until time
$\tau$ is given by the random integer $|D_0| + |\tilde{D}_0|$, and that
\[
\tau = \sum_{i=1}^{|D_0| + |\tilde{D}_0|} \tau_i,
\]
where $\tau_i$ denotes the time between the $(i-1)$th and $i$th death. Since the two pure death processes have unit per-capita
death rates, the time $\tau_i$, for a fixed realisation $k = |\xi| + |\tilde{D}_0|(\omega)$, 
is exponentially distributed with rate $k-i +1$, for each
$i=1, \ldots, k$.
Therefore, 
\[
|\upgamma(\xi)| \le  2||h||\mathbb{E} \psi(|\xi| + |\tilde{D}_0|) < \infty,
\]
where $\psi(k)= \sum_{i=1}^k 1/(k-i+1) = \sum_{i=1}^k 1/i$, and the last inequality
follows because $|\tilde{D}_0| \sim \mathrm{Poi}(\lambda)$.
\end{proof}
\begin{prop}\label{t: upgamma_solves_Stein}
The function $\upgamma$ defined in Proposition \ref{t: upgamma_welldefined} satisfies the Stein equation
\[
(\mathcal{A}\upgamma)(\xi) = h(\xi)-\mathrm{PRM}(\bl)(h),
\]
for all $\xi \in M_p(E)$.
\end{prop}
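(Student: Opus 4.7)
The plan is to exploit the semigroup $(T_t)_{t\ge 0}$ of the immigration-death process $Z$, defined by $T_t h(\xi):=\mathbb{E}^\xi h(Z_t)$. Writing $\pi(h):=\mathrm{PRM}(\bl)(h)$, the definition of $\upgamma$ takes the compact form $\upgamma(\xi) = -\int_0^\infty [T_t h(\xi) - \pi(h)]\,dt$, so the statement to prove is essentially the resolvent identity asserting that $\upgamma$ solves $\mathcal{A}\upgamma = h - \pi(h)$. Absolute convergence of this integral was already established in Proposition \ref{t: upgamma_welldefined} via the coupling bound $|T_t h(\xi) - \pi(h)|\le 2\|h\|\,P(\tau>t\,|\, D_0=\xi)$ together with $\mathbb{E}(\tau\,|\, D_0=\xi)<\infty$.

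First I would apply $T_s$ to both sides and interchange it with the outer $t$-integral by Fubini. The required integrability follows from the same coupling bound, applied pointwise in the configuration and then averaged under $\mathbb{P}^\xi$, using that $|Z_s|$ is stochastically dominated by $|\xi|$ plus an independent $\mathrm{Poi}(\lambda)$ variable. Combined with the semigroup property $T_s T_t = T_{s+t}$, this yields
\[
T_s\upgamma(\xi) = -\int_0^\infty [T_{s+t}h(\xi) - \pi(h)]\,dt = -\int_s^\infty [T_u h(\xi) - \pi(h)]\,du,
\]
so that
\[
\frac{T_s\upgamma(\xi) - \upgamma(\xi)}{s} = \frac{1}{s}\int_0^s [T_u h(\xi) - \pi(h)]\,du.
\]
Letting $s\downarrow 0$, right-continuity of $u\mapsto T_u h(\xi)$ at $u=0$, which is immediate because $Z$ remains in state $\xi$ for an $\mathrm{Exp}(\lambda+|\xi|)$-distributed time, gives that this ratio tends to $h(\xi) - \pi(h)$.

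It remains to identify the semigroup limit $\lim_{s\downarrow 0} s^{-1}(T_s\upgamma(\xi)-\upgamma(\xi))$ with $(\mathcal{A}\upgamma)(\xi)$ as defined pointwise in (\ref{d: generator}). A first-order Taylor expansion of $T_s\upgamma(\xi)$ using the elementary jump description of $Z$, namely that in $[0,s]$ the process experiences one birth in $dz$ with probability $s\,\bl(dz)+o(s)$, one death at $z\in\mathrm{supp}(\xi)$ with probability $s\,\xi(dz)+o(s)$, no jump with the complementary probability, and two or more jumps with probability $o(s)$, reproduces exactly the two integrals in (\ref{d: generator}). The main technical obstacle lies precisely in these last two interchanges: both the Fubini step and the Taylor expansion of $T_s\upgamma(\xi)$ require a uniform control on $\upgamma$ over the configurations reachable from $\xi$ in a single jump. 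That control is provided directly by the bound $\sup_{\zeta:\,\zeta(E)=k}|\upgamma(\zeta)|<\infty$ for each $k$, established in Proposition \ref{t: upgamma_welldefined}; once this is invoked, the remainder of the proof is routine.
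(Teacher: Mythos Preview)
Your approach is correct and takes a genuinely different route from the paper. The paper argues via a first-jump decomposition: it introduces the truncation $\upgamma_t(\xi)=-\int_0^t\{\mathbb{E}^\xi h(Z_u)-\mathrm{PRM}(\bl)(h)\}\,du$, conditions on whether the first jump time $T$ exceeds $t$ or not, and after sending $t\to\infty$ by dominated convergence obtains a fixed-point equation for $\upgamma$ which, upon rearranging $q_\xi\upgamma(\xi)=\int\upgamma(\xi)(\bl+\xi)(dz)$, is exactly the Stein equation. Your argument is the semigroup/resolvent route: compute $T_s\upgamma$ via Fubini and the semigroup property, form the difference quotient, and read off $h(\xi)-\pi(h)$ from right-continuity of $u\mapsto T_u h(\xi)$. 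What you gain is conceptual transparency; what the paper gains is that its renewal calculation lands directly on the algebraic form of $(\mathcal{A}\upgamma)(\xi)$ without ever having to justify that $\upgamma$ lies in the pointwise domain of the generator.

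One detail in your final step needs tightening. In the Taylor expansion of $T_s\upgamma(\xi)$ you need $\mathbb{E}^\xi[\upgamma(Z_s);\,\ge 2\text{ jumps}]=o(s)$, and knowing only that $P(\ge 2\text{ jumps})=o(s)$ is not enough because $\upgamma$ is unbounded. The stratum bound you invoke from Proposition~\ref{t: upgamma_welldefined} covers the single-jump configurations $\xi\pm\delta_z$, but for the remainder you should use the explicit growth estimate $|\upgamma(\zeta)|\le 2\|h\|\,\mathbb{E}\psi(|\zeta|+|\tilde D_0|)$ together with $|Z_s|\le|\xi|+B_s$, $B_s\sim\mathrm{Poi}(\lambda s)$; splitting on the value of $B_s$ then shows the contribution is $O(s^2)$.
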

\begin{proof}
Let $\upgamma_t(\xi) = -\int_0^t \left\{ \mathbb{E}^\xi h(Z_u) - \mathrm{PRM}(\bl)(h)\right\}du$, and let $T$ denote an exponential random variable modelling the first time 
that a particle is born or dies, with rate $q_\xi = \lambda + |\xi|$. Then, we may rewrite $\upgamma_t(\xi)$ as
\begin{align*}
& -\int_0^t \left\{ \mathbb{E}^\xi \left[ h(Z_u) - \mathrm{PRM}(\bl)(h)\right] 
\cdot I_{\{T > t\}} \right.\\
&\left. \phantom{blaaaa}+ \mathbb{E}^\xi \left[ h(Z_u) - \mathrm{PRM}(\bl)(h)\right] \cdot I_{\{T \le t\}}
\right\} du\\
= &-[h(\xi) - \mathrm{PRM}(\bl)(h)] e^{-q_\xi t} + \int_0^t e^{-q_\xi u} \left\{ -q_{\xi} u[h(\xi)-\mathrm{PRM}(\bl)(h)] \phantom{\int_E} \right.\\
& 
\left. 
+ \int_E \upgamma_{t-u}(\xi+\delta_z)\boldsymbol{\lambda}(dz) + \int_E \upgamma_{t-u}(\xi - \delta_z)\xi(dz)\right\}du\\
= & -\frac{1}{q_\xi} [h(\xi)-\mathrm{PRM}(\bl)(h)]\left( 1-e^{-q_\xi t}\right)\\
& + \int_0^t e^{-q_\xi u} \left\{ \int_E \upgamma_{t-u}(\xi + \delta_z) \boldsymbol{\lambda}(dz) + \int_E \upgamma_{t-u}(\xi -\delta_z) \xi(dz)\right\}du,
\end{align*}
where we obtained the last equality by evaluating the integral $-\int_0^t e^{-q_\xi u} q_{\xi} $ $u[h(\xi)-\mathrm{PRM}(\bl)(h)]du$.
From the proof of Proposition \ref{t: upgamma_welldefined}, the functions $\upgamma_t(\xi)$ are uniformly bounded in $t$ for each $\xi$. Letting $t \to \infty$ and using 
dominated convergence, it follows that 
\begin{align}
\upgamma(\xi) = & -\frac{1}{q_\xi}[h(\xi) - \mathrm{PRM}(\bl)(h)] \nonumber \\
 & + \int_0^\infty e^{-q_\xi u}\left\{ \int_E \upgamma(\xi 
 + \delta_z) \boldsymbol{\lambda}(dz) + \int_E \upgamma(\xi -\delta_z) \xi(dz)
 \right\}du  \nonumber\\
=& \frac{1}{q_\xi} \left\{ -[h(\xi) - \mathrm{PRM}(\bl)(h)] 
+ \int_E \upgamma(\xi + \delta_z) \boldsymbol{\lambda}(dz) 
+ \int_E \upgamma(\xi -\delta_z) \xi(dz)\right\} 
\label{p: upgamma_rewritten}
\end{align}
By rearranging (\ref{p: upgamma_rewritten}) and noting that $-q_\xi \upgamma(\xi) = -\int_E \upgamma(\xi)(\boldsymbol{\lambda}(dz) + \xi(dz))$, we find
\begin{align*}
&h(\xi)-\mathrm{PRM}(\bl)(h) \\
&= \int_E \left[ \upgamma(\xi + \delta z) - \upgamma(\xi) \right] \boldsymbol{\lambda}(dz) + 
\int_E \left[ \upgamma(\xi - \delta_z) -\upgamma(\xi) \right]\xi(dz)\\
&= (\mathcal{A}\upgamma)(\xi).
\end{align*}
\end{proof}
\noindent The solution $\gamma$ from Proposition \ref{t: upgamma_welldefined} solves the Stein equation for any bounded function $h: M_p(E) \to \mathbb{R}$, 
and by (\ref{p: Stein_id_generator_proc}),
\[
\mathbb{E}\left[ (\mathcal{A}\upgamma)(\Xi_{\bl})\right] = 0, 
\]
as there is no expected variation of the immigration-death process $Z$ when it is in equilibrium. In order to
determine approximation errors in the total variation distance we now choose $h(\xi) = I_{\{\xi \in R\}}$ for some set $R \in \mathcal{M}_p(E)$. 
Then, taking expectations on both sides of the Stein equation (\ref{d: Stein_equation_process}) and replacing $\xi$ by the process $\Xi$, we obtain
\begin{equation}\label{d: Exp_Stein_process_generator}
\left| \mathbb{E} \left(\mathcal{A}\upgamma\right)(\Xi)\right| 
= \left|\mathbb{E} I_{\{\Xi \in R\}} - \mathrm{PRM}(\bl)\{R\}\right| 
 = \left| P\left( \Xi \in R\right) - P\left( \Xi_{\bl} \in R\right)\right|,
\end{equation}
which is obviously $0$ for $\Xi=\Xi_{\bl}$, showing that the generator $\mathcal{A}$ satisfies the required Stein identity 
(mentioned previously in Sections \ref{s: antisymmetric_function} and \ref{s: generator_interpretation}). 
Then the problem of determining an upper bound on the total variation distance between the laws of the two processes $\Xi$ and $\Xi_{\bl} \sim \mathrm{PRM}(\bl)$,
that is,
\[
d_{TV}\left( \mathcal{L}(\Xi), \mathrm{PRM}(\bl)\right) = \sup_{R \in \mathcal{M}_p(E)} \left| P\left( \Xi \in R\right) - P\left( \Xi_{\bl} \in R\right)\right|,  
\]
is equivalent to determining a uniform bound on $\left| \mathbb{E} \left(\mathcal{A}\upgamma\right)(\Xi)\right|$. To achieve the latter, we require smoothness
estimates of the function $\upgamma$:
\begin{lemma}\label{t: dtv_bounds_upgamma}
If $\upgamma$ is defined as in Proposition \ref{t: upgamma_welldefined} and if $h(\xi)= I_{\{\xi \in R\}}$ for some set $R \in \mathcal{M}_p(E)$, then
\begin{align*}
(i)\quad  \Delta_1 \upgamma  
&= \sup_{\xi \in M_p(E), z \in E} \left| \upgamma(\xi + \delta_z) 
- \upgamma(\xi)\right| \le 1, \\
(ii) \quad \Delta_2 \upgamma &= 
\sup_{\xi \in M_p(E); z,w \in E} \left| \upgamma(\xi + \delta_z + \delta_w)
- \upgamma(\xi + \delta_z) -\upgamma(\xi + \delta_w) +\upgamma(\xi)\right| \\
&\le 1.
\end{align*}
\end{lemma}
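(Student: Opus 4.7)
The plan is to prove both bounds via a coupling of the immigration-death processes started from different initial configurations, in the same spirit as the coupling already used in the proof of Proposition \ref{t: upgamma_welldefined}.

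For (i), I would couple $Z$ under $\mathbb{P}^\xi$ and $Z$ under $\mathbb{P}^{\xi+\delta_z}$ by writing both as $Z^0 + D$, where $Z^0$ is a common immigration-death process with no initial particles (so it drives all future immigrations and their deaths identically in the two copies), and where the pure-death process $D$ for the initial configuration is shared for the $|\xi|$ points of $\xi$, with the extra particle at $z$ in the second copy having an independent $\mathrm{Exp}(1)$ lifetime $\tau_z$. The two coupled processes then differ by exactly the atom $\delta_z$ for $t < \tau_z$ and coincide for $t \ge \tau_z$. Since $h = I_{\{\,\cdot\,\in R\}}$ takes values in $\{0,1\}$, the integrand $|h(Z_t^{\xi+\delta_z}) - h(Z_t^\xi)|$ is bounded by $1$ and vanishes on $\{\tau_z \le t\}$. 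Hence
\begin{align*}
|\upgamma(\xi+\delta_z) - \upgamma(\xi)|
&\le \int_0^\infty \mathbb{E}\bigl|h(Z_t^{\xi+\delta_z}) - h(Z_t^{\xi})\bigr|\, I_{\{\tau_z > t\}} \, dt\\
&\le \int_0^\infty P(\tau_z > t)\, dt = \mathbb{E}\tau_z = 1,
\end{align*}
which gives $\Delta_1 \upgamma \le 1$.

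For (ii), I would extend the coupling to four processes started from $\xi$, $\xi+\delta_z$, $\xi+\delta_w$, $\xi+\delta_z+\delta_w$, all sharing the same $Z^0$ and the same deaths of the points of $\xi$, but with the extra particles at $z$ and $w$ assigned independent $\mathrm{Exp}(1)$ lifetimes $\tau_z$ and $\tau_w$ (same $\tau_z$ in the two copies containing $z$, same $\tau_w$ in the two copies containing $w$). Writing
\[
H_t := h(Z_t^{\xi+\delta_z+\delta_w}) - h(Z_t^{\xi+\delta_z}) - h(Z_t^{\xi+\delta_w}) + h(Z_t^{\xi}),
\]
the key observation is that if $\tau_w \le t < \tau_z$ then the four coupled configurations collapse pairwise (the first two agree and so do the last two), and similarly when $\tau_z \le t < \tau_w$; in both situations $H_t = 0$. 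Thus $H_t$ can only be non-zero on $\{\min(\tau_z,\tau_w) > t\}$, and on that event $|H_t| \le 2$. Since $\min(\tau_z,\tau_w) \sim \mathrm{Exp}(2)$, I obtain
\begin{align*}
\bigl|\upgamma(\xi+\delta_z+\delta_w) - \upgamma(\xi+\delta_z) - \upgamma(\xi+\delta_w) + \upgamma(\xi)\bigr|
&\le \int_0^\infty 2\, e^{-2t}\, dt = 1,
\end{align*}
which yields $\Delta_2 \upgamma \le 1$.

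The main obstacle is keeping the coupling honest: one has to verify that on the event where only one of $\tau_z$, $\tau_w$ has elapsed, two of the four processes literally coincide as point measures (not merely in distribution), which is what allows $H_t$ to vanish pointwise. This is precisely where sharing $Z^0$ and the death mechanism of the common initial particles across all four copies is essential; once that is in place, the bounds reduce to the elementary expectations $\mathbb{E}\tau_z = 1$ and $2\mathbb{E}\min(\tau_z,\tau_w) = 1$.
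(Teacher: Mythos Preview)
Your proof is correct and follows essentially the same approach as the paper: both couple the processes by sharing the common immigration--death process $Z^0$ and the deaths of the initial points of $\xi$, and attach independent $\mathrm{Exp}(1)$ lifetimes to the extra atoms at $z$ (and $w$). The paper phrases the computations by factoring out the independent $e^{-t}$ (respectively $e^{-2t}$) and bounding $|h(Z_t+\delta_z)-h(Z_t)|\le 1$ (respectively the second difference by $2$), which is exactly your argument in slightly different notation; your explicit verification that $H_t=0$ on $\{\min(\tau_z,\tau_w)\le t<\max(\tau_z,\tau_w)\}$ is the pathwise version of the paper's observation that the integrand carries the factor $I_{\{T^z>t\}}I_{\{T^w>t\}}$.
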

\begin{proof}
(i) From the definition of $\upgamma$ in Proposition \ref{t: upgamma_welldefined} we have that for any $\xi \in M_p(E)$ and for any $z \in E$,
\[
\upgamma(\xi + \delta_z) - \upgamma(\xi) = \int_0^\infty \left\{ \mathbb{E}^\xi h(Z_t) - \mathbb{E}^{\xi + \delta_z}h(Z_t) \right\}dt, 
\]
where the immigration-death process $Z= \{Z_t, t \in \mathbb{R}_+\}$ on $E$ with immigration intensity $\bl$ and unit per-capita death rate is realised under 
$\mathbb{P}^\xi$. Let $T$ be an exponential random variable with rate $1$ that is independent of $Z$. It follows that the process $Z'_t = Z_t + \delta_z I_{\{T > t\}}$
has distribution $\mathbb{P}^{\xi + \delta_z}$. Then
\begin{align*}
\upgamma(\xi + \delta_z) - \upgamma(\xi) 
&= \int_0^\infty \mathbb{E}^\xi \left[ \left\{h(Z_t) - h(Z'_t)\right\} I_{\{T > t\}}\right] dt\\
&= \int_0^\infty \mathbb{E}^\xi \left[h(Z_t) - h(Z_t + \delta_z)\right]e^{-t}dt
\end{align*}
Since $\int_0^\infty e^{-t}dt = 1$ and 
\[
|h(\xi)-h(\xi +  \delta_z)| = \left| I_{\{\xi \in R\}} - I_{\{\xi +  \delta_z \in R\}}\right| \le 1,
\]
we have $\Delta_1 \upgamma \le 1$. (ii) By the definition of $\upgamma$, we have
\begin{align*}
&\upgamma(\xi + \delta_z + \delta_w) - \upgamma(\xi + \delta_z) -\upgamma(\xi + \delta_w) +\upgamma(\xi)\\
& = -\int_0^\infty \left\{ \mathbb{E}^{\xi + \delta_z + \delta_w}h(Z_t) - \mathbb{E}^{\xi + \delta_z}h(Z_t) - \mathbb{E}^{\xi + \delta_w}h(Z_t)
+ \mathbb{E}^\xi h(Z_t)\right\}dt.
\end{align*}
Let $Z$ be realised under $\mathbb{P}^\xi$ as in (i) and let $T^z$ and $T^w$ be two independent exponential random variables with rate $1$. The processes
\begin{align*}
Z_t^z &=  Z_t + \delta_z I_{\{T^z > t\}}, \\
Z_t^w &= Z_t + \delta_w I_{\{T^w > t\}}, \\
Z_t^{zw}  &= Z_t + \delta_zI_{\{T^z > t\}} + \delta_w I_{\{T^w > t\}}
\end{align*}
then have distributions $\mathbb{P}^{\xi + \delta_z}$, $\mathbb{P}^{\xi + \delta_w}$ and $\mathbb{P}^{\xi + \delta_z + \delta_w}$, respectively. 
Therefore, 
\begin{align*}
&\upgamma(\xi + \delta_z + \delta_w) - \upgamma(\xi + \delta_z) -\upgamma(\xi + \delta_w) +\upgamma(\xi)\\
& = - \int_0^\infty \mathbb{E}^\xi \left[ \left\{ h(Z_t^{zw}) - h(Z_t^z) - h(Z_t^w) + h(Z_t)\right\}I_{\{T^z > t\}}I_{\{T^w > t\}}\right] dt \\
& = -\int_0^\infty \mathbb{E}^\xi \left[ h(Z_t + \delta_z + \delta_w) - h(Z_t + \delta_z) - h(Z_t + \delta_w) + h(Z_t)\right] e^{-2t}dt,
\end{align*}
which gives $\Delta_2 \upgamma \le 1$.
\end{proof}
We are now in shape to prove a process analogue of Theorem \ref{t: dTV_bounds_Poi_localapproach}:
\begin{thm}\label{t: dTV_PP_generator}
Suppose there exists a fixed measure $\boldsymbol{\nu}$ on $E$ and suppose that $\Xi$ is a finite simple point process on $E$ with finite mean measure $\bl$
and Janossy densities $\{j_m\}_{m \ge 0}$. Let the density $\mu$ of $\bl$ 
with respect to $\boldsymbol{\nu}$ be given by (\ref{d: density_mean_measure}) and let $\{N_z\}_{z \in E}$
be a neighbourhood structure satisfying (\ref{d: neighbourhood_structure}). 
Then,
\begin{multline*}
d_{TV}\left(\mathcal{L}(\Xi), \mathrm{PRM}(\bl)\right) 
\le \int_E \mathbb{E} \Xi(N_z) \mu(z)\boldsymbol{\nu}(dz) 
+\mathbb{E} \left[ \int_E \Xi(N_z\setminus \{z\}) \Xi(dz)\right] \\
+ \int_E \mathbb{E} \left| g(z, \Xi^z) - \mu(z)\right| \boldsymbol{\nu}(dz),
\end{multline*}
where the conditional density $g(z, \Xi^z)$  at $z$ given the configuration $\Xi^z$ of $\Xi$ outside $N_z$ is defined in (\ref{d: conditional_density_Janossy}).
\end{thm}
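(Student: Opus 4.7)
The plan is to apply the Stein equation machinery developed above. By (\ref{d: Exp_Stein_process_generator}),
\[
d_{TV}(\mathcal{L}(\Xi),\mathrm{PRM}(\bl)) = \sup_{R\in\mathcal{M}_p(E)}\bigl|\mathbb{E}(\mathcal{A}\upgamma)(\Xi)\bigr|,
\]
where $\upgamma$ solves the Stein equation (\ref{d: Stein_equation_process}) with $h(\xi)=I_{\{\xi\in R\}}$, and Lemma~\ref{t: dtv_bounds_upgamma} supplies $\Delta_1\upgamma\le 1$ and $\Delta_2\upgamma\le 1$. Using the explicit form of $\mathcal{A}$ in (\ref{d: generator}) together with the density $\mu$ of $\bl$ with respect to $\boldsymbol{\nu}$ from (\ref{d: density_mean_measure}), I rewrite
\[
\mathbb{E}(\mathcal{A}\upgamma)(\Xi) = \mathbb{E}\int_E[\upgamma(\Xi+\delta_z)-\upgamma(\Xi)]\mu(z)\boldsymbol{\nu}(dz) - \mathbb{E}\int_E[\upgamma(\Xi)-\upgamma(\Xi-\delta_z)]\Xi(dz).
\]

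The strategy is to replace $\Xi$ by $\Xi^z$ inside both brackets, so that the resulting integrands depend on $\Xi$ only through $\Xi^z$ (plus an explicit $z$-dependence via $\delta_z$), enabling an application of (\ref{d: equality_cond_dens_Janossy}). For the first integral, decomposing $\Xi=\Xi^z+\Xi|_{N_z}$ and telescoping over the $\Xi(N_z)$ atoms of $\Xi|_{N_z}$ yields, via $\Delta_2\upgamma\le 1$,
\[
\bigl|[\upgamma(\Xi+\delta_z)-\upgamma(\Xi)]-[\upgamma(\Xi^z+\delta_z)-\upgamma(\Xi^z)]\bigr| \le \Xi(N_z)\,\Delta_2\upgamma,
\]
which after integration against $\mu(z)\boldsymbol{\nu}(dz)$ contributes the first term of the asserted bound. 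For the second integral I restrict to atoms $z$ of $\Xi$, where $\Xi(dz)$ is supported; simplicity of $\Xi$ gives $\Xi-\delta_z=\Xi^z+\Xi|_{N_z\setminus\{z\}}$, and an analogous second-order telescoping over the $\Xi(N_z\setminus\{z\})$ atoms away from $z$ produces
\[
\bigl|[\upgamma(\Xi)-\upgamma(\Xi-\delta_z)]-[\upgamma(\Xi^z+\delta_z)-\upgamma(\Xi^z)]\bigr| \le \Xi(N_z\setminus\{z\})\,\Delta_2\upgamma,
\]
yielding the second term after integration against $\Xi(dz)$.

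After these two replacements, the $\upgamma$-bracket in both integrals is the common expression $\upgamma(\Xi^z+\delta_z)-\upgamma(\Xi^z)$. Invoking (\ref{d: equality_cond_dens_Janossy}), extended from integrands $f(\Xi^z)$ to jointly measurable $f(z,\Xi^z)$ by a standard monotone-class argument, converts the $\Xi(dz)$-integral into one against $g(z,\Xi^z)\boldsymbol{\nu}(dz)$, so what remains is
\[
\mathbb{E}\int_E [\upgamma(\Xi^z+\delta_z)-\upgamma(\Xi^z)]\bigl[\mu(z)-g(z,\Xi^z)\bigr]\boldsymbol{\nu}(dz),
\]
which by $\Delta_1\upgamma\le 1$ is in absolute value at most $\int_E\mathbb{E}|g(z,\Xi^z)-\mu(z)|\boldsymbol{\nu}(dz)$, the third term. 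Taking the supremum over $R\in\mathcal{M}_p(E)$ concludes the argument. The main obstacle will be the careful bookkeeping of the two telescoping identities, especially verifying that simplicity of $\Xi$ is what legitimately turns $\Xi(N_z)-1$ into $\Xi(N_z\setminus\{z\})$ at every atom, and confirming that the extension of (\ref{d: equality_cond_dens_Janossy}) from $f(\xi)$ to product-measurable $f(z,\xi)$ is valid in the present setting.
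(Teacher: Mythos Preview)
Your proof is correct and follows essentially the same route as the paper's: add and subtract $\upgamma(\Xi^z+\delta_z)-\upgamma(\Xi^z)$ in both integrals of $\mathbb{E}(\mathcal{A}\upgamma)(\Xi)$, bound the resulting second-difference terms by $\Xi(N_z)\Delta_2\upgamma$ and $\Xi(N_z\setminus\{z\})\Delta_2\upgamma$ using simplicity, then handle the residual via (\ref{d: equality_cond_dens_Janossy}) and $\Delta_1\upgamma\le 1$. Your explicit remark that (\ref{d: equality_cond_dens_Janossy}) needs extending to integrands $f(z,\Xi^z)$ is a point the paper uses silently.
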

\begin{remark}
Since $E$ is a metric space, an example for a neighbourhood $N_z$ would be a closed ball with a certain radius centred at $z$. 
\cite{Barbour/Brown:1992} show that this choice satisfies (\ref{d: neighbourhood_structure}).  
\end{remark}
\begin{remark}
Intuitively, the first error term in Theorem \ref{t: dTV_PP_generator} measures the size of the neighbourhoods, the second measures the extent of local dependence, i.e. 
inside a neighbourhood, whereas the third term measures the size of the difference between what happens at z and what happens outside its neighbourhood $N_z$. 
There is clearly a trade-off between the sizes of the first and third error terms -- the smaller $N_z$, the bigger the dependence between $z$ and $N_z^c$, and vice versa.
\end{remark}
\begin{proof}
From (\ref{d: Exp_Stein_process_generator}) we know that it is sufficient to find a uniform bound for the modulus of 
$\mathbb{E}(\mathcal{A}\upgamma)(\Xi)$,
where $\mathcal{A}$ is defined as in (\ref{d: generator}), and $\upgamma$ is the solution of the Stein equation from Proposition \ref{t: upgamma_welldefined}
with $h(\xi) = I_{\{\xi \in R\}}$, for any $R \in \mathcal{M}_p(E)$. We have 
\begin{align*}
&|\mathbb{E}(\mathcal{A}\upgamma)(\Xi)| \\
= &\,\,\left| \mathbb{E} \left\{ 
\int_E \left[ \upgamma(\Xi + \delta_z) - \upgamma(\Xi)\right]\bl(dz) + \int_E \left[ \upgamma(\Xi- \delta_z) - \upgamma(\Xi)\right]\Xi(dz)\right\}\right|.
\end{align*}
Let $\Xi^z$ denote the configuration of $\Xi$ outside $N_z$. We add and subtract $\upgamma(\Xi^z + \delta_z) - \upgamma(\Xi^z)$ to both 
of the integrands above. Then, 
\begin{align}\label{p: Exp_generator_sum}
\begin{split}
|\mathbb{E}(\mathcal{A}\upgamma)(\Xi)| 
\le & \left| \mathbb{E} \int_E \left[ \upgamma(\Xi + \delta_z) - \upgamma(\Xi) - \upgamma(\Xi^z + \delta_z) + \upgamma(\Xi^z)\right]\bl(dz)\right|\\
& + \left|\mathbb{E}\int_E  \left[ \upgamma(\Xi) - \upgamma(\Xi^z + \delta_z) - \upgamma(\Xi - \delta_z) + \upgamma(\Xi^z) \right] \Xi(dz) \right|\\
& + \left| \mathbb{E} \int_E \left[ \upgamma(\Xi^z + \delta_z) - \upgamma(\Xi^z)\right] \left\{ \Xi(dz) - \bl(dz)\right\} \right|.
\end{split}
\end{align}
Denote realisations of $\Xi$ and $\Xi^z$ by $\xi$ and $\xi^z$, respectively, and note that
\[
\xi = \xi^z + \sum_{w \in N_z}\delta_w =\xi^z + \sum_{w \in N_z\setminus\{z\}}\delta_w + \delta_z,
\]
where the last equality holds only for $z \in E$ such that $\Xi(\{z\})=1$. 
Then the modulus of the integrand of the first summand in (\ref{p: Exp_generator_sum}) corresponds to
\begin{multline*}
\left| \upgamma\left(\xi^z + \sum_{w \in N_z}\delta_w + \delta_z\right) - \upgamma\left(\xi^z + \sum_{w \in N_z}\delta_w\right) - \upgamma\left(\xi^z + \delta_z\right) 
+ \upgamma\left(\xi^z\right) \right|\\
\le  \xi(N_z)\Delta_2\upgamma,
\end{multline*}
whereas that of the second summand, for $z \in E$ such that $\Xi(\{z\})=1$, corresponds to
\begin{align*}
\left| \upgamma\left(\xi^z + \sum_{w \in N_z \setminus \{z\}} \delta_w + \delta_z\right) - \upgamma\left(\xi^z + \delta_z\right) - 
\upgamma\left(\xi^z + \sum_{w \in N_z \setminus \{z\}} \delta_w\right) + \upgamma\left(\xi^z\right)\right|,
\end{align*}
which may be bounded by $\xi(N_z \setminus \{z\}) \Delta_2 \upgamma$. Lemma \ref{t: dtv_bounds_upgamma} (ii) gives $\Delta_2 \upgamma \le 1$.
Upper bounds for the first and second summand are then given by
\[
\int_E \mathbb{E} \Xi(N_z) \bl(dz) \quad \textnormal{and} \quad \mathbb{E}\left[ \int_E \Xi(N_z \setminus \{z\})\Xi(dz) \right], 
\]
respectively, where we additionally used the Fubini-Tonelli theorem for the first summand. For the third summand,
(\ref{d: equality_cond_dens_Janossy}) gives
\begin{align*}
& \left| \mathbb{E} \int_E \left[ \upgamma(\Xi^z + \delta_z) - \upgamma(\Xi^z)\right] \left\{ \Xi(dz) - \bl(dz)\right\} \right|\\
& = \left| \mathbb{E} \int_E \left[\upgamma(\Xi^z + \delta_z) - \upgamma(\Xi^z) \right] \left\{ g(z, \Xi^z) - \mu(z) \right\}\boldsymbol{\nu}(dz) \right| \\
& \le \Delta_1 \upgamma \int_E \mathbb{E} \left| g(z, \Xi^z) - \mu(z)\right|\boldsymbol{\nu}(dz),
\end{align*}
where $\Delta_1 \upgamma \le 1$ due to Lemma \ref{t: dtv_bounds_upgamma} (i).
\end{proof}

Suppose we want to 
approximate the law of a point process $\Xi$ with mean measure $\bl$ by that of a
Poisson process with, say, mean measure $\tilde{\bl}$, different (but not too different) from $\bl$. 
We then simply do the approximation in two steps and use the triangle inequality:
\[
d_{TV}\left( \mathcal{L}(\Xi), \mathrm{PRM}(\tilde{\bl})\right) \le d_{TV}\left(\mathcal{L}(\Xi), \mathrm{PRM}(\bl)\right) + 
d_{TV}\left(\mathrm{PRM}(\bl), \mathrm{PRM}(\tilde{\bl})\right). 
\]
The following proposition gives an estimate of the additional error term.
\begin{prop}\label{t: dTV_two_PRM}
Let $\bl$ and $\tilde{\bl}$ be two finite measures on $E$. Then
\[
d_{TV}\left(\mathrm{PRM}(\bl), \mathrm{PRM}(\tilde{\bl})\right) \le \int_E | \bl - \tilde{\bl}|(dz)
\]
\end{prop}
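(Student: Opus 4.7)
The plan is to use a coupling argument together with the Hahn--Jordan decomposition of the signed measure $\bl - \tilde{\bl}$, exploiting the superposition property of independent Poisson processes. The guiding observation is that the coupling inequality $d_{TV}(\mathcal{L}(\Xi), \mathcal{L}(\tilde{\Xi})) \le P(\Xi \neq \tilde{\Xi})$ from (\ref{t: coupling_dTV}) applies verbatim to point processes viewed as random elements of $\overline{M}_p(E)$, so it suffices to construct a joint realisation $(\Xi, \tilde{\Xi})$ with marginals $\mathrm{PRM}(\bl)$ and $\mathrm{PRM}(\tilde{\bl})$ for which $P(\Xi \neq \tilde{\Xi})$ is bounded by $\int_E |\bl - \tilde{\bl}|(dz)$.

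First I would apply Hahn--Jordan to write $\bl - \tilde{\bl} = \bl_+ - \bl_-$ with $\bl_+, \bl_-$ mutually singular finite positive measures on $E$, and set $\boldsymbol{\rho} := \bl \wedge \tilde{\bl}$, so that $\bl = \boldsymbol{\rho} + \bl_+$ and $\tilde{\bl} = \boldsymbol{\rho} + \bl_-$, with $|\bl - \tilde{\bl}|(E) = \bl_+(E) + \bl_-(E)$. Next I would construct three independent Poisson processes $\Xi_0 \sim \mathrm{PRM}(\boldsymbol{\rho})$, $\Xi_+ \sim \mathrm{PRM}(\bl_+)$ and $\Xi_- \sim \mathrm{PRM}(\bl_-)$ on a common probability space, and define
\[
\Xi := \Xi_0 + \Xi_+, \qquad \tilde{\Xi} := \Xi_0 + \Xi_-.
\]
By the superposition theorem for independent Poisson processes (which is immediate from the definition in Section \ref{s: Poisson_process}, since sums of independent Poisson random variables are Poisson and disjoint increments remain independent), $\Xi \sim \mathrm{PRM}(\bl)$ and $\tilde{\Xi} \sim \mathrm{PRM}(\tilde{\bl})$, giving a valid coupling.

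Finally, since $\{\Xi = \tilde{\Xi}\} \supseteq \{\Xi_+(E) = 0, \, \Xi_-(E) = 0\}$, independence of $\Xi_+$ and $\Xi_-$ yields
\[
P(\Xi \neq \tilde{\Xi}) \le 1 - e^{-\bl_+(E)} e^{-\bl_-(E)} \le \bl_+(E) + \bl_-(E) = \int_E |\bl - \tilde{\bl}|(dz),
\]
using $1 - e^{-x} \le x$ for $x \ge 0$. Combined with the coupling inequality, this gives the claimed bound. I do not expect any serious obstacle: the only delicate point is to make sure that the Hahn--Jordan decomposition is available and that $\bl_+, \bl_-$ are finite, which holds because $\bl$ and $\tilde{\bl}$ are assumed finite.
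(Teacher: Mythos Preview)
Your proof is correct and complete: the Hahn--Jordan decomposition, the superposition property, and the coupling inequality are all applied soundly, and the final bound follows. It even yields the slightly sharper intermediate estimate $1 - e^{-|\bl - \tilde{\bl}|(E)}$ before relaxing to $\int_E |\bl - \tilde{\bl}|(dz)$.

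However, your route is genuinely different from the paper's. The paper does not couple at all; instead it stays entirely within the Stein--Chen framework built up in Section~\ref{s: Generator_process}. It takes $\Xi := \Xi_{\tilde{\bl}} \sim \mathrm{PRM}(\tilde{\bl})$, lets $\mathcal{A}$ be the generator of the immigration-death process with immigration intensity $\bl$, and uses the Palm characterisation (\ref{d: Char_PP_Palm}) to show that
\[
\mathbb{E}(\mathcal{A}\upgamma)(\Xi) = \int_E \mathbb{E}\bigl[\upgamma(\Xi + \delta_z) - \upgamma(\Xi)\bigr](\bl - \tilde{\bl})(dz),
\]
after which the bound follows from $\Delta_1 \upgamma \le 1$ (Lemma~\ref{t: dtv_bounds_upgamma}(i)). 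Your argument is more elementary and self-contained, needing none of the generator machinery. The paper's argument, by contrast, is the one that generalises directly to the $d_2$ metric in Proposition~\ref{t: d2_two_PRM}, where the same identity is combined with the sharper Lipschitz estimate of Lemma~\ref{t: helping_bound_s2}; a coupling proof would not carry over so readily to that setting.
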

\begin{proof}
Let $\Xi:= \Xi_{\tilde{\bl}} \sim \mathrm{PRM}(\tilde{\bl})$ and let $\Xi_{\bl} \sim \mathrm{PRM}(\bl)$. 
Let $\mathcal{A}$ be the generator of an immigration-death process with immigration intensity $\bl$, unit per-capita death rate, and equilibrium distribution
$\mathcal{L}(\Xi_{\bl})$. By (\ref{d: Exp_Stein_process_generator}), 
$| P\left(\Xi \in R\right)$  $ -$  $P\left(\Xi_{\bl} \in R\right)|$ 
equals
\[
\left|\mathbb{E}(\mathcal{A}\upgamma)\left(\Xi\right)\right|
= \left| \mathbb{E}\int_E \left[\upgamma(\Xi + \delta_z) - \upgamma(\Xi)\right]\bl(dz) + 
\mathbb{E}\int_E \left[\upgamma(\Xi- \delta_z) - \upgamma(\Xi)\right]\Xi(dz) \right|
\]
and it is sufficient to determine a uniform bound for $\left|\mathbb{E}(\mathcal{A}\upgamma)\left(\Xi\right)\right|$.
By (\ref{d: Palm_equality}), we have that for a bounded measurable function $\upgamma$, 
\[
\mathbb{E} \int_E \left[ \upgamma(\Xi - \delta_z) - \upgamma(\Xi) \right] \Xi(dz)
= \mathbb{E} \int_E \left[ \upgamma(\Xi_z - \delta_z) - \upgamma(\Xi_z)\right]\tilde{\bl}(dz),
\]
where $\Xi_z$ is the Palm process for $\Xi$ at $z$. It follows from (\ref{d: Char_PP_Palm}) that $\Xi_z$ is a Poisson process with mean measure $\tilde{\bl}$
with the addition of a deterministic atom at $z$. Likewise, $\mathcal{L}(\Xi_z - \delta_z) = \mathcal{L}(\Xi)$. The integral on the right-hand side then equals 
\[
\mathbb{E} \int_E \left[ \upgamma(\Xi) - \upgamma(\Xi + \delta_z)\right] \tilde{\bl}(dz) 
= -\int_E \mathbb{E}  \left[\upgamma(\Xi + \delta_z) - \upgamma(\Xi) \right] \tilde{\bl}(dz).
\]
Hence,
\begin{align*}
\left|\mathbb{E}(\mathcal{A}\upgamma)\left(\Xi\right)\right| 
&= \left| \int_E \mathbb{E} \left[ \upgamma(\Xi + \delta_z) - \upgamma(\Xi)\right](\bl - \tilde{\bl})(dz)\right| \\
& \le \int_E \Delta_1 \upgamma  | \bl - \tilde{\bl}| (dz)
\le  \int_E  | \bl - \tilde{\bl}| (dz),
\end{align*}
where we used Lemma \ref{t: dtv_bounds_upgamma} (i) for the last inequality. 
\end{proof}

The following two corollaries exemplify the use of Theorem \ref{t: dTV_PP_generator}. For both corollaries, we suppose that the state space $E$ is a finite index set,
called $\Gamma$, and that the point process $\Xi$ on $\Gamma$ is of the form $\sum_{\alpha \in \Gamma} I_\alpha \delta_\alpha$, with the $I_\alpha$'s being 
indicator variables defined on the probability space $(\Omega, \mathcal{F}, P)$. 
Note that $\Xi$ is a finite point process since $\Gamma$ is finite. It is also simple, since, for all $\omega \in \Omega$ and for all
$\beta \in \Gamma$, 
\[
\Xi(\omega, \{\beta\}) = \sum_{\alpha \in \Gamma} I_\alpha(\omega) \delta_\alpha(\{\beta\}) \le \delta_\beta(\{\beta\}) = 1. 
\] 
Corollary \ref{t: proc_version_dTV_indpt} treats the case of independent indicator variables so the point process has no 
dependence whatsoever between ``regions'' of $\Gamma$. It gives a
process analogue to Theorem \ref{t: Barbour_Hall_1984} for Poisson approximation of sums of independent indicator variables. 
Likewise, Corollary \ref{t: proc_version_dTV_local_dep} 
gives a process version of Theorem
\ref{t: dTV_bounds_Poi_localapproach}, where we have local dependence between the indicator variables. 
\begin{cor}\label{t: proc_version_dTV_indpt}
Let $\Gamma$ be a finite index set. Let $I_\alpha$, $\alpha \in \Gamma$, be independent Bernoulli random variables with success probability 
$P(I_\alpha = 1) = p_\alpha \in (0,1)$, for all $\alpha \in \Gamma$. Let $\Xi = \sum_{\alpha \in \Gamma} I_\alpha \delta_\alpha$ be a point process on $\Gamma$ with 
intensity measure $\bl =\sum_{\alpha \in \Gamma} p_\alpha \delta_\alpha$. Then
\[
d_{TV}\left( \mathcal{L}(\Xi), \mathrm{PRM}(\bl)\right) \le \sum_{\alpha \in \Gamma} p_\alpha^2. 
\]
\end{cor}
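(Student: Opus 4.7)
The plan is to invoke Theorem \ref{t: dTV_PP_generator} with the smallest possible neighbourhood structure, namely $N_\alpha := \{\alpha\}$ for every $\alpha \in \Gamma$. Since $\Gamma$ is finite, the natural reference measure on $(\Gamma, \mathcal{P}(\Gamma))$ is the counting measure $\boldsymbol{\nu}$, with respect to which the density of $\bl = \sum_{\alpha \in \Gamma} p_\alpha \delta_\alpha$ is just $\mu(\alpha) = p_\alpha$. First I would check that $\Xi$ is a finite simple point process on $\Gamma$ (which has already been observed in the excerpt) and verify measurability of the mappings (\ref{d: neighbourhood_structure}) for this trivial choice of neighbourhoods, which is immediate because $\Gamma$ is discrete.

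The main work is then to evaluate the three error terms of Theorem \ref{t: dTV_PP_generator}. For the first term, $\Xi(N_\alpha) = I_\alpha$, so $\mathbb{E}\Xi(N_\alpha) = p_\alpha$ and the term reduces to $\sum_{\alpha \in \Gamma} p_\alpha \cdot p_\alpha = \sum_{\alpha \in \Gamma} p_\alpha^2$. For the second term, $N_\alpha \setminus \{\alpha\} = \emptyset$, so $\Xi(N_\alpha \setminus \{\alpha\}) \equiv 0$ and the term vanishes. The key observation for the third term is that independence of the $I_\beta$ kills it entirely: with $N_\alpha = \{\alpha\}$, the restriction $\Xi^\alpha = \sum_{\beta \neq \alpha} I_\beta \delta_\beta$ is independent of $I_\alpha$, so by definition (\ref{d: conditional_density_Janossy}) of the conditional density $g(\alpha, \Xi^\alpha)$, this quantity equals $P(I_\alpha = 1) = p_\alpha = \mu(\alpha)$ almost surely, whence $\mathbb{E}|g(\alpha, \Xi^\alpha) - \mu(\alpha)| = 0$.

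The one step that needs a little care, and which I expect to be the main (though minor) obstacle, is the identification $g(\alpha, \Xi^\alpha) = p_\alpha$ via the Janossy-density formula. One has to compute the Janossy densities of $\Xi$ explicitly: for distinct $\alpha_1, \ldots, \alpha_m \in \Gamma$,
\[
j_m(\alpha_1, \ldots, \alpha_m) = \prod_{i=1}^m p_{\alpha_i} \prod_{\beta \notin \{\alpha_1, \ldots, \alpha_m\}} (1-p_\beta),
\]
and then substitute into (\ref{d: conditional_density_Janossy}) with $N_\alpha = \{\alpha\}$. Because $N_\alpha \setminus \{\alpha\}$ is empty, the sums in numerator and denominator collapse to single terms, and the factor $(1-p_\alpha)$ cancels, leaving exactly $p_\alpha$. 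Alternatively, one can bypass the formula and appeal directly to the interpretation of $g(\alpha, \Xi^\alpha)$ as the conditional density of a point at $\alpha$ given $\Xi^\alpha$, which by independence equals the unconditional probability $p_\alpha$.

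Summing the three contributions yields $\sum_{\alpha \in \Gamma} p_\alpha^2 + 0 + 0$, which gives the desired bound. This is the natural point-process analogue of Theorem \ref{t: Barbour_Hall_1984}, and indeed one recovers the Barbour-Hall bound for $\mathcal{L}(W) = \mathcal{L}(\Xi(\Gamma))$ by applying the (obvious) contraction inequality $d_{TV}(\mathcal{L}(\Xi(\Gamma)), \mathrm{Poi}(\lambda)) \le d_{TV}(\mathcal{L}(\Xi), \mathrm{PRM}(\bl))$.
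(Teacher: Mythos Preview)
Your proposal is correct and follows exactly the same approach as the paper: apply Theorem~\ref{t: dTV_PP_generator} with the trivial neighbourhood structure $N_\alpha = \{\alpha\}$, so that the first term gives $\sum_\alpha p_\alpha^2$ while the second and third vanish. The paper's proof is terser (it simply asserts that the second and third terms vanish without unpacking the Janossy-density computation for $g(\alpha,\Xi^\alpha)$), but your more detailed justification of $g(\alpha,\Xi^\alpha)=p_\alpha$ via independence is a faithful expansion of exactly what ``clearly'' means there.
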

\begin{proof} As the $I_\alpha$'s are independent, we choose neighbourhoods $N_\alpha = \{\alpha\}$, for all $\alpha \in \Gamma$. Clearly, the second and third error
terms from Theorem \ref{t: dTV_PP_generator} vanish, and we obtain
\[
d_{TV}(\mathcal{L}(\Xi), \mathrm{PRM}(\bl)) \le \sum_{\alpha \in \Gamma} \mathbb{E}\Xi(\{\alpha\}) \bl(\{\alpha\}) 
= \sum_{\alpha \in \Gamma} \mathbb{E}\Xi(\{\alpha\})^2 = \sum_{\alpha \in \Gamma}p_\alpha^2.
\]
\end{proof}
\begin{cor}\label{t: proc_version_dTV_local_dep}
Let $\Gamma$ be a finite index set. Let $I_\alpha$, $\alpha \in \Gamma$, be Bernoulli random variables with success probability 
$P(I_\alpha = 1) = p_\alpha \in (0,1)$, for all $\alpha \in \Gamma$. Let $\Xi = \sum_{\alpha \in \Gamma} I_\alpha \delta_\alpha$ be a point process on $\Gamma$ with 
intensity measure $\bl =\sum_{\alpha \in \Gamma} p_\alpha \delta_\alpha$. For any choice of index $\alpha \in \Gamma$, define 
$\Gamma_\alpha^s \subseteq \Gamma \setminus\{\alpha\}$ to be the set of indices containing all $\beta \neq \alpha$ for which $I_\beta$ is strongly dependent 
on $I_\alpha$, and define $\Gamma_\alpha^w$ similarly as the set of indices $\beta$ for which $I_\beta$ is weakly dependent on $I_\alpha$. Furthermore, let
$Z_\alpha = \sum_{\beta \in \Gamma_\alpha^s} I_\beta$. Then, 
\[
d_{TV}\left( \mathcal{L}(\Xi), \mathrm{PRM}(\bl)\right) 
\le \sum_{\alpha \in \Gamma}\left\{ p_\alpha^2  + p_\alpha \mathbb{E}Z_\alpha + \mathbb{E}I_\alpha Z_\alpha + \eta_\alpha \right\}, 
\]
where
\[
\eta_\alpha = \mathbb{E}\left| \mathbb{E} \left\{ (I_\beta, \beta \in \Gamma_\alpha^w)\right\} - p_\alpha\right|.
\]
\end{cor}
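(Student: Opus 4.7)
The plan is to apply Theorem \ref{t: dTV_PP_generator} to the process $\Xi = \sum_{\alpha \in \Gamma} I_\alpha \delta_\alpha$ with a neighbourhood structure that exactly matches the strong/weak dependence split given in the statement. Since $\Gamma$ is finite, I take $\boldsymbol{\nu}$ to be counting measure on $\Gamma$, so that the density of the mean measure is $\mu(\alpha) = p_\alpha$. For each $\alpha \in \Gamma$, I set $N_\alpha := \{\alpha\} \cup \Gamma_\alpha^s$; this is chosen so that the restriction $\Xi^\alpha$ of $\Xi$ to $N_\alpha^c$ is measurable with respect to $(I_\beta,\ \beta \in \Gamma_\alpha^w)$, which is precisely what is needed to match the weak-dependence term $\eta_\alpha$. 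As $\Gamma$ is a finite index set, it is also straightforward to check that the measurability requirement (\ref{d: neighbourhood_structure}) is trivially satisfied.

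The first two error terms are essentially calculations. Since $\Xi(N_\alpha) = I_\alpha + Z_\alpha$, I have $\mathbb{E}\Xi(N_\alpha) = p_\alpha + \mathbb{E}Z_\alpha$, and the first integral in Theorem \ref{t: dTV_PP_generator} becomes
\[
\int_\Gamma \mathbb{E}\Xi(N_\alpha)\,\mu(\alpha)\,\boldsymbol{\nu}(d\alpha) = \sum_{\alpha \in \Gamma} p_\alpha\bigl(p_\alpha + \mathbb{E}Z_\alpha\bigr).
\]
For the second term, $\Xi(N_\alpha \setminus \{\alpha\}) = Z_\alpha$ and $\Xi(d\alpha)$ assigns mass $I_\alpha$ to the atom at $\alpha$, so
\[
\mathbb{E}\!\left[\int_\Gamma \Xi(N_\alpha \setminus \{\alpha\})\,\Xi(d\alpha)\right] = \sum_{\alpha \in \Gamma} \mathbb{E}(I_\alpha Z_\alpha).
\]

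The main obstacle is handling the third term, which requires identifying the conditional density $g(\alpha, \Xi^\alpha)$ defined in (\ref{d: conditional_density_Janossy}) with a familiar conditional expectation. Writing out the Janossy densities of $\Xi$ with respect to counting measure $\boldsymbol{\nu}$ in terms of the joint law of $(I_\beta)_{\beta \in \Gamma}$, the numerator and denominator of the ratio defining $g$ sum respectively to $P(I_\alpha = 1,\ I_\beta = x_\beta\ \forall \beta \in \Gamma_\alpha^w)$ and $P(I_\beta = x_\beta\ \forall \beta \in \Gamma_\alpha^w)$, after marginalising out the strongly-dependent indicators inside $N_\alpha$. Hence
\[
g(\alpha, \Xi^\alpha) = P\bigl(I_\alpha = 1 \,\big|\, I_\beta,\ \beta \in \Gamma_\alpha^w\bigr) = \mathbb{E}\bigl[I_\alpha \,\big|\, I_\beta,\ \beta \in \Gamma_\alpha^w\bigr],
\]
so that $\mathbb{E}|g(\alpha, \Xi^\alpha) - \mu(\alpha)| = \eta_\alpha$ and the third contribution is $\sum_{\alpha \in \Gamma} \eta_\alpha$.

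Adding the three bounds produced by Theorem \ref{t: dTV_PP_generator} yields the stated estimate. The only nontrivial step is the Janossy-density identification in the third term; once that bookkeeping is carried out, the result follows by direct substitution, which mirrors the derivation of Theorem \ref{t: dTV_bounds_Poi_localapproach} but in the point-process setting.
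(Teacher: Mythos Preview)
Your proof is correct and follows essentially the same route as the paper: the same neighbourhood $N_\alpha = \{\alpha\} \cup \Gamma_\alpha^s$ and the same three-term decomposition from Theorem~\ref{t: dTV_PP_generator}. The only difference is in the third term: you apply the theorem as stated and carry out the Janossy-density bookkeeping to identify $g(\alpha,\Xi^\alpha) = \mathbb{E}[I_\alpha \mid I_\beta,\ \beta\in\Gamma_\alpha^w]$, whereas the paper returns to the integral form in the \emph{proof} of Theorem~\ref{t: dTV_PP_generator} and bounds $\bigl|\sum_\alpha \mathbb{E}[(I_\alpha-p_\alpha)(\upgamma(\Xi^\alpha+\delta_\alpha)-\upgamma(\Xi^\alpha))]\bigr|$ directly by $\sum_\alpha \eta_\alpha\,\Delta_1\upgamma$, bypassing the Janossy machinery entirely. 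Your route is slightly more self-contained as an application of the theorem; the paper's is marginally shorter since it avoids spelling out $g$.
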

\begin{proof}
For each $\alpha \in \Gamma$, we choose the neighbourhood $N_\alpha = \Gamma_\alpha^s \cup \{\alpha\}$. 
Let $\Xi^\alpha = \sum_{\beta \in \Gamma_\alpha^w}I_\beta \delta_\beta$ be the configuration of $\Xi$ outside $N_\alpha$. From the proof of
Theorem \ref{t: dTV_PP_generator},
\begin{align*} 
 d_{TV}\left(\mathcal{L}(\Xi), \mathrm{PRM}(\bl)\right) 
 \le &\sum_{\alpha \in \Gamma} \mathbb{E}\Xi(N_\alpha)\bl(\{\alpha\}) 
+  \sum_{\alpha \in \Gamma}\mathbb{E}\left[ \Xi(N_\alpha \setminus \{\alpha\}) \Xi(\{\alpha\})\right]\\
&+ \left| \sum_{\alpha \in \Gamma} \mathbb{E} \left[ \upgamma(\Xi^\alpha+ \delta_\alpha) - \upgamma(\Xi^\alpha)\right]
\left\{ \Xi(\{\alpha\}) - \bl(\{\alpha\}) \right\}\right|.
\end{align*}
The first of these three error terms equals 
\begin{align*}
\sum_{\alpha \in \Gamma} \mathbb{E}\Xi\left(\Gamma_\alpha^s \cup \{\alpha\}\right)p_\alpha 
&= \sum_{\alpha \in \Gamma}p_\alpha  \left( \mathbb{E}\Xi(\{\alpha\} + \sum_{\beta_\in \Gamma_\alpha^s} \mathbb{E}\Xi(\{\beta\}))\right)\\
&= \sum_{\alpha \in \Gamma} \left\{ p_\alpha^2  + p_\alpha \mathbb{E}Z_\alpha\right\},
\end{align*}
and the second error term equals
\[
\sum_{\alpha \in \Gamma} \mathbb{E}\left[ \Xi(\Gamma_\alpha^s) \Xi(\{\alpha\})\right] 
= \sum_{\alpha \in \Gamma} \mathbb{E} \left[ \left( \sum_{\beta_\in \Gamma_\alpha^s} I_\beta\right)I_\alpha\right] 
=\sum_{\alpha \in \Gamma} \mathbb{E}[I_\alpha Z_\alpha].
\]
It remains to observe that the third error term is bounded by
\[
\sum_{\alpha \in \Gamma} 
\left| \mathbb{E} (I_\alpha - p_\alpha)[\upgamma(\Xi^\alpha+ \delta_\alpha) - \upgamma(\Xi^\alpha)]\right|
\le \sum_{\alpha \in \Gamma}  \eta_\alpha \Delta_1 \upgamma, 
\]
and $\Delta_1 \upgamma \le 1$ by Lemma \ref{t: dtv_bounds_upgamma} (i).
\end{proof}

When comparing Theorems  \ref{t: Barbour_Hall_1984} and \ref{t: dTV_bounds_Poi_localapproach} (which treated the 
concrete example of $\Gamma= \{1, \ldots, n\}$) with their process analogues, Corollaries \ref{t: proc_version_dTV_indpt} and \ref{t: proc_version_dTV_local_dep},
we see that the respective error bounds are of the same form except for the multiplicative factors in $\lambda$ that are absent in the process results.
As these factors decrease towards zero with increasing $\lambda$, the lack of them in the process results shows that Poisson process approximation gives bigger errors than 
Poisson approximation. The reason for this is that the total variation distance is so strong that it does not allow for even the smallest shifts in the positions of points on the
carrier space. That is, if the sets of placements of the points of two point processes in a carrier space are disjoint, then, even if the points of 
the two processes are placed close to each other with respect to some metric on the carrier space, the total variation
distance takes value $1$, the maximum value it can take for a pair of probability distributions. As a consequence, the total variation
distance is not at all suited for approximating a process on a lattice in $\mathbb{R}^d$ by a process with a continuous intensity over $\mathbb{R}^d$. 
An example was given in Chapter \ref{Chap: Introduction}.

The hope is to find a way to recover multiplicative factors that decrease with increasing $\lambda$ when approximating a point process by a 
Poisson process. One way to do this is to use a metric that is weaker than the total variation metric and able to exploit the closeness in the positions of the points of the two 
processes. We would thus compare $\mathbb{E}h(\Xi_n)$ by $\mathbb{E}h(\Xi)$ for a set of functionals $h$ that is smaller than the one used
for approximation in total variation, and whose elements are not too sensitive to small differences in the positions
of points. \cite{Barbour/Brown:1992} and \cite{Barbour_et_al.:1992} constructed a suitable weaker metric, the $d_2$-metric. 
We will give their results in Section \ref{Sec: Improved_rates}. Another way to improve Poisson process approximation
is to consider marked point processes (in situations where the use of marks makes sense).
As we see in the following section, it is sometimes possible to actually recover the sharper results from Poisson approximation. 
Consider, for instance, the basic case where we associate a point $z$ from a carrier space $E$ to each indicator variable $I_i$, when we
know that the law of $W=\sum_{i=1}^n I_i$ is close to the Poisson distribution with parameter $\mathbb{E}W$. Then, fixing any $z$,
the process $\sum_{i=1}^n I_i \delta_z = W\delta_z$ gives $W$ points at position $z$ and it is clear that Poisson process approximation is the 
same as Poisson approximation for $W$. 
\subsection{Approximation of point processes with \iid marks -- Michel's argument}\label{s: Michel_MPP}
Suppose $\Xi$ is a marked point process of the form $\sum_{i=1}^n I_i \delta_{X_i}$, 
where the Ber\-noulli random variables $I_1, \ldots, I_n$ are independent of the \iid $E$-valued marks $X_1, \ldots, X_n$. 
We may then use an argument made by \cite{Michel:1988} to show 
that the total variation distance between $\mathcal{L}(\Xi)$ and the law of a Poisson process with mean measure 
$\mathbb{E}\Xi$ is smaller than or equal to the total variation distance between the law of $W$ and that of a Poisson random variable with
mean $\mathbb{E}W$. We may thus use Theorem \ref{t: Barbour_Hall_1984} to estimate the approximation error between $\mathcal{L}(\Xi)$ and $\mathrm{PRM}(\mathbb{E}\Xi)$.
\begin{thm} \label{t: Michel}
For each integer $n \ge 1$, let $I_1, \ldots, I_n$ be Bernoulli random variables with probability of success $P(I_i=1)=p_i \in (0,1)$. 
Let $E$ be a locally compact separable metric space and let
$X_1, \ldots, X_n$ be \iid $E$-valued random variables, independent of the $I_i$'s.
Moreover, let $\Xi = \sum_{i=1}^n I_i\delta_{X_i}$ and let $W = \sum_{i=1}^n  I_i$. Then,
\[
d_{TV}(\mathcal{L}(\Xi), \mathrm{PRM}(\mathbb{E}\Xi)) \le d_{TV}(\mathcal{L}(W), \mathrm{Poi}(\mathbb{E}W)). 
\]
\end{thm}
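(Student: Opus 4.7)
The plan is to exploit the special structure of $\Xi$ --- \iid marks independent of the Bernoulli indicators --- in order to obtain a convenient mixture representation, and then to pass to a coupling argument based on (\ref{t: coupling_dTV}).

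First, I would observe that $\Xi$ admits the representation
\[
\Xi \stackrel{d}{=} \sum_{j=1}^{W} \delta_{Z_j},
\]
where $W = \sum_{i=1}^n I_i$ and $(Z_j)_{j \ge 1}$ are \iid random elements of $E$ with common law $\mathcal{L}(X_1)$, independent of $W$. This holds because, conditional on $(I_1, \ldots, I_n)$, the accepted positions $\{X_i : I_i = 1\}$ are \iid with law $\mathcal{L}(X_1)$, while the map $(x_1, \ldots, x_n) \mapsto \sum_i I_i \delta_{x_i}$ is symmetric in the accepted indices, so only the total number $W$ of them matters. On the other hand, since $\mathbb{E}\Xi(B) = \sum_{i=1}^n p_i P(X_1 \in B) = \mathbb{E}W \cdot P(X_1 \in B)$, the mean measure factors as $\mathbb{E}\Xi = \mathbb{E}W \cdot \mathcal{L}(X_1)$, and the standard construction of a Poisson process whose intensity is the product of a scalar and a probability measure lets me realise $\Xi^\star \sim \mathrm{PRM}(\mathbb{E}\Xi)$ as
\[
\Xi^\star = \sum_{j=1}^{W^\star} \delta_{Z_j},
\]
where $W^\star \sim \mathrm{Poi}(\mathbb{E}W)$ is independent of the same sequence $(Z_j)$ used above.

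Given both representations on a common probability space, I would then couple $W$ and $W^\star$ via a maximal coupling, achieving $P(W \neq W^\star) = d_{TV}(\mathcal{L}(W), \mathrm{Poi}(\mathbb{E}W))$, while keeping the sequence $(Z_j)$ shared. On the event $\{W = W^\star\}$ the two sums over $(Z_j)$ coincide as point measures, so $\{\Xi \neq \Xi^\star\} \subseteq \{W \neq W^\star\}$. Applying (\ref{t: coupling_dTV}) to the $\overline{M}_p(E)$-valued random elements $\Xi$ and $\Xi^\star$ gives
\[
d_{TV}(\mathcal{L}(\Xi), \mathrm{PRM}(\mathbb{E}\Xi)) \le P(\Xi \neq \Xi^\star) \le P(W \neq W^\star) = d_{TV}(\mathcal{L}(W), \mathrm{Poi}(\mathbb{E}W)),
\]
which is the required bound.

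The main obstacle is to justify the first representation cleanly: one has to verify that, given $W = w$, the $w$ accepted positions form an unordered sample of $w$ \iid copies of $X_1$ independently of $W$ itself. This relies both on the \iid property of the marks and on their independence from the indicators, and is most transparent when phrased as a statement about the symmetry of the $\overline{M}_p(E)$-valued map $(x_1,\dots,x_n) \mapsto \sum_i I_i \delta_{x_i}$. Once this factorisation is in hand, the Poisson realisation and the coupling step are essentially immediate.
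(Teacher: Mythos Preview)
Your proof is correct and shares the same key idea as the paper's: both establish the mixture representations $\Xi \stackrel{d}{=} \sum_{j=1}^{W}\delta_{Z_j}$ and $\Xi^\star \stackrel{d}{=} \sum_{j=1}^{W^\star}\delta_{Z_j}$ with a common \iid sequence $(Z_j)$ independent of the counts. The difference lies only in the final step. You invoke a maximal coupling of $W$ and $W^\star$ together with the coupling inequality (\ref{t: coupling_dTV}), which is clean and conceptual. The paper instead works directly from the definition (\ref{d: dTV_without_mod}): conditioning on the value of the count, it bounds $P(\Xi\in R)-P(\Xi^\star\in R)$ by $\sum_{l}\{P(W=l)-P(W^\star=l)\}_+$ and then identifies this positive-part sum with $P(W\in B_0)-P(W^\star\in B_0)$ for the set $B_0=\{l:P(W=l)>P(W^\star=l)\}$. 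Your coupling route is shorter and avoids this explicit positive-part bookkeeping; the paper's computation, on the other hand, is self-contained and does not appeal to the existence of a maximal coupling.
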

\begin{proof} 
Let $Z_1, \ldots, Z_n$ be \iid random variables with distribution $\mathcal{L}(X_1)$, and let them be independent of $W$. 
Then the process $\sum_{j=1}^W \delta_{Z_j}$ has the same distribution as the process of interest $\Xi$. 
Furthermore, note that a $\mathrm{PRM}(\mathbb{E}\Xi)$ can be realised as $\sum_{j=1}^{W^\star} \delta_{Z_j}$, where $W^\star \sim \mathrm{Poi}(\mathbb{E}W)$ 
is independent of the $Z_j$'s. Then, using (\ref{d: dTV_without_mod}) for the total variation distance,
\begin{align*}
& d_{TV}(\mathcal{L}(\Xi), \mathrm{PRM}(\mathbb{E}\Xi)) \\
& = \sup_{R} \left\{ P(\Xi \in R) - P(\mathrm{PRM}(\mathbb{E}\Xi) \in R) \right\}\\
& =\sup_{R} \left\{ P\left(\sum_{j=1}^{W} \delta_{Z_j} \in R\right) - P\left(\sum_{j=1}^{W^\star} \delta_{Z_j} \in R\right) \right\}\\
& =\sup_{R} \left\{ \sum_{l=0}^n P\left( \sum_{j=1}^{W} \delta_{Z_j} \in R \, , \, W=l \right) 
- \sum_{l=0}^\infty P\left( \sum_{j=1}^{W^\star} \delta_{Z_j} \in R \, , \, W^\star=l \right)\right\}\\
& = \sup_{R} \left\{ 
\sum_{l=0}^n P\left( \sum_{j=1}^{l} \delta_{Z_j} \in R \right)P\left( W=l \right) 
\right. \\
& \left. \phantom{blaaaaaaaaaaaaaaaaaaaaaaaaaaaa}- \sum_{l=0}^\infty P\left( \sum_{j=1}^{l} \delta_{Z_j} \in R \right)P\left( W^\star=l \right) \right\}\\
& \le \sup_{R}\sum_{l=0}^n P\left( \sum_{j=1}^{l} \delta_{Z_j} \in R \right)\left\{P(W=l)-  P(W^\star=l) \right\}_+\\
& \le \sum_{l=0}^n \left\{P(W=l)-  P(W^\star=l) \right\}_+,
\end{align*}
where $\{\, .\,\}_+ = \max(\, .\,,0)$. Now define 
\[
B_0 = \{l \in \{1, \ldots,n\}:\, P(W = l)>P(W^\star = l)\}. 
\]
Then 
\begin{align*}
\sum_{l=0}^n \left\{P(W=l)-  P(W^\star=l) \right\}_+
& = \sum_{l \in B_0} \left\{P(W=l)-  P(W^\star=l) \right\}\\
& = P(W \in B_0)-  P(W^\star \in B_0)\\
& \le \sup_{B \subseteq \mathbb{Z}_+} |P(W \in B)-  P(W^\star \in B)|\\
& =d_{TV}(\mathcal{L}(W), \mathrm{Poi}(\mathbb{E}W)).
\end{align*}
\end{proof}
\section{Improved rates for Poisson process approximation using the $d_2$-distance}\label{Sec: Improved_rates}
This section gives the results by \cite{Barbour/Brown:1992} and \cite{Barbour_et_al.:1992} for Poisson process approximation in a 
metric that is weaker than the total variation metric.
As before in Section \ref{Sec: Poi_proc_approx}, we assume that $E$ is a locally compact separable metric space. Let $d_0$ be a metric on $E$ that is bounded by $1$.
We now define metrics on both the space $M_p(E)$ of finite point measures over $E$ and on the set of probability measures over $M_p(E)$. 
Let $\mathcal{K}$ denote the set of functions $\kappa: E \to \mathbb{R}$ such that 
\[
s_1(\kappa) = \sup_{z_1 \neq z_2 \in E} \frac{|\kappa(z_1) - \kappa(z_2)|}{d_0(z_1, z_2)} < \infty,  
\]
which implies that for all $z_1 \neq z_2 \in E$, $|\kappa(z_1) - \kappa(z_2)| \le s_1(\kappa)d_0(z_1, z_2)$. Thus each function $\kappa \in \mathcal{K}$
is Lipschitz continuous with constant $s_1(\kappa)$.
Define a distance $d_1$ between two finite measures $\boldsymbol{\rho}$ and $\boldsymbol{\sigma}$ over $E$ by
\begin{equation}\label{d: def_d1}
d_1(\boldsymbol{\rho},\boldsymbol{\sigma}) = \left\{ 
\begin{array}{ll} 
1, & \textnormal{if }  \boldsymbol{\rho}(E) \neq \boldsymbol{\sigma}(E),\\
\displaystyle \frac{1}{\boldsymbol{\rho}(E)}\, \sup_{\kappa \in \mathcal{K}} \frac{\left| \int_E \kappa d\boldsymbol{\rho} - \int_E \kappa d\boldsymbol{\sigma}\right|}{s_1(\kappa)},
& \textnormal{if } \boldsymbol{\rho}(E) = \boldsymbol{\sigma}(E).
\end{array}
\right.
\end{equation}
Note that $d_1$ is bounded by $1$. We can use $d_1$ as distance between point measures in $M_p(E)$. The $d_1$-distance is then a Wasserstein metric induced by $d_0$
over point measures on $E$. 
Suppose that we have two point configurations $\xi_1, \xi_2 \in M_p(E)$ with the same number of points $|\xi_1|=|\xi_2|=m$. 
It then follows from (\ref{p: Wasserstein_coupling}) that $d_1(\xi_1, \xi_2)$ can be interpreted as the average distance between the points 
$(z_{11}, \ldots, z_{1m})$ and $(z_{21}, \ldots, z_{2m})$ of $\xi_1$
and $\xi_2$ under their closest matching, i.e.
\begin{equation}\label{d: closest_matching}
d_1(\xi_1, \xi_2) = \min_{\pi \in S_m} \frac{1}{m} \sum_{j=1}^m d_0({z}_{1j},{z}_{2\pi(j)}).
\end{equation}
See Figure \ref{f: d2metric} for an illustration.
\begin{figure}[!ht]
\begin{center}{\footnotesize \input{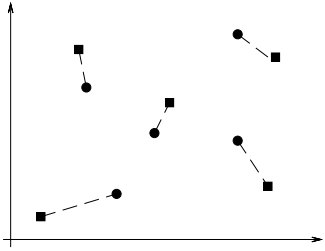}}\end{center}
\caption{Two point configurations $\xi_1$ (bullets) and $\xi_2$ (squares) with five points each on $E=\mathbb{R}^2_+$. 
The dashed lines represent the $d_0$-distances (here the Euclidean distances bounded by $1$) between the closest matchings.}
\label{f: d2metric}
\end{figure}

We establish a useful result for the $d_1$-distance between two point configurations differing only in one point, i.e. for $d_1(\xi + \delta_z, \xi + \delta_w)$, 
where $\xi \in M_p(E)$ and $z \neq w \in E$. To achieve this, note that 
\begin{align*}
\int_E \kappa(v)(\xi + \delta_z)(dv) - \int_E \kappa(v)(\xi + \delta_w)(dv) 
&= \int_E \kappa(v)\delta_z(dv) - \int_E \kappa(v) \delta_w(dv)\\
&= \kappa(z) - \kappa(w),
\end{align*}
and we therefore obtain, using $s_1(\kappa)^{-1} \le d_0(z,w)/|\kappa(z)-\kappa(w)|$ from Lipschitz continuity of $\kappa \in \mathcal{K}$, that
\begin{align}
\begin{split}\label{p: helpful_d1_bound}
&d_1(\xi + \delta_z, \xi + \delta_w) \\
&= \frac{1}{|\xi|+1} \, \sup_{\kappa \in \mathcal{K}} 
\frac{ d_0(z,w) \left|\int_E \kappa(v)(\xi + \delta_z)(dv) - \int_E \kappa(v)(\xi + \delta_w)(dv)\right|}{|\kappa(z)-\kappa(w)|}\\
& = \frac{1}{|\xi|+1} \, d_0(z,w).
\end{split}
\end{align}

We next construct a metric $d_2$ that is a Wasserstein metric induced by $d_1$ over probability measures on $M_p(E)$. 
Let $\mathcal{H}$ denote the set of functions $h : M_p(E) \to \mathbb{R}$ such that 
\begin{equation}\label{d: s_2h}
s_2(h) = \sup_{\xi_1 \neq \xi_2 \in M_p(E)} \frac{|h(\xi_1)-h(\xi_2)|}{d_1(\xi_1, \xi_2)} < \infty,
\end{equation}
i.e. each function $h \in \mathcal{H}$ is Lipschitz continuous with constant $s_2(h)$. We define a distance $d_2$ between probability measures 
$\mu$ and $\nu$ over $M_p(E)$ by
\begin{equation}\label{d: def_d2}
d_2(\mu,\nu)  = \frac{1}{s_2(h)}\, \sup_{h \in \mathcal{H}} \left| \int_{M_p(E)}h d\mu - \int_{M_p(E)} h d\nu \right|.
\end{equation}
Note that $d_2$ is bounded by $1$. By setting $\tilde{h}:= h/s_2(h)$ for each $h \in \mathcal{H}$, we may equivalently write
\[
d_2(\mu, \nu) = \sup_{\tilde{h} \in \widetilde{\mathcal{H}}}  \left| \int_{M_p(E)}\tilde{h} d\mu - \int_{M_p(E)} \tilde{h} d\nu \right|,
\]
where $\widetilde{\mathcal{H}} = \{ \tilde{h}: M_p(E) \to \mathbb{R}; \, |\tilde{h}(\xi_1) - \tilde{h}(\xi_2)| \le d_1(\xi_1, \xi_2) \le 1, \forall
\xi_1, \xi_2 \in M_p(E)\}$. By (\ref{d: equiv_def_dTV}), the test functions $\tilde{h}$ used to define the total variation distance between two probability measures 
$\mu$ and $\nu$ on $M_p(E)$ satisfy $|\tilde{h}(\xi_1) - \tilde{h}(\xi_2)|\le 1$, for any $\xi_1, \xi_2 \in M_p(E)$. The set of test functions used to define $d_2$
is thus contained in the set of test functions used to define the total variation distance. It follows that for any probability measures $\mu$ and $\nu$ on $M_p(E)$,
\begin{equation}\label{t: d2_smaller_dTV}
d_2(\mu,\nu) \le d_{TV}(\mu,\nu). 
\end{equation}

For a point process $\Xi$ on $E$ with intensity measure $\bl$, let $\mathcal{A}$ be the generator of an immigration-death process
with immigration intensity $\bl$, unit per-capita death rate, and equilibrium distribution $\mathrm{PRM}(\bl)$, i.e. 
let $\mathcal{A}$ be as defined in (\ref{d: generator}). Moreover, let $\upgamma$ be as defined in Proposition
\ref{t: upgamma_welldefined} for $h \in \mathcal{H}$. Then, by Proposition \ref{t: upgamma_solves_Stein}, 
\begin{align}\label{d: Stein_eq_d2}
\begin{split}
\left| \mathbb{E}(\mathcal{A}\upgamma)(\Xi)\right| &= \left| \mathbb{E}h(\Xi) - \mathrm{PRM}(\bl)(h)\right| \\
& = \left| \int_{M_p(E)} hd\mathcal{L}(\Xi) - \int_{M_p(E)} h d\mathrm{PRM}(\bl)\right|,
\end{split}
\end{align}
By (\ref{d: def_d2}), 
finding an upper bound on $d_2(\mathcal{L}(\Xi), \mathrm{PRM}(\bl))$ is thus equivalent to finding a uniform upper bound on 
$| \mathbb{E}(\mathcal{A}\upgamma)(\Xi)|/s_2(h)$. For the latter we need smoothness estimates of the solution $\upgamma$ 
of the Stein equation when $h \in \mathcal{H}$ (instead of $h$ being an indicator function as in Section \ref{Sec: Poi_proc_approx}). We determine
such estimates in Lemmas \ref{t: Delta_1_upgamma_d2} and \ref{t: Delta_2_upgamma_d2} below. In order to prove these, we first need the following 
two lemmas. Note that our proof of Lemma \ref{t: helping_bound_immdeath} below corrects a slight mistake in \cite{Barbour/Brown:1992} and \cite{Barbour_et_al.:1992}.
\begin{lemma}\label{l: Xia_sum}
Let $Z$ and $Z^0$ be immigration-death processes on $E$ with immigration intensity $\bl$ and unit per-capita death rate, where $\lambda = \bl(E) < \infty$,  
$Z$ has point configuration $\xi = \sum_{j=1}^{|\xi|} \delta_{z_j} \in M_p(E)$ at time $t=0$, and $Z^0$ has no initial particles. 
Let $T_1, T_2, \ldots, T_{|\xi|}$ be independent $\mathrm{Exp}(1)$-random variables, independent of $Z^0$. Then 
\[
Z_t \stackrel{d}{=} Z_t^0 + D_t, \quad \text{for all } t \in \mathbb{R}_+,
\]
where $D_t = \sum_{j=1}^{|\xi|} \delta_{z_j}I_{\{T_j>t\}}$ is a pure death process and $Z_t^0 \sim \mathrm{PRM}((1-e^{-t})\bl)$.
\end{lemma}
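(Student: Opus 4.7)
The plan is to construct $Z$ by splitting it at time $0$ into two independent components: the surviving initial particles and the descendants of later immigrations. By the defining structure of the immigration--death process (constant immigration rate $\bl$ on $E$, each particle equipped with an independent $\mathrm{Exp}(1)$ lifetime), the fate of the $|\xi|$ initial particles is independent of the arrival times and lifetimes of any subsequently immigrating particles. Setting $D_t := \sum_{j=1}^{|\xi|}\delta_{z_j} I_{\{T_j > t\}}$ with $T_1,\dots,T_{|\xi|}$ i.i.d.\ $\mathrm{Exp}(1)$ therefore gives a pure death process representing those initial particles that are still alive at time $t$, and the remaining contribution to $Z_t$ is distributed as $Z_t^0$, the same immigration--death process started with no particles. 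This independent decomposition is essentially immediate from the construction, so the substantive step is to identify the distribution of $Z_t^0$.

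First I would describe the arrivals after time $0$ as a Poisson point process $N$ on $E \times [0,t]$ with intensity measure $\bl \otimes \mathrm{Leb}$, which is the standard characterization of immigrations at constant rate $\bl$. Next, I would mark each immigrant independently with an $\mathrm{Exp}(1)$ lifetime, and declare that the immigrant at $(z,s)$ is still alive at time $t$ iff its lifetime exceeds $t-s$. By the Marking Theorem, the marked process $\{(z, s, L) : (z,s) \in N\}$ is Poisson on $E \times [0,t] \times [0,\infty)$ with intensity $\bl(dz)\,ds\, e^{-u}du$. Restricting to surviving immigrants (that is, to $\{(z,s,L) : L > t-s\}$) and projecting onto the $z$-coordinate is a measurable mapping of this Poisson process; by the Mapping Theorem, the image is again a Poisson process on $E$, with intensity
\[
\int_0^t \int_{t-s}^{\infty} e^{-u}\,du\,ds\,\bl(dz) = \int_0^t e^{-(t-s)}\,ds\,\bl(dz) = (1-e^{-t})\bl(dz).
\]
This yields $Z_t^0 \sim \mathrm{PRM}((1-e^{-t})\bl)$.

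Finally, I would note that the deaths of the initial particles are governed by their own independent $\mathrm{Exp}(1)$ lifetimes, and these lifetimes are independent both of the immigration process $N$ and of the lifetimes of the immigrants; hence $D_t$ and $Z_t^0$ are independent, and $Z_t \stackrel{d}{=} Z_t^0 + D_t$ as claimed. The main conceptual obstacle, rather than any hard computation, is to make the probabilistic decomposition rigorous via the marking/mapping formalism for Poisson point processes on $E \times [0,t] \times [0,\infty)$; once this setup is in place the computation of the intensity $(1-e^{-t})\bl$ is a one-line integration.
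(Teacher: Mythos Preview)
Your argument is correct: the independent decomposition into surviving initial particles and subsequent immigrants is exactly the right construction, and your use of the Marking and Mapping Theorems to identify the law of $Z_t^0$ as $\mathrm{PRM}((1-e^{-t})\bl)$ is clean and accurate.

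The paper does not actually prove this lemma; it simply refers the reader to Proposition~3.5 in Xia~(2005). What you have written is essentially the standard proof one finds there, so your proposal is not just consistent with the paper's approach but in fact supplies the details that the paper outsources to a citation.
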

\begin{proof}
See Proposition 3.5 in \cite{Xia:2005}.
\end{proof}
\begin{lemma}\label{t: helping_bound_immdeath}
Let $Z = \{Z_t, t \in \mathbb{R}_+\}$ be an immigration-death process on $\mathbb{Z}_+$ with constant immigration rate $\lambda >0$ and 
unit per-capita death rate, with $k$ initial particles, i.e. $P(Z_0=k)=1$. Then
\[
\int_0^\infty e^{-t} \mathbb{E} \left[ (Z_t + 1)^{-1}\right]dt \le \left( \frac{1}{\lambda} + \frac{1}{k+1} \right)(1-e^{-\lambda}).
\]
\end{lemma}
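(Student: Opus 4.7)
The plan is to decompose $Z_t$ via Lemma~\ref{l: Xia_sum}, rewrite the inner expectation as a one-dimensional $u$-integral using $(n+1)^{-1}=\int_0^1 u^n\,du$, bound the binomial factor by a convenient exponential, and then exploit the convexity of $\phi(\alpha):=(1-e^{-\alpha})/\alpha$ to collapse the $t$-integral into a chord estimate; a short scalar inequality then finishes the proof.

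First I would write $Z_t\stackrel{d}{=}Z_t^0+D_t$ with $Z_t^0\sim\mathrm{Poi}((1-e^{-t})\lambda)$ independent of $D_t\sim\mathrm{Bin}(k,e^{-t})$, so by independence
\[
\mathbb{E}^k\bigl[(Z_t+1)^{-1}\bigr]=\int_0^1 e^{-(1-u)(1-e^{-t})\lambda}\bigl(1-(1-u)e^{-t}\bigr)^k du.
\]
Applying $(1-x)^k\le e^{-kx}$ to the binomial factor merges the two exponents into $-(1-u)\alpha_t$ with $\alpha_t:=(1-e^{-t})\lambda+ke^{-t}$, giving $\mathbb{E}^k[(Z_t+1)^{-1}]\le\phi(\alpha_t)$. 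Multiplying by $e^{-t}$, integrating and substituting $q=e^{-t}$ turns $\alpha_t$ into the affine function $\alpha(q)=(1-q)\lambda+qk$ of $q\in[0,1]$, so that
\[
\int_0^\infty e^{-t}\mathbb{E}^k\bigl[(Z_t+1)^{-1}\bigr]dt\le\int_0^1\phi\bigl(\alpha(q)\bigr)dq.
\]

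Writing $\phi(\alpha)=\int_0^1 e^{-u\alpha}du$ exhibits $\phi$ as an average of convex exponentials, so $\phi$ is convex; composed with the affine $\alpha(q)$ this makes $q\mapsto\phi(\alpha(q))$ convex on $[0,1]$, hence bounded above by its secant $(1-q)\phi(\lambda)+q\phi(k)$. Integrating the secant bound over $q$ yields the clean estimate
\[
\int_0^\infty e^{-t}\mathbb{E}^k\bigl[(Z_t+1)^{-1}\bigr]dt\le\tfrac12\bigl(\phi(\lambda)+\phi(k)\bigr).
\]

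The hard part will be the remaining scalar step: showing $\tfrac12(\phi(\lambda)+\phi(k))\le(\tfrac1\lambda+\tfrac1{k+1})(1-e^{-\lambda})$, which rearranges to $\phi(k)-\phi(\lambda)\le 2(1-e^{-\lambda})/(k+1)$. I would split into three cases. If $\lambda\le k$ the left side is non-positive because $\phi$ is decreasing, and nothing needs to be shown. If $\lambda\ge k\ge1$, I would bound the left side crudely by $\phi(k)=(1-e^{-k})/k$ and use $1-e^{-\lambda}\ge 1-e^{-k}$ to reduce the claim to $(k+1)(1-e^{-k})\le 2k(1-e^{-k})$, i.e.\ $k\ge 1$. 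For the remaining case $k=0$ (with $\phi(0):=1$), the desired inequality becomes $(1+\lambda)e^\lambda\ge 2\lambda+1$, which follows at once from $f(\lambda):=(1+\lambda)e^\lambda-(2\lambda+1)$ satisfying $f(0)=f'(0)=0$ and $f''(\lambda)=(3+\lambda)e^\lambda>0$.
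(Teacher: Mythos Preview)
Your proof is correct and takes a genuinely different route from the paper's. The paper uses the same decomposition $Z_t\stackrel{d}{=}X_t+Y_t$ but then bounds $\mathbb{E}[(Z_t+1)^{-1}]$ separately by $\mathbb{E}[(X_t+1)^{-1}]$ and $\mathbb{E}[(Y_t+1)^{-1}]$, computes each closed form explicitly, and splits the time integral at a carefully chosen cut $\tau$ (defined by $e^{-\tau}=\lambda/(\lambda+k+1)$), using the binomial bound on $[0,\tau]$ and the Poisson bound on $[\tau,\infty)$; several lines of algebraic manipulation then assemble the two pieces into the target. Your argument is tighter and more conceptual: the identity $(n+1)^{-1}=\int_0^1 u^n\,du$ together with the generating functions gives the single bound $\mathbb{E}[(Z_t+1)^{-1}]\le\phi(\alpha_t)$, and the convexity of $\phi$ (transparent from the same integral representation) replaces the $\tau$-splitting by a one-line secant bound, yielding the clean intermediate $\tfrac12(\phi(\lambda)+\phi(k))$. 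The trade-off is that you must then do a short scalar case analysis to reach the stated form $(\tfrac1\lambda+\tfrac1{k+1})(1-e^{-\lambda})$, whereas the paper's two-piece integral lands directly on that expression; but your case analysis is brief and entirely elementary, so overall your approach is arguably cleaner.
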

\begin{proof}
The particles that are alive in the population at time $t$ can be grouped into two categories: those 
among the $k$ particles that were in the population from time $0$, and those that arrived later. 
By Lemma \ref{l: Xia_sum}, we may thus express the number $Z_t$ of particles in the 
population at time $t$ as 
the sum of independent random variables $X_t \sim \mathrm{Bin}(k, e^{-t})$ and $Y_t \sim \mathrm{Poi}(\lambda(1-e^{-t}))$.
We then have
\[
\mathbb{E} \left[ (Z_t + 1)^{-1}\right] \le \mathbb{E} \left[ (X_t + 1)^{-1}\right] \quad \textnormal{and } \quad 
\mathbb{E} \left[ (Z_t + 1)^{-1}\right] \le \mathbb{E} \left[ (Y_t + 1)^{-1}\right].
\]
On the one hand, setting $m=l+1$,
\begin{align*}
\mathbb{E} \left[ (X_t + 1)^{-1}\right] 
&=  \sum_{l=0}^k \frac{1}{l+1} \, \binom{k}{l} (e^{-t})^l (1-e^{-t})^{k-l}\\
&=  (1-e^{-t})^k \left(\frac{e^{-t}}{1-e^{-t}}\right)^{-1} \sum_{m=1}^{k+1} \frac{k!}{m! (k-m+1)!} \, \left( \frac{e^{-t}}{1-e^{-t}}\right)^m\\
&=(1-e^{-t})^{k+1} \, \frac{e^t}{k+1} \left\{  \sum_{m=0}^{k+1} \frac{(k+1)!}{m!(k+1-m)!} \, \left( \frac{e^{-t}}{1-e^{-t}}\right)^m - 1\right\}\\
&=(1-e^{-t})^{k+1} \, \frac{e^t}{k+1} \left\{  \left( 1+ \frac{e^{-t}}{1-e^{-t}}\right)^{k+1} - 1\right\}\\
& = \frac{e^t}{k+1} \left\{ 1 - (1-e^{-t})^{k+1}\right\}.
\end{align*}
On the other hand, setting $m=l+1$, 
\begin{align*}
\mathbb{E} \left[ (Y_t + 1)^{-1}\right]  = e^{-\lambda_t} \sum_{l=0}^\infty \frac{\lambda_t^l}{(l+1)!}
= \frac{e^{-\lambda_t}}{\lambda_t} \left( \sum_{m=0}^\infty \frac{\lambda_t^m}{m!} -1\right)
= \frac{1- e^{-\lambda (1-e^{-t})}}{\lambda (1-e^{-t})}.
\end{align*}
With $\tau$ such that $e^{-\tau} = \lambda/(\lambda + k+ 1)$, we now have 
\begin{align}\label{p: techn_lemma_step0}
\begin{split} 
&\int_0^\infty e^{-t} \mathbb{E} \left[ (Z_t + 1)^{-1} \right]dt \\
&\le \int_0^{\tau} e^{-t} \mathbb{E} \left[ (X_t + 1)^{-1} \right]dt  + \int_{\tau}^\infty e^{-t}\mathbb{E} \left[ (Y_t + 1)^{-1} \right]dt\\
& \le \int_0^{\tau}\frac{1}{k+1} \left\{ 1 - (1-e^{-t})^{k+1}\right\}dt + \frac{1-e^{-\lambda}}{\lambda}
\int_{\tau}^\infty \frac{e^{-t}}{1-e^{-t}}\, dt.
\end{split}
\end{align}
The second of these integrals equals 
\begin{equation}\label{p: techn_lemma_2nd_integral}
- \frac{1-e^{-\lambda}}{\lambda}\, \log\left(1-e^{-\tau}\right) = \frac{1-e^{-\lambda}}{\lambda} \, \log\left( 1 + \frac{\lambda}{k+1}\right) 
\le \frac{1-e^{-\lambda}}{k+1},
\end{equation}
since $-\log(1-z) \le z/(1-z)$ for $z = \lambda/(\lambda+k+1) <1$. Furthermore, due to 
\[
 1-(1-e^{-t})^{k+1} =e^{-t} \cdot \frac{1-(1-e^{-t})^{k+1}}{1-(1-e^{-t})} = e^{-t }\sum_{j=0}^k \left( 1-e^{-t}\right)^j,
\]
the first integral equals
\begin{equation}\label{p: techn_lemma_step}
\frac{1}{k+1} \, \sum_{j=0}^k \int_0^{\tau} e^{-t}\left( 1-e^{-t}\right)^j dt 
 = \frac{1}{k+1} \, \sum_{j=1}^{k+1} \frac{(1-e^{-\tau})^j}{j}. 
\end{equation}
By setting $x:= \lambda/(k+1)$, we may rewrite the expression 
in (\ref{p: techn_lemma_step}) as
$(x/\lambda)\cdot$ $\sum_{j=1}^{k+1}$ $[j(1+x)^j]^{-1}$,
which is smaller than
\begin{align}\label{p: techn_lemma_1st_integral}
\frac{x}{\lambda}\, \sum_{j=1}^{k+1}  \frac{1}{(1+x)^j} 
& = \frac{x}{\lambda} \, \left[ \sum_{j=0}^\infty \frac{1}{(1+x)^j} - 1 - \sum_{j=k+2}^\infty \frac{1}{(1+x)^j} \right]
\nonumber \\
& = \frac{x}{\lambda} \, \left[ \frac{1}{x} - \frac{(1+x)^{-k-2}}{1-(1+x)^{-1}} \right] \nonumber
= \frac{1}{\lambda}\, \left[ 1- \frac{1}{(1+x)^{k+1}} \right]\\
& = \frac{1}{\lambda} \, \left( 1- e^{-\frac{\lambda \log(1+x)}{x}} \right)
 \le \frac{1-e^{-\lambda}}{\lambda},
\end{align}
where we used $\log (1+x) \le x$ for the last inequality. 
By combining (\ref{p: techn_lemma_step0}), (\ref{p: techn_lemma_2nd_integral}) and (\ref{p: techn_lemma_1st_integral}), we obtain the lemma.
\end{proof}

\noindent The following two lemmas give smoothness estimates of the solution $\upgamma$ of the Stein equation when $h \in \mathcal{H}$. They are the counterparts of
(i) and (ii) of Lemma \ref{t: dtv_bounds_upgamma}, respectively, for the smaller class of Lipschitz continuous functions $\mathcal{H}$. 
\begin{lemma}\label{t: Delta_1_upgamma_d2}
Let $\bl$ be a  finite measure over $E$ with $\bl(E)= \lambda$. Let $\upgamma: M_p(E) \to \mathbb{R}$ be defined as in Proposition
\ref{t: upgamma_welldefined}, where $h: M_p(E) \to \mathbb{R}$ is any function in $\mathcal{H}$. Then, for any $\xi \in M_p(E)$,
\[
\Delta_1 \upgamma  \le s_2(h) \left(1 \wedge \frac{1.65}{\sqrt{\lambda}} \right). 
\] 
\end{lemma}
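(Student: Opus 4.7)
The plan is to combine the coupling used in the proof of Lemma \ref{t: dtv_bounds_upgamma}(i) with the Poisson decomposition of Lemma \ref{l: Xia_sum}, and to split the analysis into the trivial bound $s_2(h)$ and the refined bound $s_2(h)\,1.65/\sqrt{\lambda}$ using Lemma \ref{t: helping_bound_immdeath}.

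First, I would re-use the coupling of Lemma \ref{t: dtv_bounds_upgamma}(i): if $Z$ is the immigration-death process under $\mathbb{P}^\xi$ and $T\sim\mathrm{Exp}(1)$ is independent of $Z$, then $Z_t+\delta_z I_{\{T>t\}}$ has the law $\mathbb{P}^{\xi+\delta_z}$, giving
\begin{equation*}
\upgamma(\xi+\delta_z)-\upgamma(\xi)=\int_0^\infty e^{-t}\,\mathbb{E}^\xi\bigl[h(Z_t)-h(Z_t+\delta_z)\bigr]\,dt.
\end{equation*}
Since $h\in\mathcal{H}$ and $d_1(Z_t,Z_t+\delta_z)\le 1$, the Lipschitz estimate $|h(Z_t)-h(Z_t+\delta_z)|\le s_2(h)$ together with $\int_0^\infty e^{-t}\,dt=1$ immediately yields the first bound $\Delta_1\upgamma\le s_2(h)$, which accounts for the ``$1$'' in the minimum.

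For the $1.65/\sqrt{\lambda}$ refinement, the plan is to avoid the trivial $d_1\le 1$ estimate by using the averaging induced by the common PRM background. Invoking Lemma \ref{l: Xia_sum}, I would write $Z_t\stackrel{d}{=}Z_t^0+D_t^\xi$ with $Z_t^0\sim\mathrm{PRM}((1-e^{-t})\bl)$ independent of the pure-death residue $D_t^\xi$. Conditional on $\{Z_t^0(E)\ge 1\}$, the atoms of $Z_t^0$ are exchangeable i.i.d.\ samples from $\bl/\lambda$, and relabelling one of them (uniformly chosen) to position $z$ via this exchangeability produces an intermediate configuration with the same total cardinality as $Z_t+\delta_z$ that differs from it only in the location of a single atom. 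Applying the Lipschitz property $|h(\xi_1)-h(\xi_2)|\le s_2(h)\,d_1(\xi_1,\xi_2)$ within equal cardinality, and absorbing the small residue into the size-biased tail of the Poisson distribution, I would obtain the key integrand estimate
\begin{equation*}
\bigl|\mathbb{E}^\xi[h(Z_t)-h(Z_t+\delta_z)]\bigr|\le s_2(h)\,\mathbb{E}\!\left[\frac{1}{Z_t^0(E)+1}\right].
\end{equation*}
Since $Z_t^0(E)\sim\mathrm{Poi}(\lambda(1-e^{-t}))$, the substitution $u=1-e^{-t}$ and the standard identity $\mathbb{E}[(Y+1)^{-1}]=(1-e^{-\nu})/\nu$ for $Y\sim\mathrm{Poi}(\nu)$ reduce the outer integral to
\begin{equation*}
\int_0^\infty e^{-t}\,\mathbb{E}\!\left[\frac{1}{Z_t^0(E)+1}\right]dt=\frac{1}{\lambda}\int_0^1\frac{1-e^{-\lambda u}}{u}\,du,
\end{equation*}
which, after splitting the inner integral at $u=\lambda^{-1/2}$ and bounding the two halves separately (or equivalently applying Lemma \ref{t: helping_bound_immdeath} with $k=0$ and optimising), is uniformly bounded by $1.65/\sqrt{\lambda}$ for every $\lambda>0$. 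Taking the minimum with the first bound gives the claim.

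The hard part will be the rigorous construction of the relabelling coupling: the extra particle at $z$ must be absorbed into the PRM atoms in a way that both preserves the joint laws of the two coupled processes and converts the apparent cardinality mismatch (which forces $d_1=1$) into a same-cardinality single-point shift contributing only $s_2(h)/(Z_t^0(E)+1)$. Once this coupling is in place, the remaining computation reduces to the explicit integral estimate above and the elementary one-variable bound giving the constant $1.65$.
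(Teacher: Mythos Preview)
Your trivial bound $\Delta_1\upgamma\le s_2(h)$ is fine, and the coupling via $Z_t+\delta_z I_{\{T>t\}}$ together with the decomposition $Z_t=Z_t^0+D_t^\xi$ is the right starting point. The gap is in your key estimate
\[
\bigl|\mathbb{E}^\xi[h(Z_t)-h(Z_t+\delta_z)]\bigr|\le s_2(h)\,\mathbb{E}\!\left[\frac{1}{|Z_t^0|+1}\right],
\]
which is false in general. The relabelling idea only matches the \emph{conditional} laws $\mathcal{L}(Z_t^0+\eta\mid |Z_t^0|=k+1)$ and $\mathcal{L}(Z_t^0+\eta+\delta_z\mid |Z_t^0|=k)$, but these carry different Poisson weights $P(|Z_t^0|=k+1)$ and $P(|Z_t^0|=k)$. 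That weight mismatch contributes a term of order $\max_k P(|Z_t^0|=k)\sim (2e\lambda_t)^{-1/2}$ which cannot be ``absorbed into the size-biased tail''. Concretely, take $h(\xi)=I\{|\xi|\le m\}$ with $m=|\eta|+\lfloor\lambda_t\rfloor$; then $s_2(h)=1$ and
\[
\mathbb{E}\bigl[h(Z_t^0+\eta)-h(Z_t^0+\eta+\delta_z)\bigr]=P\bigl(|Z_t^0|=\lfloor\lambda_t\rfloor\bigr)\asymp \lambda_t^{-1/2},
\]
whereas your right-hand side is $(1-e^{-\lambda_t})/\lambda_t\asymp\lambda_t^{-1}$.

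Note also that if your estimate held, your integral $\lambda^{-1}\int_0^1(1-e^{-\lambda u})u^{-1}\,du\asymp (\log\lambda)/\lambda$ would yield a bound \emph{much} stronger than $1.65/\sqrt{\lambda}$; that is a red flag. The paper handles this by writing the difference as a telescoping sum over $k$ and splitting via $|a_1a_2-b_1b_2|\le a_2|a_1-b_1|+b_1|a_2-b_2|$: your relabelling argument is exactly the $|a_2-b_2|$ piece (giving the $\mathbb{E}[(|Z_t^0|+1)^{-1}]$ contribution), but the dominant $O(\lambda_t^{-1/2})$ contribution comes from the $|a_1-b_1|$ piece, bounded via $\sum_k|P(|Z_t^0|=k+1)-P(|Z_t^0|=k)|\le 2\max_k P(|Z_t^0|=k)$. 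You need to reinstate this term.
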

\begin{proof} From Proposition \ref{t: upgamma_welldefined}, we have that for any $\xi = \sum_{j \in J} \delta_{w_j}\in M_p(E)$, where $J \subseteq \mathbb{N}$, 
and for any $z \in E$,
\[
\upgamma(\xi + \delta_z) - \upgamma(\xi) = \int_0^\infty \left\{ \mathbb{E}^\xi h(Z_t) - \mathbb{E}^{\xi + \delta_z} h(Z_t) \right\}dt, 
\]
where $Z=\{Z_t, t \in \mathbb{R}_+\}$ is the immigration-death process on $E$ with immigration intensity $\bl$ and unit per-capita death rate. Let $Z$ be realised 
under $\mathbb{P}^\xi$ and let $T$ be an exponential random variable with parameter $1$, independent of $Z$. Then the process $Z'$ defined by 
$Z_t' = Z_t + \delta_z I_{\{T > t\}}$ has distribution $\mathbb{P}^{\xi + \delta_z}$. Moreover, let $Z^0$ be realised under $\mathbb{P}^0$ and let $D$ be a pure death
process with unit per-capita death rate starting with $D_0=\xi.$ Then, $Z_t = Z_t^0 + D_t$ by Lemma \ref{l: Xia_sum}, and 
\begin{align}
\begin{split}\label{p: diff_upgamma_eq}
&\upgamma(\xi + \delta_z) - \upgamma(\xi) \\
&= \int_0^\infty e^{-t} \,\mathbb{E}^\xi \left[ h\left(Z_t^0 + D_t\right) - h\left(Z_t^0 + D_t + \delta_z\right) \right]dt \\
&= \int_0^\infty e^{-t} \sum_{\eta \in \mathcal{N}} \mathbb{E}\left[ h\left(Z_t^0 + \eta\right) - h\left(Z_t^0 + \eta + \delta_z\right) \right]P\left( D_t = \eta \right)dt,
\end{split}
\end{align}
where $\mathcal{N} = \{\sum_{j \in J'} \delta_{w_j}\, ;\, J' \subseteq J\}$. We first show that $\Delta_1 \upgamma \le s_2(h)$. 
Using Lipschitz continuity of $h$, as well as the fact that the
$d_1$-distance between point configurations of different sizes is $1$ (see (\ref{d: s_2h}) and (\ref{d: def_d1}), respectively), we find that
\begin{align*}
\left| \upgamma(\xi + \delta_z) - \upgamma(\xi) \right| 
 \le \int_0^\infty e^{-t}\, \sum_{\eta \in \mathcal{N}} P(D_t = \eta) dt
\le s_2(h) \int_0^\infty e^{-t} dt = s_2(h). 
\end{align*}
In order to show that $\Delta_1 \upgamma \le s_2(h)(1.65/\sqrt{\lambda})$, note first that
\begin{align*}
&\mathbb{E}\left[ h\left(Z_t^0 + \eta\right) - h\left( Z_t^0 + \eta + \delta_z \right) \right]\\
& =  \sum_{k \ge 0} P\left( \left| Z_t^0\right| = k \right) 
\mathbb{E} \left[ h\left( Z_t^0 +\eta \right) - h\left( Z_t^0 + \eta + \delta_z \right) \left|\right.  \left|Z_t^0\right|=k\right] \\
& = P\left( \left| Z_t^0\right| = 0 \right)h(\eta) 
+ \sum_{k\ge0} \left\{ P\left( \left| Z_t^0\right| = k+1 \right)\mathbb{E} \left[ h\left( Z_t^0 +\eta \right) \left|\right.  \left|Z_t^0\right|=k+1\right] \right.\\
&\left.\phantom{ P\left( \left| Z_t^0\right| \right) P\left( \left| Z_t^0\right| = 0 \right)h(\eta) 
+} - P\left( \left| Z_t^0\right| = k \right)\mathbb{E} \left[ h\left( Z_t^0 +\eta+ \delta_z\right) \left|\right.  \left|Z_t^0\right|=k\right]\right\}.
\end{align*}
For the part in curly brackets we use $|a_1a_2 - b_1 b_2| \le a_2|a_1-b_1| + b_1|a_2 - b_2|$,
where 
\begin{align*}
a_1 &:=  P\left( \left| Z_t^0\right| = k+1 \right),\quad  &a_2 :=\,\,& \mathbb{E} \left[ h\left( Z_t^0 +\eta \right) \left|\right.  \left|Z_t^0\right|=k+1\right],\\
b_1 &:=  P\left( \left| Z_t^0\right| = k \right),\quad    &b_2 :=\,\,& \mathbb{E} \left[ h\left( Z_t^0 +\eta+ \delta_z\right) \left|\right.  \left|Z_t^0\right|=k\right].
\end{align*}
We have
\begin{align}
\begin{split}\label{p: Delta_2_d2_bound_number_of_part}
|a_2-b_2| &= \frac{1}{\lambda} \left|\int_E \mathbb{E}[h(Z_t^0 + \eta + \delta_w)- h\left( Z_t^0 +\eta+ \delta_z\right)  \left|\right. \left|Z_t^0\right|=k]\bl(dw) \right|\\
&\le \frac{1}{\lambda} \int_E \mathbb{E}\left[s_2(h) d_1(Z_t^0 + \eta + \delta_w , Z_t^0 +\eta+ \delta_z)\left|\right. \left|Z_t^0\right|=k\right]\bl(dw)\\
& \le  \frac{s_2(h)}{\lambda}   \int_E \mathbb{E}\left[(|Z_t^0| + |\eta| + 1)^{-1}d_0(w,z)\left|\right. \left|Z_t^0\right|=k\right]\bl(dw)\\
& \le  s_2(h) (k + |\eta| + 1)^{-1},
\end{split}
\end{align}
where we used used Lipschitz continuity of $h$, (\ref{p: helpful_d1_bound}), and boundedness of $d_0$ by $1$ for the first, second and third inequalities, respectively.
It follows that
\[
\sum_{k\ge0} b_1|a_2-b_2| \le s_2(h) \sum_{k \ge 0 }\frac{P(|Z_t^0|=k)}{k+|\eta| + 1}  
\le s_2(h) \mathbb{E}\left[ (|Z_t^0| + 1)^{-1}\right].
\]
Furthermore, note that the use of the function $ h-(\inf_\xi h + \sup_\xi h)/2$ instead of $h$ leaves (\ref{p: diff_upgamma_eq}) unchanged. Therefore, we may use 
$\sup_\xi |h|=s_2(h)/2$, which entails the following two bounds:
\begin{align}\label{p: Delta_2_d2_E}
\begin{split}
& a_2 \le \left| \mathbb{E}\left[ h(Z_t^0 + \eta) \left|\right. \left| Z_t^0 \right|=k\right]\right| \le \frac{s_2(h)}{2}\, ,\\
& P\left( \left| Z_t^0\right|=0\right)h(\eta) \le \frac{s_2(h)}{2}\,P\left( \left| Z_t^0\right|=0\right). 
\end{split}
\end{align}
We obtain 
\begin{align}\label{p: Delta_1_d2_bd1}
\begin{split}
&\left| \mathbb{E}\left[ h\left(Z_t^0 + \eta\right) - h\left( Z_t^0 + \eta + \delta_z \right) \right] \right|\\
\le\, \,& s_2(h) \left\{\frac{1}{2}\,P\left( \left|Z_t^0\right|=0\right) 
+ \frac{1}{2}\sum_{k \ge 0 } \left| P(|Z_t^0| = k+1) - P(|Z_t^0| = k) \right| \right\}\\
&+ s_2(h)\mathbb{E}\left[ (|Z_t^0|  + 1)^{-1}\right].\\
\end{split}
\end{align}
By Lemma \ref{l: Xia_sum}, we have $|Z_t^0| \sim \mathrm{Poi}(\lambda_t)$ with $\lambda_t := \lambda(1-e^{-t})$. Thus, 
\begin{equation}\label{p: Delta_1_d2_bd1b}
 \mathbb{E}\left[ (|Z_t^0| + 1)^{-1}\right]  = \frac{e^{-\lambda_t}}{\lambda_t} \sum_{k \ge 0} \frac{\lambda_t^{k+1}}{(k+1)!} 
= \frac{1-e^{-\lambda_t}}{\lambda_t}\,.  
\end{equation}
Furthermore, note that 
\[
P(|Z_t^0|=k+1) - P(|Z_t^0|=k) =  \frac{e^{-\lambda_t}\lambda_t^k}{k!} \left(\frac{\lambda_t}{k+1} -1\right), 
\]
and that, if $k < \lambda_t - 1$, then $P(|Z_t^0|=k+1) > P(|Z_t^0|=k)$, and else, if $k > \lambda_t - 1$, then $P(|Z_t^0|=k+1) < P(|Z_t^0| =k)$. Thus,
\begin{multline*}
 \frac{1}{2}\, P(|Z_t^0| =0) + \frac{1}{2}\,\sum_{k \ge 0 } \left| P(|Z_t^0| = k+1) - P(|Z_t^0| = k) \right| \\
=\,\, \frac{1}{2} P(|Z_t^0| =0) + \frac{1}{2} \sum_{k=0}^{\lfloor \lambda_t - 1 \rfloor} \left[  P(|Z_t^0| = k+1) - P(|Z_t^0| = k)\right]\\
 + \frac{1}{2} \sum_{k = \lceil \lambda_t - 1 \rceil}^\infty \left[ P(|Z_t^0| = k) - P(|Z_t^0| = k+1)\right]
\end{multline*}
equals 
\begin{align}\label{p: Delta_1_d2_bd1a}
\begin{split}
\frac{1}{2} \left\{ P(|Z_t^0| = \lfloor \lambda_t - 1\rfloor + 1) + P(|Z_t^0| = \lceil \lambda_t - 1 \rceil) \right\} 
&\le \max_{k \ge 0} P(|Z_t^0| = k) \\  
&\le \frac{1}{\sqrt{2 e \lambda_t}}\,,
\end{split}
\end{align}
where the last inequality is due to Proposition A.2.7 in \cite{Barbour_et_al.:1992}. 
In addition to the estimate (\ref{p: Delta_1_d2_bd1}) with (\ref{p: Delta_1_d2_bd1b}) and (\ref{p: Delta_1_d2_bd1a}), 
we get the following more direct estimate from Lipschitz continuity of $h$:
\begin{equation}\label{p: Delta_1_d2_bd2}
\left| \mathbb{E}\left[ h\left(Z_t^0 + \eta\right) - h\left( Z_t^0 + \eta + \delta_z \right) \right] \right| \le s_2(h). 
\end{equation} 
Choose $\tau$ such that $e^{-\tau}= 1-\lambda^{-1}$.
With (\ref{p: Delta_1_d2_bd1})-(\ref{p: Delta_1_d2_bd2}), 
the following then holds for any $\eta \in \mathcal{N}$:
\begin{align*}
& \frac{1}{s_2(h)} \int_0^\infty e^{-t} \left| \mathbb{E}\left[ h\left(Z_t^0 + \eta\right) - h\left( Z_t^0 + \eta + \delta_z \right) \right] \right|dt \\
& \le \int_0^\tau e^{-t} dt + \int_\tau^\infty e^{-t}\left( \frac{1}{\sqrt{2e\lambda_t}} +\frac{1-e^{-\lambda_t}}{\lambda_t} \right)dt\\
& \le \int_0^\tau e^{-t} dt + \frac{1}{\sqrt{2e\lambda}}\int_\tau^\infty \frac{e^{-t}}{\sqrt{1-e^{-t}}}\,dt + \frac{1}{\lambda}\int_\tau^\infty \frac{e^{-t}}{1-e^{-t}}\,dt \\
& = 1-e^{-\tau} +\frac{2-2\sqrt{1-e^{-\tau}}}{\sqrt{2e\lambda}} - \frac{\log(1-e^{-\tau})}{\lambda}
= \frac{1}{\lambda}+ \sqrt{\frac{2}{e\lambda}} -\frac{1}{\lambda} \sqrt{\frac{2}{e}} + \frac{\log \lambda}{\lambda}
\\
& \le \frac{1}{\sqrt{\lambda}}\left( \frac{0.14223 +\log \lambda}{\sqrt{\lambda}} + 0.86  \right)\le \frac{1}{\sqrt{\lambda}}\, (0.79+0.86) = \frac{1.65}{\sqrt{\lambda}}\,.
\end{align*}
Thus, 
\begin{align*}
\Delta_1 \upgamma
& \le\sum_{\eta \in \mathcal{N}} P(D_t=\eta) \int_0^\infty e^{-t}\left| \mathbb{E}\left[ h\left(Z_t^0 + \eta\right) - h\left( Z_t^0 + \eta + \delta_z \right) \right] \right|dt\\
& \le \frac{1.65 s_2(h)}{\sqrt{\lambda}} \sum_{\eta \in \mathcal{N}} P(D_t=\eta) \le \frac{1.65 s_2(h)}{\sqrt{\lambda}}\, .
\end{align*}
\end{proof}
\begin{lemma}\label{t: Delta_2_upgamma_d2}
Under the conditions of Lemma \ref{t: Delta_1_upgamma_d2},
\[ 
\Delta_2 \upgamma \le  s_2(h)\left\{ 1 \wedge \frac{2}{\lambda} \left( 1 + 2 \log_+ \left( \frac{\lambda}{2}\right)\right) \right\}.  
\]
\end{lemma}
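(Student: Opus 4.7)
My plan is to mirror the proof of Lemma \ref{t: Delta_1_upgamma_d2}, but applied to the twice-coupled integral representation from the proof of Lemma \ref{t: dtv_bounds_upgamma}(ii). Introducing independent $\mathrm{Exp}(1)$ clocks $T^z,T^w$ for the two extra atoms yields
\[
\upgamma(\xi+\delta_z+\delta_w)-\upgamma(\xi+\delta_z)-\upgamma(\xi+\delta_w)+\upgamma(\xi)=-\int_0^\infty e^{-2t}\,\mathbb{E}^\xi[\widetilde h(Z_t)]\,dt,
\]
where $\widetilde h(\eta):=h(\eta+\delta_z+\delta_w)-h(\eta+\delta_z)-h(\eta+\delta_w)+h(\eta)$. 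Writing $\widetilde h$ as a difference of two first-order differences and using $|h(\eta')-h(\eta'')|\le s_2(h)\,d_1(\eta',\eta'')\le s_2(h)$ gives $|\widetilde h|\le 2s_2(h)$, so the easy estimate $\Delta_2\upgamma\le s_2(h)$ follows at once from $\int_0^\infty e^{-2t}dt=1/2$.

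For the refined bound I invoke Lemma \ref{l: Xia_sum} to decompose $Z_t=Z_t^0+D_t$, where $Z_t^0\sim\mathrm{PRM}((1-e^{-t})\bl)$ is independent of the pure-death process $D_t$ with $D_0=\xi$, and then condition on $D_t=\eta$ and on $|Z_t^0|=k$. I iterate twice the telescoping device used in the proof of Lemma \ref{t: Delta_1_upgamma_d2}: the identity $\mathbb{E}[h(Z_t^0+\,\cdot\,)\mid|Z_t^0|=k+1]=\mathbb{E}[\int h(Z_t^0+\,\cdot\,+\delta_v)(\bl/\lambda)(dv)\mid|Z_t^0|=k]$ converts the discrepancy between neighbouring conditional expectations into a first-order discrete derivative which, by \eqref{p: helpful_d1_bound} and Lipschitz continuity of $h$, contributes a factor $s_2(h)/(k+|\eta|+1)$. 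Applying this once to the $\delta_z$-slot and once more to the $\delta_w$-slot yields an estimate on $|\mathbb{E}[\widetilde h(Z_t^0+\eta)]|$ of order $s_2(h)\,\mathbb{E}[(|Z_t^0|+1)^{-1}]$ together with an $O(\max_k P(|Z_t^0|=k))$ remainder coming from the $|a_1a_2-b_1b_2|\le a_2|a_1-b_1|+b_1|a_2-b_2|$ splitting; these are bounded by $(1-e^{-\lambda_t})/\lambda_t$ and $(2e\lambda_t)^{-1/2}$ respectively, where $\lambda_t:=\lambda(1-e^{-t})$.

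The main obstacle is integrability near $t=0$: since $\lambda_t\sim\lambda t$ as $t\downarrow 0$, the refined integrand is not integrable on all of $[0,\infty)$. I resolve this by splitting at the cutoff $\tau$ defined by $\lambda_\tau=2$ (i.e.\ $1-e^{-\tau}=2/\lambda$, for $\lambda\ge 2$; otherwise the trivial bound $s_2(h)$ already suffices). On $[0,\tau]$ I retain the crude bound $|\widetilde h|\le 2s_2(h)$, and $\int_0^\tau 2s_2(h)e^{-2t}dt$ yields a contribution of order $s_2(h)/\lambda$; on $[\tau,\infty)$ the refined bound, integrated via the substitution $u=e^{-t}$, produces $\lambda^{-1}\log(\lambda/2)$ up to absolute constants (exactly as in (\ref{p: techn_lemma_2nd_integral})--(\ref{p: techn_lemma_1st_integral}) of Lemma \ref{t: helping_bound_immdeath}). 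Summing the two contributions and absorbing numerical constants yields $s_2(h)\cdot(2/\lambda)\{1+2\log_+(\lambda/2)\}$, which combined with the crude bound $s_2(h)$ gives the stated minimum. A subtle point to verify is that the two telescoping substitutions can be performed in either order without inflating the constant beyond the stated one; this should follow from the symmetry of the Palm-type identity between Poisson configurations of sizes $k$ and $k+1$, just as in the $\Delta_1$ argument.
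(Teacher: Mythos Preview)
Your overall strategy---the coupled integral representation with weight $e^{-2t}$, the Xia decomposition $Z_t=Z_t^0+D_t$, the crude bound $|\widetilde h|\le 2s_2(h)$ for $\Delta_2\upgamma\le s_2(h)$, and the split at $\tau$ with $\lambda_\tau=2$---matches the paper exactly. However, there is a genuine gap in the refined bound.

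You claim that iterating the $\Delta_1$ telescoping device twice yields a remainder of order $\max_k P(|Z_t^0|=k)=O(\lambda_t^{-1/2})$. This is not good enough. If the integrand is bounded by $C\,s_2(h)\lambda_t^{-1/2}$ on $[\tau,\infty)$, then the substitution $u=e^{-t}$ gives
\[
\int_\tau^\infty e^{-2t}\lambda_t^{-1/2}\,dt=\lambda^{-1/2}\int_0^{1-2/\lambda}\frac{u}{\sqrt{1-u}}\,du=O(\lambda^{-1/2}),
\]
which is strictly weaker than the claimed $O(\lambda^{-1}\log\lambda)$. A naive iteration---applying the $\Delta_1$ argument to $g(\cdot)=h(\cdot+\delta_w)-h(\cdot)$---inevitably produces the term $\sum_k|P(k{+}1)-P(k)|\cdot|a_2|$ with $|a_2|\le\sup|g|\le s_2(h)$, and this first-order total variation is only $O(\lambda_t^{-1/2})$.

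The paper does \emph{not} iterate the $\Delta_1$ argument. Instead, it rewrites $\mathbb{E}[\widetilde h(Z_t^0+\eta)]$ as a single sum over $k\ge -1$ in which each summand couples $P(|Z_t^0|=k)$, $P(|Z_t^0|=k+1)$ and $P(|Z_t^0|=k+2)$ with conditional expectations at levels $k$, $k+1$, $k+2$. After two add-and-subtract steps (one for each of $\delta_z,\delta_w$), the probability-difference part appears directly as a \emph{second-order} difference $P(k)-2P(k+1)+P(k+2)$. The key identity, for $X\sim\mathrm{Poi}(\nu)$,
\[
P(X=k)-2P(X=k{-}1)+P(X=k{-}2)=P(X=k)\Bigl\{(1-\nu^{-1}k)^2-\nu^{-2}k\Bigr\},
\]
then yields $\sum_k|\,\cdot\,|\le\mathbb{E}[(1-\nu^{-1}X)^2]+\nu^{-2}\mathbb{E}X=2/\nu$. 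Combined with $3\,\mathbb{E}[(|Z_t^0|+1)^{-1}]\le 3/\lambda_t$ from the Lipschitz contributions, this gives the crucial $|\mathbb{E}[\widetilde h(Z_t^0+\eta)]|\le 4s_2(h)/\lambda_t$, after which the $\int_\tau^\infty e^{-2t}/(1-e^{-t})\,dt$ computation produces exactly $4/\lambda^2+(4/\lambda)\log(\lambda/2)$ and hence the stated bound. Your proposal is missing precisely this second-order difference structure and the associated $2/\lambda_t$ estimate.
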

\begin{proof} As in the proof of Lemma \ref{t: dtv_bounds_upgamma} (ii), we may write, for any $\xi$ $ =$ $ \sum_{j \in J} \delta_{w_j}$ $\in$ $ M_p(E)$, where $J \subseteq \mathbb{N}$, 
and for any $z,w \in E$,
\begin{align*}
&\upgamma(\xi + \delta_z + \delta_w) - \upgamma(\xi+ \delta_z) - \upgamma(\xi + \delta_w) + \upgamma(\xi) \\
&= -\int_0^\infty \mathbb{E}^\xi \left[ h(Z_t + \delta_z + \delta_w) - h(Z_t + \delta_z) - h(Z_t + \delta_w) + h(Z_t)\right] e^{-2t}dt, 
\end{align*}
where $Z= \{Z_t, t \in \mathbb{R}_+\}$ is an immigration-death process on $E$ with immigration intensity $\bl$ and unit per-capita death rate realised under 
$\mathbb{P}^\xi$. Let $Z^0$ be realised under $\mathbb{P}^0$ and let $D$ be a pure death process with unit per-capita death rate starting with $D_0 = \xi$. Then
$Z_t = Z_t^0 + D_t$ by Lemma \ref{l: Xia_sum}, and 
\begin{align}
\begin{split} \label{p: Delta_2_d2_expression}
&\upgamma(\xi + \delta_z + \delta_w) - \upgamma(\xi+ \delta_z) - \upgamma(\xi + \delta_w) + \upgamma(\xi) \\ 
& = - \int_0^\infty e^{-2t}\sum_{\eta \in \mathcal{N}} 
\mathbb{E} \left[h(Z_t^0 + \eta + \delta_z + \delta_w)  - h(Z_t^0 + \eta + \delta_z) \right. \\
&\left.\phantom{blaaaaaaaaaaaaaaaaaaaaaa} 
 - h(Z_t^0 + \eta + \delta_w) + h(Z_t^0 + \eta)\right]P(D_t = \eta)dt,
\end{split}
\end{align}
where $\mathcal{N} = \{\sum_{j \in J'} \delta_{w_j}\, ;\, J' \subseteq J\}$. We first show that $\Delta_2 \upgamma \le s_2(h)$. For any $\eta \in \mathcal{N}$, 
it follows from Lipschitz continuity of $h$ that
\begin{align*}
&\left|\mathbb{E}\left[ h(Z_t^0 + \eta + \delta_z + \delta_w)  - h(Z_t^0 + \eta + \delta_z)- h(Z_t^0 + \eta + \delta_w) + h(Z_t^0 + \eta)\right] \right|\\
& \le \mathbb{E} \left|h(Z_t^0 + \eta + \delta_z + \delta_w)  - h(Z_t^0 + \eta + \delta_z)\right| + \mathbb{E} \left|h(Z_t^0 + \eta + \delta_w) - h(Z_t^0 + \eta) \right|\\
&\le 2s_2(h).
\end{align*}
Then,
\begin{align*}
\Delta_2 \upgamma \le 2s_2(h) \int_0^\infty e^{-2t} \sum_{\eta \in \mathcal{N}} P(D_t = \eta)dt 
\le 2s_2(h) \int_0^\infty e^{-2t}dt = s_2(h). 
\end{align*}
For the $\lambda$-dependent bound, note that the expectation in (\ref{p: Delta_2_d2_expression}) may be rewritten as
\begin{align*}
&\sum_{k \ge -1} \left\{ \right.  P(|Z_t^0|=k) \mathbb{E}\left[ h(Z_t^0 + \eta + \delta_z + \delta_w) \left|\right. \left| Z_t^0 \right|=k\right] \\
& \phantom{blaaaa} \left. - P(|Z_t^0|=k+1) \mathbb{E}\left[ h(Z_t^0 + \eta + \delta_z) + h(Z_t^0 + \eta + \delta_w) \left|\right. \left| Z_t^0 \right|=k+1\right] \right.\\
& \phantom{blaaaa} \left. + P(|Z_t^0|=k+2) \mathbb{E}\left[ h(Z_t^0 + \eta ) \left|\right. \left| Z_t^0 \right|=k+2\right] \right\} \\
&+ P(|Z_t^0|=0)h(\eta). 
\end{align*}
We add and subtract both $P(|Z_t^0|=k) \mathbb{E}[h(Z_t^0 + \eta + \delta_z) \left| \right. |Z_t^0|=k+1]$ and
$P(|Z_t^0|=k+2)\mathbb{E}[h(Z_t^0 + \eta + \delta_w) \left| \right. |Z_t^0|= k+1]$ to the part in curly brackets. 
Then, using \ref{p: Delta_2_d2_bound_number_of_part}) and (\ref{p: Delta_2_d2_E}), we find that
\begin{align*}
&\left| \mathbb{E} \left[h(Z_t^0 + \eta + \delta_z + \delta_w)  - h(Z_t^0 + \eta + \delta_z)- h(Z_t^0 + \eta + \delta_w) + h(Z_t^0 + \eta)\right]\right| \\
\le &\sum_{k \ge -1} s_2(h)\left\{ P(|Z_t^0|=k) + P(|Z_t^0|=k+2)\right\}/(k+2 + |\eta|) \\
&+ \left| \sum_{k \ge -1} \mathbb{E}\left[\frac{1}{2}\, h(Z_t^0 + \eta + \delta_z) + \frac{1}{2}\, h(Z_t^0 + \eta + \delta_w) \left|\right. |Z_t^0|= k+1 \right] \right.\\
& \left. \phantom{bl \left[\frac{s_2(h)}{2}\right]}  \cdot \left\{ P(|Z_t^0|=k) - 2 P(|Z_t^0|=k+1) + P(|Z_t^0|= k+2)\right\} \right| 
\\ &+ P(|Z_t^0|=0)|h(\eta)| \\
\le &\,\,s_2(h)\sum_{k \ge -1}\left\{ P(|Z_t^0|=k) + P(|Z_t^0|=k+2)\right\}/(k+2 + |\eta|) \\
&+ \frac{s_2(h)}{2}\left\{\sum_{k \ge -1} \left| P(|Z_t^0|=k) - 2 P(|Z_t^0|=k+1) 
+ P(|Z_t^0|= k+2)\right| \right.\\
&\left.\phantom{\frac{s_2(h)}{2}blaaaa}+P(|Z_t^0|=0)\right\}.
\end{align*}
Note that 
\begin{align}
&\sum_{k \ge -1} \frac{P(|Z_t^0|=k) + P(|Z_t^0|=k+2) }{k+ |\eta|+2} \nonumber \\
& \le \sum_{k \ge 0} \frac{P(|Z_t^0|=k)}{k+2} + \sum_{k \ge 1} \frac{P(|Z_t^0|=k)}{k} \nonumber\\
& = \mathbb{E}\left[\left(|Z_t^0| + 2\right)^{-1}\right] + \mathbb{E}\left[\left(|Z_t^0|\right)^{-1}I_{\{|Z_t^0|\ge 1\}} \right] \nonumber \\
&\le 3\mathbb{E}\left[\left(|Z_t^0| + 1\right)^{-1}\right].\label{p: Delta_2_d2_3E}
\end{align}
Moreover, for $X \sim \mathrm{Poi}(\nu)$, 
\[
P(X=k)-2P(X=k-1)+P(X=k-2) = P(X=k)\left\{ \left(1-\nu^{-1}k \right)^2 - \nu^{-2}k\right\}, 
\]
for all $k \ge 0$. 
Since $|Z_t^0| \sim \mathrm{Poi}(\lambda_t)$ with $\lambda_t := \lambda(1-e^{-t})$ by Lemma \ref{l: Xia_sum}, we have
\begin{align}
&\sum_{k \ge -1}  \left| P(|Z_t^0|=k) -2P(|Z_t^0|=k+1)
+ P(|Z_t^0|=k+2)\right| + P(|Z_t^0|=0)\nonumber\\
&= \sum_{k \ge 0} \left| P(|Z_t^0|=k-2)
-2P(|Z_t^0|=k-1) + P(|Z_t^0|=k)\right| \nonumber\\
&= \sum_{k \ge 0} P(|Z_t^0|=k) \left|\left(1-\lambda_t^{-1}|Z_t^0| \right)^2
- \lambda_t^{-2}|Z_t^0|\right| \nonumber\\
&\le \mathbb{E}\left[\left(1-\lambda_t^{-1}|Z_t^0| \right)^2\right] 
+\mathbb{E}\left[  \lambda_t^{-2}|Z_t^0|\right] = \frac{2}{\lambda_t}\,.
\label{p: Delta_2_d2_sumPoi}
\end{align}
It follows from (\ref{p: Delta_2_d2_3E}), (\ref{p: Delta_2_d2_sumPoi}) and (\ref{p: Delta_1_d2_bd1b}) that
\begin{align*}
&\left|\mathbb{E}\left[ h(Z_t^0 + \eta + \delta_z + \delta_w)  - h(Z_t^0 + \eta + \delta_z)- h(Z_t^0 + \eta + \delta_w) + h(Z_t^0 + \eta)\right] \right|\\
& \le s_2(h) \left\{ \frac{1}{\lambda_t} + \frac{3(1-e^{-\lambda_t})}{\lambda_t}\right\} \le \frac{4s_2(h)}{\lambda_t}\,.
\end{align*}
A more direct bound is given by $ 4 \sup_\xi |h(\xi)| = 2s_2(h)$. With these two estimates, we find, for any $\eta \in \mathcal{N}$, and for $\tau$ chosen 
such that $e^{-\tau} = 1-2\lambda^{-1}$,
\begin{align*}
& \int_0^\infty \frac{e^{-2t}}{s_2(h)} 
\left|\mathbb{E}\left[ h(Z_t^0 + \eta + \delta_z + \delta_w)  
- h(Z_t^0 + \eta + \delta_z)
- h(Z_t^0 + \eta + \delta_w) \right. \right.\\
&\left. \left. \phantom{blaaaaaaaaaa}+ h(Z_t^0 + \eta)\right] \right|dt\\
& = \int_0^\tau 2e^{-2t}dt + \frac{4}{\lambda}\int_\tau^\infty \frac{e^{-2t}}{1-e^{-t}}\,dt
= 1- e^{-2\tau} - \frac{4}{\lambda}\,e^{-\tau} - \frac{4}{\lambda} \log(1- e^{-\tau})\\
&= 1- \left( 1- \frac{2}{\lambda}\right)^2 - \frac{4}{\lambda}\left(1- \frac{2}{\lambda} \right) - \frac{4}{\lambda}\log\left(\frac{2}{\lambda}\right)
= \frac{4}{\lambda^2} + \frac{4}{\lambda} \log\left( \frac{\lambda}{2}\right).
\end{align*}
Therefore,
\begin{align*}
\Delta_2 \upgamma  &\le s_2(h) \left\{ \frac{4}{\lambda^2} + \frac{4}{\lambda} \log\left( \frac{\lambda}{2}\right) \right\} \sum_{\eta \in \mathcal{N}}P(D_t=\eta)
\le  s_2(h) \left\{ \frac{4}{\lambda^2} + \frac{4}{\lambda} \log\left( \frac{\lambda}{2}\right) \right\} \\
&\le \frac{2}{\lambda}\left\{ 1+ 2\log_+\left( \frac{\lambda}{2}\right) \right\}s_2(h), \quad \text{for all } \lambda \ge 2. 
\end{align*}
\end{proof}
\noindent Since the class of functions $h$ is smaller than the class of functions considered for approximation in the total variation distance, 
the smoothness estimates are better: they have the desired property of decreasing with increasing $\lambda$. 
With the above lemmas, we are in a position to prove an analogue of Theorem \ref{t: dTV_PP_generator} in the weaker $d_2$-metric.
\begin{thm}\label{t: d2_PP_generator}
Suppose there exists a fixed measure $\boldsymbol{\nu}$ on $E$ and suppose that $\Xi$ is a finite simple point process on $E$ with finite mean measure $\bl$
and Janossy densities $\{j_m\}_{m \ge 0}$. Suppose the density $\mu$ of $\bl$ 
with respect to $\boldsymbol{\nu}$ is given by (\ref{d: density_mean_measure}). Let $\{N_z\}_{z \in E}$
be a neighbourhood structure satisfying (\ref{d: neighbourhood_structure}). 
Then,
\begin{align*}
&d_2\left(\mathcal{L}(\Xi), \mathrm{PRM}(\bl)\right) \\
\le &\left\{ 1 \wedge \frac{2}{\lambda} \left( 1 + 2 \log_+ \left( \frac{\lambda}{2}\right)\right) \right\}
\left( \int_E \mathbb{E} \Xi(N_z) \mu(z)\boldsymbol{\nu}(dz) \right. \\ 
&\left. +\,\,\mathbb{E} \left[ \int_E \Xi(N_z\setminus \{z\}) \Xi(dz)\right] \right)
  + \left\{1 \wedge \frac{1.65}{\sqrt{\lambda}} \right\}\int_E \mathbb{E} \left| g(z, \Xi^z) - \mu(z)\right| \boldsymbol{\nu}(dz),
\end{align*}
where $\lambda = \bl(E)$, and the conditional density $g(z, \Xi^z)$  at $z$ given the configuration $\Xi^z$ of $\Xi$ outside $N_z$ is defined in (\ref{d: conditional_density_Janossy}).
\end{thm}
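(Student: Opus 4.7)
The plan is to mirror the proof of Theorem \ref{t: dTV_PP_generator} step by step, replacing the crude bounds $\Delta_1 \upgamma \le 1$ and $\Delta_2 \upgamma \le 1$ from Lemma \ref{t: dtv_bounds_upgamma} by the sharper, $\lambda$-dependent estimates from Lemmas \ref{t: Delta_1_upgamma_d2} and \ref{t: Delta_2_upgamma_d2}. By (\ref{d: def_d2}) and (\ref{d: Stein_eq_d2}), it suffices to find a uniform bound for $|\mathbb{E}(\mathcal{A}\upgamma)(\Xi)|/s_2(h)$, where $\upgamma$ is the solution of the Stein equation (\ref{d: Stein_equation_process}) associated with an arbitrary $h \in \mathcal{H}$, as constructed in Proposition \ref{t: upgamma_welldefined}. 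Note that all the analysis from Proposition \ref{t: upgamma_welldefined} through Proposition \ref{t: upgamma_solves_Stein} goes through for $h \in \mathcal{H}$: boundedness of $h$ on each level set $\{\xi: \xi(E)=k\}$ is all that is needed (which follows from Lipschitz continuity and the fact that $d_1 \le 1$), so $\upgamma$ is well-defined and solves $\mathcal{A}\upgamma = h - \mathrm{PRM}(\bl)(h)$.

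Next, I would carry out exactly the same algebraic rearrangement as in the proof of Theorem \ref{t: dTV_PP_generator}: writing out $\mathcal{A}\upgamma$ using (\ref{d: generator}), adding and subtracting $\upgamma(\Xi^z + \delta_z) - \upgamma(\Xi^z)$ inside both integrals, I obtain the three-term decomposition
\begin{align*}
|\mathbb{E}(\mathcal{A}\upgamma)(\Xi)|
\le &\,\left| \mathbb{E} \int_E \left[ \upgamma(\Xi + \delta_z) - \upgamma(\Xi) - \upgamma(\Xi^z + \delta_z) + \upgamma(\Xi^z)\right]\bl(dz)\right|\\
& + \left|\mathbb{E}\int_E  \left[ \upgamma(\Xi) - \upgamma(\Xi^z + \delta_z) - \upgamma(\Xi - \delta_z) + \upgamma(\Xi^z) \right] \Xi(dz) \right|\\
& + \left| \mathbb{E} \int_E \left[ \upgamma(\Xi^z + \delta_z) - \upgamma(\Xi^z)\right] \left\{ \Xi(dz) - \bl(dz)\right\} \right|,
\end{align*}
where $\Xi^z$ is the restriction of $\Xi$ to $N_z^c$. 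The integrands of the first two terms are second differences of $\upgamma$ of the form appearing in the definition of $\Delta_2 \upgamma$, and expanding $\Xi = \Xi^z + \sum_{w \in N_z} \delta_w$ yields pointwise bounds of $\Xi(N_z)\, \Delta_2 \upgamma$ and $\Xi(N_z\setminus\{z\})\,\Delta_2\upgamma$, respectively (for the second term, one argues on $\{z:\Xi(\{z\})=1\}$ exactly as before, simplicity of $\Xi$ being needed here). For the third term, (\ref{d: equality_cond_dens_Janossy}) converts $\Xi(dz) - \bl(dz)$ into $\{g(z,\Xi^z) - \mu(z)\}\boldsymbol{\nu}(dz)$, and the integrand of the first differences is bounded by $\Delta_1 \upgamma$.

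Finally, I would invoke Lemma \ref{t: Delta_2_upgamma_d2} to bound $\Delta_2 \upgamma$ by $s_2(h)\,\{1 \wedge 2\lambda^{-1}(1 + 2\log_+(\lambda/2))\}$ and Lemma \ref{t: Delta_1_upgamma_d2} to bound $\Delta_1 \upgamma$ by $s_2(h)\,\{1 \wedge 1.65/\sqrt{\lambda}\}$. Applying Fubini-Tonelli to the first summand and dividing through by $s_2(h)$ gives the claimed bound on $d_2(\mathcal{L}(\Xi), \mathrm{PRM}(\bl))$. There is no genuine obstacle: all the work is already packaged into Lemmas \ref{t: Delta_1_upgamma_d2} and \ref{t: Delta_2_upgamma_d2}, and the decomposition is identical to that of Theorem \ref{t: dTV_PP_generator}. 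The one mild subtlety to check is that the integrability needed to apply Fubini and the dominated-convergence-type manipulations in Proposition \ref{t: upgamma_solves_Stein} still holds with $h$ only assumed Lipschitz rather than bounded; this is handled by noting that Lipschitz $h$ is bounded on each level set $\{\xi(E)=k\}$ with a bound growing at most linearly in $k$, while $|\tilde D_0| \sim \mathrm{Poi}(\lambda)$ has all moments, so the bound in (\ref{p: boundupgamma}) remains finite.
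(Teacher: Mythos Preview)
Your proposal is correct and follows essentially the same approach as the paper: reuse the three-term decomposition from the proof of Theorem \ref{t: dTV_PP_generator}, then apply Lemmas \ref{t: Delta_1_upgamma_d2} and \ref{t: Delta_2_upgamma_d2} in place of Lemma \ref{t: dtv_bounds_upgamma}. One minor remark: your caveat about $h$ being only Lipschitz is unnecessary, since $d_1 \le 1$ implies that any $h \in \mathcal{H}$ has oscillation at most $s_2(h)$ and is therefore genuinely bounded, so Propositions \ref{t: upgamma_welldefined} and \ref{t: upgamma_solves_Stein} apply directly.
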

\begin{proof}
Let $\upgamma$ be defined as in Proposition \ref{t: upgamma_welldefined} for a function $h \in \mathcal{H}$. By (\ref{d: def_d2}) and (\ref{d: Stein_eq_d2}),
it is sufficient to determine an upper bound on $|\mathbb{E}(\mathcal{A}\upgamma)(\Xi)|/s_2(h)$, where $\mathcal{A}$ is defined as in (\ref{d: generator}).
From the proof of Theorem \ref{t: dTV_PP_generator}, we have that
\begin{align*}
\left| \mathbb{E} (\mathcal{A}\upgamma)(\Xi) \right| & \le \Delta_2  \upgamma \left( \int_E \mathbb{E} \Xi(N_z) \mu(z)\boldsymbol{\nu}(dz) 
+\mathbb{E} \left[ \int_E \Xi(N_z\setminus \{z\}) \Xi(dz)\right] \right)\\
& + \Delta_1 \upgamma \int_E \mathbb{E} \left| g(z, \Xi^z) - \mu(z)\right| \boldsymbol{\nu}(dz).
\end{align*}
The estimates from Lemmas \ref{t: Delta_1_upgamma_d2} and \ref{t: Delta_2_upgamma_d2} for $\Delta_1 \upgamma$ and $\Delta_2 \upgamma$,
respectively, then immediately give the error bound for $d_2\left(\mathcal{L}(\Xi), \mathrm{PRM}(\bl)\right)$.
\end{proof}

In case we want to approximate the law of $\Xi$ by a Poisson process with intensity measure $\tilde{\bl} \neq\bl$, we 
have to add an estimate for $d_2(\mathrm{PRM}(\bl), \mathrm{PRM}(\tilde{\bl}))$  to the error given by Theorem \ref{t: d2_PP_generator}.
To determine such an error estimate we first need Lemma \ref{t: helping_bound_s2}.
\begin{lemma}\label{t: helping_bound_s2}
Let $\bl$ and $\tilde{\bl}$ be two finite measures over $E$ such that $\bl(E)= \tilde{\bl}(E) = \lambda$. Let $\upgamma: M_p(E) \to \mathbb{R}$ be defined as in Proposition
\ref{t: upgamma_welldefined}, where $h: M_p(E) \to \mathbb{R}$ is any function in $\mathcal{H}$. Then, for any $\xi \in M_p(E)$,
\begin{multline*}
\left| \int_E \left[ \upgamma(\xi + \delta_z) - \upgamma(\xi) \right] (\bl(dz) - \tilde{\bl}(dz))\right| \\
\le s_2(h) (1-e^{-\lambda}) \left( 1+ \frac{\lambda}{|\xi| + 1}\right)d_1(\bl, \tilde{\bl}). 
\end{multline*}
\end{lemma}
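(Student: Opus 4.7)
The plan is to exploit the Wasserstein-type structure of $d_1$ directly: fix $\xi\in M_p(E)$ and view
\[
\kappa(z):=\upgamma(\xi+\delta_z)-\upgamma(\xi)
\]
as a scalar function of $z\in E$, so that the integral in question equals $\int_E\kappa\,d\bl-\int_E\kappa\,d\tilde\bl$. Since $\bl(E)=\tilde\bl(E)=\lambda$, the definition (\ref{d: def_d1}) of $d_1$ immediately gives
\[
\left|\int_E\kappa\,d\bl-\int_E\kappa\,d\tilde\bl\right|\le \lambda\,s_1(\kappa)\,d_1(\bl,\tilde\bl),
\]
provided $\kappa\in\mathcal{K}$. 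The whole task therefore reduces to producing the Lipschitz constant $s_1(\kappa)$.

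To estimate $s_1(\kappa)$, I would use the same couplings employed in the proof of Lemma \ref{t: Delta_1_upgamma_d2}. From Proposition \ref{t: upgamma_welldefined} together with Lemma \ref{l: Xia_sum}, writing the immigration-death process started at $\xi$ as $Z_t=Z_t^0+D_t$ with $Z^0$ an independent immigration-death process started from the empty configuration and $D$ an independent pure-death process with $D_0=\xi$, gives
\[
\kappa(z_1)-\kappa(z_2)=\int_0^\infty e^{-t}\,\mathbb{E}\bigl[h(Z_t^0+D_t+\delta_{z_2})-h(Z_t^0+D_t+\delta_{z_1})\bigr]\,dt.
\]
Applying Lipschitz continuity of $h$ with constant $s_2(h)$ and the pointwise $d_1$-identity (\ref{p: helpful_d1_bound}) for configurations differing in a single atom yields
\[
|\kappa(z_1)-\kappa(z_2)|\le s_2(h)\,d_0(z_1,z_2)\int_0^\infty e^{-t}\,\mathbb{E}\bigl[(|Z_t^0+D_t|+1)^{-1}\bigr]\,dt.
\]

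The remaining step is to evaluate the time integral. By Lemma \ref{l: Xia_sum} the total count $|Z_t^0+D_t|$ has exactly the distribution of the scalar immigration-death process on $\mathbb{Z}_+$ (with constant immigration rate $\lambda$, unit per-capita death rate) started from $|\xi|$ particles, so Lemma \ref{t: helping_bound_immdeath} applies verbatim and produces
\[
\int_0^\infty e^{-t}\,\mathbb{E}\bigl[(|Z_t^0+D_t|+1)^{-1}\bigr]\,dt\le\Bigl(\tfrac{1}{\lambda}+\tfrac{1}{|\xi|+1}\Bigr)(1-e^{-\lambda}).
\]
Thus $s_1(\kappa)\le s_2(h)(1-e^{-\lambda})\bigl(\lambda^{-1}+(|\xi|+1)^{-1}\bigr)$, and substituting into the $d_1$ bound above collapses the factor $\lambda\cdot s_1(\kappa)$ to $s_2(h)(1-e^{-\lambda})\bigl(1+\lambda/(|\xi|+1)\bigr)$, which is the claimed inequality.

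The steps are all routine manipulations once the correct identification has been made; the only subtle point is recognizing that $|Z_t^0+D_t|$ on the spatial carrier $E$ behaves, in law, exactly like a scalar immigration-death process with initial size $|\xi|$, so that Lemma \ref{t: helping_bound_immdeath} can be invoked without any additional work. No separate case analysis or alternative bound (analogous to the $\lambda$-independent bound $s_1(\kappa)\le s_2(h)$) is needed here, since we are not asked for the improvement in $\lambda$ beyond the factor already built into $1-e^{-\lambda}$.
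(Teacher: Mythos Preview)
Your proof is correct and follows essentially the same route as the paper: define the scalar function $\kappa(z)=\upgamma(\xi+\delta_z)-\upgamma(\xi)$, invoke the Wasserstein definition of $d_1$ to reduce to bounding $s_1(\kappa)$, then use the coupling of two processes differing in a single atom together with (\ref{p: helpful_d1_bound}) and Lemma~\ref{t: helping_bound_immdeath} to obtain the Lipschitz constant. The only cosmetic difference is that the paper works directly with $Z_t$ under $\mathbb{P}^\xi$ rather than explicitly invoking the decomposition $Z_t=Z_t^0+D_t$ from Lemma~\ref{l: Xia_sum}, but the substance is identical.
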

\begin{proof} 
For any $\xi \in M_p(E)$, define $\upgamma_\xi: E \to \mathbb{R}$ by $\upgamma_\xi(z) = \upgamma(\xi + \delta_z) - \upgamma(\xi)$. 
From the definition of $d_1$ in (\ref{d: def_d1}),
\begin{align}\label{p: helping_bound_s2_first}
\begin{split}  
\left| \int_E \left[ \upgamma(\xi + \delta_z) - \upgamma(\xi)\right] (\bl(dz)-\tilde{\bl}(dz))\right|
&= \left| \int_E \upgamma_\xi d\bl - \int_E \upgamma_\xi d\tilde{\bl} \right|\\
&\le s_1(\upgamma_\xi) \lambda \, d_1(\bl, \tilde{\bl}).
\end{split}
\end{align}
In order to determine an upper bound on $s_1(\upgamma_\xi)$, that is, 
on $|\upgamma_\xi(z) - \upgamma_\xi (w)|/$ $|d_0(z,w)|$ for any choice of $z\neq w \in E$,
let $Z=\{Z_t, t \in \mathbb{R}_+\}$ be an immigration-death process on $E$ with initial point configuration $\xi$, i.e. realised under $\mathbb{P}^\xi$. 
Let $T$ be an exponential random variable with rate $1$ and independent of $Z$. The processes defined by
$Z_{t}^z = Z_t + \delta_z I_{\{T > t\}}$ and $Z_{t}^w = Z_t + \delta_w I_{\{T > t\}}$ then have distributions $\mathbb{P}^{\xi+ \delta_z}$ and 
$\mathbb{P}^{\xi + \delta_w}$, respectively, and, for any $z \neq w \in E$, $\left| \upgamma_\xi(z) - \upgamma_\xi(w)\right|$ equals
\begin{align}
\begin{split}\label{p: helping_bound_s2}
\left| \upgamma(\xi + \delta_z) - \upgamma(\xi +\delta_w)\right| 
&= \left| \int_0^\infty \mathbb{E}^\xi \left[ \left\{ h(Z_t^z) - h(Z_t^w)\right\} I_{\{T > t\}} \right]dt \right| \\
& \le  \int_0^\infty e^{-t} \mathbb{E}^\xi \left| h(Z_t + \delta_z) - h(Z_t + \delta_w)  \right|dt\\
& \le s_2(h) \int_0^\infty e^{-t} \mathbb{E}^\xi \left[ d_1\left(Z_t + \delta_z, Z_t + \delta_w\right) \right]dt,
\end{split}
\end{align}
where we used Lipschitz continuity of $h$ in the last inequality. Now, note that $|Z_t + \delta_z| = |Z_t + \delta_w| = |Z_t|+1$, and $|Z_t|$ is 
an immigration-death process on $\mathbb{Z}_+$ with initial number of points $|\xi|$. 
By (\ref{p: helpful_d1_bound}) we thus have
$\mathbb{E}^\xi[ d_1\left(Z_t + \delta_z, Z_t + \delta_w\right)] $
$=$  $d_0(z,w)\mathbb{E}^\xi \left[ (|Z_t|+1)^{-1}\right]$ for any $t \in \mathbb{R}_+$.
With Lemma \ref{t: helping_bound_immdeath}, (\ref{p: helping_bound_s2}) then gives 
\[
\left| \upgamma_\xi(z) - \upgamma_\xi(w)\right| 
\le s_2(h) d_0(z,w) (1-e^{-\lambda}) \left( \frac{1}{\lambda} + \frac{1}{|\xi| + 1}\right),
\]
for any $z \neq w \in E$, and therefore,
\[
s_1(\upgamma_\xi) \le s_2(h)(1-e^{-\lambda}) \left( \frac{1}{\lambda} + \frac{1}{|\xi| + 1}\right). 
\]
Use of this bound for $s_1(\upgamma_\xi)$ in (\ref{p: helping_bound_s2_first}) completes the proof. 
\end{proof}
\noindent With the above lemma, we are in shape to determine an error estimate for the $d_2$-distance between two Poisson processes with different mean measures $\bl$ 
and $\tilde{\bl}$.
\begin{prop}\label{t: d2_two_PRM}
Let $\bl$ and $\tilde{\bl}$ be two finite measures over $E$ such that $\bl(E)= \tilde{\bl}(E) = \lambda$. Then
\[
d_2\left(\mathrm{PRM}(\bl), \mathrm{PRM}(\tilde{\bl})\right) \le (1-e^{-\lambda})(2-e^{-\lambda}) d_1(\bl, \tilde{\bl}). 
\]
\end{prop}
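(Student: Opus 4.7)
The plan is to follow the same strategy that proved Proposition \ref{t: dTV_two_PRM}, only now with the sharper Lipschitz-type smoothness estimate provided by Lemma \ref{t: helping_bound_s2}. Write $\Xi := \Xi_{\tilde{\bl}} \sim \mathrm{PRM}(\tilde{\bl})$ and let $\Xi_{\bl} \sim \mathrm{PRM}(\bl)$. For any $h \in \mathcal{H}$, let $\upgamma$ be the solution of the Stein equation from Proposition \ref{t: upgamma_welldefined}, where $\mathcal{A}$ is the generator of the immigration-death process with immigration intensity $\bl$ and equilibrium distribution $\mathrm{PRM}(\bl)$. By (\ref{d: Stein_eq_d2}) and the definition of the $d_2$-metric in (\ref{d: def_d2}), it suffices to bound $|\mathbb{E}(\mathcal{A}\upgamma)(\Xi)|/s_2(h)$ uniformly in $h \in \mathcal{H}$.

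First I would recycle, verbatim, the Palm-calculus manipulation from the proof of Proposition \ref{t: dTV_two_PRM}: exploiting that $\Xi \sim \mathrm{PRM}(\tilde{\bl})$, the identities (\ref{d: Palm_equality}) and (\ref{d: Char_PP_Palm}) let us rewrite the death-term integral $\mathbb{E}\int_E[\upgamma(\Xi - \delta_z) - \upgamma(\Xi)]\Xi(dz)$ as $-\int_E \mathbb{E}[\upgamma(\Xi + \delta_z) - \upgamma(\Xi)]\tilde{\bl}(dz)$. Combining with the immigration term (which integrates against $\bl$) collapses the expression into
\begin{equation*}
\mathbb{E}(\mathcal{A}\upgamma)(\Xi) = \int_E \mathbb{E}\bigl[\upgamma(\Xi + \delta_z) - \upgamma(\Xi)\bigr](\bl - \tilde{\bl})(dz).
\end{equation*}
Up to this point the computation is completely parallel to the total-variation proof.

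The key new ingredient is Lemma \ref{t: helping_bound_s2}: conditional on a realisation of $\Xi$, it bounds the modulus of the inner integral by $s_2(h)(1 - e^{-\lambda})(1 + \lambda/(|\Xi|+1))\, d_1(\bl, \tilde{\bl})$. Taking expectations (after justifying the Fubini swap, which is fine because all expressions are bounded), we obtain
\begin{equation*}
|\mathbb{E}(\mathcal{A}\upgamma)(\Xi)| \le s_2(h)(1-e^{-\lambda})\, d_1(\bl, \tilde{\bl}) \left(1 + \lambda\, \mathbb{E}\bigl[(|\Xi|+1)^{-1}\bigr]\right).
\end{equation*}
Since $|\Xi| \sim \mathrm{Poi}(\lambda)$, the calculation in (\ref{p: Delta_1_d2_bd1b}) (with $\lambda_t$ replaced by $\lambda$) gives $\mathbb{E}[(|\Xi|+1)^{-1}] = (1-e^{-\lambda})/\lambda$, so the factor in parentheses equals $2 - e^{-\lambda}$. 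Dividing by $s_2(h)$ and taking the supremum over $h \in \mathcal{H}$ yields the claimed bound.

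The proof has no real obstacle: the Palm-theory step is routine (already used for the total-variation analogue), and the sharp smoothness estimate has been isolated as Lemma \ref{t: helping_bound_s2}. The only point requiring care is bookkeeping — making sure the roles of $\bl$ (the immigration intensity appearing in $\mathcal{A}$, and hence governing the exponential lifetime bound through $1 - e^{-\lambda}$) and $\tilde{\bl}$ (the distribution of $\Xi$, giving the Poisson moment $\mathbb{E}[(|\Xi|+1)^{-1}]$) are not switched. The identity $\bl(E) = \tilde{\bl}(E) = \lambda$ makes this bookkeeping harmless.
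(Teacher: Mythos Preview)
Your proposal is correct and follows essentially the same approach as the paper: the paper also reduces via the Palm-calculus identity from Proposition~\ref{t: dTV_two_PRM} to $\mathbb{E}\int_E[\upgamma(\Xi+\delta_z)-\upgamma(\Xi)](\bl-\tilde{\bl})(dz)$, applies Lemma~\ref{t: helping_bound_s2} conditionally on $\Xi$, and then computes $\mathbb{E}[(|\Xi|+1)^{-1}]=(1-e^{-\lambda})/\lambda$ to obtain the factor $2-e^{-\lambda}$.
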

\begin{proof} 
Let $\Xi:= \Xi_{\tilde{\bl}} \sim \mathrm{PRM}(\tilde{\bl})$ and let $\Xi_{\bl} \sim \mathrm{PRM}(\bl)$. 
Let $\mathcal{A}$ be the generator of an immigration-death process with immigration intensity $\bl$, unit per-capita death rate, and equilibrium distribution
$\mathcal{L}(\Xi_{\bl})$. By (\ref{d: Stein_eq_d2}), $|\mathbb{E}h(\Xi) - \mathrm{PRM}(\bl)(h)|$ $= |\mathrm{PRM}(\tilde{\bl})(h) - \mathrm{PRM}(\bl)(h)|$ equals
$|\mathbb{E}(\mathcal{A} \upgamma)(\Xi)|$. From the proof of Proposition \ref{t: dTV_two_PRM} we know that
\[
\mathbb{E}(\mathcal{A}\upgamma)(\Xi) = \mathbb{E}\int_E \left[ \upgamma(\Xi + \delta_z) - \upgamma(\Xi) \right] (\bl(dz)-\tilde{\bl}(dz)), 
\]
and thus 
\begin{align*}
\frac{|\mathbb{E}(\mathcal{A}\upgamma)(\Xi)|}{s_2(h)} 
&\le \frac{1}{s_2(h)}\, \mathbb{E} \left|\int_E \left[ \upgamma(\Xi + \delta_z) - \upgamma(\Xi) \right] (\bl(dz)-\tilde{\bl}(dz)) \right| \\
& \le (1-e^{-\lambda})\left(1+ \lambda \mathbb{E} \left[ (|\Xi|+1)^{-1}\right]\right) d_1(\bl, \tilde{\bl}),
\end{align*}
where we used Lemma \ref{t: helping_bound_s2} for the second inequality. Finally, since $|\Xi| \sim \mathrm{Poi}(\lambda)$, we have
\[
\mathbb{E} \left[ (|\Xi|+1)^{-1}\right] 
= \sum_{k \ge 0} \frac{ P(|\Xi|=k) }{k+1} 
= e^{-\lambda}\sum_{k \ge 0} \frac{\lambda^k}{(k+1)!} 
= \frac{e^{-\lambda} }{\lambda} \sum_{k \ge 1} \frac{\lambda^k}{k!} = 
\frac{1-e^{-\lambda}}{\lambda},
\]
and thus 
\[
1+ \lambda \mathbb{E} \left[ (|\Xi|+1)^{-1}\right] = 2-e^{-\lambda}.
\]

\end{proof}

\chapter{Poisson and Poisson process approximation for univariate extremes}\label{Chap: Univariate_extremes}
\setcounter{thm}{0}
The tools that we established in Chapter \ref{Chap: Stein-Chen} by way of the Stein-Chen method are now applied to problems from extreme value theory, where we  
restrict ourselves, for simplicity, to samples of \iid univariate random variables. Section \ref{Sec: univ_max} relates  
extreme points to exceedances of thresholds. 
Since the number of extreme points follows a binomial distribution, the Stein-Chen method for Poisson approximation 
from Section \ref{Sec: PoiAppr_dTV} may be used in order to determine error estimates for the approximation by a Poisson distribution. 
We thereby establish bounds on the errors in the
Kolmogorov distance involved in the approximation of the law of the maximum value by a so-called extreme value distribution. 
In particular, we delineate the different 
steps, as well as the respective error estimates arising from them, that are needed for the approximation. We present our results for the cases of
random variables that follow exponential, Pareto, uniform, normal, Cauchy or geometric distributions. 
In Section \ref{Sec: Poi_proc_approc_MPPE} we generalise by introducing marked point processes of exceedances. Using results 
from Chapter \ref{Sec: Poi_proc_approx} and \ref{Sec: Improved_rates}, we determine and discuss bounds on the errors in the total variation distance 
(or the $d_2$-distance if need be) 
for processes whose marks follow any of the distributions that we already treated in Section \ref{Sec: univ_max}. 
\section{Poisson approximation for the number of extreme points and maxima
of random variables}
\sectionmark{Poisson approximation for the number of extreme points and maxima}
\label{Sec: univ_max}
The first question is of course: what is an ``extreme point''? It is an atypical value taken by a random variable. For a one-dimensional
random variable $X$ with state space $E \subseteq \mathbb{R}$ it is a value that exceeds a certain threshold, either towards 
the right or towards the left of the state space.  Suppose we have random variables $X_1, \ldots, X_n$ that are \iid copies of $X$, 
and denote by $F$ and $\overline{F}$ the distribution and survival functions of $X$, respectively. In this section, we consider upper tail extremes, i.e. we 
call ``extreme value'' or ``extreme point'' (suggesting the language of point processes) a value in $(u_n, x_F] \cap E$ (or $[u_n,x_F] \cap E$), where 
$x_F=\sup\{x \in \mathbb{R}: \, F(x)<1\}$ is the right endpoint of $F$ and $u_n$ denotes a 
threshold that varies with the chosen sample size $n$ (note that if $x_F=\infty$, then $(u_n,x_F] \cap E = (u_n,\infty)$). The number of extreme points is then given 
by 
\[
\sum_{i=1}^n I_{\{X_i > u_n\}} \sim \mathrm{Bin}(n, \overline{F}(u_n)).
\]
For a threshold $u_n$ increasing with $n$, the probability $\overline{F}(u_n)$ of exceeding the threshold decreases towards $0$. If $n\overline{F}(u_n) \to \lambda >0$
as $n \to \infty$, the law of the number of extreme points converges to a Poisson distribution with mean $\lambda$. This implies that the number 
of points exceeding the threshold $u_n$ is approximately distributed as $\mathrm{Poi}(n\overline{F}(u_n))$.
The Stein-Chen method for Poisson approximation from Section \ref{Sec: PoiAppr_dTV} provides us with the tools needed to 
investigate the sharpness of this approximation for each integer $n \ge 1$. 
Instead of counting points in $(u_n, x_F] \cap E$, we can count them in a more general set $A$
that we suppose to be a measurable subset of $E$ containing extreme values. 
Theorem \ref{t: Barbour_Hall_1984} then
gives the following result for the error in total variation that arises when approximating the law of $\sum_{i=1}^n I_{\{X_i \in A\}}$ 
by a Poisson distribution:
\begin{thm}\label{t: Poster_univ}
For each integer $n \ge 1$, let $X, X_1, \ldots, X_n$ be \iid univariate random variables with state space $E \subseteq \mathbb{R}$.
For a fixed set $A\in \mathcal{E}:= \mathcal{B}(E)$, let $W_A := \sum_{i=1}^n I_{\{X_i \in A\}}$ denote the random number of points in $A$.
Then,
\[
d_{TV}(\mathcal{L}(W_A), \mathrm{Poi}(nP(X \in A)) \le P(X \in A). 
\]
\end{thm}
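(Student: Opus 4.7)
The plan is to observe that the statement is essentially an immediate corollary of the Barbour--Hall bound (Theorem \ref{t: Barbour_Hall_1984}) combined with the independence and identical distribution of the $X_i$'s.

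First I would note that since $X_1, \ldots, X_n$ are \iid, the indicators $I_i := I_{\{X_i \in A\}}$, $i=1,\ldots,n$, are independent Bernoulli random variables with common success probability
\[
p_i = P(X_i \in A) = P(X \in A) =: p.
\]
Writing $W_A = \sum_{i=1}^n I_i$ and $\lambda = \mathbb{E}W_A = np$, we are precisely in the setting of Theorem \ref{t: Barbour_Hall_1984}.

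Applying that theorem gives
\[
d_{TV}(\mathcal{L}(W_A), \mathrm{Poi}(np)) \le \min\!\left(1, \tfrac{1}{\lambda}\right)\sum_{i=1}^n p_i^2 = \min\!\left(1, \tfrac{1}{np}\right) \cdot np^2 = \min(np^2, p).
\]
Since $\min(np^2, p) \le p = P(X \in A)$, the desired inequality follows immediately.

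There is essentially no obstacle: the only ``step'' is recognising that iid-ness yields independent Bernoulli indicators with equal parameters, so the sum $\sum p_i^2$ collapses to $np^2$, and that the factor $\min(1,1/\lambda)$ in the Barbour--Hall bound absorbs the extra $n$ when $np \ge 1$, leaving the uniform bound $p$ in all regimes. The proof is therefore a one-line application of a result already established in the preceding chapter, which justifies why the theorem is stated without further machinery here.
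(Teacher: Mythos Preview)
Your proof is correct and follows essentially the same approach as the paper: apply Theorem~\ref{t: Barbour_Hall_1984} with $I_i = I_{\{X_i \in A\}}$, note that $p_i \equiv p = P(X\in A)$, and observe that the Barbour--Hall bound reduces to $P(X\in A)$. The only cosmetic difference is that the paper uses the form $(1-e^{-\lambda})\lambda^{-1}\sum p_i^2 = (1-e^{-np})p \le p$, whereas you use the equivalent $\min(1,\lambda^{-1})\sum p_i^2 = \min(np^2,p)\le p$; both give the same result.
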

\begin{proof}
We apply Theorem \ref{t: Barbour_Hall_1984} with $I_i := I_{\{X_i \in A\}}$ and $W:=W_A$. Then $p_i \equiv P(X \in A)$, 
$\lambda=\mathbb{E}W_A = nP(X \in A)$, and the upper bound in (\ref{t: bound_Barbour_Hall_1984}) equals $P(X \in A)$.
\end{proof}
\noindent For the case $A=A_n=(u_n,x_F] \cap E$, Theorem \ref{t: Poster_univ} amounts to 
\begin{equation}\label{t: Bin_to_Poi_extremes_dTV}
d_{TV}\left(\mathrm{Bin}(n, \overline{F}(u_n)), \mathrm{Poi}(n \overline{F}(u_n))\right)
 \le \overline{F}(u_n).  
\end{equation}
This result is immediately applicable to all kinds of distributions $F$, and gives
error bounds vanishing with $n \to \infty$ for suitably chosen thresholds $u_n$.
It can prominently be used to study the quality of asymptotic results given by classical extreme value theory, which
establishes limit laws for maxima of \iid random variables. 
The number of extreme points can be related to the maximum  $X_{(n)} := \max_{1 \le i \le n} X_i$ of the random variables $X_1, \ldots, X_n$ by considering that
\[
\left\{ \sum_{i=1}^n I_{\{X_i > u_n\}}=0 \right\} = \left\{ X_{(n)} \le u_n \right\}.
\]
Using the Poisson approximation to the binomial
it is thus clearly possible to determine an approximation to the law of the maximum,
and (\ref{t: Bin_to_Poi_extremes_dTV}) in particular gives the error of this approximation:
\begin{equation}\label{t: error_max}
\left| P\left(W_A = 0\right) - \mathrm{Poi}(\mathbb{E}W_A)\{0\}\right| = \left| P\left(X_{(n)} \le u_n \right) - e^{-n\overline{F}(u_n)}\right|  \le \overline{F}(u_n).
\end{equation}
Indeed, underlying classical extreme value theory is the 
following well-known limit result:
\begin{thm}\label{t: Lim_Poiapprox_Max} (Poisson approximation for maxima of \iid rv's) Let $X_1,$ $\ldots,$ $ X_n$ be \iid random variables with maximum $X_{(n)}$. 
For given $\tau \in [0, \infty]$ and a sequence
$(u_n)_{n \ge 1}$ of real numbers, the following are equivalent:
\begin{align}
  n\overline{F}(u_n) &\to \tau, \quad \textnormal{ as } n \to \infty,\\
 P\left(X_{(n)} \le u_n \right) &\to e^{-\tau},\quad \textnormal{ as } n \to \infty.
\end{align}
\end{thm}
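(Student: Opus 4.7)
The plan is to deduce both implications from the single uniform error estimate (\ref{t: error_max}), namely
\[
\bigl| P(X_{(n)} \le u_n) - e^{-n\overline{F}(u_n)}\bigr| \le \overline{F}(u_n),
\]
which was established in (\ref{t: error_max}) via the Stein--Chen bound of Theorem \ref{t: Barbour_Hall_1984}. Everything else will reduce to elementary facts about the exponential function and a subsequence/compactness argument.

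For the implication $(1) \Rightarrow (2)$, I would split on whether $\tau$ is finite. If $\tau < \infty$, then $\overline{F}(u_n) = n\overline{F}(u_n)/n \to 0$, so continuity of $x \mapsto e^{-x}$ gives $e^{-n\overline{F}(u_n)} \to e^{-\tau}$, and (\ref{t: error_max}) forces $P(X_{(n)} \le u_n) \to e^{-\tau}$. If $\tau = \infty$, the bound $\overline{F}(u_n)$ need not vanish, so I instead use the direct inequality $P(X_{(n)} \le u_n) = (1-\overline{F}(u_n))^n \le e^{-n\overline{F}(u_n)} \to 0 = e^{-\infty}$, which does the job.

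For the implication $(2) \Rightarrow (1)$, I would argue by contradiction via subsequences. Suppose $P(X_{(n)} \le u_n) \to e^{-\tau}$ but $n\overline{F}(u_n) \not\to \tau$. By compactness of the extended half-line $[0,\infty]$, there exists a subsequence $(n_k)$ along which $n_k \overline{F}(u_{n_k}) \to \tau'$ for some $\tau' \in [0,\infty]$ with $\tau' \neq \tau$. Applying the already-proved direction $(1) \Rightarrow (2)$ to this subsequence gives $P(X_{(n_k)} \le u_{n_k}) \to e^{-\tau'}$. But by hypothesis the same sequence converges to $e^{-\tau}$, and since $x \mapsto e^{-x}$ is strictly decreasing on $[0,\infty]$ (with $e^{-\infty} = 0$) it is injective, forcing $\tau = \tau'$, a contradiction.

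The only mildly delicate point is the handling of $\tau = \infty$, since the estimate $\overline{F}(u_n) \to 0$ that one gets for free in the finite case no longer holds; here one must fall back on the elementary inequality $(1-x)^n \le e^{-nx}$ rather than on (\ref{t: error_max}) itself. Apart from that, the proof is essentially a one-line consequence of (\ref{t: error_max}) combined with the continuity and injectivity of the exponential.
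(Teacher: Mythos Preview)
Your argument is correct. The paper does not actually prove this theorem; it simply refers the reader to Proposition~3.1.1 in Embrechts, Kl\"uppelberg and Mikosch (1997). In contrast, you give a self-contained proof using only tools already developed in the thesis, namely the pointwise error bound~(\ref{t: error_max}) and the elementary inequality $(1-x)^n \le e^{-nx}$. Your treatment of the two regimes $\tau<\infty$ and $\tau=\infty$ is clean, and correctly identifies that the Stein--Chen bound~(\ref{t: error_max}) alone does not suffice for $\tau=\infty$ because $\overline{F}(u_n)$ need not vanish there. The subsequence/compactness argument for the converse is standard and valid; the injectivity of $x\mapsto e^{-x}$ on $[0,\infty]$ is exactly what is needed to close the contradiction. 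Your proof thus has the advantage over the paper's of keeping the argument internal to the Stein--Chen framework that the thesis is built on.
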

\begin{proof} See, for instance, Proposition 3.1.1 in \cite{Embrechts_et_al:1997}.\end{proof}
\noindent More interesting than the approximation by $e^{-\tau}$ for a fixed value $\tau$, or by $e^{-n\overline{F}(u_n)}$ which varies with 
the sample size $n$, would be the approximation by
a non-degenerate distribution function that no longer depends on $n$. Such a distribution function may be found by
subjecting the maximum to a \textit{normalisation}, more precisely here, to a suitable affine transformation  $u_n = a_n x + b_n$ for $x \in \mathbb{R}$, 
$a_n, b_n \in \mathbb{R}$ with $a_n >0$. If we can indeed find non-degenerate limit distributions for maxima, then what are these?
This question is answered by one of the most fundamental
results of classical extreme value theory, the Fisher-Tippett theorem: 
\begin{thm}(Fisher-Tippett)\label{t: Fisher-Tippett}
Let $X_1, \ldots, X_n$ be a sequence of \iid random variables with maximum $X_{(n)}$. If there exist norming constants $a_n >0$, $b_n \in \mathbb{R}$ and some non-degenerate distribution
function $H$ such that 
\begin{equation}\label{d: MDA}
\frac{X_{(n)}-b_n}{a_n} \stackrel{d}{\longrightarrow} H,
\end{equation}
then $H$ belongs to the type of one of the three following distribution functions:
\begin{align*}
&\textnormal{Fr\'echet:} &\Phi_{\alpha}(x) &= \left\{\begin{array}{ll} 0, & x \le 0\\ e^{-x^{-\alpha}}, & x>0 \end{array}\right. &\alpha >0.\\
&\textnormal{Weibull:} &\Psi_{\alpha}(x) &= \left\{\begin{array}{ll} e^{-(-x)^\alpha}, & x < 0\\ 1, & x\ge 0 \end{array} \right.  & \alpha >0. \\
&\textnormal{Gumbel:} &\Lambda(x)&= e^{-e^{-x}}, x \in \mathbb{R}.   
\end{align*}
\end{thm}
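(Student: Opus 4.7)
The plan is to follow the classical route via max-stability and Khintchine's convergence of types theorem. First, I would exploit the fact that $X_{(nk)}$ is distributed as the maximum of $k$ independent copies of $X_{(n)}$ for every fixed integer $k\ge 1$. Combined with (\ref{d: MDA}) applied along the subsequence $nk$, this gives
\[
\frac{X_{(nk)} - b_{nk}}{a_{nk}} \xrightarrow{d} H, \quad \text{and} \quad \frac{X_{(nk)} - b_n}{a_n} \xrightarrow{d} H^k,
\]
the second convergence coming from $P(X_{(nk)}\le a_n x + b_n)=[P(X_{(n)}\le a_n x + b_n)]^k$ together with pointwise convergence at continuity points. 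Since both $H$ and $H^k$ are non-degenerate, Khintchine's convergence of types theorem yields constants $\alpha_k>0$, $\beta_k\in\mathbb{R}$ with
\[
H^k(\alpha_k x + \beta_k) = H(x), \quad x \in \mathbb{R}.
\]
This is the max-stability functional equation, and it is the key structural consequence of (\ref{d: MDA}).

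Next I would solve this equation. Writing $\psi(x):=-\log(-\log H(x))$ on the set $\{0<H(x)<1\}$ (an interval by monotonicity of $H$), the equation becomes the Abel-type relation $\psi(\alpha_k x + \beta_k) = \psi(x)+\log k$. By iterating the max-stability relation, one first establishes a two-parameter version: for all rationals $t>0$ there exist $\alpha(t)>0$, $\beta(t)\in\mathbb{R}$ with $H^t(\alpha(t)x+\beta(t))=H(x)$, and monotonicity of $H$ lets one extend this to all real $t>0$ with $\alpha$ and $\beta$ continuous. Analysing the multiplicative/additive cocycle satisfied by $\alpha$ and $\beta$ (namely $\alpha(st)=\alpha(s)\alpha(t)$ and a corresponding relation for $\beta$) splits the problem into the cases $\alpha(t)\equiv 1$ and $\alpha(t)\not\equiv 1$.

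In the case $\alpha(t)\equiv 1$, the cocycle forces $\beta(t)=c\log t$ for some $c\in\mathbb{R}\setminus\{0\}$, and the functional equation pins $H$ down, up to an affine change of variable, to $\Lambda(x)=e^{-e^{-x}}$ (Gumbel). In the case $\alpha(t)\not\equiv 1$, one has $\alpha(t)=t^\rho$ for some $\rho\ne 0$, and a short computation shows that $\beta$ must be of the form $\beta(t)=d(1-t^\rho)$; substituting back into $H^t(\alpha(t)x+\beta(t))=H(x)$ and making the affine change of variable $y=(x-d)/|\rho|$ reduces $H$ to one of $\Phi_\alpha$ (if $\rho>0$, corresponding to $H$ supported on $(0,\infty)$ after the shift) or $\Psi_\alpha$ (if $\rho<0$, corresponding to $H$ having a finite right endpoint), with $\alpha=1/|\rho|$. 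Collecting the three cases gives the claim.

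The main obstacle I anticipate is the clean derivation of the cocycle $\alpha(st)=\alpha(s)\alpha(t)$ and its companion relation for $\beta$, and then the passage from rationals $t$ to continuous $t>0$; this is where one must use carefully that $H$ is non-degenerate and right-continuous, so that uniqueness in Khintchine's theorem applies and $\alpha,\beta$ are uniquely determined and continuous in $t$. Everything after the functional equations is essentially algebra. A mild cosmetic point is to decide whether to state and invoke Khintchine's theorem as a black box or to include its short proof (via tightness plus a comparison-of-quantiles argument); for a self-contained presentation I would include a brief statement with a reference.
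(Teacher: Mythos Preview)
Your sketch is correct and follows the classical argument via max-stability and Khintchine's convergence of types theorem. The paper, however, does not actually prove this result: it simply refers the reader to Proposition~0.3 in \cite{Resnick:1987}. Since Resnick's proof proceeds along exactly the lines you outline (max-stability from the convergence-of-types lemma, then solving the functional equation for $H$), your proposal is in fact a fleshed-out version of what the cited reference contains, rather than a different approach.
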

\begin{proof} See, for instance, Proposition 0.3 in \cite{Resnick:1987}.\end{proof} 
\noindent The Fr\'echet, Weibull and Gumbel distributions are called \textit{extreme value distributions}. If (\ref{d: MDA}) holds, we say
that $F$ is in the \textit{maximum domain of attraction} of $H$, which we denote by $F \in \mathrm{MDA}(H)$.
We summarise the normalisations and extremal limit results for a selection of well-known (continuous) distribution functions in the following proposition:
\begin{prop}\label{t: cont_limit_maxima}
For any integer $n \ge 1$, let $X_1, \ldots, X_n$ be \iid random variables with cumulative distribution function $F$ and maximum $X_{(n)}$.
\begin{enumerate}
\item[(a)] (Exponential distribution) Let 
\[
F(y) = \left\{ \begin{array}{ll} 1-e^{-\lambda y},& y \ge 0, \\ 0 & y <0, \end{array} \right.
\]
with rate parameter $\lambda >0$. Then, for all $x \in \mathbb{R}$,
\[
P\left( X_{(n)} \le \frac{x+\log n}{\lambda}\right) \longrightarrow e^{-e^{-x}} = \Lambda(x), \quad \textnormal{ as } n \to \infty. 
\]
\item[(b)] (Pareto distribution) Let 
\begin{align}\label{d: Pareto}
\begin{split}
F(y) = \left\{ 
\begin{array}{ll}
1-\left( \frac{\phi}{y}\right)^\alpha, & y \ge \phi, \\
0, & y < \phi,
\end{array}
\right. 
\end{split}
\end{align}
where  $\alpha, \phi >0$ denote the shape and scale parameters, respectively. Then, for all $x>0$, 
\[
P\left( X_{(n)} \le \phi n^{\frac{1}{\alpha}}x\right) \longrightarrow e^{-x^{-\alpha}}= \Phi_\alpha(x),  \quad \textnormal{ as }n \to \infty. 
\]
\item[(c)] (Uniform distribution) Let 
\[F(y) = \left\{ \begin{array}{lll}
0, &  y < a,\\
\frac{y-a}{b-a}, & a \le y < b,\\
1, & y \ge b,       
       \end{array}
\right.
\]
where $a,b \in \mathbb{R}$ with $a < b$. Then, for all $x < 0$,
\[
P\left( X_{(n)} \le \frac{(b-a)x}{n} + b\right) \longrightarrow e^x = \Psi_1(x), \quad \textnormal{ as }n \to \infty. 
\]
\item[(d)] (Standard normal distribution) Let $\Phi(y):= F(y) = \int_{-\infty}^y \varphi(t)dt$ and $ \displaystyle \varphi(y)= 
 \frac{e^{-y^2/2}}{\sqrt{2\pi}} $ for $y \in \mathbb{R}$. 
Then, for all $x \in \mathbb{R}$, 
\[
P \left( X_{(n)} \le \frac{x}{\sqrt{2 \log n}} + \sqrt{2 \log n} - \frac{\log \log n + \log 4\pi}{2\sqrt{2\log n}}\right)
 \longrightarrow e^{-e^{-x}} = \Lambda(x), 
\]
as $n \to \infty$.
\item[(e)] (Standard Cauchy distribution) Let 
\[
F(y) = \arctan(y)/\pi +0.5 = \int_{-\infty}^y 1/\pi(1+t^2)dt 
\]
for $y \in \mathbb{R}$. Then, for any 
$x >0$, 
\[
P\left( X_{(n)} \le \frac{nx}{\pi}\right) \longrightarrow e^{-x^{-1}}= \Phi_1(x),  \quad \textnormal{ as }n \to \infty. 
\]
\end{enumerate}
\end{prop}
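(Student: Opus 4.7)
The plan is to apply Theorem \ref{t: Lim_Poiapprox_Max} in each case: for the prescribed normalising sequence $u_n = u_n(x)$, compute (or estimate) $n\overline{F}(u_n)$ and show it converges to $\tau(x) = -\log H(x)$, where $H$ is the claimed limit distribution. Since every one of $\Lambda$, $\Phi_\alpha$, $\Psi_1$ has the form $e^{-\tau(x)}$, this immediately yields the stated convergence.

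Parts (a)--(c) are essentially exact computations. For (a) exponential, $\overline{F}(y) = e^{-\lambda y}$ gives $n\overline{F}((x+\log n)/\lambda) = n e^{-(x+\log n)} = e^{-x}$. For (b) Pareto, $\overline{F}(y) = (\phi/y)^\alpha$ for $y \ge \phi$ yields $n\overline{F}(\phi n^{1/\alpha}x) = n \cdot n^{-1} x^{-\alpha} = x^{-\alpha}$ for $x > 0$ (and $u_n \ge \phi$ for $n$ large). For (c) uniform on $[a,b]$, $\overline{F}(y) = (b-y)/(b-a)$ on $[a,b]$, and for $x < 0$ and $n$ large we have $u_n = (b-a)x/n + b \in (a,b)$, giving $n\overline{F}(u_n) = n \cdot (-x/n) = -x$, matching $-\log \Psi_1(x) = -x$.

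The real work is part (d). Here the key tool is the Mills ratio estimate $1 - \Phi(y) = \varphi(y)/y \cdot (1+o(1))$ as $y \to \infty$, which I would either quote or derive from a single integration by parts. With $u_n = x/\sqrt{2\log n} + \sqrt{2\log n} - (\log\log n + \log 4\pi)/(2\sqrt{2\log n})$, the squared normalising constant expands as
\[
\tfrac{1}{2}u_n^2 = \log n - \tfrac{1}{2}\log(4\pi \log n) + x + o(1),
\]
where the $o(1)$ collects the cross term $x^2/(2\log n)$ and the square of $(\log\log n + \log 4\pi)/(2\sqrt{2\log n})$. Exponentiating and dividing by $\sqrt{2\pi}$ gives $\varphi(u_n) = n^{-1}\sqrt{2\log n}\, e^{-x}(1+o(1))$, and since $u_n \sim \sqrt{2\log n}$,
\[
n\overline{F}(u_n) \sim n \cdot \frac{\varphi(u_n)}{u_n} \longrightarrow e^{-x},
\]
which is the desired $\tau = -\log\Lambda(x)$. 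The main obstacle is keeping track of the three scales $\sqrt{2\log n}$, $\log\log n /\sqrt{2\log n}$, and $1/\sqrt{2\log n}$ when expanding $u_n^2$; everything else is routine.

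For part (e), I would use the identity $\arctan y + \arctan(1/y) = \pi/2$ for $y > 0$ together with the Taylor expansion $\arctan(1/y) = 1/y - 1/(3y^3) + O(y^{-5})$, which yields $\overline{F}(y) = 1/(\pi y) + O(y^{-3})$ as $y \to \infty$. Substituting $u_n = nx/\pi$ with $x > 0$ gives $n\overline{F}(u_n) = 1/x + O(n^{-2}) \to 1/x = -\log\Phi_1(x)$. Finally, I would remark that Theorem \ref{t: Lim_Poiapprox_Max} only supplies the pointwise limit, but in each case the limit distribution is continuous, so by the standard continuity argument (or Pólya's theorem) pointwise convergence of the CDFs suffices for convergence in distribution.
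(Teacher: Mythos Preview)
Your proof is correct and follows essentially the same route as the paper: both reduce to showing $n\overline{F}(u_n)\to\tau(x)$ (the paper does this implicitly via $F^n(u_n)=(1-\overline{F}(u_n))^n$, you do it by invoking Theorem~\ref{t: Lim_Poiapprox_Max} explicitly), with the Mills ratio handling part (d) in both cases. The only cosmetic differences are that for (e) the paper obtains $\overline{F}(y)\sim 1/(\pi y)$ via l'H\^opital rather than the $\arctan$ complement identity, and your final remark on P\'olya's theorem is not needed since the statement only asserts pointwise convergence of the distribution function.
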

\begin{proof} For each case we use $y:=y_n := a_nx + b_n$ and
\begin{align}
\begin{split} 
\label{p: max_Fn}
P\left(X_{(n)} \le a_n x + b_n \right) 
&= P\left(X_1 \le a_n x + b_n, \ldots, X_n \le a_n x + b_n\right)\\
&= F^n(a_n x + b_n). 
\end{split}
\end{align}
(a) Fix any $x \in \mathbb{R}$. With $a_n = \lambda^{-1}$ and $b_n = \lambda^{-1} \log n$, we have, for any integer $n > e^{-x}$,
\[
F^n\left(a_nx+b_n\right) = \left( 1-\frac{e^{-x}}{n}\right)^n \longrightarrow e^{-e^{-x}}=\Lambda(x), \quad \text{as } n \to \infty.
\]
(b) Fix any $x >0$. With $a_n = \phi n^{1/\alpha}$ and $b_n \equiv 0$, we have, for any integer $n > x^{-\alpha}$,
\[
F^n(a_nx+b_n) = \left( 1-\frac{x^{-\alpha}}{n}\right)^n \to e^{-x^{-\alpha}} = \Phi_\alpha(x), \quad \text{as } n \to \infty.
\]
(c) Fix any $x <0$. With $a_n = (b-a)/n$ and $b_n\equiv b$, we have, for any integer $n > x$, 
\[
F^n(a_nx+b_n) = \left(1+\frac{x}{n}\right)^n \longrightarrow e^x = \Psi_1(x), \quad \text{as } n \to \infty.
\]
(d) For any fixed $x \in \mathbb{R}$, 
we have $y=y_n = y_n(x) = a_nx+b_n \in \mathbb{R}$,
where 
\[
a_n = \frac{1}{\sqrt{2 \log n}} \to 0 \quad \textnormal{ and } \quad b_n = \sqrt{2 \log n} - \frac{\log \log n + \log 4\pi}{2\sqrt{2 \log n}} \to \infty, 
\]
as $n \to \infty$. It follows that $y_n(x) \to \infty$ and $\overline{\Phi}(y_n) \to 0$ as $n \to \infty$. Hence,
\[
\Phi^n(y_n) = \left( 1- \overline{\Phi}(y_n) \right)^n = e^{n\log(1-\overline{\Phi}(y_n))} \sim e^{-n\overline{\Phi}(y_n)} \sim e^{-\frac{n\varphi(y_n)}{y_n}}, 
\]
as $y_n, n \to \infty$, where the first asymptotic equality is due to $\log(1-z) \sim -z$ as $z \to 0$, and the second is the Mills ratio, i.e.
$\overline{\Phi}(y_n) \sim y_n^{-1} \varphi(y_n)$ as $y_n \to \infty$. Furthermore, for $x$ fixed,
\begin{align}\label{p: normal_normalised_density}
\begin{split}
&\frac{n\varphi(y_n)}{y_n} \\
=&\,\,e^{-x} \cdot \frac{ \exp\left\{
-\frac{1}{8 \log n} \left[2x^2 - 2x(\log \log n + \log 4\pi) + (\log \log n + \log 4\pi)^2 \right]\right\}}{1+\frac{x-(\log \log n + \log 4 \pi)/2}{2\log n}}
\end{split}
\end{align}
tends to $e^{-x}$ as $n \to \infty$, and thus 
\[
\Phi^n(a_n x + b_n) \to e^{-e^{-x}} = \Lambda(x), \textnormal{ as } n \to \infty. 
\]
(e) By l'H\^opital's rule, 
\[
\lim_{y \to \infty} \frac{\overline{F}(y)}{1/\pi y} = \lim_{y \to \infty} \frac{y^2}{1+y^2} = 1. 
\]
With $a_n = n\pi^{-1}$ and $b_n \equiv 0$, we then find
\[
F^n(a_nx+b_n) = \left( 1-\overline{F}\left(\frac{nx}{\pi}\right)\right)^n \sim \left( 1-\frac{x^{-1}}{n}\right)^n \to e^{-x^{-1}} = \Phi_1(x) 
\]
for all $x>0$.
\end{proof}
\noindent The question we now ask is whether these limit results actually give good approximations for the laws of the maxima.
Some results on convergence rates are given in Chapters 2.4 in \cite{Resnick:1987} and \cite{Leadbetter_et_al:1983}, respectively, and, for maxima of normals, in 
\cite{Hall:1979}. We establish precise rates of convergence to extreme value distributions by using (\ref{t: error_max}) along with suitable
normalisations. In Section \ref{s: maxima_cont} we achieve this for each of the results from Proposition \ref{t: cont_limit_maxima}. 
Later, Section \ref{s: Problem_discrete} discusses issues that may arise for 
distributions with discontinuities in the tail, and Section \ref{s: max_discrete} offers a way to partially remedy these issues. 
\subsection{Maxima of continuous random variables}\label{s: maxima_cont}
As demonstrated in Propositions \ref{t: Max_exp}, \ref{t: Max_pareto} and \ref{t: Max_uniform} below, it is relatively straightforward to establish rates of convergence for the maximum law of
exponential, Pareto, and uniform random variables, using Theorem \ref{t: Poster_univ} and a suitable normalisation. For each of these cases the 
uniform error bound is of order $\log (n)/n \to 0$, as $n \to \infty$, thus providing a sharp approximation. The proofs for these three distributions are analogous, whereas the cases of standard normal and Cauchy 
random variables, treated in Propositions \ref{t: Max_normal} and \ref{t: Max_Cauchy}, respectively, are somewhat more involved. By choosing a non-linear normalisation for the uniform distribution, it is possible to expand 
the range of possible limiting distributions from the Weibull with parameter $1$ to Weibull distributions with any parameter $\alpha>0$. The choice $\alpha = 1$ restores 
the result from Proposition \ref{t: cont_limit_maxima}. 
\begin{prop}\label{t: Max_exp}(Exponential distribution)
For each integer $n \ge 1$, let $X_1, \ldots, X_n$ be \iid exponential random variables with parameter $\lambda > 0$. Then,
for all $x \in \mathbb{R}$,
\[
\left| P\left( X_{(n)} \le \frac{x + \log n}{\lambda} \right) -e^{-e^{-x}}\right| \le \frac{\log n}{n} + \frac{1}{n} = O\left( \frac{\log n}{n} \right).
\] 
\end{prop}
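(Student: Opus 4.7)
The plan is to combine the general Poisson-approximation bound (\ref{t: error_max}) with a short case analysis on $x$.

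First, I would set $u_n = (x+\log n)/\lambda$ and observe that the exponential distribution interacts with this choice particularly cleanly: whenever $u_n \ge 0$, i.e.\ $x \ge -\log n$, one has $\overline{F}(u_n) = e^{-\lambda u_n} = e^{-x}/n$, so that $n\overline{F}(u_n) = e^{-x}$ \emph{exactly}. Consequently the Poisson-mean $e^{-n\overline{F}(u_n)}$ appearing on the right of (\ref{t: error_max}) coincides identically with the Gumbel target $e^{-e^{-x}}$, and (\ref{t: error_max}) reduces to
\[
\bigl| P(X_{(n)} \le u_n) - e^{-e^{-x}}\bigr| \le \frac{e^{-x}}{n}.
\]
This identity of exponents is the main reason no additional approximation-to-Gumbel step is needed; the entire difficulty is absorbed into the Stein--Chen estimate.

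To pass from this pointwise estimate to a uniform-in-$x$ bound, I would split the real line into three regimes. When $x \ge -\log\log n$ one has $e^{-x} \le \log n$, so the display above directly gives an error of at most $\log(n)/n$. When $-\log n \le x < -\log\log n$ the Stein bound $e^{-x}/n$ may be as large as $1$ and is useless; here I would bound both quantities separately, using the elementary inequality $(1-p)^n \le e^{-np}$ with $p = e^{-x}/n$ to conclude that $P(X_{(n)} \le u_n) \le e^{-e^{-x}}$, whence the difference is at most $e^{-e^{-x}} \le e^{-\log n} = 1/n$. Finally, when $x < -\log n$, then $u_n < 0$ forces $P(X_{(n)} \le u_n) = 0$ while $e^{-e^{-x}} \le e^{-n}$, a negligible contribution.

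Adding the worst contributions from these regimes gives the claimed bound $\log(n)/n + 1/n$. There is no genuine obstacle here: the key observation is that the exponential distribution is matched so tightly by the chosen normalisation that the exponent $n\overline{F}(u_n)$ equals $e^{-x}$ on the nose, leaving only the Stein--Chen estimate to be made uniform in $x$, which the elementary three-case split accomplishes.
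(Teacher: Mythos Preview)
Your proof is correct. The first part --- using the Stein--Chen bound (\ref{t: error_max}) and the exact identity $n\overline{F}(u_n)=e^{-x}$ to obtain $|P(X_{(n)}\le u_n)-e^{-e^{-x}}|\le e^{-x}/n$ for $x\ge -\log n$ --- coincides with the paper's argument. The difference lies in how the regime $x<-\log\log n$ is handled. The paper argues by monotonicity of the distribution function: it bounds $P(X_{(n)}\le u_n)$ from above by its value at the cutoff $x_0=-\log\log n$, then applies the already-established pointwise bound there, obtaining $\log(n)/n + 1/n$. You instead use the explicit closed form $P(X_{(n)}\le u_n)=(1-e^{-x}/n)^n$ together with the elementary inequality $(1-p)^n\le e^{-np}$, which gives the sharper estimate $e^{-e^{-x}}\le 1/n$ in that regime directly. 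Your route is marginally tighter (indeed it yields $\log(n)/n$ rather than $\log(n)/n+1/n$ as the uniform bound), but it exploits the exact product form of $F^n$ for the exponential. The paper's monotonicity argument is the more portable one: it is reused verbatim in the subsequent propositions for Pareto, uniform, normal and Cauchy marks, where no such clean closed form is available. One small expository point: since your three regimes are disjoint, the uniform bound is their \emph{maximum}, not their sum; the claimed $\log(n)/n+1/n$ is of course still valid as an upper bound.
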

\begin{proof}
We apply Theorem \ref{t: Poster_univ} with $A=[y, \infty)$ for any choice of $y \ge 0$ to find
\[
\left| P\left( X_{(n)} \le y \right) - e^{-ne^{-\lambda y}} \right| \le e^{-\lambda y}. 
\]
Plug in $y= \lambda^{-1}(x + \log n)$, possible for $x \ge -\log n$, since then $y \ge 0$, giving
\[
\left| P\left( X_{(n)} \le \frac{x+\log n}{\lambda}\right) - e^{-e^{-x}} \right| \le \frac{e^{-x}}{n}. 
\]
In order to find a uniform bound for all $x \in \mathbb{R}$, choose $x_0 :=x_{0n}:= - \log \log n$. On the one hand, for all $x \ge x_0$, the error estimate 
$\exp{(-x)}/n$ is smaller than $\exp{(-x_0)}/n = \log (n)/n$. On the other hand, for all $ x \le x_0$,
\[
P\left( X_{(n)} \le \frac{x+\log n}{\lambda}\right) \le P\left( X_{(n)} \le \frac{x_0+\log n}{\lambda}\right) 
\le \frac{e^{-x_0}}{n} + e^{-e^{-x_0}} ,
\]
since distribution functions are non-decreasing. This implies that 
\begin{equation}\label{p: exp_dK_bound}
\left| P\left( X_{(n)} \le \frac{x+\log n}{\lambda}\right) - e^{-e^{-x}} \right| \le \frac{e^{-x_0}}{n} + e^{-e^{-x_0}} = \frac{\log n}{n} + \frac{1}{n},
\end{equation}
for all $x \le x_0$. See Figure \ref{f: Max_exponential_dTV} for a sketch of the situation.
We may use the upper bound in (\ref{p: exp_dK_bound}) for all $x \in \mathbb{R}$.
\begin{figure}[!ht]
\begin{center}{\footnotesize \input{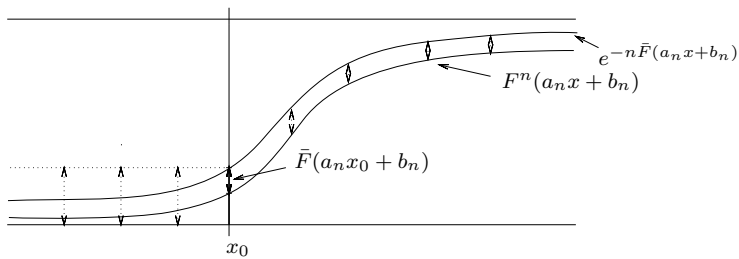}}\end{center}
\caption{At all values $x \le x_0$, the error cannot exceed the sum of the bound $\bar{F}(a_nx_0+b_n)$ on the difference
between the two distributions functions at $x_0$ and the height $\exp\{-n\bar{F}(a_nx_0+b_n)\}$ of the
approximating distribution.}
\label{f: Max_exponential_dTV}
\end{figure}
\end{proof}
\begin{prop}\label{t: Max_pareto}(Pareto distribution)
For each integer $n \ge 1$, let $X_1,$ $\ldots,$ $X_n$ be \iid Pareto random variables with shape parameter $\alpha > 0$ and scale parameter $\phi >0$. Then, 
for all $x >0$,
\[
\left| P\left( X_{(n)} \le \phi n^{1/\alpha} x \right) - e^{-x^{-\alpha}}\right| \le \frac{\log n}{n} + \frac{1}{n} = O\left( \frac{\log n}{n} \right). 
\]
\end{prop}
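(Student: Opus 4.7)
The plan is to mirror the argument used for the exponential case in Proposition \ref{t: Max_exp}, replacing the specific form of $\overline{F}$ with the Pareto tail $\overline{F}(y) = (\phi/y)^\alpha$ for $y \ge \phi$. First I would apply Theorem \ref{t: Poster_univ} with $A = [y, \infty)$ for arbitrary $y \ge \phi$, which yields
\[
\left| P\left(X_{(n)} \le y\right) - e^{-n\overline{F}(y)} \right| \le \overline{F}(y) = \left(\frac{\phi}{y}\right)^\alpha.
\]
Next I would substitute the chosen normalisation $y = \phi n^{1/\alpha} x$, valid whenever $x \ge n^{-1/\alpha}$ (so that $y \ge \phi$). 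A direct computation gives $n\overline{F}(y) = x^{-\alpha}$ and $\overline{F}(y) = x^{-\alpha}/n$, so the pointwise estimate becomes
\[
\left| P\left(X_{(n)} \le \phi n^{1/\alpha} x\right) - e^{-x^{-\alpha}} \right| \le \frac{x^{-\alpha}}{n}.
\]

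To upgrade this to a bound uniform in $x > 0$, I would choose the threshold $x_0 = x_{0n} = (\log n)^{-1/\alpha}$, which is the analogue of $x_0 = -\log \log n$ used in the exponential proof and balances the two error contributions. For $x \ge x_0$ the pointwise bound gives $x^{-\alpha}/n \le x_0^{-\alpha}/n = \log(n)/n$. For $0 < x \le x_0$ I would use monotonicity as in the exponential case: since distribution functions are non-decreasing, both $P(X_{(n)} \le \phi n^{1/\alpha} x)$ and $e^{-x^{-\alpha}}$ are bounded above by their values at $x_0$ (noting that $x \le x_0$ implies $e^{-x^{-\alpha}} \le e^{-x_0^{-\alpha}} = 1/n$), and the pointwise estimate applied at $x_0$ gives
\[
P\left(X_{(n)} \le \phi n^{1/\alpha} x_0\right) \le e^{-x_0^{-\alpha}} + \frac{x_0^{-\alpha}}{n} = \frac{1}{n} + \frac{\log n}{n}.
\]
Hence in this range the absolute difference is at most $\log(n)/n + 1/n$.

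The only minor technical point is to verify that $x_0 \ge n^{-1/\alpha}$, so that the pointwise estimate is actually available at $x_0$ and that the range $0 < x < n^{-1/\alpha}$ (where the CDF vanishes trivially) is handled by the same bound. This reduces to $\log n \le n$, which holds for all $n \ge 1$, so no obstacle arises there. Combining the two cases yields the uniform bound $\log(n)/n + 1/n = O(\log(n)/n)$, completing the argument. I do not expect any real obstacle: the proof is essentially a transcription of the exponential case with $\alpha$-power tails replacing exponential tails, and the choice $x_0 = (\log n)^{-1/\alpha}$ is dictated by the requirement $e^{-x_0^{-\alpha}} = 1/n$.
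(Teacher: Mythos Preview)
Your proposal is correct and follows essentially the same approach as the paper: apply Theorem~\ref{t: Poster_univ} with $A=[y,\infty)$, substitute $y=\phi n^{1/\alpha}x$, choose the threshold $x_0=(\log n)^{-1/\alpha}$, and split into the ranges $x\ge x_0$ and $x\le x_0$ using monotonicity. The only addition you make is the explicit check that $x_0\ge n^{-1/\alpha}$, which the paper leaves implicit.
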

\begin{proof}
We apply Theorem \ref{t: Poster_univ} with $A=[y, \infty)$ for any choice of $y \ge \phi$:
\[
\left| P\left( X_{(n)} \le y \right) - e^{-n\left(\frac{\phi}{y}\right)^\alpha}\right| \le \left(\frac{\phi}{y} \right)^\alpha.
\]
Plug in $y = \phi n^{1/\alpha}  x$, possible for $x \ge n^{-1/\alpha}$, since then $y \ge \phi$, giving
\[
\left| P\left( X_{(n)} \le  \phi n^{1/\alpha} x \right) - e^{-x^{-\alpha}}\right| \le \frac{x^{-\alpha}}{n} . 
\]
In order to determine a uniform error bound, we first choose $x_0:=x_{0n}:= (\log n)^{-1/\alpha}$. Then, for all $x\ge x_0$, we have 
$x^{-\alpha}/n \le x_0^{-\alpha}/n = \log (n)/n$, whereas, for all $x \le x_0$, 
\[
\left| P\left( X_{(n)} \le  \phi n^{1/\alpha} x \right) - e^{-x^{-\alpha}}\right|  \le  \frac{x_0^{-\alpha}}{n}  + e^{-x_0^{-\alpha}} = \frac{\log n}{n} + \frac{1}{n}.
\]
This bound holds for all $x >0$.
\end{proof}
\begin{prop} (Uniform distribution)\label{t: Max_uniform}
For each integer $n \ge 1$, let $X_1,$ $\ldots,$ $X_n$ be \iid uniform random variables with parameters $a,b \in \mathbb{R}$, $a < b$, and let $\alpha > 0$. Then, for all $x <0$,
\[
\left| P\left( X_{(n)} \le \frac{-(-x)^\alpha(b-a)}{n} + b\right) - e^{-(-x)^\alpha} \right| \le \frac{\log n}{n} + \frac{1}{n} = O\left( \frac{\log n}{n} \right).
\]
\end{prop}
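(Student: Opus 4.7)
The plan is to mimic the strategy used in Propositions 3.1.4 and 3.1.5 for the exponential and Pareto cases. For the uniform distribution on $[a,b]$, we have $\overline{F}(y) = (b-y)/(b-a)$ for $y \in [a,b]$, so applying Theorem 3.1.1 with $A = [y,b]$ yields, for every $y \in [a,b]$,
\[
\Bigl|P(X_{(n)} \le y) - e^{-n(b-y)/(b-a)}\Bigr| \le \frac{b-y}{b-a}.
\]

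Next I would substitute $y = b - (-x)^{\alpha}(b-a)/n$, which lies in $[a,b]$ exactly when $-n^{1/\alpha} \le x < 0$, i.e.\ when $(-x)^{\alpha} \le n$. For such $x$, one has $\overline{F}(y) = (-x)^{\alpha}/n$, which immediately gives
\[
\Bigl|P\bigl(X_{(n)} \le -(-x)^{\alpha}(b-a)/n + b\bigr) - e^{-(-x)^{\alpha}}\Bigr| \le \frac{(-x)^{\alpha}}{n}.
\]
This pointwise bound blows up as $x \to -\infty$, so the main (and only nontrivial) task is to convert it to a uniform bound over all $x<0$, exactly as in the proof of Proposition 3.1.4.

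To do so, I would choose the cut-off $x_0 := x_{0n} := -(\log n)^{1/\alpha}$. For $x_0 \le x < 0$ the pointwise estimate gives an error of at most $(-x_0)^{\alpha}/n = \log(n)/n$. For $x \le x_0$, use that both $x \mapsto P(X_{(n)} \le a_n x + b_n)$ (with $a_n = (b-a)/n$, $b_n = b$, interpreting the normalisation appropriately) and $x \mapsto e^{-(-x)^{\alpha}}$ are non-decreasing in $x$ on $(-\infty,0)$, so each of them is bounded above by its value at $x_0$; in particular the approximating function is at most $e^{-(-x_0)^{\alpha}} = 1/n$, and the true probability is at most $1/n + \log(n)/n$ by the pointwise bound at $x_0$. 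Hence on $x \le x_0$ the difference is bounded by $\log(n)/n + 1/n$ as well, and this uniform bound extends to all $x<0$.

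The argument involves no real obstacle beyond bookkeeping: the only thing to check carefully is the monotonicity argument on $(-\infty,x_0]$, which is the direct analogue of Figure 3.1 in the exponential proof, and the verification that the substitution $y = b - (-x)^{\alpha}(b-a)/n$ keeps $y$ in the admissible range $[a,b]$ precisely on $[-n^{1/\alpha},0)$, which is where the pointwise bound is needed. Everything else is routine.
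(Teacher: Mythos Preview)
Your proposal is correct and follows essentially the same approach as the paper: apply the Poisson approximation bound to get the pointwise estimate $(-x)^\alpha/n$, then split at $x_0 = -(\log n)^{1/\alpha}$ and use monotonicity on $(-\infty,x_0]$ to obtain the uniform bound $\log(n)/n + 1/n$. The only cosmetic slip is writing ``$a_n x + b_n$'' for what is actually the nonlinear map $x \mapsto b - (-x)^\alpha(b-a)/n$, but you already flag this and the monotonicity you need holds since this map is increasing on $(-\infty,0)$.
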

\begin{proof}
The proof is analogous to the proofs of Propositions \ref{t: Max_exp} and \ref{t: Max_pareto}, 
with the normalisation
$y = -(-x)^\alpha(b-a)/n + b$. For all $y \in [a,b)$ or equivalently, for all $x \in [-n^{1/\alpha},0)$, we have
\[
\left| P\left( X_{(n)} \le \frac{-(-x)^\alpha(b-a)}{n} + b\right) - e^{-(-x)^\alpha}\right| \le \frac{(-x)^\alpha}{n}. 
\]
With the choice $x_0 := x_{0n} := -(\log n)^{1/\alpha}$, we find the upper error bound 
\[
\frac{(-x_0)^\alpha}{n} + e^{-(-x_0)^\alpha} = \frac{\log n}{n} + \frac{1}{n},
\]
which we may use for all $x <0$.
\end{proof}
\noindent For the three examples above, the approximation by an extreme value distribution was effected in two steps: in the first step, we used Theorem \ref{t: Poster_univ} to approximate 
the maximum law by a Poisson probability mass function at $0$, and in the second step, a suitable normalisation to transform this Poisson 
into the required extreme value distribution. For each of these examples the total error estimate is of order $\log (n)/n$, thus providing a sharp approximation as 
$n$ increases. Moreover, the total error estimate is of the same order as the error estimate for the first step, 
which means that the principal part of the error in the approximation by an extreme value distribution arises from basic Poisson approximation, 
whereas the normalisation is negligeable. 

For the maximum of \iid standard normals, the situation is somewhat different. 
As remarked already by \cite{Fisher/Tippett:1928}, and later by \cite{Hall:1979} and \cite{Leadbetter_et_al:1983}, convergence of the maximum law of normals 
to the Gumbel distribution is extremely slow. 
\cite{Hall:1979} showed that with the choice of norming constants (\ref{t: an_bn_N}), the rate of convergence is not 
better than $(\log\log n)^2/\log n$. Moreover, he showed that if the 
norming constants $a_n$ and $b_n$ were chosen as solutions to 
\[
\frac{n\varphi(b_n)}{b_n} = 1, \quad a_n=b_n^{-1}, 
\]
then
\[
\frac{C_1}{\log n} \le \sup_{x \in \mathbb{R}} \left| P\left( X_{(n)} \le a_nx +b_n \right) - e^{-e^{-x}}\right| \le \frac{C_2}{\log n}\, 
\]
for constants $C_1, C_2 >0$, and that the rate of convergence cannot be improved by choosing different norming constants.  

Proposition \ref{t: Max_normal} below delineates the different steps needed for the approximation by the Gumbel distribution and gives error estimates for each step. 
Though the first step, which is basic Poisson approximation, gives an error that is only of
order $\log (n)/n$, the subsequent steps needed for the approximation by a Gumbel distribution give bigger error estimates. 
The next step after Poisson approximation uses the \textit{Mills ratio}:
\[
\overline{\Phi}(y) \sim \frac{\varphi(y)}{y}, \textnormal{ as }y \to \infty. 
\] 
As shown below we estimate the error arising from this step, i.e. the error of the approximation of $\exp\{- n\overline{\Phi}(y)\}$ by $\exp\{- n\varphi(y)/y\}$, 
to be of order $1/\log n$, and thereby substantially bigger than the one from Poisson approximation. 
In a last step, the normalisation needed to transform $\exp\{- n\varphi(y)/y\}$ into the required Gumbel distribution gives rise
to an even bigger error, which we estimate to be of size $(\log \log n)^2/\log n$.
\begin{prop}\label{t: Max_normal} (Normal distribution)
For each integer $n \ge 2$, let $X_1,$ $\ldots,$ $X_n$ be \iid standard normal random variables with distribution function $\Phi$ and probability density function $\varphi$.
Then, 
\\
(a) (Basic Poisson approximation) For all $y \in \mathbb{R}$,
\begin{equation}\label{t: normal_basicapprox}
\left| P\left( X_{(n)} \le y \right) - e^{-n\overline{\Phi}(y)}\right| \le \frac{\log n}{n} + \frac{1}{n} =: \delta_{\mathrm{PoiAppr}}= O\left(\frac{\log n}{n}\right).
\end{equation}
(b) (Approximation using the Mills ratio) For each integer $n \ge 21$ and for all $y \in \mathbb{R}$,
\begin{align}\label{t: normal_Mill_error}
&\left| P\left( X_{(n)} \le y \right) - e^{-\frac{n\varphi(y)}{y}}\right|  \nonumber \\
&\phantom{buhh} \le  \delta_{\mathrm{PoiAppr}} + \frac{1}{2\log n} +  e^{-0.1 \sqrt{\log n}} =: \delta_{\mathrm{Mills}}
= O \left(\frac{1}{\log n} \right).
\end{align}
(c) (Approximation by a standard Gumbel distribution) For each integer $n \ge 21$ and for all $x\in \mathbb{R}$,
\begin{align*}
\left| P\left( X_{(n)} \le a_nx +b_n \right) - e^{-e^{-x}} \right| 
&\le  \delta_{\mathrm{Mills}} + \frac{69(\log \log n + \log 4\pi)^2}{\log n}\\
& = O \left( \frac{\log^2\log n}{\log n}\right),
\end{align*}
where 
\begin{equation}\label{t: an_bn_N}
a_n = \frac{1}{\sqrt{2 \log n}}, \quad b_n = \sqrt{2\log n} - \frac{\log \log n + \log 4\pi}{2\sqrt{2\log n}}. 
\end{equation}
\end{prop}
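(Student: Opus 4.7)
The proof decomposes into three cumulative steps linked by the triangle inequality.

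For part (a) I would apply Theorem \ref{t: Poster_univ} with $A=(y,\infty)$, which yields the pointwise bound $|P(X_{(n)} \le y) - e^{-n\overline{\Phi}(y)}| \le \overline{\Phi}(y)$. To uniformise in $y$, the truncation trick of Propositions \ref{t: Max_exp}--\ref{t: Max_uniform} carries over: pick $y_0=y_0(n)$ with $n\overline{\Phi}(y_0) = \log n$; then the pointwise bound is $\le \log(n)/n$ for $y \ge y_0$, while for $y \le y_0$ monotonicity of both $y \mapsto P(X_{(n)} \le y)$ and $y \mapsto e^{-n\overline{\Phi}(y)}$ together with the pointwise estimate at $y_0$ produce the uniform bound $\log(n)/n + 1/n$.

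For part (b) the triangle inequality with (a) reduces the task to bounding $|e^{-n\overline{\Phi}(y)} - e^{-n\varphi(y)/y}|$ uniformly in $y$. The plan is to combine the two-sided Mills ratio inequality $\varphi(y)(1-1/y^2)/y \le \overline{\Phi}(y) \le \varphi(y)/y$ (valid for $y \ge 1$) with $|e^{-a}-e^{-b}| \le |a-b|$ to get the direct estimate $|e^{-n\overline{\Phi}(y)} - e^{-n\varphi(y)/y}| \le n\varphi(y)/y^3$, and to use the tighter factorisation $|e^{-n\overline{\Phi}(y)} - e^{-n\varphi(y)/y}| \le e^{-n\overline{\Phi}(y)}\cdot n\varphi(y)/y^3$ when the direct bound is too crude. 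With a threshold $y_*=y_*(n)$ of size $\sqrt{2\log n}$, the decreasing function $y \mapsto \varphi(y)/y^3$ makes the Mills bound $\le 1/(2\log n)$ for $y \ge y_*$; for $y \le y_*$ I would bound $P(X_{(n)} \le y) \le P(X_{(n)} \le y_*)$ via (a) and $e^{-n\varphi(y)/y} \le e^{-n\varphi(y_*)/y_*}$ using the monotonicity of $\varphi(y)/y$ on $y > 0$, with $y_*$ fine-tuned so that these tail contributions are captured by $e^{-0.1\sqrt{\log n}}$.

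For part (c), equation (\ref{p: normal_normalised_density}) from the proof of Proposition \ref{t: cont_limit_maxima}(d) already expresses
\[
\frac{n\varphi(a_nx+b_n)}{a_nx+b_n} = e^{-x}\bigl(1+\epsilon_n(x)\bigr), \qquad c_n := \log\log n + \log 4\pi,
\]
with
\[
\epsilon_n(x) = \frac{\exp\{-(2x^2 - 2xc_n + c_n^2)/(8\log n)\}}{1 + (x - c_n/2)/(2\log n)} - 1.
\]
Applying $|e^{-a}-e^{-b}| \le |a-b|$ once more and invoking (b) via the triangle inequality reduces the proposition to the uniform estimate $e^{-x}|\epsilon_n(x)| \le 69\, c_n^2/\log n$. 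The main obstacle is uniformity in $x \in \mathbb{R}$: on a bulk region (say $|x| \le c_n$), a first-order Taylor expansion of $\epsilon_n$ in the small parameter $1/(2\log n)$ yields $|\epsilon_n(x)| = O(c_n^2/\log n)$ while $e^{-x}$ stays of bounded order, so the bound follows by tracking constants; for $x$ far to the left (where $a_nx+b_n$ drops near the edge of the Mills regime) and $x$ far to the right (where $e^{-x}$ is tiny but $|\epsilon_n(x)|$ may blow up), one instead estimates $|e^{-n\varphi(y)/y} - e^{-e^{-x}}|$ directly by showing that both quantities are within the required tolerance of a common limit ($1$ as $x \to +\infty$, $0$ as $x \to -\infty$). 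Carrying the explicit constant $69$ through these range-dependent Taylor estimates is the most delicate and computational piece of the argument.
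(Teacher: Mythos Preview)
Parts (a) and (b) are fine and match the paper's argument closely; the paper's only refinement in (b) is to use the factorisation to obtain the $n$-free bound $4/(3ey^2)$ rather than $n\varphi(y)/y^3$, but your direct bound would also work once a threshold of order $\sqrt{2\log n}$ is chosen.

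The gap is in part (c). Your reduction via $|e^{-a}-e^{-b}| \le |a-b|$ to the uniform estimate $e^{-x}|\epsilon_n(x)| \le 69\,c_n^{2}/\log n$ does not go through, because that estimate is false. On your bulk region $|x|\le c_n$ the factor $e^{-x}$ is \emph{not} of bounded order: at $x=-c_n$ one has $e^{-x}=e^{c_n}=4\pi\log n$, while the Taylor expansion still only gives $|\epsilon_n(x)|=O(c_n^{2}/\log n)$, so the product is of order $c_n^{2}$, not $c_n^{2}/\log n$. Shrinking the bulk to a fixed bounded interval avoids this but forces you to absorb an uncontrolled constant like $e^{K}$ into the final bound, destroying the $69$.

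What is missing is the sharper mean-value bound $|e^{-a}-e^{-b}|\le e^{-\min(a,b)}|a-b|$. The paper uses exactly this, writing
\[
\bigl|e^{-e^{-x}f_n(x)}-e^{-e^{-x}}\bigr|\le e^{-\min\{e^{-x}f_n(x),\,e^{-x}\}}\,e^{-x}\,|f_n(x)-1|,
\]
and then exploits that $z\,e^{-cz}$ is bounded by $(ce)^{-1}$ for any $c>0$ with $z=e^{-x}$. On the bulk $|x|\le\log\log n$ this kills the growth of $e^{-x}$: when $f_n\ge 1$ one has $e^{-x}e^{-e^{-x}}\le e^{-1}$, and when $f_n<1$ a lower bound $f_n\ge 0.004$ gives $e^{-x}e^{-f_n e^{-x}}\le 250/e$. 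The constant $69$ comes from the latter case combined with $1-f_n\le \tfrac{3}{4}c_n^{2}/\log n$. Without this damping factor the bulk estimate cannot close.
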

\begin{proof}
(a) We apply Theorem \ref{t: Poster_univ} with $A=[y, \infty)$ for any choice of $y\in \mathbb{R}$:
\begin{equation}\label{p: normal_basicapprox}
\left| P(X_{(n)} \le y) - e^{-n\overline{\Phi}(y)}\right| \le \overline{\Phi}(y).
\end{equation}
For a uniform error bound, choose $y_0 :=y_{0n}:= \Phi^{-1}(1-\log (n) / n)$. Then, for all $y \ge y_0$, we have 
$\overline{\Phi}(y) \le \overline{\Phi}(y_0)=\log (n) / n $, whereas for $y \le y_0$, 
\[
\left| P(X_{(n)} \le y) - e^{-n\overline{\Phi}(y)}\right| \le \overline{\Phi}(y_0) + e^{-n\overline{\Phi}(y_0)} = \frac{\log n}{n} + \frac{1}{n}, 
\]
and we may use this bound for all $y \in \mathbb{R}$.\\
(b) 
By adding and subtracting $\exp\{-n\varphi(y)/y\}$ into (\ref{t: normal_basicapprox}),
we find, for all $y \in \mathbb{R}$,
\[
\left| P\left( X_{(n)} \le y \right) - e^{-\frac{n\varphi(y)}{y}}\right| \le  \frac{\log n}{n} + \frac{1}{n} +  
\left|e^{-n\overline{\Phi}(y)} - e^{-\frac{n\varphi(y)}{y}} \right|, 
\]
and we need to determine a uniform bound for 
$\left|\exp\{-n\overline{\Phi}(y)\} - \exp\left\{-\frac{n\varphi(y)}{y}\right\}\right|$.
Suppose first that $y >0$. With the two consecutive changes of variables $z:=t-y$ and $w:=yz$, we obtain
\begin{align*}
\overline{\Phi}(y) 
&= \frac{1}{\sqrt{2\pi}} \int_{y}^{\infty} e^{-t^2/2} dt 
= \frac{1}{\sqrt{2\pi}} \int_{0}^{\infty} e^{-(z+y)^2/2} dz\\
&= \frac{1}{\sqrt{2\pi}} e^{-y^2/2}\int_0^{\infty} e^{-z^2/2} \cdot e^{-zy} dz
= \frac{\varphi(y)}{y} \int_0^{\infty} e^{-w^2/2y^2} \cdot e^{-w} dw.
\end{align*}
With $1-x \le e^{-x} \le 1$ for $x= w^2/2y^2$, $\int_0^\infty e^{-w} dw = 1$ and $\int_0^\infty w^2e^{-w}=2$, we get
\[
1-\frac{1}{y^2} \le \int_0^{\infty} e^{-w^2/2y^2} \cdot e^{-w} dw \le 1,
\]
and thereby 
\begin{equation}\label{p: normal_millsratio}
\frac{\varphi(y)}{y} - \frac{\varphi(y)}{y^3} \le \overline{\Phi}(y) \le \frac{\varphi(y)}{y}, \quad \textnormal{ for all } y >0.
\end{equation}
It then follows that for all $y \ge 2$,
\begin{align}\label{p: norm_error_b}
\left|e^{-n\overline{\Phi}(y)} - e^{-\frac{n\varphi(y)}{y}} \right| 
&= e^{-n\overline{\Phi}(y)} - e^{-\frac{n\varphi(y)}{y}} 
= e^{-n\overline{\Phi}(y)}\left\{ 1-e^{-n\left( \frac{\varphi(y)}{y} - \overline{\Phi}(y)\right)}\right\} \nonumber \\
&\le n e^{-n\overline{\Phi}(y)} \left( \frac{\varphi(y)}{y} - \overline{\Phi}(y) \right)
\le \frac{n\varphi(y)}{y^3}\, e^{-\frac{n\varphi(y)}{y}\left( 1- \frac{1}{y^2}\right)} \nonumber\\
&\le \frac{n\varphi(y)}{y^3}\,  e^{-\frac{3n\varphi(y)}{4y}}
\le \frac{4}{3ey^2},
\end{align}
where we used $1-e^{-z} \le z$, (\ref{p: normal_millsratio}) and $y\ge2$ in the first, second and third inequalities, respectively, as well
as $z \exp\{-(3/4)z\} \le 4/(3e)$ for all $z \in \mathbb{R}$, for the last inequality. 
In order to determine a uniform bound on the error between $\exp\{-n\overline{\Phi}(y)\}$ and $\exp\{-n\varphi(y)/y\}$, 
choose $y_1:=y_{1n}:= \sqrt{2\log n} - \log \log (n) /\sqrt{2 \log n}$. 
Note that $y_{1n} \ge 2$ for all $n \ge 21$.
Since the error bound in (\ref{p: norm_error_b}) decreases for increasing $y$, 
we have 
\begin{align}
\frac{4}{3ey^2} &\le \frac{4}{3ey_1^2}= \frac{2}{3e \log n} \left[ 1- \frac{\log \log n}{\log n} + \frac{\log^2 \log n}{4 \log^2 n} \right]^{-1}\nonumber \\
&\le \frac{2}{3e \log n}\left[ 1- \frac{\log \log n}{\log n} \right]^{-1} \le \frac{4}{3e\log n} \le \frac{1}{2\log n} \label{p: norm_error_b_help0}
\end{align}
for all $y \ge y_1$, where we used that $\log\log(n)/\log n \le 1/2$ for all $n \ge 1$. On the other hand, for all $y \le y_1$,
\begin{equation}\label{p: norm_error_b_help1}
\left|e^{-n\overline{\Phi}(y)} - e^{-\frac{n\varphi(y)}{y}} \right| \le \frac{4}{3ey_1^2} + e^{-\frac{n\varphi(y_1)}{y_1}}.
\end{equation}
This gives an upper bound that we may use for all $y \in \mathbb{R}$. We have
\begin{align}
\varphi(y_1)& =  \frac{e^{-y_1^2/2}}{\sqrt{2\pi}}=\frac{\log n}{ n} \cdot \frac{e^{-\frac{\log^2\log n}{4\log n}}}{\sqrt{2\pi}} , \nonumber \\
e^{-\frac{n\varphi(y_1)}{y_1}} &= \exp\left\{ -\sqrt{\log n} \cdot \frac{e^{-\frac{\log^2 \log n}{4\log n}}}{2\sqrt{\pi}\left(1-\frac{\log \log n}{2 \log n} \right) } \right\}
\le e^{-0.1 \sqrt{\log n}}, \label{p: norm_error_b_help2}
\end{align}
where we used $(1- (\log \log n)/(2\log n))^{-1} \ge 1$ 
and $(2\sqrt{\pi})^{-1} \exp\{-(\log^2 \log n)/$ $(4 \log n)\} $ 
$\ge (2e\sqrt{\pi})^{-1} \ge 0.1$. 
It follows from (\ref{p: norm_error_b_help0})-(\ref{p: norm_error_b_help2}) that
for each integer $n \ge 21$,
\begin{equation}\label{p: normal_bound_pos_y}
\left|e^{-n\overline{\Phi}(y)} - e^{-\frac{n\varphi(y)}{y}} \right| \le  \frac{1}{2\log n} + e^{-0.1 \sqrt{\log n}}, \quad \textnormal{ for all } 
y \in \mathbb{R}.
\end{equation}
(c) With the normalisation  
\[
y:= y(x) = a_n x + b_n = \frac{x}{\sqrt{2 \log n}} + \sqrt{2 \log n} - \frac{\log \log n + \log 4\pi}{2\sqrt{2 \log n}} 
\]
we obtain $\displaystyle \frac{n\varphi(y)}{y} = e^{-x} f_n(x) $, where
\begin{equation}
f_n(x) = \frac{\exp\left\{
-\frac{1}{8 \log n} \left[2x^2 - 2x(\log \log n + \log 4\pi) + (\log \log n + \log 4\pi)^2 \right]\right\}}{1+\frac{x-(\log \log n + \log 4 \pi)/2}{2\log n}}.
\end{equation}
By adding and subtracting $\exp\{-e^{-x}\}$ into (\ref{t: normal_Mill_error}), we find
\begin{align}\label{p: norm_Gumbel_general_bound}
\left| P\left( X_{(n)} \le a_nx +b_n \right) - e^{-e^{-x}} \right| 
&\le \delta_{\mathrm{Mills}} + \left| e^{-\frac{n\varphi(y)}{y}} - e^{-e^{-x}}\right| \nonumber \\
&\le  \delta_{\mathrm{Mills}} + e^{-\min\left\{ \frac{n\varphi(y)}{y} \, , \, e^{-x}\right\}} \cdot \left| \frac{n\varphi(y)}{y} - e^{-x}\right| \nonumber \\
&\le  \delta_{\mathrm{Mills}} + e^{-\min\left\{ e^{-x}f_n(x)\, , \, e^{-x}\right\}}  e^{-x}  \left| f_n(x) -1\right|,
\end{align}
and we need to determine a uniform bound on the new error term in (\ref{p: norm_Gumbel_general_bound}), for all $x \in $ $\mathbb{R}$. 
Note that the function $2x^2-2ax+a^2$ takes the minimal value $a^2/2$ at $x=a/2$. Thus
\[
2x^2 -2x(\log \log n + \log 4\pi) + (\log \log n + \log 4\pi)^2 
\ge \frac{1}{2}(\log \log n + \log 4\pi)^2, 
\]
which is strictly positive for $n \ge 2$, implying that
\begin{equation}\label{p: term1}
f_n(x) \le \left[1+\frac{x-(\log \log n + \log 4 \pi)/2}{2\log n} \right]^{-1} \,.
\end{equation}
Suppose first that $x \ge \log \log n$. Then
\[
e^{-\min\left\{ e^{-x}f_n(x)\, , \, e^{-x}\right\}} \le 1, \quad e^{-x} \le \frac{1}{\log n},
\]
and 
\[
f_n(x) \le \left[1+ \frac{\log \log n + \log 4\pi}{4\log n}\right]^{-1} \le 1, \quad |f_n(x) - 1| \le f_n(x) + 1 \le 2. 
\]
Thus, 
\begin{equation}\label{p: stand_norm_result_c1}
\left| P\left( X_{(n)} \le a_nx +b_n \right) - e^{-e^{-x}} \right|  \le   \delta_{\mathrm{Mills}} + \frac{2}{\log n}, \quad \textnormal{ for all } x \ge \log \log n.
\end{equation}
Now suppose that $-\log \log n \le x \le \log \log n$. Then, on the one hand
\begin{align}\label{p: dK_dTV_normal_help}
f_n(x) & \le \left[1- \frac{3 \log \log n + \log 4\pi}{4\log n} \right]^{-1} \nonumber \\
&\le 1 +  \frac{3 \log \log n + \log 4\pi}{4\log n} \cdot \left[ 1- \frac{3 \log \log n + \log 4\pi}{4\log n} \right]^{-2} \nonumber\\
& \le 1+ \frac{6 \log \log n + 2\log 4\pi}{\log n},
\end{align}
where the second inequality uses Taylor expansion about $0$, and the third bounds the squared term by the constant $8$, for all $n \ge 1$. Moreover,
$e^{-x} \cdot \exp\{-e^{-x}\}$ has the global maximum $e^{-1}$. Thus, if $f_n(x) \ge 1$, the error in (\ref{p: norm_Gumbel_general_bound}) is 
\begin{align}\label{p: stand_norm_bound_A}
\begin{split}
 \delta_{\mathrm{Mills}}+ e^{-e^{-x}} \cdot e^{-x} \cdot(f_n(x) - 1) &\le
 \delta_{\mathrm{Mills}} + e^{-1}\cdot \frac{6 \log \log n + 2\log 4\pi}{\log n}\\
& \le  \delta_{\mathrm{Mills}} + \frac{3 \log \log n + \log 4\pi}{\log n}.
\end{split}
\end{align}
On the other hand, first note that for $-\log \log n \le x \le \log \log n$, 
and for all $n \ge 3$,
\begin{align*}
\left[1+\frac{x-(\log \log n + \log 4 \pi)/2}{2\log n}\right]^{-1} 
&\ge \left[1+ \frac{\log \log n -\log 4\pi}{4\log n} \right]^{-1}\\
&\ge 1 - \frac{\log \log n - \log 4 \pi}{4\log n}\,,
\end{align*}
where
\[
 0 \le  \frac{\log \log n - \log 4 \pi}{4\log n} \le 0.9, \quad \text{ for all } n \ge 2.
\]
Moreover, note that
\begin{align*}
&2x^2 - 2x(\log \log n + \log 4\pi) + (\log \log n + \log 4\pi)^2 \\
&\le 2 \log^2 \log n + 2 \log \log n( \log \log n + \log 4\pi) + (\log \log n + \log 4\pi)^2\\
& \le 5(\log \log n +\log4\pi)^2,
\end{align*}
and that $5(\log \log n + \log 4\pi)^2/(8\log n) \le 3$ for $n \ge 12$.
Therefore, 
\begin{align}
f_n(x) &\ge \exp\left\{- \frac{5(\log \log n +\log4\pi)^2}{8 \log n} \right\}\left(  1 - \frac{\log \log n - \log 4 \pi}{4\log n}\right) \nonumber \\
& = \exp\left\{- \frac{5(\log \log n +\log4\pi)^2}{8 \log n} + \log \left(1 - \frac{\log \log n - \log 4 \pi}{4\log n}\right) \right\} \label{p: stand_norm_bound1}\\
& \ge 0.1e^{-3} \ge 0.004. \label{p: stand_norm_bound2}
\end{align}
Using (\ref{p: stand_norm_bound1}) and $1-e^{-z} \le z$ for all $z \ge 0$, we obtain 
\begin{align*}
1- f_n(x) &\le  \frac{5(\log \log n +\log4\pi)^2}{8 \log n} - \log \left(1 - \frac{\log \log n - \log 4 \pi}{4\log n}\right)\\
& \le \frac{5(\log \log n +\log4\pi)^2}{8 \log n} + \frac{5(\log \log n - \log 4 \pi)}{2\log n}\\
& \le  \frac{3(\log \log n +\log4\pi)^2}{4 \log n},
\end{align*}
where we used $-\log(1-z) \le 10z$ for $0 \le z \le 0.9 $ for the second inequality, and
$\log \log n - \log 4\pi \le (\log \log n + \log 4\pi)^2/20$ for the third inequality. 
Furthermore, due to (\ref{p: stand_norm_bound2})
$e^{-x}\cdot  \exp\{-f_n(x)e^{-x}\} \le e^{-x} \cdot \exp\{-0.004e^{-x}\} \le 250/e$, for all $x \in \mathbb{R}$.
Thus, if $f_n(x) \le 1$, the error in (\ref{p: norm_Gumbel_general_bound}) is 
\begin{align}
& \delta_{\mathrm{Mills}} +  e^{- e^{-x}f_n(x)} \cdot e^{-x} (1-f_n(x))\nonumber \\
&\le  \delta_{\mathrm{Mills}} +\frac{750}{4e} \cdot \frac{(\log \log n + \log 4\pi)^2}{\log n} \nonumber \\
& \le  \delta_{\mathrm{Mills}} +  \frac{69(\log \log n + \log 4\pi)^2}{\log n}, \quad \text{ for all }n \ge 12. \label{p: stand_norm_bound_B}
\end{align}
All in all, (\ref{p: stand_norm_bound_A}) and (\ref{p: stand_norm_bound_B}) give, for all $n \ge 12$ and $-\log \log n \le x \le \log \log n$,
\begin{equation}\label{p: stand_norm_result_c2}
\left| P\left( X_{(n)} \le a_nx +b_n \right) - e^{-e^{-x}} \right| 
\le  \delta_{\mathrm{Mills}} + \frac{69(\log \log n + \log 4\pi)^2}{\log n}\,.
\end{equation}
Lastly, suppose that $x \le -\log \log n$. Since $a_nx + b_n \le -a_n \log \log n + b_n$ and since $\exp\{-n\varphi(y)/y\}$ and $\exp\{-e^{-x}\}$ are non-decreasing, we have 
\[
\left| e^{-\frac{n\varphi(y)}{y}} - e^{-e^{-x}}\right| \le e^{-\frac{n\varphi(y)}{y}} + e^{-e^{-x}} \le e^{-\frac{n\varphi(-a_n \log \log n + b_n)}{-a_n \log \log n + b_n}} + \frac{1}{n}\,,
\]
where  $\frac{n\varphi(-a_n \log \log n + b_n)}{-a_n \log \log n + b_n} =f_n(-\log \log n) \log n $, and                                                                                                             
\[
f_n(-\log \log n) \ge \frac{\exp\left\{ - \frac{5(\log \log n + \log 4 \pi)^2}{8\log n}\right\}}{1- \frac{3\log \log n  + \log 4 \pi}{4\log n}}
\ge \exp\left\{ - \frac{5(\log \log n + \log 4 \pi)^2}{8\log n}\right\},
\]
which is bigger than $e^{-3} \ge 0.04$ for all $n \ge 12$. Thus,
\[
e^{-\frac{n\varphi(-a_n \log \log n + b_n)}{-a_n \log \log n + b_n}} \le e^{-0.04 \log n },
\]
and, for all $n \ge 12$ and $x \le -\log \log n$, we have
\begin{equation}\label{p: stand_norm_result_c3}
\left| P\left( X_{(n)} \le a_nx +b_n \right) - e^{-e^{-x}} \right| 
\le \delta_{\mathrm{Mills}} + e^{-0.04 \log n } +\frac{1}{n}\,.
\end{equation}
For a bound for all $x \in \mathbb{R}$ and for all $n \ge 12$, we choose the maximum of the bounds in (\ref{p: stand_norm_result_c1}), (\ref{p: stand_norm_result_c2}) and (\ref{p: stand_norm_result_c3}):
\[
\left| P\left( X_{(n)} \le a_nx +b_n \right) - e^{-e^{-x}} \right|  \le   \delta_{\mathrm{Mills}} + \frac{69(\log \log n + \log 4\pi)^2}{\log n}\,.
\]
\end{proof}

In order to approximate the law of the maximum of standard Cauchy random variables by a Fr\'echet distribution, we need an intermediate step (comparable to the Mills ratio for the case of maxima of normals). More precisely, we use that
\[
\overline{F}(y) \sim \frac{1}{\pi y}\,, \quad \text{ as } y \to \infty, 
\]
where $\overline{F}(y)$ denotes the survival function of the Cauchy distribution. 
Contrary to the case of maxima of normals, the normalisation here does not produce any additional error and the total error is of the same order as the error that we obtain
in the first step from basic Poisson approximation, that is, of order $\log (n)/n$. 
\begin{prop}\label{t: Max_Cauchy}(Standard Cauchy distribution)
For each integer $n \ge 1$, let $X_1,$ $ \ldots, X_n$ be \iid standard Cauchy random variables with distribution function 
$F(y) =  \arctan{(y)}/\pi + 0.5$ and density $f(y) = 1/\pi(1+y^2)$, for $y \in \mathbb{R}$. Then:\\
(a) (Basic Poisson approximation) For all $y>0$,
\[
\left| P\left( X_{(n)} \le y \right) - e^{-n\overline{F}(y)}\right| \le \frac{\log n}{n} + \frac{1.74}{n} = O\left( \frac{\log n}{n}\right). 
\]
(b) (Approximation by a standard Fr\'echet distribution) For all $x >0$,
\begin{equation}\label{t: bound_Max_Cauchy}
\left| P\left( X_{(n)} \le \frac{nx}{\pi}\right) - e^{-x^{-1}} \right|
\le \frac{\log n}{n} + \frac{\pi^2 \log^3 n}{3n^2} + \frac{1}{n} = O\left( \frac{\log n}{n}\right). 
\end{equation}
\end{prop}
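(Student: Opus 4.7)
The plan is to follow the two-step structure used for the continuous cases in Propositions \ref{t: Max_exp}--\ref{t: Max_normal}: first apply basic Poisson approximation via Theorem \ref{t: Poster_univ}, and then handle the normalisation into Fr\'echet form. For the standard Cauchy distribution, the useful exact formula is $\overline{F}(y) = \arctan(1/y)/\pi$ for $y > 0$, and the role played by the Mills ratio in the normal case is played here by the elementary two-sided inequality $z - z^3/3 \le \arctan(z) \le z$, valid for $z \ge 0$.

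For part (a), I would apply Theorem \ref{t: Poster_univ} with $A = [y,\infty)$ to obtain the pointwise estimate $|P(X_{(n)} \le y) - e^{-n\overline{F}(y)}| \le \overline{F}(y) \le 1/(\pi y)$. A uniform bound then follows by the same cutoff device used in Propositions \ref{t: Max_exp}--\ref{t: Max_uniform}: choose $y_0 := n/(\pi \log n)$ so that $1/(\pi y_0) = \log(n)/n$. For $y \ge y_0$ the right-hand side is already at most $\log(n)/n$, while for $y \le y_0$ monotonicity of $F^n$ and of $e^{-n\overline{F}(\cdot)}$ gives $|P(X_{(n)} \le y) - e^{-n\overline{F}(y)}| \le \overline{F}(y_0) + e^{-n\overline{F}(y_0)}$. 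The lower $\arctan$ bound at $y_0$ yields $n\overline{F}(y_0) \ge \log n - \pi^2 \log^3(n)/(3 n^2)$, so $e^{-n\overline{F}(y_0)} \le (1/n)\exp\{\pi^2 \log^3(n)/(3 n^2)\}$. The stated constant $1.74$ absorbs the maximum of $\exp\{\pi^2 \log^3(n)/(3 n^2)\}$, which is attained near $n = e^{3/2}$ and is numerically less than $1.74$.

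For part (b), I would apply the triangle inequality
\begin{equation*}
\left|P(X_{(n)} \le nx/\pi) - e^{-1/x}\right| \le \left|P(X_{(n)} \le nx/\pi) - e^{-n\overline{F}(nx/\pi)}\right| + \left|e^{-n\overline{F}(nx/\pi)} - e^{-1/x}\right|.
\end{equation*}
The first summand is controlled by the estimate $\overline{F}(nx/\pi) \le 1/(nx)$ from Theorem \ref{t: Poster_univ}. For the second, write $n\overline{F}(nx/\pi) = (n/\pi)\arctan(\pi/(nx))$; the two-sided $\arctan$ bound gives $0 \le 1/x - n\overline{F}(nx/\pi) \le \pi^2/(3 n^2 x^3)$, and the mean-value inequality $|e^{-a} - e^{-b}| \le e^{-\min(a,b)}|a-b|$ then yields $|e^{-n\overline{F}(nx/\pi)} - e^{-1/x}| \le e^{-n\overline{F}(nx/\pi)}\,\pi^2/(3 n^2 x^3)$. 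Taking the cutoff $x_0 := 1/\log n$ so that $nx_0/\pi = y_0$, the regime $x \ge x_0$ directly produces the term $\pi^2 \log^3(n)/(3 n^2)$, whereas the regime $x \le x_0$ is treated by monotonicity: both $P(X_{(n)} \le nx/\pi) \le P(X_{(n)} \le nx_0/\pi)$ and $e^{-1/x} \le e^{-\log n} = 1/n$, so the cutoff estimate at $x_0$ together with the extra $1/n$ closes the argument.

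The main obstacle is the careful bookkeeping of constants: the regime $x \le x_0$ in (b), where both $P(X_{(n)} \le nx/\pi)$ and $e^{-1/x}$ are small but different, has to be compared tightly enough that the residual stays of order $1/n$, and the maximum of $\exp\{\pi^2 \log^3(n)/(3 n^2)\}$ has to be controlled numerically in order to match the stated $1.74/n$ in (a). Apart from this bookkeeping, the approximation is far more benign than in the normal case: the Taylor error for $\arctan(1/y)$ at $y_0$ is only of order $\log^3(n)/n^2$, which is of lower order than the Poisson error $\log(n)/n$, so no additional error of the same size as in Proposition \ref{t: Max_normal} appears, and the overall rate remains $O(\log(n)/n)$.
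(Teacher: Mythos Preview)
Your proposal is correct and follows essentially the same route as the paper: the same tail bounds $\frac{1}{\pi y}-\frac{1}{3\pi y^3}\le \overline F(y)\le \frac{1}{\pi y}$ (equivalently your $\arctan$ two-sided inequality), the same cutoff $y_0=n/(\pi\log n)$ resp.\ $x_0=1/\log n$, and the same monotonicity argument for the small-$x$ regime. The paper likewise obtains $e^{-n\overline F(y_0)}\le n^{-1}\exp\{\pi^2\log^3 n/(3n^2)\}$ and bounds the latter exponential by its maximum $\exp\{9\pi^2/(8e^3)\}\approx 1.738$ at $n=e^{3/2}$, matching your constant $1.74$.
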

\begin{proof}
We first determine an upper and lower bound on 
\[
\overline{F}(y) =  0.5-\arctan{\left( y\right)}/\pi 
\]
by setting $z=1/y$ and performing a Taylor expansion of $g(z):=\overline{F}(1/z)$ about $z=0$:
\[
g(z)=  \frac{1}{2}-\frac{1}{\pi} \arctan{\left(\frac{1}{z}\right)}
= \frac{z}{\pi} - \frac{z^3}{3\pi} + \frac{z^5}{5\pi} - \frac{z^7}{7\pi} + \ldots,
\]
Here, 
\begin{align*}
& g(0+) = \lim_{z \to 0+} f(z) = 0, \quad g'(z) = \frac{1}{\pi(1+z^2)},\\
& g''(z) = -\frac{2z}{\pi(1+z^2)^2}, \quad g'''(z) = \frac{6z^2-2}{\pi(1+z^2)^3}.
\end{align*}
Since 
\[
\frac{z}{\pi} + \frac{z^3}{3!} \min_{0 \le \xi \le z} g'''(\xi) \le g(z) \le z \max_{0 \le \xi \le z} \left| g'(\xi)\right| ,
\]
where $\max_{0 \le \xi \le z} \left| g'(\xi)\right| = 1/\pi$ and $\min_{0 \le \xi \le z} g'''(\xi) = g'''(0) =-2/\pi$, we find
\[
\frac{z}{\pi} -\frac{z^3}{3\pi} \le g(z) \le \frac{z}{\pi},
\]
and thereby
\begin{equation}\label{p: bounds}
\frac{1}{\pi y} -\frac{1}{3\pi y^3} \le \overline{F}(y) \le \frac{1}{\pi y},
\end{equation}
which holds for all $y >0$.\\
(a) We apply Theorem \ref{t: Poster_univ} with $A=[y, \infty)$ for any choice of $y >0$:
\begin{equation}\label{p: cauchy_basicapprox}
\left| P(X_{(n)} \le y) - e^{-n\overline{F}(y)}\right| \le \overline{F}(y) \le \frac{1}{\pi y},
\end{equation}
where we used (\ref{p: bounds}) for the second inequality. 
In order to determine a uniform error bound, first choose $y_0 :=y_{0n}:= n/\pi \log n$. Then, for all $y \ge y_0$, we have $1/\pi y \le 1/\pi y_0 \le \log (n)/n$,
whereas, for $0 < y \le y_0$, we may bound the error by further adding the approximating distribution function at $y_0$, i.e. $\exp\{-n\overline{F}(y_0)\}$,
to $1/\pi y_0$. Therefore,
\[
\left| P(X_{(n)} \le y) - e^{-n\overline{F}(y)}\right| \le \frac{1}{\pi y_0} + e^{-n\overline{F}(y_0)} \le
\frac{\log n}{n} + \frac{1}{n} \exp\left\{\frac{\pi^2 \log^3 n}{3n^2}\right\},
\]
for all $y>0$, where we used (\ref{p: bounds}) to estimate $\exp\{-n \overline{F}(y_0)\}$.
By noting that the function $\exp\{ \pi^2 \log^3 z / 3z^2\}$ takes on its maximum $\exp\{9\pi^2/8e^3\} \approx 1.7381$ at $z=e^{3/2}$, we obtain the uniform bound
$\log (n)/ n + 1.74/n$. \\
(b) By adding and subtracting $\exp\{-n/\pi y\}$ into (\ref{p: cauchy_basicapprox}) and noting that 
\\$\exp\{-n\overline{F}(y)\}$ $ - \exp\{-n/\pi y\} \ge 0$, we obtain
\[
\left| P\left( X_{(n)} \le y \right) - e^{-\frac{n}{\pi y}}\right| \le \frac{1}{\pi y} + e^{-n\overline{F}(y)} - e^{-\frac{n}{\pi y}},
\]
where, using (\ref{p: bounds}) for the last inequality, 
\begin{align*}
e^{-n\overline{F}(y)} - e^{-\frac{n}{\pi y}} &= e^{-n\overline{F}(y)}\left\{ 1-e^{-n\left(\frac{1}{\pi y} - \overline{F}(y) \right)}\right\}
\le ne^{-n\overline{F}(y)} \left(\frac{1}{\pi y} - \overline{F}(y) \right)\\
&\le n\left(\frac{1}{\pi y} - \overline{F}(y)\right) \le \frac{n}{3\pi y^3}.
\end{align*}
Now plug in $y=nx/\pi$. Since $y>0$, we have $x >0$, and obtain
\begin{align*}
\left| P\left( X_{(n)} \le y \right) - e^{-\frac{n}{\pi y}}\right| 
&= \left| P\left( X_{(n)} \le \frac{nx}{\pi}\right) - e^{-x^{-1}} \right|\\
& \le \frac{1}{\pi y} + \frac{n}{3\pi y^3} = \frac{1}{nx} + \frac{\pi^2 }{3n^2 x^3}. 
\end{align*}
To find a uniform error bound, choose $x_0 :=x_{0n} := 1/\log n$ (which is equivalent to the choice of $y_0$ we had in (a)). For all $x \ge x_0$, we 
have 
\[
\frac{1}{nx} + \frac{\pi^2}{3n^2x^3} \le  \frac{1}{nx_0} + \frac{\pi^2}{3n^2x_0^3} = \frac{\log n}{n} + \frac{\pi^2 \log^3 n}{3n^2},
\]
whereas for all $0<x \le x_0$,
\[
\left| P\left( X_{(n)} \le \frac{nx}{\pi}\right) - e^{-x^{-1}} \right| \le  \frac{1}{nx_0} + \frac{\pi^2}{3n^2x_0^3} + e^{-x_0^{-1}} =  \frac{\log n}{n} + \frac{\pi^2 \log^3 n}{3n^2} +\frac{1}{n}.
\]
The latter bound clearly holds for all $x>0$.
\end{proof}
\subsection{The problem for maxima of discrete random variables}\label{s: Problem_discrete}
So far, we have only studied maxima of random variables with continuous distribution function $F$. As we will see below, it turns out that for some
well-known discrete distributions functions, no non-degenerate limit law $H$ as mentioned in Theorem \ref{t: Fisher-Tippett} may be found. So what are
the conditions for the existence of non-degenerate limit laws?  
The following corollaries of Theorem \ref{t: Lim_Poiapprox_Max} address this question. We again use the notation $x_F$ for the right endpoint 
of the distribution function $F$ of the \iid random variables $X_1, \ldots,X_n$, and note that $F(x) <1$ for all $x < x_F$ and $F(x_F)=1$ for all $x \ge x_F$. Moreover, 
we use the notation $F(x-) := \lim_{h \downarrow 0}F(x-h)$, for all $x \in \mathbb{R}$. 
\begin{cor}\label{t: degenerate}
(i) $X_{(n)} \to x_F $ with probability one as $n \to \infty$. \\
(ii) Suppose that $x_F < \infty$ and $F(x_F-)< 1$. Then, for every sequence $(u_n)_{n \geq 1}$ such that 
$P[X_{(n)} \leq u_n] \to \rho$, as $n \to \infty$,
we either have $\rho=0$ or $\rho=1$.
\end{cor}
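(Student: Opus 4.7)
The plan is to prove the two parts separately, with part (i) relying on monotonicity plus Borel--Cantelli and part (ii) reducing, via Theorem~\ref{t: Lim_Poiapprox_Max}, to a dichotomy on whether the sequence $(u_n)$ eventually lies above the atom at $x_F$.

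For (i), I would first observe that $X_{(n)}$ is non-decreasing in $n$ and bounded above by $x_F$, so $X^\star := \lim_{n \to \infty} X_{(n)}$ exists almost surely in $(-\infty, x_F]$. To show $X^\star = x_F$ a.s., fix any $x < x_F$; then $F(x) < 1$ by definition of $x_F$, and
\[
\sum_{n \ge 1} P\bigl(X_{(n)} \le x\bigr) = \sum_{n \ge 1} F(x)^n < \infty.
\]
By Borel--Cantelli, $P(X_{(n)} \le x \text{ infinitely often}) = 0$, so $X^\star > x$ a.s. Applying this to a countable sequence $x_k \uparrow x_F$ and taking a union of null sets yields $X^\star \ge x_F$ a.s., hence $X^\star = x_F$ a.s.

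For (ii), set $p := 1 - F(x_F-)$, which is strictly positive by assumption. Suppose $\lim_{n \to \infty} P(X_{(n)} \le u_n) = \rho$. I would split into two complementary cases according to the behaviour of $(u_n)$. If $u_n \ge x_F$ for all sufficiently large $n$, then $P(X_{(n)} \le u_n) = F(u_n)^n = 1$ eventually, forcing $\rho = 1$. Otherwise, there is a subsequence $(n_k)$ with $u_{n_k} < x_F$; since $F$ restricted to $(-\infty, x_F)$ is bounded above by $F(x_F-) = 1 - p$, we have $\overline{F}(u_{n_k}) \ge p$, so $n_k \overline{F}(u_{n_k}) \to \infty$. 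By Theorem~\ref{t: Lim_Poiapprox_Max} (applied with $\tau = \infty$, or directly via $P(X_{(n_k)} \le u_{n_k}) \le (1-p)^{n_k}$), we get $P(X_{(n_k)} \le u_{n_k}) \to 0$, and since the full sequence converges to $\rho$, this forces $\rho = 0$.

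The argument is essentially bookkeeping; the only conceptual subtlety is recognising that the jump of size $p$ at $x_F$ creates a ``forbidden interval'' of Poisson parameters $\tau$. Any sequence $(u_n)$ with $u_n < x_F$ cannot achieve $n\overline{F}(u_n) \to$ a finite positive value because $\overline{F}$ is bounded below by $p$ there, which is exactly what rules out limits $\rho \in (0,1)$. The main obstacle is just being careful about whether the limit exists at all: the statement assumes $P(X_{(n)} \le u_n)$ converges, so the two-case dichotomy cannot produce a contradiction between subsequential limits, and I only need to show each of the two exhaustive cases gives a limit in $\{0,1\}$.
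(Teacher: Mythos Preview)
Your proof is correct. The paper does not actually prove this corollary: it simply refers the reader to Corollary~1.5.2 in Leadbetter, Lindgren and Rootz\'en (1983). So there is no ``paper's approach'' to compare against beyond that citation; your self-contained argument via Borel--Cantelli for~(i) and the two-case dichotomy for~(ii) is precisely the standard proof one finds behind such references, and nothing is missing.
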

\begin{proof}
See, for example, Corollary 1.5.2 in \cite{Leadbetter_et_al:1983}.
\end{proof}
\noindent Thus, for $X_1, \ldots, X_n$ having common distribution function $F$ with finite right endpoint $x_F <\infty$ and a jump at $x_F$, i.e. $F(x_F-) <1=F(x_F)$,
it follows that if $P(X_{(n)} \le u_n) = P(X_{(n)} \le a_n x + b_n) \to \rho=H(x)$ for a sequence $u_n= a_nx+b_n$, then $H(x)=0$ or $1$ for each $x$, so that
$H$ is degenerate. 
\begin{exa}(Binomial distribution) Let $X \sim \mathrm{Bin}(m,p)$ for $m \in$ $ \mathbb{N}$ and $p \in (0,1)$. Since $P(X=m) = 1$, the right endpoint $x_F$ 
is given by $m < \infty$. 
Moreover, $F(x_F-) = F(m-1) = 1-p^m < 1$. Thus, by Corollary \ref{t: degenerate}, there exists no non-degenerate limit distribution for the maximum of binomials.  
\end{exa}
\noindent More commonly, the existence of non-degenerate limit laws is impossible due to the following corollary of Theorem \ref{t: Lim_Poiapprox_Max}, which is valid for
distribution functions $F$ with right endpoint $x_F \le \infty$.
\begin{thm} \label{t: conditionF}
Let $X_1, \ldots, X_n$ be \iid random variables with common distribution function $F$ and let $\tau \in (0, \infty)$. 
Then there exists a sequence $(u_n)_{n \geq 1}$ satisfying $n \overline{F}(u_n) \to \tau$ as $n \to \infty$ if and only if 
\begin{equation} \label{t: conditionFeq}
\lim_{x \uparrow x_F} \frac{\overline{F}(x)}{\overline{F}(x-)}=1,
\end{equation}
or equivalently, if and only if
\begin{equation}\label{t: condhazardrate}
\lim_{x \uparrow x_F} \frac{p(x)}{\overline{F}(x-)} =0,
\end{equation}
where $p(x)=F(x)-F(x-)$ and $p(x)/\overline{F}(x-)$ denotes the hazard rate.
\end{thm}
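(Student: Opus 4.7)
The plan is to handle the equivalence of the two conditions by straightforward algebra, then prove sufficiency by an explicit construction of $u_n$, and finally prove necessity by a contradiction argument that exploits the forbidden intervals in the range of $\overline{F}$.

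First, I note that
\[
\frac{\overline{F}(x)}{\overline{F}(x-)} = \frac{\overline{F}(x-) - p(x)}{\overline{F}(x-)} = 1 - \frac{p(x)}{\overline{F}(x-)},
\]
so (\ref{t: conditionFeq}) and (\ref{t: condhazardrate}) are equivalent. Thus I can work with either one; I will use (\ref{t: conditionFeq}).

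For the sufficiency direction ($\Leftarrow$), I would define, for each $n$ large enough that $\tau/n < \overline{F}(-\infty)=1$,
\[
u_n := \inf\{x : \overline{F}(x) \le \tau/n\}.
\]
Right-continuity of $\overline{F}$ together with the definition of the infimum gives $\overline{F}(u_n) \le \tau/n \le \overline{F}(u_n-)$, equivalently $n\overline{F}(u_n) \le \tau \le n\overline{F}(u_n-)$. Dividing through and rearranging yields the sandwich
\[
\frac{\overline{F}(u_n)}{\overline{F}(u_n-)} \le \frac{n\overline{F}(u_n)}{\tau} \le 1.
\]
Since $\overline{F}(u_n) \le \tau/n \to 0$, we have $u_n \to x_F$, so the hypothesis (\ref{t: conditionFeq}) forces the left-hand side to tend to $1$, hence $n\overline{F}(u_n) \to \tau$. (If $\overline{F}(u_n)=0$ happened for arbitrarily large $n$, then $\overline{F}$ would have a jump to $0$, i.e.\ $F(x_F-)<1$, which Corollary \ref{t: degenerate} already excludes from yielding a non-trivial limit.)

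For the necessity direction ($\Rightarrow$), I would argue by contradiction. Assume a sequence $(u_n)$ with $n\overline{F}(u_n)\to\tau>0$ exists, but (\ref{t: conditionFeq}) fails. Then there exist $\delta\in(0,1)$ and atoms $y_k \uparrow x_F$ with $\overline{F}(y_k)/\overline{F}(y_k-)\le 1-\delta$, so the open interval $(\overline{F}(y_k),\overline{F}(y_k-))$ is \emph{not} attained by $\overline{F}$ (these are the jumps over which the survival function plummets). The key observation is that this interval has relative width at least $\delta$, so one can squeeze the target value $\tau/n$ strictly inside it. Concretely, I would show that for every sufficiently large $k$ there exists an integer $n=n(k)$ with
\[
\frac{\tau(1+\delta/3)}{\overline{F}(y_k-)} < n < \frac{\tau(1-\delta/3)}{\overline{F}(y_k)},
\]
the point being that this interval has length of order $\tau/\overline{F}(y_k)\to\infty$, so integers exist in it for large $k$, and moreover $n(k)\to\infty$. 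For such $n$ we have
\[
\overline{F}(y_k) < \frac{\tau-\tau\delta/3}{n} < \frac{\tau+\tau\delta/3}{n} < \overline{F}(y_k-).
\]
Once $n$ is large enough that $|n\overline{F}(u_n)-\tau|<\tau\delta/3$, the value $\overline{F}(u_n)$ is trapped in $((\tau-\tau\delta/3)/n,(\tau+\tau\delta/3)/n)\subset(\overline{F}(y_k),\overline{F}(y_k-))$, which is impossible since the latter interval is disjoint from the range of $\overline{F}$.

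The routine parts are the algebraic equivalence and the sandwich argument for sufficiency. The main obstacle is the necessity direction, specifically, (i) identifying the correct quantitative statement (the range of $\overline{F}$ skips intervals of relative width $\ge\delta$), and (ii) verifying the arithmetic that lets one pick the integer $n(k)$ so that the whole $\varepsilon$-tube around $\tau/n$ lies inside the forbidden interval. The choice of $\delta/3$ above is tailored precisely so that $\tau(1+\delta/3)/\overline{F}(y_k-)<\tau(1-\delta/3)/\overline{F}(y_k)$ is compatible with $\overline{F}(y_k)/\overline{F}(y_k-)\le 1-\delta$, and this is where the argument really uses the gap hypothesis.
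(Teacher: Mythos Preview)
The paper does not give a proof at all; it simply cites Theorem~1.7.13 in Leadbetter, Lindgren and Rootz\'en (1983). Your proposal therefore supplies what the paper omits, and the argument you sketch---quantile construction for sufficiency, ``forbidden intervals'' of $\overline{F}$ for necessity---is exactly the classical proof found in that reference. Both directions are handled correctly: the sandwich $\overline{F}(u_n)/\overline{F}(u_n-)\le n\overline{F}(u_n)/\tau\le 1$ is right, and your $\delta/3$ arithmetic in the necessity direction checks out (indeed $(1-\delta)(1+\delta/3)<1-\delta/3$ for all $\delta\in(0,1)$, and the interval length is of order $\tau/\overline{F}(y_k)\to\infty$, guaranteeing integers).

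One small point: your parenthetical about $\overline{F}(u_n)=0$ in the sufficiency argument is slightly misplaced. Citing Corollary~\ref{t: degenerate} there does not rescue sufficiency---it would only show the theorem's ``if'' direction \emph{fails} when $F(x_F-)<1$. The clean fix is to observe that the hypothesis $n\overline{F}(u_n)\to\tau>0$ in the necessity direction already forces $F(x_F-)=1$ (since $\overline{F}(u_n)\to 0$ while $\overline{F}(u_n)>0$ eventually), so both directions may be argued under that standing assumption; the degenerate case $F(x_F-)<1$ is then handled separately by Corollary~\ref{t: degenerate}, as the paper's exposition indeed does just before stating the theorem.
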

\begin{proof}
See Theorem 1.7.13 in \cite{Leadbetter_et_al:1983}.
\end{proof}
\noindent Theorem \ref{t: conditionF} basically says that if the jump heights continue to be too large, there is 
no value $u_n$ such that $F(u_n)$ is close to $1-\tau/n$ and
no non-degenerate limit distribution may be found. For discrete, integer-valued random variables $X_1, \ldots, X_n$ that are \iid copies of $X$,
with $x_F = \infty$, conditions 
(\ref{t: conditionFeq}) and (\ref{t: condhazardrate}) become 
\[
\lim_{k \to \infty} \frac{\overline{F}(k)}{\overline{F}(k-1)} = 1\quad  \textnormal{ and } \quad\lim_{k \to \infty} \frac{P(X=k)}{\overline{F}(k-1)} = 0,
\]
respectively. If either of these conditions fails, we cannot find a non-degenerate limit distribution for $X_{(n)}$. 
In the following examples we will show that this is precisely the case for the Poisson and geometric distributions.
\begin{exa}(Poisson distribution) Let $X \sim \mathrm{Poi}(\lambda)$, with $\lambda >0$. Then
\begin{align*}
\frac{P(X=k)}{\overline{F}(k-1)} &= \frac{\lambda^k}{k!} \left(\sum_{l=k}^\infty \frac{\lambda^l}{l!}\right)^{-1} \\
&=\frac{\lambda^k}{k!}\left(\frac{\lambda^k}{k!} + \sum_{l=k+1}^\infty \frac{\lambda^l}{l!} \right)^{-1}\\
&= \left( 1+ \sum_{l=k+1}^\infty\frac{k! \lambda^{l-k}}{l!}\right)^{-1}.
\end{align*}
The latter sum may be rewritten and estimated as  
\[
\sum_{s=1}^{\infty} \frac{\lambda^s}{(k+1) \cdot \ldots \cdot(k+s)} 
\le \sum_{s=1}^{\infty} \left(\frac{\lambda}{k}\right)^s = \frac{\lambda/k}{1-\lambda/k} \quad \textnormal{if $\lambda < k$,} 
\]
which tends to $0$ as $k \to \infty$, and thus $P(X=k)/\overline{F}(k-1)$ tends to $1$.
Hence, by Theorem \ref{t: conditionF}, no non-degenerate limit distribution exists for Poisson maxima, and $P[M_n \leq u_n] \to \rho$ only for $\rho = 0$ or $1$.
\end{exa}
\begin{exa}(Geometric distribution) Let $X \sim \mathrm{Geo}(p)$, with the parameter $0<p<1$ denoting the success probability and the random variables $X$ counting the number of failures in 
$0$-$1$ experiments before the first success, i.e. $P(X=k) = p(1-p)^k$, for any $ k \in \mathbb{Z}_+$. Then
\[
\frac{P(X=k)}{\overline{F}(k-1)} = \frac{p(1-p)^k}{(1-p)^k} = p \in (0,1),  
\]
which violates condition (\ref{t: condhazardrate}).  
\end{exa}
We have now illustrated that for three of the most well-known and widely used discrete distributions, we may not find a limit law $H$ as in Theorem \ref{t: Fisher-Tippett}.
However, it should not be assumed that this problem occurs for discrete distributions in general. \cite{Feidt_et_al:2010} give the following 
example of discrete distribution functions that do indeed possess a non-degenerate extreme value behaviour:
\begin{exa}
Let $X \ge 0$ be an absolutely continuous random variable with distribution function $F$, probability density function $f$ and right endpoint $x_F = \infty$,  
and suppose that its hazard rate $\lambda(y) = f(y)/\overline{F}(y) \to 0$, as $y \to \infty$. Denote by $\lceil X \rceil$ the integer-valued random variable 
with distribution function $\lceil F \rceil (y) = \lceil F \rceil (\lfloor y \rfloor) = F(\lfloor y \rfloor)$, for all $y \ge 0$, that is, let $\lceil X \rceil$
be a discretised version of $X$. Then, since $\overline{F}$ is decreasing, 
\[
0 \le \frac{P\left(\lceil X \rceil = k \right)}{ \overline{\lceil F \rceil}(k-1) } = \frac{F(k)-F(k-1)}{\overline{F}(k-1)} = 
\int_{k-1}^{k} \lambda(t)\frac{\overline{F}(t)}{\overline{F}(k-1)}dt \le \int_{k-1}^k \lambda(t) dt, 
\]
which tends to $0$ as $k \to \infty$. Thus,  $\lceil X \rceil$ satisfies condition (\ref{t: condhazardrate}). 
Moreover, note that since $\lceil F \rceil(y) \le F(y)$ and $-\log \overline{F}(y) = \int_{0}^y \lambda(t) dt$ for all $y \ge0$, we have
\[
1 \le \frac{\overline{\lceil F \rceil} (y)}{\overline{F}(y)} \le \frac{\overline{F}(\lfloor y \rfloor)}{\overline{F}(\lfloor y \rfloor +1)} = 
\exp \left( \int_{\lfloor y \rfloor}^{\lfloor y \rfloor +1} \lambda(t) dt\right),
\]
which, again by the above condition on the hazard rate, tends to $1$ as $y \to \infty$. Thus, the two distribution functions $F$ and $\lceil F \rceil$ are 
\textit{tail-equivalent} (see, e.g., Definition 3.3.3 in \cite{Embrechts_et_al:1997}). 
By Proposition 1.19 in \cite{Resnick:1987} it then follows that if $F$ is in the maximum domain of attraction of an extreme value distribution $H$, then so is $\lceil F \rceil $, and
vice versa. For instance, let $X$ be Pareto distributed with parameters $\alpha, \phi >0$ with distribution function $F$ as in (\ref{d: Pareto}). Its hazard rate $\lambda(y) = \alpha/y$, for $y \ge \Phi$, 
vanishes as $y \to \infty$, as required. From Proposition \ref{t: cont_limit_maxima} we know that $F \in \mathrm{MDA}(\Phi_\alpha)$. By the above argument, we then also have that
$\lceil F \rceil \in \mathrm{MDA}(\Phi_\alpha)$.
\end{exa}

\subsection{Maxima of discrete random variables}\label{s: max_discrete}
In the previous subsection we discussed the non-existence of non-degenerate extremal limit laws for some well-known discrete distributions, like the binomial, geometric 
and Poisson. There is, however, a way to partially remedy this. \cite{Anderson_et_al:1997} and \cite{Nadarajah/Mitov:2002} determined limit laws for precisely these distributions by allowing
the distributional parameter (or one of them) to vary with the sample size $n$ at suitable rates. This means that they actually considered triangular arrays, 
\[
\begin{array}{lll}
m=1: & X_1^{(1)} & \stackrel{\phantom{\iid}}{\sim}\mathcal{P}_{\theta_1} \\
m=2: & X_1^{(2)}, X_2^{(2)} & \stackrel{\iid}{\sim}\mathcal{P}_{\theta_2}\\
m=3: & X_1^{(3)}, X_2^{(3)}, X_3^{(3)}& \stackrel{\iid}{\sim} \mathcal{P}_{\theta_3}\\
\vdots & \vdots & \vdots\\
m=n: & X_1^{(n)}, X_2^{(n)}, \ldots, X_n^{(n)}&\stackrel{\iid}{\sim} \mathcal{P}_{\theta_n},
\end{array}
\]
where $\mathcal{P}_{\theta_m}$ denotes a distribution with parameter, or collection of parameters, $\theta_m$. To keep notations simple, we omit the superscript. 
Moreover, for any $y \in \mathbb{R}$, let $\lfloor y \rfloor$ denote the integer part of $y$,
i.e. let $\lfloor y \rfloor = \max\{k \in \mathbb{Z}:\, k \le y\}$, and similarly, let $\lceil y \rceil = \min\{k \in \mathbb{Z}:\, k \ge y\}$.
For a discrete random variable $X$, taking values in $\mathbb{Z}$, we then 
naturally have $P(X=y)$ $=P(X = \lfloor y \rfloor)$ $=P(X=\lceil y \rceil)$ for all $y \in \mathbb{Z}$, whereas, for all $y \notin \mathbb{Z}$,
$P(X=y)=0$, and 
\begin{align}
\begin{split}\label{d: discretised_cdf}
P(X \le y) & = P(X < y) = P(X \le \lfloor y \rfloor) = P(X < \lceil y \rceil),\\
P(X > y) &= P(X \ge y) = P(X > \lfloor y \rfloor) = P(X \ge \lceil y \rceil).  
\end{split}
\end{align}
Having clarified notations, we now give the extremal limit results for the Poisson, binomial and geometric distributions, the first of which was proven 
by \cite{Anderson_et_al:1997} and the latter two by \cite{Nadarajah/Mitov:2002}. Proposition \ref{t: Max_Poi_lim} first uses convergence of the Poisson distribution with 
large parameter $\lambda$
to the normal distribution and then the fact that the normal distribution is in the maximum domain of attraction of the Gumbel distribution, as shown in Proposition 
\ref{t: cont_limit_maxima}(d).
Proposition \ref{t: Max_Bin_lim} proceeds similarly for the binomial distribution. 
\begin{prop}(Poisson distribution, Anderson et al., 1997)\label{t: Max_Poi_lim}
For each integer $n \ge 1$, let $X_1, \ldots, X_n$ be \iid Poisson random variables with parameter $\lambda=\lambda_n>0$. 
If $\lambda_n$ grows with $n$ such that $(\log n)^3$ $ = o(\lambda_n)$, 
then, for all $x \in \mathbb{R}$, 
\[
P\left( X_{(n)} \le \sqrt{\lambda_n} \alpha_n x + \lambda_n + \sqrt{\lambda_n}\beta_n\right) \to e^{-e^{-x}} = \Lambda(x), 
\]
as $n \to \infty$, where 
\[
\alpha_n = \frac{1}{\sqrt{2 \log n}}, \qquad \beta_n = \sqrt{2\log n} - \frac{\log \log n + \log 4\pi}{2\sqrt{2\log n}}. 
\]
\begin{flushright} $\qed$ \end{flushright} 
\end{prop}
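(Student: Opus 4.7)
The plan is to bridge the Poisson maximum to the Gumbel limit via the standard normal's own domain-of-attraction result (Proposition~\ref{t: cont_limit_maxima}(d)), using the elementary Poisson approximation from Theorem~\ref{t: Poster_univ} to handle the exceedance count and a moderate-deviations estimate to identify the Poisson tail with the Gaussian tail. Concretely, set $z_n = z_n(x) = \alpha_n x + \beta_n$ and $y_n = y_n(x) = \lambda_n + \sqrt{\lambda_n}\, z_n$, so that the stated threshold in the proposition equals $y_n$. By \eqref{t: error_max} applied with $A=(y_n,\infty)$ and the \iid Poisson sample,
\[
\bigl| P(X_{(n)} \le y_n) - e^{-n\overline{F}_{\lambda_n}(y_n)} \bigr| \le \overline{F}_{\lambda_n}(y_n),
\]
where $\overline{F}_{\lambda_n}$ is the survival function of $\mathrm{Poi}(\lambda_n)$ (so $\overline{F}_{\lambda_n}(y_n)=\overline{F}_{\lambda_n}(\lceil y_n\rceil)$ by \eqref{d: discretised_cdf}). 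Thus it suffices to show that $n\overline{F}_{\lambda_n}(y_n) \to e^{-x}$, as then both the error bound on the right vanishes and the main term converges to $e^{-e^{-x}} = \Lambda(x)$.

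The heart of the argument is a moderate-deviations asymptotic for the Poisson tail: under the hypothesis $(\log n)^3 = o(\lambda_n)$,
\[
\overline{F}_{\lambda_n}\!\bigl(\lambda_n + \sqrt{\lambda_n}\, z_n\bigr) = \overline{\Phi}(z_n)\bigl(1+o(1)\bigr),
\]
uniformly for $x$ in any bounded set. I would prove this by writing the Poisson point probabilities via Stirling's formula, expanding $\log\{(\lambda_n^k / k!)\, e^{-\lambda_n}\}$ to third order about $k=\lambda_n$, and summing the resulting near-geometric tail; the growth condition on $\lambda_n$ exactly controls the third-order Cram\'er correction, since $z_n \sim \sqrt{2\log n}$ makes $z_n^3/\sqrt{\lambda_n}\to 0$ precisely when $(\log n)^3 = o(\lambda_n)$. (One could alternatively invoke a Cram\'er-type moderate-deviation theorem for lattice sums.) Combining this asymptotic with the facts already extracted in the proof of Proposition~\ref{t: cont_limit_maxima}(d), namely $n\varphi(z_n)/z_n \to e^{-x}$ together with the Mills ratio $\overline{\Phi}(z_n) \sim \varphi(z_n)/z_n$, yields
\[
n\overline{F}_{\lambda_n}(y_n) \;\sim\; n\overline{\Phi}(z_n) \;\sim\; \frac{n\varphi(z_n)}{z_n} \;\longrightarrow\; e^{-x},
\]
which is what is needed.

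The main obstacle is the moderate-deviations estimate in the second step: the ordinary CLT would only give $\overline{F}_{\lambda_n}(\lambda_n + \sqrt{\lambda_n}\, z) \to \overline{\Phi}(z)$ for fixed $z$, whereas here $z_n \to \infty$ at rate $\sqrt{2\log n}$, well outside the CLT regime. The assumption $(\log n)^3 = o(\lambda_n)$ is exactly the threshold at which the cubic term in the Edgeworth-type expansion of $\log P(\mathrm{Poi}(\lambda_n) = \lambda_n + \sqrt{\lambda_n}\, z_n)$ becomes negligible; any weaker growth of $\lambda_n$ would leave a non-vanishing skewness correction that destroys the Gumbel limit. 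Once this moderate-deviation asymptotic is in hand, the rest of the proof is routine bookkeeping combining it with the Stein bound and the normalising constants chosen in Proposition~\ref{t: cont_limit_maxima}(d).
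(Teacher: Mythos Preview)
The paper does not actually prove this proposition; it is stated with a \qed and attributed to Anderson et al.\ (1997), with only a one-sentence sketch preceding it: ``first uses convergence of the Poisson distribution with large parameter $\lambda$ to the normal distribution and then the fact that the normal distribution is in the maximum domain of attraction of the Gumbel distribution.'' Your proposal fleshes out exactly this sketch --- the moderate-deviations step replaces the Poisson tail by the Gaussian tail, and then Proposition~\ref{t: cont_limit_maxima}(d) (via Mills' ratio and the computation $n\varphi(z_n)/z_n\to e^{-x}$) finishes --- so your approach is correct and aligned with what the paper indicates, with the added value that you make explicit why the growth condition $(\log n)^3 = o(\lambda_n)$ is the right threshold for the cubic Cram\'er correction to vanish.
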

\begin{prop}(Binomial distribution, Nadarajah and Mitov, 2002)\label{t: Max_Bin_lim}
For each integer $n \ge 1$, let $X_1, \ldots, X_n$ be \iid binomial random variables with number of trials $N=N_n \in \{1, 2, \ldots\}$ and fixed probability of success $p \in (0,1)$. 
If $N_n \to \infty$ as $n \to \infty$ such that $(\log n)^3 = o(N_n)$, 
then, for all $x \in \mathbb{R}$, 
\[
P\left( X_{(n)} \le \sqrt{p(1-p)N_n} \alpha_n x + pN_n + \sqrt{p(1-p)N_n}\beta_n\right) \to e^{-e^{-x}} = \Lambda(x), 
\]
as $n \to \infty$, where 
\[
\alpha_n = \frac{1}{\sqrt{2 \log n}}, \qquad \beta_n = \sqrt{2\log n} - \frac{\log \log n + \log 4\pi}{2\sqrt{2\log n}}. 
\]
\begin{flushright} $\qed$ \end{flushright}
\end{prop}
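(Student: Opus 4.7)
The plan is to parallel the proof of Proposition \ref{t: Max_Poi_lim} (and ultimately of Proposition \ref{t: cont_limit_maxima}(d)) by reducing the discrete binomial problem to the already-treated standard normal problem through a moderate-deviation estimate on the binomial upper tail. Write $y_n(x) := \sqrt{p(1-p)N_n}\,\alpha_n x + pN_n + \sqrt{p(1-p)N_n}\,\beta_n$ and let $\overline{F}_n$ denote the survival function of $\mathrm{Bin}(N_n,p)$. The standardised threshold is $(y_n(x) - pN_n)/\sqrt{p(1-p)N_n} = \alpha_n x + \beta_n$, which tends to $+\infty$ at rate $\sqrt{2\log n}$.

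First, Theorem \ref{t: Poster_univ} applied with $A = (y_n(x), \infty)$ yields
\[
\left| P\bigl(X_{(n)} \le y_n(x)\bigr) - \exp\{-n \overline{F}_n(y_n(x))\} \right| \le \overline{F}_n(y_n(x)),
\]
and any crude central-limit bound (e.g.\ Berry--Esseen) shows $\overline{F}_n(y_n(x)) \to 0$ since $\alpha_n x + \beta_n \to \infty$. It therefore suffices to prove that $n\,\overline{F}_n(y_n(x)) \to e^{-x}$ as $n \to \infty$, since then $\exp\{-n\overline{F}_n(y_n(x))\} \to \exp(-e^{-x})$ by continuity. To achieve this I would factor
\[
n\,\overline{F}_n(y_n(x)) \;=\; n\,\overline{\Phi}(\alpha_n x + \beta_n) \cdot \frac{\overline{F}_n(y_n(x))}{\overline{\Phi}(\alpha_n x + \beta_n)},
\]
where the first factor already tends to $e^{-x}$ by the computation (\ref{p: normal_normalised_density}) used in the proof of Proposition \ref{t: cont_limit_maxima}(d). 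The problem thus reduces to establishing the relative convergence
\[
\frac{\overline{F}_n(y_n(x))}{\overline{\Phi}(\alpha_n x + \beta_n)} \longrightarrow 1.
\]

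This last step is the main obstacle and is exactly a classical moderate-deviation statement for sums of $N_n$ i.i.d.\ Bernoulli$(p)$ variables at the level $z_n := \alpha_n x + \beta_n \sim \sqrt{2\log n}$ standard deviations above the mean. The Cram\'er/Bahadur--Ranga Rao expansion for the binomial tail gives $\overline{F}_n(y_n(x)) = \overline{\Phi}(z_n)(1 + O(z_n^3/\sqrt{N_n}))$ throughout the range $z_n = o(N_n^{1/6})$, and the hypothesis $(\log n)^3 = o(N_n)$ is precisely tailored to yield $\sqrt{\log n} = o(N_n^{1/6})$, so that $z_n^3/\sqrt{N_n} = O((\log n)^{3/2}/\sqrt{N_n}) \to 0$. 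In practice I would derive this expansion by hand via Stirling's formula applied to the individual point probabilities $P(X=k)$ for $k$ in the window $y_n(x) \le k \le N_n$, matching the leading Gaussian factor $\varphi((k-pN_n)/\sqrt{p(1-p)N_n})/\sqrt{p(1-p)N_n}$ uniformly in that window and summing; the discreteness correction contributes only $O(1/\sqrt{N_n})$ to the ratio. Combining this with the first factor completes the reduction, and invoking the Poisson-approximation bound from the first step yields the claimed weak convergence for every $x \in \mathbb{R}$.
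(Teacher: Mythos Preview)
Your approach is correct and is precisely the route the paper indicates: the paper gives no proof but remarks that one ``proceeds similarly'' to Proposition \ref{t: Max_Poi_lim}, reducing to the standard-normal case via a normal approximation to the binomial tail and then invoking Proposition \ref{t: cont_limit_maxima}(d). Your factorisation through $n\overline{\Phi}(\alpha_n x+\beta_n)$ and the moderate-deviation ratio is the natural way to make this precise, and your observation that the hypothesis $(\log n)^3=o(N_n)$ is exactly the condition $z_n=o(N_n^{1/6})$ needed for the Cram\'er expansion to give a ratio tending to $1$ is the key point.
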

\noindent Note that the geometric distribution treated by \cite{Nadarajah/Mitov:2002} as a special case of the negative binomial is the shifted geometric distribution, 
which counts the number of trials until the first success. In contrast, the following proposition treats the geometric distribution which counts the number of 
failures before the first success. The normalising constants however remain the same as those used by \cite{Nadarajah/Mitov:2002}.
\begin{prop} \label{t: Max_geo_limit_NM}
(Geometric distribution) For each integer $n \ge 1$, let $X_1, \ldots, X_n$ be \iid geometric random variables with probability of success $p=p_n \in (0,1)$, probability mass function 
$P(X_1=k)=p(1-p)^k$ and cumulative distribution function 
$F(k)= 1-(1-p)^{k+1}$, for any $k \in \mathbb{Z}_+$. 
If $p_n \to 0$ as $n \to \infty$
, 
then, for all $x \in \mathbb{R}$,
\[
P\left( X_{(n)} \le \frac{\log n + x}{p_n}\right) \to e^{-e^{-x}} = \Lambda(x), \quad \textnormal{as }n \to \infty. 
\]
\end{prop}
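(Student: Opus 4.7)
The plan is to mirror the two-step strategy used for the continuous examples earlier in the section: first invoke the basic Poisson approximation from Theorem \ref{t: Poster_univ} to compare $P(X_{(n)} \le y)$ with $\exp\{-n\overline{F}(y)\}$, and then verify that the chosen normalisation makes $n\overline{F}(y) \to e^{-x}$. Since the geometric distribution is discrete, some care will be needed with the floor function, but the overall recipe is exactly the one applied to the exponential, Pareto, and uniform cases.

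Concretely, fix $x \in \mathbb{R}$ and set $y_n := (\log n + x)/p_n$. For $n$ large enough that $y_n \ge 0$, I would apply Theorem \ref{t: Poster_univ} with $A = [y_n, \infty)$, using (\ref{d: discretised_cdf}) to identify $P(X_1 > y_n) = P(X_1 \ge \lceil y_n \rceil) = (1-p_n)^{\lfloor y_n \rfloor + 1}$, obtaining
\[
\left| P(X_{(n)} \le y_n) - \exp\bigl\{-n(1-p_n)^{\lfloor y_n \rfloor + 1}\bigr\} \right| \le (1-p_n)^{\lfloor y_n \rfloor + 1}.
\]

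The second step is the asymptotic analysis of $n(1-p_n)^{\lfloor y_n \rfloor + 1}$. Write $\lfloor y_n \rfloor + 1 = y_n + \theta_n$ with $\theta_n \in (0,1]$, so that
\[
\log\bigl[ n(1-p_n)^{\lfloor y_n \rfloor + 1} \bigr] = \log n + (y_n + \theta_n)\log(1-p_n).
\]
Expanding $\log(1-p_n) = -p_n + O(p_n^2)$ as $p_n \to 0$ and using $p_n y_n = \log n + x$, this becomes
\[
\log n - (\log n + x) - \theta_n p_n + O\bigl(y_n p_n^2\bigr) = -x - \theta_n p_n + O(p_n \log n).
\]
Provided the remainder terms vanish, we have $n\overline{F}(y_n) \to e^{-x}$, so $\exp\{-n\overline{F}(y_n)\} \to \exp\{-e^{-x}\}$; combined with the upper bound $\overline{F}(y_n) \to 0$ from step one, Theorem \ref{t: Poster_univ} then delivers $P(X_{(n)} \le y_n) \to \Lambda(x)$.

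The main obstacle, and the point that the subsequent quantitative sections of the chapter are meant to clarify, is that the stated hypothesis $p_n \to 0$ alone does not quite suffice to annihilate the Taylor-expansion remainder $O(y_n p_n^2) = O(p_n \log n)$; a rate such as $p_n \log n \to 0$ appears unavoidable if one insists on a \emph{continuous} Gumbel limit, as opposed to the discretised Gumbel approximation discussed in the chapter's introduction. By contrast, the rounding contribution $\theta_n p_n$ is automatically $o(1)$ and gives no trouble. So the real work is bookkeeping the Taylor remainder carefully, which is the same sensitivity that explains why the Nadarajah--Mitov choice of norming constants can only guarantee convergence, not a sharp rate, without additional assumptions on $p_n$.
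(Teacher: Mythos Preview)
Your route via Theorem~\ref{t: Poster_univ} differs from the paper's: the paper does not invoke Poisson approximation here at all but works directly with
\[
P\!\left(X_{(n)}\le y_n\right)=\Bigl\{1-(1-p_n)^{\lfloor y_n\rfloor+1}\Bigr\}^n
\]
and squeezes it, using $z-1\le\lfloor z\rfloor\le z$ together with $-p/(1-p)\le\log(1-p)\le -p$, between $\{1-e^{-(\log n+x)}\}^n$ and $\{1-(1-p_n)e^{-(\log n+x)/(1-p_n)}\}^n$, then simply asserts that both bounds tend to $\Lambda(x)$ as $p_n\to 0$.

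Your flag on the $O(p_n\log n)$ remainder is justified, and it is in fact a gap in the paper's own argument: the upper sandwich bound carries a factor $n^{-p_n/(1-p_n)}=\exp\{-p_n\log n/(1-p_n)\}$, which need not tend to~$1$ under $p_n\to 0$ alone. Concretely, take $p_n=1/\log n$; then
\[
(\lfloor y_n\rfloor+1)\log(1-p_n)+\log n \;\longrightarrow\; -x-\tfrac12,
\]
so $n(1-p_n)^{\lfloor y_n\rfloor+1}\to e^{-x-1/2}$ and $P(X_{(n)}\le y_n)\to\Lambda(x+\tfrac12)\neq\Lambda(x)$. The proposition as stated therefore requires the stronger hypothesis $p_n\log n\to 0$; the paper's sandwich closes only under that extra condition, exactly as your Taylor analysis predicts. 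Your diagnosis is more careful than the paper's proof on this point.
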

\begin{proof}
Fix any $x \in \mathbb{R}$.
Due to (\ref{d: discretised_cdf}) and (\ref{p: max_Fn}), we have
\begin{align*}
&P\left( X_{(n)} \le \frac{\log n + x}{p_n}\right)\\
&= P\left( X_{(n)} \le \left\lfloor \frac{\log n + x}{p_n}\right\rfloor\right)
= F^n\left( \left\lfloor \frac{\log n + x}{p_n}\right\rfloor\right)\\
& = \left\{ 1- (1-p_n)^{\left\lfloor \frac{\log n + x}{p_n} \right\rfloor + 1}\right\}^n
= \left\{ 1-e^{\left( \left\lfloor \frac{\log n + x}{p_n} \right\rfloor + 1\right)\log(1-p_n)}\right\}^n,
\end{align*}
for any integer $n \ge e^{-x}$.
Using $z-1 \le \lfloor z \rfloor \le z$, for $z \in \mathbb{R}$, and 
$-p/(1-p) \le \log(1-p) \le -p$, for all $p \in (0,1)$, 
we obtain
\begin{equation}\label{p: geo_max_lim_bounds}
\left\{ 1-e^{-(\log n + x)}\right\}^n 
\le P\left( X_{(n)} \le \frac{\log n + x}{p_n}\right) 
\le \left\{ 1-e^{-\frac{\log n+x}{1-p_n}}\cdot (1-p_n)\right\}^n. 
\end{equation}
As $n \to \infty$ and $p_n \to 0$, both sides of
(\ref{p: geo_max_lim_bounds}) tend to $\exp\{-e^{-x}\} = \Lambda(x)$.
\end{proof}
\noindent The following proposition investigates the rate of convergence of the limit result from Proposition \ref{t: Max_geo_limit_NM} and suggests two improvements.
One way to reduce the error is to approximate by a discretised version of the Gumbel distribution, the other is to use different normalising constants.
\begin{prop}\label{t: max_geo_d_K}
(Geometric distribution) For each integer $n \ge 1$, let $X_1, \ldots, X_n$ be \iid geometric random variables with success probability $p_n\in (0,1)$, 
failure probability $q_n=1-p_n$, probability mass function 
$P(X_1=k)=p_nq_n^k$ and survival function $\overline{F}(k)= q_n^{k+1}$, for any $k \in \mathbb{Z}_+$. Then:\\
(a) (Approximation by a discretised Gumbel distribution) For all $k \in \mathbb{Z}_+$ and for all $k^\star \in \mathbb{R}$ defined by $k^\star = -\log n + k \log(1/q_n)$,
\begin{equation}\label{(a)Gumbel}
\left| P\left( X_{(n)} < \frac{\log n + k^\star}{\log(1/q_n)} \right) -  e^{-e^{-k^\star}}\right| \le \frac{\log n}{q_n n} + \frac{1}{n} =:\delta_{\mathrm{PoiAppr}}.
\end{equation}
(b) (Approximation by a Gumbel distribution) For all $x \in \mathbb{R}$,
\[
\left| P\left( X_{(n)} < \frac{\log n + x}{\log(1/q_n)} \right) -  e^{-e^{-x}}\right| \le \delta_{\mathrm{PoiAppr}} +e^{-1}\log(1/q_n) =: 
\delta_{\mathrm{Cont}}.
\]
(c) (Using the normalising constants from Nadarajah and Mitov, 2002)
\[
\left| P\left( X_{(n)} < \frac{\log n + x}{1-q_n} \right) -  e^{-e^{-x}}\right| \le \delta_{\mathrm{Cont}} +\frac{1-q_n}{2q_n}\left( \log^2 n +e^{-1}\right). 
\]
\end{prop}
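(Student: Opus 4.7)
The plan is to handle the three parts in sequence, with (a) carrying all of the real Poisson-approximation content and (b), (c) reducing to it by carefully controlling the mismatch between the discrete quantile $\lceil y\rceil$ and $x$, and between the two normalising constants $\log(1/q_n)$ and $1-q_n$.

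For (a), I would first apply Theorem~\ref{t: Poster_univ} (equivalently (\ref{t: error_max})) to the set $A=\{k,k+1,\dots\}$, noting that $P(X_1\in A)=\overline{F}(k-1)=q_n^k$ and $P(X_{(n)}<k)=P(X_{(n)}\le k-1)$. This yields
$$\bigl|P(X_{(n)}<k)-e^{-nq_n^k}\bigr|\le q_n^k\qquad(k\in\mathbb{Z}_+),$$
and by the definition $k^\star=-\log n+k\log(1/q_n)$ the approximating quantity is exactly $e^{-e^{-k^\star}}$. To make the bound uniform in $k$, I would mimic the splitting trick used in the proofs of Propositions~\ref{t: Max_exp}--\ref{t: Max_Cauchy}: let $k_0$ be the smallest integer with $q_n^{k_0}\le \log n/(nq_n)$. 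For $k\ge k_0$ the bound $q_n^k\le q_n^{k_0}\le \log n/(nq_n)$ is already of the right form, while for $k\le k_0$ both $P(X_{(n)}<k)$ and $e^{-nq_n^k}$ are nondecreasing in $k$, so their difference is dominated by the maximum of the two at $k_0$; minimality of $k_0$ gives $nq_n^{k_0}\ge\log n$, hence $e^{-nq_n^{k_0}}\le 1/n$, and the approximation at $k_0$ yields the combined bound $1/n+\log n/(nq_n)$.

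For (b), the argument $y=(\log n+x)/\log(1/q_n)$ is typically non-integer, but (\ref{d: discretised_cdf}) gives $P(X_{(n)}<y)=P(X_{(n)}<\lceil y\rceil)$. Setting $k=\lceil y\rceil$ and the corresponding $k^\star$, part~(a) handles $|P(X_{(n)}<k)-e^{-e^{-k^\star}}|$. It remains to compare the two Gumbel values, for which I would observe that $g(t)=e^{-e^{-t}}$ has derivative $g'(t)=e^{-t-e^{-t}}$, attaining its global maximum $e^{-1}$ at $t=0$; hence $g$ is $e^{-1}$-Lipschitz. Since $k^\star-x=\log(1/q_n)(\lceil y\rceil-y)\in[0,\log(1/q_n))$, this comparison costs at most $e^{-1}\log(1/q_n)$, producing $\delta_{\mathrm{Cont}}$.

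For (c), I would repeat the strategy of (b) but with $y_1=(\log n+x)/(1-q_n)$, $k=\lceil y_1\rceil$ and the same $k^\star$. The key algebraic step is the decomposition
$$k^\star-x=\log(1/q_n)(\lceil y_1\rceil-y_1)+y_1\bigl[\log(1/q_n)-(1-q_n)\bigr],$$
together with the Taylor bound $0\le\log(1/q_n)-(1-q_n)=\sum_{j\ge 2}(1-q_n)^j/j\le (1-q_n)^2/(2q_n)$. Using $y_1(1-q_n)=\log n+x$, the second summand is bounded by $(1-q_n)|\log n+x|/(2q_n)$, and applying the $e^{-1}$-Lipschitz estimate to $g$ gives an excess over $\delta_{\mathrm{Cont}}$ of $e^{-1}(1-q_n)|\log n+x|/(2q_n)$.

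The main obstacle is converting this $x$-dependent excess into the uniform bound $(1-q_n)(\log^2 n+e^{-1})/(2q_n)$. For $|x|$ not too large, say $|\log n+x|\lesssim \log^2 n$, the direct estimate above already delivers the $(1-q_n)\log^2 n/(2q_n)$ term. For very large $|x|$ the Lipschitz bound is wasteful and must be replaced by a monotonicity/triviality argument analogous to the ``$k\le k_0$'' case in (a): for large positive $x$ both $P(X_{(n)}<y_1)$ and $e^{-e^{-x}}$ lie within $e^{-x}$ of $1$, and for large negative $x$ both are below $e^{-e^{-x}}$, so the difference is absorbed into the $e^{-1}$ boundary constant. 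I expect the bookkeeping of this split to be the delicate part, but only the constants — not the structure — depend on the exact thresholds chosen.
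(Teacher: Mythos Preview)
Your proofs of (a) and (b) are essentially the paper's: the Poisson-approximation step and the $k_0$ split in (a) match the paper's argument with $x_0=-\log\log n$ and $m=\lfloor y_0\rfloor$, and (b) is the same $e^{-1}$-Lipschitz comparison of Gumbel values.

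For (c) your route genuinely differs from the paper's. You build (c) directly on (a) via $k=\lceil y_1\rceil$ and the global Lipschitz bound $|g(k^\star)-g(x)|\le e^{-1}|k^\star-x|$, which leaves an $x$-dependent excess $e^{-1}\tfrac{1-q_n}{2q_n}(\log n+x)$ and forces the three-region split you sketch; this can indeed be pushed through (taking the middle region $|\log n+x|\le e\log^2 n+1$ gives exactly the stated constant, and outside it both quantities are within $e^{-x}$ of $1$, respectively within $e^{-n}$ of $0$), but the endpoint checks for small $n$ are fiddly. The paper instead builds (c) on the pointwise version of (b), introduces $x'$ via $(\log n+x)/\log(1/q)=(\log n+x')/(1-q)$, and replaces your global $e^{-1}$ by the \emph{local} estimate
\[
e^{-e^{-x}}-e^{-e^{-x'}}\;\le\; e^{-x'}-e^{-x}\;\le\; e^{-x'}(x-x').
\]
Combined with $(1-q)y=\log n+x'$ and the same Taylor bound $\log(1/q)-(1-q)\le (1-q)^2/(2q)$, this yields the excess $\tfrac{1-q}{2q}\bigl(e^{-x'}\log n+x'e^{-x'}\bigr)\le\tfrac{1-q}{2q}\bigl(e^{-x'}\log n+e^{-1}\bigr)$, after which a single split at $x_0=-\log\log n$ gives the $\log^2 n$ directly. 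The pointwise factor $e^{-x'}$ is what eliminates the need for separate large-$|x|$ arguments; your approach trades that simplification for an extra case analysis.
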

\noindent Note that the failure probability $q_n$ need not vary with the sample size $n$ for approximation by a discretised Gumbel distribution. The error bound is sharp for any constant
$q_n \equiv q \in (0,1)$, showing clearly that it makes more sense to approximate a discrete distribution by another discrete distribution than by a continuous one, as there is
no need to add an extra error as in (b). 

The extra error in (b), $e^{-1}\log(1/q_n)$, is the discretisation error that
arises when going from the Gumbel concentrated on the lattice of
points $k^\star$ to the continuous Gumbel distribution over 
$\mathbb{R}$. It dominates the overall error in (b) unless 
$q_n$ tends to $1$ fast enough as $n \to \infty$, that is, unless
$1-q_n = O(\log(n)/n)$, in which case the discretisation error is of 
the same order as the first error term from (a).

Part (c) shows that the choice of normalising constants, more precisely, of the scaling by $p_n$ in Proposition \ref{t: Max_geo_limit_NM},
is far from optimal. In order for the approximation in (c) to be good we require $p_n = o(1/\log^2 n)$.
Its being a stronger condition than the one for the asymptotic 
result from Proposition \ref{t: Max_geo_limit_NM} is justified by (c) also being a stronger result in the sense that it gives a uniform bound. The error in (c) 
is of the same order as the error in (a) only if $1-q_n = O( 1/(n\log n))$. 
\begin{proof} For ease of notation we omit the subscript $n$. 
(a) Let $A=[y, \infty)$ for any choice of $y \ge 0$. Then, by (\ref{d: discretised_cdf}), $P(X_1 \in A) = q^{\lceil y \rceil}$, and, setting $k:=\lceil y \rceil \in \mathbb{Z}_+$,
Theorem \ref{t: Poster_univ} gives
\begin{equation}\label{p: first_bound_geo}
\left| P\left( X_{(n)} < k \right) - e^{-nq^k} \right| \le q^k. 
\end{equation}
With $k^\star \in \mathbb{R}$ chosen such that $k=(\log n + k^\star)/\log(1/q)$, we then have
\begin{equation}\label{p: Max_Geo_error}
\left| P\left( X_{(n)} < \frac{\log n+k^\star}{\log(1/q)}\right) - e^{-e^{-k^\star}} \right| \le \frac{e^{-k^\star}}{n}. 
\end{equation}
In order to find a uniform bound for all $k \in \mathbb{Z}_+$, choose $x_0:=x_{0n}:= - \log \log n$. Then, for all $k$ such that $k^\star \ge x_0$, we have
$\exp{(-k^\star)}/n \le \exp{(-x_0)}/n = \log (n)/n$, whereas for $k$ such that $k^\star \le x_0$, we may 
bound the error in (\ref{p: Max_Geo_error}) 
by further adding the Gumbel distribution to the error at $m^\star$, where 
$m := \lfloor y_0 \rfloor := \lfloor (\log n + x_0)/\log(1/q) \rfloor$,
i.e. 
\begin{align}\label{p: bound_xk_vs.xstar}
\left| P\left( X_{(n)} < \frac{\log n+k^\star}{\log(1/q)}\right) - e^{-e^{-k^\star}} \right|  
&\le \frac{e^{-m^\star}}{n}  + e^{-e^{-m^\star}}  
= e^{-m \log(1/q)} + e^{-e^{-m^\star}}\nonumber \\
&\le \frac{e^{-x_0}}{qn} + e^{-e^{-x_0}} \le \frac{\log n}{qn} + \frac{1}{n}\,,
\end{align}
where we used $m \ge y_0 -1$ in the second inequality. See Figure \ref{f: Max_geo_dK} for a sketch.\\
\begin{figure}[!ht]
\begin{center}{\footnotesize \input{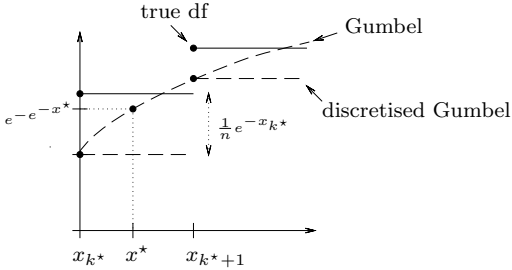}}\end{center}
\caption{At all values $k^\star \le m^\star \le x_0$, the error cannot exceed the sum of the bound $e^{-m^\star}/n$ on the difference
between the two distributions functions and the height $\exp\{-e^{-x_0}\}$ of the
Gumbel distribution.}
\label{f: Max_geo_dK}
\end{figure} 
(\ref{p: bound_xk_vs.xstar}) provides a bound for all $k^\star$,
and thus
\[
\left| P\left( X_{(n)} < \frac{\log n +k^\star}{\log(1/q)}\right) - e^{-e^{-k^\star}} \right| \le \frac{\log n}{qn} + \frac{1}{n}.
\]
(b) Let $x= \log(1/q)y-\log n$. 
By adding and subtracting $\exp\{-e^{-x}\}$ 
into (\ref{(a)Gumbel}), and noting that,
since $y \le \lceil y \rceil = k$, we have $x \le k^\star$ and
$\exp\{-e^{-k^\star}\}-\exp\{-e^{-x}\} \ge 0$. We thus obtain
\[
\left| P\left( X_{(n)} < \frac{\log n +x}{\log(1/q)} \right)
- e^{-e^{-x}} \right| \le 
\delta_{\mathrm{PoiAppr}} + e^{-e^{-k^\star}} - e^{-e^{-x}},
\]
where 
\begin{align*}
e^{-e^{-k^\star}} - e^{-e^{-x}}
&\le \int_{x}^{k^\star}e^{-t}e^{-e^{-t}}dt 
\le e^{-1}(k^\star - x)\\
&= e^{-1}\log(1/q)(\lceil y \rceil - y) 
\le e^{-1}\log(1/q).
\end{align*}
(c) 
From (a) and (b) we have
\begin{equation}\label{p: max_geo_cont}
\left| P\left( X_{(n)} < \frac{\log n +x}{ \log(1/q)}\right)
-e^{-e^{-x}}\right| \le \frac{e^{-x}}{n} + e^{-1}\log(1/q).  
\end{equation}
Choose $x' \ge -\log n$ such that 
\[
y = \frac{\log n + x}{\log(1/q)} = \frac{\log n + x'}{1-q}.
\]
By adding and subtracting $\exp\{-e^{-x'}\}$ into (\ref{p: max_geo_cont}) and observing that
$x > x'$ since $\log(1/q)$ $ > 1-q$, we then obtain
\[
\left| P\left( X_{(n)} < \frac{\log n + x'}{1-q}\right) - e^{-e^{-x'}}\right| \le \frac{e^{-x'}}{n}  + e^{-1}\log(1/q) + e^{-e^{-x}} - e^{-e^{-x'}}.
\]
For the latter error term we find
\begin{align*}
e^{-e^{-x}} - e^{-e^{-x'}} &= e^{-e^{-x}} \left[ 1-e^{-(e^{-x'} - e^{-x})}\right] \le e^{-x'}\left[ 1-e^{-(x-x')}\right] \\
& \le e^{-x'} (x-x') = e^{-x'}\left[\log (1/q) - (1-q)\right]y, 
\end{align*}
where we used $\exp\{-e^{-x}\} \le 1$ in the first inequality and $1-e^{-z} \le z$ for $z \ge 0$ in both inequalities. 
Note that use of the definition of the logarithm
and the geometric series give
\begin{align*}
&\log(1/q)-(1-q) \\
&= \sum_{j=2}^\infty \frac{(1-q)^j}{j} \le \sum_{j=2}^\infty \frac{(1-q)^j}{2} 
= \frac{1}{2}\left[ \sum_{j=0}^\infty (1-q)^j - 1 - (1-q)\right]\\
&= \frac{1}{2} \left[ \frac{1}{q} -2 + q\right]= \frac{(1-q)^2}{2q}.
\end{align*}
Then, 
\begin{align*}
e^{-e^{-x}} - e^{-e^{-x'}} &\le  \frac{(1-q)^2y}{2q}e^{-x'} 
= \frac{1-q}{2q} (\log n + x')e^{-x'}\\
&\le \frac{1-q}{2q} (e^{-x'}\log n  + e^{-1}).
\end{align*}
Thus,  
\begin{multline*}
\left| P\left( X_{(n)} < \frac{\log n + x'}{1-q}\right)
- e^{-e^{-x'}}\right|\\
\le \frac{e^{-x'}}{n}  + e^{-1}\log(1/q) + 
\frac{1-q}{2q} \left(e^{-x'}\log n  + e^{-1}\right).
\end{multline*}
For a uniform bound over all $x'$, we choose $x_0$ as before in (a) and (b), and obtain, with an analogous argument, the overall error bound
\[
\frac{\log n}{n}  + e^{-1}\log(1/q) + \frac{1-q}{2q}\left(\log^2 n + e^{-1}\right) + \frac{1}{n}.
\]  
\end{proof}

\section{Poisson process approximation for MPPE's}\label{Sec: Poi_proc_approc_MPPE}
Theorem \ref{t: Poster_univ} not only gives information on distributional approximation for the maximum 
$X_{(n)}$ of \iid random variables $X_1, \ldots, X_n$ with distribution function $F$, as in (\ref{t: error_max}), but also on approximation of the laws of each of the order statistics of this sample. 
Denote by $X_{(k)}$ the $k$-th order statistic of the sample $X_1, \ldots, X_n$, i.e. order the sample as follows: 
\[
\min_{1\le i \le n} X_i = X_{(1)} \le X_{(2)} \le \ldots \le X_{(n)} = \max_{1 \le i \le n} X_i.
\]
The number of points exceeding a threshold $u_n$ can be related to each order statistic by
\[
\left\{ \sum_{i=1}^n I_{\{X_i > u_n\}} \le k\right\} = \{X_{(n-k)} \le u_n\}, 
\]
and Theorem \ref{t: Poster_univ} gives
\[
\left| P(X_{(n-k)} \le u_n) - e^{-n\overline{F}(u_n)}\frac{(n\overline{F}(u_n))^k}{k!} \right| \le \overline{F}(u_n). 
\]

Several further generalisations can be achieved by using point processes. 
One of them incorporates a way to specify which of the $X_i$'s are the ones exceeding the threshold.
This is not immediately given by Theorem \ref{t: Poster_univ}.  
The object that we are studying, $\sum_{i=1}^n I_{\{X_i > u_n\}}$, needs to be generalised so as to additionally pinpoint the indices of the random variables exceeding $u_n$. 
This can be attained by using \textit{point processes of exceedances} (PPE's). Classically, a PPE is a point process of the form 
\begin{equation}\label{d: PPE}
N_n = \sum_{i=1}^n I_{\{X_i >u\}} \delta_{in^{-1}},
\end{equation}
that lives on the state space $E=(0,1]$. If applied to the entire state space, this point process recovers the total number of extreme points, i.e.
\[
N_n((0,1]) = \sum_{i=1}^n I_{\{X_i >u\}},
\]
but if applied to a measurable subset $B \subset (0,1]$, $N_n(B)$ gives only the random number of $X_i$'s that exceed $u_n$ and for which $in^{-1} \in B$. For instance,
suppose that the $X_i$'s describe the outcomes of $n$ identical and independent random experiments that are realised at consecutive time points $i$. The random number
of extreme points that occur after time $t >0$ is then given by $N_n((tn^{-1}, 1])$.
Now what do we know about the distribution of $N_n$?
\cite{Embrechts_et_al:1997} on p.238 states the following theorem for weak convergence of point processes of exceedances to a Poisson process:
\begin{thm}
Suppose that $(X_n)$ is a sequence of \iid random variables with common distribution function $F$. Let $(u_n)$ be threshold values such that for some 
$\tau \in (0, \infty)$, 
\[
n\overline{F}(u_n) = \mathbb{E} \sum_{i=1}^n I_{\{X_i > u_n\}} \to \tau, \textnormal{ for }n\to \infty. 
\]
Then the point processes of exceedances $N_n$, as defined in (\ref{d: PPE}), converge weakly in $M_p(E)$ to a homogeneous Poisson process $N$ on $E=(0,1]$ 
with intensity $\tau$, i.e. $N$ is $\mathrm{PRM}(\tau|\cdot|)$, where $|.|$ denotes Lebesgue measure on $E$. 
\end{thm}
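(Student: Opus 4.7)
The plan is to combine the Stein--Chen bound of Corollary~\ref{t: proc_version_dTV_indpt} with a Riemann-sum argument at the level of Laplace functionals. Writing $p_n := \overline{F}(u_n)$ and $\bl_n := p_n \sum_{i=1}^n \delta_{i/n}$, the indicators $I_{\{X_i > u_n\}}$ are i.i.d.\ Bernoulli with common parameter $p_n$ placed at the deterministic locations $i/n \in (0,1]$. Corollary~\ref{t: proc_version_dTV_indpt}, applied with index set $\Gamma = \{1/n, 2/n, \ldots, 1\}$, therefore yields
\[
d_{TV}\bigl(\mathcal{L}(N_n), \mathrm{PRM}(\bl_n)\bigr) \le \sum_{i=1}^n p_n^2 = (np_n)\, p_n \longrightarrow \tau \cdot 0 = 0,
\]
because $np_n \to \tau \in (0,\infty)$ forces $p_n \to 0$. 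This reduces the theorem to showing that $\mathrm{PRM}(\bl_n) \Rightarrow \mathrm{PRM}(\tau|\cdot|)$ weakly in $M_p((0,1])$.

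For this second step I would invoke the standard characterisation of weak convergence of point processes on a locally compact separable metric space by convergence of Laplace functionals on the class of non-negative continuous test functions $f$ with compact support (cf.\ Kallenberg's theorem). For such an $f$,
\[
\mathbb{E}\exp\!\left(-\int f\, d\,\mathrm{PRM}(\bl_n)\right) = \exp\!\left(-p_n \sum_{i=1}^n \bigl(1-e^{-f(i/n)}\bigr)\right),
\]
and since $x \mapsto 1-e^{-f(x)}$ is bounded and Riemann integrable on $(0,1]$, the Riemann sum $n^{-1}\sum_{i=1}^n (1-e^{-f(i/n)})$ converges to $\int_0^1 (1-e^{-f(x)})\, dx$. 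Combined with $np_n \to \tau$ this gives
\[
\mathbb{E}\exp\!\left(-\int f\, d\,\mathrm{PRM}(\bl_n)\right) \longrightarrow \exp\!\left(-\tau \int_0^1 (1-e^{-f(x)})\, dx\right) = \mathbb{E}\exp\!\left(-\int f\, dN\right),
\]
the Laplace functional of the target process $N \sim \mathrm{PRM}(\tau|\cdot|)$.

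To close the argument I would apply the triangle inequality: since $\xi \mapsto \exp(-\int f\, d\xi)$ is a $[0,1]$-valued measurable functional on $M_p((0,1])$, the Stein--Chen bound above gives
\[
\left| \mathbb{E}\exp\!\left(-\int f\, dN_n\right) - \mathbb{E}\exp\!\left(-\int f\, d\,\mathrm{PRM}(\bl_n)\right) \right| \le d_{TV}\bigl(\mathcal{L}(N_n), \mathrm{PRM}(\bl_n)\bigr) \longrightarrow 0,
\]
and combining with the Laplace-functional limit of the previous paragraph yields $\mathbb{E}\exp(-\int f\, dN_n) \to \mathbb{E}\exp(-\int f\, dN)$ for every admissible $f$, hence $N_n \Rightarrow N$. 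The main technical obstacle lies in the second step: the total variation distance between the lattice-supported measure $\bl_n$ and the continuous intensity $\tau|\cdot|$ does \emph{not} vanish, so Proposition~\ref{t: dTV_two_PRM} cannot be applied directly. The approximation must be carried out in a weaker sense---either through Laplace functionals as above, or equivalently in the $d_2$-metric of Section~\ref{Sec: Improved_rates}---which is precisely the lattice-versus-continuum phenomenon already flagged in Chapter~\ref{Chap: Introduction}.
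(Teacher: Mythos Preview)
Your argument is correct, but there is nothing in the paper to compare it against: the theorem is quoted from \cite{Embrechts_et_al:1997} (p.~238) as background motivation for introducing the MPPE, and no proof is given in the thesis itself.

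That said, your two-step strategy is well adapted to the paper's toolbox. Step~1 is a clean application of Corollary~\ref{t: proc_version_dTV_indpt}, and you correctly recognise in Step~2 that the passage from the lattice intensity $\bl_n$ to the continuous intensity $\tau|\cdot|$ cannot be effected in $d_{TV}$---this is precisely the obstruction discussed in Chapter~\ref{Chap: Introduction} and the reason the $d_2$-metric is introduced in Section~\ref{Sec: Improved_rates}. Circumventing it via Laplace functionals is the standard route and works as you describe: test functions $f$ with compact support in $(0,1]$ are supported on some $[a,1]$, so $1-e^{-f}$ is Riemann integrable and the sum converges; the triangle-inequality step is justified by the equivalent definition~(\ref{d: equiv_def_dTV}) of $d_{TV}$ for $[0,1]$-valued functionals. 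The one external ingredient you invoke---that convergence of Laplace functionals on $C_c^+$ characterises weak convergence in $M_p(E)$---is not stated in the paper, so you should cite it explicitly (e.g.\ Proposition~3.19 in \cite{Resnick:1987}).
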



Another kind of point processes studied in EVT are \textit{marked point processes} (MPP's) of the form $\sum_{i=1}^n \delta_{X_i}$, that live on the state space $E$
of the random variables $X_i$. An MPP gives a random configuration of points in space and counts the number of points in any measurable subset of the state space
that it is applied to. The MPP converges weakly in $M_p(E)$ to a Poisson process with mean measure $\bl$ if and only if its mean measure converges vaguely
to $\bl$, as $n \to \infty$ (see Chapter 3 in \cite{Resnick:1987} for more details).

We introduce yet another kind of point process, that we call \textit{marked point process of exceedances} (MPPE) and that we define as follows:
\begin{equation}\label{d: MPPEu}
\Xi_{u,n} := \sum_{i=1}^n I_{\{X_i >u\}} \delta_{X_i}.
\end{equation} 
Though the MPPE does not mark the points that exceed $u_n$ as the PPE does, it contains more information relevant to the study of extreme values than an MPP,
as it is not only a random configuration of points in space, but specifically a random configuration of points exceeding a threshold. From now on, we concentrate on
MPPE's, as they are better suited to our purposes than PPE's or MPP's. 

The state space $E \subseteq \mathbb{R}$ of $\Xi_{u,n}$ is 
the set of values $X(\Omega)$ taken by 
the \iid random variables $X_1, \ldots, X_n$.
Let $\mathcal{E}=\mathcal{B}(E)$.
For a fixed set $B \in \mathcal{E}$, $\Xi_{u,n}(B)$ gives the random number of 
$X_i$'s in $(u, \infty) \cap B$, whereas for a fixed $\omega \in \Omega$, $\Xi_{u,n}(\omega) = \sum_{ i \in\{1, \ldots,n\}:\,  x_i >u} \delta_{x_i}$ 
gives the point configuration of the realisations $X_i(\omega) = x_i > u$. 
We can generalise MPPE's by considering a set $A \in \mathcal{E}$ of an arbitrary shape instead of a 
threshold $u$: 
\begin{equation}\label{d: MPPE}
\Xi_A := \Xi_{A,n} := \sum_{i=1}^n I_{\{X_i \in A\}} \delta_{X_i}.
\end{equation}
$A$ should be chosen such that points $x_i$ lying in it can be considered to be extreme points. This generalisation makes sense mostly for higher-dimensional
points, i.e. points in $\mathbb{R}^d$, $d \ge 2$, as it gives more flexiblity to the choice of region in which it makes sense to consider points to be extreme. 
For instance, to obtain a multivariate analogue to (\ref{d: MPPEu}) with componentwise thresholds,
we can simply choose $A = (u_1, \infty)\times \dots \times(u_d, \infty)$, for $u_1, \ldots, u_d \in \mathbb{R}$. Another example would be to choose $A$ as the complement
of a disc with a `large' radius $r$ centred in some origin.  
The mean measure of $\Xi_A$ on $\mathcal{E}$ is given by
\[
\bl(\,.\,):= \bl_A(\, . \,) :=\mathbb{E}\Xi_A(\,.\,) = \sum_{i=1}^n P(X_i \in A \cap \, .\,) =  nP(X \in A \cap \,.\,).
\]
Note that $\Xi_A$ is a finite point process and that $\bl$ is a finite measure. 

As both PPE's and MPP's asymptotically, under certain conditions, behave like Poisson processes, the same can be expected of the MPPE's. 
We can indeed readily apply Theorem \ref{t: Michel} from Section \ref{s: Michel_MPP} to MPPE's to obtain a result in this vein. Our result, however, 
is stronger than a mere limit theorem: Proposition \ref{t: MyMichel} below gives an estimate
of the error of the approximation, in the total variation distance, of the law of $\Xi_{A}$ by that of a Poisson process with mean measure $\mathbb{E}\Xi_A$.
\begin{prop} \label{t: MyMichel}
For each integer $n \ge 1$, let $X_1, \ldots, X_n$ be \iid copies of an $E$-valued random variable $X$. 
For a fixed set $A \in \mathcal{E}$, 
let $\Xi_A = \sum_{i=1}^n I_{\{X_i \in A\}} \delta_{X_i}$ be the marked point
process of points in $A$ and let $W_A = \sum_{i=1}^n  I_{\{X_i \in A\}}$ denote the random number of points in $A$. Then,
\begin{equation}\label{t: MyMichel_result}
d_{TV}(\mathcal{L}(\Xi_A), \mathrm{PRM}(\mathbb{E}\Xi_A)) \le d_{TV}(\mathcal{L}(W_A), \mathrm{Poi}(\mathbb{E}W_A))
\le P(X \in A). 
\end{equation}
\end{prop}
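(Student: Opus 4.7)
The plan is to reduce both inequalities to results that have already been established in the excerpt, namely Theorem \ref{t: Michel} (Michel's argument) and Theorem \ref{t: Poster_univ} (Stein-Chen bound for sums of iid indicators). The small subtlety compared to Theorem \ref{t: Michel} is that in the MPPE $\Xi_A = \sum_{i=1}^n I_{\{X_i\in A\}} \delta_{X_i}$ the indicator $I_{\{X_i\in A\}}$ and the mark $X_i$ are not independent, so I cannot apply that theorem in its stated form. I would circumvent this by rewriting the MPPE in the form required by Michel's argument.

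First, I would verify the distributional identity
\[
\Xi_A \stackrel{d}{=} \sum_{j=1}^{W_A} \delta_{Y_j},
\]
where $Y_1, Y_2, \ldots$ are iid with distribution $\mathcal{L}(X \,|\, X \in A)$ and independent of $W_A$. This is a routine check on finite-dimensional distributions: conditioning on $W_A = l$, the indices $i$ for which $X_i \in A$ form a uniformly chosen $l$-subset of $\{1,\ldots,n\}$, and the corresponding $X_i$'s are iid with the conditional law $\mathcal{L}(X\,|\,X\in A)$. Second, since $\mathbb{E}\Xi_A(\cdot) = nP(X\in A)\cdot P(X\in\cdot \,|\, X \in A)$, the standard representation of a finite Poisson process yields
\[
\mathrm{PRM}(\mathbb{E}\Xi_A) \stackrel{d}{=} \sum_{j=1}^{W^\star} \delta_{Y_j},
\]
where $W^\star \sim \mathrm{Poi}(\mathbb{E}W_A)$ is independent of the $Y_j$'s.

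With these two representations in hand, the proof of Theorem \ref{t: Michel} applies verbatim: using (\ref{d: dTV_without_mod}) to drop the modulus, conditioning on the respective total numbers of points, and exploiting that the $Y_j$'s are common to both representations, I would obtain
\[
d_{TV}(\mathcal{L}(\Xi_A), \mathrm{PRM}(\mathbb{E}\Xi_A))
\le \sum_{l=0}^n \{P(W_A = l) - P(W^\star = l)\}_+
= d_{TV}(\mathcal{L}(W_A), \mathrm{Poi}(\mathbb{E}W_A)),
\]
which gives the first inequality of (\ref{t: MyMichel_result}).

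For the second inequality, $W_A = \sum_{i=1}^n I_{\{X_i \in A\}}$ is a sum of iid Bernoulli random variables with success probability $p = P(X\in A)$, so Theorem \ref{t: Poster_univ} applies directly and yields $d_{TV}(\mathcal{L}(W_A), \mathrm{Poi}(nP(X\in A))) \le P(X\in A)$. The only non-mechanical step is the verification of the two distributional identities used to reframe the MPPE into the form required by Michel's argument; once those are in place, everything else is a direct invocation of the cited results.
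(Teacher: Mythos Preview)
Your proposal is correct and follows essentially the same approach as the paper: both establish that $\Xi_A$ has the same distribution as $\sum_{j=1}^{W_A}\delta_{Y_j}$ with $Y_j$ iid from $\mathcal{L}(X\,|\,X\in A)$ independent of $W_A$, then invoke the argument of Theorem~\ref{t: Michel} for the first inequality and Theorem~\ref{t: Poster_univ} for the second. The paper reaches the distributional identity by introducing an auxiliary iid sample $X'_1,\ldots,X'_n\sim\mathcal{L}(X\,|\,X\in A)$ independent of the original sample and noting that $\sum_i I_{\{X_i\in A\}}\delta_{X'_i}\stackrel{d}{=}\Xi_A$, whereas you argue it directly by conditioning on $W_A$; these are two phrasings of the same fact.
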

\begin{proof} 
Let $P_A = \mathcal{L}(X|X \in A)$ and 
define an \iid random sample $X'_1, \ldots, X'_n$ with common distribution $P_A$ that is independent of the sample $X_1, \ldots, X_n$.
Then the process $\sum_{i=1}^n I_{\{X_i \in A\}}\delta_{X'_i}$ has the same distribution as the process of interest $\Xi_A$.   
Note that due to the independence of the samples $X_1, \ldots, X_n$ and $X'_1, \ldots, X'_n$, the process $\sum_{i=1}^n I_{\{X_i \in A\}}\delta_{X'_i}$
is distributed as $\sum_{j=1}^{W_A} \delta_{Z'_j}$, where the $Z'_j$ are independent, have common distribution $P_A$, and are independent of $W_A$. 
Furthermore, note that a $\mathrm{PRM}(\mathbb{E}\Xi_A)$ can be realised as $\sum_{j=1}^{W^\star} \delta_{Z'_j}$, where $W^\star \sim \mathrm{Poi}(\mathbb{E}W_A)$ is independent of
the $Z'_j$. It then follows from the proof of Theorem \ref{t: Michel} that
\begin{align*}
d_{TV}(\mathcal{L}(\Xi_A), \mathrm{PRM}(\mathbb{E}\Xi_A)) 
\le d_{TV} (\mathcal{L}(W_A), \mathrm{Poi}(\mathbb{E}W_A)),
\end{align*}
where the latter is at most $ P(X \in A)$ by Theorem \ref{t: Poster_univ}.
\end{proof}
\noindent The error of Poisson process approximation for an MPPE is thus the same as the error for Poisson approximation for the number of extreme points. 
This means that we may (more or less) recover the results from Propositions \ref{t: Max_exp} - \ref{t: Max_Cauchy} and \ref{t: max_geo_d_K}. 
However, in order to do this, we first need to subject the random variables $X_i$ to a normalisation as we did in these propositions. More precisely, define the normalised random variable
\begin{equation}\label{d: normalised_rv}
X^\star := \frac{X-b_n}{a_n},
\end{equation}
where $a_n, b_n \in \mathbb{R}$ with $a_n >0$, and let $X^\star_1, \ldots, X^\star_n$
be \iid copies of $X^\star$, with state space $E^\star = X^\star(\Omega)$. Let 
$\mathcal{E}^\star = \mathcal{B}(E^\star)$ and let $A^\star$ be the accordingly normalised version of $A^\star$, i.e. $A^\star = ((A-b_n)/a_n) \in \mathcal{E}^\star$. 
For instance, for $E= [0, \infty)$ and $A = [u, \infty)$ for $u \ge 0$, we have $E^\star = [-b_n/a_n, \infty)$ and $A^\star = [u^\star, \infty)$, 
where $u^\star = (u - b_n)/a_n \ge -b_n/a_n$. 
The distribution function of $X^\star$ is given by
\begin{equation}\label{d: Fstar}
F^\star(x)= P(X^\star \le x) = P(X \le a_nx+b_n) = F(a_nx+b_n), \quad  x \in E^\star.
\end{equation}
For any $A \in E$ and $A^\star \in E^\star$, we thus have
\begin{equation}\label{d: normalised_W_A}
P(X \in A) = P(X^\star \in A^\star) \quad \textnormal{and } \quad 
W_A = \sum_{i=1}^n I_{\{ X^\star_i \in A^\star\}} =: W^\star_{A^\star}.
\end{equation}
Similarly, we have
$
\Xi_A = \sum_{i=1}^n I_{\{ X^\star_i \in A^\star\}} \delta_{X_i}$,
which still lives on the state space $E$. We define a normalised version of this process,
\begin{equation}\label{d: normalised_MPPE}
\Xi^\star_{A^\star} := \sum_{i=1}^n I_{\{ X_i^\star \in A^\star\}} \delta_{X^\star_i},
\end{equation}
which has state space $E^\star$ and mean measure
\begin{equation}\label{d: normalised_bl}
\bl^\star(\,.\,) := \mathbb{E}\Xi^\star_{A^\star}(\,.\,) = nP(X^\star \in A^\star \cap \,.\,), 
\end{equation}
on $\mathcal{E}^\star$. Obviously, $\Xi_A \neq \Xi^\star_{A^\star}$. However, for any $R \in \mathcal{M}_p(E)$ and its normalised version $R^\star \in \mathcal{M}_p(E^\star)$, we have $P(\Xi_A \in R) = P(\Xi^\star_{A^\star} \in R^\star)$.
It follows that 
\begin{equation}\label{d: normalised_dTV}
d_{TV}(\mathcal{L}(\Xi_A), \mathrm{PRM}(\bl) )
 = d_{TV}(\mathcal{L}(\Xi^\star_{A^\star}), \mathrm{PRM}(\bl^\star)),
\end{equation}
and (\ref{t: MyMichel_result}) may equivalently be expressed as
\begin{equation}\label{t: MyMichel_normalised}
d_{TV}\left(\mathcal{L}(\Xi^\star_{A^\star}), \mathrm{PRM}(\bl^\star)\right) 
\le d_{TV}\left(\mathcal{L}(W^\star_{A^\star}), \mathrm{Poi}(nP(X^\star \in A^\star))\right) \le P(X^\star \in A^\star).
\end{equation}
We now use (\ref{t: MyMichel_normalised}) to obtain process analogues of Propositions \ref{t: Max_exp} - \ref{t: Max_Cauchy} and \ref{t: max_geo_d_K}, that is, we determine 
error bounds for Poisson process approximation of MPPE's with marks $X_1, \ldots, X_n$ following well-known distributions. Section \ref{s: MPPE_cont} discusses the case
of continuous marks and treats all distributions listed in Proposition \ref{t: cont_limit_maxima}. In Section \ref{s: MPPE_geo} we discuss an example of an MPPE 
with discrete marks. More precisely, we suppose that $X_1, \ldots, X_n$ follow the geometric distribution. We first approximate the MPPE with 
geometric marks by a Poisson process on $\mathbb{Z}_+$. Then, as processes with a continuous intensity are more practicable, we further try to approximate the MPPE by
a Poisson process with continuous intensity over $\mathbb{R}_+$. To achieve this, we use the weaker $d_2$-metric and the results from Section \ref{Sec: Improved_rates}. Throughout Section \ref{Sec: Poi_proc_approc_MPPE}, we set, for simplicity, very strong assumptions on our MPPE's, as we require \iid marks and \iid indicators. Our main efforts therefore lie, not so much in determining an error bound for basic Poisson approximation, but rather in determining error bounds between two Poisson processes, if necessary. Section \ref{s: Remarks_on_choice} gives a short discussion on what might happen for different assumptions.
\subsection{Application to MPPE's with continuous marks}\label{s: MPPE_cont}
Suppose that the distribution function $F$ of the \iid $E$-valued random variables $X$, $X_1, \ldots, X_n$ is absolutely continuous with probability density function $f(y)$, 
$y \in E$. We may then define an intensity function $\lambda(y):=nf(y)$ for all $y \in E$, and write the intensity measure $\bl$ of $\Xi_A$ as 
\[
\bl(B) = \int_{A \cap B} \lambda(y)dy, \quad \textnormal{ for any } B \in \mathcal{E}.
\]
Due to (\ref{d: Fstar}), the probability density function of $X^\star$ is given by 
\begin{equation}\label{d: density_normalised}
f^\star(x) = \frac{d}{dx} F(a_n x + b_n) = a_nf(a_nx +b_n), 
\end{equation}
and we may write the intensity measure $\bl^\star$ of $\Xi^\star_{A^\star}$ as
\begin{equation}\label{d: intensity_function_normalised}
\bl^\star(\,B^\star\,) =\int_{A^\star \cap B^\star} \lambda^\star(x)dx, \quad \textnormal{ for any } B^\star \in \mathcal{E}^\star,
\end{equation}
where $\lambda^\star(x):= nf^\star(x)$. Propositions \ref{t: MPPE_Exp_dTV} - \ref{t: MPPE_Cauchy_dTV} below determine, on the one hand, the intensity functions $\lambda^\star$ of 
MPPE's whose marks follow well-known continuous distributions, and, on the other hand, the errors in total variation arising when approximating these
MPPE's by a Poisson process with intensity function $\lambda^\star$. Knowledge of the intensity function $\lambda^\star$ 
simplifies the computation of $\bl^\star(B^\star)=nP(X^\star \in B^\star)$ for measurable sets $B^\star \subseteq E^\star$ 
of arbitrary shape, as well as of the error $P(X^\star \in A^\star) = n^{-1}\bl^\star(A^\star)$ that we obtain
from Proposition \ref{t: MyMichel}, or, equivalently, from (\ref{t: MyMichel_normalised}). 
For each of the propositions below, we choose $A^\star = [u^\star,x^\star_{F^\star}] \cap E^\star$. 
Then, the smaller the choice of the threshold $u^\star$, the bigger the error $P(X^\star \in A^\star) = P(X^\star \ge u^\star)$,
and the worse the approximation by a Poisson process. This of course exactly mirrors Poisson approximation for the number of points exceeding $u^\star$, for which 
we have the same error $P(X^\star \ge u^\star)$. Put in another way, the  distribution of the number of points exceeding $u^\star$
is binomial with success probability precisely equal to $P(X^\star \ge u^\star)$. 
The smaller the threshold, the bigger the success
probability, and the worse Poisson approximation which requires a success probability tending to zero.  
We thus aim for a high threshold $u^\star$ and consequently a small number of exceedances. For each of the propositions below we will
discuss suitable choices of $u^\star$ depending on the sample size $n$.

\begin{prop}\label{t: MPPE_Exp_dTV}
(Exponential distribution) For each integer $n \ge 1$, let $X_1, \ldots, X_n$ be \iid exponential random variables with parameter $\lambda >0$. 
Define the normalised random variables $X_i^\star = \lambda X_i - \log n$, $i = 1, \ldots, n$, taking values in $E^\star=[-\log n, \infty)$. 
Let $A^\star = [u^\star, \infty)$ for any choice of $u^\star \in E^\star$, 
and let $W^\star_{A^\star}$ and $\Xi^\star_{A^\star}$
be defined as in (\ref{d: normalised_W_A}) and (\ref{d: normalised_MPPE}), respectively. Then the mean measure of $\Xi^\star_{A^\star}$ is given by 
\[
\bl^\star(B^\star) = \int_{A^\star \cap B^\star} e^{-x} dx, \quad   \textit{ for any } B^\star \in \mathcal{E}^\star, 
\]
and
\[
d_{TV}\left(\mathcal{L}(\Xi^\star_{A^\star}), \mathrm{PRM}(\bl^\star)\right) \le d_{TV}( \mathcal{L}(W^\star_{A^\star}), \mathrm{Poi}(e^{-u^\star}))\le \frac{e^{- u^\star}}{n}. 
\]
\end{prop}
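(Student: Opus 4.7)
The plan is to reduce everything to the Michel-type estimate (\ref{t: MyMichel_normalised}) together with the density-transformation formula (\ref{d: density_normalised}); there is no new probabilistic content beyond identifying the normalising constants and performing an elementary integration.

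First I would read off the affine transformation implicit in the definition $X^\star = \lambda X - \log n$, namely $a_n = 1/\lambda$ and $b_n = (\log n)/\lambda$, so that $X^\star = (X - b_n)/a_n$ is indeed of the form (\ref{d: normalised_rv}). Applying (\ref{d: density_normalised}) to the exponential density $f(y) = \lambda e^{-\lambda y} I_{\{y \ge 0\}}$ then yields
\[
f^\star(x) \;=\; a_n f(a_n x + b_n) \;=\; \frac{1}{\lambda}\cdot \lambda\, e^{-\lambda\bigl((x+\log n)/\lambda\bigr)} \;=\; \frac{e^{-x}}{n}, \qquad x \ge -\log n,
\]
so that the intensity function of $\Xi^\star_{A^\star}$ given by (\ref{d: intensity_function_normalised}) is $\lambda^\star(x) = n f^\star(x) = e^{-x}$ on $E^\star$. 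This immediately produces the claimed formula for $\bl^\star(B^\star)$.

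Next I would compute the exceedance probability directly: for $u^\star \ge -\log n$,
\[
P(X^\star \in A^\star) \;=\; \int_{u^\star}^{\infty} \frac{e^{-x}}{n}\, dx \;=\; \frac{e^{-u^\star}}{n},
\]
and consequently $\mathbb{E} W^\star_{A^\star} = n P(X^\star \in A^\star) = e^{-u^\star}$, which matches the Poisson parameter appearing in the statement. Plugging these into the already-established chain (\ref{t: MyMichel_normalised}) of Proposition \ref{t: MyMichel} gives both inequalities of the proposition at once.

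There is essentially no obstacle here: everything reduces to knowing that the normalised exponential density is $e^{-x}/n$ on $[-\log n, \infty)$ and then invoking the general MPPE bound. The only point worth flagging is the bookkeeping, namely checking that the normalisations chosen in Proposition \ref{t: Max_exp} for the maximum carry over verbatim to the process level via (\ref{d: normalised_dTV}), so that the error $P(X^\star \in A^\star)$ on the right-hand side of (\ref{t: MyMichel_normalised}) is indeed $e^{-u^\star}/n$ rather than something depending on the original scale.
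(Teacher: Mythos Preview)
Your proposal is correct and follows essentially the same approach as the paper: identify $a_n=\lambda^{-1}$, $b_n=\lambda^{-1}\log n$, apply (\ref{d: density_normalised}) and (\ref{d: intensity_function_normalised}) to get $\lambda^\star(x)=e^{-x}$, and then invoke (\ref{t: MyMichel_normalised}) with $P(X^\star\ge u^\star)=e^{-u^\star}/n$. You actually spell out a bit more than the paper does (the explicit integration for the exceedance probability and the identification of the Poisson mean $e^{-u^\star}$), but there is no substantive difference.
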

\begin{proof}
We have $f(y) = \lambda e^{-\lambda y}$ for all $y \ge 0$, $a_n=\lambda^{-1}$ and $b_n = \lambda^{-1}\log n$. By (\ref{d: density_normalised})
and (\ref{d: intensity_function_normalised}), $f^\star(x)= e^{-x}/n$ and $\lambda^\star(x)= e^{-x}$ for all $x \ge -\log n$, and 
$\bl^\star(B^\star) = \int_{A^\star \cap B^\star} \lambda^\star (x) dx$ for any $B^\star \in \mathcal{B}([-\log n, \infty))$. By
(\ref{t: MyMichel_normalised}), 
\[
d_{TV}(\mathcal{L}(\Xi^\star_{A^\star}), \mathrm{PRM}(\bl^\star)) \le P(X^\star_1 \ge u^\star) = \frac{e^{-u^\star}}{n}. 
\]
\end{proof}
\noindent The expected number of exceedances of the threshold $u^\star = u^\star_n$ is $e^{-u^\star}$, whereas the error of the approximation 
in the total variation distance is $e^{-u^\star}/n$. Thus, the lower we set the threshold $u^\star$ with respect to the sample size, the more exceedances we will expect and the bigger the error of the 
approximation by a Poisson process will be. For instance, for $u^\star_n =  -\log \log n$, the error estimate is $\log(n)/n$ and 
we expect about $\log n$ exceedances. Round $\log n$ to its nearest integer value $[ \log n ]$. The points of the point process $\Xi^\star_{A^\star}$ are then
(more or less) the realisations of the $[\log n]$ biggest normalised order statistics $X^\star_{(n- [\log n] + 1)}, \ldots, X_{(n-1)}, X_{(n)}$ of the sample.
For $u^\star_n \equiv 0$ on the other hand, we expect only $1$ threshold exceedance among $n$ random variables and the single expected point of
$\Xi^\star_{A^\star}$ is thus the realisation of the maximum $X_{(n)}$ of the sample. The
error estimate is of size $1/n$ and thereby decreases more rapidly as the sample size $n$ increases. 
\begin{prop}\label{t: MPPE_Par_dTV}
(Pareto distribution) For each integer $n \ge 1$, let $X_1,$ $\ldots,$ $X_n$ be \iid Pareto random variables with shape and scale parameters $\alpha, \phi >0$. 
Define the normalised random variables $X_i^\star = \phi^{-1} n^{-1/\alpha}X_i$, $i = 1, \ldots, n$, taking values in $E^\star=[n^{-1/\alpha}, \infty)$.
Let $A^\star = [u^\star, \infty)$ for any choice of $u^\star \in E^\star$, 
and let $W^\star_{A^\star}$ and $\Xi^\star_{A^\star}$
be defined as in (\ref{d: normalised_W_A}) and (\ref{d: normalised_MPPE}), respectively. Then the mean measure of $\Xi^\star_{A^\star}$ is given by
\[
\bl^\star(B^\star) = \int_{A^\star \cap B^\star} \alpha x^{-\alpha-1} dx, \quad   \textit{ for any } B^\star \in \mathcal{E}^\star, 
\]
and
\[
d_{TV}\left(\mathcal{L}(\Xi^\star_{A^\star}), \mathrm{PRM}(\bl^\star)\right) 
\le d_{TV}\left( \mathcal{L}(W^\star_{A^\star}), \mathrm{Poi}\left(\frac{1}{u^{\star{\alpha}}}\right)\right)
\le \frac{1}{n{u^\star}^{\alpha}}\,. 
\]
\end{prop}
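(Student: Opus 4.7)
The plan is to follow exactly the same template as in the exponential case of Proposition \ref{t: MPPE_Exp_dTV}: identify the normalising constants, push forward the density to $X^\star$, read off the intensity function $\lambda^\star$, and then appeal to the general bound (\ref{t: MyMichel_normalised}).

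First, I would record from Proposition \ref{t: cont_limit_maxima}(b) that the relevant normalising constants for the Pareto distribution are $a_n = \phi n^{1/\alpha}$ and $b_n \equiv 0$, so the normalised variable $X^\star = X/(\phi n^{1/\alpha})$ takes values in $E^\star = [n^{-1/\alpha}, \infty)$, consistent with the statement. The Pareto density is $f(y) = \alpha \phi^\alpha y^{-\alpha-1}$ for $y \ge \phi$, and plugging into (\ref{d: density_normalised}) gives
\[
f^\star(x) = a_n f(a_n x) = \phi n^{1/\alpha} \cdot \alpha \phi^{\alpha}\left(\phi n^{1/\alpha} x\right)^{-\alpha-1} = \frac{\alpha}{n}\, x^{-\alpha-1},
\]
for $x \ge n^{-1/\alpha}$, where the factors of $\phi$ and $n^{1/\alpha}$ telescope cleanly because the Pareto tail is a pure power.

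Second, I would use (\ref{d: intensity_function_normalised}) with $\lambda^\star(x) = n f^\star(x) = \alpha x^{-\alpha-1}$ to identify the mean measure
\[
\bl^\star(B^\star) = \int_{A^\star \cap B^\star} \alpha x^{-\alpha - 1}\, dx, \quad B^\star \in \mathcal{E}^\star,
\]
which is the first claim of the proposition.

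Third, the error bound is immediate from (\ref{t: MyMichel_normalised}): I simply compute
\[
P(X^\star \in A^\star) = P(X^\star \ge u^\star) = \int_{u^\star}^\infty \frac{\alpha}{n}\, x^{-\alpha-1}\, dx = \frac{1}{n {u^\star}^\alpha},
\]
so $nP(X^\star \in A^\star) = {u^\star}^{-\alpha}$, which is exactly the Poisson parameter appearing in the middle term of the inequality. Since the proposition is a direct instantiation of machinery already in place, I do not anticipate a genuine obstacle; the only point that requires a moment's care is keeping the powers of $\phi$ and $n^{1/\alpha}$ aligned when pushing the density through the affine change of variable, so that $\lambda^\star$ comes out free of $\phi$ and $n$ as stated.
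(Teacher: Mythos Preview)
Your proposal is correct and follows essentially the same approach as the paper: identify $a_n=\phi n^{1/\alpha}$, $b_n=0$, compute $f^\star$ and $\lambda^\star$ via (\ref{d: density_normalised})--(\ref{d: intensity_function_normalised}), and apply (\ref{t: MyMichel_normalised}) with $P(X^\star\ge u^\star)=1/(n{u^\star}^\alpha)$. The only difference is that you spell out the cancellation of the $\phi$ and $n^{1/\alpha}$ factors explicitly, whereas the paper simply states the resulting $f^\star$ and $\lambda^\star$.
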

\begin{proof}
We have $f(y) = \alpha \phi^\alpha y^{-\alpha -1}$ for all $y \ge \phi$, $a_n=\phi n^{1/\alpha}$ and $b_n \equiv 0$. By (\ref{d: density_normalised})
and (\ref{d: intensity_function_normalised}), $f^\star(x)= \alpha n^{-1} x^{-\alpha - 1}$ and $\lambda^\star(x)= \alpha x^{-\alpha -1}$ for all $x \ge n^{-1/\alpha}$, and 
$\bl^\star(B^\star) = \int_{A^\star \cap B^\star} \lambda^\star (x) dx$ for any $B^\star \in \mathcal{B}([n^{-1/\alpha}, \infty))$. By
(\ref{t: MyMichel_normalised}), 
\[
d_{TV}(\mathcal{L}(\Xi^\star_{A^\star}), \mathrm{PRM}(\bl^\star)) \le P(X^\star_1 \ge u^\star) = \frac{1}{n{u^\star}^{\alpha}}\,. 
\]
\end{proof}
\noindent For roughly $\log n$ expected threshold exceedances among the $n$ \iid Pareto random variables, and an error of order $\log(n)/n$, 
we need to choose $u^\star = u^\star_n = (\log n)^{-1/\alpha}$. Similarly, for only $1$ threshold exceedance (by the maximum of the Pareto variables) and a rather smaller
error $1/n$, we would have to set $u^\star_n \equiv 1$.
\begin{prop}\label{t: MPPE_U_dTV}
(Uniform distribution) For each integer $n \ge 1$, let $X_1,$ $\ldots,$ $X_n$ be \iid uniform random variables with parameters $a, b \in \mathbb{R}$ such that $a < b$. 
Define the normalised random variables $X_i^\star = -n(b-X_i)/(b-a)$, $i = 1, \ldots, n$, taking values in $E^\star=[-n, 0)$.
Let $A^\star = [u^\star, 0)$ for any choice of $u^\star \in E^\star$, 
and let $W^\star_{A^\star}$ and $\Xi^\star_{A^\star}$
be defined as in (\ref{d: normalised_W_A}) and (\ref{d: normalised_MPPE}), respectively. Then the mean measure of $\Xi^\star_{A^\star}$ is given by
\[
\bl^\star(B^\star) = \int_{A^\star \cap B^\star} 1dx, \quad   \textit{ for any } B^\star \in \mathcal{E}^\star, 
\]
and
\[
d_{TV}\left(\mathcal{L}(\Xi^\star_{A^\star}), \mathrm{PRM}(\bl^\star)\right) 
\le d_{TV} (\mathcal{L}(W^\star_{A^\star}), \mathrm{Poi}(-u^\star))
\le \frac{-u^\star}{n}\,. 
\]
\end{prop}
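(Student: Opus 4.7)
The plan is to mimic the proofs of Propositions \ref{t: MPPE_Exp_dTV} and \ref{t: MPPE_Par_dTV} essentially verbatim, since the uniform case is structurally identical: identify $f$, the normalising constants $a_n, b_n$, compute the normalised density $f^\star$ via (\ref{d: density_normalised}), read off the intensity function via (\ref{d: intensity_function_normalised}), and then apply (\ref{t: MyMichel_normalised}).

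First I would note that $X_i$ has density $f(y) = (b-a)^{-1}$ on $[a,b]$, and that the normalisation $X_i^\star = -n(b-X_i)/(b-a)$ corresponds to the affine transformation (\ref{d: normalised_rv}) with $a_n = (b-a)/n$ and $b_n = b$; a quick check shows $(X_i - b_n)/a_n = n(X_i - b)/(b-a) = -n(b-X_i)/(b-a) = X_i^\star$, taking values in $E^\star = [-n, 0)$, and that $A^\star = [u^\star, 0)$ for $u^\star \in E^\star$ corresponds to an exceedance region in the original scale.

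Next I would apply (\ref{d: density_normalised}) to obtain
\[
f^\star(x) = a_n f(a_n x + b_n) = \frac{b-a}{n} \cdot \frac{1}{b-a} = \frac{1}{n}, \quad x \in [-n, 0),
\]
so that the intensity function in (\ref{d: intensity_function_normalised}) is $\lambda^\star(x) = n f^\star(x) = 1$ on $E^\star$. Consequently, for any $B^\star \in \mathcal{E}^\star$,
\[
\bl^\star(B^\star) = \int_{A^\star \cap B^\star} \lambda^\star(x)\,dx = \int_{A^\star \cap B^\star} 1\,dx,
\]
which gives the stated intensity measure.

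Finally, I would compute $P(X^\star \in A^\star) = P(X^\star \ge u^\star) = \int_{u^\star}^0 f^\star(x)\,dx = -u^\star/n$, and invoke (\ref{t: MyMichel_normalised}) to conclude
\[
d_{TV}\bigl(\mathcal{L}(\Xi^\star_{A^\star}), \mathrm{PRM}(\bl^\star)\bigr) \le d_{TV}\bigl(\mathcal{L}(W^\star_{A^\star}), \mathrm{Poi}(-u^\star)\bigr) \le \frac{-u^\star}{n}.
\]
There is really no obstacle here: the entire argument is a direct computation. The only minor point of care is keeping track of signs (since $u^\star < 0$, the quantity $-u^\star$ is the positive expected number of threshold exceedances and the correct Poisson parameter), but this is routine.
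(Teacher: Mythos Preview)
Your proposal is correct and follows essentially the same approach as the paper's own proof: identify the density $f(y) = 1/(b-a)$, the normalising constants $a_n = (b-a)/n$ and $b_n = b$, compute $f^\star(x) = 1/n$ via (\ref{d: density_normalised}) so that $\lambda^\star \equiv 1$, and then apply (\ref{t: MyMichel_normalised}) with $P(X^\star_1 \ge u^\star) = -u^\star/n$. The paper's proof is the same computation, written slightly more tersely.
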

\begin{proof}
We have $f(y) = 1/(b-a)$ for all $y \in [a,b)$, and we use the normalisation
$y = x (b-a)/n + b$, where $x \in [-n, 0)$. Then 
\[
f^\star(x) = \frac{d}{dx} F\left(\frac{b-a}{n}\,x + b\right) = f\left(\frac{b-a}{n}\,x + b\right)\,\frac{b-a}{n} = \frac{1}{n}
\]
and $\lambda^\star(x) = 1$, for all $x \in [-n, 0)$.
Moreover, $\bl^\star(B^\star) = \int_{A^\star \cap B^\star} \lambda^\star (x) dx$ for any $B^\star \in \mathcal{B}([-n, 0))$.
By
(\ref{t: MyMichel_normalised}), 
\[
d_{TV}(\mathcal{L}(\Xi^\star_{A^\star}), \mathrm{PRM}(\bl^\star)) 
\le d_{TV} (\mathcal{L}(W^\star_{A^\star}), \mathrm{Poi}(-u^\star))
\le P(X^\star_1 \ge u^\star) = \frac{-u^\star}{n}\,. 
\]
\end{proof}
\noindent Here, e.g. $u^\star_n = -\sqrt{n}$ will lead to $\Xi^\star_{\As}$ capturing roughly the $\sqrt{n}$ biggest order 
statistics and an error estimate of size $1/\sqrt{n}$, whereas the choice $u_n^\star = - \log \log n$ will give $\log n$ 
expected exceedances and the error estimate $\log(n)/n$.

The intensity function of the MPPE with \iid normal marks is given by $\lambda^\star_n(x)=na_n \varphi(a_n x + b_n)$, 
where $a_n$, $b_n$ are the norming constants defined in (\ref{t: an_bn_N}). As seen in Proposition \ref{t: Max_normal} (c), this intensity function may be 
approximated by $e^{-x}$ as $n \to \infty$. With $\As = [u^\star, \infty)$, we then expect about $e^{-u^\star}$ threshold exceedances. 
Proposition \ref{t: MPPE_Normal_dTV} below gives estimates for the two steps involved in approximating $\mathcal{L}(\Xi^\star_{\As})$ 
by $\mathrm{PRM}(\int_{\As \cap \, .\,}e^{-x}dx)$. The (much) bigger of the two estimates is the one that 
arises from approximating 
$\mathrm{PRM}(\int_{\As \cap \, .\,}$ $\lambda^\star_n(x))$ by $\mathrm{PRM}(\int_{\As \cap \, .\,}e^{-x}dx)$.
We would like to
get a positive, but not too big, expected number of threshold exceedances. The choice $u^\star = - \log \log n$, which gives an expected number of $\log n$
threshold exceedances, is not suitable in view of error bound (b) of Proposition \ref{t: MPPE_Normal_dTV}, which becomes too big. Instead, better choose 
$u^\star = -\alpha \log \log n$ for any $\alpha \in (0,1)$. We then expect about $(\log n)^\alpha$ threshold exceedances and obtain an error
estimate of order $(\log\log n)^2/(\log n)^{1-\alpha}$. 
\begin{prop}\label{t: MPPE_Normal_dTV}
(Standard normal distribution) 
For each integer $n \ge 5$, let $X_1, \ldots,$ $ X_n$ be \iid standard normal random variables, with probability density function
$\varphi(y)$  $ = (2\pi)^{-1}$ $e^{-y^2/2}$ for all $y \in \mathbb{R}$. Define the normalised random variables $X_i^\star = a_n^{-1}(X_i-b_n)$, $i=1, \ldots, n$,
with $a_n$ and $b_n$ from (\ref{t: an_bn_N}) and with state space $E^\star = \mathbb{R}$.
Let $A^\star = [u^\star, \infty)$ for any choice of $u^\star \in [-\alpha\log \log n, 0]$, where $\alpha \in (0,1)$, and let $\Xi^\star_{A^\star}$ be defined as in  
(\ref{d: normalised_MPPE}). Then the mean measure of $\Xi^\star_{A^\star}$ is given by 
\[
\bl^\star_n(B^\star) = \int_{A^\star \cap B^\star} na_n\varphi(a_n x + b_n)dx, \quad \text{ for any } B^\star \in \mathcal{E}^\star,
\]
Moreover, define
\[
{\bl}^\star (B^\star) = \int_{A^\star \cap B^\star}e^{-x} dx, \quad  \textit{ for any } B^\star \in \mathcal{E}^\star. 
\]
Then\\
$
(a)\quad \displaystyle d_{TV}\left(\mathcal{L}(\Xi^\star_{A^\star}), \mathrm{PRM}(\bl^\star_n)\right) 
\le 
\frac{6e^{-u^\star}}{n \phantom{\frac{1}{n}}},
$ \\
$
(b) \quad  \displaystyle d_{TV}(\mathrm{PRM}(\bl^\star_n), \mathrm{PRM}({\bl}^\star)) 
\le  \frac{(3\log \log n + \log 4\pi)^2}{16\log n}\, e^{-u^\star}.
$
\end{prop}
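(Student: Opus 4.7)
My plan is to treat (a) and (b) separately, in each case reducing to tools already assembled.

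For (a), I would start from the basic Michel-type estimate (\ref{t: MyMichel_normalised}),
\[
d_{TV}\bigl(\mathcal{L}(\Xi^\star_{\As}), \mathrm{PRM}(\bl^\star_n)\bigr) \le P(X^\star_1 \ge u^\star) = \overline{\Phi}(a_n u^\star + b_n),
\]
and control the right-hand side via the Mills ratio bound (\ref{p: normal_millsratio}). Since $u^\star \in [-\alpha\log\log n, 0]$ keeps $y := a_n u^\star + b_n$ positive for $n \ge 5$, the identity $n\varphi(y)/y = e^{-u^\star}f_n(u^\star)$ from the proof of Proposition \ref{t: Max_normal}(c) reduces the claim to showing $f_n(u^\star) \le 6$. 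Bounding the exponential factor of $f_n$ by $1$, it suffices to have $g_n(u^\star)^{-1} \le 6$, where $g_n(x) := 1 + (x - c/2)/(2\log n)$ and $c := \log\log n + \log 4\pi$. For $\alpha < 1$ and $u^\star \in [-\alpha\log\log n, 0]$,
\[
g_n(u^\star) \ge 1 - \frac{(2\alpha + 1)\log\log n + \log 4\pi}{4\log n} > 1 - \frac{3\log\log n + \log 4\pi}{4\log n},
\]
and a direct numerical check at $n = 5$, combined with monotonicity of this ratio in $n$ (for $n \ge 4$), confirms that $g_n(u^\star)$ stays bounded below by roughly $0.38$, so $g_n(u^\star)^{-1} \le 6$ with room to spare.

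For (b), Proposition \ref{t: dTV_two_PRM} gives
\[
d_{TV}\bigl(\mathrm{PRM}(\bl^\star_n), \mathrm{PRM}(\bl^\star)\bigr) \le \int_{u^\star}^\infty \bigl| na_n \varphi(a_nx + b_n) - e^{-x}\bigr|\,dx.
\]
The key algebraic step is to expand $(a_nx + b_n)^2/2$, essentially the same computation appearing in the proof of Proposition \ref{t: Max_normal}(c) but kept exact, to obtain the clean factorisation
\[
na_n\varphi(a_nx + b_n) = e^{-x}h_n(x), \qquad h_n(x) := \exp\bigl\{-(x - c/2)^2/(4\log n)\bigr\}.
\]
Since $h_n \le 1$, the integrand equals $e^{-x}(1 - h_n(x))$, which by $1 - e^{-z} \le z$ is at most $e^{-x}(x - c/2)^2/(4\log n)$. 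Substituting $t = x - u^\star$ and using $\int_0^\infty (a + t)^2 e^{-t}\,dt = (a+1)^2 + 1$ with $a := u^\star - c/2$ turns the bound into
\[
\frac{e^{-u^\star}\bigl[(u^\star - c/2 + 1)^2 + 1\bigr]}{4\log n}.
\]
For $n \ge 5$ one has $c/2 > 1$, so $a + 1 \le 0$ and therefore $(a+1)^2 + 1 \le a^2 = (u^\star - c/2)^2$; combined with $|u^\star - c/2| \le \alpha\log\log n + c/2 < (3\log\log n + \log 4\pi)/2$ for $\alpha < 1$, this yields the stated bound.

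The main obstacle is not conceptual but algebraic: one must carry out the expansion of $(a_nx + b_n)^2$ carefully enough to read off the factor $h_n(x)$ cleanly, and then track sign conditions so that the estimate $(u^\star - c/2 + 1)^2 + 1 \le (u^\star - c/2)^2$ is available exactly where it is needed. The constant $6$ in (a) is deliberately loose, giving a bound that is easy to verify uniformly in $n \ge 5$ and $\alpha \in (0,1)$ once the Mills-ratio identity is in hand.
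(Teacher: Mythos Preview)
Your argument is correct and follows the same overall strategy as the paper: Mills ratio plus the bound on $f_n$ for (a), and the factorisation $na_n\varphi(a_nx+b_n) = e^{-x}h_n(x)$ combined with $1-e^{-z}\le z$ for (b). In (b) you are in fact more careful than the paper: the paper bounds $(2x-c)^2 \le (3\log\log n + \log 4\pi)^2$ pointwise, citing the restriction $x \in [-\alpha\log\log n,0]$, but since the integration variable $x$ runs over $[u^\star,\infty)$ this pointwise bound is not valid for large $x$. Your exact evaluation $\int_0^\infty (a+t)^2 e^{-t}\,dt = (a+1)^2+1$ followed by the observation that $(a+1)^2+1 \le a^2$ whenever $a \le -1$ (which holds here since $u^\star \le 0$ and $c/2 > 1$ for $n \ge 5$) is the right way to close the argument and in fact patches a small gap in the paper's write-up.
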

\begin{proof}
(a) By (\ref{d: density_normalised})
and (\ref{d: intensity_function_normalised}), $\lambda^\star_n(x) = na_n\varphi(a_n x + b_n)$ for all $x \in \mathbb{R}$, 
and $\bl^\star_n(B^\star) = \int_{A^\star \cap B^\star} \lambda^\star_n(x)dx$ for any $B^\star \in \mathcal{E}^\star$. 
By
(\ref{t: MyMichel_normalised}), 
\[
d_{TV}(\mathcal{L}(\Xi^\star_{A^\star}), \mathrm{PRM}(\bl^\star_n)) 
\le P(X^\star_1 \ge u^\star) = P(X_1 \ge a_n u^\star + b_n) 
\le \frac{\varphi(a_n u^\star + b_n)}{a_n u^\star + b_n},
\]
where we used (\ref{p: normal_millsratio}) for the last inequality. 
Using (\ref{p: normal_normalised_density}) with 
$x:= u^\star $ $\in$ $[-\alpha \log \log n,$ $0]$ and (\ref{p: term1}), as well as $\alpha \log \log n \le \log \log n$, we find
\begin{align*}
\frac{\varphi(a_n u^\star + b_n)}{a_n u^\star + b_n} &\le   \frac{e^{-u^\star}}{n} \left[ 1+ \frac{u^\star -(\log \log n + \log 4\pi)/2}{2\log n}\right]^{-1}\\
&\le \frac{e^{-u^\star}}{n} \left[1- \frac{3\log \log n + \log 4\pi}{4 \log n} \right]^{-1}.
\end{align*}
By using (\ref{p: dK_dTV_normal_help}) and noting that $ (6 \log \log n + 2\log 4\pi)/\log n \le 5$ for all $n \ge 5$, we obtain the error estimate. 
(b) By Proposition \ref{t: dTV_two_PRM}, 
\begin{align*}
d_{TV}\left(\mathrm{PRM}(\bl^\star_n), \mathrm{PRM}({\bl}^\star)\right) 
&\le \int_{E^\star} |\bl^\star_n - {\bl}^\star|(dx)\\
&= \int_{u^\star}^\infty \left| na_n\varphi(a_nx+b_n) - e^{-x} \right|dx,
\end{align*}
where 
\[
na_n\varphi(a_nx+b_n) 
= e^{-x} \cdot \exp\left[ -\frac{(2x-\log \log n -\log 4\pi)^2}{16\log n}\right] \le e^{-x}.
\]
Using $1-e^{-z} \le z$ for $z \ge 0$, and $x \in [-\alpha \log \log n, 0]$, we find
\begin{align*}
0 \le  e^{-x} - na_n\varphi(a_nx+b_n) &\le \frac{(2x-\log \log n -\log 4\pi)^2}{16\log n}\,e^{-x} \\
&\le  \frac{(3\log \log n + \log 4\pi)^2}{16\log n}\,e^{-x}.
\end{align*}
It follows that
\begin{align*}
d_{TV}\left(\mathrm{PRM}(\bl^\star_n), \mathrm{PRM}({\bl}^\star)\right) 
&\le  \frac{(3\log \log n + \log 4\pi)^2}{16\log n}\,e^{-u^\star}.
\end{align*}
\end{proof}

For maxima of Cauchy-distributed random variables, we had to perform an approximation in two steps in Proposition \ref{t: Max_Cauchy}. This is also the case when approximating
an MPPE with Cauchy marks by a Poisson process. The reason for this is that the intensity function $[(\pi/n)^2 + x^2]^{-1}$ of the MPPE, 
which is the intensity function of the Poisson process we 
first approximate with, varies with the sample size $n$. Since $[(\pi/n)^2 + x^2]^{-1} \sim x^{-2}$ as $n \to \infty$, it then makes sense to further 
approximate the MPPE by a Poisson process with intensity function $x^{-2}$. 
\begin{prop}\label{t: MPPE_Cauchy_dTV}
(Standard Cauchy distribution) For each integer $n \ge 1$, let $X_1,$ $ \ldots, X_n$ be \iid standard Cauchy random variables, with probability density function
$f(y) $ $= 1/\pi(1+y^2)$ for all $y \in \mathbb{R}$. Define the normalised random variables $X_i^\star = \pi n^{-1} X_i$, $i=1, \ldots, n$, taking
values in $E^\star = \mathbb{R}$. Let $A^\star = [u^\star, \infty)$ for any choice of $u^\star >0$, and let $\Xi^\star_{A^\star}$ be defined as in  
(\ref{d: normalised_MPPE}). Then the mean measure of $\Xi^\star_{A^\star}$ is given by 
\[
\bl^\star_n(B^\star) = \int_{A^\star \cap B^\star} \frac{1}{(\pi/n)^2 + x^2}\, dx, \quad  \textit{ for any } B^\star \in \mathcal{E}^\star, 
\]
and
\[
d_{TV}\left(\mathcal{L}(\Xi^\star_{A^\star}), \mathrm{PRM}(\bl^\star_n)\right) \le \frac{1}{nu^\star}. 
\]
Moreover, define
\[
{\bl}^\star (B^\star) = \int_{A^\star \cap B^\star} x^{-2} dx, \quad  \textit{ for any } B^\star \in \mathcal{E}^\star. 
\]
Then, 
\begin{equation}\label{t: dTV_Cauchy_simple_int}
d_{TV}\left(\mathcal{L}(\Xi^\star_{A^\star}), \mathrm{PRM}({\bl}^\star)\right) \le \frac{1}{nu^\star} + 
\frac{\pi^2}{3n^2u^{\star3}}. 
\end{equation}
\end{prop}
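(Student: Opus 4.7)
The plan is to follow the two-step strategy already used in Proposition \ref{t: Max_Cauchy}: first identify the intensity function of $\Xi^\star_{\As}$ and invoke Michel's result in the form (\ref{t: MyMichel_normalised}), then add the extra error needed to replace the awkward, $n$-dependent intensity by the simpler one $x^{-2}$ via Proposition \ref{t: dTV_two_PRM}.

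First I would compute the density of $X_1^\star$. Here $a_n=n/\pi$ and $b_n\equiv 0$, so (\ref{d: density_normalised}) gives
\[
f^\star(x) \;=\; \frac{n}{\pi}\,f\!\left(\tfrac{n}{\pi}x\right) \;=\; \frac{n}{\pi^2+n^2x^2},
\]
and hence $\lambda^\star_n(x)=nf^\star(x)=[(\pi/n)^2+x^2]^{-1}$. By (\ref{d: intensity_function_normalised}), this gives the stated expression for $\bl^\star_n$. Applying (\ref{t: MyMichel_normalised}) to $\As=[u^\star,\infty)$ yields
\[
d_{TV}\bigl(\mathcal{L}(\Xi^\star_{\As}),\mathrm{PRM}(\bl^\star_n)\bigr) \;\le\; P(X_1^\star\ge u^\star) \;=\; \overline{F}\!\left(\tfrac{nu^\star}{\pi}\right),
\]
and the upper Mills-type bound $\overline{F}(y)\le 1/(\pi y)$ established in (\ref{p: bounds}) gives $\overline{F}(nu^\star/\pi)\le 1/(nu^\star)$, which is the first claimed estimate.

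For (\ref{t: dTV_Cauchy_simple_int}) I would use the triangle inequality
\[
d_{TV}\bigl(\mathcal{L}(\Xi^\star_{\As}),\mathrm{PRM}(\tilde{\bl}^\star)\bigr) \;\le\; d_{TV}\bigl(\mathcal{L}(\Xi^\star_{\As}),\mathrm{PRM}(\bl^\star_n)\bigr)+ d_{TV}\bigl(\mathrm{PRM}(\bl^\star_n),\mathrm{PRM}(\tilde{\bl}^\star)\bigr),
\]
and bound the second summand via Proposition \ref{t: dTV_two_PRM}. The integrand in $\int_{u^\star}^\infty|\bl^\star_n-\tilde{\bl}^\star|(dx)$ is non-negative since $x^{-2}\ge[(\pi/n)^2+x^2]^{-1}$, and the simple identity
\[
\frac{1}{x^2}-\frac{1}{(\pi/n)^2+x^2} \;=\; \frac{(\pi/n)^2}{x^2\bigl[(\pi/n)^2+x^2\bigr]} \;\le\; \frac{\pi^2}{n^2 x^4}
\]
reduces the remaining task to the elementary integral $\int_{u^\star}^\infty x^{-4}\,dx = 1/(3u^{\star 3})$, yielding the extra term $\pi^2/(3n^2 u^{\star 3})$.

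No step here is really an obstacle; the only thing to be a little careful about is that Proposition \ref{t: dTV_two_PRM} was stated for finite measures with the same total mass, whereas $\bl^\star_n(\As)$ and $\tilde{\bl}^\star(\As)$ differ. However, inspecting its proof one sees that the argument only uses $|\bl-\tilde{\bl}|$ as a signed measure and the bound $\Delta_1\upgamma\le 1$ from Lemma \ref{t: dtv_bounds_upgamma}(i), both of which still apply, so the inequality $d_{TV}(\mathrm{PRM}(\bl^\star_n),\mathrm{PRM}(\tilde{\bl}^\star))\le\int|\bl^\star_n-\tilde{\bl}^\star|(dx)$ remains valid (and in any case the difference in total mass is itself bounded by the same integral, so nothing subtle is hidden here).
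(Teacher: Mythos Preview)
Your proof is correct and follows essentially the same route as the paper: compute $\lambda^\star_n$ via (\ref{d: density_normalised})--(\ref{d: intensity_function_normalised}), apply (\ref{t: MyMichel_normalised}) together with the tail bound (\ref{p: bounds}), and then handle the second step by the triangle inequality and Proposition~\ref{t: dTV_two_PRM} with the estimate $x^{-2}-[(\pi/n)^2+x^2]^{-1}\le (\pi/n)^2 x^{-4}$. Your closing caveat is unnecessary, though: Proposition~\ref{t: dTV_two_PRM} as stated only requires the two measures to be finite, not to have equal total mass (indeed the paper explicitly checks finiteness of both $\bl^\star_n(E^\star)$ and $\bl^\star(E^\star)$ before invoking it), so there is nothing to justify there.
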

\begin{proof}
We have $a_n = n\pi^{-1}$ and $b_n \equiv 0$. By (\ref{d: density_normalised})
and (\ref{d: intensity_function_normalised}), $f^\star(x) = n/(\pi^2 +n^2x^2)$ and $\lambda_n^\star(x) = 1/[(\pi/n)^2 + x^2]$ for all $x \in \mathbb{R}$,
and $\bl^\star_n(B^\star) = \int_{A^\star \cap B^\star} \lambda_n^\star (x) dx$ for any $B^\star \in \mathcal{B}(\mathbb{R})$. By
(\ref{t: MyMichel_normalised}), 
\begin{align*}
d_{TV}(\mathcal{L}(\Xi^\star_{A^\star}), \mathrm{PRM}(\bl_n^\star))
&\le P(X^\star_1 \ge u^\star) = P\left(X_1 \ge \frac{nu^\star}{\pi}\right)\\
& = \overline{F}\left( \frac{nu^\star}{\pi}\right)
 \le \frac{1}{nu^\star}, 
\end{align*}
where we used (\ref{p: bounds}) for the last inequality. Furthermore, note that $\bl^\star_n$ and ${\bl}^\star$ are two finite measures on
$E^\star = \mathbb{R}$, since
\[
\bl^\star_n(E^\star) = nP(X^\star_1 \ge u^\star) \le \frac{1}{u^\star} < \infty \quad \textnormal{ and } \quad 
{\bl}^\star(E^\star) = \int_{u^\star}^\infty x^{-2}dx = \frac{1}{u^\star} < \infty.  
\]
By Proposition \ref{t: dTV_two_PRM}, we then have
\begin{align*}
d_{TV}\left(\mathrm{PRM}(\bl^\star_n), \mathrm{PRM}({\bl}^\star)\right) 
&\le \int_{E^\star} |\bl^\star_n - {\bl}^\star|(dx)\\
& = 
\int_{u^\star}^\infty \left| \frac{1}{(\pi/n)^2 + x^2} - \frac{1}{x^2}\right| dx\\
& = \left( \frac{\pi}{n}\right)^2 \int_{u^\star}^\infty \frac{1}{(\pi/n)^2 x^2 + x^4}\,dx.
\end{align*}
Since $x \ge u^\star >0$ in the above integral, we obtain the upper bound
\[
\left( \frac{\pi}{n}\right)^2 \int_{u^\star}^\infty x^{-4}\,dx =  \frac{\pi^2}{3n^2u^{\star3}}.
\]
\end{proof}
\noindent The expected number of threshold exceedances for the MPPE with Cauchy marks is roughly $1/u_n^\star$.
The smaller we choose $u_n^\star$ (i.e. the closer to $0$), the bigger the expected number of threshold exceedances and the smaller the error estimate. 
Note that for all $u_n^\star \le \pi/\sqrt{3n}$, the second of the two error terms in (\ref{t: dTV_Cauchy_simple_int}) is the bigger one. 
As an example, choose $u_n^\star = 1/\sqrt{n}$. We then expect the MPPE to capture about $\sqrt{n}$ points in $[u_n^\star, \infty)$, and the error of the approximation by $\mathrm{PRM}(\int_{\As \cap \,.\,} x^{-2}dx)$ is bounded by 
\[
\frac{1}{\sqrt{n}} + \frac{\pi^2}{3\sqrt{n}} \le \frac{4.3}{\sqrt{n}}\,.
\]
\subsection{Application to MPPE's with geometric marks}\label{s: MPPE_geo}
In Proposition \ref{t: max_geo_d_K} we demonstrated that for maxima of geometric random variables, the approximation by a discretised Gumbel distribution living on
lattice points $k^\star$ gives a smaller error than the approximation by a continuous Gumbel distribution on $\mathbb{R}$. For the latter approximation to be sharp,
we need the condition that the failure probability $q_n$ depends on $n$ in such a way that $1-q_n = o(1/\log n)$ for $n \to \infty$. We encounter a similar behaviour 
when approximating an MPPE with geometric marks by a Poisson process. Proposition \ref{t: Geo_MPPE_dTV} below gives the error in total variation of the 
approximation by a Poisson process with mean measure living on the lattice $E^\star$ of normalised points $k^\star$. On the other hand, Proposition \ref{t: Geo_MPPE_d2} 
determines the error of the approximation by a Poisson process with an easy-to-use continuous mean measure, and uses the $d_2$-metric to achieve this.
\begin{prop}\label{t: Geo_MPPE_dTV}
(Geometric distribution) For each integer $n \ge 1$, let $X_1, \ldots, X_n$ be \iid geometric random variables 
with failure probability $q\in (0,1)$ and $P(X_1 \ge y)= q^{\lceil y \rceil}$, for any $y \ge 0$. 
Define the normalised random variables $X_i^\star = \log(1/q) X_i - \log n$, $i = 1, \ldots, n$, taking values in 
$E^\star= \log(1/q)\mathbb{Z}_+-\log n$. 
Let $A^\star = [u^\star, \infty)$ for any choice of $u^\star \in [-\log n, \infty)$, 
and let $\Xi^\star_{A^\star}$
be defined as in (\ref{d: normalised_MPPE}). Then the mean measure of $\Xi^\star_{A^\star}$ is given by 
\begin{equation}\label{t: mean_meas_normalised_MPPE}
\bpis(B^\star) = \sum_{k^\star \in A^\star \cap E^\star \cap B^\star}(1-q)e^{-k^\star}, \quad   \textit{ for any } B^\star \in \mathcal{B}([-\log n, \infty)), 
\end{equation}
and
\[
d_{TV}\left(\mathcal{L}(\Xi^\star_{A^\star}), \mathrm{PRM}(\bpis)\right) \le \frac{e^{- u^\star}}{n}\,. 
\] 
\end{prop}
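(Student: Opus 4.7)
The plan is to proceed in two parts, first identifying the mean measure on the lattice $E^\star$, then invoking the master inequality (\ref{t: MyMichel_normalised}) from Proposition \ref{t: MyMichel} to obtain the total variation bound.

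For the mean measure, I would observe that since $X_1$ takes values in $\mathbb{Z}_+$, the normalised variable $X_1^\star$ takes values precisely on the lattice $E^\star = \log(1/q)\mathbb{Z}_+ - \log n$. Thus for any lattice point $k^\star = k\log(1/q) - \log n$ with $k \in \mathbb{Z}_+$, I have
\[
P(X_1^\star = k^\star) = P(X_1 = k) = (1-q)q^k = (1-q) e^{-k \log(1/q)} = \frac{(1-q) e^{-k^\star}}{n}.
\]
Multiplying by $n$ and summing over $k^\star \in A^\star \cap E^\star \cap B^\star$ yields the claimed expression for $\bpis(B^\star) = \mathbb{E}\Xi^\star_{A^\star}(B^\star)$.

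For the bound in total variation, I would apply (\ref{t: MyMichel_normalised}) directly, which gives
\[
d_{TV}\left(\mathcal{L}(\Xi^\star_{A^\star}), \mathrm{PRM}(\bpis)\right) \le P(X_1^\star \ge u^\star).
\]
Rewriting this probability in terms of $X_1$ and using the survival function of the geometric distribution,
\[
P(X_1^\star \ge u^\star) = P\!\left(X_1 \ge \frac{u^\star + \log n}{\log(1/q)}\right) = q^{\lceil (u^\star + \log n)/\log(1/q)\rceil},
\]
which is well-defined since the assumption $u^\star \ge -\log n$ ensures that the argument of the ceiling is nonnegative.

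Finally, the key estimate uses $\lceil y \rceil \ge y$ with $y = (u^\star + \log n)/\log(1/q)$ to deduce
\[
q^{\lceil y \rceil} = e^{-\lceil y \rceil \log(1/q)} \le e^{-(u^\star + \log n)} = \frac{e^{-u^\star}}{n},
\]
completing the proof. There is no real obstacle here; the result is essentially a direct corollary of Proposition \ref{t: MyMichel} once the normalisation and the discrete nature of $X_1$ are accounted for. The only mildly delicate point is to handle the ceiling that arises from the fact that $(u^\star + \log n)/\log(1/q)$ need not be an integer, but this rounding only works in our favour for the upper bound.
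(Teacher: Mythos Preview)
Your proposal is correct and follows essentially the same approach as the paper's proof: both compute the point probability $P(X_1^\star = k^\star) = (1-q)e^{-k^\star}/n$ via the normalisation, invoke (\ref{t: MyMichel_normalised}) to bound the total variation distance by $P(X_1^\star \ge u^\star)$, express this as $q^{\lceil (u^\star + \log n)/\log(1/q)\rceil}$, and then use $\lceil y\rceil \ge y$ to obtain the final bound $e^{-u^\star}/n$.
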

\begin{proof}
For all $k \in \mathbb{Z}_+$, we use the normalisation $k=(k^\star + \log n)/\log(1/q)$, where 
$k^\star$ $ \in E^\star$ $= \{-\log n,$ $ \log(1/q) - \log n,$ $ 2 \log(1/q) -\log n, \ldots\}$. We then have
\begin{equation}\label{p: point_prob_geo}
P(X_1 =k) = (1-q)q^k = (1-q)\, \frac{e^{-k^\star}}{n} = P(X^\star_1 = k^\star), 
\end{equation}
and, for any $B^\star \in \mathcal{B}([-\log n, \infty))$, 
\begin{align}\label{p: mean_meas_geo_discrete}
\begin{split}
\bpis(B^\star) &= nP(X^\star \in A^\star \cap B^\star)\\
&= \sum_{k \in A^\star \cap E^\star \cap B^\star} nP(X_1^\star = k^\star) \\
&=\sum_{k \in A^\star \cap E^\star \cap B^\star} (1-q)e^{-k^\star}.
\end{split}
\end{align}
Using (\ref{t: MyMichel_normalised}) and (\ref{d: discretised_cdf}), we obtain 
\begin{align*}
d_{TV}(\mathcal{L}(\Xi^\star_{A^\star}), \mathrm{PRM}(\bpis)) 
&\le P(X^\star_1 \ge u^\star)\\
&= P\left(X_1 \ge  \frac{u^\star + \log n}{\log(1/q)}  \right) \\
&= q^{\left \lceil \frac{u^\star + \log n}{\log(1/q)} \right \rceil} 
\le \frac{e^{-u^\star}}{n}\,.
\end{align*}

\end{proof}
\noindent 
The upper error bound that we obtain here is exactly the same as the error bound that we determined in Proposition \ref{t: MPPE_Exp_dTV} for an MPPE with exponential marks, 
which makes sense as
the exponential distribution is the continuous analogue of the geometric distribution. To see this, set $\lambda = \log(1/q)$, 
and let $Z \sim \mathrm{Exp}(\lambda)$ and 
$\tilde{Z} \sim \mathrm{Geo}(1-e^{-\lambda})$. Then $P(Z \ge \lceil z \rceil) = e^{-\lambda \lceil z \rceil} = P(\tilde{Z} \ge z)$, for all $z \ge 0$.

The following proposition now uses the $d_2$-metric to approximate the MPPE with geometric marks by a Poisson process with continuous intensity,
as the total variation metric is too strong to achieve this. The continuous intensity measure we aim for is the same as that of the MPPE with 
exponential marks. The result is achieved in two steps: we first use (\ref{t: d2_smaller_dTV}) to estimate the error in the $d_2$-distance of the approximation 
by a Poisson process with mean measure given by (\ref{t: mean_meas_normalised_MPPE}), and then compare this Poisson process by another one with the desired continuous
mean measure, again in the $d_2$-distance, by making use of Proposition \ref{t: d2_two_PRM}. 
We assume here that
$d_0$ is the Euclidean distance on $\mathbb{R}$ bounded by $1$, i.e. $d_0(z_1, z_2) = \min(|z_1-z_2|, 1)$ for any $z_1, z_2 \in \mathbb{R}$, 
and define the $d_1$- and $d_2$-distances as in (\ref{d: def_d1}) and (\ref{d: def_d2}), respectively,
in Section \ref{Sec: Improved_rates}.

\begin{prop}\label{t: Geo_MPPE_d2}
(Geometric distribution) 
For each integer $n \ge 1$, let $X_i$, $X_i^\star$, $i=1, \ldots, n$, and $E^\star$ be defined as in Proposition
\ref{t: Geo_MPPE_dTV}. Let $A^\star = [u^\star, \infty)$ for any choice of $u^\star \in E^\star$, let $\Xi^\star_{A^\star}$ be defined as in (\ref{d: normalised_MPPE}),
with mean measure $\bpis$ as in (\ref{t: mean_meas_normalised_MPPE}),
and define the continuous measure
\[
\bls(B^\star) = \int_{A^\star \cap B^\star} e^{-x} dx, \quad   \textit{ for any } B^\star \in \mathcal{B}([-\log n, \infty)).
\]
Then 
\[
d_2\left(\mathcal{L}(\Xi^\star_{A^\star}), \mathrm{PRM}(\bls)\right) 
\le \frac{e^{-u^\star}}{n} +  2\left\{\log\left(1/q\right)\wedge 1\right\}. 
\]
\end{prop}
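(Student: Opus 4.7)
The plan is to use the triangle inequality to split the $d_2$-distance into two pieces:
\[
d_2\left(\mathcal{L}(\Xi^\star_{A^\star}), \mathrm{PRM}(\bls)\right)
\le d_2\left(\mathcal{L}(\Xi^\star_{A^\star}), \mathrm{PRM}(\bpis)\right)
+ d_2\left(\mathrm{PRM}(\bpis), \mathrm{PRM}(\bls)\right).
\]
For the first term I would simply invoke (\ref{t: d2_smaller_dTV}) to pass from $d_2$ to $d_{TV}$ and then apply Proposition \ref{t: Geo_MPPE_dTV}, which directly gives $e^{-u^\star}/n$.

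For the second term I would use Proposition \ref{t: d2_two_PRM}, which requires that both measures have equal total mass, so I would first verify $\bpis(E^\star) = \bls(E^\star) = e^{-u^\star}$. The former is a geometric series over the lattice $k^\star \in A^\star \cap E^\star$, and the latter is an elementary integral; the coincidence of these masses is precisely the statement that each lattice atom $\bpis(\{k^\star\}) = (1-q)e^{-k^\star}$ equals $\bls([k^\star, k^\star + \log(1/q)))$, since $\int_{k^\star}^{k^\star+\log(1/q)} e^{-x}dx = e^{-k^\star}(1-q)$. This observation will be the workhorse for estimating $d_1(\bpis, \bls)$.

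To estimate $d_1(\bpis, \bls)$, I fix any $\kappa \in \mathcal{K}$ with $s_1(\kappa)=1$ and split the integral $\int_E^\star \kappa \, d\bls$ into pieces over the intervals $[k^\star, k^\star + \log(1/q))$ for $k^\star \in A^\star \cap E^\star$. On each such interval, since $\bls([k^\star, k^\star + \log(1/q))) = \bpis(\{k^\star\})$, I can write
\[
\left|\int_{k^\star}^{k^\star + \log(1/q)} \kappa(x) e^{-x} dx - \kappa(k^\star)\bpis(\{k^\star\})\right|
\le \int_{k^\star}^{k^\star + \log(1/q)} d_0(x, k^\star) e^{-x} dx,
\]
and since $d_0(x, k^\star) \le \min(x-k^\star, 1) \le \log(1/q) \wedge 1$ throughout this interval, the right-hand side is at most $(\log(1/q) \wedge 1)\bpis(\{k^\star\})$. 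Summing over $k^\star$ and dividing by the total mass $e^{-u^\star}$ yields $d_1(\bpis, \bls) \le \log(1/q) \wedge 1$. Proposition \ref{t: d2_two_PRM} then gives $d_2(\mathrm{PRM}(\bpis), \mathrm{PRM}(\bls)) \le (1-e^{-\lambda})(2-e^{-\lambda})\,d_1(\bpis, \bls) \le 2\{\log(1/q) \wedge 1\}$, and combining with the first term completes the proof.

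The proof is essentially routine once the right decomposition is found; the only mild subtlety is recognising the alignment between lattice atoms and integral pieces that forces $\bpis$ and $\bls$ to have equal total mass and lets the Lipschitz bound on $\kappa$ produce exactly the spacing $\log(1/q) \wedge 1$. No step is genuinely hard, but keeping track of the truncation at $1$ (coming from the boundedness of $d_0$) is what produces the $\wedge 1$ and must be respected in the per-interval estimate.
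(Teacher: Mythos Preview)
Your proof is correct and follows essentially the same approach as the paper: the same triangle-inequality split, the same use of (\ref{t: d2_smaller_dTV}) together with Proposition \ref{t: Geo_MPPE_dTV} for the first term, and the same application of Proposition \ref{t: d2_two_PRM} for the second, with the $d_1$-estimate obtained by partitioning $[u^\star,\infty)$ into the lattice intervals $[k^\star,k^\star+\log(1/q))$ and exploiting the identity $\bpis(\{k^\star\})=\bls([k^\star,k^\star+\log(1/q)))$ to reduce to the Lipschitz bound $|\kappa(x)-\kappa(k^\star)|\le s_1(\kappa)\{\log(1/q)\wedge 1\}$. One small notational slip: the total masses that must agree for Proposition \ref{t: d2_two_PRM} are over $A^\star$ (equivalently over all of $[-\log n,\infty)$, since both measures are supported on $A^\star$), and this equality relies on the hypothesis $u^\star\in E^\star$, which ensures the intervals tile $A^\star$ exactly.
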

\begin{proof}
We have
\[
d_2\left(\mathcal{L}(\Xi^\star_{A^\star}), \mathrm{PRM}(\bls)\right) 
\le d_2\left(\mathcal{L}(\Xi^\star_{A^\star}), \mathrm{PRM}(\bpis)\right) + d_2\left(\mathrm{PRM}(\bpis), \mathrm{PRM}(\bls)\right),
\]
where, by (\ref{t: d2_smaller_dTV}) and Proposition \ref{t: Geo_MPPE_dTV},
\[
 d_2\left(\mathcal{L}(\Xi^\star_{A^\star}), \mathrm{PRM}(\bpis\right) \le  d_{TV}\left(\mathcal{L}(\Xi^\star_{A^\star}), \mathrm{PRM}(\bpis)\right)
\le \frac{e^{-u^\star}}{n}\,. 
\]
It thus remains to determine an estimate of 
$d_2\left(\mathrm{PRM}(\bpis), \mathrm{PRM}(\bls)\right)$. Since 
\[
\bls (A^\star) = \int_{u^\star}^\infty e^{-x} dx = e^{-u^\star} = \bpis(A^\star), 
\]
Proposition \ref{t: d2_two_PRM} gives
\begin{align}\label{p: d2_to_d1_geo}
\begin{split}
d_2\left(\mathrm{PRM}(\bpis), \mathrm{PRM}(\bls)\right) 
&\le \left(1-e^{-e^{-u^\star}}\right)\left(2-e^{-e^{-u^\star}}\right)d_1(\bpis, \bls)\\ 
&\le 2d_1(\bpis, \bls). 
\end{split}
\end{align}
By definition (\ref{d: def_d1}) of the $d_1$-distance,
\begin{equation}\label{p: d_1_geo}
d_1(\bpis, \bls) = e^{u^\star}\, \sup_{\kappa \in \mathcal{K}}\, \frac{1}{s_1(\kappa)} \, 
\left| \int_{-\log n}^\infty \kappa(x) \bpis(dx) - \int_{-\log n}^\infty \kappa(x) \bls(dx)\right|. 
\end{equation}
We may write the two integrals in the above expression as a sum of integrals over the ``normalised unit intervals'' 
$[k^\star, (k+1)^\star) = [k^\star, k^\star + \log(1/q))$, for all $k^\star \in E^\star \cap [u^\star, \infty)$. The modulus then
equals
\begin{equation}\label{p: mod_geo_d1}
\left| \sum_{k^\star \ge u^\star} \left\{ \int_{k^\star}^{k^\star + \log(1/q)} \kappa(x) \bpis(dx) 
- \int_{k^\star}^{k^\star + \log(1/q)} \kappa(x) \bls(dx)
\right\} \right| .
\end{equation}
Since $\bpis$ is concentrated on the lattice points $k^\star \in E^\star \cap [u^\star, \infty)$, we have
\[
\int_{k^\star}^{k^\star + \log(1/q)} \kappa(x) \bpis(dx) 
= \kappa(k^\star) \bpis(\{k^\star\}) = \kappa(k^\star) (1-q)e^{-k^\star}.
\]
Note that we obtain the same result by computing
\[
 \int_{k^\star}^{k^\star + \log(1/q)} \kappa(k^\star) \bls(dx) 
= \kappa(k^\star)  \int_{k^\star}^{k^\star + \log(1/q)} e^{-x} dx 
= \kappa(k^\star) (1-q)e^{-k^\star}.
\]
We may thus express (\ref{p: mod_geo_d1}) as follows:
\begin{multline*}
\left| \sum_{k^\star \ge u^\star}  \int_{k^\star}^{k^\star + \log(1/q)} 
\left\{ \kappa(k^\star) - \kappa(x) \right\}\bls(dx) 
\right|\\
\le  \sum_{k^\star \ge u^\star} \int_{k^\star}^{k^\star + \log(1/q)}
\left|  \kappa(k^\star) - \kappa(x) \right| \bls(dx). 
\end{multline*}
From Lipschitz continuity of $\kappa$, we know that $|\kappa(k^\star) - \kappa(x)| \le s_1(\kappa) d_0(k^\star, x)$
for any $x \in [k^\star, k^\star + \log(1/q))$, where $k^\star \in E^\star \cap [u^\star, \infty)$. The maximum Euclidean distance between $k^\star$ and any point in 
$[k^\star, k^\star + \log(1/q))$ is of course given by $\log(1/q)$. Since we bound $d_0$ by $1$, we have
\[
\left|  \kappa(k^\star) - \kappa(x) \right|  \le s_1(\kappa) \left\{ \log(1/q) \wedge 1 \right\}. 
\]
For the $d_1$-distance in (\ref{p: d_1_geo}) we now find, using $\bls([u^\star, \infty)) = e^{-u^\star}$,
\begin{align*}
d_1(\bpis, \bls) 
\le e^{u^\star} \sum_{k^\star \ge u^\star} \int_{k^\star}^{k^\star + \log(1/q)} \left\{ \log(1/q) \wedge 1\right\} \bls(dx) 
 = \log(1/q) \wedge 1,
\end{align*}
which we plug into (\ref{p: d2_to_d1_geo}) to obtain an estimate for $d_2(\mathrm{PRM}(\bpis), \mathrm{PRM}(\bls)) $.
\end{proof}
The approximation of $\mathcal{L}(\Xi^\star_{\As})$ by $\mathrm{PRM}(\bls)$, whose continuous intensity function $e^{-x}$ corresponds to that of MPPE's with exponential marks, gives rise to an additional error term which depends only on the failure probability of the geometric distribution. With threshold values similar to those that might be used for MPPE's with exponential marks, the error will still become small only if we allow the failure probability $q=q_n$ to tend to $1$ as $n \to \infty$. Since $\log(1/q_n)$ is the length of the normalised unit intervals, this condition causes the lattice structure to melt into the whole real subset $[-\log n, \infty)$ as $n \to \infty$.  Note that Proposition \ref{t: Geo_MPPE_d2} does not require $q_n$ to vary at a particular rate. The reason for that is that we chose the threshold $u_n^\star$ as element of the lattice $E^\star$. If we had not done so, we would have obtained an additional error term of size $\log(1/q_n)e^{-u_n^\star}$. In this case, $q_n$ would have 
needed to vary at a fast enough rate to guarantee a small error despite the factor $e^{-u_n^\star}$, which roughly corresponds to the expected number of exceedances and should thus be $\ge 1$. We refer to Section \ref{s: MO_Geo_d2}, where we established the error estimate in full detail for MPPE's with bivariate geometric marks.  
\subsection{Remarks on the choice of the point process and its approximation by a Poisson process}\label{s: Remarks_on_choice}
Throughout Section \ref{Sec: Poi_proc_approc_MPPE}, we have first approximated the law of an MPPE, as defined in (\ref{d: MPPE}), by a Poisson process with mean measure equal to that of the MPPE. If the mean measure was easy to work with, we were done; else, we approximated further by another process with an easier-to-use mean measure. We will continue to do this for MPPE's with multivariate marks in Chapter \ref{Chap: Multivariate_extremes}. The estimate for the first step, the actual ``Poisson approximation", comes easily in both chapters. The reason for this is that we use \iid samples $X_1, \ldots,X_n$ and \iid indicators $I_{\{X_1 \in A\}}, \ldots I_{\{ X_n \in A\}}$. This allows us to apply Proposition \ref{t: MyMichel}, which reduces the problem to the approximation of a binomial by a Poisson distribution. Our main effort, in both chapters, thus lies in determining error bounds on the approximation of a Poisson process by another Poisson process. As the error given by Proposition \ref{t: MyMichel} is 
only $P(X \in A)$, the error obtained by further approximating by a different Poisson process is typically the bigger of the two. This might, however, not be the case, if we had a different basic set-up, i.e. if the point process that we consider were different to the MPPE in (\ref{d: MPPE}). The error arising from approximation by a Poisson process with equal mean measure might then be bigger, and the error from further approximation by a different Poisson process (if not made redundant entirely) might be smaller. For an example of a different basic set-up, assume that we have indicator variables $I_i$ that are dependent, but independent of \iid marks $X_i$, and let $\Gamma_i^s$, $\Gamma_i^w$, $Z_i$ and $\eta_i$ be defined as in Theorem \ref{t: dTV_bounds_Poi_localapproach}. We may then apply Theorem 10.H in \cite{Barbour_et_al.:1992} to determine a bound on the approximation, in the total variation distance, of the law of $\Xi= \sum_{i=1}^n I_i \delta_{X_i}$ by $\mathrm{PRM}(\mathbb{E}\Xi)$. A process such 
as $\Xi$ might appear, for instance, in an insurance context, when considering a claim distribution that is a mixture of typical and large claim sizes. The indices of the occurrences of the large claims may then be dependent, as there may be underlying events leading to these large claims, but the large claim sizes may still be \iid 
\chapter{Poisson process approximation for multivariate extremes}\label{Chap: Multivariate_extremes}
\setcounter{thm}{0}
The previous chapter gave a first treatment of random configurations of extreme points in space. It dealt with the one-dimensional case, where
we considered Poisson process approximation for marked point processes of exceedances (that we called MPPE's) whose marks were univariate. Chapter \ref{Chap: Multivariate_extremes} now studies multivariate extremes. 
More precisely, instead of \iid random variables as marks, we now consider random vectors 
$\mathbf{X}_1, \ldots, \mathbf{X}_n$ that are \iid copies of a $d$-dimensional random vector $\mathbf{X} = (X_1, \ldots, X_d)$, where $d \ge 1$. 
Random point configurations of multivariate extreme points can be modelled, analogously to (\ref{d: MPPE}) in Chapter \ref{Chap: Univariate_extremes},
by MPPE's of the form
\[
\Xi_A = \sum_{i=1}^n I_{\left\{ \mathbf{X}_i \in A \right\}} \delta_{\mathbf{X}_i}. 
\]
We suppose that the state space $E$ of the random vectors $\mathbf{X}, \mathbf{X}_1, \ldots, \mathbf{X}_n$ is a subset of $ \mathbb{R}^d$ and let
$\mathcal{E} = \mathcal{B}(E)$. Denote by $F$ the joint distribution of the random vectors and by $F_1, \ldots, F_d$ the marginal distribution functions of their 
components, i.e. for any $\mathbf{y}= (y_1, \ldots, y_d) \in E$, let 
\[
F(\mathbf{y})= P(\mathbf{X}_i \le \mathbf{y})  =P(\mathbf{X} \le \mathbf{y})  = P(X_1 \le y_1, \ldots, X_d \le y_d),
\] and let
\[
F_j(y_j) = P(X_{ij} \le y_j) = P(X_j \le y_j),
\] 
for all $i =1, \ldots, n$ and $j = 1, \ldots, d$. Moreover, denote by $x_{F_1}, \ldots, x_{F_d}$ the right endpoints of $F_1,\ldots, F_d$, respectively. 
We fix a set $A \in \mathcal{E}$ such that points (that is, realisations of the random vectors) 
$\mathbf{x}_i = (x_{i1}, \ldots, x_{id})$ lying in it can be 
considered to be extreme points. In contrast to the univariate case, where it is clear that the set $A$ should be of the form $[u,x_F]\cap E$ (for a certain choice of 
a threshold $u$) when studying right-tail extremes, there is more flexibility as to the choice of ``extreme region'' $A$ in the multivariate case. 
We might, for instance, set $A:= \{[u_1, x_{F_1}] \times \ldots \times [u_d, x_{F_d}]\} \cap E$ which implies that points in $A$ are extreme in \textit{all} 
components. We might also define $A$ as the complement of $(-\infty, u_1) \times \ldots \times (-\infty, u_d) $;
then $A$ not only contains jointly extreme points but also points that might have only one extreme component. A similar possibility would be to take a 
$d$-dimensional ball
of a certain radius $r >0$ centred in $\mathbf{0}$ and let $A$ be the intersection of $[0,\infty)^d$ with the complement of the ball 
(or, if looking at all kinds of extreme points, i.e. not only those in the 
right tail of the marginal distributions, just let $A$ be the complement of the ball). 
Figure \ref{f: Sets_A} illustrates these three particular choices for $A$ in the 
bivariate case $E = \mathbb{R}_+^2$. 
\begin{figure}[!ht]
\begin{center}{\footnotesize \input{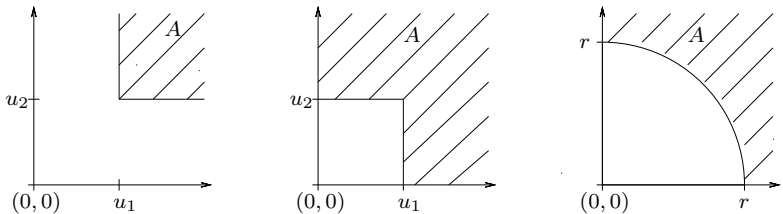}}\end{center}
\caption{We give three examples of choices of the set $A $ in the state space $E = \mathbb{R}_+^2$ (for the case $x_{F_j}=\infty$ for all $j=1, \ldots, d$). 
(Left) $A= [u_1, \infty) \times [u_2, \infty)$. 
(Middle) $A = ([0,u_1)\times [0,u_2))^C$. (Right) $A = \{(y_1,y_2) \in E:\, y_1^2 + y_2^2 \ge r^2\}$.}
\label{f: Sets_A}
\end{figure}

Analogously to the univariate case (see Proposition \ref{t: MyMichel}), we can apply Theorem \ref{t: Michel} to MPPE's with multivariate marks in order to determine 
the error in 
total variation of the approximation of the law of $\Xi_A$ by that of a Poisson process with mean measure $\mathbb{E}\Xi_A$:  
\begin{thm}\label{t: MyMichelMultivariate}
For each integer $n \ge 1$, let $\mathbf{X}_1, \ldots, \mathbf{X}_n$ be \iid copies of a $d$-di{-}mensional random vector $\mathbf{X}$ with state space 
$E \subseteq \mathbb{R}^d$, where $d \ge 1$. For a fixed set $A \in \mathcal{E}$, let
$\Xi_A = \sum_{i=1}^n I_{\{ \mathbf{X}_i \in A\}} \delta_{\mathbf{X}_i}$ be the marked point process of points in $A$ and let 
$W_A = \sum_{i=1}^n I_{\{\mathbf{X}_i \in A\}}$ denote the random number of points in $A$. Then,
\[
d_{TV}(\mathcal{L}(\Xi_A), \mathrm{PRM}(\mathbb{E}\Xi_A)) \le d_{TV}(\mathcal{L}(W_A), \mathrm{Poi}(\mathbb{E}W_A)) \le P(\mathbf{X} \in A). 
\]
\qed
\end{thm}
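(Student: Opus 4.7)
The plan is to proceed in complete analogy with Proposition \ref{t: MyMichel} from the univariate setting, noting that nothing in the argument there actually used the one-dimensionality of the marks --- only that their state space was a locally compact separable metric space, a property that $\mathbb{R}^d$ clearly inherits. So I would decompose the statement into the two displayed inequalities and attack them separately.

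For the first inequality, I would invoke Michel's coupling (Theorem \ref{t: Michel}) directly. Let $P_A = \mathcal{L}(\mathbf{X}\mid \mathbf{X}\in A)$ and introduce an iid sample $\mathbf{X}'_1,\ldots,\mathbf{X}'_n$ with common law $P_A$, independent of $\mathbf{X}_1,\ldots,\mathbf{X}_n$. Then $\sum_{i=1}^n I_{\{\mathbf{X}_i\in A\}}\delta_{\mathbf{X}'_i}$ has the same distribution as $\Xi_A$, and it can be rewritten as $\sum_{j=1}^{W_A}\delta_{\mathbf{Z}'_j}$ with the $\mathbf{Z}'_j$ iid $P_A$ and independent of $W_A$. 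On the other hand, $\mathrm{PRM}(\mathbb{E}\Xi_A)$ is realized as $\sum_{j=1}^{W^\star}\delta_{\mathbf{Z}'_j}$ for $W^\star\sim\mathrm{Poi}(\mathbb{E}W_A)$ independent of the $\mathbf{Z}'_j$. Conditioning on the total count and using (\ref{d: dTV_without_mod}), exactly as in the proof of Theorem \ref{t: Michel}, gives
\[
d_{TV}(\mathcal{L}(\Xi_A),\mathrm{PRM}(\mathbb{E}\Xi_A)) \;\le\; \sum_{l=0}^n \{P(W_A=l)-P(W^\star=l)\}_+ \;=\; d_{TV}(\mathcal{L}(W_A),\mathrm{Poi}(\mathbb{E}W_A)).
\]

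For the second inequality, I would apply Theorem \ref{t: Poster_univ}, which only ever uses that $I_{\{\mathbf{X}_i\in A\}}$ are iid Bernoulli variables with success probability $p := P(\mathbf{X}\in A)$; the argument goes through verbatim regardless of whether the underlying $\mathbf{X}_i$ are univariate or multivariate. This yields $d_{TV}(\mathcal{L}(W_A),\mathrm{Poi}(np)) \le p = P(\mathbf{X}\in A)$, closing the chain.

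There really is no main obstacle: both steps are direct transcriptions of existing results, and the chief merit of the statement is to package them in a form suited to multivariate extremes. The only thing to verify in passing is that the coupling construction in Theorem \ref{t: Michel} only requires the mark space to be a locally compact separable metric space, which $E\subseteq\mathbb{R}^d$ is; this legitimates dropping the one-dimensional assumption without any further work.
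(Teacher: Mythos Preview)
Your proposal is correct and matches the paper's approach exactly: the theorem is marked \qed\ precisely because it is a verbatim transcription of the proof of Proposition \ref{t: MyMichel}, with the observation that neither Michel's coupling (Theorem \ref{t: Michel}) nor the Barbour--Hall bound underlying Theorem \ref{t: Poster_univ} uses anything about the dimension of the mark space. You have spelled out in full what the paper leaves implicit.
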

In a first (bivariate) example, we suppose that the components of the random vectors are standard uniformly distributed and independent of each other. More precisely, we denote the 
random vectors (or \textit{random pairs}, since $d=2$) by $\mathbf{U}_1, \ldots, \mathbf{U}_n$ and suppose that they are \iid copies of $\mathbf{U} = (U, V)$, where 
$U, V \sim \mathrm{U}(0,1)$, 
and where
\begin{equation}\label{d: independence_copula_df}
P(U \le u, V \le v) = P(U \le u)  P(V \le v)= uv.
\end{equation}
The state space of the random vectors is $E = [0,1)^2$. We fix $A=A_n=[u_n, 1) \times [v_n,1)$ for some choices of thresholds $u_n, v_n \in [0,1)$ 
and introduce the following normalisation:
\begin{align*}
&\textnormal{For } u_n, v_n \in [0,1), \textnormal{ there exist } 
s_n, t_n \in (0,n] \textnormal{ such that } \\
&\phantom{blaaaaaaaaa}u_n =1- \frac{s_n}{n} \textnormal{ and } v_n =1- \frac{t_n}{n}\,.
\end{align*}
Thus, $(u,v)$ $ \in [u_n, 1) \times [v_n,1)$ $ \subseteq [0,1)^2$ is equivalent to $(s,t)$ $ =(n(1-u),n(1-v))$ $ \in (0,s_n] \times (0,t_n] $ $\subseteq (0,n]^2$.
Note that this normalisation is equivalent to the slightly different normalisation that we introduced for the univariate case in Propositions 
\ref{t: cont_limit_maxima} (c), \ref{t: Max_uniform} and \ref{t: MPPE_U_dTV}.
Suppose, for instance, that $s_n=t_n=\log n$. 
The probability that both components of $\mathbf{U}$ are jointly extreme is 
\begin{equation}\label{p: motivational_example_prob_U_indpt}
P(\mathbf{U} \in A) = P\left(U \ge 1-\frac{s_n}{n}, V \ge 1-\frac{t_n}{n}\right) = \frac{s_nt_n}{n^2} = \left( \frac{\log n}{n}\right)^2.
\end{equation}
The probability of the occurrence of joint extremes is thus very small and Poisson approximation, by Theorem \ref{t: MyMichelMultivariate}, is very sharp. 
However, the mean of both $\mathcal{L}(W_A)$ and the approximating Poisson distribution is
$nP(\mathbf{U} \in A) =s_nt_n/n = \log^2(n)/n$, which is strictly smaller than $1$ for all $n \ge 1$, and tends to zero as $n \to \infty$. For large $n$, we therefore 
expect no joint threshold exceedances, so nothing really happens in $A$ for either $\mathcal{L}(W_A)$ or $\mathrm{Poi}(\mathbb{E}W_A)$, and Poisson approximation
has to be good.  
But clearly, the choice 
$[u_n, 1) \times [v_n, 1)$ for the set $A$ is not the most sensible one for this example.
The probability that \textit{one or both} of the components exceeds a threshold is of a higher order
than the probability in (\ref{p: motivational_example_prob_U_indpt}):
\[
P\left( \left\{U \ge 1-\frac{s_n}{n} \right\} \cup \left\{ V \ge 1-\frac{t_n}{n} \right\}\right) 
= \frac{s_n+t_n}{n} - \frac{s_nt_n}{n^2} = \frac{2 \log n}{n} - \left(\frac{\log n}{n} \right)^2,
\]
and we expect about $2 \log n$ joint threshold exceedances. 
A more suitable choice for $A$ in this example is thus $([0,1-s_n/n) \times [0,1-t_n/n))^C$. 
Of course, we can choose different $s_n$ and $t_n$.  
The choice of these values depends, 
on the one hand, on what expected number of exceedances $nP(\mathbf{U} \in A)$ we wish to consider, and,
on the other hand, on what size $P(\mathbf{U} \in A)$ of the error we judge to be sufficiently small. 
The bigger the allowed number of exceedances, the bigger the error will be
and vice versa. The approximation will get sharper the farther the set $A$ moves away from the origin $(0,0)$ as there will be less and less points in $A$. Thus for, say
$A= ([0,1-1/n)^2)^C$, we expect only about two threshold exceedances and the error is $2/n$. See Figure \ref{f: Sim_Ind_cop} for an illustration.
\begin{figure}[!ht]
\includegraphics[scale=0.65]{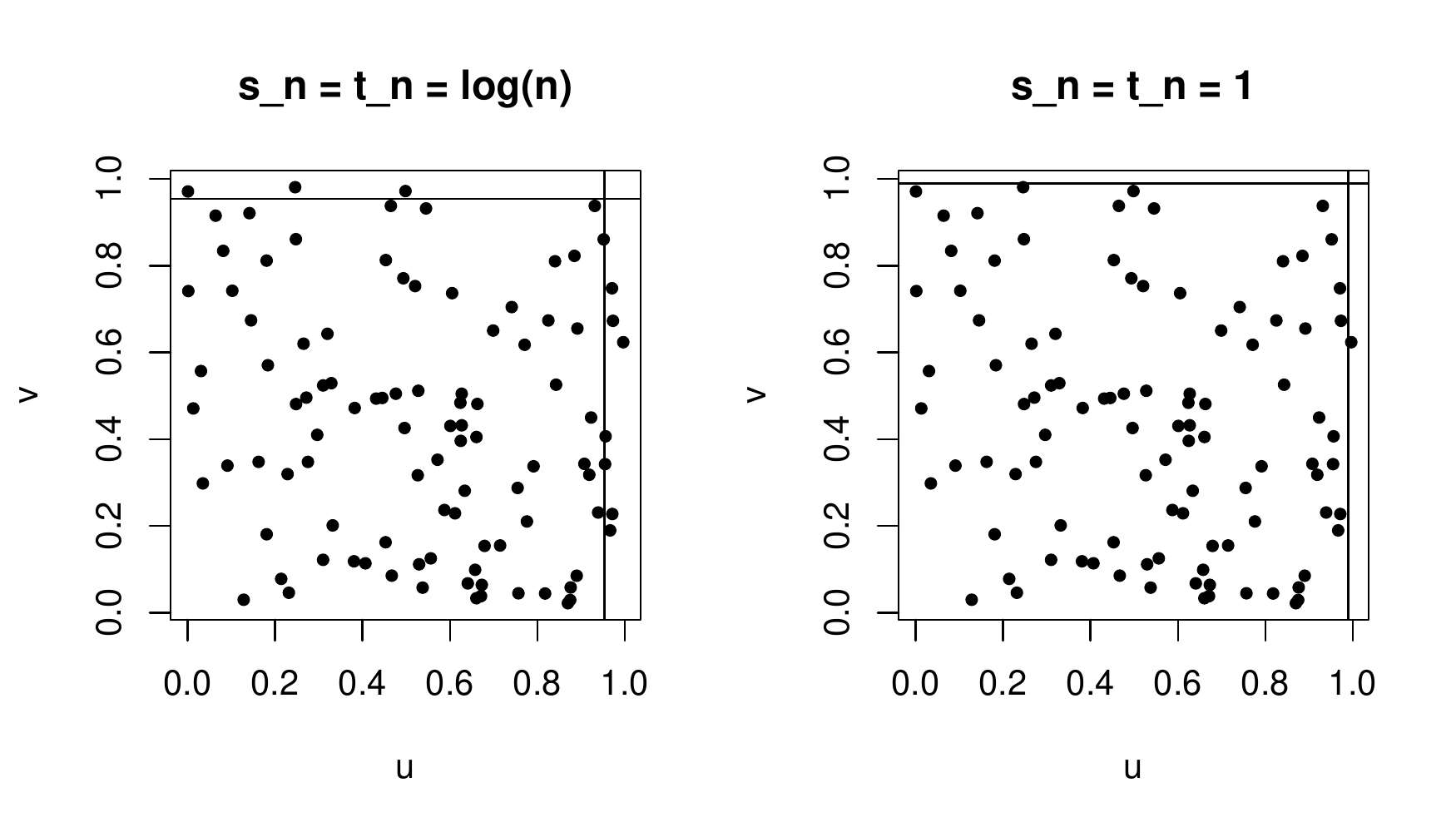}
\caption{We simulate $n=100$ points from the distribution in (\ref{d: independence_copula_df}) and compare two sets $A=([0,1-s_n/n) \times [0,1-t_n/n))^C$ 
for different choices of $s_n$ and $t_n$. 
Note that we 
observe no points in the upper-right corner $[1-s_n/n,1) \times [1-t_n/n,1)$ and that
the actual number of threshold exceedances coincides roughly with the expected number of exceedances $\mathbb{E}W_A$.
 (Left) For the choice $s_n=t_n = \log n$, 
$\mathbb{E}W_A \approx 2 \log n \approx 9.2$. (Right) For the choice $s_n=t_n = 1$, $\mathbb{E}W_A \approx 2$. }
\label{f: Sim_Ind_cop}
\end{figure}

An issue that arises with the use of random vectors as opposed to univariate random variables is thus the question of the choice of $A$. This issue is connected to the relation 
between the components of the random vectors: are the components connected in a way to exhibit dependence in the upper tail, i.e. is there some non-negligeable probability
of the components being simultaneously extreme? To address this question, we first need to define the joint distribution of the random vectors in more detail, that is, we need to specify 
the dependence structure between their margins. One way to achieve this is by using copulas. Section \ref{Sec: Copulas} gives a short introduction to copulas, as well as to 
bivariate measures of extremal dependence, the so-called \textit{coefficients of tail dependence}. Given that the
random vectors have a certain dependence structure, we may then, at least in the case $d=2$, determine from their coefficient of upper tail dependence whether they are
likely to have joint extremes and whether we should define an MPPE with a set $A$ of shape $[u_n,1) \times [v_n,1)$. We can turn this idea around. Suppose we are 
particularly interested in marked point processes with bivariate points that are extreme in both components. Then we may specifically choose copulas that 
exhibit upper tail dependence for their marks. But why should we actually
bother with distinguishing between joint and single-component extremes and not just always use a set $A$ that contains both kinds? The reason for this is that there is an
interest, for instance in finance, or in the modelling of extreme events in nature, in providing models for a ``perfect storm'' scenario, where many things go wrong at the same time.   

Another issue that arises with the use of multivariate random vectors 
$\mathbf{X}_1,$ $\ldots,$ $\mathbf{X}_n$ is that, given certain specified margins, there are 
infinitely many choices for the joint distribution
function and copula. Theorem \ref{t: MyMichelMultivariate} fortunately gives a hugely general result for the error that occurs when approximating an
process $\Xi_A$ with \iid marks $\mathbf{X}_1, \ldots, \mathbf{X}_n$ by a Poisson process with mean measure $\mathbb{E}\Xi_A$, regardless of the common joint distribution of 
the marks. However, if the joint distribution function has a complicated structure, it might be difficult to understand
in what way the error estimate $P(\mathbf{X}\in A)$
varies with the sample size $n$. To remedy this, Section \ref{Sec: PP_approx_MPPE_mult} establishes easy bounds on $P(\mathbf{X}\in A)$ for two choices of regions $A$ -- one 
where all components are extreme, and one where there might only be one extreme component -- and gives some examples. 

Also, the aim should be to approximate a certain choice of an MPPE by a ``workable'' Poisson process. That is, 
the Poisson process should have an intensity function that is easy to handle and that preferably does not depend on the sample size $n$. 
Whether the Poisson process that we approximate with is useful or not needs to be judged on a case-by-case basis and we thus 
necessarily need to restrict ourselves to examples. In cases where the intensity function is too difficult to handle, we might try to see if it behaves in a simpler way for
$n \to \infty$ and then approximate by another Poisson process with this simpler intensity, using 
Proposition \ref{t: dTV_two_PRM}. Sections \ref{Sec: Archim_tail_dep} and \ref{Sec: MO_Geo} each treat an application of this method. 
In Section \ref{Sec: Archim_tail_dep}
we first give a short introduction to the subclass of \textit{Archimedean copulas}. \cite{Charpentier/Segers:2009} list examples of bivariate Archimedean copulas according to their
asymptotic tail behaviour. Among these, we choose, as examples of possible distribution functions for the marks, those that exhibit asymptotic tail dependence, and show that MPPE's 
with such marks can be approximated by Poisson processes with practicable intensity functions. 
In Section \ref{Sec: MO_Geo} we study Poisson process approximation for MPPE's with bivariate marks that follow the \textit{Marshall-Olkin geometric distribution}, 
which is commonly thought of as 
a natural choice for a bivariate geometric distribution. For this bivariate discrete distribution we of course encounter the same problem as in the univariate case, namely that the 
total variation distance is too strong if we want to approximate by a Poisson process with a continuous intensity function. We thus use the $d_2$-distance 
as we did before in Section \ref{s: MPPE_geo} for the univariate geometric distribution. 
We contrast our results with those that we obtained in Section \ref{Sec: PP_approx_MPPE_mult} for the continuous counterpart of this distribution, the 
\textit{Marshall-Olkin exponential distribution}. 

\section{Copulas and tail dependence}\label{Sec: Copulas}
This section gives a very short introduction to copulas and coefficients of tail dependence. 
A more thorough introduction to copulas along with applications to finance are given in \cite{McNeil_et_al:2005}, a comprehensive treatment
can be found in \cite{Nelsen:2006}, whereas \cite{Genest/Neslehova:2007} treat issues that arise for copulas when using count data. 
As will be made clear by Theorem \ref{t: SklarsThm} below, copulas come in useful, on the one hand, 
when trying to understand the dependence structure between the margins of a given distribution function.
On the other hand, they are useful for building multivariate models when certain margins are given. 
\subsection{Definition and properties of copulas}
Copulas are defined as follows:
A $d$-dimensional \textit{copula} $C:\,[0,1]^d \to [0,1]$ 
is a joint distribution function with standard uniform margins. 
Let $u_1, \ldots, u_d \in [0,1]$.
Copulas are characterised by the following three properties:
\begin{enumerate}
\item[(i)] $C(u_1, \ldots, u_d)$ is increasing in each component $u_j$, $j=1, \ldots, d$. 
\item[(ii)] $C(1, \ldots, 1, u_j, 1, \ldots, 1) = u_j$, for all $j =1, \ldots, d $ and $u_j \in [0,1]$. 
\item[(iii)] For all $(a_1, \ldots, a_d)$, $(b_1, \ldots, b_d) \in [0,1]^d$ with $a_j \leq b_j$, we have 
\begin{equation}\label{d: rectangle_inequality}
\sum_{i_1=1}^{2} \ldots \sum_{i_d=1}^{2} (-1)^{i_1 + \ldots + i_d} C(u_{1i_1}, \ldots, u_{di_d}) \geq 0,
\end{equation}
where $u_{j1}= a_j$ and $u_{j2}= b_j$ for all $j = 1, \ldots, d$. 
\end{enumerate}
The first property has to be satisfied for any multivariate distribution function, whereas the second property is the requirement of standard uniform margins. 
The so-called \textit{rectangle inequality} in (\ref{d: rectangle_inequality}) makes sure that $P(a_1 \le U_1 \le b_1, \ldots , a_d \le U_d \le b_d)$ is non-negative
for a random vector $(U_1, \ldots, U_d)$ with distribution function $C$.
\begin{exa}\label{d: examples_copulas}
Let $u,v, u_1, \ldots, u_d \in [0,1]$. We list some examples of well-known copulas; many more examples of copulas can be found in \cite{Nelsen:2006}.
\begin{enumerate} 
\item[(a)] \textit{Independence copula:} $\Pi(u_1, \ldots, u_d) = \prod_{j=1}^d u_j$. We used this copula in (\ref{d: independence_copula_df}) for $d=2$. It is 
also called the \textit{product copula}. 
\item[(b)] \textit{Comonotonicity copula:} $M(u_1, \ldots, u_d) = \min_{1 \le j \le d} u_j$. 
This copula is the joint distribution function of a $d$-dimensional random vector
$(U, \ldots, U)$, where $U$ is standard uniformly distributed.
\item[(c)] \textit{Countermonotonicity copula:} $W(u, v) = \max\{u+v-1,0\}$. This copula is the joint distribution of  
$(U, 1-U)$, where $U$ is standard uniformly distributed. 
\item[(d)] \textit{Family of Gumbel(-Hougaard) copulas}: For any $\theta \in [1, \infty)$,
\[
C_\theta(u,v) = \exp\left\{ - \left[(-\log u)^\theta + (-\log v)^\theta\right]^{1/\theta} \right\}.
\]
\item[(e)] \textit{Family of Clayton copulas}: For any $\theta \in [-1,\infty)\setminus \{0\} $,
\[
C_\theta(u,v) = \left[\max\left( u^{-\theta} + v^{-\theta} -1 \,,\, 0\right) \right]^{-1/\theta}. 
\]
\item[(f)] \textit{Family of Marshall-Olkin copulas}, 
also called \textit{family of generalised Cua\-dras-Aug\'e copulas}: 
For any $\alpha, \beta \in (0,1)$,
\[
C_{\alpha, \beta} (u,v) = \min\left( u^{1-\alpha}v\, , \, uv^{1-\beta}\right) 
= \left\{ \begin{array}{ll} u^{1-\alpha} v, & u^\alpha \ge v^\beta \\ uv^{1-\beta}, & u^\alpha \le v^\beta \end{array}\right..
\]
\end{enumerate}
\end{exa}
\subsection{Fr\'echet-Hoeffding bounds}
\noindent The following theorem states that any copula may be bounded by the so-called \textit{Fr\'echet-Hoeffding bounds} (or \textit{Fr\'echet bounds}).
\begin{thm}\label{t: FH_bounds}
For every $d$-dimensional copula $C(u_1, \ldots, u_d)$, we have the bounds
\[
\max\left(\sum_{j=1}^d u_j + 1-d \, , \, 0 \right) \le  C(u_1, \ldots, u_d) \le \min_{1\le j \le d} u_{j}. 
\]
\end{thm}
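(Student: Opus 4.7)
The plan is to prove the two inequalities separately, using the fact that any copula $C$ is a joint distribution function of some random vector $(U_1, \ldots, U_d)$ with $U_j \sim \mathrm{U}(0,1)$ for each $j$. Both bounds will then follow from elementary probabilistic arguments, using properties (i) and (ii) of copulas for the upper bound and a union bound for the lower bound.

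For the upper bound, I would exploit monotonicity (property (i)) together with the uniform-margin property (ii). Since $C$ is non-decreasing in each coordinate and $u_j \le 1$ for every $j$, fixing coordinate $k$ at $u_k$ while raising each of the other coordinates to $1$ gives
\[
C(u_1, \ldots, u_d) \le C(1, \ldots, 1, u_k, 1, \ldots, 1) = u_k,
\]
where the equality uses property (ii). Since this holds for every $k = 1, \ldots, d$, we may take the minimum over $k$ to obtain $C(u_1, \ldots, u_d) \le \min_{1 \le j \le d} u_j$.

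For the lower bound, the non-negativity $C(u_1, \ldots, u_d) \ge 0$ is immediate from its interpretation as a probability. For the other part, I would take a random vector $(U_1, \ldots, U_d)$ with joint distribution $C$ and standard uniform margins, and apply Boole's inequality to the complement:
\[
1 - C(u_1, \ldots, u_d) = P\!\left( \bigcup_{j=1}^d \{U_j > u_j\}\right) \le \sum_{j=1}^d P(U_j > u_j) = \sum_{j=1}^d (1-u_j) = d - \sum_{j=1}^d u_j.
\]
Rearranging yields $C(u_1, \ldots, u_d) \ge \sum_{j=1}^d u_j + 1 - d$, and combining with non-negativity gives the claimed lower Fréchet--Hoeffding bound.

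There is no real obstacle here: both bounds drop out of the defining properties of copulas together with standard probability estimates. The only mild subtlety is the implicit appeal to the existence of a random vector realising $C$, which is legitimate since $C$ is by definition a $d$-dimensional distribution function with uniform margins. The two bounds together then yield the theorem.
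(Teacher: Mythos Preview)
Your proof is correct and essentially matches the paper's. The lower bound via Boole's inequality is identical, and your upper bound via monotonicity plus the marginal condition is just the axiomatic rephrasing of the paper's set-theoretic argument $\bigcap_j \{U_j \le u_j\} \subset \{U_k \le u_k\}$.
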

\begin{proof}
Let $C$ be the joint distribution function of a $d$-dimensional random vector $\mathbf{U}=(U_1, \ldots, U_d)$ with standard uniform margins. For the lower bound,
note that, using Boole's inequality $P(\cup_j B_j) \le \sum_{j}P(B_j)$ for a countable union of events $B_1, B_2, \ldots$, we obtain 
\begin{align*}
C(u_1, \ldots, u_d) &= P \left( \cap_{1 \le j \le d} \{U_j \le u_j\}\right) = 1- P\left( \cup_{1 \le j \le d} \{ U_j > u_j\}\right)\\
&\ge 1- \sum_{j=1}^d P(U_j > u_j) = 1 - d + \sum_{j=1}^d u_j,
\end{align*}
and remember that a distribution function $C$ is always positive. We obtain the upper bound by noting that, for any $k \in \{1, \ldots, d\}$,
\[
\bigcap_{1 \le j \le d} \{U_j \le u_j\} \subset \{U_k \le u_k\}.  
\]
\end{proof}
\noindent The \textit{Fr\'echet-Hoeffding upper bound} $\min_{1\le j \le d} u_{j}$ corresponds to the comonotonicity copula from Example \ref{d: examples_copulas} (b). For $d=2$, the 
\textit{Fr\'echet-Hoeffding lower bound} is precisely the countermonotonicity copula from Example \ref{d: examples_copulas} (c). As shown in Example 5.21 in
\cite{McNeil_et_al:2005}, the $d$-dimensional Fr\'echet-Hoeffding lower bound is not a copula for $d >2$, as it does not satisfy the rectangle inequality (\ref{d: rectangle_inequality}).
In view of Sklar's Theorem below, the Fr\'echet bounds may similarly be established for the joint distribution function $F$ of any $d$-dimensional random vector $\mathbf{X}$ and are expressed
in terms of the marginal distribution functions $F_1, \ldots, F_d$ of $\mathbf{X}$:
\[
\max \left( \sum_{j=1}^d F_j(y_j) + 1 -d \, , \, 0\right) \le F(y_1, \ldots, y_d) \le \min_{1 \le j \le d} F_j(y_j). 
\]
\subsection{Sklar's Theorem}
A fundamental result is the following theorem by \cite{Sklar:1959}, which shows that
copulas can be extracted from any joint distribution function. It also shows that a copula, along with some marginal distribution functions, gives all the information 
that is necessary to define a multivariate joint distribution function.
\begin{thm}\label{t: SklarsThm} (Sklar, 1959)\\
(i) Let $F$ be a $d$-dimensional joint distribution function with marginal distribution functions $F_1, \ldots, F_d$. Then there exists a copula
$C:\,[0,1]^d \to [0,1]$ such that for all $y_1, \ldots, y_d $ $\in \overline{\mathbb{R}} = [-\infty, \infty]$,
\begin{equation}\label{t: Sklar}
F(y_1, \ldots, y_d) = C(F_1(y_1), \ldots, F_d(y_d)). 
\end{equation}
If the margins are continuous, then the copula $C$ is unique; else $C$ is uniquely determined on $\mathrm{Ran}(F_1) \times \ldots \times \mathrm{Ran}(F_d)$, where
$\mathrm{Ran}(F_j) = F_j(\overline{\mathbb{R}})$ denotes the range of $F_j$, $j=1, \ldots, d$. \\
(ii) If $C$ is a $d$-dimensional copula and $F_1, \ldots, F_d$ are univariate distribution functions, then $F$ defined by (\ref{t: Sklar}) is a joint distribution
function with margins $F_1, \ldots, F_d$. 
\end{thm}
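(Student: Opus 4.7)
The plan is to treat the two parts separately, starting with (ii) since it is essentially a verification, and then handling (i) by first doing the continuous case (which is clean) and then explaining the modification needed for the general case.

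For part (ii), given a copula $C$ and univariate distribution functions $F_1,\ldots,F_d$, I would define $F(y_1,\ldots,y_d) := C(F_1(y_1),\ldots,F_d(y_d))$ and check the four characterising properties of a joint distribution function. Monotonicity in each argument follows from monotonicity of each $F_j$ together with property (i) of a copula. Right-continuity follows from right-continuity of $F_j$ together with the fact that copulas are $1$-Lipschitz in each argument (a standard consequence of the rectangle inequality together with property (ii), which I would state and briefly justify). The limits $F(y_1,\ldots,y_d)\to 0$ as some $y_j\to-\infty$ and $F(y_1,\ldots,y_d)\to 1$ as all $y_j\to\infty$ follow because $F_j(-\infty)=0$ (combined with the rectangle inequality forcing $C=0$ whenever any argument is $0$) and $F_j(\infty)=1$ (combined with $C(1,\ldots,1)=1$). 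The rectangle inequality for $F$ is a direct consequence of the rectangle inequality for $C$ applied to the points $u_{ji}=F_j(a_j), F_j(b_j)$. Finally, the margins come out right because $F(\infty,\ldots,y_j,\ldots,\infty)=C(1,\ldots,F_j(y_j),\ldots,1)=F_j(y_j)$ by property (ii) of copulas.

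For part (i) in the continuous case, I would use the probability integral transform: let $\mathbf{X}=(X_1,\ldots,X_d)$ have joint distribution $F$ and set $U_j:=F_j(X_j)$. When $F_j$ is continuous, $U_j$ is standard uniform on $[0,1]$, so the joint distribution $C$ of $(U_1,\ldots,U_d)$ has standard uniform margins and is therefore a copula. Moreover, by continuity of each $F_j$, the event $\{U_j\le F_j(y_j)\}$ coincides $P$-almost surely with $\{X_j\le y_j\}$, so
\[
C(F_1(y_1),\ldots,F_d(y_d)) = P(U_1\le F_1(y_1),\ldots,U_d\le F_d(y_d)) = F(y_1,\ldots,y_d).
\]
Uniqueness in the continuous case is then forced, because the pair $(u_1,\ldots,u_d) = (F_1(y_1),\ldots,F_d(y_d))$ ranges over the whole of $[0,1]^d$ as $(y_1,\ldots,y_d)$ ranges over $\overline{\mathbb{R}}^d$, and indeed the formula $C(u_1,\ldots,u_d)=F(F_1^{-1}(u_1),\ldots,F_d^{-1}(u_d))$ with generalised inverses $F_j^{-1}(u):=\inf\{y:F_j(y)\ge u\}$ determines $C$ completely.

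For the general case, the main obstacle is precisely that when some $F_j$ has jumps, the map $X_j\mapsto F_j(X_j)$ is no longer uniform, and the identity $F_j(F_j^{-1}(u))=u$ fails off $\mathrm{Ran}(F_j)$, so the construction above must be adjusted. I would use Rüschendorf's distributional transform: on an enlarged probability space, take auxiliary independent $V_1,\ldots,V_d\sim\mathrm{U}(0,1)$ independent of $\mathbf{X}$, and set
\[
U_j := F_j(X_j-) + V_j\bigl(F_j(X_j)-F_j(X_j-)\bigr).
\]
A direct check shows each $U_j$ is standard uniform and that $U_j\le F_j(y_j)$ implies $X_j\le y_j$ (and is equivalent to it $P$-a.s.\ for $y_j$ with $F_j$ continuous at $y_j$); defining $C$ as the joint distribution of $(U_1,\ldots,U_d)$ then yields a copula satisfying $C(F_1(y_1),\ldots,F_d(y_d))=F(y_1,\ldots,y_d)$. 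Since the values $(F_1(y_1),\ldots,F_d(y_d))$ only sweep out $\mathrm{Ran}(F_1)\times\cdots\times\mathrm{Ran}(F_d)$, $C$ is only determined on this product set, which accounts for the uniqueness statement; the values on the complementary "gaps" depend on the interpolation performed by the auxiliary variables $V_j$, giving the known non-uniqueness in the discrete case.
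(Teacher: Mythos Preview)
Your proof is correct and in fact goes well beyond what the paper does: the paper simply cites Nelsen (2006) for the proof, and then adds a short remark sketching the continuous case via the probability integral transform $U_j = F_j(X_j)$ --- exactly your argument for that case. Your treatment of part (ii) and of the general (discontinuous) case via R\"uschendorf's distributional transform is not in the paper at all; the distributional-transform route is a clean modern proof and your outline of it is accurate, including the key a.s.\ equivalence of $\{U_j \le F_j(y_j)\}$ and $\{X_j \le y_j\}$.
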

\begin{proof}
See, for instance, \cite{Nelsen:2006} (p. 18-21). 
\end{proof}
\noindent 
If the marginal distribution functions $F_1, \ldots, F_d$
are continuous, it is easy to obtain (\ref{t: Sklar}) by using the fact that for a random variable $Y$ with continuous distribution function $G$,
the random variable $G(Y)$ is standard uniformly distributed (see, e.g., Proposition 5.2. in 
\cite{McNeil_et_al:2005}):
\begin{align*}
F(y_1, \ldots, y_d) &= P(X_1 \le y_1, \ldots, X_d \le y_d)\\
&= P(F_1(X_1) \le F_1(y_1), \ldots, F_d(X_d) \le F_d(y_d)) \\
& = P(U_1 \le F_1(y_1), \ldots, U_d \le F_d(y_d))\\
& = C(F_1(y_1), \ldots, F_d(y_d)),
\end{align*}
where the random variables $U_1, \ldots, U_d$ are standard uniformly distributed and where we let $C$ denote the joint distribution function of the random vector $(U_1, \ldots, U_d)$. 
\begin{remark} Note that there is no particular reason or justification behind transforming the marginal distributions to standard uniform distributions, and 
thereby, to using copulas. Though copulas provide a way to isolate the dependence structure of a multivariate distribution from its margins and to compare
different dependence structures, we could just as well transform the marginal distributions to any other univariate distribution. For instance, multivariate extreme value theory often 
transforms to standard Fr\'echet margins, see, e.g. Section 5.4 in \cite{Resnick:1987}. We choose copulas as one among many possibilities to describe the dependence structure in multivariate distribution functions, 
mostly out of convenience,
as they are well-established in the literature. 
\end{remark}
\begin{exa}
\cite{Gumbel:1958,Gumbel:1965} introduced the so-called \textit{Type B bivariate extreme value distribution} as a possible limiting distribution function 
of the joint distribution function of normalised component-wise maxima; it was later discussed by \cite{Kotz:2000} (p. 628) and \cite{Nelsen:2006} (p. 28) and is given by
\[
F(y_1, y_2) = \exp\left\{- \left( e^{-\theta y_1} + e^{-\theta y_2}\right)^{1/\theta} \right\},
\]
for all $y_1, y_2 \in \mathbb{R}$, where $\theta \in [1, \infty)$. 
Note that we may rewrite $F(y_1, y_2)$ in the following way in order to see that
it satisfies (\ref{t: Sklar}):
\begin{align*}
F(y_1, y_2) 
&= \exp\left\{- \left[ \left( -\log e^{-e^{-y_1}}\right)^\theta + \left( -\log e^{-e^{-y_2}}\right)^\theta \right]^{1/\theta} \right\}\\
&= C_\theta(F_1(y_1), F_2(y_2)),
\end{align*}
where $C_\theta$ is the Gumbel copula from Example \ref{d: examples_copulas} (d), and the marginal distributions are standard Gumbel, i.e. $F_1(y)=F_2(y) = \Lambda(y)$, for all 
$y \in \mathbb{R}$. 
\end{exa}
\subsection{Survival copulas}
A version of (\ref{t: Sklar}) also exists for survival functions of multivariate distribution functions. Let $\bar{F}$ be the survival
function of a random vector $\mathbf{X}= (X_1, \ldots, X_d)$ with state space $E$, i.e. let
\[
\bar{F}(\mathbf{y}) = \bar{F}(y_1, \ldots, y_d) = P(X_1 > y_1, \ldots, X_d > y_d), 
\]
for any $\mathbf{y} = (y_1, \ldots, y_d) \in E$, and let $F_j$, $\bar{F}_j=1-F_j$ 
be the marginal and survival functions of $X_j$, respectively, for each $j = 1, \ldots, d$. 
In the case of continuous margins, $(F_1(X_1), \ldots, F_d(Y_d))$ is distributed as $(U_1, \ldots, U_d) =: \mathbf{U}$, 
where the $U_j$ are standard uniform random variables. Denote by $C$ the distribution function of $\mathbf{U}$ and by $\hat{C}$ the \textit{survival copula} 
of $C$, that is, let $\hat{C}$ be the joint distribution function of $\mathbf{1-U}$. Then,
\begin{equation}\label{t: Sklar_survival}
\bar{F}(y_1, \ldots, y_d) = \hat{C}\left(\bar{F_1}(y_1), \ldots, \bar{F_d}(y_d)\right), 
\end{equation}
since 
\[
\bar{F}(y_1, \ldots, y_d) = P\left( 1-F_1(X_1) \le \bar{F_1}(y_1), \ldots, 1-F_d(X_d) \le \bar{F_d}(y_d)\right).  
\]
Note that (\ref{t: Sklar_survival}) also holds if the margins are discontinuous. Moreover, note that $\hat{C}$ is a copula and not to be mixed up 
with the survival function of a copula which is not a copula. Denote the survival function of a copula $C$ by $\bar{C}$. Then
\begin{align*}
\begin{split}
\bar{C}(u_1, \ldots, u_d) &= P(U_1 > u_1, \ldots, U_d > u_d) \\
&= P(1-U_1 \le 1-u_1, \ldots, 1-U_d \le 1-u_d)\\
& = \hat{C}(1-u_1, \ldots, 1-u_d).
\end{split}
\end{align*}
In the case $d=2$, we have the following useful relationship between a copula $C$ and its survival copula $\hat{C}$:
\[
\hat{C}(1-u, 1-v) = 1-u-v + C(u,v).  
\]
\begin{exa}\label{e: MO_exp}
\cite{Marshall/Olkin:1967a,Marshall/Olkin:1967,Marshall/Olkin:1985} 
offer three different de\-ri\-va\-tions of a specific multivariate exponential distribution. 
One of these is achieved by using a fatal shock model; see also Section~3.1.1 in \cite{Nelsen:2006}.
For simplicity, we suppose that we are in two dimensions. We consider a two-component system, for instance a two engine aircraft. The components fail after they receive a shock
(that is always fatal). 
Let $X_1$ and $X_2$ denote the
lifetimes of the first and second component, respectively, and let $F$ be the joint distribution function of $(X_1,X_2)$.
We can model the occurrence of shocks to the first, the second, and to both components, up to 
a time $t \in \mathbb{R}_+$, by three independent Poisson
processes $Z_1(t)$, $Z_2(t)$ and $Z_{12}(t)$ with parameters $\nu_1, \nu_2,  \nu_{12} >0$, respectively. For any $y_1, y_2 \in [0,\infty)$, 
the survival function of $(X_1,X_2)$ is then given by
\begin{align}\label{d: surv_fct_MO_exp}
\begin{split}
\bar{F}(y_1,y_2) &= P(X_1 > y_1, X_2 > y_2)\\ 
&= P\left[Z_1(y_1)=0, Z_2(y_2)=0, Z_{12}(\max(y_1,y_2))=0\right]\\
& = \exp\left\{ -\nu_1 y_1 -\nu_2 y_2 - \nu_{12}\max(y_1,y_2) \right\},
\end{split}
\end{align}
and we say that $(X_1,X_2)$ follows the \textit{bivariate Marshall-Olkin exponential distribution}. 
The survival function of $X_1$ is given by $\bar{F_1}(y_1):= P(X_1 >y_1) = P[Z_1(y_1)=0, Z_{12}(y_1)=0] = \exp\{-(\nu_1 + \nu_{12})y_1\}$, and
similarly, the survival 
function of $X_2$ is 
$\bar{F_2}(y_2)= \exp\{-(\nu_2 + \nu_{12})y_2\}$. In order to determine the survival copula $\hat{C}$, note first that 
$\max(y_1,y_2) $ $=$ $ y_1+y_2 - \min(y_1,y_2)$ and thus
\begin{align*}
\bar{F}(y_1,y_2) &= \exp\left\{ -(\nu_1 + \nu_{12})y_1 - (\nu_2 + \nu_{12})y_2 + \nu_{12} \min(y_1,y_2)\right\}\\
&= \bar{F_1}(y_1) \bar{F_2}(y_2) \min\left\{ \exp(\nu_{12}y_1), \exp(\nu_{12}y_2)\right\}.
\end{align*}
By setting $u:= \bar{F_1}(y_1)$, $v:= \bar{F_2}(y_2)$, $\alpha := \nu_{12}/(\nu_1 + \nu_{12}) \in (0,1)$, and 
$\beta := \nu_{12}/(\nu_2 + \nu_{12}) \in (0,1)$, 
we find that $\exp(\nu_{12}y_1) = u^{-\alpha}$, $\exp(\nu_{12}y_2) = v^{-\beta}$, and therefore
\begin{align*}
\bar{F}(y_1,y_2) &= \hat{C}\left(\bar{F_1}(y_1), \bar{F_2}(y_2)\right) 
=\hat{C}(u,v)\\
&=uv\min\left(u^{-\alpha}, v^{-\beta}\right)
= \min\left( u^{1-\alpha}v, uv^{1-\beta}\right). 
\end{align*}
The survival copula of the bivariate Marshall-Olkin exponential distribution is thus given by the Marshall-Olkin copula that we introduced in Example 
\ref{d: examples_copulas} (f). The bivariate Marshall-Olkin exponential distribution will be treated in more detail in Section \ref{Sec: MO_Geo}.
\end{exa}
\subsection{Absolutely continuous and singular components of copulas}
A $d$-dimensional copula need not be absolutely continuous as there might not be a density with respect to Lebesgue measure on $\mathbb{R}^d$; see, for instance,
Section 2.4 in \cite{Nelsen:2006} or Theorem 1.1 in \cite{Joe:1997}. The copula might thus have a singular component. More precisely (for $d=2$), 
each copula may be expressed as follows:
\[
C(u,v) = A_C(u,v) + S_C(u,v), 
\]
where we suppose that $A_C$ is absolutely continuous
with respect to two-dimen\-sional Lebesgue measure with density $a_C$. Then  
\begin{align}
A_C(u,v) = \int_0^v \int_0^u  a_C(s,t) 
\quad \textnormal{and} \quad S_C(u,v) &= C(u,v) - A_C(u,v), \label{d: A_S}
\end{align}
denote the copula's \textit{absolutely continuous} and \textit{singular components}, respectively. If $C \equiv A_C$ and $S_C \equiv 0$ on $[0,1]^2$, 
then $C$ is \textit{absolutely continuous} and $a_C(u,v) = \frac{\partial^2}{\partial v \partial u}\, C(u,v)$; 
if $C \equiv S_C$ on $[0,1]^2$, then $C$ is \textit{singular}. Among the copulas of Example \ref{d: examples_copulas}, 
the independence, Gumbel and Clayton copulas are absolutely continuous, whereas the comonotonicity and countermonotonicity copulas are singular 
(see also Figure \ref{f: Exa_cop}). The only copula among these
to have both an absolutely continuous and a singular component is the Marshall-Olkin copula:
\begin{exa}\label{e: exa_MO_cop_AC_SC}
Let $\alpha, \beta \in (0,1)$ and let $C_{\alpha, \beta}$ belong to the Marshall-Olkin family of copulas as defined in Example \ref{d: examples_copulas} (f). 
For all $(u,v) \in [0,1]^2$,
\begin{equation}\label{p: density_MO_cop}
\frac{\partial^2}{\partial u \partial v}\, C_{\alpha, \beta}(u,v) 
= \left\{ \begin{array}{ll} (1-\alpha)u^{-\alpha}, &  u^\alpha > v^\beta,\\
(1-\beta) v^{-\beta}, &  u^\alpha < v^\beta,
\end{array}\right. 
\end{equation}
and the density $a_C(u,v)$ is given by the right-hand side of (\ref{p: density_MO_cop}).
Integration of $a_C$ as in (\ref{d: A_S}) gives the absolutely continuous component of $C_{\alpha,\beta}$:
\begin{align*}
A_C(u,v) &= u^{1-\alpha} v - \frac{\alpha \beta}{\alpha + \beta - \alpha \beta} \left(v^{\beta} \right)^{\frac{\alpha + \beta - \alpha \beta}{\alpha \beta}},
\quad \textnormal{ for } u^\alpha > v^\beta,\\
\textnormal{whereas} \quad A_C(u,v) &= uv^{1-\beta} - \frac{\alpha \beta}{\alpha + \beta - \alpha \beta} \left(u^\alpha \right)^{\frac{\alpha + \beta - \alpha \beta}{\alpha \beta}}, 
\quad \textnormal{ for } u^\alpha < v^\beta.
\end{align*}
Thus, for all $(u,v) \in [0,1]^2$,
\begin{equation}\label{d: A_C_MO_copula}
A_C(u,v) = C_{\alpha, \beta}(u,v) - \frac{\alpha \beta}{\alpha + \beta - \alpha \beta} \left\{ \min\left(u^{\alpha}\, ,\, v^{\beta}\right)^{\frac{\alpha + \beta - \alpha \beta}{\alpha \beta}}\right\},
\end{equation}
and $C_{\alpha, \beta}$ has a singular component concentrated on the curve $u^{\alpha} = v^{\beta}$ in $[0,1]^2$. 
By (\ref{d: A_S}) and (\ref{d: A_C_MO_copula}), the singular component is given by
\begin{equation}\label{p: S_C_MO}
S_C(u,v) =  \frac{\alpha \beta}{\alpha + \beta - \alpha \beta} \left\{ \min\left(u^{\alpha}\, ,\,  v^{\beta}\right)^{\frac{\alpha + \beta - \alpha \beta}{\alpha \beta}}\right\}
= \int_0^{\min\left(u^\alpha \, , \, v^{\beta}\right)} t^{\frac{\alpha + \beta - 2\alpha \beta}{\alpha \beta}} dt. 
\end{equation}
For two standard uniform random variables $U$ and $V$ whose joint distribution function is given by $C_{\alpha, \beta}$, we have
\[
P\left( U^\alpha = V^\beta \right) = S_C(1,1) =   \frac{\alpha \beta}{\alpha + \beta - \alpha \beta}\,.
\]
\end{exa}
\subsection{Coefficients of tail dependence}
There exist a number of measures that can be used to quantify the dependence between the components of a random pair $(X_1,X_2)$ on $E \subseteq \mathbb{R}^2$. 
Among these are the linear correlation
between $X_1$ and $X_2$, rank correlations like Kendall's tau and Spearman's rho, as well as coefficients of tail dependence; see 
Section 5.2 in \cite{McNeil_et_al:2005} or Chapter 5 in \cite{Nelsen:2006}. Among these, we use the \textit{coefficients of tail dependence} of \cite{Joe:1993} which measure
the strength of the dependence in the tails of the distribution of a random pair. The coefficient of upper tail dependence is defined as the limiting 
probability (if it exists) of one of the components exceeding its $q$-quantile,
given that the other component exceeds its $q$-quantile, for $q \to 1$. The coefficient of lower tail dependence is defined in a similar way, with both components
now being in the lower left quadrant instead of in the upper right quadrant of $E$.  We first define the \textit{generalised inverse} $G^{\leftarrow}$ of a distribution function $G$
on $B \subseteq \mathbb{R}$ as follows: 
$G^{\leftarrow} (w) = \inf\{ y \in B:\, G(y) \ge w\}$, for any $w \in [0,1]$. Moreover, suppose that 
$X_1$ and $X_2$ are continuous random variables with distribution functions
$F_1$ and $F_2$, respectively. 
We define the \textit{coefficient of upper tail dependence} (if it exists in $[0,1]$) as follows:
\[
\lambda_u := \lambda_u(X_1, X_2) =  \lim_{q \uparrow 1} P\left( X_2 > F_2^{\leftarrow}(q) \,|\, X_1 >  F_1^{\leftarrow} (q)\right). 
\]
If $\lambda_u=0$, we say that $X_1$ and $X_2$ are \textit{asymptotically independent} in the upper tail; 
if, however, $\lambda_u \in (0,1]$, we say that they show asymptotic dependence 
in the upper tail, or \textit{extremal dependence}. Note that we can exchange 
$\{X_2 > F_2^{\leftarrow}(q)\}$ and $\{X_1 >  F_1^{\leftarrow} (q)\}$ in the above definition.
Similarly, the \textit{coefficient of lower tail dependence} is defined by
\[
\lambda_l := \lambda_l(X_1, X_2) =  \lim_{q \downarrow 0} P\left( X_2 \le F_2^{\leftarrow}(q) \,|\, X_1 \le  F_1^{\leftarrow} (q)\right),
\]
provided there exists a limit $\lambda_l \in [0,1]$. By Theorem 5.4.2 in \cite{Nelsen:2006}, for continuous $X_1$ and $X_2$, the coefficients of tail dependence depend only on the unique copula $C$ of 
the joint distribution of $(X_1,X_2)$:
\begin{align*}
\lambda_u &:= \lambda_u(C)= \lim_{q \uparrow 1} \frac{\hat{C}(1-q,1-q)}{1-q} = 2- \lim_{q \uparrow 1} \frac{1-C(q,q)}{1-q},\\
\lambda_l & := \lambda_l(C)= \lim_{q \downarrow 0} \frac{C(q,q)}{q}, 
\end{align*}
where $\hat{C}$ denotes the survival copula. Furthermore, note that we have
\begin{equation}\label{t: coefs_cop_survcop}
\lambda_u(\hat{C}) = \lambda_l(C) \quad \textnormal{ and } \quad \lambda_l(\hat{C}) = \lambda_u(C),
\end{equation}
since $\hat{\hat{C}}(1-q,1-q) = C(1-q, 1-q)$. The case of discontinuous margins is examined (for upper tail dependence) in \cite{Feidt_et_al:2010}, where Proposition 4 says that the existence of 
the coefficient of upper tail dependence is not guaranteed unless the marginal distribution functions satisfy (\ref{t: conditionFeq}). 
\begin{exa}\label{e: examples_copulas_coefs}
For the copulas in Example \ref{d: examples_copulas} (in the case $d=2$), we can easily compute the coefficients of upper and lower tail dependence:
\begin{center}
\begin{tabular}{l|cc}
Copula & $\lambda_l$ & $\lambda_u$\\
\hline 
Independence & $0$ & $0$\\
Comonotonicity & $1$ & $1$\\
Countermonotonicity & $\nexists$ & $0$\\
Gumbel & $0$ & $2-2^{1/\theta}$ \\
Clayton ($\theta >0$) & $2^{-1/\theta}$ & $0$\\
Marshall-Olkin & $0$ & $\min(\alpha, \beta)$
\end{tabular}
\end{center}
Figure \ref{f: Exa_cop} illustrates the tail behaviour of these copulas.
\begin{figure}[!ht]
\centering
\includegraphics[scale=0.65]{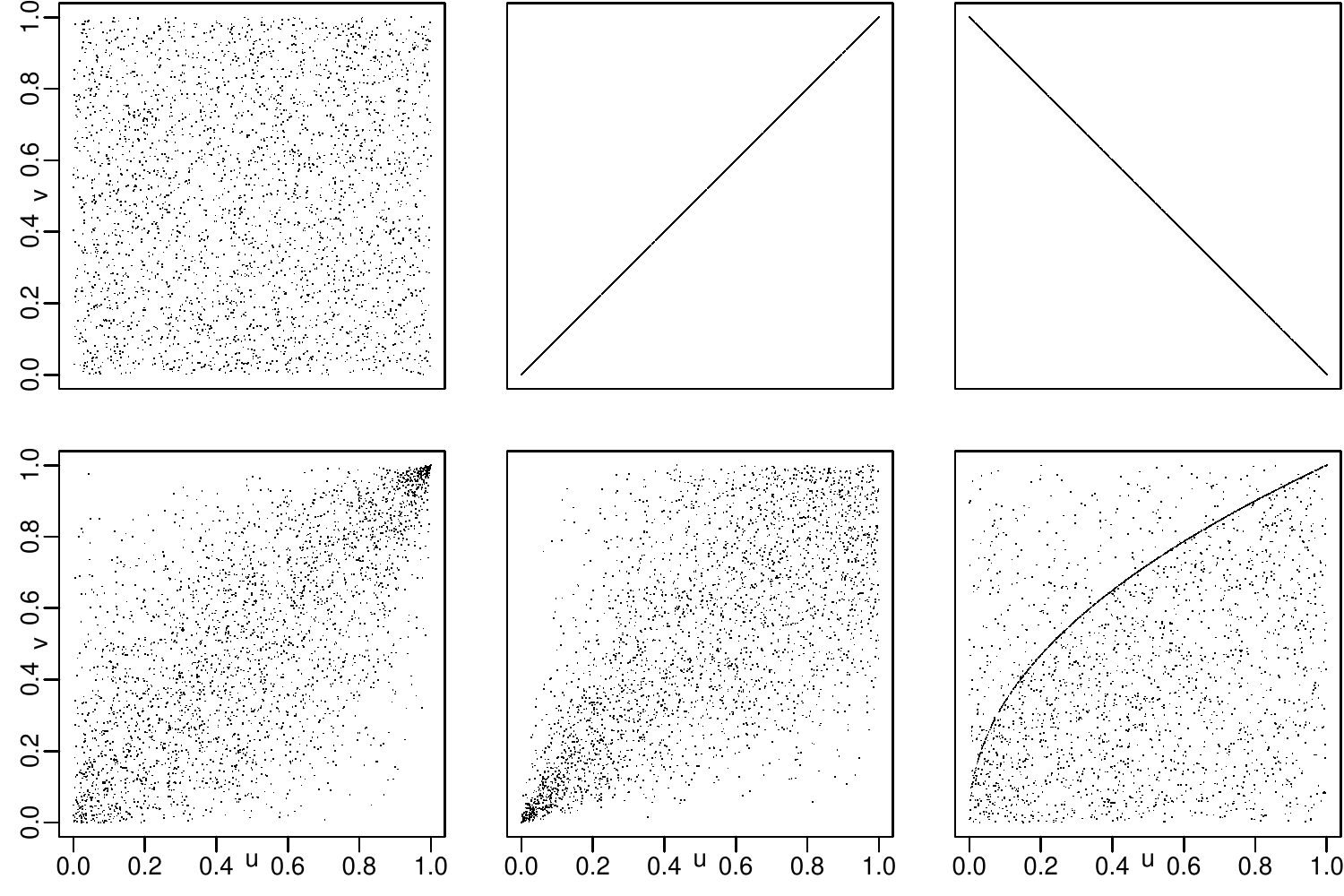}
\caption{We simulate $n=3000$ points from each of the copulas in Example \ref{d: examples_copulas}. 
(Top left) The independence copula shows no tail dependence. 
(Top middle) The comonotonicity copula has upper and lower tail dependence concentrated on the line $u=v$.
(Top right) The countermonotonicity has no tail dependence; all points are concentrated on the line $v = 1-u$. 
(Bottom left) The Gumbel copula with $\theta=2$ exhibits upper tail dependence. 
(Bottom middle) The Clayton copula with $\theta=2$ shows lower tail dependence. 
(Bottom right) The Marshall-Olkin copula
with parameters $\alpha = 0.35$ and $\beta = 0.75$ shows upper tail dependence, concentrated on the curve $u^\alpha = v^\beta$.}
\label{f: Exa_cop}
\end{figure}
\end{exa}
\section{Poisson process approximation for MPPE's with multivariate marks}\label{Sec: PP_approx_MPPE_mult}
Theorem \ref{t: MyMichelMultivariate} gives $P(\mathbf{X} \in A)$ as error estimate for the approximation, in the total variation distance, of the law of an  MPPE with 
\iid multivariate marks distributed like $\mathbf{X}$ by that of a Poisson process whose mean measure equals that of the MPPE. We now apply this result to 
two different choices of the set $A$ that denotes the region in the state space containing extreme points: in Section \ref{s: joint_thresh_exc} we define $A$
such that it contains only points having threshold exceedances in all components, whereas Section \ref{s: single_thresh_exc} allows $A$ to also contain points that exceed 
thresholds in only one component. In both cases, we establish easy bounds on the error estimate $P(\mathbf{X} \in A)$ that are valid for any choice of multivariate distribution 
function for the marks, and for any dimension $d\ge 2$. 
By way of the example of MPPE's with bivariate Marshall-Olkin exponential marks, Section \ref{s: MO_Exp_joint}, together with Example \ref{e: MO_Exp_single_comp}
from Section \ref{s: single_thresh_exc},
highlight the importance of the choice of the scaling constant of the normalisation. Depending on the scaling, the occurrence of threshold exceedances in both components can have a probability disappearing
or non-disappearing with increasing $n$. 

\subsection{Joint threshold exceedances}\label{s: joint_thresh_exc}
We suppose that the possible point configurations taken on by the MPPE's in their $d$-dimensional state space $E$ only feature points for which 
every component exceeds a threshold. As discussed in the motivational example at the beginning of Chapter \ref{Chap: Multivariate_extremes}, it only makes 
sense to define a set $A$ and a corresponding MPPE $\Xi_A$ for cases where there is actually some non-negligeable probability that the components of the marks are
jointly extreme. Theorem \ref{t: dTV_mult_df_extreme_A} gives an easy error estimate for any choice of joint distribution function, whereas Proposition 
\ref{t: dTV_copula_joint_extreme_A} is an application of Theorem \ref{t: dTV_mult_df_extreme_A} to the case of joint distribution
functions with standard uniform margins, i.e. to the case of copulas. Corollary \ref{t: dTV_mult_df_extreme_A_normalised} reformulates Theorem \ref{t: dTV_mult_df_extreme_A}
in terms of normalised random vectors, using the $^\star$ notation and multivariate analogues of 
(\ref{d: normalised_rv})-(\ref{t: MyMichel_normalised}). 
\begin{thm}\label{t: dTV_mult_df_extreme_A}
For each integer $n \ge 1$, let $\mathbf{X}_1, \ldots, \mathbf{X}_n$ be \iid copies of a $d$-di{-}mensional random vector $\mathbf{X} =(X_1, \ldots, X_d)$ with 
state space $E \subseteq \mathbb{R}^d$, joint distribution function $F$ and marginal distribution functions $F_j$ with right endpoints $x_{F_j}$, respectively, for each
$j = 1, \ldots, d$, where $d \ge 1$.  
For a fixed choice of $(u_{1n}, \ldots, u_{dn}) \in E$, define 
\[
A:=A_n:=\{(u_{1n},x_{F_1}] \times \ldots \times (u_{dn},x_{F_d}]\} \cap E.
\]
Let $\Xi_A = \sum_{i=1}^n I_{\{ \mathbf{X}_i \in A\}} \delta_{\mathbf{X}_i}$ be the marked point process of joint exceedances and let 
$W_A = \sum_{i=1}^n I_{\{\mathbf{X}_i \in A\}}$ denote the random number of points in $A$. Then,
\[
d_{TV}(\mathcal{L}(\Xi_A), \mathrm{PRM}(\mathbb{E}\Xi_A)) \le d_{TV}(\mathcal{L}(W_A), \mathrm{Poi}(\mathbb{E}W_A)) \le \min_{1 \le j \le d} \bar{F}_j(u_{jn}). 
\]
\end{thm}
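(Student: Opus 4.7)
The plan is to apply Theorem \ref{t: MyMichelMultivariate} directly and then reduce the error estimate $P(\mathbf{X} \in A)$ to the stated minimum over marginal survival probabilities. Since the hypotheses of Theorem \ref{t: MyMichelMultivariate} are exactly satisfied here (\iid $d$-dimensional marks, a fixed measurable set $A \in \mathcal{E}$), the two-step inequality
\[
d_{TV}(\mathcal{L}(\Xi_A), \mathrm{PRM}(\mathbb{E}\Xi_A)) \le d_{TV}(\mathcal{L}(W_A), \mathrm{Poi}(\mathbb{E}W_A)) \le P(\mathbf{X} \in A)
\]
is immediate. So the only real content left is bounding $P(\mathbf{X} \in A)$.

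For this last step, I would simply observe the set inclusion
\[
\{\mathbf{X} \in A\} \;=\; \bigcap_{j=1}^{d} \{X_j > u_{jn}\} \;\subseteq\; \{X_j > u_{jn}\}
\]
for every index $j \in \{1, \ldots, d\}$. Monotonicity of probability then yields $P(\mathbf{X} \in A) \le P(X_j > u_{jn}) = \bar{F}_j(u_{jn})$ for each $j$, and taking the minimum over $j$ gives the required bound. This is nothing more than a one-sided Fr\'echet-Hoeffding argument applied to the joint survival function: the joint tail probability of being extreme in all coordinates is dominated by the marginal tail probability in any single coordinate.

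There is no real obstacle in the proof; the whole argument is bookkeeping on top of Theorem \ref{t: MyMichelMultivariate}, combined with the trivial set inclusion above. The only point worth flagging is that the bound does not require any assumption on the dependence structure between the coordinates of $\mathbf{X}$, which is precisely what makes it useful: whatever copula ties the margins together, the joint exceedance probability is controlled by the smallest of the marginal exceedance probabilities. Of course, the bound can be very loose when the components are close to independent (in which case the true $P(\mathbf{X} \in A)$ behaves like a product $\prod_j \bar{F}_j(u_{jn})$), and it is tight when the components are comonotonic; this trade-off motivates the study in Section \ref{Sec: Archim_tail_dep} of copulas exhibiting upper tail dependence, for which the bound is of the right order.
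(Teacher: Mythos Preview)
Your proof is correct and matches the paper's argument essentially verbatim: apply Theorem \ref{t: MyMichelMultivariate} to obtain $P(\mathbf{X}\in A)$ as the bound, then use the set inclusion $\bigcap_{j}\{X_j>u_{jn}\}\subseteq\{X_k>u_{kn}\}$ for each $k$ and take the minimum over $k$.
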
 
\begin{proof} By Theorem \ref{t: MyMichelMultivariate},
\begin{align*}
d_{TV}(\mathcal{L}(\Xi_A), \mathrm{PRM}(\mathbb{E}\Xi_A)) &\le d_{TV}(\mathcal{L}(W_A), \mathrm{Poi}(\mathbb{E}W_A))\\
& \le  P\left(\mathbf{X} \in A\right) = P\left( X_1 \ge u_{1n}, \ldots, X_d \ge u_{dn} \right)\\
& \le \min_{1\le j \le d} P(X_j > u_{jn}) = \min_{1 \le j \le d} \bar{F}_j(u_{jn}),
\end{align*}
where the last inequality follows from 
\[
\bigcap_{1 \le j \le d } \left\{ X_j > u_{jn}\right\} \subset \left\{ X_k > u_{kn}\right\}, \quad \textnormal{for any } k \in \{1, \ldots, d\}.  
\]
\end{proof}
\noindent We may thus bound the error of the approximation by the minimum of the marginal probabilities of threshold exceedances (or, in fact, by any of these probabilities, as 
$\min_{1 \le j \le d}$ $\bar{F}_j(u_{jn})$ $\le$ $ \bar{F}_k(u_{kn})$, for any $k \in \{1, \ldots, d\}$). The above theorem might be easier to interpret 
in applications with random variables subject to a linear transformation; we therefore restate it as follows: 
\begin{cor}\label{t: dTV_mult_df_extreme_A_normalised}
Let $\mathbf{X} =(X_1, \ldots, X_d)$ be a $d$-dimensional random vector  with 
state space $E \subseteq \mathbb{R}^d$, $d \ge 1$. For each integer $n \ge 1$, let $\mathbf{X}_1^\star, \ldots, \mathbf{X}_n^\star$ 
be \iid copies of the normalised random vector $\mathbf{X}^\star = (X_1^\star, \ldots, X_d^\star)$ with state space $E^\star$ and joint 
distribution function $F^\star$, where, for each $j=1, \ldots, d$ and constants $a_{jn}, b_{jn} \in \mathbb{R}$ with $a_{jn} >0$, 
the normalised random variable $X_j^\star = a_{jn}^{-1}(X_j - b_{jn})$ has distribution function $F_j^\star$ and right endpoint $x^\star_{F_j^\star}$. 
For a fixed choice of $(u_{1n}^\star, \ldots, u_{dn}^\star) \in E^\star$, define 
\[
A^\star = (u_{1n}^\star, x^\star_{F_1^\star}]\times \ldots \times(u_{dn}^\star, x^\star_{F_d^\star}]\cap E^\star,
\]
let $\Xi^\star_{A^\star} = \sum_{i=1}^n I_{\{\mathbf{X}_i^\star \in A^\star\}} \delta_{\mathbf{X}_i^\star}$ on $E^\star$ and 
$W^\star_{A^\star} = \sum_{i=1}^n I_{\{\mathbf{X}_i^\star \in A^\star\}}$. Then,
\[
d_{TV}\left( \mathcal{L}\left(\Xi^\star_{A^\star} \right), \mathrm{PRM}(\mathbb{E}\Xi^\star_{A^\star})\right) \le
d_{TV}\left( \mathcal{L}\left(W^\star_{A^\star} \right), \mathrm{Poi}(\mathbb{E}W^\star_{A^\star})\right)
\le \min_{1 \le j \le d} \bar{F}^\star_j(u^\star_{jn}).
\]
\qed
\end{cor}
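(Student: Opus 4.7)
The plan is to reduce the corollary directly to Theorem \ref{t: dTV_mult_df_extreme_A} by exploiting the fact that the componentwise affine normalisation $\mathbf{x} \mapsto \mathbf{x}^\star$ with $x_j^\star = a_{jn}^{-1}(x_j - b_{jn})$ is a measurable bijection between $E$ and $E^\star$, under which total variation distances are preserved. This is the multivariate analogue of the relation (\ref{d: normalised_dTV}) that was used throughout Section \ref{s: MPPE_cont}.

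First, I would set $u_{jn} := a_{jn} u_{jn}^\star + b_{jn}$ for each $j = 1, \ldots, d$, so that $X_j^\star > u_{jn}^\star$ if and only if $X_j > u_{jn}$. Equivalently, letting $\mathbf{X}_i$ denote the unnormalised version of $\mathbf{X}_i^\star$, we have $\mathbf{X}_i^\star \in A^\star$ if and only if $\mathbf{X}_i \in A$, where $A := (u_{1n}, x_{F_1}] \times \ldots \times (u_{dn}, x_{F_d}] \cap E$. In particular $W^\star_{A^\star} = W_A$ as random variables, and the point processes $\Xi^\star_{A^\star}$ and $\Xi_A$ correspond through the bijection applied point by point.

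Next I would invoke the fact that for a measurable bijection $\phi : E \to E^\star$, the induced map on $M_p(E)$ takes $\Xi_A$ to $\Xi^\star_{A^\star}$ almost surely, and sends $\mathrm{PRM}(\mathbb{E}\Xi_A)$ to $\mathrm{PRM}(\mathbb{E}\Xi^\star_{A^\star})$ (the latter because the mean measure transforms correctly and the independence-of-disjoint-regions property is preserved). Since total variation distance is invariant under pushforward by a bijection, this yields
\[
d_{TV}\bigl(\mathcal{L}(\Xi^\star_{A^\star}), \mathrm{PRM}(\mathbb{E}\Xi^\star_{A^\star})\bigr) = d_{TV}\bigl(\mathcal{L}(\Xi_A), \mathrm{PRM}(\mathbb{E}\Xi_A)\bigr),
\]
and similarly for the Poisson approximation of the counts. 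Applying Theorem \ref{t: dTV_mult_df_extreme_A} to the unnormalised objects bounds both distances above by $\min_{1 \le j \le d} \bar{F}_j(u_{jn})$. To conclude, I would note that
\[
\bar{F}_j(u_{jn}) = P(X_j > a_{jn} u_{jn}^\star + b_{jn}) = P(X_j^\star > u_{jn}^\star) = \bar{F}_j^\star(u_{jn}^\star),
\]
which delivers the claimed bound.

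The proof is essentially routine, so there is no substantial obstacle; the only mildly delicate bookkeeping is verifying that the pushforward under $\phi$ really does send $\mathrm{PRM}(\mathbb{E}\Xi_A)$ to $\mathrm{PRM}(\mathbb{E}\Xi^\star_{A^\star})$ and that it preserves the total variation distance, but both are immediate from the characterisation of Poisson processes via their mean measures and from the definition of $d_{TV}$ in terms of suprema over measurable sets.
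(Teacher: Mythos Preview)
Your proposal is correct and matches the paper's intent: the paper gives no proof at all for this corollary, presenting it simply as a restatement of Theorem \ref{t: dTV_mult_df_extreme_A} for normalised variables and closing with \qed. Your bijection argument via (\ref{d: normalised_dTV}) is sound, though note that an even shorter route is to apply Theorem \ref{t: dTV_mult_df_extreme_A} \emph{directly} to the i.i.d.\ sample $\mathbf{X}_1^\star, \ldots, \mathbf{X}_n^\star$ on $E^\star$, since the theorem's hypotheses make no reference to whether the random vector is normalised or not.
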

\noindent The following proposition applies Theorem \ref{t: dTV_mult_df_extreme_A} to copulas.
\begin{prop}\label{t: dTV_copula_joint_extreme_A}
For each integer $n \ge 1$, let $\mathbf{U}_1, \ldots, \mathbf{U}_n$ be \iid copies of a $d$-dimensional random vector $\mathbf{U}$ with 
state space $E \subseteq [0,1)^d$, standard uniform margins $U_1, \ldots,U_d$, and joint distribution function $C$, where $d \ge 1$. 
For a fixed choice of $(s_{1n},$ $\ldots, s_{dn})$ $ \in$ $ (0,n]^d$, define 
\[
A:=A_n:=\left[1-\frac{s_{1n}}{n},1\right) \times\ldots \times \left[1-\frac{s_{dn}}{n},1\right).
\]
Let $\Xi_A = \sum_{i=1}^n I_{\{ \mathbf{U}_i \in A\}} \delta_{\mathbf{U}_i}$ be the marked point process of joint exceedances and let 
$W_A = \sum_{i=1}^n I_{\{\mathbf{U}_i \in A\}}$ denote the random number of points in $A$. Then,
\[
d_{TV}(\mathcal{L}(\Xi_A), \mathrm{PRM}(\mathbb{E}\Xi_A)) \le d_{TV}(\mathcal{L}(W_A), \mathrm{Poi}(\mathbb{E}W_A)) \le \min_{1 \le j \le d} \frac{s_{jn}}{n}. 
\]
\end{prop}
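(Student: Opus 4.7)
The plan is to recognize this proposition as an immediate specialization of Theorem \ref{t: dTV_mult_df_extreme_A} to the case where the marginal distributions of the random vector $\mathbf{U}$ are standard uniform on $[0,1)$. The copula $C$ plays the role of the joint distribution function $F$, and the margins $U_1, \ldots, U_d$ have distribution functions $F_j(u) = u$ on $[0,1)$ with right endpoints $x_{F_j} = 1$ for each $j = 1, \ldots, d$.

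First I would identify the threshold values: setting $u_{jn} = 1 - s_{jn}/n$ for each $j$, the set $A$ from the statement of Proposition \ref{t: dTV_copula_joint_extreme_A} matches exactly the form $\{(u_{1n}, x_{F_1}] \times \cdots \times (u_{dn}, x_{F_d}]\} \cap E$ required in Theorem \ref{t: dTV_mult_df_extreme_A} (up to the harmless inclusion of the lower endpoint, which has probability zero under continuous margins). The choice $(s_{1n}, \ldots, s_{dn}) \in (0,n]^d$ ensures $u_{jn} \in [0,1) \subseteq E$, so the hypotheses of the theorem are met.

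Next, I would evaluate the marginal survival functions at these thresholds. Since each $U_j$ is standard uniform, $\bar{F}_j(u_{jn}) = 1 - u_{jn} = s_{jn}/n$. Applying Theorem \ref{t: dTV_mult_df_extreme_A} directly then yields
\[
d_{TV}(\mathcal{L}(\Xi_A), \mathrm{PRM}(\mathbb{E}\Xi_A)) \le d_{TV}(\mathcal{L}(W_A), \mathrm{Poi}(\mathbb{E}W_A)) \le \min_{1 \le j \le d} \bar{F}_j(u_{jn}) = \min_{1 \le j \le d} \frac{s_{jn}}{n},
\]
which is the claimed bound. There is no real obstacle here; the proposition is essentially a restatement of Theorem \ref{t: dTV_mult_df_extreme_A} in the language of copulas, and the only work is verifying that the uniform margins make $\bar{F}_j(1 - s_{jn}/n)$ collapse to the simple expression $s_{jn}/n$.
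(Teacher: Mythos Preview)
Your proposal is correct and follows exactly the same approach as the paper: apply Theorem \ref{t: dTV_mult_df_extreme_A} with $u_{jn} = 1 - s_{jn}/n$ and use that the standard uniform margins give $\bar{F}_j(u_{jn}) = s_{jn}/n$. Your remark about the closed versus open lower endpoint being harmless under continuous margins is a nice clarification that the paper leaves implicit.
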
 
\begin{proof}
By Theorem \ref{t: dTV_mult_df_extreme_A}, we have 
\[
d_{TV}(\mathcal{L}(\Xi_A), \mathrm{PRM}(\mathbb{E}\Xi_A)) 
\le d_{TV}(\mathcal{L}(W_A), \mathrm{Poi}(\mathbb{E}W_A))
 \le \min_{1 \le j \le d} \bar{F}_j(u_{jn}),
\]
where $u_{jn}= 1-s_{jn}/n$ and $\bar{F}_j(u_{jn}) = 1-u_{jn}$, for all $j=1, \ldots, d$.  
\end{proof}
\begin{remark}
Proposition \ref{t: dTV_copula_joint_extreme_A} is not a direct application of Corollary \ref{t: dTV_mult_df_extreme_A_normalised}. In terms of the notation of
Corollary \ref{t: dTV_mult_df_extreme_A_normalised}, the norming constants used for Proposition \ref{t: dTV_copula_joint_extreme_A} 
correspond to $a_{jn} = -(1/n) <0$ and $b_{jn} = 1$, for all $j= 1, \ldots, d$, and thereby do not satisfy the conditions of the corollary. 
\end{remark}
\begin{exa}\label{e: Example_copula2_joint_exc}
For $d=2$ and for any $(u,v) \in E:=[0,1]^2$, consider the family of copulas 
\[
C_\theta(u,v)= \max\left\{ 1-\left[ (1-u)^\theta + (1-v)^\theta \right]^{1/\theta}\, , \, 0\right\}, \quad \textnormal{ where }\theta \in [1, \infty).
\]
Its coefficient of tail dependence is given by $\lambda_u = 2-2^{1/\theta}$
so that the copula $C_\theta$ displays upper tail dependence if $\theta \neq 1$. Suppose that $\theta \in (1, \infty)$ and note that,
for $u$, $v$ close to $1$, $1-[(1-u)^\theta + (1-v)^\theta]^{1/\theta}$ will be positive. 
Let $(U,V), (U_1, V_1), \ldots, (U_n,V_n)$ be \iid random pairs with standard uniform margins and joint distribution function $C_\theta$. 
We choose $(s_n,t_n) \in (0,n)^2$ and define $A = [1-s_n/n,1) \times [1-t_n/n,1)$, 
and the MPPE
\[
\Xi_A  = \sum_{i=1}^n I_{\left\{U_i \ge 1-\frac{s_n}{n}, V_i \ge 1-\frac{t_n}{n}\right\}}\delta_{(U_i,V_i)}
\]
on $\mathcal{B}([0,1]^2)$. By Proposition \ref{t: dTV_copula_joint_extreme_A}, 
\[
d_{TV}(\mathcal{L}(\Xi_A), \mathrm{PRM}(\mathbb{E}\Xi_A))  \le d_{TV}(\mathcal{L}(W_A), \mathrm{Poi}(\mathbb{E}W_A)) \le
\min\left(\frac{s_n}{n},\frac{t_n}{n}\right),
\]
where $\mathbb{E}W_A = \mathbb{E}\Xi_A([0,1]^2)$ equals
\begin{align*}
 &nP\left( U \ge 1-\frac{s_n}{n}, V \ge 1- \frac{t_n}{n}\right) \\
&=  n \left\{ 1- P\left(U \le 1- \frac{s_n}{n} \right) - P\left(V \le 1- \frac{t_n}{n} \right) + C_\theta\left(  1- \frac{s_n}{n}, 1- \frac{t_n}{n}\right) \right\}\\
&= s_n +t_n -\left( s_n^\theta + t_n^\theta\right)^{1/\theta}. 
\end{align*}
We define the intensity function of the approximating Poisson process as follows:
\[
\lambda^\star(s,t) := \frac{\partial^2}{\partial s \partial t} \left\{ s+t - \left(s^\theta + t^\theta \right)^{1/\theta} \right\}
= (\theta-1) (st)^{\theta - 1} \left(s^\theta + t^\theta\right)^{\frac{1}{\theta}-2}, 
\]
for any $(s,t) \in (0,s_n]\times(0,t_n] \subseteq(0,n)^2$. Define $A^\star = (0,s_n]\times(0,t_n]$ and
$E^\star := [0,n]^2$. 
The intensity measure of the approximating Poisson process may thus be expressed in terms of the 
intensity function on $\mathcal{B}(E^\star)$, i.e. 
\[
\bl^\star(B^\star) = \int_{A^\star \cap B^\star} \lambda^\star(s,t) dsdt, \quad \textnormal{ for any } B^\star \in \mathcal{B}(E^\star).  
\]
A Poisson process with intensity measure $\bl^\star$ is an example of a Poisson process that is easy to use and thus a good choice for approximating the MPPE. The smaller
the choices for (one of) the values $s_n,t_n$, the sharper the approximation will be and the fewer joint exceedances will be expected. 
For instance, suppose that $s_n = t_n = \log n$. The error estimate
for the total variation distance is then $\log (n) /n$ and we expect $(2-2^{1/\theta})\log n$ joint threshold exceedances (again, for $\theta=1$, we expect no joint 
threshold exceedances). 
\end{exa}
\subsection{Single-component threshold exceedances}\label{s: single_thresh_exc}
In contrast to Section \ref{s: joint_thresh_exc}, we here define the set $A$ of extreme points in a way to capture all upper tail extremes of the components. 
That is, $A$ not only contains $d$-dimensional points that are extreme in all $d$ components, but also includes points that are extreme in less than $d$ or even only $1$
component. The results of this section may thus be used in general for any multivariate distribution regardless of whether it exhibits some kind of joint
upper-tail dependence or not. For that, they are also somewhat less precise: the estimate of the error in the total variation distance for approximation by a Poisson
process, given by Theorem \ref{t: dTV_mult_df_single_extreme_A}, is bounded by $d$ times the maximum marginal survival function, whereas the corresponding bound
in Section \ref{s: joint_thresh_exc} is only one time the minimum marginal survival function. Corollary \ref{t: dTV_mult_df_single_extreme_A_normalised} and 
Proposition \ref{t: dTV_copula_single_extreme_A} reformulate Theorem \ref{t: dTV_mult_df_single_extreme_A} for the cases of normalised random vectors and copulas, 
respectively. 
\begin{thm}\label{t: dTV_mult_df_single_extreme_A}
For each integer $n \ge 1$, let $\mathbf{X}_1, \ldots, \mathbf{X}_n$ be \iid copies of a $d$-di{-}mensional random vector $\mathbf{X} =(X_1, \ldots, X_d)$ with 
state space $E \subseteq \mathbb{R}^d$, joint distribution function $F$ and marginal distribution functions $F_j$ with right endpoints $x_{F_j}$, respectively, for each
$j = 1, \ldots, d$, where $d \ge 1$.  
For a fixed choice of $(u_{1n}, \ldots, u_{dn}) \in E$, define 
\[
A:=A_n:=\left( (-\infty, u_{1n}] \times \ldots \times (-\infty, u_{dn}] \right)^C. 
\]
Let $\Xi_A = \sum_{i=1}^n I_{\{ \mathbf{X}_i \in A\}} \delta_{\mathbf{X}_i}$ be the marked point process of joint exceedances and let 
$W_A = \sum_{i=1}^n I_{\{\mathbf{X}_i \in A\}}$ denote the random number of points in $A$. Then,
\[
d_{TV}(\mathcal{L}(\Xi_A), \mathrm{PRM}(\mathbb{E}\Xi_A)) \le d_{TV}(\mathcal{L}(W_A), \mathrm{Poi}(\mathbb{E}W_A)) \le \sum_{j=1}^d \bar{F}_j(u_{jn}). 
\]
\end{thm}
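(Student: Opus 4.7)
The plan is to apply Theorem \ref{t: MyMichelMultivariate} directly to obtain the first two inequalities for free, and then bound $P(\mathbf{X} \in A)$ using a union bound. More precisely, by Theorem \ref{t: MyMichelMultivariate},
\[
d_{TV}(\mathcal{L}(\Xi_A), \mathrm{PRM}(\mathbb{E}\Xi_A)) \le d_{TV}(\mathcal{L}(W_A), \mathrm{Poi}(\mathbb{E}W_A)) \le P(\mathbf{X} \in A),
\]
so it remains only to show that $P(\mathbf{X} \in A) \le \sum_{j=1}^{d} \bar{F}_j(u_{jn})$.

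The key observation is that the set $A$, being the complement of the $d$-dimensional box $(-\infty, u_{1n}] \times \ldots \times (-\infty, u_{dn}]$, can be rewritten via De Morgan's laws as a union of half-spaces: a point $\mathbf{x} = (x_1, \ldots, x_d)$ lies in $A$ if and only if at least one of its coordinates satisfies $x_j > u_{jn}$. Hence
\[
\{\mathbf{X} \in A\} = \bigcup_{j=1}^{d} \{X_j > u_{jn}\}.
\]
Applying Boole's inequality (which we already used in the proof of Theorem \ref{t: FH_bounds}) yields
\[
P(\mathbf{X} \in A) \le \sum_{j=1}^{d} P(X_j > u_{jn}) = \sum_{j=1}^{d} \bar{F}_j(u_{jn}),
\]
which completes the argument.

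There is no real obstacle here: the entire content of the theorem is the combination of the Michel-type bound already proved in Theorem \ref{t: MyMichelMultivariate} with the elementary union bound applied to the explicit set $A$. In contrast with Theorem \ref{t: dTV_mult_df_extreme_A}, where $A$ was an intersection of half-spaces so that $\{\mathbf{X} \in A\}$ was contained in each individual half-space (giving the minimum), here $A$ is a union of half-spaces and Boole's inequality gives the sum; no additional structure on $F$ or its copula is needed.
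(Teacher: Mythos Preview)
Your proof is correct and follows essentially the same approach as the paper: apply Theorem \ref{t: MyMichelMultivariate} to get $P(\mathbf{X}\in A)$ as the bound, rewrite $\{\mathbf{X}\in A\}$ as the union $\bigcup_{j=1}^d\{X_j>u_{jn}\}$ via De Morgan, and finish with Boole's inequality.
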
 
\begin{proof} By Theorem \ref{t: MyMichelMultivariate}, and using Boole's inequality, we have that
\begin{align*}
d_{TV}(\mathcal{L}(\Xi_A), \mathrm{PRM}(\mathbb{E}\Xi_A)) &\le d_{TV}(\mathcal{L}(W_A), \mathrm{Poi}(\mathbb{E}W_A))\\
& \le  P\left(\mathbf{X} \in A\right) = P\left( \left\{ X_1 > u_{1n}\right\} \cup  \ldots \cup \left\{ X_d > u_{dn} \right\}\right)\\
& \le \sum_{j=1}^d P\left( X_j > u_{jn}\right) = \sum_{j=1}^d \bar{F}(u_{jn}).
\end{align*}
\end{proof}
\noindent For random vectors whose components are subject to affine transformations, Theorem \ref{t: dTV_mult_df_single_extreme_A} reads as follows:
\begin{cor}\label{t: dTV_mult_df_single_extreme_A_normalised} With the notation from Corollary \ref{t: dTV_mult_df_extreme_A_normalised} and 
$A^\star:=$ $A^\star_n:=$ $( (-\infty, u^\star_{1n}] \times $ $\ldots $ $\times (-\infty, u^\star_{dn}])^C$,
we obtain 
\[
d_{TV}\left( \mathcal{L}\left(\Xi^\star_{A^\star} \right), \mathrm{PRM}(\mathbb{E}\Xi^\star_{A^\star})\right) \le
d_{TV}\left( \mathcal{L}\left(W^\star_{A^\star} \right), \mathrm{Poi}(\mathbb{E}W^\star_{A^\star})\right)
\le  \sum_{j=1}^d \bar{F}^\star_j(u_{jn}). 
\]
\qed
\end{cor}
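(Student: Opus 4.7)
The plan is to observe that Corollary \ref{t: dTV_mult_df_single_extreme_A_normalised} is a direct restatement of Theorem \ref{t: dTV_mult_df_single_extreme_A} applied to the normalised random vectors $\mathbf{X}^\star_1, \ldots, \mathbf{X}^\star_n$ rather than to the raw vectors $\mathbf{X}_1, \ldots, \mathbf{X}_n$. Since $\mathbf{X}^\star_1, \ldots, \mathbf{X}^\star_n$ are themselves \iid copies of the $d$-dimensional random vector $\mathbf{X}^\star$ with state space $E^\star \subseteq \mathbb{R}^d$, joint distribution function $F^\star$, and marginal distribution functions $F^\star_j$ (with right endpoints $x^\star_{F^\star_j}$), they satisfy all the hypotheses of Theorem \ref{t: dTV_mult_df_single_extreme_A} verbatim.

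Concretely, I would verify that the set $A^\star=((-\infty, u^\star_{1n}]\times\ldots\times(-\infty, u^\star_{dn}])^C$ appearing in the corollary has exactly the shape of the set $A$ appearing in Theorem \ref{t: dTV_mult_df_single_extreme_A}, with the thresholds $u_{jn}$ there replaced by $u^\star_{jn} \in E^\star$. Then the marked point process $\Xi^\star_{A^\star}=\sum_{i=1}^n I_{\{\mathbf{X}^\star_i\in A^\star\}}\delta_{\mathbf{X}^\star_i}$ and the count $W^\star_{A^\star}=\sum_{i=1}^n I_{\{\mathbf{X}^\star_i\in A^\star\}}$ are the versions, in the starred setting, of $\Xi_A$ and $W_A$.

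Applying Theorem \ref{t: dTV_mult_df_single_extreme_A} in the starred setting therefore yields
\[
d_{TV}\!\left(\mathcal{L}(\Xi^\star_{A^\star}),\mathrm{PRM}(\mathbb{E}\Xi^\star_{A^\star})\right)\le d_{TV}\!\left(\mathcal{L}(W^\star_{A^\star}),\mathrm{Poi}(\mathbb{E}W^\star_{A^\star})\right)\le \sum_{j=1}^d \bar F^\star_j(u^\star_{jn}),
\]
which is the claim. No genuine obstacle arises; the only thing to check, should one wish to derive the corollary from the un-normalised Theorem \ref{t: dTV_mult_df_single_extreme_A} applied to $\mathbf{X}_1,\ldots,\mathbf{X}_n$ directly, is the identity $P(\mathbf{X}^\star\in A^\star)=P(\mathbf{X}\in A)$ with $A=((-\infty,a_{1n}u^\star_{1n}+b_{1n}]\times\ldots\times(-\infty,a_{dn}u^\star_{dn}+b_{dn}])^C$ and $\bar F^\star_j(u^\star_{jn})=\bar F_j(a_{jn}u^\star_{jn}+b_{jn})$, both of which follow from $F^\star_j(x)=F_j(a_{jn}x+b_{jn})$ and the fact that $a_{jn}>0$ preserves the order of the half-lines. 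An analogous remark, namely that distributional equality of starred and un-starred point processes modulo relabelling of the carrier space preserves total variation distance, is used throughout Chapter~\ref{Chap: Univariate_extremes} (cf.\ (\ref{d: normalised_dTV})) and requires no new argument here.
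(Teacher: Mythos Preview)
Your proposal is correct and matches the paper's approach: the paper gives no proof at all (only a \qed), treating the corollary as an immediate reformulation of Theorem~\ref{t: dTV_mult_df_single_extreme_A} in the normalised variables, exactly as you describe. Your additional remarks on the identities $\bar F^\star_j(u^\star_{jn})=\bar F_j(a_{jn}u^\star_{jn}+b_{jn})$ and the invariance of total variation under relabelling of the carrier space are accurate and more explicit than anything the paper provides here.
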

\noindent We apply Theorem \ref{t: dTV_mult_df_single_extreme_A} to the case where the multivariate marks of the MPPE's are distributed as copulas:
\begin{prop}\label{t: dTV_copula_single_extreme_A}
For each integer $n \ge 1$, let $\mathbf{U}_1, \ldots, \mathbf{U}_n$ be \iid copies of a $d$-di{-}mensional random vector $\mathbf{U}$ with 
state space $E \subseteq [0,1)^d$, standard uniform margins $U_1, \ldots,U_d$, and joint distribution function $C$, where $d \ge 1$. 
For a fixed choice of $\mathbf{s}_n = (s_{1n}, \ldots, s_{dn}) \in (0,n]^d$, define 
\[
A:=A_n:=\left(\left[0, 1-\frac{s_{1n}}{n}\right) \times\ldots \times \left[0, 1-\frac{s_{dn}}{n}\right)\right)^C.
\]
Let $\Xi_A = \sum_{i=1}^n I_{\{ \mathbf{U}_i \in A\}} \delta_{\mathbf{U}_i}$ be the marked point process of points in $A$ and let 
$W_A = \sum_{i=1}^n I_{\{\mathbf{U}_i \in A\}}$ denote the random number of points in $A$. Then,
\[
d_{TV}(\mathcal{L}(\Xi_A), \mathrm{PRM}(\mathbb{E}\Xi_A)) \le d_{TV}(\mathcal{L}(W_A), \mathrm{Poi}(\mathbb{E}W_A)) \le \frac{1}{n} \sum_{j=1}^d s_{jn}. 
\]
\end{prop}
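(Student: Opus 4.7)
The plan is to apply Theorem \ref{t: dTV_mult_df_single_extreme_A} directly, with the marginal distributions specialised to be standard uniform. Since Proposition \ref{t: dTV_copula_single_extreme_A} is the copula counterpart of Theorem \ref{t: dTV_mult_df_single_extreme_A} (just as Proposition \ref{t: dTV_copula_joint_extreme_A} was for Theorem \ref{t: dTV_mult_df_extreme_A}), there is essentially no new work to do beyond identifying the thresholds in the notation of the general result.

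More concretely, I would first observe that the set $A$ here has the exact same shape as the set $A$ appearing in Theorem \ref{t: dTV_mult_df_single_extreme_A}, namely the complement of a product of half-open intervals, once one identifies the thresholds $u_{jn} := 1 - s_{jn}/n$ for $j=1,\ldots,d$. Note that $u_{jn} \in [0,1)$ since $s_{jn} \in (0,n]$, so these thresholds lie in the state space $E \subseteq [0,1)^d$ as required. Also, each marginal $U_j$ of $\mathbf{U}$ has the standard uniform distribution, i.e.\ $F_j(y) = y$ for $y \in [0,1)$, so the right endpoint is $x_{F_j}=1$ and the survival function satisfies $\bar{F}_j(u_{jn}) = 1 - u_{jn} = s_{jn}/n$.

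Then I would simply invoke Theorem \ref{t: dTV_mult_df_single_extreme_A}, which gives
\[
d_{TV}(\mathcal{L}(\Xi_A), \mathrm{PRM}(\mathbb{E}\Xi_A)) \le d_{TV}(\mathcal{L}(W_A), \mathrm{Poi}(\mathbb{E}W_A)) \le \sum_{j=1}^d \bar{F}_j(u_{jn}) = \sum_{j=1}^d \frac{s_{jn}}{n},
\]
which is exactly the desired bound. Since the argument is a direct specialisation of an already-established result, there is no real obstacle in the proof; the only thing to check is that the hypotheses of the parent theorem are met, which amounts to the trivial verification above that $(u_{1n},\ldots,u_{dn}) \in E$ and that the survival functions of the standard uniform margins simplify as expected.
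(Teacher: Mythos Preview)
Your proposal is correct and is essentially the same approach as the paper's. The only cosmetic difference is that the paper's proof cites Theorem \ref{t: MyMichelMultivariate} directly and reapplies Boole's inequality to $P(\mathbf{U}\in A)=P\bigl(\bigcup_j\{U_j\ge 1-s_{jn}/n\}\bigr)$, rather than invoking the already-packaged Theorem \ref{t: dTV_mult_df_single_extreme_A}; your route, which mirrors how Proposition \ref{t: dTV_copula_joint_extreme_A} was handled, is equally valid and arguably tidier.
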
 
\begin{proof} By Theorem \ref{t: MyMichelMultivariate}, and using Boole's inequality, we have that
\begin{align*}
d_{TV}(\mathcal{L}(\Xi_A), \mathrm{PRM}(\mathbb{E}\Xi_A)) &\le d_{TV}(\mathcal{L}(W_A), \mathrm{Poi}(\mathbb{E}W_A))\\
& \le  P\left(\mathbf{U} \in A\right) 
\\ &= P\left( \left\{ U_1 \ge 1- \frac{s_{1n}}{n}\right\} \cup  \ldots \cup \left\{ U_d \ge 1- \frac{s_{dn}}{n} \right\}\right)\\
& \le \sum_{j=1}^d P\left( U_j \ge 1- \frac{s_{jn}}{n}\right) = \sum_{j=1}^d \frac{s_{jn}}{n}.
\end{align*}
\end{proof}
\begin{exa}\label{e: MO_Exp_single_comp} 
Consider again the example from the beginning of Chapter \ref{Chap: Multivariate_extremes},
where the common joint distribution function of \iid marks $(U,V),$ $(U_1, V_1),$ 
$\ldots,$ $(U_n,V_n)$ is given by the independence copula
$C(u,v)= uv$. For any fixed choice of $(s_n, t_n) \in (0,n]^2$, define
\begin{align*}
A &= \left(\left[0,1-\frac{s_n}{n}\right)\times \left[0,1-\frac{t_n}{n}\right)\right)^C \\
&= \left( \left[ 1- \frac{s_n}{n}, 1\right) \times [0,1)\right) 
\cup \left( [0,1) \times \left[ 1- \frac{t_n}{n}, 1\right)\right).
\end{align*}
With the normalisation $(s,t) = (n(1-u),n(1-v))$ for any $(u,v) \in [0,1)^2$, this set corresponds to 
$A^\star = ((0,s_n] \times (0,n]) \cup ((0,n] \times (0,t_n])$. 
By Proposition \ref{t: dTV_copula_single_extreme_A}, 
\[
d_{TV}(\mathcal{L}(\Xi_A), \mathrm{PRM}(\mathbb{E}\Xi_A)) \le d_{TV}(\mathcal{L}(W_A), \mathrm{Poi}(\mathbb{E}W_A)) \le \frac{s_n+t_n}{n}, 
\]
where 
\[
\mathbb{E}W_A = \mathbb{E}\Xi_A([0,1]^2) = s_n+t_n -\frac{s_nt_n}{n} \sim s_n + t_n, \quad \textnormal{ as }n \to \infty.
\]
In examples such as this it does not make sense to search for only a bivariate intensity function as in, say, Example \ref{e: Example_copula2_joint_exc},
since we essentially have two univariate problems and the bivariate contribution is negligeable. 
More precisely, with
$E^\star = [0,n]^2$, write any $B^\star \in \mathcal{B}([0,n]^2)$ as $B_s^\star \times B_t^\star$, where
$B_s^\star, B_t^\star \in \mathcal{B}([0,n])$. Then the intensity measure of $\Xi^\star_{A^\star}$ is given by
\[
\bl^\star_n(B^\star) = \int_0^{s_n} I_{B_s^\star} ds + \int_0^{t_n} I_{B_t^\star} dt + \int_0^{s_n} \int_0^{t_n} \left(- \frac{1}{n}\right) I_{B^\star}dsdt, 
\]
for any $B^\star \in \mathcal{B}([0,n]^2)$, and asymptotically behaves like 
\[
\bl^\star(B^\star) := \int_0^{s_n} I_{B_s^\star} ds + \int_0^{t_n} I_{B_t^\star} dt. 
\]
By Proposition \ref{t: dTV_two_PRM}, 
\[
d_{TV}\left( \mathrm{PRM}(\bl_n^\star),\mathrm{PRM}(\bl^\star)\right) \le \int_0^{s_n} \int_0^{t_n}\frac{1}{n}\,dsdt = \frac{s_nt_n}{n}\,.
\]
\end{exa}
\begin{exa}\label{e: MO_Exp_singlecomp}
Let $\nu_1, \nu_2, \nu_{12} >0$, let $E=[0,\infty)^2$,  
and let $\mathbf{X}=(X_1,X_2)$ follow the bivariate Marshall-Olkin exponential distribution that we introduced in Example \ref{e: MO_exp}. 
Since the survival copula $\hat{C}$ of this distribution is given by the Marshall-Olkin copula 
(see Example \ref{d: examples_copulas} (f))
which has no lower tail dependence, by (\ref{t: coefs_cop_survcop}), the Marshall-Olkin
exponential distribution has no upper tail dependence, as is also illustrated by Figure \ref{f: MO_Exp}.
\begin{figure}[!ht]
\centering
\includegraphics[scale=0.45]{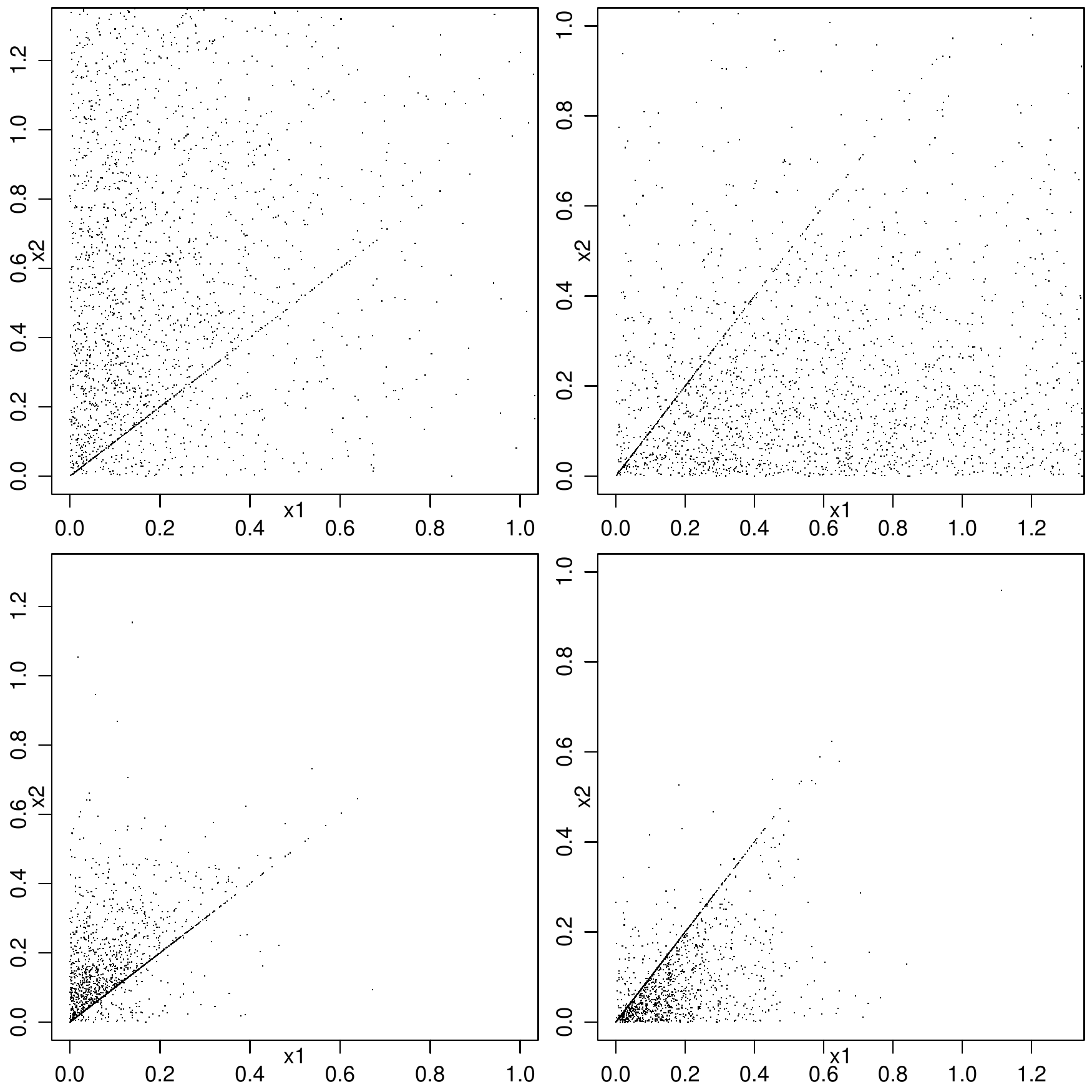}
\caption{We simulate $n=3000$ points from the Marshall-Olkin exponential distribution. The parameters $(\nu_1, \nu_2, \nu_{12})$ 
are given by $(4, 0.5, 0.5)$, $(0.5, 4, 0.5)$, $(4, 0.5, 8)$ and $(0.5, 4, 8)$, respectively (clockwise from top left).}
\label{f: MO_Exp}
\end{figure}
Define
\[
A= ([0,u_{1n}) \times [0,u_{2n}))^C = ([u_{1n}, \infty) \times [0,\infty)) \cup( [0,\infty) \times [u_{2n}, \infty) )
\]
for some choices of thresholds $u_{1n}, u_{2n} \in E$.
Let $\mathbf{X}_1^\star, \ldots, \mathbf{X}_n^\star$ be \iid copies of the normalised random variable
\begin{equation}\label{e: MO_Exp_scaling_single_comp}
\mathbf{X}^\star = (X_1^\star , X_2^\star) = \left( (\nu_1+\nu_{12}) X_1 - \log n\, ,\,  (\nu_2 + \nu_{12}) X_2 - \log n\right)
\end{equation}
with state space $E^\star = [-\log n, \infty)^2$ and marginal survival functions 
\[
\bar{F}^\star_j(x_j) = \bar{F}_j\left(\frac{x_j + \log n}{\nu_j + \nu_{12}}\right) =  \frac{e^{-x_j}}{n}\,, \quad \textnormal{ for } x_j \ge -\log n \textnormal{ and }j=1,2. 
\]
Then $A^\star$ $= ([u^\star_{1n}, \infty)$ $\times$ $ [-\log n,\infty)) \cup( [- \log n,\infty) \times [u^\star_{2n}, \infty) )$ and 
Corollary \ref{t: dTV_mult_df_single_extreme_A_normalised} gives 
\[
d_{TV}\left( \mathcal{L}\left(\Xi^\star_{A^\star} \right), \mathrm{PRM}(\mathbb{E}\Xi^\star_{A^\star})\right) \le
d_{TV}\left( \mathcal{L}\left(W^\star_{A^\star} \right), \mathrm{Poi}(\mathbb{E}W^\star_{A^\star})\right)
\le \frac{e^{-u^\star_{1n}} + e^{-u^\star_{2n}}}{n}\,.
\]
The Marshall-Olkin exponential distribution behaves in different ways above and below the diagonal $y_1= y_2$, which corresponds to
\[
x_2 = \frac{\nu_2 + \nu_{12}}{\nu_1 + \nu_{12}} \, x_1 +
 \frac{\nu_2 - \nu_1}{\nu_1 + \nu_{12}}\, \log n =: \Delta(x_1), \text{ for any }(x_1,x_2) \in E^\star. 
\]
Figure \ref{f: MO_Exp_shapes_A} shows the three possible shapes for the set $A$, depending on whether $u_{1n}> u_{2n}$, $u_{1n} = u_{2n}$, 
or $u_{1n} < u_{2n}$.
\begin{figure}
\begin{center}{\footnotesize \input{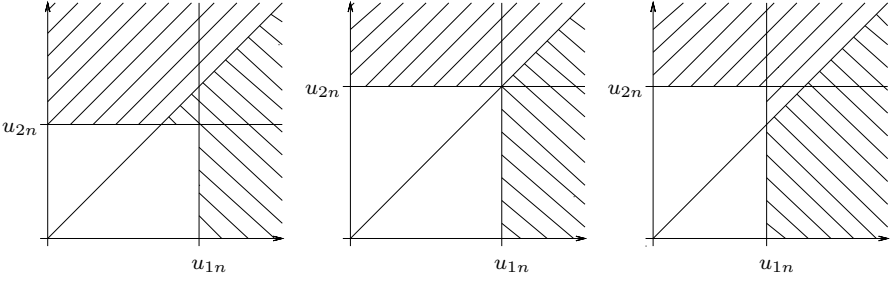}}\end{center}
\caption{Different shapes of the set $A$ from Example \ref{e: MO_Exp_singlecomp} depending on whether $u_{1n}> u_{2n}$ (left), $u_{1n} = u_{2n}$ (middle), 
or $u_{1n} < u_{2n}$ (right).}
\label{f: MO_Exp_shapes_A}
\end{figure}
With $\alpha = \nu_{12}/(\nu_1 + \nu_{12}) \in (0,1)$, $\beta= \nu_{12}/(\nu_2 + \nu_{12}) \in (0,1)$, we then have
\begin{align*}
&\mathbb{E}W^\star_{A^\star} \\
&= n \left\{ P(X_1^\star \ge u_{1n}^\star) + P(X_2^\star \ge u_{2n}^\star) - P(X_1^\star \ge u_{1n}^\star, X_2^\star \ge u_{2n}^\star)\right\}\\
&=n \bar{F}^\star_1(u_{1n}^\star) + n\bar{F}_2^\star(u_{2n}^\star) \\
&\phantom{blaaa}- n\min \left\{ {\bar{F}^\star_1(u_{1n}^\star)}^{1-\alpha}\cdot \bar{F}_2^\star(u_{2n}^\star)\,,\, \bar{F}^\star_1(u_{1n}^\star)\cdot \bar{F}_2^\star(u_{2n}^\star)^{1-\beta}\right\}\\
& = \left\{ 
\begin{array}{ll}
e^{-u_{1n}^\star} + e^{-u_{2n}^\star} - 
\left( \frac{1}{n}\right)^{1-\alpha} e^{-(1-\alpha)u_{1n}^\star-{u_{2n}^\star}} 
&\text{if } u_{2n}^\star \ge \Delta(u_{1n}^\star)  \,
\scriptstyle{(\Leftrightarrow u_{1n} \le u_{2n})} \\
e^{-u_{1n}^\star} + e^{-u_{2n}^\star} - 
\left( \frac{1}{n}\right)^{1-\beta}e^{-u_{1n}^\star-(1-\beta){u_{2n}^\star}} 
&\text{if } u_{2n}^\star \le \Delta(u_{1n}^\star) \, 
\scriptstyle{(\Leftrightarrow u_{1n} \ge u_{2n})}
\end{array}
\right.\\
& \sim e^{-u_{1n}^\star} + e^{-u_{2n}^\star}, \quad \text{as } n\to \infty.
\end{align*}
The choice of the threshold $u_{1n}^\star$ and $u_{2n}^\star$ again determines the size of the error estimate and the expected number of threshold exceedances. 
For instance, for $u_{1n}^\star = u_{2n}^\star =\log 2 -0.5\log (n)$, the error is bounded by $n^{-1/2}$ and the MPPE captures roughly the $\sqrt{n}$ points farthest away 
from the origin $(-\log n, -\log n)$ of the normalised state space. 
\end{exa}
\begin{remark}
Theorem \ref{t: dTV_mult_df_single_extreme_A} is easily adapted to the case where $A$ allows for a selection of the components to be extreme but specifically 
forbids the remaining components to be so. Let $J \subseteq \{1, \ldots, d\}$ and set
\[
A := A_n := \left( \prod_{j \in J} \left(-\infty, u_{jn}\right]\right)^C \times \left( \prod_{k \in \{1, \ldots, d\} \setminus J} \left(-\infty, u_{kn}\right]\right).
\]
Then, $d_{TV}(\mathcal{L}(\Xi_A), \mathrm{PRM}(\mathbb{E}\Xi_A))$ is bounded by
\begin{align*}
P(\mathbf{X} \in A) 
& = P\left( \bigcup_{j \in J} \left\{ X_{j} > u_{jn} \right\}, \bigcap_{k \in \{1, \ldots, d\}\setminus J} \left\{ X_k \le u_{kn}\right\} \right) \\
&\le \left( \sum_{j \in J} \bar{F}(u_{jn})\right)\cdot \left(1- \min_{k \in \{1, \ldots, d\} \setminus J} \bar{F}(u_{kn})\right) 
\\ &\le \sum_{j \in J} \bar{F}(u_{jn}) 
\le |J|\max_{j \in J}\bar{F}(u_{jn}).
\end{align*}
\end{remark}
\subsection{An example of hidden dependence for joint threshold exceedances}\label{s: MO_Exp_joint}
We consider again the bivariate Marshall-Olkin exponential distribution from Examples \ref{e: MO_exp} and \ref{e: MO_Exp_singlecomp}. 
With the normalisation used in (\ref{e: MO_Exp_scaling_single_comp}), the probability of the occurrence of joint extremes is negligeable compared against the
probability of extremes in the margins. With a different scaling however, we may zoom in on the region of joint extremes and model the behaviour of an MPPE
with Marshall-Olkin exponential marks in that region by a Poisson process with an intensity that will need to be determined. 

We use the following normalisation of the Marshall-Olkin exponentially distributed random pair $\mathbf{X} = (X_1,X_2)$:
\[
\mathbf{X}^\star = (X_1^\star, X_2^\star) = \left( \nu X_1 - \log n\, ,\,  \nu X_2 - \log n\right),
\]
where $\nu:= \nu_1 + \nu_2 + \nu_{12}$. Let $\mathbf{X}_1^\star, \ldots, \mathbf{X}_n^\star$ be \iid copies of 
$\mathbf{X}^\star$. Their state space is given by $E^\star = [-\log n, \infty)^2$, and the marginal survival functions by
\[
\bar{F}^\star_j(x_j) = \bar{F}_j\left(\frac{x_j + \log n}{\nu}\right) = \left(\frac{e^{-x_j}}{n}\right)^{\frac{\nu_j + \nu_{12}}{\nu}}, \quad \textnormal{ for } x_j \ge -\log n \textnormal{ and }j=1,2. 
\]
Let $u_{1n}^\star, u_{2n}^\star \in E^\star$ and define $A^\star = A^\star_n = (u_{1n}^\star, \infty) \times (u_{2n}^\star, \infty)$. 
The expected number of points in $A^\star$ is given by
\begin{align}\label{p: MOExp_joint_expected_number}
\begin{split}
\mathbb{E}W^\star_{A^\star} &= n \bar{F}\left(\frac{u_{1n}^\star + \log n}{\nu} , \frac{u_{2n}^\star + \log n}{\nu}\right) \\
&= \left\{ 
\begin{array}{ll}
\exp \left(-\frac{\nu_1}{\nu}\, u_{1n}^\star-\frac{\nu_2 + \nu_{12}}{\nu}\, u_{2n}^\star \right),& \text{if }u_{1n}^\star < u_{2n}^\star,\\
\exp\left( -u_{1n}^\star \right),\phantom{\frac{\nu_1}{\nu_2}}  & \text{if }u_{1n}^\star = u_{2n}^\star,\\
\exp \left(-\frac{\nu_1 + \nu_{12}}{\nu}\, u_{1n}^\star -\frac{\nu_2}{\nu}\, u_{2n}^\star \right),& \text{if }u_{1n}^\star > u_{2n}^\star. 
\end{array}
\right.
\end{split}
\end{align}
By Theorem \ref{t: MyMichelMultivariate},
\begin{align}\label{e: MO_Exp_joint_dTV_Michel}
\begin{split}
d_{TV}\left( \mathcal{L}\left(\Xi^\star_{A^\star} \right), \mathrm{PRM}(\mathbb{E}\Xi^\star_{A^\star})\right) 
&\le d_{TV}\left( \mathcal{L}\left(W^\star_{A^\star} \right), \mathrm{Poi}(\mathbb{E}W^\star_{A^\star})\right)\\
&\le P(X_1^\star \ge u_{1n}^\star, X_2^\star \ge u_{2n}^\star) = \frac{\mathbb{E}W^\star_{A^\star}}{n}\,,
\end{split}
\end{align} 
with $\mathbb{E}W^\star_{A^\star}$ from (\ref{p: MOExp_joint_expected_number}).
We could of course also use Corollary \ref{t: dTV_mult_df_extreme_A_normalised}, but it gives a worse error estimate:
\begin{align*}
d_{TV}\left( \mathcal{L}\left(\Xi^\star_{A^\star} \right), \mathrm{PRM}(\mathbb{E}\Xi^\star_{A^\star})\right) &\le
d_{TV}\left( \mathcal{L}\left(W^\star_{A^\star} \right), \mathrm{Poi}(\mathbb{E}W^\star_{A^\star})\right)\\
&\le \min\left\{ \left( \frac{e^{-u_{1n}^\star}}{n}\right)^{\frac{\nu_1 + \nu_{12}}{\nu}}\, , \, 
\left( \frac{e^{-u_{2n}^\star}}{n}\right)^{\frac{\nu_2 + \nu_{12}}{\nu}}\right\}.
\end{align*} 
\begin{exa}
Suppose that $\nu_1 \ge \nu_2$ and that
\[
u_{1n}^\star = -\frac{\nu}{\nu_1 + \nu_{12}}\, \log \log n \quad \text{and} \quad u_{2n}^\star = - \frac{\nu}{\nu_2 + \nu_{12}}\, \log \log n. 
\]
Then $u_{1n}^\star \ge u_{2n}^\star$ for all $n > 2$, and we expect $(\log n)^{1+\frac{\nu_2}{\nu_2+\nu_{12}}}$ points in $A^\star$. 
The error estimate in (\ref{e: MO_Exp_joint_dTV_Michel}) is given by 
\[
\frac{(\log n)^{1+ \frac{\nu_2}{\nu_2 + \nu_{12}}}}{n}\,.
\]
\end{exa}
\begin{exa} 
Suppose that $u_{1n}^\star = u_{2n}^\star = -\log \log n$. Then we expect $\log n$ points in $A^\star$ and the error estimate in (\ref{e: MO_Exp_joint_dTV_Michel})
is $\log(n)/n$.
\end{exa}

The Marshall-Olkin exponential distribution has both an absolutely continuous and a singular part
(see Theorem 3.1 in \cite{Marshall/Olkin:1967} or deduce it from the underlying Marshall-Olkin survival copula and Example \ref{e: exa_MO_cop_AC_SC}).
We denote the intensity functions of the approximating Poisson process in the original and in the normalised state spaces $E$ and $E^\star$
by $\lambda(y_1,y_2)$ and $\ls(x_1,x_2)$, respectively, for the absolutely continuous part (i.e. for $y_1 \neq y_2$ and $x_1 \neq x_2$), and by
$\acute{\lambda}(y)$ and ${\acute{\lambda}}^\star(x)$, respectively, for the singular part (i.e. for $y_1=y_2=y$ and $x_1=x_2=x$). 
The corresponding intensity measures are 
\[
 \bl_A(B)  =  \mathbb{E}\Xi_A(B) 
 = \int_{A \cap B} \lambda(y_1,y_2) dy_1 dy_2 
 + \int_{A \cap B \cap \{(y_1,y_2):\, y_1=y_2\}} \acute{\lambda}(y)dy, 
 \]
\begin{multline*} 
 \bl_{A^\star}^\star(B^\star)= \mathbb{E}\Xi^\star_{A^\star}(B^\star) 
= \int_{A^\star \cap B^\star} \lambda^\star(x_1,x_2)dx_1 dx_2\\
+ \int_{A^\star \cap B^\star \cap \{(x_1,x_2):\, x_1=x_2\}} {\acute{\lambda}}^\star(x)dx,
\end{multline*}
for any $B \in \mathcal{B}(E)$ and for any $B^\star \in \mathcal{B}(E^\star)$, respectively. We can use (\ref{p: density_MO_cop}) from Example \ref{e: exa_MO_cop_AC_SC}
with the transformations 
$u:= \bar{F}_1(y_1)$, $v := \bar{F}_2(y_2)$ and $\alpha = \nu_{12}/(\nu_1 + \nu_{12})$, $\beta= \nu_{12}/(\nu_2 + \nu_{12})$ in order to determine the bivariate density
function $f(y_1, y_2)$ of the Marshall-Olkin exponential distribution, and thereby also $\lambda(y_1,y_2) = nf(y_1,y_2)$:
\[
\lambda(y_1,y_2) =\left\{
\begin{array}{ll}
n \nu_1 (\nu_2 + \nu_{12}) \exp\{ -\nu_1 y_1 - (\nu_2 + \nu_{12})y_2\}, &\text{if } y_1 < y_2,\\
n \nu_2 (\nu_1 + \nu_{12}) \exp\{ - (\nu_1 + \nu_{12})y_1 - \nu_2 y_2\}, & \text{if } y_1 > y_2. \\   
\end{array}
\right.
\]
Substitution by $t:= \exp(-\nu_{12}w)$ in (\ref{p: S_C_MO}) gives 
$S_C(y,y)$ $= $ $\int_0^y \nu_{12}$  $\exp\{-\nu w\}dw$
and the intensity function on the diagonal is thus given by
\[
\acute{\lambda}(y) = n\nu_{12}\exp(-\nu y), \quad \text{for } y_1=y_2=y.
\]
With the affine transformations
\begin{align*}
&\phi:\, E^\star \to E,\quad (x_1,x_2) \mapsto \phi(x_1,x_2) = \left( \frac{x_1+ \log n}{\nu}, \frac{x_2 + \log n}{\nu}\right),\\ 
&\tau:\, [-\log n, \infty) \to [0,\infty), \quad x \mapsto \tau(x) = \frac{x + \log n}{\nu},
\end{align*}
we obtain $\lambda^\star(x_1,x_2) $ $= \nu^{-2}\lambda(\phi(x_1,x_2))$, for $x_1 \neq x_2$, and $\acute{\lambda}^\star(x)$
$= \nu^{-1}\acute{\lambda}(\tau(x))$, for $x_1 = x_2 = x$. 
Hence, for any $(x_1, x_2) \in E^\star$,
\begin{align*}
\lambda^\star(x_1,x_2) &=  
\left\{
\begin{array}{ll}
\frac{\nu_1(\nu_2 + \nu_{12})}{\nu^2} \, \exp\left( -\frac{\nu_1}{\nu}\, x_1 - \frac{\nu_2 + \nu_{12}}{\nu}\, x_2\right), & \text{if }x_1 < x_2, \\
\frac{\nu_2(\nu_1 + \nu_{12})}{\nu^2}\, \exp\left( -\frac{\nu_1 + \nu_{12}}{\nu}\, x_1 - \frac{\nu_2}{\nu}\, x_2 \right), & \text{if } x_1 > x_2,
\end{array} 
\right.\\
\acute{\lambda}^\star(x) &=\frac{\nu_{12}}{\nu}\, \exp (-x), \quad \text{if }x_1=x_2=x.\\
\end{align*}                                                                          
\section{Archimedean copulas with upper tail dependence}\label{Sec: Archim_tail_dep}
A well known class of copulas are the so-called \textit{Archimedean copulas}, discussed in detail in, e.g. Chapter 4 in \cite{Nelsen:2006}. 
A copula $C$ is called Archimedean copula with \textit{generator} $\phi$, if it can be expressed in the following way:
\[
C(u_1, \ldots, u_d) = \phi^{[-1]}(\phi(u_1) + \ldots +  \phi(u_d)), 
\]
where the function $\phi:\, [0,1] \to [0,\infty]$ is continuous, strictly decreasing, convex, and satisfies $\phi(1)=0$. 
The \textit{pseudo-inverse}
$\phi^{[-1]}:\ [0,\infty] \to [0,1]$ of $\phi$ is defined as follows:
\[
\phi^{[-1]}(x) := \left\{ 
\begin{array}{ll} 
 \phi^{-1}(x), & 0 \le x \le \phi(0), \\
 0, & \phi(0) \le x \le \infty.
\end{array} 
\right.
\]
The copula is called \textit{strict} if $\lim_{r \downarrow 0}\phi(r) =\infty$, and in this case $\phi^{[-1]} =  \phi^{-1}$.

Now let $\mathbf{U}_i = (U_i,V_i)$, $i=1, \ldots, n$ be \iid copies of a random pair $(U,V)$ 
whose joint distribution function is given by a bivariate Archimedean copula, and let $A$ denote a region
in $E=[0,1)^2$ of joint upper tail extremes. More precisely, we define, as in Proposition \ref{t: dTV_copula_joint_extreme_A},
\begin{equation}\label{d: MPPE_Arch_A}
A = A_n = \left[ 1- \frac{s_n}{n}, 1\right) \times \left[ 1- \frac{t_n}{n}, 1\right), \quad \textnormal{for any choice of } (s_n, t_n) \in (0,n]^2.
\end{equation}
Moreover, define the number $W_A$ of $(U_i,V_i)$'s in $A$, as well as the MPPE $\Xi_A$ on $E$ with marks $(U_i,V_i)$ by
\begin{equation}\label{d: MPPE_Arch}
W_A = \sum_{i=1}^n I_{\left\{ U_i \ge 1-\frac{s_n}{n}, V_i \ge 1-\frac{t_n}{n}\right\}} \quad \text{and} \quad \Xi_A = \sum_{i=1}^n I_{\left\{ U_i \ge 1-\frac{s_n}{n}, V_i \ge 1-\frac{t_n}{n}\right\}} \delta_{(U_i,V_i)},
\end{equation}
respectively.
It follows from Proposition \ref{t: dTV_copula_joint_extreme_A} that 
\begin{equation}\label{p: Arch_Poi_approx_error}
d_{TV}\left( \mathcal{L}(\Xi_A), \mathrm{PRM}(\mathbb{E}\Xi_A)\right) \le d_{TV}\left( \mathcal{L}(W_A), \mathrm{Poi}(\mathbb{E}W_A)\right)
\le \min\left( \frac{s_n}{n}, \frac{t_n}{n}\right).
\end{equation}
As usual, we can regulate the size of this error estimate by the size of the values $s_n$ and $t_n$. The smaller $\min(s_n, t_n)$ with respect to the sample size $n$,
the smaller the error of the approximation by the Poisson process. However, it is not necessarily clear how to interpret $\mathbb{E}W_A$ or $\mathbb{E}\Xi_A$, 
as the structure of the copula can be complicated. We will illustrate this by way of an example below. 
But first we define, for any $r \in [0,1]$ and for any $x \in [0,\infty]$, 
\begin{equation}\label{d: bar_generator}
\bar{\phi}(r) := \phi(1-r) \quad \textnormal{ and } \quad
\bar{\phi}^{[-1]}(x) :=  1- \phi^{[-1]}(x).
\end{equation}
The expected number of exceedances may then in general be expressed as follows:
\begin{align*}
\mathbb{E}W_A &= nP\left(U \ge 1- \frac{s_n}{n}, V \ge 1- \frac{t_n}{n} \right) \\
&= s_n + t_n - n\bar{\phi}^{[-1]}\left( \bar{\phi}\left(\frac{s_n}{n}\right) + \bar{\phi}\left( \frac{t_n}{n}\right) \right).
\end{align*}
With the normalisation $(s_n,t_n)=(n(1-u),n(1-v))$, where $(u,v) \in E$, we obtain 
\begin{equation}\label{d: Arch_Astar}
A^\star = (0,s_n] \times (0,t_n] \subseteq (0,n]^2.
\end{equation}
On any set $B^\star \in \mathcal{B}((0,n]^2)$, the intensity measure  of the MPPE is then given by 
\[
\mathbb{E}\Xi^\star_{A^\star}(B^\star) = \int_{A^\star \cap B^\star} e_n^\star(s,t) dsdt,
\]
where $e_n^\star(s,t)$ is the intensity function given by
\begin{align}\label{t: int_fct_Arch_cop}
\begin{split}
e_n^\star(s,t) 
&= \frac{\partial^2}{\partial t \partial s} nP\left( U \ge 1- \frac{s}{n} , V \ge 1- \frac{t}{n} \right)\\
&= \frac{\partial^2}{\partial t \partial s}  (-n) \bar{\phi}^{[-1]}\left( \bar{\phi}\left(\frac{s}{n}\right) + \bar{\phi}\left( \frac{t}{n}\right) \right),
\end{split}
\end{align}
for any $(s,t) \in  (0,s_n]\times(0,t_n] \subseteq (0,n]^2$. We next consider $\mathbb{E}W_A$ and $e_n^\star(s,t)$ for a specific example of an MPPE with marks distributed according to an Archimedean copula.
\begin{exa}\label{e: Arch_Gumbel}
The Gumbel copula from Example \ref{d: examples_copulas} (d)
is an Archimedean copula 
with generator $\phi(r) = (-\log r)^\theta$ and $\theta \ge 1$.
As seen in Example \ref{e: examples_copulas_coefs}, it has upper tail dependence. We assume the notation and setting introduced above. The expected number of joint exceedances is then given by
\[
\mathbb{E}W_A = -n+s_n+t_n + 
n\exp{\left\{-\left[\left(-\ln \left(1-\frac{s_n}{n}\right)\right)^\theta + \left(-\ln \left(1-\frac{t_n}{n}\right)\right)^\theta\right]^{\frac{1}{\theta}}\right\}}.
\]
Moreover, by computing the double derivative in (\ref{t: int_fct_Arch_cop}), we obtain the following intensity function:
\begin{align*}
e_n^\star(s,t) 
& = \frac{1}{n} \cdot \frac{\left(\log \left(1-\frac{s}{n}\right)\log \left(1-\frac{t}{n}\right)\right)^{\theta-1}}{\left(1-\frac{s}{n}\right)\left(1-\frac{t}{n}\right)}
\cdot  e^{-\left[\left(-\log \left(1-\frac{s}{n}\right)\right)^\theta + 
\left(-\log \left(1-\frac{t}{n}\right)\right)^\theta\right]^{1/\theta}}\\
&\phantom{=} \cdot  \left[\left(-\log \left(1-\frac{s}{n}\right)\right)^\theta + 
\left(-\log \left(1-\frac{t}{n}\right)\right)^\theta\right]^{\frac{1}{\theta}-2}\\
& \phantom{=}\cdot \left\{ \theta -1 + \left[\left(-\log \left(1-\frac{s}{n}\right)\right)^\theta + 
\left(-\log \left(1-\frac{t}{n}\right)\right)^\theta\right]^\frac{1}{\theta}\right\}. 
\end{align*}
Clearly, it is not evident how many joint threshold exceedances to expect for specific choices of $s_n$ and $t_n$. Moreover, $e_n^\star(s,t)$ is impracticable 
as intensity function of the approximating Poisson process: on the one hand its structure is too complicated to work with, and, on the other hand, it
depends on the sample size $n$. However, by subsequently using $-\log(1-w) \sim w$ and $e^{-z} \sim 1-z$ for $w$, $z \to 0$, we find that
\begin{equation}\label{p: asympt_int_meas_Gumbel}
\mathbb{E}W_A 
 \sim -n + s_n + t_n + n\exp\left\{{-\frac{\left( s_n^\theta + t_n^\theta\right)^{1/\theta}}{n}}\right\}
 \sim s_n + t_n -\left(s_n^\theta + t_n^\theta \right)^{1/\theta}, 
\end{equation}
as $n \to \infty$. A ``nicer'' approximative intensity function can thus be defined as follows
\begin{equation}\label{d: Arch_int_fct}
\lambda^\star(s,t) := \frac{\partial^2}{\partial s \partial t} \left\{ s+t - \left(s^\theta + t^\theta \right)^{1/\theta} \right\}
= (\theta-1) (st)^{\theta - 1} \left(s^\theta + t^\theta\right)^{\frac{1}{\theta}-2},
\end{equation}
for any $(s,t) \in (0,s_n]\times(0,t_n] \subseteq(0,n]^2$. Instead of approximating the law of $\Xi_A$ by a Poisson process with intensity function $e_n^\star(s,t)$, the aim 
would rather be to approximate it by a Poisson process with the simpler intensity function $\lambda^\star(s,t)$ and use Proposition \ref{t: dTV_two_PRM} to estimate 
$d_{TV}(\mathrm{PRM}(\mathbb{E}\Xi^\star_{A^\star}), \mathrm{PRM}(\bl^\star))$, where 
\begin{equation}\label{d: Arch_int_meas}
\bl^\star(B^\star) = \bl^\star_{A^\star}(B^\star)= \int_{A^\star \cap B^\star} \lambda^\star(s,t) dsdt, \quad \textnormal{ for any } B^\star \in \mathcal{B}((0,n]^2). 
\end{equation}
\end{exa}
There is an entire subclass of Archimedean copulas showing the asymptotic behaviour (\ref{p: asympt_int_meas_Gumbel}) for $\mathbb{E}W_A$. This is demonstrated in
Section \ref{s: Charpentier/Segers}, where an asymptotic result due to \cite{Charpentier/Segers:2009} provides a way to determine which Archimedean copulas exhibit upper tail dependence similar to the Gumbel copula. For any such copula, Section \ref{s: Arch_dTV_two_PRM} determines a bound on the total variation distance between $\mathrm{PRM}(\mathbb{E}\Xi^\star_{A^\star})$ and $\mathrm{PRM}(\bl^\star)$. In Section \ref{s: Arch_Examples} we apply our results to a list of examples of Archimedean copulas with upper tail dependence. Section \ref{s: Higher_dimensions} gives a discussion on ways to determine bounds if $d \ge 3$. 
\subsection{An asymptotic result}\label{s: Charpentier/Segers}
\cite{Charpentier/Segers:2009} showed that there is a subclass of Archimedean copulas all displaying the asymptotic behaviour that we found 
in (\ref{p: asympt_int_meas_Gumbel}).
They noted that the upper tail behaviour of Archimedean copulas can be determined by computing some characteristics of their generator $\phi$. 
The following theorem, formulated more generally in
\cite{Charpentier/Segers:2009} (see Theorem 4.1), gives two different possibilities for the asymptotic
behaviour, as $n \to \infty$, of 
\[
\mathbb{E}W_A =  nP\left(U \ge 1- \frac{s_n}{n}, V \ge 1- \frac{t_n}{n} \right) ,
\]
depending on the value of $\tilde{\theta}$ that we define in (\ref{t: limit_theta_Arch_cop}) below.
\begin{thm}\label{t: Charp_Segers_AD}(Charpentier and Segers, 2009)
Let $(U,V)$ be a random pair with standard uniform margins and joint distribution function $C$, which is a bivariate Archimedean copula 
with generator $\phi$. If the limit 
\begin{equation}\label{t: limit_theta_Arch_cop}
\tilde{\theta} := -\lim_{r\downarrow 0} \frac{r\phi'(u)|_{u=1-r}}{\phi(1-r)}, 
\end{equation} 
exists in $[1, \infty)$, then, for every $(s,t) \in (0,\infty)^2$,
\begin{multline}\label{t: ChapSeg_E}
\lim_{n \to \infty} nP\left(U \ge 1- \frac{s}{n}, V \ge 1- \frac{t}{n} \right)\\
= \left\{ 
\begin{array}{ll} 
0, & \textnormal{if }\tilde{\theta}=1,\\
s+t -\left( s^{\tilde{\theta}} + t^{\tilde{\theta}}\right)^{1/\tilde{\theta}}, 
& \textnormal{if }1 < \tilde{\theta} < \infty.
\end{array}
\right.   
\end{multline}
\end{thm}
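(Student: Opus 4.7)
The first step is to rewrite the quantity of interest in terms of the generator. Using standard uniform margins, the survival function is
\[
\bar C(u,v) \;=\; 1 - u - v + C(u,v),
\]
and the Archimedean form $C(u,v)=\phi^{[-1]}(\phi(u)+\phi(v))$ combined with the notation $\bar\phi(r)=\phi(1-r)$, $\bar\phi^{[-1]}(x)=1-\phi^{[-1]}(x)$ from (\ref{d: bar_generator}) gives
\[
nP\!\left(U\ge 1-\tfrac{s}{n},\,V\ge 1-\tfrac{t}{n}\right) \;=\; s+t\;-\;n\,\bar\phi^{[-1]}\!\left(\bar\phi(s/n)+\bar\phi(t/n)\right).
\]
So the whole question reduces to analysing the asymptotics of $n\,\bar\phi^{[-1]}(\bar\phi(s/n)+\bar\phi(t/n))$ as $n\to\infty$.

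Next, observe that the hypothesis (\ref{t: limit_theta_Arch_cop}) can be re-expressed via $\bar\phi'(r)=-\phi'(1-r)$ as
\[
\lim_{r\downarrow 0}\frac{r\,\bar\phi'(r)}{\bar\phi(r)}\;=\;\tilde\theta\,.
\]
This is the classical von~Mises-type criterion for $\bar\phi$ to be regularly varying at $0$ with index $\tilde\theta$; that is, $\bar\phi(r)=r^{\tilde\theta}L(1/r)$ for some function $L$ slowly varying at infinity. Since $\bar\phi$ is strictly increasing in a neighbourhood of $0$ with $\bar\phi(0+)=0$, its inverse $\bar\phi^{-1}$ (which agrees with $\bar\phi^{[-1]}$ near $0$) is then regularly varying at $0$ with index $1/\tilde\theta$.

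The main asymptotic computation is now a two-step application of the uniform convergence theorem for regular variation. First, for any fixed $c>0$,
\[
\frac{\bar\phi(c/n)}{\bar\phi(1/n)}\;\longrightarrow\;c^{\tilde\theta}\qquad(n\to\infty),
\]
so that
\[
\bar\phi(s/n)+\bar\phi(t/n)\;\sim\;\bigl(s^{\tilde\theta}+t^{\tilde\theta}\bigr)\,\bar\phi(1/n).
\]
Second, applying regular variation of $\bar\phi^{-1}$ with index $1/\tilde\theta$ to the previous display at the small argument $\bar\phi(1/n)$, and using $\bar\phi^{-1}(\bar\phi(1/n))=1/n$,
\[
n\,\bar\phi^{[-1]}\!\left(\bar\phi(s/n)+\bar\phi(t/n)\right)\;\longrightarrow\;\bigl(s^{\tilde\theta}+t^{\tilde\theta}\bigr)^{1/\tilde\theta}.
\]
Substituting this back yields the limit $s+t-(s^{\tilde\theta}+t^{\tilde\theta})^{1/\tilde\theta}$ asserted for $\tilde\theta\in(1,\infty)$; for $\tilde\theta=1$ this limiting expression vanishes identically in $(s,t)$, which gives the first case of (\ref{t: ChapSeg_E}).

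The main obstacle I anticipate is the rigorous passage from the pointwise von~Mises condition (\ref{t: limit_theta_Arch_cop}) to the regular variation of $\bar\phi$ and, in particular, to the regular variation of $\bar\phi^{-1}$: one needs enough monotonicity/continuity of $\bar\phi$ near $0$ (which follows from $\phi$ being continuous, strictly decreasing and convex with $\phi(1)=0$) to invoke the inverse-function result for regularly varying functions and to deduce locally uniform (not merely pointwise) convergence of the ratios in $(s,t)$ on compact subsets of $(0,\infty)^2$. Once this is in place, the chain of asymptotic equivalences above is routine.
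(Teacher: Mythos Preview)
Your proposal is correct and follows essentially the same route as the paper's proof: both reduce the problem to the asymptotics of $n\,\bar\phi^{[-1]}\!\bigl(\bar\phi(s/n)+\bar\phi(t/n)\bigr)$, invoke the equivalence between the von~Mises condition (\ref{t: limit_theta_Arch_cop}) and regular variation of $\bar\phi$ at $0$ with index $\tilde\theta$ (the paper's Remark~\ref{r: Arch_RV}), pass to regular variation of the inverse with index $1/\tilde\theta$, and conclude via the uniform convergence theorem. The paper makes the same normalisation by $\bar\phi(1/n)$ explicit in equation (\ref{p: Arch_CS}) and cites Theorems~1.5.2 and~1.5.12 of Bingham, Goldie and Teugels for the two regular-variation facts you flagged as the main obstacle, so your anticipated difficulty is exactly the one the paper resolves by reference.
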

\begin{remark}\label{r: Arch_RV}
\cite{Charpentier/Segers:2009} showed that (\ref{t: limit_theta_Arch_cop}) is equivalent to regular variation of the function $\bar{\phi}$ at $0$ with index $\tilde{\theta}$, i.e. to
\[
\lim_{r \downarrow 0}\frac{\bar{\phi}(rx)}{\bar{\phi}(r)} = x^{\tilde{\theta}}, \quad \text{for any } x \in (0,\infty). 
\]
This fact is used in the proof of Theorem \ref{t: Charp_Segers_AD} below.
\end{remark}
\begin{proof}
We have
\begin{align*}
&nP\left( U \ge 1- \frac{s}{n}, V \ge 1- \frac{t}{n}\right)\\
&= n \left[ P\left( U \ge 1-\frac{s}{n}\right) 
+ P\left( V \ge 1- \frac{t}{n}\right) \right.\\
&\left.\phantom{blaaaaaaaaaaaaaaaaaaaaaa}-P\left( \left\{ U \ge 1-\frac{s}{n} \right\} \cup \left\{ V \ge 1- \frac{t}{n}\right\}\right) \right]\\
&= s+t -nP\left( \left\{ U \ge 1-\frac{s}{n} \right\} \cup \left\{ V \ge 1- \frac{t}{n}\right\}\right),
\end{align*}
where
\begin{align}
&nP\left( \left\{ U \ge 1-\frac{s}{n} \right\} \cup \left\{ V \ge 1- \frac{t}{n}\right\}\right)
= n \bar{\phi}^{[-1]} \left(  \bar{\phi}\left( \frac{s}{n} \right) + \bar{\phi} \left( \frac{t}{n}\right)\right) \nonumber \\
&= \frac{1}{\bar{\phi}^{[-1]}(\bar{\phi}\left( \frac{1}{n}\right))}\cdot \bar{\phi}^{[-1]}\left( \bar{\phi}\left(\frac{1}{n}\right)\cdot \left[ \frac{\bar{\phi}\left( \frac{s}{n}\right)}{\bar{\phi}\left( \frac{1}{n}\right)} + \frac{\bar{\phi}\left( \frac{t}{n}\right)}{\bar{\phi}\left( \frac{1}{n}\right)} \right] \right).\label{p: Arch_CS}
\end{align}
By Remark \ref{r: Arch_RV}, $\bar{\phi}$ is regularly varying at $0$ with index $\tilde{\theta}$. It follows that the function $x \mapsto 1/\bar{\phi}(1/x)$ is regularly varying at infinity with index $\tilde{\theta}$. By Theorem 1.5.12 in \cite{Bingham_et_al:1987}, its inverse function, i.e. $y \mapsto 1/\bar{\phi}^{[-1]}(1/y)$ is regularly varying at infinity with index $1/\tilde{\theta}$, and the function $\bar{\phi}^{[-1]}$ is regularly varying at $0$ with index $1/\tilde{\theta}$. 
By the uniform convergence theorem (see Theorem 1.5.2 in \cite{Bingham_et_al:1987}), (\ref{p: Arch_CS}) thus converges to 
\[
(s^{\tilde{\theta}} + t^{\tilde{\theta}})^{1/{\tilde{\theta}}}, \quad \text{as } n \to \infty.
\]
\end{proof}
\noindent 
If $\tilde{\theta} = 1$, the copula displays asymptotic independence in the upper tail. For $1 < \tilde{\theta} < \infty$, it shows upper tail dependence and we recognise the limiting structure of $\mathbb{E}W_A$ in (\ref{p: asympt_int_meas_Gumbel}) that we obtained for the Gumbel example. 
Indeed, for the Gumbel copula with generator $\phi(r) = (-\log r)^\theta$, we have  
\begin{align*}
& \phi'(u)|_{u=1-r} = \frac{(-\theta)[-\log (1-r)]^{\theta-1}}{1-r},  \\
\textnormal{and} \quad & \tilde{\theta} 
= \lim_{r \downarrow 0} \frac{\theta [-\log(1-r)]^{\theta-1}}{(1-r)[-\log (1-r)]^{\theta}} 
=\lim_{r \downarrow 0} \frac{\theta r}{(1-r)[-\log(1-r)]} =\theta \in [1,\infty). 
\end{align*}
For $\theta = 1$, the Gumbel copula reduces to the independence copula, which is obviously asymptotically independent (see Example \ref{e: examples_copulas_coefs}
for the coefficients of upper tail dependence of the independence and Gumbel copulas). 
In the following, we only consider Archimedean copulas with upper tail dependence as described by Theorem \ref{t: Charp_Segers_AD} for $\tilde{\theta} \in (1, \infty)$. 

\subsection{Approximation in $d_{TV}$ by a Poisson process}\label{s: Arch_dTV_two_PRM}
The aim is now to determine an estimate of $d_{TV}(\mathrm{PRM}(\mathbb{E}\Xi_A),\mathrm{PRM}(\bl))$ 
for all Archime\-dean copulas with parameter $\theta \in (1, \infty)$
and $\tilde{\theta} = \theta$, 
where $\bl^\star$ and $\lambda^\star$ are defined as in (\ref{d: Arch_int_meas}) and (\ref{d: Arch_int_fct}), respectively. 
We define a function $\bar{\phi}_0$ and its inverse $\bar{\phi}_0^{[-1]}$ as follows: 
\begin{align}\label{d: phi0}
\begin{split}
&\bar{\phi}_0 :\, [0,1] \to [0,\infty], \quad r \mapsto \bar{\phi}_0(r) = r^\theta, \\
&\bar{\phi}_0^{[-1]}:\, [0,\infty] \to [0,1], \quad x \mapsto 
 \bar{\phi}_0^{[-1]}(x) = x^{1/\theta}. 
\end{split}
\end{align}
Now note that we may express $\lambda^\star$ in terms of $\bar{\phi}_0$ and $\bar{\phi}_0^{[-1]}$:
\begin{align*}
\lambda^\star(s,t)
&= \frac{\partial^2}{\partial s \partial t}\, (-n) \left\{ \left( \frac{s}{n} \right)^\theta + \left( \frac{t}{n}\right)^\theta\right\}^{1/\theta}\\
&= \frac{\partial^2}{\partial s \partial t}\, (-n) \bar{\phi}_0^{[-1]}\left(\bar{\phi}_0 \left( \frac{s}{n} \right) + \bar{\phi}_0\left( \frac{t}{n}\right) \right).
\end{align*}
The following theorem now gives an upper bound on the error involved in approximating a Poisson process with mean measure $\mathbb{E}\Xi^\star_{A^\star}$ by another Poisson process with the more useful mean measure $\bl^\star$. It uses Proposition \ref{t: dTV_two_PRM} and properties of the generator $\phi$. By adding the upper bound from Theorem \ref{t: MyMichel_Arch} below to the error estimate in (\ref{p: Arch_Poi_approx_error}), we obtain an upper bound for the total variation distance between $\mathcal{L}(\Xi^{\star}_{A^\star})$ and $\mathrm{PRM}(\bl^\star)$.
\begin{thm}\label{t: MyMichel_Arch}
Let $C_\theta$ be a bivariate Archimedean copula with parameter $\theta \in (1,\infty)$ such that $\tilde{\theta} = \theta$ for $\tilde{\theta}$ 
defined by (\ref{t: limit_theta_Arch_cop}). Let $\phi$ be the generator of $C_\theta$ and suppose that $\bar{\phi}(r) = w^\theta(r) = r^\theta h^\theta(r)$, 
where $w$ is twice continuously differentiable on $[0,1)$. 
Suppose that for some $\delta \in (0,1)$, we have $h(0)>0$, $h'(r) \ge 0$, and $w''(r) \ge w''(0)$ for all $r \in [0,\delta)$. 
For each integer $n \ge 1$, let $(U_1, V_1), \ldots, (U_n, V_n)$ be \iid random pairs whose common joint distribution function is 
given by $C_\theta$. Assume the setting and notation from (\ref{d: MPPE_Arch_A})-(\ref{d: Arch_int_meas}), and define 
$H(r) = \max_{0 \le \xi \le r}h'(\xi)$ and $W(r) = \max_{0 \le \xi \le r}w''(r)$. Then there exists $r_0 \in [0,\delta)$ 
such that for all $r \le r_0$, $w'(r) \le 4h(0)/3$; and for $(s_n,t_n) \in (0,n]^2$ such that 
\[
\frac{s_n}{n}\, , \frac{t_n}{n} \le \frac{3r_0}{8}\,,
\]
it follows that
\[
 d_{TV}(\mathrm{PRM}(\mathbb{E}\Xi^\star_{A^\star}), \mathrm{PRM}(\bl^\star)) \\
 \le \frac{K(s_n+t_n)^2}{n}, 
\]
where $K=K(\theta, h(0), r_0, H(r_0),W(r_0))$ is defined by
\[
K= \frac{\pi(\sqrt{2})^\theta}{2}\, 
\left[(\theta-1)\kappa +\left( \frac{4}{3}\right)^{2\theta} \frac{W(r_0)}{h(0)} \right],
\]
and
\begin{multline*}
\kappa =  \frac{H(r_0)}{h(0)}\,
\max\left\{(\theta+1)\left(1+\frac{3r_0H(r_0)}{16h(0)}\right) \left[ 1+ \frac{3r_0H(r_0)}{4h(0)} + \frac{9r_0^2H(r_0)^2}{256h(0)^2}\right]^\theta, \right.\\
\left. (2\theta-1)2^{\theta-1}\left(1+\frac{3r_0H(r_0)}{8h(0)}\right)^{\theta-1}+ 2\left( 1+ \frac{3r_0H(r_0)}{4h(0)}\right)  \right\}.
\end{multline*}
\end{thm}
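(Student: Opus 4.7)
The plan is to invoke Proposition \ref{t: dTV_two_PRM}, which yields
\[
d_{TV}(\mathrm{PRM}(\mathbb{E}\Xi^\star_{A^\star}),\mathrm{PRM}(\bl^\star)) \le \int_{A^\star}|e_n^\star(s,t)-\lambda^\star(s,t)|\,ds\,dt,
\]
since both mean measures are absolutely continuous on $A^\star=(0,s_n]\times(0,t_n]$ with densities $e_n^\star$ and $\lambda^\star$ and vanish outside. Using the factorisation $\bar\phi(r)=w^\theta(r)$, so that $\bar\phi^{[-1]}(y)=w^{-1}(y^{1/\theta})$, and the analogue $\bar\phi_0(r)=r^\theta$, $\bar\phi_0^{[-1]}(y)=y^{1/\theta}$, I would first rewrite
\[
e_n^\star(s,t) = -\frac{\partial^2}{\partial s\,\partial t}\,n\,w^{-1}\!\bigl([w^\theta(s/n)+w^\theta(t/n)]^{1/\theta}\bigr),
\]
and obtain $\lambda^\star$ from exactly the same formula with $w$ replaced by the linear map $r\mapsto h(0)r$ (equivalently, with $\bar\phi$ replaced by $\bar\phi_0$ after a rescaling by $h(0)$). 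The target pointwise estimate is then $|e_n^\star-\lambda^\star|(s,t)\le C\,\lambda^\star(s,t)\,(s+t)/n$ on $A^\star$, with $C$ depending only on $\theta,h(0),r_0,H(r_0),W(r_0)$.

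The linchpin is a Taylor expansion of $w$ about $0$. Since $w(0)=0$ and $w'(0)=h(0)$, Taylor's theorem gives $w(r)=h(0)r+\tfrac12 r^2 w''(\xi)$ for some $\xi\in(0,r)$; the assumptions $h'\ge0$ and $w''(r)\ge w''(0)$ on $[0,\delta)$ control both the sign and the size of the remainder, and parallel expansions are available for $w^{-1}$, $w'$ and $(w^{-1})'$. Using continuity of $w'$ together with $w'(0)=h(0)$, I would first produce $r_0\in(0,\delta)$ on which $w'(r)\le 4h(0)/3$; the assumption $s_n/n,t_n/n\le 3r_0/8$ then guarantees that $s/n,t/n\in[0,r_0]$ on $A^\star$ and that $[w^\theta(s/n)+w^\theta(t/n)]^{1/\theta}$ lies in $w([0,r_0])$, keeping every quantity in the safe region. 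Computing $\partial^2/(\partial s\,\partial t)$ by the chain rule produces a ``diagonal'' term in $(w^{-1})'\cdot\partial^2_{st}[w^\theta(s/n)+w^\theta(t/n)]^{1/\theta}$ and a ``cross'' term in $(w^{-1})''\cdot(\partial_s\cdot)(\partial_t\cdot)$, with an identical structure for $\lambda^\star$ after linearisation of $w$. Subtracting term by term and substituting the Taylor remainders extracts a factor $(s+t)/n$ from each contribution, the remaining factors reproducing the homogeneity of $\lambda^\star(s,t)$; after bounding $h$, $h'$, and $w''$ by $h(0)$, $H(r_0)$, and $W(r_0)$ respectively, the constants collapse into the two summands $(\theta-1)\kappa$ and $(4/3)^{2\theta}W(r_0)/h(0)$ of $K$.

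Finally, integration of the pointwise bound over $A^\star$ gives $C\,n^{-1}\int_{A^\star}\lambda^\star(s,t)(s+t)\,ds\,dt$. Using $s+t\le 2^{1-1/\theta}(s^\theta+t^\theta)^{1/\theta}\le(\sqrt 2)^\theta(s^\theta+t^\theta)^{1/\theta}$ for $\theta\ge1$, passing to polar coordinates $(s,t)=(r\cos\varphi,r\sin\varphi)$ in which $\lambda^\star\cdot(s^\theta+t^\theta)^{1/\theta}\cdot r$ becomes a bounded function of $\varphi$ times $r$, and noting that $\varphi$ ranges over a subinterval of $[0,\pi/2]$ with radial cut-offs controlled by $s_n,t_n$, one obtains $\int_{A^\star}\lambda^\star(s,t)(s+t)\,ds\,dt\le \tfrac{\pi}{2}(\sqrt 2)^\theta(s_n+t_n)^2$, producing the constant $\pi(\sqrt 2)^\theta/2$ in $K$. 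The main obstacle is not conceptual but bookkeeping: the chain-rule expansion of $\partial^2_{st}w^{-1}([w^\theta(s/n)+w^\theta(t/n)]^{1/\theta})$ generates many terms of differing homogeneities in $s$ and $t$, and extracting a common multiplicative factor matching $\lambda^\star(s,t)$ from every error contribution, without losing a power of $s$ or $t$, requires careful tracking of the exponents in the $w^{\theta-1}$, $w^{\theta-2}$, $w'$, and $w''$ factors at each stage.
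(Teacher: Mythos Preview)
Your proposal is correct and follows essentially the same route as the paper: start from Proposition~\ref{t: dTV_two_PRM}, expand $e_n^\star$ via the chain rule applied to $w^{-1}\bigl([w^\theta(s/n)+w^\theta(t/n)]^{1/\theta}\bigr)$, split into the $(w^{-1})''$-term (which vanishes for the linear comparison $w_0(r)=h(0)r$, so contributes the $(4/3)^{2\theta}W(r_0)/h(0)$ piece) and the $(w^{-1})'$-term (whose comparison to $\lambda^\star$ yields the $\kappa$-piece after the refined bounds $h(0)\le h(r)\le h(0)+rH(r)$ and $h(0)\le w'(r)\le h(0)+2rH(r)$), and then integrate using polar coordinates. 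The only cosmetic difference is that the paper pulls out the factor $(s+t)\le s_n+t_n$ before integrating $\lambda^\star$ over $A^\star$ (via the mean value theorem for integration), whereas you absorb $(s+t)$ into the polar integral directly; both lead to the same constant $\tfrac{\pi}{2}(\sqrt{2})^\theta$.
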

\begin{proof}
By Proposition \ref{t: dTV_two_PRM},
\[
d_{TV}(\mathrm{PRM}(\mathbb{E}\Xi^\star_{A^\star}), \mathrm{PRM}(\bl^\star)) \le \int_0^{t_n} \int_0^{s_n} \left| e^\star_n(s,t)-\lambda^\star(s,t) \right| dsdt,
\]
where $e_n^\star(s,t)$ and $\lambda^\star(s,t)$ are given by (\ref{t: int_fct_Arch_cop}) and (\ref{d: Arch_int_fct}), respectively. 
Note that
\[
 \lambda^\star(nu,nv) 
 =\frac{1}{n}\,(\theta-1)(uv)^{\theta-1} \left(u^\theta + v^\theta\right)^{\frac{1}{\theta}-2}
\text{with } (u,v):=(u_n,v_n):= \left(\frac{s}{n},\frac{t}{n}\right).
\]
In order to express $e^\star_n(s,t)$ in terms of $u$ and $v$, note that for $r  \in [0,1]$, and $x=\bar{\phi}(r)$ $ = \bar{\phi}_0(w(r))$, with $\bar{\phi}_0$ from (\ref{d: phi0}),
we have 
\[
r = \bar{\phi}^{[-1]}(x) = w^{-1}(\bar{\phi}_0^{-1}(x)) .
\]
Also, note that $\partial^2/(\partial s \partial t)$ $ =$ $ n^{-2} \partial^2/(\partial u \partial v)$
by the chain rule. Thus, $e^\star_n(s,t)$ equals
\begin{multline*}
\frac{\partial^2}{\partial s \partial t} \, (-n) w^{-1}\left[\left( w^\theta \left( \frac{s}{n}\right)  + w^\theta \left( \frac{t}{n}\right)\right)^{1/\theta} \right]\\
=\left(- \frac{1}{n} \right) \frac{\partial^2}{\partial u \partial v} \,w^{-1} \left[ \left(w^\theta(u) + w^\theta(v)\right)^{1/\theta}\right],
\end{multline*}
where
\begin{align*}
&\frac{\partial^2}{\partial u \partial v} \, w^{-1} \left[ \left(w^\theta(u) + w^\theta(v)\right)^{1/\theta}\right] \\
&= 
\frac{
\left\{ w(u)w(v)\right\}^{\theta-1} w'(u)w'(v)}  
{ w'\left( w^{-1}\left[ \left( w^\theta(u) + w^\theta(v)\right)^{1/\theta}\right]\right)}\,
\left( w^\theta(u) + w^\theta(v)\right)^{\frac{1}{\theta}-2}\\
& \phantom{==}\cdot \left\{ - \frac{w''\left( w^{-1}\left[ \left( w^\theta(u) + w^\theta(v)\right)^{1/\theta}\right]\right)}{w'^{2}\left( w^{-1}\left[ \left( w^\theta(u) + w^\theta(v)\right)^{1/\theta}\right]\right)} 
\,  \left( w(u)^\theta + w(v)^\theta\right)^{1/\theta} + (1-\theta)
\right\}.
\end{align*}
We may thus bound the integrand $|e^\star_n(s,t) - \lambda^\star(s,t)| = |e^\star_n(nu,nv) - \lambda^\star(nu,nv)|$ by 
\begin{align}\label{p: two_error_terms}
\begin{split}
&\frac{1}{n}\, \left|\frac{\left\{ w(u)w(v)\right\}^{\theta-1} \cdot w'(u)w'(v)\cdot
w''\left( w^{-1}\left[ \left( w^\theta(u) + w^\theta(v)\right)^{1/\theta}\right]\right)}
{\left( w^\theta(u) + w^\theta(v)\right)^{2-\frac{2}{\theta}} \cdot w'^{3}\left( w^{-1}\left[ \left( w^\theta(u) + w^\theta(v)\right)^{1/\theta}\right]\right)} 
\right| \\
& + \frac{\theta-1}{n}\, \left| 
\frac{\left\{ w(u)w(v)\right\}^{\theta-1} \cdot w'(u)w'(v)\cdot\left(w^\theta(u) + w^\theta(v)\right)^{\frac{1}{\theta}-2}}{ w'\left( w^{-1}\left[ \left( w^\theta(u) + w^\theta(v)\right)^{1/\theta}\right]\right)}
\right.\\
&\left. \phantom{blaaaaaaaaaaaaaaaaaaaaaaaaaaaa\frac{w(t)^{\frac{1}{t}}}{w^{1}}a}
-(uv)^{\theta-1}\left( u^\theta + v^\theta \right)^{\frac{1}{\theta}-2}
\right|.
\end{split}
\end{align}
Note that since $w(r)=rh(r)$, we have
$w'(r) = h(r)+ rh'(r)$, and $w''(r) = 2h'(r) + rh''(r)$. 
Furthermore, $w'(0)=h(0)>0$, and, since $h'(r) \ge 0$, we have $w''(0) = 2h'(0) \ge 0$ and 
\begin{equation}\label{p: Arch_1Der_w_lower}
w'(r) \ge h(0) + rh'(r) \ge h(0),  \quad\text{for all } r \in [0,\delta).
\end{equation}
By continuity of $w'$ at $0$, there has to exist $r_0 \in [0,\delta)$ such that 
\begin{equation}\label{p: Arch_1Der_w_upper}
w'(r) \le \frac{4h(0)}{3}, \quad \text{for all } r \le r_0.
\end{equation}
It follows that 
\begin{equation}\label{p: Arch_h_upper}
h(r) \le \frac{4h(0)}{3}, \quad \text{for all } r \le r_0.
\end{equation}
We now determine bounds on the inverse of $w$ close to $0$.
Since $rh(0) \le w(r) \le (4/3)rh(0)$ for all $r \le r_0$,
it follows that for all $x \le r_0h(0)$,
\begin{equation}\label{p: Arch_Inverse_w}
\frac{3x}{4h(0)} \le w^{-1}(x) \le \frac{x}{h(0)}\,.
\end{equation}
See Figure \ref{f: Inverse_w} for an illustration. 
\begin{figure}
\begin{center}{\footnotesize \input{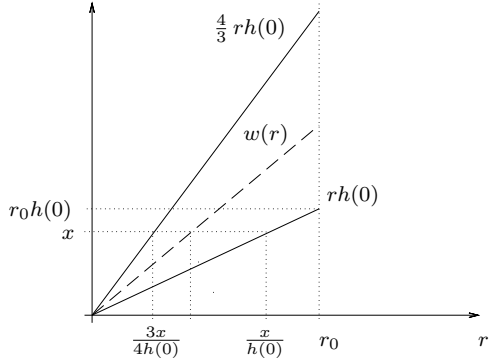}}\end{center}
\caption{
If $w(r)$ is wedged between two straight lines for all $r  \le r_0$, then its inverse must be wedged between the inverses of these lines, i.e. $\frac{3x}{4h(0)} \le w^{-1}(x) \le \frac{x}{h(0)}\,$, for all $x \le r_0h(0)$.}
\label{f: Inverse_w}
\end{figure}
Note that with the well known property 
\[ 2^{\frac{1}{p}-\frac{1}{q}}||(s,t)||_q \le ||(s,t)||_p \le ||(s,t)||_q \] 
of the $p$-norm 
$||(s,t)||_p$ $ := (s^p + t^p)^{1/p}$ that holds if $p > q >0$, we obtain the following inequalities:
\begin{equation}\label{p: Arch_p_norm}
\left( s^\theta + t^\theta\right)^{\frac{1}{\theta}} \le s+t, \quad
s^{\theta+ 1} + t^{\theta+1} \le (s+t)^{\theta+1}, \quad
s^\theta + t^\theta \ge 2^{1-\theta} (s+t)^\theta.
\end{equation}
For $(s,t)\in (0,s_n] \times (0,t_n] \subseteq (0,n]^2$ such that
\[
u=\frac{s}{n} \le \frac{s_n}{n} \le \frac{3r_0}{8} \quad  \text{and} \quad  v=\frac{t}{n} \le \frac{t_n}{n} \le \frac{3r_0}{8}\,,
\]
we thus have, using (\ref{p: Arch_p_norm}) and (\ref{p: Arch_h_upper}),
\[
\left( w^\theta\left( \frac{s}{n}\right) + w^\theta\left( \frac{t}{n}\right)\right)^{\frac{1}{\theta}}
\le w\left( \frac{s}{n}\right) + w \left(\frac{t}{n}\right) \le \frac{4h(0)(s+t)}{3n} \le r_0h(0).
\]
Therefore, by (\ref{p: Arch_Inverse_w}), 
\begin{multline}\label{p: Arch_Inverse_w_welldef}
0 \le \frac{3}{4h(0)}\,\left( w^\theta\left( \frac{s}{n}\right) + w^\theta\left( \frac{t}{n}\right)\right)^{\frac{1}{\theta}}\\
 \le 
w^{-1}\left[ \left( w^\theta\left( \frac{s}{n}\right) + w^\theta\left( \frac{t}{n}\right)\right)^{\frac{1}{\theta}}\right] 
\le \frac{1}{h(0)}\, \left( w^\theta\left( \frac{s}{n}\right) + w^\theta\left( \frac{t}{n}\right)\right)^{\frac{1}{\theta}} \le r_0.
 \end{multline}
We proceed by determining bounds on the first of the two error terms in (\ref{p: two_error_terms}).
By (\ref{p: Arch_h_upper}), and since $h(r) \ge h(0)$ (due to $h'(r) \ge 0$ for all $r \in [0,\delta$)), 
\begin{equation}\label{p: Arch_prod_of_two_w}
\left( \frac{h(0)}{n}\right)^{2\theta-2}(st)^{\theta-1}
 \le \left\{ w\left( \frac{s}{n}\right) w\left( \frac{t}{n}\right)\right\}^{\theta-1}
 \le \left( \frac{4}{3}\right)^{2\theta -2}\left( \frac{h(0)}{n}\right)^{2\theta-2}(st)^{\theta-1}.
\end{equation}
Moreover, since $\frac{2}{\theta}-2 <0$, we obtain
\begin{multline}\label{p: Arch_long_term_for1stbound}
\left( \frac{4}{3}\right)^{2-2\theta}\left( \frac{h(0)}{n}\right)^{2-2\theta}\left( s^\theta + t^\theta\right)^{\frac{2}{\theta}-2}\\
\le \left( w^\theta\left( \frac{s}{n}\right) + w^\theta\left( \frac{t}{n}\right)\right)^{\frac{2}{\theta}-2}
\le \left( \frac{h(0)}{n}\right)^{2-2\theta}\left( s^\theta + t^\theta\right)^{\frac{2}{\theta}-2}.
\end{multline}
By (\ref{p: Arch_1Der_w_lower}) and (\ref{p: Arch_1Der_w_upper}),
\begin{equation}\label{p: Arch_prod_of_two_Der_w}
h(0)^2 \le w'\left( \frac{s}{n}\right) w'\left( \frac{t}{n}\right) \le \left( \frac{4}{3}\right)^{2} h(0)^2,
\end{equation}
and also, due to (\ref{p: Arch_Inverse_w_welldef}),
\[
h(0) 
\le  w'\left( w^{-1}\left[ \left( w^\theta\left( \frac{s}{n}\right) + w^\theta\left( \frac{t}{n}\right)\right)^{\frac{1}{\theta}}\right] \right)
\le \frac{4}{3}\, h(0).
\]
It follows that
\begin{equation}\label{p: Arch_Inv_thrice_Der_w}
\left( \frac{3}{4h(0)}\right)^3
\le \frac{1}{w'^3\left( w^{-1}\left[ \left( w^\theta\left( \frac{s}{n}\right) + w^\theta\left( \frac{t}{n}\right)\right)^{\frac{1}{\theta}}\right] \right)}
\le \frac{1}{h(0)^3}\,.
\end{equation}
Furthermore, by (\ref{p: Arch_Inverse_w_welldef}), and since $w''(r) \ge w''(0) = 2h'(0)$,
\begin{equation}\label{p: Arch_2Der_w}
0 \le 2h'(0)
\le w'' \left(w^{-1}\left[ \left( w^\theta\left( \frac{s}{n}\right) + w^\theta\left( \frac{t}{n}\right)\right)^{\frac{1}{\theta}}\right]  \right)
\le \max_{0 \le \xi \le r_0}w''(\xi) = W(r_0).
\end{equation}
Inequalities (\ref{p: Arch_prod_of_two_w})-(\ref{p: Arch_2Der_w}) yield 
the following upper bound on the first of the two error terms in (\ref{p: two_error_terms}):
\begin{align}\label{p: Arch_first_of_error_bounds}
\begin{split}
 &\frac{1}{n} \cdot\frac{\left\{ w\left( \frac{s}{n}\right)w\left( \frac{t}{n}\right)\right\}^{\theta-1}  w'\left( \frac{s}{n}\right)w'\left( \frac{t}{n}\right)
w''\left( w^{-1}\left[ \left( w^\theta\left( \frac{s}{n}\right) + w^\theta\left( \frac{t}{n}\right)\right)^{1/\theta}\right]\right)}
{\left( w^\theta\left( \frac{s}{n}\right) + w^\theta\left( \frac{t}{n}\right)\right)^{2-\frac{2}{\theta}} w'^{3}\left( w^{-1}\left[ \left( w^\theta\left( \frac{s}{n}\right) + w^\theta\left( \frac{t}{n}\right)\right)^{1/\theta}\right]\right)} 
\, (\ge 0)\\
& \le \frac{1}{n} \left( \frac{4}{3}\right)^{2\theta} \frac{W(r_0)}{h(0)}\,(st)^{\theta-1}\left( s^\theta + t^\theta\right)^{\frac{2}{\theta}-2}
\\ & \le \frac{s+t}{n} \left( \frac{4}{3}\right)^{2\theta} \frac{W(r_0)}{h(0)}\,(st)^{\theta-1}\left( s^\theta + t^\theta\right)^{\frac{1}{\theta}-2},
\end{split}
\end{align}
where we used (\ref{p: Arch_p_norm}) for the last inequality.
We next determine bounds on the second of the two error terms in (\ref{p: two_error_terms}).
Note that for all $r \in [0,\delta)$, 
\begin{equation}\label{p: Arch_new_bounds_h}
h(0) \le h(r) \le h(0) + r\max_{0 \le \xi \le r}h'(r) = h(0)+ rH(r),
\end{equation}
and thereby,
\begin{equation}\label{p: Arch_new_bounds_1Der_w}
h(0) \le h(0)+ rh'(r) \le w'(r) \le h(0) + rH(r) + rh'(r) \le h(0) + 2rH(r).
\end{equation}
By (\ref{p: Arch_Inverse_w_welldef}) and (\ref{p: Arch_new_bounds_1Der_w}),
\begin{align*}
h(0) 
&\le 
 w'\left( w^{-1}\left[ \left( w^\theta\left( \frac{s}{n}\right) + w^\theta\left( \frac{t}{n}\right)\right)^{\frac{1}{\theta}}\right] \right)\\
&\le h(0) + 2w^{-1}\left[ \left( w^\theta\left( \frac{s}{n}\right) + w^\theta\left( \frac{t}{n}\right)\right)^{\frac{1}{\theta}}\right] \\
&  \phantom{blaaaaaaaaaaaaaaaaaa}\cdot H\left(w^{-1}\left[ \left( w^\theta\left( \frac{s}{n}\right) + w^\theta\left( \frac{t}{n}\right)\right)^{\frac{1}{\theta}}\right] \right),
\end{align*}
and by (\ref{p: Arch_Inverse_w_welldef}), (\ref{p: Arch_new_bounds_h}) and (\ref{p: Arch_p_norm}),
\begin{align*}
w^{-1}\left[ \left( w^\theta\left( \frac{s}{n}\right) + w^\theta\left( \frac{t}{n}\right)\right)^{\frac{1}{\theta}}\right] 
&\le \frac{1}{h(0)} \left( w^\theta\left( \frac{s}{n}\right) + w^\theta\left( \frac{t}{n}\right)\right)^{\frac{1}{\theta}} \\
&\le \frac{\frac{s}{n}\,h\left( \frac{s}{n}\right) +  \frac{t}{n}\,h\left( \frac{t}{n}\right)}{h(0)}\\
&\le \frac{s+t}{n} + \frac{s^2H \left(\frac{s}{n}\right) + t^2 H \left(\frac{t}{n}\right) }{n^2h(0)}\,.
\end{align*}
Furthermore, note that since
\[
w^{-1}\left[ \left( w^\theta\left( \frac{s}{n}\right) + w^\theta\left( \frac{t}{n}\right)\right)^{\frac{1}{\theta}}\right]  \le r_0 \quad \text{and} \quad
\frac{s}{n}\, , \frac{t}{n} \le \frac{3r_0}{8}\,,
\]
we have 
\begin{equation}\label{p: Arch_upperbounds_H}
H\left(w^{-1}\left[ \left( w^\theta\left( \frac{s}{n}\right) + w^\theta\left( \frac{t}{n}\right)\right)^{\frac{1}{\theta}}\right]  \right)  \le H(r_0) 
\text{ and } H\left( \frac{s}{n}\right), H\left( \frac{t}{n}\right)  \le H(r_0).
\end{equation}
It follows that
\begin{multline*}
h(0) 
\le  w'\left( w^{-1}\left[ \left( w^\theta\left( \frac{s}{n}\right) + w^\theta\left( \frac{t}{n}\right)\right)^{\frac{1}{\theta}}\right] \right)\\
\le h(0) + 2H(r_0)\, \frac{s+t}{n} + \frac{2H^2(r_0)}{h(0)}\, \left( \frac{s+t}{n}\right)^2,
\end{multline*}
and thereby
\begin{multline}\label{p: Arch_Inv_1Der_w}
\frac{1}{h(0)}\left\{1 - \frac{2H(r_0)(s+t)}{h(0) n} - \frac{2H^2(r_0)}{h(0)^2}\,\left(\frac{s+t}{n} \right)^2 \right\}\\
\le \frac{1}{ w'\left( w^{-1}\left[ \left( w^\theta\left( \frac{s}{n}\right) + w^\theta\left( \frac{t}{n}\right)\right)^{\frac{1}{\theta}}\right] \right)}
\le \frac{1}{h(0)}\,,
\end{multline}
where we used for the lower bound that $(b+az)^{-1} \ge b^{-1} - azb^{-2}$ for $b >0$, $a \ge 0$ and $z <1$. Similarly to (\ref{p: Arch_long_term_for1stbound}), we have 
\begin{equation}\label{p: Arch_long_term_for2ndbound_upper}
\left( w^\theta\left( \frac{s}{n}\right) + w^\theta\left( \frac{t}{n}\right)\right)^{\frac{1}{\theta}-2} \le \left( \frac{h(0)}{n} \right)^{1-2\theta}\left( s^\theta + t^\theta\right)^{\frac{1}{\theta}-2}
\end{equation}
For a lower bound, we successively use (\ref{p: Arch_new_bounds_h}), (\ref{p: Arch_upperbounds_H}), the inequalities
$(1+z)^\theta \le 1 + \theta z(1+z)^{\theta-1}$ and $(1+z)^{1/\theta -2} \ge 1-(2 - 1/\theta)z$, for $0\le z \le 1$, as well as $s/n, t/n \le 3r_0/8$ and (\ref{p: Arch_p_norm}). 
That is,
\begin{align}\label{p: Arch_long_term_for2ndbound_lower}
\begin{split}
&\left( w^\theta\left( \frac{s}{n}\right) + w^\theta\left( \frac{t}{n}\right)\right)^{\frac{1}{\theta}-2} \\
&\ge\left( \frac{h(0)}{n}\right)^{1-2\theta}
\left( s^\theta + t^\theta \right)^{\frac{1}{\theta}-2}
 \left\{\frac{s^\theta\left[ 1 + \frac{sH(r_0)}{nh(0)}\right]^\theta + t^\theta\left[ 1 + \frac{tH(r_0)}{nh(0)}\right]^\theta }{s^\theta + t^\theta}\right\}^{\frac{1}{\theta}-2} \\
&\ge \left( \frac{h(0)}{n}\right)^{1-2\theta}
\left( s^\theta + t^\theta \right)^{\frac{1}{\theta}-2} 
\left\{1+ \frac{\theta H(r_0) \left(s^{\theta+1} + t^{\theta +1}\right)}{nh(0)(s^\theta + t^\theta)}
\right. \\ & \left. \phantom{blaaaaaaaaaaaaaaaaaaaaaaaaaaaaaaaaaaaaaaa}
\cdot\left[1+\frac{3r_0H(r_0)}{8h(0)}\right]^{\theta-1}  \right\}^{\frac{1}{\theta}-2}\\
&\ge \left( \frac{h(0)}{n}\right)^{1-2\theta}
\left( s^\theta + t^\theta \right)^{\frac{1}{\theta}-2}
\left\{ 1 - \frac{(2\theta-1)2^{\theta-1}H(r_0)(s+t)}{nh(0)}\right.\\
&\left.\phantom{blaaaaaaaaaaaaaaaaaaaaaaaaaaaaaaaaaaaaaaa}\cdot\left[1+\frac{3r_0H(r_0)}{8h(0)}\right]^{\theta-1}\right\}.
\end{split}
\end{align}
Similarly, 
\begin{multline}\label{p: Arch_new_bounds_prod}
\left( \frac{ h(0)}{n}\right)^{2\theta-2} (st)^{\theta-1} 
\le \left\{ w\left( \frac{s}{n}\right) w\left( \frac{t}{n}\right)\right\}^{\theta-1}\\
\le \left( \frac{ h(0)}{n}\right)^{2\theta-2} (st)^{\theta-1}
\left[ 1 + \frac{sH(r_0)}{nh(0)}\right]^{\theta-1}\left[ 1 + \frac{tH(r_0)}{nh(0)}\right]^{\theta-1}.
\end{multline}
By (\ref{p: Arch_new_bounds_1Der_w}) and (\ref{p: Arch_upperbounds_H}),
\begin{equation}\label{p: Arch_new_bounds_prod_1Der_w}
h(0)^2 \le w'\left( \frac{s}{n}\right) w'\left( \frac{t}{n}\right) 
\le h(0)^2 \left[ 1 + \frac{2sH(r_0)}{nh(0)}\right]\left[ 1 + \frac{2tH(r_0)}{nh(0)}\right].
\end{equation}
Multiplication of (\ref{p: Arch_new_bounds_prod}) and (\ref{p: Arch_new_bounds_prod_1Der_w})
gives 
\begin{align}\label{p: Arch_helping0}
\begin{split}
&\left( \frac{1}{n}\right)^{2\theta-2} h(0)^{2\theta}(st)^{\theta-1} \\
&\le   \left\{ w\left( \frac{s}{n}\right) w\left( \frac{t}{n}\right)\right\}^{\theta-1}w'\left( \frac{s}{n}\right) w'\left( \frac{t}{n}\right) \\
&\le \left( \frac{1}{n}\right)^{2\theta-2} h(0)^{2\theta}(st)^{\theta-1}
\left[ 1 + \frac{sH(r_0)}{nh(0)}\right]^{\theta+1}\left[ 1 + \frac{tH(r_0)}{nh(0)}\right]^{\theta+1}\\
& \le \left( \frac{1}{n}\right)^{2\theta-2} h(0)^{2\theta}(st)^{\theta-1}
\left[ 1 + \frac{H(r_0)(s+t)}{nh(0)} + \frac{H(r_0)^2(s+t)^2}{4n^2h(0)^2} \right]^{\theta+1},
\end{split}
\end{align}
where we used $(1+2z)\le (1+z)^2$ for the second inequality, and $st \le (s+t)^2/4$ (due to the inequality of arithmetic and geometric means) for the third. 
For the part in brackets, we use $(s+t)/n \le 3r_0/4$ and the inequality $(1+z)^{\theta+1} \le 1+(\theta +1)z(1+z)^\theta$. We obtain
\begin{align}\label{p: Arch_helping1}
&\left[ 1 + \frac{H(r_0)(s+t)}{nh(0)} + \frac{H(r_0)^2(s+t)^2}{4n^2h(0)^2} \right]^{\theta+1} \nonumber\\
&\le \left[ 1 + \frac{H(r_0)(s+t)}{nh(0)}\left( 1+ \frac{3r_0H(r_0)}{16h(0)}\right) \right]^{\theta+1}\\
&\le  1+ \frac{(\theta+1)H(r_0)(s+t)}{nh(0)}\left(1+\frac{3r_0H(r_0)}{16h(0)}\right)
\left[ 1+ \frac{3r_0H(r_0)}{4h(0)} + \frac{9r_0^2H(r_0)^2}{256h(0)^2}\right]^\theta .  \nonumber
\end{align}
By (\ref{p: Arch_Inv_1Der_w})-(\ref{p: Arch_helping1}), we obtain, on the one hand,
\begin{align*}
&\frac{\left\{ w\left( \frac{s}{n}\right)w\left( \frac{t}{n}\right)\right\}^{\theta-1} 
\cdot w'\left( \frac{s}{n}\right)w'\left( \frac{t}{n}\right)
\cdot\left(w^\theta\left( \frac{s}{n}\right) + w^\theta\left( \frac{t}{n}\right)\right)^{\frac{1}{\theta}-2}}{ w'\left( w^{-1}\left[ \left( w^\theta\left( \frac{s}{n}\right) + w^\theta\left( \frac{t}{n}\right)\right)^{1/\theta}\right]\right)} \\
&\phantom{blaaaaaaaaaaaaaaaaaaaaaaaaaaaaaaaaaaaa}-n(st)^{\theta-1}\left( s^\theta + t^\theta \right)^{\frac{1}{\theta}-2} \\
\le & \,\,(st)^{\theta-1}\left( s^\theta + t^\theta \right)^{\frac{1}{\theta}-2}  (s+t)\\ 
& \phantom{blaa}
\cdot \frac{(\theta+1)H(r_0)}{h(0)}\left(1+\frac{3r_0H(r_0)}{16h(0)}\right) \left[ 1+ \frac{3r_0H(r_0)}{4h(0)} + \frac{9r_0^2H(r_0)^2}{256h(0)^2}\right]^\theta,\\
\end{align*}
and, on the other hand, again using $(s+t)/n \le 3r_0/4$ for the lower bound in (\ref{p: Arch_Inv_1Der_w}),
\begin{align*}
&n(st)^{\theta-1}\left( s^\theta + t^\theta \right)^{\frac{1}{\theta}-2}\\
&\phantom{blaaaaaa}-
\frac{\left\{ w\left( \frac{s}{n}\right)w\left( \frac{t}{n}\right)\right\}^{\theta-1} \cdot w'\left( \frac{s}{n}\right)w'\left( \frac{t}{n}\right)\cdot\left(w^\theta\left( \frac{s}{n}\right) + w^\theta\left( \frac{t}{n}\right)\right)^{\frac{1}{\theta}-2}}{ w'\left( w^{-1}\left[ \left( w^\theta\left( \frac{s}{n}\right) + w^\theta\left( \frac{t}{n}\right)\right)^{1/\theta}\right]\right)}\\
\le& \,\,  n(st)^{\theta-1}\left( s^\theta + t^\theta \right)^{\frac{1}{\theta}-2}\\
&\cdot \left\{1-\left[ 1 - \frac{(2\theta-1)2^{\theta-1}H(r_0)(s+t)}{nh(0)}\left(1+\frac{3r_0H(r_0)}{8h(0)}\right)^{\theta-1}\right] \right.\\
&\left. \phantom{blaaaaaaaaaaaaaaaaaaaa,a}\cdot\left[1 - \frac{2H(r_0)(s+t)}{h(0) n}\left( 1+\frac{3r_0H(r_0)}{4h(0)}\right)\right] \right\}\\
\le& \,\,  (st)^{\theta-1}\left( s^\theta + t^\theta \right)^{\frac{1}{\theta}-2}(s+t)\\
&\phantom{blaa} \cdot\frac{H(r_0)}{h(0)}  \left[(2\theta-1)2^{\theta-1}\left(1+\frac{3r_0H(r_0)}{8h(0)}\right)^{\theta-1}+ 2\left( 1+ \frac{3r_0H(r_0)}{4h(0)}\right) \right].
\end{align*}
Hence, the second of the two error terms in (\ref{p: two_error_terms}) is bounded by
\begin{equation}\label{p: Arch_second_of_error_bounds}
(\theta-1)(st)^{\theta-1}\left( s^\theta + t^\theta \right)^{\frac{1}{\theta}-2}\,\frac{\kappa(s+t)}{n}\,,
\end{equation}
where 
\begin{multline*}
\kappa =  \frac{H(r_0)}{h(0)}\,
\max\left\{(\theta+1)\left(1+\frac{3r_0H(r_0)}{16h(0)}\right) \left[ 1+ \frac{3r_0H(r_0)}{4h(0)} + \frac{9r_0^2H(r_0)^2}{256h(0)^2}\right]^\theta, \right.\\
\left. (2\theta-1)2^{\theta-1}\left(1+\frac{3r_0H(r_0)}{8h(0)}\right)^{\theta-1}+ 2\left( 1+ \frac{3r_0H(r_0)}{4h(0)}\right)  \right\}.
\end{multline*}
Thus, by (\ref{p: Arch_first_of_error_bounds}) and (\ref{p: Arch_second_of_error_bounds}),
\begin{align*}
&\int_0^{t_n} \int_0^{s_n} \left| e^\star_n(s,t)-\lambda^\star(s,t) \right| dsdt \\
&\le  \int_0^{t_n} \int_0^{s_n} \lambda^\star(s,t) \cdot\frac{s+t}{n}\cdot \left[\kappa +\left( \frac{4}{3}\right)^{2\theta} \frac{W(r_0)}{h(0)(\theta-1)} \right]\\
& \le \bl^\star(A^\star) \cdot \frac{s_n+t_n}{n}\cdot\left[\kappa +\left( \frac{4}{3}\right)^{2\theta} \frac{W(r_0)}{h(0)(\theta-1)} \right],
\end{align*}
where we used the mean value theorem for integration for the second inequality. 
Using polar coordinates $s=\rho\cos \varphi > 0$ and $t=\rho\sin \varphi > 0$, with $\rho= \sqrt{s^2 + t^2}$ and $\varphi \in (0,\pi/2)$, 
we can rewrite $\boldsymbol{\lambda}^\star(A^\star)$ as follows:
\begin{align*}
& \int_0^{s_n} \int_0^{t_n}(\theta-1) (st)^{\theta - 1} \left(s^\theta + t^\theta\right)^{\frac{1}{\theta}-2}dtds \\
& \le \int_0^{\sqrt{s_n^2 + t_n^2}} \int_0^{\frac{\pi}{2}} (\theta - 1)(\cos \varphi \sin \varphi)^{\theta -1} 
\left( \cos^\theta \varphi + \sin^\theta \varphi \right)^{\frac{1}{\theta}-2} d\varphi d\rho\\
& = (\theta -1)\sqrt{s_n^2 + t_n^2} \int_0^{\frac{\pi}{2}} (\cos \varphi \sin \varphi)^{\theta -1} 
\left( \cos^\theta \varphi + \sin^\theta \varphi \right)^{\frac{1}{\theta}-2} d\varphi\\
& \le \frac{\pi}{2}\, (\theta -1)\sqrt{s_n^2 + t_n^2} \cdot
\max_{\varphi \in \left(0, \frac{\pi}{2}\right)} \left|(\cos \varphi \sin \varphi)^{\theta -1} 
\left( \cos^\theta \varphi + \sin^\theta \varphi \right)^{\frac{1}{\theta}-2}  \right|.
\end{align*}
Note that for all $\varphi \in \left(0,\pi/4\right]$, we have $(\sqrt{2})^{-1} \le \cos \varphi < 1$ and $0 < \sin \varphi \le (\sqrt{2})^{-1}$. Therefore, 
$\cos^\theta \varphi + \sin^\theta \varphi > (\sqrt{2})^{-\theta}$ and $\cos \varphi \sin \varphi < (\sqrt{2})^{-1}$. 
Analogously, we get the same inequalities for all $\varphi \in \left[\pi/4, \pi/2 \right)$. Thus, we obtain
\[
\left( \cos^\theta \varphi + \sin^\theta \varphi \right)^{\frac{1}{\theta}-2} < (\sqrt{2})^{2\theta-1}
\quad \textnormal{ and } \quad (\cos \varphi \sin \varphi)^{\theta-1} < (\sqrt{2})^{1-\theta}
\]
for all $\varphi \in (0, \pi/2)$, and therefore
\begin{equation}\label{p: bound_lambda(A)}
\boldsymbol{\lambda}^\star(A^\star) \le \frac{\pi}{2}\, (\theta -1)(\sqrt{2})^\theta\sqrt{s_n^2 + t_n^2} 
\le \frac{\pi}{2}\, (\theta -1)(\sqrt{2})^\theta (s_n + t_n).
\end{equation}
Hence,
\begin{align*}
& \int_0^{t_n} \int_0^{s_n} \left| e^\star_n(s,t)-\lambda^\star(s,t) \right| dsdt \\
 &\le \frac{(s_n+t_n)^2}{n} \cdot \frac{\pi(\sqrt{2})^\theta}{2} \cdot 
\left[(\theta-1)\kappa +\left( \frac{4}{3}\right)^{2\theta} \frac{W(r_0)}{h(0)} \right].
\end{align*}
\end{proof}
\noindent For any choice of Archimedean copula satisfying (\ref{t: limit_theta_Arch_cop}), we thus first need to determine $h(0)$ and $w'$ by way of the generator $\phi$, in order to find $r_0$ such that for all $r \le r_0$, it holds that $w'(r)\le 4h(0)/3$. We can then choose $n$, $s_n$ and $t_n$ such that 
\begin{equation}\label{t: condition_Arch}
\frac{s_n}{n},\, \frac{t_n}{n} \le \frac{3r_0}{8}\,.
\end{equation}
Suppose, for instance, that this is satisfied for $s_n = t_n = \sqrt{\log n}/2$ for some big enough integer $n$. For an MPPE whose bivariate marks are distributed according to the chosen copula, the error of the approximation in total variation by a Poisson process with intensity measure $\bl^\star$ is then, by (\ref{p: Arch_Poi_approx_error}) and Theorem \ref{t: MyMichel_Arch},  bounded by 
\begin{equation}\label{t: Arch_result_logn}
\frac{\sqrt{\log n}}{2n} + \frac{K \log n}{n}\,,
\end{equation}
and $n$ needs to be big enough to offset the effect of the multiplication by the constant $K$. The expected number of exceedances $\mathbb{E}W^\star_{A^\star}$ of the MPPE is then approximately 
\begin{equation}\label{t: Arch_expected_logn}
\bl^\star(A^\star) = s_n+t_n - \left( s_n^\theta + t_n^\theta\right)^{\frac{1}{\theta}}
= \left( 1- 2^{\frac{1}{\theta}-1}\right) \log n.
\end{equation}
The next section gives some examples.
\subsection{Examples}\label{s: Arch_Examples}
We apply Theorem \ref{t: MyMichel_Arch} to several examples. 
\cite{Charpentier/Segers:2009} showed that (\ref{t: limit_theta_Arch_cop}) and thereby (\ref{t: ChapSeg_E}) are satisfied for the families 
of Archimedean copulas listed in Table \ref{table: examples_Arch_cop}, with $\tilde{\theta} = \theta \in [1, \infty)$. For each of these, we assume that $\theta >1$ (as the case $\theta =1$ 
gives independence in the upper tail). We denote the various families of copulas by the numbers assigned to them in \cite{Nelsen:2006} and 
\cite{Charpentier/Segers:2009}. 

Note that $\bar{\phi}$ of family (2) is exactly $\bar{\phi}_0$ from (\ref{d: phi0}). The intensity measure of the corresponding MPPE thus equals the intensity measure $\bl^\star$ of the Poisson process that we would approximate by in Theorem \ref{t: MyMichel_Arch}. For family (2), (\ref{p: Arch_Poi_approx_error}) is thus sufficient, and there is no need to apply Theorem \ref{t: MyMichel_Arch}.
Note moreover that family (4) is the family of Gumbel copulas from Example \ref{e: Arch_Gumbel}.
\begin{center}
\begin{table}[!ht]
\begin{tabular}{l|l|l}
Nr. & $C_\theta(u,v)$ & $\bar{\phi}(r) = \phi(1-r)$ \\ \hline \hline
{} & {} & {} \\
(2) & $\max\left\{ 1-\left[ (1-u)^\theta + (1-v)^\theta \right]^{\frac{1}{\theta}}\, , \, 0\right\}$ & $r^\theta$  \\ 
{} & {} & {} \\
(4) & $\exp\left\{ -\left[ (-\log u)^\theta + (-\log v)^\theta \right]^{\frac{1}{\theta}}\right\}$ & $[-\log (1-r)]^\theta$ \\
{} & {} & {} \\
(6) & $1-\left[ (1-u)^\theta + (1-v)^\theta -(1-u)^\theta(1-v)^\theta \right]^{\frac{1}{\theta}}$ & $-\log \left(1-r^\theta\right)$ \\
{} & {} & {} \\
(12) & $\left\{ 1+ \left[ (u^{-1} - 1)^\theta + (v^{-1} - 1)^\theta\right]^{\frac{1}{\theta}} \right\}^{-1}$ & $\left( \frac{r}{1-r}\right)^{\theta}$ \\
{} & {} & {} \\
(14) & $\left\{ 1+ \left[ (u^{-\frac{1}{\theta}}-1)^\theta + (v^{-\frac{1}{\theta}} - 1)^\theta \right]^{\frac{1}{\theta}} \right\}^{-\theta}$ & $\left[(1-r)^{-\frac{1}{\theta}} - 1\right]^\theta$\\
{} & {} & {} \\
(15) & $ \left\{ \max\left(1- \left[ \left( 1- u^{\frac{1}{\theta}}\right)^\theta + \left(1-v^{\frac{1}{\theta}} \right)^{\theta}\right]^{\frac{1}{\theta}} \,, \, 0\right)\right\}^\theta$ 
& $\left[1-(1-r)^{\frac{1}{\theta}}\right]^\theta$\\
{} & {} & {} \\
(21) & $1-(1-\{\max( [1-(1-u)^\theta]^{\frac{1}{\theta}}$       
 & $1-\left( 1-r^\theta\right)^{\frac{1}{\theta}}$\\
{} & \phantom{blablabla}$+ [1-(1-v)^\theta]^{\frac{1}{\theta} } -1\, ,\, 0)\}^\theta)^{\frac{1}{\theta}} $ & {} 
\end{tabular}
\caption{We list families of Archimedean copulas with parameter $\theta \in [1, \infty)$, for which the limit $\tilde{\theta}$
in (\ref{t: limit_theta_Arch_cop}) exists in $[1, \infty)$ and equals $\theta$. These copulas exhibit upper tail dependence as determined by 
Theorem \ref{t: Charp_Segers_AD} (unless $\tilde{\theta} = \theta=1$ in which case we have asymptotic independence). 
}
\label{table: examples_Arch_cop}
\end{table}
\end{center}

For each of the examples from Table \ref{table: examples_Arch_cop} (with $\theta > 1$), it is possible to show that the function 
$w(r) = \bar{\phi}^{\frac{1}{\theta}}(r)$ is twice continuously differentiable on $[0,1)$. Consider, e.g., family (4), for which
$w(r) = -\log(1-r)$:
\[
w'(r) = \frac{1}{1-r}\,,\quad w'(0)=1, \quad w''(r) = \frac{1}{(1-r)^2}\,, \quad w''(0)=1.  
\]
For each example, we now give the function 
$h(r) = \bar{\phi}^{\frac{1}{\theta}}(r)/r$ and
indicate the first few terms of its series expansion. We thereby determine the value $h(0)$. 
\begin{enumerate}
\item[(2):] $h(r) \equiv 1$. We have $h(0)= 1$.
\item[(4):] Using series expansion of the logarithm, we have 
\[
 h(r) = -\frac{\log(1-r)}{r} = \frac{r + \frac{r^2}{2} + \frac{r^3}{3} + \ldots}{r} 
= 1 + \frac{r}{2} + \frac{r^2}{3} +\ldots,
\]
and thus $h(0) = 1$.
\item[(6):] Series expansion of the logarithm gives 
\[
h(r) = \frac{\left[ -\log\left(1-r^\theta\right)\right]^{1/\theta}}{r}
= \left(\sum_{j=1}^\infty \frac{r^{(j-1)\theta}}{j} \right)^{\frac{1}{\theta}}
= \left(1+\sum_{j=2}^\infty \frac{r^{(j-1)\theta}}{j} \right)^{\frac{1}{\theta}}.
\]
Taylor expansion about $0$ gives
\[
(1+z)^{\frac{1}{\theta}} =1+ \frac{1}{\theta}\, z + \frac{1}{\theta}\left(\frac{1}{\theta }-1 \right) \frac{z^2}{2}+\ldots,
\]
and with $\displaystyle z =  \sum_{j=2}^\infty \frac{r^{(j-1)\theta}}{j}\,$, we obtain
\begin{align*}
h(r) &= 1 + \frac{1}{2\theta}\, r^\theta + \frac{1}{8\theta}\left( \frac{1}{\theta} + \frac{5}{3}\right) r^{2\theta}+\frac{1}{6\theta}\left( \frac{1}{\theta} + \frac{1}{2}\right)r^{3\theta} + \ldots,\\
h'(r) &= \frac{1}{2}\, r^{\theta-1} + \frac{1}{4}\left( \frac{1}{\theta} + \frac{5}{3}\right)r^{2\theta-1} + \frac{1}{2}\left(\frac{1}{\theta} + \frac{1}{2}\right)r^{3\theta-1}+ \ldots,\\
h''(r) &= \frac{\theta-1}{2}\, r^{\theta-2} + \frac{2 \theta -1}{4}\left( \frac{1}{\theta} + \frac{5}{3} \right) r^{2\theta-2} + \frac{3\theta-1}{2}\left( \frac{1}{\theta} + \frac{1}{2}\right)r^{3\theta-2} + \ldots
\end{align*}
Thus $h(0)=1$. 
Note that $h$ is not twice differentiable at $0$ if $\theta <2$; however, Theorem \ref{t: MyMichel_Arch} uses only $w''(r)= h'(r)+rh''(r)$, which does exist at $r=0$ for all $\theta>1$. Family (21) below behaves similarly. 

\item[(12):] The function
\[
h(r) = \frac{1}{1-r} = \sum_{j=0}^\infty r^j = 1 + r + r^2 + \ldots
\]
is the geometric series. We have $h(0)= 1$.
\item[(14):] Taylor expansion about $0$ gives
\[
(1-r)^{-\frac{1}{\theta}} = 1 + \frac{1}{\theta}\, r + \frac{1}{2\theta}\left( \frac{1}{\theta}+1\right)r^2 + \ldots,
\]
and therefore,
\[
h(r)=\frac{(1-r)^{-1/\theta}-1}{r} = \frac{1}{\theta} + \frac{1}{2\theta} \left( \frac{1}{\theta} +1\right)r + \ldots
\]
We thus obtain $\displaystyle h(0) = \frac{1}{\theta}$\,.
\item[(15):] With Taylor expansion about $0$, we  find
\begin{equation}\label{p: Arch_Taylor_15}
(1-r)^{\frac{1}{\theta}} = 1 - \frac{1}{\theta}\, r - \frac{1}{2\theta}\left( 1 - \frac{1}{\theta}\right) r^2 - \frac{1}{6\theta}\left( 1 -\frac{1}{\theta}\right)\left( 2-\frac{1}{\theta}\right)r^3 -\ldots,
\end{equation}
and therefore,
\[
h(r) = \frac{1-(1-r)^{1/\theta}}{r} = \frac{1}{\theta} + \frac{1}{2\theta}\left(1 - \frac{1}{\theta} \right)r + \ldots,
\]
and $\displaystyle h(0)= \frac{1}{\theta}$\,.
\item[(21):] 
By (\ref{p: Arch_Taylor_15}), we find
\begin{align*}
h(r) &= \frac{\left[ 1-\left(1-r^\theta\right)^{1/\theta}\right]^{1/\theta}}{r}\\
&= \left\{\frac{1}{\theta} + \frac{1}{2\theta}\left( 1 - \frac{1}{\theta}\right)r^\theta
+ \frac{1}{6\theta}\left( 1-\frac{1}{\theta}\right)\left( 2 - \frac{1}{\theta}\right)r^{2\theta} + \ldots \right\}^{\frac{1}{\theta}},
\end{align*}
and Taylor expansion about $0$ gives
\[
\left( \frac{1}{\theta} +z\right)^{\frac{1}{\theta}} = 
\left( \frac{1}{\theta}\right)^{\frac{1}{\theta}} 
+ \frac{1}{\theta}\left( \frac{1}{\theta}\right)^{\frac{1}{\theta}-1}z
- \frac{1}{2\theta}\left( 1-\frac{1}{\theta}\right) \left( \frac{1}{\theta}\right)^{\frac{1}{\theta}-2}z^2 
+ \ldots
\]
Then,
\begin{align*}
h(r) = \left( \frac{1}{\theta}\right)^{\frac{1}{\theta}} 
+ \frac{1}{2\theta^2}\left( \frac{1}{\theta}\right)^{\frac{1}{\theta}-1}\left( 1-\frac{1}{\theta}\right)r^\theta + \ldots \quad \text{and} \quad h(0)=\left( \frac{1}{\theta}\right)^{\frac{1}{\theta}}. 
\end{align*}
\end{enumerate}
\begin{table}[b]
\begin{center}
\begin{tabular}{r|r|r|r|r|r}
Nr. 	& $h(0)$			   		& $r_0$	    & $H(r_0)$	 & $W(r_0)$	 & $K$ \\ \hline \hline
(4)	& $1$   					& $0.250$   & $0.731$      & $1.778$       & 16.2 \\ 
(6)   & $1$      					& $0.851$   & $2.531$      & $21.027$     &186.0\\ 
(12) & $1$       				& $0.133$   & $1.331$      & $3.080$       &28.4\\
(14) & $2/3$	                		& $0.158$   & $0.754$      & $1.761$       &24.3\\
(15) & $2/3$      				& $0.578$   & $0.229$      & $0.703$       &9.0\\
(21) & $(2/3)^{\frac{2}{3}}$	& $0.738$   & $0.240$      & $1.053$       &10.8
\end{tabular}
\caption{For $\theta=1.5$, we compute the values $h(0)$, $r_0$, $H(r_0)$, $W(r_0)$ and the constant $K$ for each of the examples of families of copulas from Table \ref{table: examples_Arch_cop}.}
\label{table: theta1.5}
\end{center}
\end{table}
We have thus shown that $h(0)>0$ 
for each of the examples. Note that it is furthermore possible to show that
$h'(r) \ge 0$ and $w''(r) \ge w''(0)$ for all $r$ close to $0$ (or even for all $r \in [0,1)$); see also Figures \ref{f: Der_h} and \ref{f: Der_w}.  
We next need to compute the values
\[
r_0, \quad H(r_0)= \max_{0 \le \xi \le r_0} h'(\xi), \quad W(r_0)=\max_{0 \le \xi \le r_0} w''(\xi),
\]
in order to determine the constant $K$. Note that for families (6), (14), (15) and (21), the functions $h$ and $w$ depend 
on the parameter $\theta$ and the value of $\theta$ should thus be specified. For families (4) and (12), this is not the 
case. However, we still need to specify $\theta$ for all families of copulas, as the constant $K$ depends on $\theta$. 
By way of example, we compute $r_0$, $H(r_0)$, $W(r_0)$ and finally $K=K(\theta, h(0), r_0, H(r_0), W(r_0))$ for each 
family of copulas for the parameter values $\theta=1.5$ and $\theta=3$, respectively. The results are summarised in 
Tables \ref{table: theta1.5} and \ref{table: theta3}, respectively. Figures \ref{f: Der_h} and \ref{f: Der_w} 
illustrate that for $\theta=1.5$, the functions $h'$ and $w''$ are non-decreasing, and that therefore 
$H(r_0) = h'(r_0)$ and $W(r_0)=w''(r_0)$. The functions $h'$ and $w''$ behave analogously for $\theta=3$.
\begin{center}
\begin{figure}[b!]
\captionsetup[subfigure]{labelformat=empty}
\centering
\subfloat[(4)]{\includegraphics[scale=0.3]{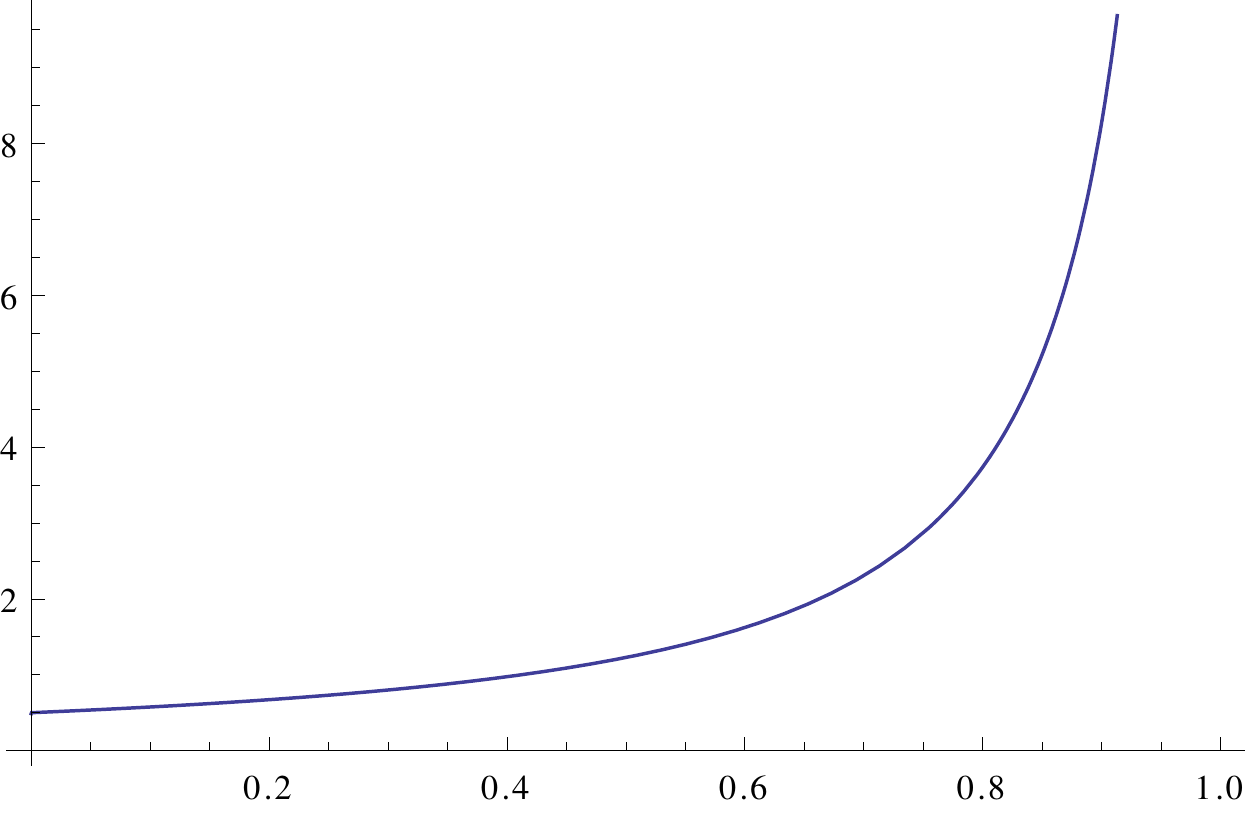}}\qquad
\subfloat[(6)]{\includegraphics[scale=0.3]{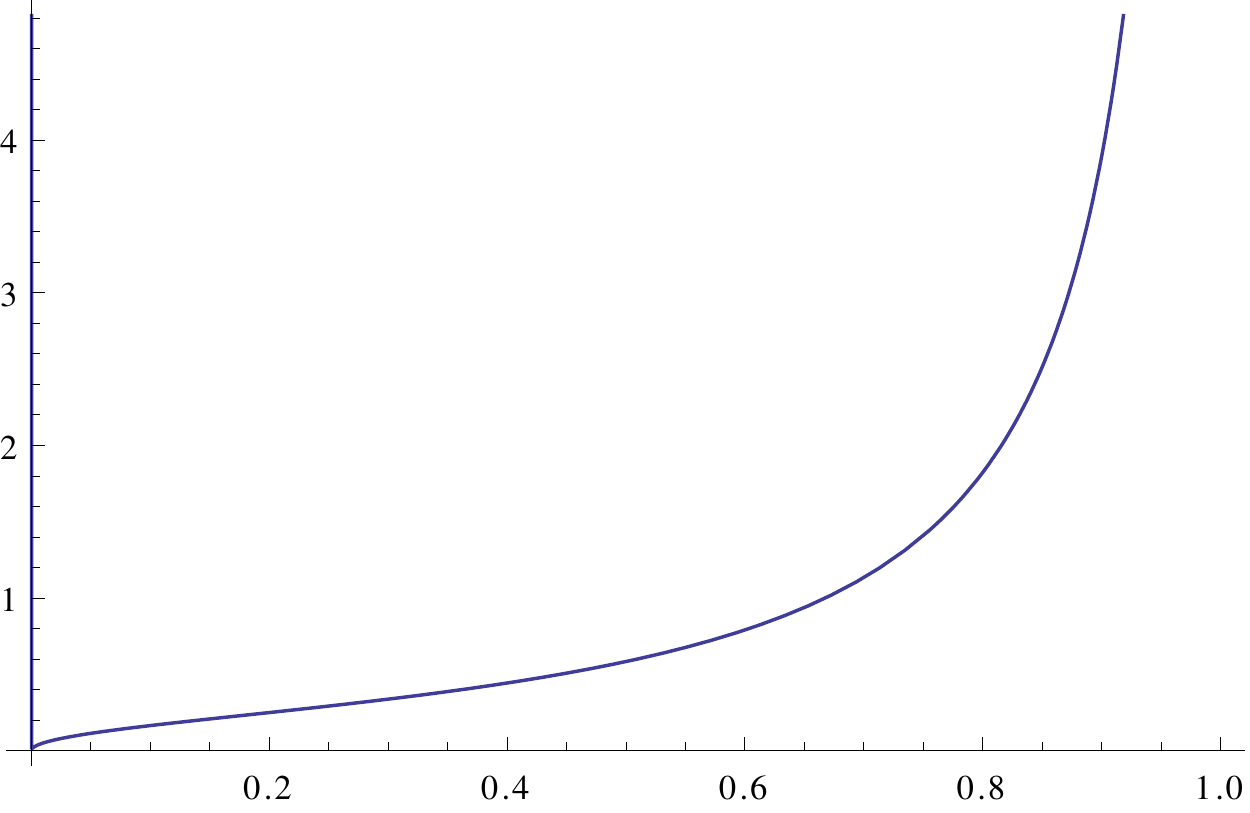}}\qquad
\subfloat[(12)]{\includegraphics[scale=0.3]{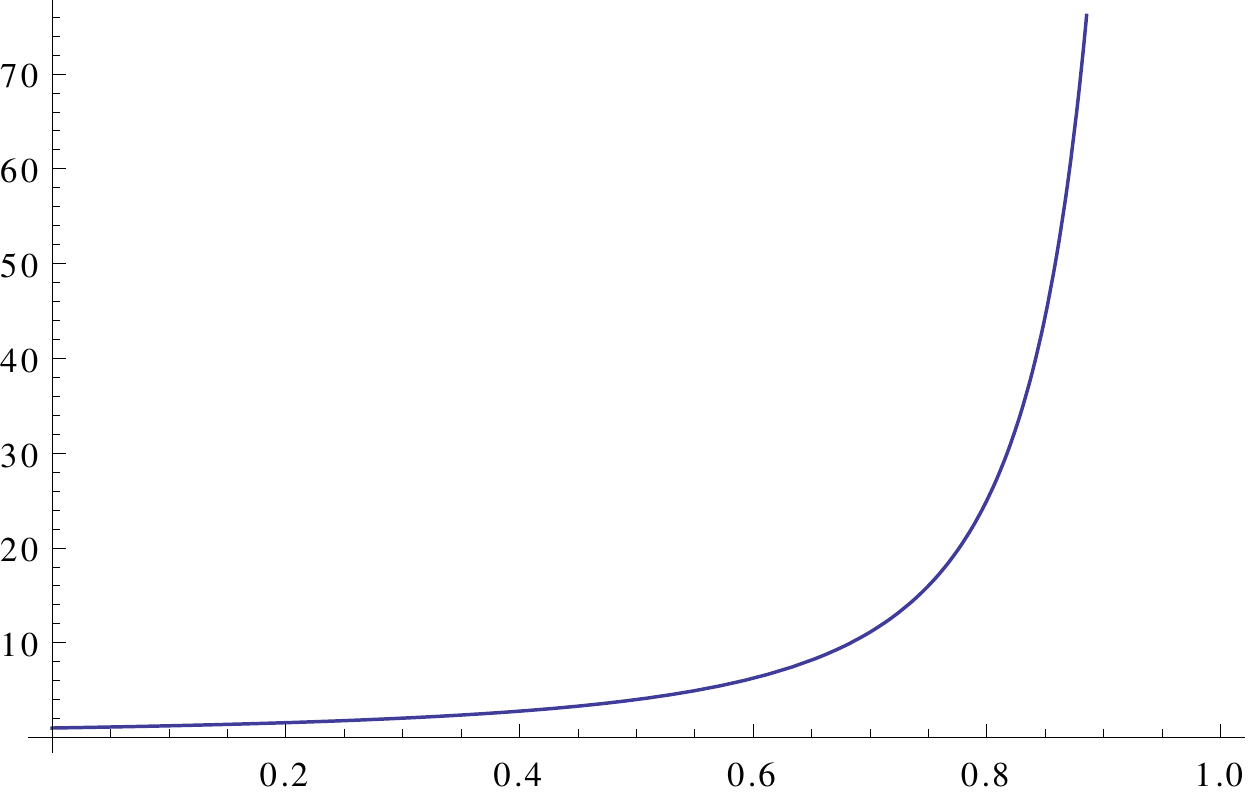}}\qquad
\subfloat[(14)]{\includegraphics[scale=0.3]{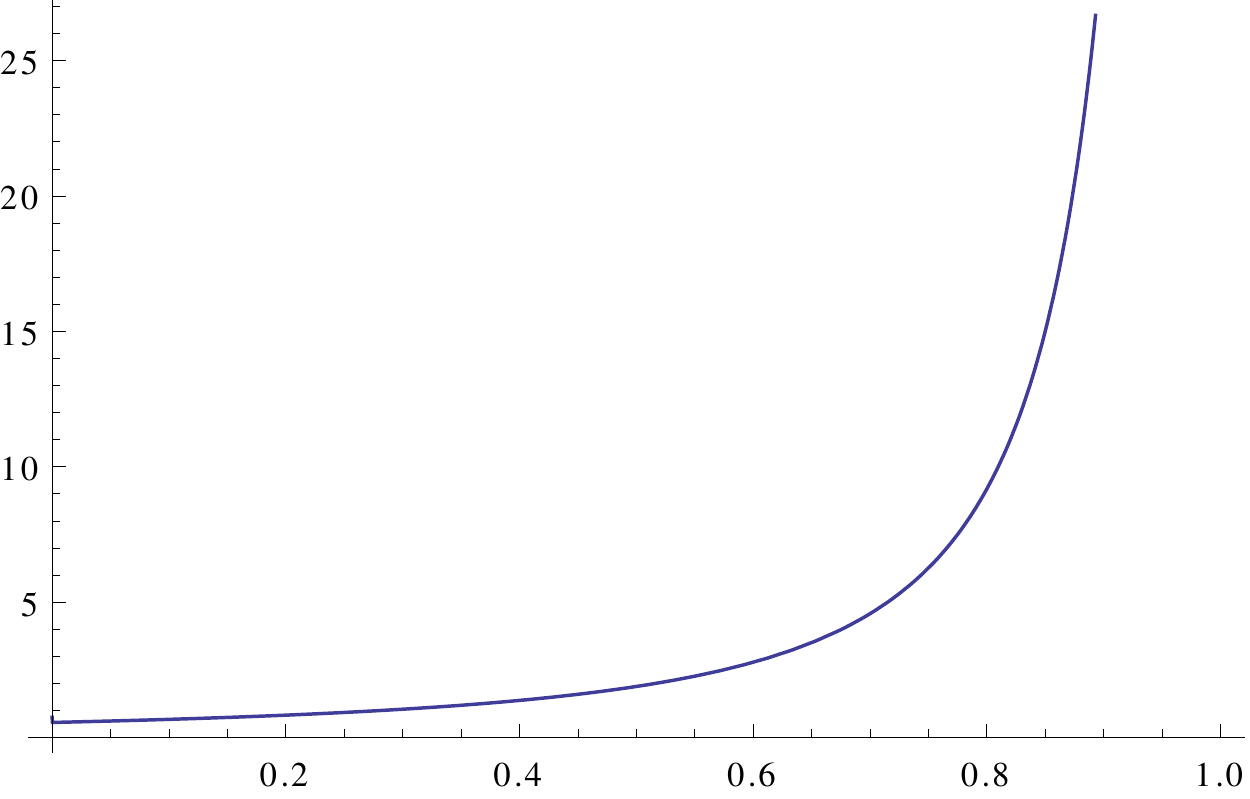}}\qquad
\subfloat[(15)]{\includegraphics[scale=0.3]{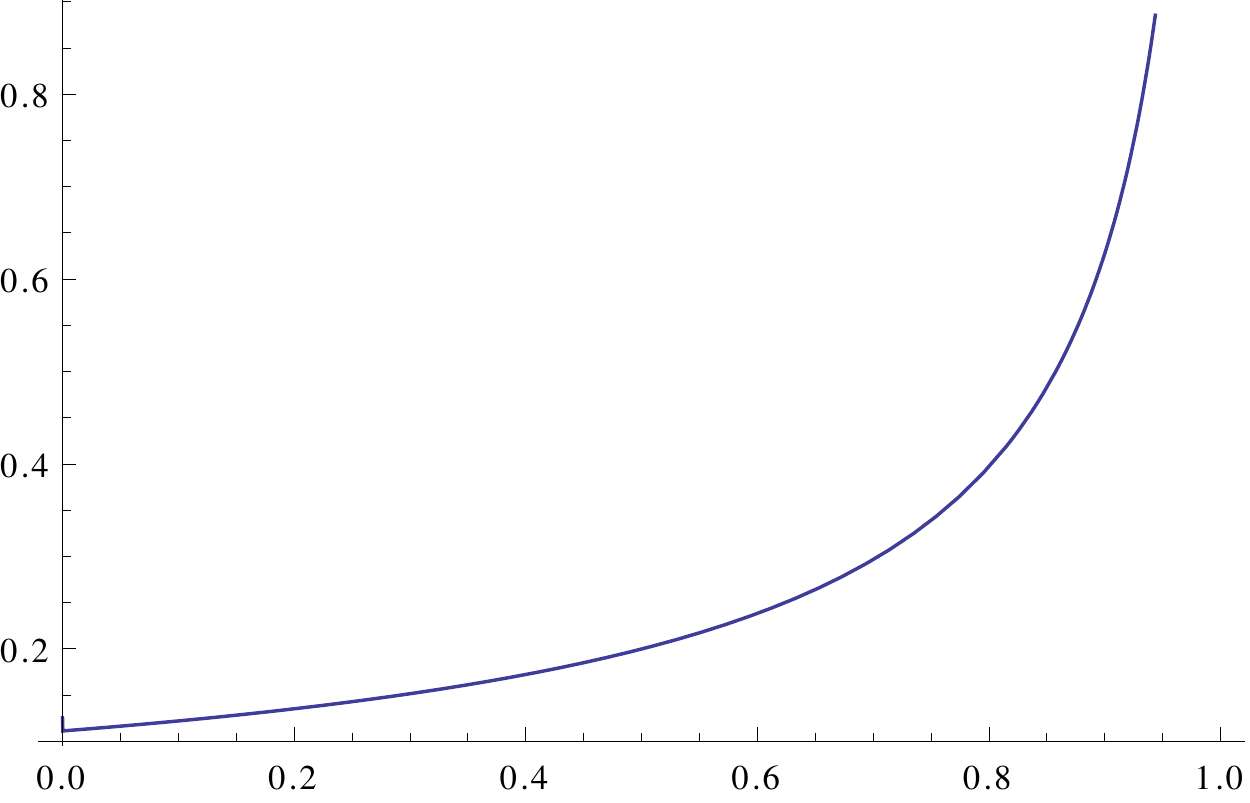}}\qquad
\subfloat[(21)]{\includegraphics[scale=0.3]{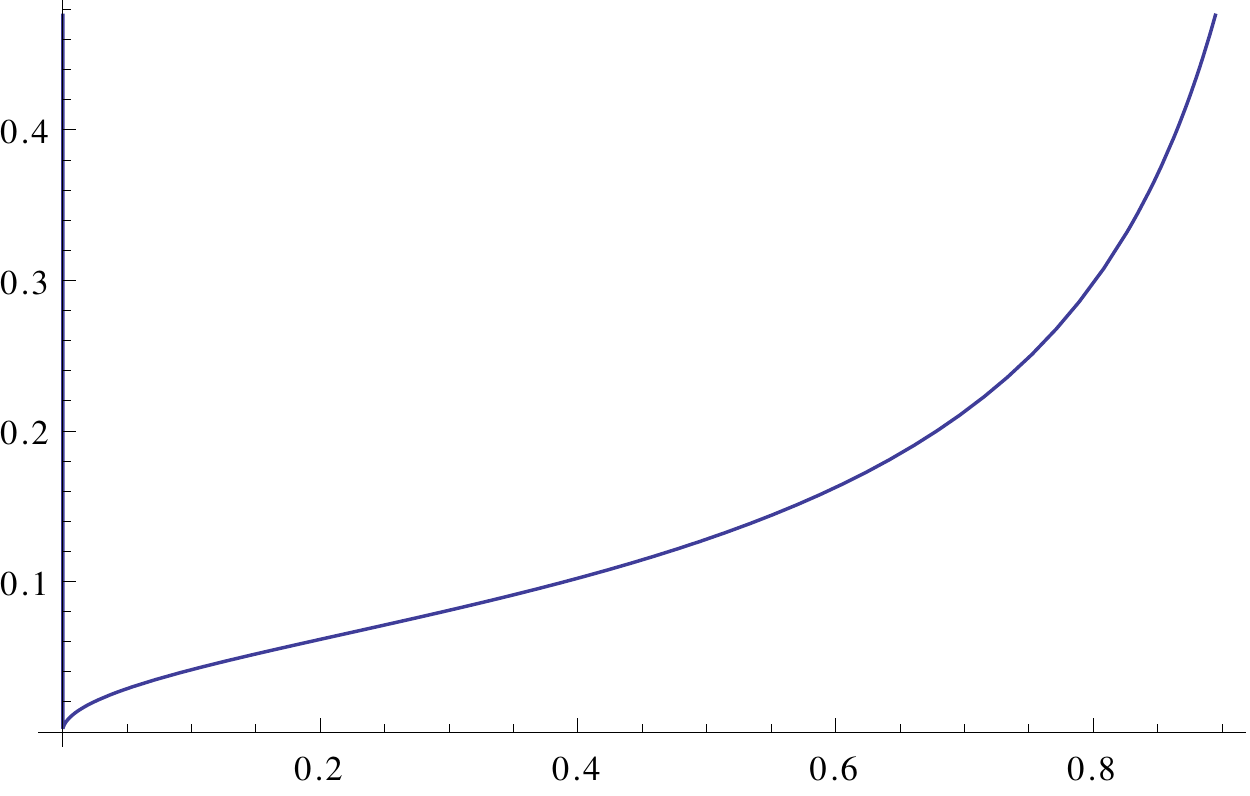}}
\caption{Plots of $h'$ on $[0,1]$ for each of the families of copulas from Table \ref{table: examples_Arch_cop}, for $\theta=1.5$.}
\label{f: Der_h}
\end{figure}
\end{center}
\begin{remark}
Note that $h(0)$ does not depend on the value of $\theta$. We thus do not include it again in Table 
\ref{table: theta3} for $\theta=3$. Also note that in Tables \ref{table: theta1.5} and \ref{table: theta3}, $r_0$ was rounded down to three decimal places (any $r$ smaller than $r_0$ would indeed do), but that we used six decimal places to compute $H(r_0)$ and $W(r_0)$. In Tables  \ref{table: theta1.5} and \ref{table: theta3}, the results for $H(r_0)$ and $W(r_0)$ are rounded to three decimal places, and the results for $K$ to one. As can be seen when 
considering the formula for $K$ in Theorem \ref{t: MyMichel_Arch} or when
comparing the two tables, the value of $K$ increases as $\theta$ increases. 
\end{remark}

The results from the tables can be interpreted by considerations like the following: for instance, if we choose the Gumbel copula, i.e. copula (4), with $\theta=1.5$, and if we choose $s_n=t_n =\sqrt{\log n}/2$, the sample size $n$ needs to be big enough to allow for 
\[
\frac{\sqrt{\log n}}{2n} \le \frac{3r_0}{8}= \frac{3}{32} \approx 0.0938.
\]
This is satisfied for each integer $n \ge 8$. As determined in (\ref{t: Arch_expected_logn}), the expected number of exceedances is approximately
\[
\left( 1-2^{-\frac{1}{3}}\right)\sqrt{\log n} \approx 0.2 \sqrt{\log n}, 
\]
and the overall error bound from (\ref{t: Arch_result_logn}) with $K=16.2$ is smaller than $1$ only for $n \ge 70$. For example, for $n=100$, we only expect $0.2 \sqrt{\log 100} \approx 0.43$, i.e. less than one joint exceedance of the thresholds. The sample size thus has to be very big in order to expect only as much as one threshold exceedance and to get a small error. 
\begin{table}[h!]
\begin{center}
\begin{tabular}{r|r|r|r|r}
Nr. 		& $r_0$	    & $H(r_0)$	 & $W(r_0)$	 & $K$ \\ \hline \hline
(4)		& $0.250$   & $0.731$      & $1.778$       & 207.2 \\ 
(6)  		& $0.701$   & $0.375$      & $2.078$       &1401.1\\ 
(12)    	& $0.133$   & $1.331$      & $3.080$       &372.4\\
(14)	        & $0.194$   & $0.291$      & $0.736$       &313.9\\
(15)    	& $0.350$   & $1.773$      & $0.457$       &107.3\\
(21) 	 	& $0.774$   & $0.238$      & $1.479$       &126.1
\end{tabular}
\caption{For $\theta=3$, we compute the values $h(0)$, $r_0$, $H(r_0)$, $W(r_0)$ and the constant $K$ for each of the 
families of copulas from Table \ref{table: examples_Arch_cop}.} 
\label{table: theta3}
\end{center}
\end{table}
\begin{center}
\begin{figure}[!ht]
\captionsetup[subfigure]{labelformat=empty}
\centering
\subfloat[(4)]{\includegraphics[scale=0.45]{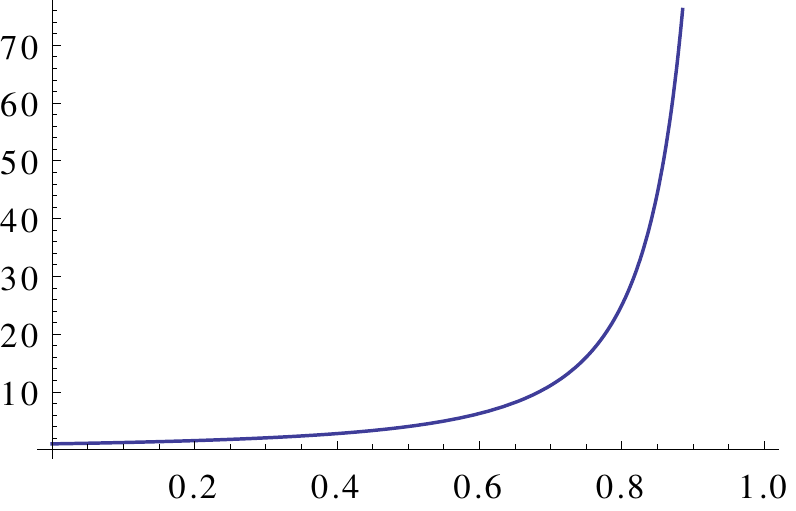}}\qquad
\subfloat[(6)]{\includegraphics[scale=0.45]{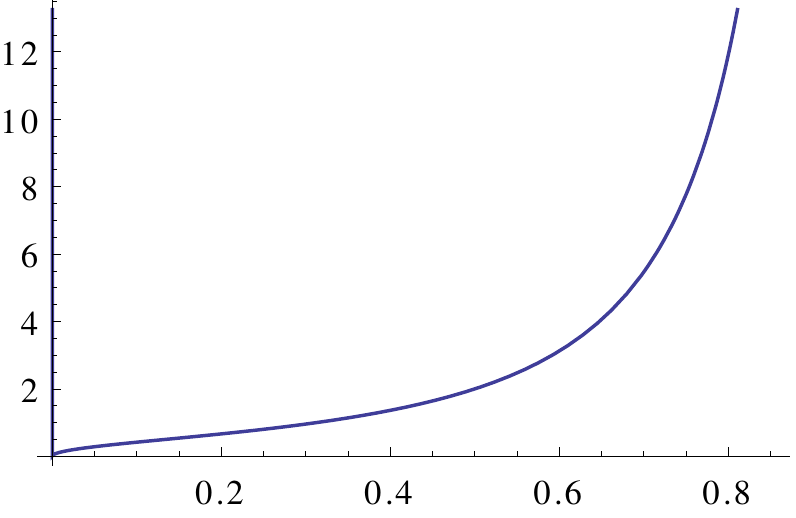}}\qquad
\subfloat[(12)]{\includegraphics[scale=0.45]{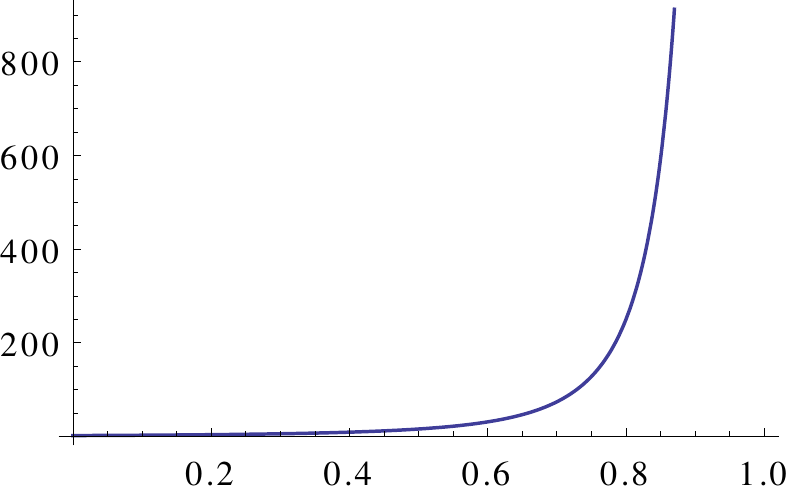}}\qquad
\subfloat[(14)]{\includegraphics[scale=0.45]{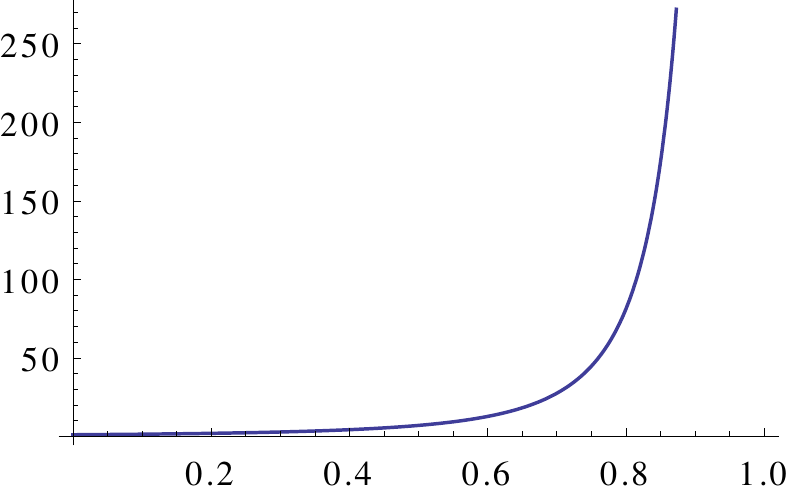}}\qquad
\subfloat[(15)]{\includegraphics[scale=0.45]{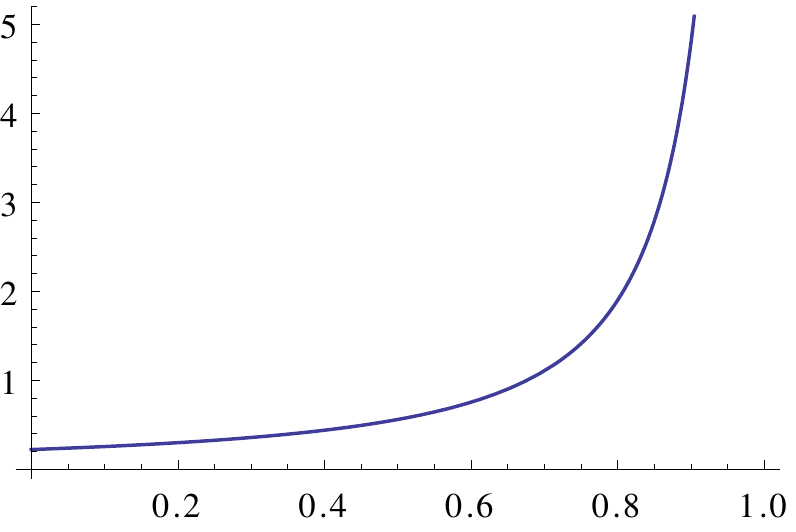}}\qquad
\subfloat[(21)]{\includegraphics[scale=0.45]{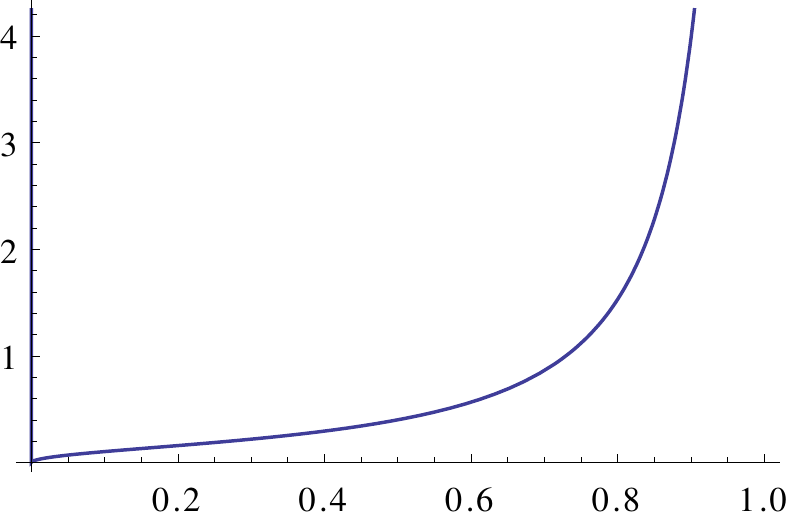}}
\caption{Plots of $w''$ on $[0,1]$ for each of the families of copulas from Table \ref{table: examples_Arch_cop}, for $\theta=1.5$.}
\label{f: Der_w}
\end{figure}
\end{center}
Some values for $K$ in the tables might be unnecessarily high. These might be 
reduced by choosing a smaller $r_0$ than the one indicated. For instance, for 
copula (6) with $\theta=3$, Table \ref{table: theta3} indicates $r_0=0.701$. We 
might instead choose, say, $r_0=0.1$, as the inequality $w'(r) \le 4h(0)/3$ will 
then still be satisfied. Then, $H(r_0) = 0.005$, $W(r_0) = 0.02$ and $K=1.5$ 
(instead of $K=1401.1$ as before). Condition (\ref{t: condition_Arch}) is 
satisfied for $s_n$ and $t_n$ chosen as above for all integers $n \ge 24$. 
For $n=24$, 
the size of the error bound is $\approx 0.24$ and the expected number of 
joint threshold exceedances is $\approx 0.63$, showing again that an even
bigger sample size is needed to expect at least one joint threshold exceedance.
\subsection{Higher dimensions}\label{s: Higher_dimensions}
Sections \ref{s: Charpentier/Segers}-\ref{s: Arch_Examples} treat the case $d=2$. 
Theorem \ref{t: Charp_Segers_AD} is, however, also valid for $d \ge 3$; see 
Theorem 4.1 in \cite{Charpentier/Segers:2009}. This section sketches two ways 
to determine bounds on the error between $\mathcal{L}(\Xi^\star_{A^\star})$ and 
$\mathrm{PRM}(\bls_{\As})$ for processes on the $d$-dimensional (normalised) space $E^\star= (0,n]^d$. 

Suppose that $\mathbf{U},\mathbf{U}_1, \ldots, \mathbf{U}_n$ are \iid$d$-dimensional random vectors 
distributed according to an Archimedean copula $C(u_1, \ldots, u_d) = \phi^{[-1]}(\phi(u_1)+ \ldots + \phi(u_d))$ on $E=[0,1)^d$, 
satisfying (\ref{t: limit_theta_Arch_cop}), for some fixed parameter value $\theta >1$. 
Let $\mathbf{V}, \mathbf{V}_1, \ldots, \mathbf{V}_n$ be \iid$d$-dimensional random vectors 
distributed according to a $d$-variate copula from family (2) (see Table \ref{table: examples_Arch_cop}) 
with parameter $\theta$, that we denote by $C_0$ $(:= C_{0,\theta})$:
\begin{align*}
 C_0(u_1, \ldots, u_d) 
&=\phi_0^{[-1]}(\phi_0(u_1) + \ldots + \phi_0(u_d)) \\
&=\max \left\{ 1- \left[(1-u_1)^\theta + \ldots + (1-u_d)^\theta \right]^{\frac{1}{\theta}}\, ,\,0 \right\}.
\end{align*}
Suppose, for simplicity, that $s_{1n}=s_{2n}=\ldots = s_{dn} = s_{n} \in (0,n]^d$, and define 
\[
A = A_n = \left[1- \frac{s_n}{n},1 \right)^d \quad \text {and} \quad A^\star = A^\star_n= (0,s_n]^d.
\] 
Let $\Xi^\star_{A^\star} = \sum_{i=1}^n I_{\{\mathbf{U}^\star_i \in \As \}}\delta_{\mathbf{U}^\star_i}$ and $W^\star_{\As} =\sum_{i=1}^n I_{\{\mathbf{U}^\star_i \in \As \}}$, where the $\mathbf{U}_i^\star$'s are \iid copies of $\mathbf{U}^\star = (n(1-U_1), \ldots, n(1-U_d)) = n(\mathbf{1} - \mathbf{U})$. Furthermore, let $\mathbf{V}^\star = n(\mathbf{1} - \mathbf{V})$. As we chose \iid $\mathbf{U}_i$'s, a bound on the error $d_{TV}(\mathcal{L}(\Xi^\star_{\As}), \mathrm{PRM}(\mathbb{E}\Xi^\star_{\As}))$ is easy to determine: Proposition \ref{t: dTV_copula_joint_extreme_A} gives the upper bound 
$s_n/n$. However, as for $d=2$, we prefer to approximate further by $\mathrm{PRM}(\bl^\star_{\As})$, whose intensity function is given by
\[
\ls(s_1, \ldots,s_d) =\left( \prod_{j=1}^d
 \left[ 1-(j-1)\theta\right]\right) (s_1 \cdot \ldots \cdot s_d)^{\theta-1} \left( s_1^\theta + \ldots + s_d^\theta \right)^{\frac{1}{\theta}-d},
\] 
for all $(s_1, \ldots, s_d) \in E^\star = (0,n]^d $. 
  
One possibility is to proceed analogously to the bivariate case treated in 
Theorem \ref{t: MyMichel_Arch}, where we determined an upper bound on 
$d_{TV}(\mathrm{PRM}(\mathbb{E}\Xi^\star_{\As}),$ $\mathrm{PRM}(\bls_{\As}))$ 
by straightforward comparison of their intensity functions, which led to 
rather involved computations. Computations would of course become even harder 
for $d \ge 3$ and we would need to introduce more and more assumptions on the 
functions $w$ and $h$. For instance, $w$ would need to be $d$-times continuously 
differentiable on $[0,1)$. 

Another possibility is to look for a cruder bound that does not require these assumptions. We can use the weaker $d_2$-distance from Section \ref{Sec: Improved_rates}.
To achieve this note first that we can express $\As$ as a union of $d$-rectangles of side lengths, say, $s_n/m$, for some integer $m \ge 1$. Define a $d$-rectangle as follows: 
\[
R^\star_{\mathbf{k}} := \left(k_1 \frac{s_n}{m},(k_1 +1) \frac{s_n}{m}\right] \times \ldots \times \left(k_d \frac{s_n}{m}, (k_d+1)\frac{s_n}{m} \right],
\]
for all $\mathbf{k}=(k_1, \ldots,k_d) \in \As$  with $k_j \in \{ 0,1, \ldots, m-1\} $, for each $1 \le j \le d$. The set $\As$ is then a union of $m^d$ $d$-rectangles.  Then, define $P_{\mathbf{k}} = \mathcal{L}(\mathbf{U}^\star|\mathbf{U}^\star \in R^\star_{\mathbf{k}})$ and 
$P'_{\mathbf{k}} = \mathcal{L}(\mathbf{V}^\star|\mathbf{V}^\star \in R^\star_{\mathbf{k}})$ for each $\mathbf{k}$,  and note that by proceeding similarly as in the proof of Proposition \ref{t: MyMichel}, 
we can show that
\begin{align*}
&\mathrm{PRM}(\mathbb{E}\Xi^\star_{R^\star_\mathbf{k}}) 
= \mathcal{L} \left( \sum_{j=1}^{N_{\mathbf{k}}} \delta_{\mathbf{Z}_{j, \mathbf{k}}}\right),\\
&\qquad \qquad \qquad\text{where } \mathbf{Z}_{j, \mathbf{k}} \stackrel{\iid}{\sim} P_{\mathbf{k}}, \text{ independent of }N_{\mathbf{k}} \sim \mathrm{Poi}(\mathbb{E}W^\star_{R^\star_{\mathbf{k}}});\\
&\mathrm{PRM}(\bl^\star_{R^\star_{\mathbf{k}}}) 
=  \mathcal{L} \left( \sum_{j=1}^{L_{\mathbf{k}}} \delta_{\mathbf{Z}'_{j,\mathbf{k}}}\right),\\
&\qquad \qquad \qquad \text{where } \mathbf{Z}'_{j,\mathbf{k}} \stackrel{\iid}{\sim} P'_{\mathbf{k}}, \text{ independent of }L_{\mathbf{k}}  \sim \mathrm{Poi}(\bl^\star(R^\star_{\mathbf{k}})),
\end{align*}
where 
$
\mathbb{E}W^\star_{R^\star_{\mathbf{k}}}
= nP(\mathbf{U}^\star \in R^\star_{\mathbf{k}})\text{ and } \bl^\star(R^\star_{\mathbf{k}})= nP(\mathbf{V}^\star \in R^\star_{\mathbf{k}}) 
$.
Now construct, for each $\mathbf{k}$, an additional Poisson process with intensity measure $\boldsymbol{\nu}^\star_{R^\star_{\mathbf{k}}}$ as follows:
\[
\mathrm{PRM}(\boldsymbol{\nu}^\star_{R^\star_{\mathbf{k}}}) 
= \mathcal{L}\left( \sum_{j=1}^{N_{\mathbf{k}}}\delta_{\mathbf{Z}'_{j,\mathbf{k}}}\right) , \quad 
\text{where } \mathbf{Z}'_{j,\mathbf{k}} \stackrel{\iid}{\sim} P'_{\mathbf{k}}, \text{ independent of } N_{\mathbf{k}}.
\]
The corresponding Poisson processes on $\As$ can be realised in an analogous way, and, since Poisson processes constructed on disjoint sets are independent, we have that 
\[
\mathrm{PRM}(\mathbb{E}\Xi^\star_{\As}) = \sum_{\mathbf{k}}\sum_{j=1}^{N_{\mathbf{k}}} \delta_{\mathbf{Z}_{j,\mathbf{k}}},
\] 
for all $\mathbf{k}=(k_1, \ldots,k_d) \in \As$  with $k_j \in \{ 0,1, \ldots, m-1\} $ for each $1 \le j \le d$,
and similarly for $\mathrm{PRM}(\bl^\star_{\As})$ and $\mathrm{PRM}(\boldsymbol{\nu}^\star_{\As})$. 
The error that we want to estimate can then be split up into two parts: 
\begin{multline*}
d_2\left(\mathrm{PRM}(\mathbb{E}\Xi^\star_{\As}), \mathrm{PRM}(\bl^\star_{\As})\right)\\
\le d_2\left( \mathrm{PRM}(\mathbb{E}\Xi^\star_{\As}),\mathrm{PRM}(\boldsymbol{\nu}^\star_{\As}) \right) 
+ d_2 \left(\mathrm{PRM}(\boldsymbol{\nu}^\star_{\As})  ,\mathrm{PRM}(\bl^\star_{\As})\right).
\end{multline*}
Since $\mathrm{PRM}(\mathbb{E}\Xi^\star_{\As})$ and $\mathrm{PRM}(\boldsymbol{\nu}^\star_{\As})$ have the same number $nP(\mathbf{U}^\star \in \As)$ of expected points in $\As$, Proposition \ref{t: d2_two_PRM} gives the upper bound
$2d_1(\mathbb{E}\Xi^\star_{\As}, \boldsymbol{\nu}^\star_{\As})$ for the first of the above two summands. With (\ref{d: closest_matching}), the $d_1$-distance may be bounded by the maximum $d_0$-distance that points of the two Poisson processes may be apart, which is the length of the space diagonal of the $d$-rectangle in the $d_0$-distance. 
The $d_0$-distance can be chosen in a way to give a good estimate. Choose, e.g., the Euclidean distance bounded by $1$:
\[
d_0(\mathbf{x}, \mathbf{y}) = \min\left( \sqrt{\sum_{j=1}^d (x_j - y_j)^2}\, , \, 1\right),
\]
for any $\mathbf{x}=(x_1, \ldots, x_d)$ and $\mathbf{y}= (y_1, \ldots, y_d)$ $\in \mathbb{R}$.
The $d_0$-distance between two diagonally opposite corner points of $R^\star_{\mathbf{k}}$ is then
$\sqrt{d}\,s_n/m$ 
and we have the following estimate for the error caused by smearing out points over the rectangles: 
\[
d_2\left( \mathrm{PRM}(\mathbb{E}\Xi^\star_{\As}),\mathrm{PRM}(\boldsymbol{\nu}^\star_{\As}) \right)  \le \frac{2\sqrt{d}\,s_n}{m}.
\]
The second error term, i.e. $d_2(\mathrm{PRM}(\boldsymbol{\nu}^\star_{\As}), \mathrm{PRM}(\bl^\star_{\As}))$, may be estimated as follows:
\begin{align}
\begin{split}
& \sum_{\mathbf{k}} d_{TV}(\mathrm{PRM}(\boldsymbol{\nu}^\star_{R^\star_{\mathbf{k}}}),\mathrm{PRM}(\bl^\star_{R^\star_{\mathbf{k}}}) )\\
& \le \sum_{\mathbf{k}} d_{TV}(\mathrm{Poi}(nP(\mathbf{U}^\star \in R^\star_{\mathbf{k}})), \mathrm{Poi}(nP(\mathbf{V}^\star \in R^\star_{\mathbf{k}}))) \\
& \le \sum_{\mathbf{k}} n|P(\mathbf{U}^\star \in R^\star_{\mathbf{k}}) - P(\mathbf{V}^\star \in R^\star_{\mathbf{k}})|, \label{p: diff_two_d2_Arch_higher}
\end{split}
\end{align}
where we use (\ref{t: d2_smaller_dTV}), as well as an argument similar to Michel's argument in the proof of Theorem \ref{t: Michel} and the fact that $d_{TV}(\mathrm{Poi}(\mu), \mathrm{Poi}(\mu')) \le |\mu - \mu'|$. Note that, with 
\[
P(\mathbf{U}^\star \in R^\star_{\mathbf{k}}) = \prod_{j=1}^d \mathbb{E}[I_{\{ U^\star_j \le  (k_j +1)s_n/m\}} - I_{\{ U_j^\star \le k_j s_n/m\}}] 
\]
for each $\mathbf{k}$ (and analogously for $P(\mathbf{V}^\star \in R^\star_{\mathbf{k}})$), each of the two probabilities in (\ref{p: diff_two_d2_Arch_higher}) may be expressed as a sum of $2^d$ $d$-dimensional copulas, and 
\begin{align}\label{p: dooof}
&|P(\mathbf{U}^\star \in R^\star_{\mathbf{k}}) - P(\mathbf{V}^\star \in R^\star_{\mathbf{k}})| \nonumber \\
 & \le 2^d \max_{(u_1, \ldots, u_d) \in A} |C(u_1, \ldots, u_d) - C_0(u_1, \ldots, u_d)|\\
& = 2^d \max_{(s_1, \ldots, s_d) \in A^\star} 
\left| w^{-1}\left[ \left(w^\theta \left( \frac{s_1}{n} \right)+ \ldots w^\theta \left( \frac{s_d}{n} \right) \right)^{\frac{1}{\theta}}\right] - 
\frac{1}{n}\left( s_1^\theta + \ldots + s_d^\theta\right)^{\frac{1}{\theta}}\right|.\nonumber
\end{align}
As in Theorem \ref{t: MyMichel_Arch}, assume that $w(r)=rh(r)$ and that $h$ has a positive derivative for all $r \in [0,\delta)$, for some $\delta >0$ close to $0$. 
We then have $h(0) \le h(r) \le h(0) + r \max_{0 \le \xi \le r} h'(r)$ for all $r \in [0,\delta)$. Also, for $x$ small enough,
we can show, with arguments similar to (\ref{p: Arch_Inverse_w}), that there is a constant $c>0$ such that 
\begin{equation}\label{p: goofy}
\frac{x}{h(0)} - cx^2 \le w^{-1}(x) \le \frac{x}{h(0)}. 
\end{equation}
Supposing that
$s_n/n$ is small enough for (\ref{p: goofy}) to be satisfied for $x :=$ $(w^\theta(s_1/n)$ 
$+ \ldots $ $+ w^\theta(s_d/n))^{1/\theta}$, where $s_1, \ldots, s_d \le s_n$, 
it is then possible to show that 
\[
w^{-1}\left[ \left(w^\theta \left( \frac{s_1}{n} \right)+ \ldots w^\theta \left( \frac{s_d}{n} \right) \right)^{\frac{1}{\theta}}\right]
= \frac{1}{n}\left( s_1^\theta + \ldots + s_d^\theta\right)^{\frac{1}{\theta}} + O \left( \left(\frac{s_n}{n} \right)^2\right).
\]
Then there exists a constant $\alpha>0$ such that (\ref{p: dooof}) is smaller than $2^d\alpha(s_n/n)^2$, and we obtain the following estimate for the second error term:
\[
d_2(\mathrm{PRM}(\boldsymbol{\nu}^\star_{\As}), \mathrm{PRM}(\bl^\star_{\As})) \le  \frac{(2m)^d \alpha s_n^2}{n}\,. 
\]
The bound of the total error $d_2\left(\mathrm{PRM}(\mathbb{E}\Xi^\star_{\As}), \mathrm{PRM}(\bl^\star_{\As })\right)$ is thus composed of a term of order $s_n/m$ and another term of order
$m^ds_n^2/n$. Now choose, for instance, $m = (n/s_n)^{1/(d+1)}$. Then both terms are of order $(s_n^{d+2}/n)^{1/(d+1)}$ and the total error is small only
if $s_n \ll n^{1/(d+2)}$, i.e. if the threshold value $s_n$ is smaller than a small power of $n$. This result is reminiscent of the result obtained in Theorem
\ref{t: MyMichel_Arch}, which requires $s_n \ll n^{1/2}$. 

\section{MPPE's with bivariate Marshall-Olkin geometric marks}\label{Sec: MO_Geo}
We consider MPPE's with bivariate marks that follow a certain bivariate geometric distribution, the
Marshall-Olkin geometric distribution. We can readily approximate the law of this process by that of a Poisson process with the same mean measure by way of Theorem \ref{t: MyMichelMultivariate}. However, as the marks have geometric, and thereby discrete margins, the mean measure will live on a lattice and be rather tedious to work with in practial applications. We would therefore prefer to approximate by a further Poisson process with a continuous mean measure. As the total variation distance is too strong for this kind of approximation, we use the weaker $d_2$-distance instead, which is not as sensitive towards small changes in the positions of the points of the point processes. As for MPPE's with univariate geometric marks, which we studied in Section \ref{s: MPPE_geo}, the error that arises when going from a process on a lattice to a process with continuous intensity will only be small if the parameters of the distribution of the marks vary with the sample size $n$ at a suitable rate. 

Section \ref{s: MO_Geo} introduces the bivariate Marshall-Olkin geometric distribution and relates it to its continuous counterpart, the bivariate Marshall-Olkin exponential distribution. 
Section \ref{s: MO_Geo_dTV} determines an error estimate in the total variation distance for the approximation of the law of the MPPE by that of a Poisson process with equal mean measure.
In Section \ref{s: MO_Geo_cont_int_fct}, we construct a continuous intensity function by spreading out the point probabilities of the Marshall-Olkin distribution over the entire space. As this intensity function depends on $n$, 
Section \ref{s: MO_Geo_cont_nicer_int_fct} makes some assumptions on the parameters of the Marshall-Olkin geometric distribution. These allow us to find another continuous intensity function that is asymptotically equal to the one that we constructed previously, but no longer varies with $n$.
Section \ref{s: MO_Geo_d2} establishes error estimates in the $d_2$-distance for the approximation by the Poisson process whose intensity function we constructed in Section \ref{s: MO_Geo_cont_int_fct}, whereas
Section \ref{s: MO_Geo_nicer} gives error bounds, both in $d_{TV}$ and in $d_2$, for further approximating by a Poisson process
with the intensity we found in Section \ref{s: MO_Geo_cont_nicer_int_fct}.
In Section \ref{s: MO_Geo_final_bound_d2}, we summarise the results by adding up the $d_2$-error bounds arising from each step, thus giving the total error bound for the approximation of the MPPE by the final Poisson process.
\subsection{The bivariate Marshall-Olkin geometric distribution}\label{s: MO_Geo}
The bivariate Marshall-Olkin geometric distribution arises as a natural generalisation of the geometric distribution to two dimensions.
It was first introduced by \cite{Hawkes:1972} and later studied by \cite{Marshall/Olkin:1985} as the discrete counterpart to their bivariate exponential 
distribution, first derived by them in \cite{Marshall/Olkin:1967} using shock models. Limit distributions for maxima of i.i.d. Marshall-Olkin geometric
random pairs were established in \cite{Mitov/Nadarajah:2005} and \cite{Feidt_et_al:2010}. 

Underlying the Marshall-Olkin geometric distribution are Bernoulli trials. Suppose $S$ and $T$ are two Bernoulli random variables 
with joint probability mass function $P(S=i,T=j)=p_{ij}$, for all $i,j=0,1$, and let $S_1, S_2, \dots$ and $T_1, T_2, \dots$ be i.i.d. copies of $S$ 
and $T$, respectively. Let $X_1$ and $X_2$ denote the numbers of $0$'s before the first $1$ in the sequences $S_1, S_2, \dots$ and $T_1, T_2,\dots$, respectively. 
Obviously, $X_1$ and $X_2$ follow geometric distributions with failure probabilities
$q_1 := P(S=0) = p_{00} + p_{01}$ and $q_2:= P(T=0) = p_{00}+p_{10}$, respectively. 
Their joint probability mass function is given by
\begin{equation}\label{d: MO_pointprob}
 P(X_1=k, X_2=l)= 
 \left\{ \begin{array}{ll}
 p_{00}^k q_2^{l-k} (1-p_{00}/ q_2 - q_2 + p_{00} ) & \textrm{ for } k < l , \\
 p_{00}^k ( 1-q_1 - q_2 + p_{00})& \textrm{ for } k=l ,  \\
 p_{00}^l q_1^{k-l} (1-q_1 - p_{00}/q_1 + p_{00} ) & \textrm{ for } k>l ,
 \end{array} \right.
\end{equation}
for any $k,l \in \mathbb{Z}_+$. The distribution of $\mathbf{X}= (X_1,X_2)$ thus depends on three parameters: the two marginal failure probabilities $q_1$ and
$q_2$, as well as $p_{00}=P(S=0,T=0)$, the probability of joint failure.
We assume that $p_{00} \ge q_1 q_2$. 
We have
\begin{equation}\label{d: MO_survival}
P(X_1 \ge k, X_2 \ge l)
 = \left\{ \begin{array}{ll}
 p_{00}^k q_2^{l-k} & \textrm{ for } k < l , \\
 p_{00}^k & \textrm{ for } k=l , \\
 p_{00}^l q_1^{k-l} & \textrm{ for } k>l.
 \end{array} \right. 
 \end{equation}
The survival copula $\hat{C}$ is given by a Marshall-Olkin copula $C_{\alpha, \beta}$ as defined in Example \ref{d: examples_copulas} (f). 
To show this, we may proceed as in Example \ref{e: MO_exp} for the Marshall-Olkin exponential distribution. 
That is, rewrite (\ref{d: MO_survival})
as
\[
\left( \frac{p_{00}}{q_2}\right)^k \left( \frac{p_{00}}{q_1}\right)^l \left( \frac{q_1 q_2}{p_{00}}\right)^{\max (k,l)} 
= q_1^k q_2^l \min \left\{ \left(\frac{p_{00}}{q_1q_2}\right)^k, \left(\frac{p_{00}}{q_1q_2}\right)^l\right \}, 
\]
using $\max (k,l) = k+l - \min(k,l)$ and $p_{00} \ge q_1 q_2$. With $u = P(X_1 \ge k)= q_1^k$, $v= P(X_2 \ge l) = q_2^l$, and
\[
\alpha =\frac{\log (p_{00}/q_1q_2)}{\log (1/q_1)}\,, \qquad \beta = \frac{\log (p_{00}/q_1q_2)}{\log(1/q_2)}\,, 
\]
we have $(p_{00}/q_1q_2)^k = u^{-\alpha}$ and $(p_{00}/q_1q_2)^l = v^{-\beta}$, and $\hat{C}(u,v) = C_{\alpha,\beta}(u,v)$, for all $(u,v) \in (0,1)^2$, with parameters
$\alpha, \beta \in [0,1]$ since $p_{00} \ge q_1q_2$ and $q_1,q_2 \ge p_{00}$. For $\alpha,\beta \in (0,1)$, the copulas in this family have full support, i.e. $[0,1]^2$. 
Note that if $p_{00} = q_1q_2$, the Marshall-Olkin geometric distribution corresponds to a bivariate distribution with independent geometric margins. 

We can relate the Marshall-Olkin geometric distribution to its continuous counterpart, the Marshall-Olkin exponential distribution, by noting that 
(\ref{d: MO_survival}) is equal to $P(\tilde{X}_1 \ge k, \tilde{X}_2 \ge l)$ for all $(k,l) \in \mathbb{Z}_+^2$ and for $(\tilde{X}_1, \tilde{X}_2)$ distributed according
to (\ref{d: surv_fct_MO_exp}) with parameters $\nu_1, \nu_2, \nu_{12}>0$, $\nu:= \nu_1 + \nu_2 + \nu_{12}$, if we set 
\begin{align*}
q_1 := e^{-(\nu_1 + \nu_{12})}, \quad q_2:= e^{-(\nu_2 + \nu_{12})}, \quad p_{00} := e^{-\nu}.
\end{align*}
The condition $p_{00} \ge q_1 q_2$ corresponds to $\nu_{12} \ge 0$. For $\nu_{12}=0$, $\tilde{X}_1$ and $\tilde{X}_2$ are independent. 

\subsection{Approximation in $d_{TV}$ by a Poisson process on a lattice}\label{s: MO_Geo_dTV}
For any integer $n \ge 1$, let $\mathbf{X}_1, \ldots, \mathbf{X}_n$ be \iid copies of the random pair 
$\mathbf{X} = (X_1,X_2)$, which follows the Marshall-Olkin geometric distribution 
from Section \ref{s: MO_Geo} and takes values in $\mathbb{Z}_+^2 \subset [0,\infty)^2$. Let $A \in \mathcal{B}([0,\infty)^2)$. We consider the MPPE
$
\Xi_A = \sum_{i=1}^n I_{\{\textbf{X}_i\in A\}} \delta_{\mathbf{X}_i},  
$
which lives on the lattice $\mathbb{Z}_+^2$. 
The following normalisation is the Marshall-Olkin geometric counterpart to the normalisation used in Section \ref{s: MO_Exp_joint}
for studying joint threshold exceedances of Marshall-Olkin exponential marks:
\begin{equation}\label{d: MO_Geo_normalisation_joint}
(k^\star, l^\star) = \left(k \log (1/p_{00}) - \log n \,,\, l \log (1/p_{00}) - \log n\right), \quad \text{for any } (k,l) \in \mathbb{Z}_+.  
\end{equation}
Under this normalisation, $\Xi_A$ corresponds to 
\begin{equation}\label{d: MO_Geo_MPPE}
\Xi^\star_{A^\star} =\sum_{i=1}^n I_{\left\{\textbf{X}^\star_i \in A^\star\right\}} \delta_{\mathbf{X}_i^\star},
\end{equation}
which lives on the lattice $E^\star$ of normalised points $(k^\star, l^\star)$. Note that $E^\star \subset [-\log n, \infty)^2$. Furthermore, denote by
\begin{equation}\label{d: MO_Geo_mean}
W^\star_{A^\star}= \sum_{i=1}^n I_{\left\{\textbf{X}^\star_i \in A^\star\right\}} 
\end{equation}
the random number of normalised points in $A^\star$. 
For the particular choice $A=A_n = [u_{n},\infty)^2$ for some threshold $u_{n} \in [0,\infty)$,
we obtain $A^\star= A^\star_n = [u_n^\star, \infty)^2$ with $u_{n}^\star = u_n \log(1/p_{00}) - \log n$, and $\Xi^\star_{A^\star}$
captures joint threshold exceedances of the components of the normalised random pairs $\mathbf{X}^\star_1, \ldots, \mathbf{X}^\star_n$. 

The following proposition gives straightforward error estimates for the approximation of the law of $\Xi^\star_{A^\star}$ by that of a Poisson process
with mean measure $\mathbb{E}\Xi^\star_{A^\star}$, both for general sets $A^\star$, and for the particular choice $A^\star = [u_n^\star, \infty)^2$.
\begin{prop}\label{t: MO_Geo_dTV_lattice}
Suppose $\textbf{X}=(X_1,X_2)$ follows the Marshall-Olkin geometric distribution with parameters $q_1, q_2, p_{00} \in (0,1)$.   
For each integer $n \ge 1$, let $\mathbf{X}^\star_1, \ldots, \mathbf{X}^\star_n$ be \iid copies of the normalised random pair 
$\mathbf{X}^\star = (X^\star_1, X^\star_2)$ with state space $E^\star$, where $X_j^\star = \log(1/p_{00})X_j - \log n$, for $j=1,2$. 
Let $A^\star \in \mathcal{B}([0,\infty)^2)$ and let $\Xi^\star_{A^\star}$ and $W^\star_{A^\star}$ be defined as in (\ref{d: MO_Geo_MPPE}) and 
(\ref{d: MO_Geo_mean}), respectively. 
Then the mean measure of $\Xi^\star_{A^\star}$ is given by
\[
\bpis(B^\star) := \bpis_{\As}(B^\star) := \mathbb{E}\Xi^\star_{A^\star} (B^\star)= \sum_{(k^\star,l^\star) \in A^\star \cap E^\star \cap B^\star} nP(X_1^\star = k^\star, X_2^\star = l^\star), 
\]
for any $B^\star \in \mathcal{B}([-\log n, \infty)^2)$, where, for any $(k^\star, l^\star) \in E^\star$,
\begin{align}\label{d: MO_Geo_mass_fct_normalised}
&P\left(X_1^\star = k^\star, X_2^\star = l^\star\right) \nonumber \\
&= \left\{
\begin{array}{ll}
\frac{1}{n}\, (1-\frac{p_{00}}{q_2} - q_2 + p_{00})\,e^{-\frac{\log(p_{00}/q_2)}{\log p_{00}} \, k^\star} e^{- \frac{\log q_2}{\log p_{00}}\, l^\star}  & \textrm{ for } k^\star < l^\star , \\
\frac{1}{n}\,( 1-q_1 - q_2 + p_{00})\, e^{-k^\star} \phantom{e^{-\frac{\log(p_{00}/q_2)}{\log p_{00}} \, k^\star}}& \textrm{ for } k^\star=l^\star ,  \\
\frac{1}{n}\,(1-q_1 - \frac{p_{00}}{q_1} + p_{00} )\, e^{-\frac{\log q_1}{\log p_{00}}\,k^\star} e^{-\frac{\log(p_{00}/q_1)}{\log p_{00}}\,l^\star}& \textrm{ for } k^\star>l^\star ,
\end{array}
\right.
\end{align}
and 
$d_{TV}\left( \mathcal{L}\left(\Xi^\star_{A^\star} \right), \mathrm{PRM}(\bpis)\right)  \le P(\mathbf{X}^\star \in A^\star)$.
With $A^\star = A^\star_n = [u_n^\star, \infty)^2$ for any choice of $u_n^\star \ge -\log n$, we obtain
\begin{equation}\label{t: MO_Geo_dTV_basic_Poiapprox_appl}
d_{TV}\left( \mathcal{L}\left(\Xi^\star_{A^\star} \right), \mathrm{PRM}(\bpis)\right) 
\le \frac{e^{-u^\star_n}}{n}\,.
\end{equation}
\end{prop}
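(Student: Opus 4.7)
The plan is to handle the three assertions of the proposition in order: first derive the mass function of $\mathbf{X}^\star$ under the normalisation, then invoke the multivariate analogue of Michel's inequality (Theorem \ref{t: MyMichelMultivariate}) to get the general $P(\mathbf{X}^\star \in A^\star)$ bound, and finally evaluate this probability explicitly when $A^\star = [u_n^\star,\infty)^2$.

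For the first part, I would simply rewrite (\ref{d: MO_pointprob}) under the affine transformation $k = (k^\star + \log n)/\log(1/p_{00})$, $l = (l^\star + \log n)/\log(1/p_{00})$, using that $P(X_1^\star = k^\star, X_2^\star = l^\star) = P(X_1=k, X_2=l)$ whenever $(k^\star, l^\star)$ is an admissible lattice point in $E^\star$. The key computation is that a factor of the form $r^k$ (for $r \in \{p_{00}, q_1, q_2, p_{00}/q_1, p_{00}/q_2\}$) becomes
\[
r^k = \exp\!\left\{-\frac{\log r}{\log p_{00}}\,(k^\star + \log n)\right\} = n^{-\log r/\log p_{00}}\,\exp\!\left\{-\frac{\log r}{\log p_{00}}\,k^\star\right\}.
\]
For each of the three cases in (\ref{d: MO_pointprob}), the exponents of $n$ coming from the $k$-factor and the $l$-factor combine to give exactly $n^{-1}$ (e.g.\ in the case $k<l$ one has $-\log(p_{00}/q_2)/\log p_{00} - \log q_2/\log p_{00} = -1$), which yields the stated expression for $P(X_1^\star=k^\star, X_2^\star=l^\star)$.

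For the second assertion, I would observe that $\mathbf{X}_1^\star,\ldots,\mathbf{X}_n^\star$ are \iid copies of the $[-\log n,\infty)^2$-valued random vector $\mathbf{X}^\star$, and that $\Xi^\star_{A^\star} = \sum_{i=1}^n I_{\{\mathbf{X}_i^\star \in A^\star\}}\delta_{\mathbf{X}_i^\star}$ is an MPPE of exactly the form treated in Theorem \ref{t: MyMichelMultivariate} (with $\mathbf{X}^\star$ playing the role of $\mathbf{X}$). Applying that theorem directly gives $d_{TV}(\mathcal{L}(\Xi^\star_{A^\star}), \mathrm{PRM}(\bpis)) \le P(\mathbf{X}^\star \in A^\star)$, with $\mathbb{E}\Xi^\star_{A^\star}$ computed by summing the mass function over $A^\star \cap E^\star$.

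For the third assertion, I would use the survival function (\ref{d: MO_survival}) on the diagonal. Setting $u := (u_n^\star + \log n)/\log(1/p_{00})$, the event $\{\mathbf{X}^\star \in [u_n^\star,\infty)^2\}$ is precisely $\{X_1 \ge \lceil u \rceil, X_2 \ge \lceil u \rceil\}$ because $X_1, X_2$ are integer-valued; by (\ref{d: MO_survival}) with $k=l=\lceil u\rceil$, this probability equals $p_{00}^{\lceil u \rceil} \le p_{00}^u = e^{-(u_n^\star + \log n)} = e^{-u_n^\star}/n$. The main technical nuisance is really only the bookkeeping in the first step — tracking how each of the marginal and joint factors contributes a power of $n$ and making sure they cancel to give the clean $1/n$ prefactor — while the rest is an immediate application of earlier machinery.
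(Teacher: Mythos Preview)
Your proposal is correct and follows essentially the same route as the paper: derive the normalised mass function from (\ref{d: MO_pointprob}) via the affine change of variables, invoke Theorem \ref{t: MyMichelMultivariate} for the general bound, and then use (\ref{d: MO_survival}) on the diagonal together with the ceiling (as in (\ref{d: discretised_cdf})) to bound $P(\mathbf{X}^\star \in [u_n^\star,\infty)^2)$ by $p_{00}^{\lceil u\rceil} \le e^{-u_n^\star}/n$. Your treatment of the first step is in fact more explicit than the paper's, which simply states that the substitution yields (\ref{d: MO_Geo_mass_fct_normalised}) without spelling out why the powers of $n$ collapse to $1/n$.
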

\begin{proof}
With (\ref{d: MO_pointprob}) and 
\[
P\left(X_1^\star = k^\star, X_2^\star = l^\star\right) = P\left( X_1 = \frac{k^\star + \log n}{\log(1/p_{00})}, X_2 = \frac{l^\star + \log n }{\log(1/p_{00})}\right), 
\]
we obtain (\ref{d: MO_Geo_mass_fct_normalised}) for the joint probability mass function of $\mathbf{X}^\star$.
For any set $B^\star \in \mathcal{B}([-\log n, \infty)^2)$, 
the mean measure of $\Xi^\star_{A^\star}$ applied to $B^\star$ is then given by
\[
nP(\mathbf{X}^\star \in A^\star \cap B^\star) =\sum_{(k^\star,l^\star) \in A^\star \cap E^\star \cap B^\star} nP(X_1^\star = k^\star, X_2^\star = l^\star).
\]
By Theorem \ref{t: MyMichelMultivariate},
\[
d_{TV}\left( \mathcal{L}\left(\Xi^\star_{A^\star} \right)\,, \mathrm{PRM}(\bpis)\right) 
 \le P(\mathbf{X}^\star \in A^\star),
\]
where, using (\ref{d: discretised_cdf}) and (\ref{d: MO_survival}), we find
\begin{align*}
P\left( \mathbf{X}^\star \in A^\star\right) 
&= P\left( X_1^\star \ge u_n^\star , X_2^\star \ge u_{n}^\star\right) \\
&= P \left( X_1 \ge \frac{u^\star_n + \log n}{\log(1/p_{00})}, X_2 \ge \frac{u^\star_n + \log n}{\log(1/p_{00})\, l^\star} \right)  \\
&= P \left( X_1 \ge \left \lceil \frac{u^\star_n + \log n}{\log(1/p_{00})} \right \rceil, 
X_2 \ge \left \lceil \frac{u^\star_n + \log n}{\log(1/p_{00})} \right \rceil \right)\\
&= p_{00}^{\left \lceil \frac{u^\star_n + \log n}{\log(1/p_{00})} \right \rceil} 
\le \frac{e^{-u^\star_n}}{n}\, .
\end{align*}
\end{proof}
\begin{remark}
For $A^\star = [u_{1n}^\star, \infty) \times [u_{2n}^\star, \infty)$ with $u_{1n}^\star \neq u_{2n}^\star \in [-\log n, \infty)$, we can 
proceed as in Section \ref{s: MO_Exp_joint} for the Marshall-Olkin exponential distribution, i.e. use Theorem \ref{t: MyMichelMultivariate},
in order to determine an estimate for $d_{TV}(\mathcal{L}(\Xi^\star_{A^\star}),$ $ \mathrm{PRM}(\bpis))$. 
\end{remark}
The error bound in (\ref{t: MO_Geo_dTV_basic_Poiapprox_appl}) is exactly the same as the one that we found in (\ref{e: MO_Exp_joint_dTV_Michel}) 
for analogous MPPE's with Marshall-Olkin exponential instead of geometric marks. The difference is of course that the mean measure $\bpis$ of the MPPE with Marshall-Olkin
geometric marks lives only on points $(k^\star, l^\star) \in A^\star \cap E^\star$ instead of on the whole of $A^\star \cap [-\log n, \infty)^2$.
\subsection{Construction of a ``continuous'' intensity function}\label{s: MO_Geo_cont_int_fct} 
Proposition \ref{t: MO_Geo_dTV_lattice} gives an error bound for the approximation of the MPPE $\Xi^\star_{A^\star}$ by a Poisson process whose
mean measure $\mathbb{E}\Xi^\star_{A^\star}$ lives on the lattice of normalised points $(k^\star, l^\star)$, i.e. on
\begin{align*}
E^\star 
&= \left\{ (k^\star, l^\star):\, k^\star = k \log(1/p_{00}) - \log n, 
l^\star = l \log(1/p_{00}) - \log n, \right.\\
&\left.\phantom{blaaaaaaaaaaaaaaaaaaaaaaaaaaaaaaaaaaaaaaaaaaaa}\text{for all }(k,l) \in \mathbb{Z}_+^2\right\} \\
&= \left(\log(1/p_{00})\mathbb{Z}_+  - \log n\right)^2 \subset [-\log n, \infty)^2.
\end{align*}
We would however prefer to approximate the law of the MPPE by that of a Poisson process with an easier-to-use and more flexible \textit{continuous} 
intensity measure $\bl^\star = \bl^\star_{A^\star}$ living on $A^\star \cap [-\log n, \infty)^2$. 

As discussed in Section \ref{s: MO_Geo}, the survival copula of the Marshall-Olkin geometric distribution is a Marshall-Olkin copula, and thereby consists 
of both an absolutely continuous part and a singular part on the curve $u^\alpha = v^\beta$ (which corresponds to the diagonal in $[-\log n, \infty)^2$). 
The ``continuous'' intensity measure $\bl^\star$ will have to mirror this behaviour, i.e. it 
will have to be of the form 
\begin{equation}\label{d: MO_Geo_cont_int_meas}
\bl^\star(B^\star) = \int_{A^\star \cap B^\star} \lambda^\star(s,t)dsdt + \int_{A^\star \cap B^\star \cap \{(s,t):\, s=t\}} \acute{\lambda}^\star(s)ds, 
\end{equation}
for any $B^\star \in \mathcal{B}([-\log n, \infty)^2)$, for ``continuous'' intensity functions $\lambda^\star$ and $\acute{\lambda}^\star$ that, 
if integrated over the entire space, will give $n$, i.e. that will ensure that
\[
\int_{-\log n}^\infty \int_{-\log n}^\infty \lambda^\star(s,t) dsdt + \int_{-\log n}^\infty \acute{\lambda}^\star(s)ds = nP\left(\mathbf{X}^\star \in [-\log n, \infty)^2 \right) = n.
\]
\begin{remark}
Note that for simplicity of language we here (and later on) somewhat abuse terminology when speaking of a ``continuous'' intensity function $\ls$ or a ``continuous'' intensity 
measure $\bls$. The bivariate intensity function $\ls$ is not continuous, but piecewise continuous, having a jump along the diagonal. 
The measure $\bls$ is continuous only in the sense that it has an intensity with respect to 
Lebesgue measure ($2$-dimensional on the off-diagonal and $1$-dimensional on the diagonal) and not with respect to a point measure. 
\end{remark}
The idea is to spread the point mass sitting on each of the off-diagonal lattice points $(k^\star, l^\star) \in E^\star$, $k^\star \neq l^\star$, uniformly
over each of their corresponding coordinate rectangles (or rather, coordinate squares)
\[
R^\star_{k^\star,l^\star} 
= \left[k^\star, k^\star + \log \left(\frac{1}{p_{00}}\right)\right) \times \left[l^\star, l^\star + \log \left(\frac{1}{p_{00}}\right)\right), \quad k^\star \neq l^\star,
\]
and to also spread the point probabilities of the diagonal points $(k^\star,k^\star)$ over the diagonal line $s=t$, where $s,t \ge - \log n$. 
We achieve this in the following three steps.

\textbf{Step 1.} Consider only the off-diagonal lattice points. We of course have 
\[
P\left(\mathbf{X}^\star \in R^\star_{k^\star,l^\star}  \right)= P\left(X_1^\star = k^\star, X_2^\star = l^\star\right),
\]
which is given by (\ref{d: MO_Geo_mass_fct_normalised}), and we may express the mean $nP(\mathbf{X}^\star \in A^\star)$ as 
\begin{multline}\label{p: spread_prob_1}
\sum_{ (k^\star,l^\star) \in A^\star, k^\star \neq l^\star } n
\int \int_{\Rstar} \frac{P(X_1^\star=k^\star, X_2^\star=l^\star)}{\log^2 (1/p_{00})}\, dsdt \\
+ \sum_{(k^\star, k^\star) \in A^\star}n P\left(X_1^\star=k^\star, X_2^\star=k^\star\right),
\end{multline}
where $\log^2 (1/p_{00})$ is the surface area of $\Rstar$. We have not actually changed anything yet as the integrand 
$P(X_1^\star=k^\star, X_2^\star=l^\star)/\log^2 (1/p_{00})$ is constant
with respect to the integrating variables $s$ and $t$, and 
\[
 \int \int_{\Rstar} \frac{P(X_1^\star=k^\star, X_2^\star =l^\star)}{\log^2 (1/p_{00})} \,dsdt = P(X_1^\star=k^\star, X_2^\star=l^\star).
\]
\begin{figure}
\begin{center}{\footnotesize \input{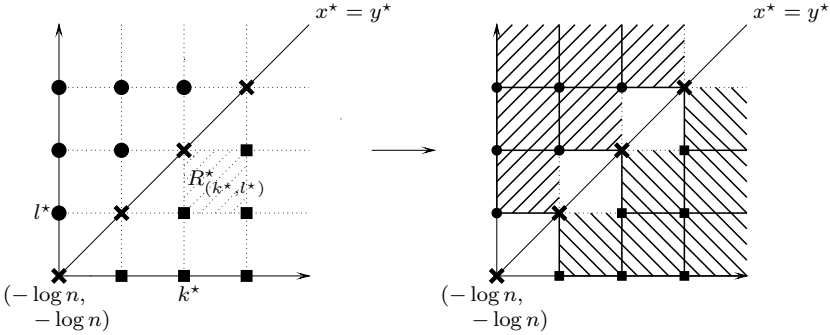}}\end{center}
\caption{Spread the point masses on the off-diagonal points over the corresponding coordinate rectangles.}
\label{f: spread_prob_1}
\end{figure}
As we aim to find a continuous intensity function over the entire space $[-\log n,$ $\infty)^2$,
we exchange $k^\star$ and $l^\star$ in the expression of the point probability $P(X^\star_1=k^\star, X_2^\star=l^\star)$ from (\ref{d: MO_Geo_mass_fct_normalised})
by $s$ and $t$, respectively. 
E.g., suppose that $k^\star < l^\star$. Then we replace the integral in (\ref{p: spread_prob_1}) by
\begin{equation}\label{p: spread_prob_2}
\int \int_{\Rstar} \frac{1-p_{00} /q_2 - q_2 + p_{00}}{\log^2 (1/p_{00})} \,
e^{-\frac{\log (p_{00} /q_2)}{\log p_{00}}\,s}
e^{-\frac{\log q_2}{\log p_{00}}\,t} ds dt
\end{equation}
Evaluation of this new integral gives
\begin{equation}\label{p: spread_prob_3}
\frac{ 1- p_{00}/q_2 - q_2 + p_{00}}{\log (p_{00}/q_2) \log q_2}\,P(X^\star_1=k^\star, X_2^\star=l^\star).
\end{equation}
The switch to variable $s$ and $t$ thus results only in the multiplication of the original point probability by a factor. 
The goal, however, is to integrate a function in $s$ and $t$ over $\Rstar$ and obtain the original point probability. This may
be achieved by simply dividing the integrand in (\ref{p: spread_prob_2}) by the multiplying factor found in 
(\ref{p: spread_prob_3}). Hence, we rewrite the mean as follows
\[
\sum_{ (k^\star,l^\star) \in A^\star, k^\star \neq l^\star } 
\int \int_{\Rstar} \lambda^\star(s,t)dsdt + n\sum_{(k^\star, k^\star) \in A^\star} P\left(X_1^\star=k^\star, X_2^\star=k^\star\right),
\]
where
\begin{equation}\label{p: int_fct_above}
\lambda^\star(s,t) = 
\frac{\log (p_{00}/q_2) \log q_2}{\log^2(1/p_{00})}\, e^{-\frac{\log (p_{00}/q_2)}{\log p_{00}}\,s} e^{-\frac{\log q_2}{\log p_{00}}\,t}, 
\forall (s,t) \in \Rstar\textnormal{with }k^\star < l^\star.
\end{equation}
Analogously, we find
\begin{equation}\label{p: int_fct_below}
\lambda^\star(s,t) = 
\frac{\log (p_{00}/q_1) \log q_1}{\log^2(1/p_{00})}\, e^{-\frac{\log q_1}{\log p_{00}}\,s} e^{-\frac{\log (p_{00}/q_1)}{\log p_{00}}\,t}
, \forall (s,t) \in \Rstar\textnormal{with }k^\star > l^\star.
\end{equation}
(\ref{p: int_fct_above}) and (\ref{p: int_fct_below}) supply suitable choices for the intensity function on coordinate 
rectangles lying above and below the diagonal, respectively. Figure \ref{f: spread_prob_1} illustrates Step 1. 

\textbf{Step 2.} We expand $\lambda^\star(s,t)$ from (\ref{p: int_fct_above}) and (\ref{p: int_fct_below}) to the entire space (without the diagonal), i.e. we define
\[
\lambda^\star(s,t) := \left\{ \begin{array}{ll} 
\frac{\log (p_{00}/q_2) \log q_2}{\log^2(1/p_{00})}\, e^{-\frac{\log (p_{00}/q_2)}{\log p_{00}}\,s} e^{-\frac{\log q_2}{\log p_{00}}\,t} & \textnormal{ for } s<t,\\
\frac{\log (p_{00}/q_1) \log q_1}{\log^2(1/p_{00})} \, e^{-\frac{\log q_1}{\log p_{00}}\,s} e^{-\frac{\log (p_{00}/q_1)}{\log p_{00}}\,t} & \textnormal{ for } s>t,
\end{array}\right.  
\]
for all $(s,t) \in [-\log n, \infty)^2$; see Figure \ref{f: spread_prob_2}. However, this adds surplus mass on the diagonal rectangles $R^\star_{k^\star, k^\star}$.
\begin{figure}
\begin{center}{\footnotesize \input{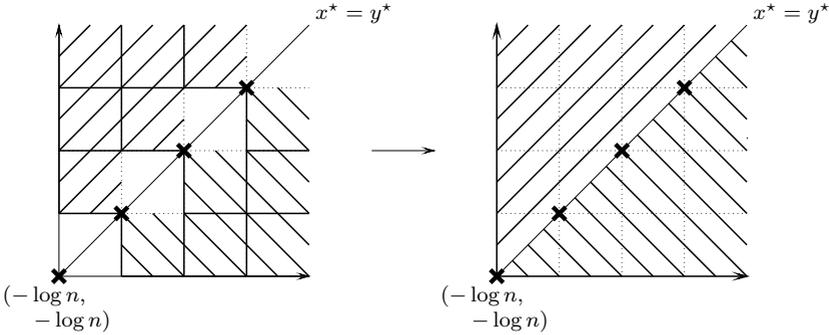}}\end{center}
\caption{Define the intensity functions determined for off-diagonal rectangles on the entire space.}
\label{f: spread_prob_2}
\end{figure}

\textbf{Step 3.} We adjust for the surplus mass on the diagonal rectangles by subtracting it from the point probabilities of the diagonal lattice points $(k^\star,k^\star)$,
and accordingly rewrite the mean as follows:
\begin{multline}\label{p: spread_prob_5}
\int \int_{A^\star} \lambda_n^\star(s,t)dsdt \\
+ n\sum_{(k^\star, k^\star) \in A^\star} \left\{ P(X_1^\star=k^\star, X_2^\star=k^\star) - \frac{1}{n}\int_{R^\star_{k^\star, k^\star}} \lambda_n^\star(s,t)dsdt\right\}.
\end{multline}
Computation of the term in curly brackets shows that the new mass that we put on the diagonal segments of each diagonal rectangle $R^\star_{k^\star,k^\star}$ is given by
\begin{equation}\label{p: spread_prob_6}
\frac{ e^{-k^\star}}{n}\,(1-p_{00})\left[ \frac{\log (1/q_1 q_2)}{\log (1/p_{00})}-1\right]. 
\end{equation}
Note that this equals 
\[
\int_{k^\star}^{k^\star + \log(1/p_{00})} \frac{e^{-s}}{n}\, \left[\frac{\log (1/q_1q_2)}{\log (1/p_{00})}-1 \right]ds,           
\]
for each $k^\star \in E^\star$, where we have parameterised the intensity function on the diagonal as projection along the $s$-axis.
We thus define:
\begin{align}\label{d: ls}
\begin{split}
\lambda^\star(s,t)&= \left \{
\begin{array}{lll} 
\frac{\log (p_{00}/q_2) \log q_2}{\log^2(1/p_{00})}\, e^{-\frac{\log (p_{00}/q_2)}{\log p_{00}}\,s} e^{-\frac{\log q_2}{\log p_{00}}\,t} &\textnormal{ for }& s < t ,  \\
\frac{\log (p_{00}/q_1) \log q_1}{\log^2(1/p_{00})}\, e^{-\frac{\log q_1}{\log p_{00}}\,s} e^{-\frac{\log (p_{00}/q_1)}{\log p_{00}}\,t} &\textnormal{ for }& s > t,   
\end{array} \right.\\
\acute{\lambda}^\star(s)&=\frac{\log (p_{00}/q_1 q_2)}{\log (1/p_{00})}\, e^{-s}\quad \textnormal{for } s = t.\\
\end{split}
\end{align}
Figure \ref{f: spread_prob_3} illustrates this last step in the construction of $\lambda^\star$. 
\begin{figure}
\begin{center}{\footnotesize \input{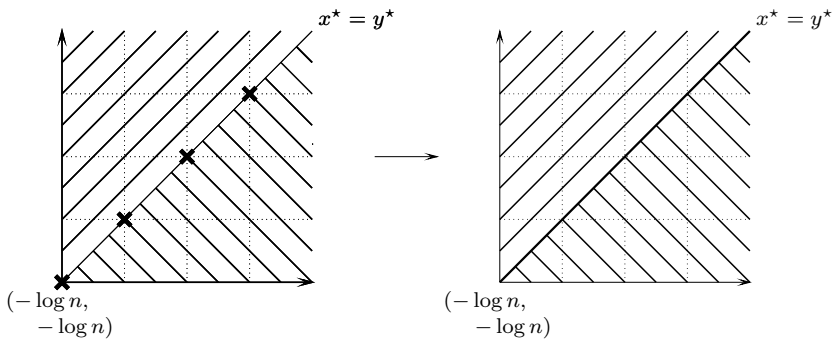}}\end{center}
\caption{The mass on the diagonal lattice points is spread over the entire diagonal.} 
\label{f: spread_prob_3}
\end{figure}

The above construction guarantees the following:
\begin{prop}\label{t: MO_Geo_same_meas_rectangles}
Let $\bl^\star$, $\lambda^\star$ and $\acute{\lambda}^\star$ be defined by (\ref{d: MO_Geo_cont_int_meas}) and (\ref{d: ls}). Then,\\
(i)$\,\, \displaystyle  \bl^\star \left(\Rstar\right) = \bpis\left(\Rstar\right)$, for any $(k^\star,l^\star) \in E^\star$,\\
(ii)$\,\, \displaystyle \int_{[-\log n, \infty)^2} \lambda^\star(s,t)dsdt + \int_{-\log n}^\infty \acute{\lambda}^\star(s)ds= n$. \\ \qed 
\end{prop}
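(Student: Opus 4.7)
The proof is essentially a bookkeeping verification that the three-step construction leading to (\ref{d: ls}) does exactly what it was designed to do. The plan is to split part (i) according to whether the rectangle $R^\star_{k^\star,l^\star}$ lies off the diagonal or on it, and then to deduce part (ii) as an immediate corollary of (i) by summing over the tiling of $[-\log n,\infty)^2$.

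For part (i), first consider an off-diagonal rectangle, $k^\star \neq l^\star$. Then $R^\star_{k^\star,l^\star}$ does not meet the line $\{s=t\}$, so the singular contribution in (\ref{d: MO_Geo_cont_int_meas}) vanishes and $\bls(R^\star_{k^\star,l^\star}) = \iint_{R^\star_{k^\star,l^\star}}\ls(s,t)\,ds\,dt$. Using the explicit formula for $\ls$ from (\ref{d: ls}) on the appropriate side of the diagonal, a direct evaluation of the double integral in $s$ and $t$ reverses exactly the algebra that led from (\ref{p: spread_prob_2}) to (\ref{p: int_fct_above})--(\ref{p: int_fct_below}); the multiplicative prefactor cancels and one obtains $nP(X_1^\star=k^\star, X_2^\star=l^\star) = \bpis(R^\star_{k^\star,l^\star})$, using (\ref{d: MO_Geo_mass_fct_normalised}) and the fact that $(k^\star,l^\star)$ is the only lattice point in the rectangle.

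For a diagonal rectangle $R^\star_{k^\star,k^\star}$, both the absolutely continuous and the singular parts of $\bls$ contribute. By the same computation as in the off-diagonal case, applied separately to the two triangular halves of $R^\star_{k^\star,k^\star}$, the $\ls$-integral equals the quantity that was subtracted from the discrete point mass in (\ref{p: spread_prob_5}). The remaining diagonal part is, by definition of $\acute{\lambda}^\star$ in (\ref{d: ls}),
\[
\int_{k^\star}^{k^\star + \log(1/p_{00})} \acute{\lambda}^\star(s)\,ds = \frac{\log(1/q_1q_2)}{\log(1/p_{00})}\bigl(e^{-k^\star} - e^{-(k^\star + \log(1/p_{00}))}\bigr),
\]
which after simplification is exactly the surplus expression (\ref{p: spread_prob_6}). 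Summing the two contributions gives $nP(X_1^\star=k^\star, X_2^\star=k^\star) = \bpis(R^\star_{k^\star,k^\star})$. This bookkeeping on the diagonal rectangles, matching the $\acute{\lambda}^\star$-integral with (\ref{p: spread_prob_6}), is the only step that requires care; the rest is formal.

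For part (ii), observe that the family $\{R^\star_{k^\star,l^\star}:(k^\star,l^\star)\in E^\star\}$ tiles $[-\log n,\infty)^2$ up to a set of Lebesgue measure zero, and the diagonal $\{s=t\}\cap[-\log n,\infty)$ is covered by the diagonal segments of the $R^\star_{k^\star,k^\star}$. Summing the identity of (i) over all $(k^\star,l^\star) \in E^\star$ therefore yields
\[
\int_{[-\log n,\infty)^2}\!\!\ls(s,t)\,ds\,dt + \int_{-\log n}^{\infty}\!\!\acute{\lambda}^\star(s)\,ds = \sum_{(k^\star,l^\star)\in E^\star} nP(X_1^\star=k^\star, X_2^\star=l^\star) = n,
\]
where the last equality uses $P(\mathbf{X}^\star\in [-\log n,\infty)^2)=1$.
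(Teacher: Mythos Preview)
Your approach is exactly what the paper intends: the proposition is stated with a bare $\qed$ because it is designed to follow immediately from the three-step construction, and your proof is precisely that verification, tiling $[-\log n,\infty)^2$ by the $R^\star_{k^\star,l^\star}$ for part~(ii). One small slip to fix: in your displayed diagonal integral the prefactor should be $\log(p_{00}/q_1q_2)/\log(1/p_{00})$ (as in (\ref{d: ls})), not $\log(1/q_1q_2)/\log(1/p_{00})$, and the result equals $n$ times (\ref{p: spread_prob_6}) rather than (\ref{p: spread_prob_6}) itself---this is exactly what is needed, since the $\ls$-integral over $R^\star_{k^\star,k^\star}$ plus $n$ times the curly-bracket quantity in (\ref{p: spread_prob_5}) recovers $nP(X_1^\star=k^\star,X_2^\star=k^\star)$.
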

\begin{remark}
Proceeding as in Section \ref{s: MO_Exp_joint} for the Marshall-Olkin exponential distribution, 
we may express the new intensity functions
$\lambda^\star$ and $\acute{\lambda}^\star$ in the original coordinate system by
\begin{align*}
\lambda(x,y)&=\lambda_n(x,y) = \left\{ 
\begin{array}{lll}
n \log (q_2) \log (\frac{p_{00}}{q_2})   p_{00}^x q_2^{y-x} &\textnormal{ for } x <y,\\
n \log (q_1) \log (\frac{p_{00}}{q_1}) q_1^{x-y} p_{00}^y &\textnormal{ for } x>y,
\end{array} \right.\\
\acute{\lambda}(x) &= \acute{\lambda}_n(x)= n \log \left( \frac{p_{00}}{q_1 q_2}\right) p_{00}^x \quad \textnormal{ for } x=y,\\
\end{align*} 
for any $(x,y) \in [0,\infty)^2$. We recognise a weighted and continuous version of $P(X_1 \ge k,X_2 \ge l)$ from (\ref{d: MO_survival}). 
\end{remark}
\subsection{Assumptions on the distributional parameters}\label{s: MO_Geo_cont_nicer_int_fct}
The continuous intensity measure $\bl^\star$ defined by (\ref{d: MO_Geo_cont_int_meas}) and (\ref{d: ls}) depends on the parameters $q_1$, $q_2$ and $p_{00}$ of the Marshall-Olkin geometric distribution.
Our aim is to determine a bound on the error for the approximation of the Poisson process with mean measure $\mathbb{E}\Xi^\star_{A^\star}$, living on the lattice $E^\star$,
by a Poisson process with mean measure $\bl^\star$. 
We already did something similar for the univariate geometric distribution in Proposition \ref{t: Geo_MPPE_d2}, and it turned out that the error could only become small
if the success probability $p=p_n$ vanished as $n \to \infty$. As Section \ref{s: MO_Geo_d2} will show below, the 
probability of simultaneous success, $p_{11}$, for the Marshall-Olkin geometric distribution, will similarly have to tend to $0$ as $n \to \infty$.
Since $p_{00}+p_{01}+p_{10}+p_{11}=1$, this of course influences 
the distributional parameters $p_{00}, q_1$ and $q_2$ in that it also makes them dependent on $n$.
The continuous intensity functions $\lambda^\star$ and $\acute{\lambda}^\star$ thus have the drawback that, through their dependence on the 
parameters $p_{00}$, $q_1$ and $q_2$, they are also dependent on $n$.
We thus try to find other suitable continuous intensity functions that no longer vary with the sample size.  

For simplicity, we make the assumption that $p_{10}$ and $p_{01}$ vary at the same rate as $p_{11}=p_{11n}$; more precisely, assume
$p_{10} = p_{10n}=\gamma p_{11n}$ and $p_{01} = p_{01n}=\delta p_{11n}$, where $\gamma$ and $\delta$ are strictly positive real numbers, bounded such
that $p_{10}$ and $p_{01}$ are smaller than 1. We assume that $p_{11n}$ tends to $0$ as $n \to \infty$ at a rate that will be determined later, and 
express the distributional parameters as functions of it:
\begin{equation}\label{d: conditions}
\begin{split}
q_{1n} &=  1-(1+\gamma)p_{11n},\\
q_{2n} &= 1-(1+\delta)p_{11n},\\
p_{00n} &= 1-(1+\gamma+\delta)p_{11n}.
\end{split}
\end{equation}
Plugging into (\ref{d: ls}) and using the relation $\log(1-z) \sim -z$ for $|z| < 1$ and $z \to 0$, we find that $\lambda^\star(s,t)$ and $\acute{\lambda}^\star(s)$ are, 
for $p_{11n} \to 0$ as $n \to \infty$,
asymptotically equal to 
\begin{align}\label{d: lsnew}
\begin{split}
 \lsnew(s,t)&:= \left \{
 \begin{array}{lll} 
 \frac{\gamma(1+\delta)}{(\gade)^2}\,e^{-\frac{\gamma}{\gade}\,s}e^{-\frac{1 + \delta}{\gade}\,t} &\textnormal{ for }& s < t ,  \\
 \frac{\delta (1 + \gamma)}{(\gade)^2}\,e^{-\frac{1+\gamma}{\gade}\,s} e^{-\frac{\delta}{\gade}\,t} &\textnormal{ for }& s > t,  
 \end{array} \right.\\
 \text{and } \quad \acute{\lambda}^\star_{\gamma, \delta}(s)&:=\frac{1}{\gade}\, e^{-s} \quad \textnormal{ for }  s = t,
\end{split}
\end{align}
respectively, for all $(s,t) \in [-\log n, \infty)^2$. At first glance $\lsnew$ and $\acute{\lambda}^\star_{\gamma, \delta}$ seem to be valid choices 
for continuous intensity functions independent of $n$. 
We will investigate in Section \ref{s: MO_Geo_nicer} whether a Poisson process with mean measure $\bl^\star$ on $A^\star$ may indeed be approximated by a 
Poisson process with mean measure 
\begin{equation}\label{d: blsnew}
\blsnew(B^\star) := \int \int_{A^\star \cap B^\star} \lsnew(s,t)dsdt + \int_{A^\star \cap B^\star \cap \{(s,t):\, s=t\}}\acute{\lambda}^\star_{\gamma, \delta}(s)ds,
\end{equation}
for all $B^\star \in \mathcal{B}([-\log n, \infty)^2)$. 
To do the corresponding error calculations for a fixed sample size $n$ we first need to examine in further detail the differences between 
the exponent terms in $\lambda^\star(s,t)$ and $\lsnew(s,t)$:
\begin{lemma}\label{t: Lemma_cond} 
For each integer $n \ge 1$, let $p_{11n} \in (0,1)$ and let $q_{1n}$, $q_{2n}$, $p_{00n}$ $\in (0,1)$  be defined by (\ref{d: conditions}). Then,
\begin{align*}
(i) \qquad 0 \le \frac{1+\delta}{\gade} - \frac{\log q_{2n}}{\log p_{00n}} &\le \frac{\gamma p_{11n}}{1-(\gade)p_{11n}}\,,\\
(ii) \qquad 0 \le \frac{1+\gamma}{\gade} -\frac{\log q_{1n}}{\log p_{00n}} &\le \frac{\delta p_{11n}}{1-(\gade)p_{11n}}\,.
\end{align*}
Moreover,
\begin{align*}
(iii) \qquad 0 \le \log\left(\frac{q_{2n}}{p_{00n}}\right) \le \frac{\gamma p_{11n}}{1-(\gade)p_{11n}}\,, \\
(iv) \qquad 0 \le \log\left(\frac{q_{1n}}{p_{00n}}\right) \le \frac{\delta p_{11n}}{1-(\gade)p_{11n}}\,,
\end{align*}
and 
\[
(v)\qquad \log\left(\frac{1}{p_{00n}}\right)\log\left(\frac{p_{00n}}{q_{1n}q_{2n}}\right) \le \frac{(\gade)p_{11n}}{\{1-(\gade)p_{11n}\}^2}\,.
\]
\end{lemma}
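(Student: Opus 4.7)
\bigskip
\noindent\textbf{Proof plan for Lemma \ref{t: Lemma_cond}.}

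The whole lemma reduces to standard estimates for $-\log(1-x)$ once we introduce the shorthand $p := p_{11n}$, $a := 1+\gamma+\delta$, $b_1 := 1+\gamma$ and $b_2 := 1+\delta$, so that $p_{00n}=1-ap$, $q_{1n}=1-b_1 p$, $q_{2n}=1-b_2 p$ with $b_j \le a$ for $j=1,2$. I will use throughout the two-sided bound $z \le -\log(1-z) \le z/(1-z)$ valid for $z \in [0,1)$.

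For (i), I plan to show first that the function $\psi(x) := \log(1-xp)/x$ is non-increasing on $(0,\infty)$: one differentiates and reduces to the elementary inequality $-\log(1-xp)\ge xp/(1-xp) - $ something non-positive, or more cleanly writes
\[
 \frac{-\log(1-xp)}{x} \;=\; \sum_{k\ge 1}\frac{x^{k-1}p^k}{k},
\]
which is manifestly non-decreasing in $x$, so $\psi(x)=-[\text{this sum}]$ is non-increasing. Since $b_2 \le a$, this gives $\psi(a) \le \psi(b_2)$, i.e.\ $b_2 \log(1-ap) \le a\log(1-b_2 p)$; dividing by the negative quantity $a\log(1-ap)$ yields the lower bound $0\le b_2/a - \log q_{2n}/\log p_{00n}$. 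For the upper bound I will apply the mean value theorem to $\psi$ on $[b_2,a]$: a short calculation shows that
\[
 |\psi'(x)| \;=\; \frac{1}{x^2}\sum_{k\ge 2}(xp)^k\!\left(1-\tfrac{1}{k}\right) \;\le\; \frac{p^2}{1-xp},
\]
so $|\psi(b_2)-\psi(a)| \le (a-b_2)\,p^2/(1-ap) = \gamma p^2/(1-ap)$. Combining with $|\log p_{00n}| = -\log(1-ap) \ge ap$ and $b_2/a \le 1$, one obtains
\[
 \frac{b_2}{a} - \frac{\log q_{2n}}{\log p_{00n}} \;=\; \frac{b_2(\psi(a)-\psi(b_2))}{\log p_{00n}} \;\le\; \frac{b_2 \gamma p^2/(1-ap)}{ap} \;\le\; \frac{\gamma p}{1-ap}.
\]
Part (ii) is identical with the roles of $\gamma$ and $\delta$ swapped.

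Parts (iii) and (iv) are direct. Non-negativity is immediate from $q_{2n}\ge p_{00n}$ (since $b_2 \le a$). For the upper bound in (iii), I write
\[
 \log\!\frac{q_{2n}}{p_{00n}} \;=\; \log\!\left(1+\frac{q_{2n}-p_{00n}}{p_{00n}}\right) \;\le\; \frac{q_{2n}-p_{00n}}{p_{00n}} \;=\; \frac{\gamma p}{1-ap},
\]
using $\log(1+z)\le z$; (iv) is symmetric.

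For (v), the first factor is bounded by $-\log(1-ap) \le ap/(1-ap)$. For the second factor, I will split $\log(p_{00n}/(q_{1n}q_{2n})) = [-\log q_{1n}]+[-\log q_{2n}]-[-\log p_{00n}]$, apply $-\log(1-z)\le z/(1-z)$ to the first two terms (using $1-b_j p \ge 1-ap$ in the denominators) and the lower bound $-\log(1-ap)\ge ap$ to the third. The resulting cancellation of the leading linear terms $b_1 p + b_2 p - ap = p$ gives
\[
 \log\!\frac{p_{00n}}{q_{1n}q_{2n}} \;\le\; \frac{b_1 p + b_2 p}{1-ap} - ap \;=\; \frac{p(1+a^2 p)}{1-ap}.
\]
Multiplying the two factors yields a bound of order $ap^2$, which is majorised by the stated $ap/(1-ap)^2$ in the regime where $p(1+a^2 p)\le 1$; the write-up will verify this final comparison and present the chain of inequalities explicitly. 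The main obstacle is really (i): establishing the clean upper bound $\gamma p/(1-ap)$ rather than a crude $O(p)$ term requires the monotonicity-plus-MVT argument on $\psi(x)=\log(1-xp)/x$, whereas (iii)--(v) are essentially one-line applications of $\log(1+z)\le z$ and $-\log(1-z)\le z/(1-z)$.
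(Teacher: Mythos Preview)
Your arguments for (i)--(iv) are sound. In particular, your route for (i) via the mean value theorem on $\psi(x)=\log(1-xp)/x$ is a legitimate alternative to the paper's approach, which instead uses the monotonicity of $z\mapsto -(1-z)\log(1-z)/z$ to obtain the lower bound $\log q_{2n}/\log p_{00n}\ge b_2(1-ap)/[a(1-b_2p)]$ and subtracts from $b_2/a$. Both arrive at the same clean bound $\gamma p/(1-ap)$; yours is arguably more transparent since it avoids invoking a second, less familiar monotonicity fact.

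However, your plan for (v) has a real gap. The comparison $p(1+a^2 p)\le 1$ that you defer to ``the write-up'' is \emph{not} implied by the hypotheses of the lemma. Take $\gamma=\delta=0.01$ and $p_{11n}=0.98$: then $a=1.02$, $ap\approx 0.9996<1$, and all of $q_{1n},q_{2n},p_{00n}$ lie in $(0,1)$, yet $p(1+a^2p)\approx 1.98>1$. In this regime your product bound $ap^2(1+a^2p)/(1-ap)^2$ exceeds the target $ap/(1-ap)^2$ roughly by a factor of two, so the chain of inequalities you outline cannot close. The paper sidesteps this by a much cruder but universally valid estimate on the second factor: since $q_{jn}\ge p_{00n}$, one has
\[
\log\!\frac{p_{00n}}{q_{1n}q_{2n}}=(-\log q_{1n})+(-\log q_{2n})-(-\log p_{00n})\le -\log p_{00n},
\]
whence the product is at most $(-\log p_{00n})^2\le (1-p_{00n})^2/p_{00n}^2\le (1-p_{00n})/p_{00n}^2$, which is exactly the stated bound. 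You should replace your splitting-and-cancellation argument with this direct comparison.
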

\begin{proof}
(i) For ease of notation we omit the subscript $n$. Since, for all $|z| < 1$, $-\log(1-z)/z$ is increasing and $-(1-z)\log(1-z)/z$ is decreasing, we obtain the
following lower and upper bound, respectively, for $-(\log q_2)/(\log p_{00})$, where $q_2 < p_{00}$:
\[
- \frac{1+\delta}{\gade} \le - \frac{\log q_2}{\log p_{00}} \le - \frac{(1+\delta)\cdot [1-(\gade)p_{11}]}{[1-(1+\delta)p_{11}]\cdot(\gade)}.
\]
Therefore,
\begin{align*}
0 \le \frac{1+\delta}{\gade} - \frac{\log q_2}{\log p_{00}} 
&\le \frac{1+\delta}{\gade} \left\{ 1- \frac{1-(\gade)p_{11}}{1-(1+\delta)p_{11}}\right\}\\
&= \frac{(1+\delta)\gamma p_{11}}{(\gade) [1- (1+\delta)p_{11}]}
\le \frac{\gamma p_{11}}{1-(\gade)p_{11}}\,.
\end{align*}
(iii) Moreover, since $q_2 =p_{00}+p_{10}$, we have $\log(q_2/ p_{00}) \ge 0$. Using $\log(1+ z) \le z$ for positive $z$, we obtain
\[
\log(q_2/p_{00}) = \log \left( \frac{p_{00}+p_{10}}{p_{00}}\right) \le \frac{p_{10}}{p_{00}} = \frac{\gamma p_{11}}{1-(\gade)p_{11}}\,.
\]
(ii) and (iv) can be shown analogously to (i) and (iii), respectively.\\
(v) We have
\begin{align*}
&\log\left(\frac{1}{p_{00n}}\right)\log\left(\frac{p_{00n}}{q_{1n}q_{2n}}\right) \\
&= (-\log p_{00}) \left\{ -\log(p_{00} + p_{01}) - \log(p_{00}+p_{10}) + \log p_{00}\right\}\\
& \le (-\log p_{00}) \left\{ -\log p_{00} - \log p_{00} + \log p_{00} \right\}\\
& = (-\log p_{00})^2 \le \frac{(1-p_{00})^2}{p_{00}^2} \le \frac{1-p_{00}}{p_{00}^2} \\
&= \frac{(\gade)p_{11}}{\left\{ 1- (\gade)p_{11}\right\}^2}\,. 
\end{align*}
\end{proof}
\noindent We will use Lemma \ref{t: Lemma_cond} to determine error estimates in Sections \ref{s: MO_Geo_d2} and \ref{s: MO_Geo_nicer}. 
\begin{remark}
We suppose here that $\gamma$ and $\delta$ do not vary with $n$. However, the asymptotic equivalence of (\ref{d: ls}) and (\ref{d: lsnew}),
and later results (i.e. Propositions \ref{t: MO_Geo_disc_to_cont_appl} and \ref{t: MO_Geo_cont_to_nicer_appl}, as well as Corollary \ref{t: MO_Geo_final_bound_d2}) 
also hold for the case $\gamma=\gamma_n$ and $\delta=\delta_n$. These results are thus actually stronger than we make them out to be.  
\end{remark}
\subsection{Approximation in $d_2$ by a Poisson process with continuous intensity}\label{s: MO_Geo_d2} 
We now determine the error of the approximation of the Poisson process with mean measure $\bpis$, living on lattice points $(k^\star, l^\star) \in A^\star \cap E^\star$, 
and the Poisson process with continuous mean measure $\bls$, living on $A^\star \cap [-\log n, \infty)^2$. As the total variation distance is too strong to achieve this, 
we use the weaker $d_2$-distance that we introduced in Section \ref{Sec: Improved_rates}. Theorem \ref{t: approx_disc_cont} gives a general error estimate for any set 
$\As \in \mathcal{B}([-\log n, \infty)^2)$, which Proposition \ref{t: MO_Geo_disc_to_cont_appl} in turn applies to the particular choice $A^\star = [u^\star, \infty)^2$.

Note that any not too small set $\As \in \mathcal{B}([-\log n, \infty)^2)$ contains subsets that are unions of coordinate rectangles $\Rstar$, i.e. of the form
\begin{equation}\label{d: TildeAstar}
 \bigcup_{(k^\star,l^\star) \in M^\star}  R^\star_{k^\star, l^\star} \subseteq A^\star, 
\end{equation}
where $M^\star$ is a countable subset of $E^\star$. 
Let $\tilde{A}^\star$ denote the biggest set $\subseteq A^\star$ of the form (\ref{d: TildeAstar}); see Figure \ref{f: TildeAstar} for some examples.
In order to prove Theorem \ref{t: approx_disc_cont}, we distinguish between the errors on
$\tilde{\As}$ and $\As \setminus \tilde{\As}$. Even though $\bpis(\As)=\bls(\As)$ is not necessarily satisfied, Proposition \ref{t: MO_Geo_same_meas_rectangles} ensures that at least $\bpis(\tilde{\As})= \bls(\tilde{\As})$. We may therefore use Lemma \ref{t: helping_bound_s2} to bound the error on $\tilde{\As}$ by way of the $d_1$-distance between $\bpis$ and $\bls$ on $\tilde{\As}$. The size of the $d_1$-distance depends on the choice of the $d_0$-distance. As in Section \ref{s: MPPE_geo}, where we treated MPPE's with univariate geometric marks, we choose the Euclidean distance bounded by $1$. For the remaining error, we rely on the ``small" size of $\As \setminus \tilde{\As}$ and use Lemma \ref{t: Delta_1_upgamma_d2} for an upper bound on $\Delta_1 \upgamma$, where $\upgamma$ is the solution to an appropriate Stein equation. 
\begin{figure}
\begin{center}{\footnotesize \input{TildeAstar.TpX}}\end{center}
\caption{Examples of sets $\tilde{A^\star}$.}
\label{f: TildeAstar}
\end{figure}
\begin{thm}\label{t: approx_disc_cont}
With the notations from Sections \ref{s: MO_Geo}-\ref{s: MO_Geo_cont_nicer_int_fct}, 
we obtain, for a set $A^\star \in \mathcal{B}([-\log n, \infty)^2)$,
\begin{equation}\label{t: approx_disc_cont_eq} 
d_2(\mathrm{PRM}(\bpis), \mathrm{PRM}(\bl^\star))\\
\le 2\sqrt{2} \log(1/ p_{00}) + \left(1 \wedge \frac{1.65}{\sqrt{\bl^\star(A^\star)}} \right) \bl^\star(A^\star \setminus \tilde{A}^\star),
\end{equation}
where $\tilde{A}^\star$ denotes the biggest set $\subseteq A^\star$ that is a union of coordinate rectangles, i.e. 
$\tilde{A}^\star = \cup_{(k^\star, l^\star) \in M^\star} R^\star_{k^\star,l^\star}$, where $M^\star$ is the biggest subset of 
$E^\star$ such that $\tilde{A}^\star \subseteq \As$.\\
\end{thm}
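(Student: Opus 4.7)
The plan is to invoke the Stein--Chen generator approach of Section \ref{Sec: Improved_rates}. For any test function $h \in \mathcal{H}$ let $\upgamma$ be the Stein solution associated with the equilibrium $\mathrm{PRM}(\bls)$ on $\As$, let $\mathcal{A}$ be the corresponding generator, and let $\Xi \sim \mathrm{PRM}(\bpis)$. Since $\Xi$ is itself a Poisson process with mean measure $\bpis$, Palm calculus (applied exactly as in the proof of Proposition \ref{t: dTV_two_PRM}) collapses the Stein identity to $\mathbb{E}(\mathcal{A}\upgamma)(\Xi) = \mathbb{E}\int_{\As}[\upgamma(\Xi+\delta_z) - \upgamma(\Xi)](\bls - \bpis)(dz)$, so by the definition of $d_2$ it suffices to bound this quantity uniformly over $h$, divided by $s_2(h)$. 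Split the integral according to the decomposition $\As = \tilde{\As} \cup (\As\setminus\tilde{\As})$ and treat the two pieces separately.

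On $\tilde{\As}$, Proposition \ref{t: MO_Geo_same_meas_rectangles} guarantees that $\bls$ and $\bpis$ assign equal mass to each coordinate rectangle $\Rstar$, and hence to $\tilde{\As}$ as a whole. Following the template of Lemma \ref{t: helping_bound_s2} combined with the $d_1$-computation of Proposition \ref{t: Geo_MPPE_d2}, one bounds the $\tilde{\As}$-contribution by the Lipschitz estimate on $\upgamma_\xi(z) := \upgamma(\xi+\delta_z) - \upgamma(\xi)$ times $\bls(\tilde{\As})\, d_1(\bpis|_{\tilde{\As}}, \bls|_{\tilde{\As}})$. The $d_1$-distance is computed rectangle by rectangle: on each $\Rstar$, $\bpis$ is concentrated at the corner while $\bls$ redistributes the same mass $\bls(\Rstar)$ across the rectangle, so for any Lipschitz test function $\kappa$ the local contribution is at most $s_1(\kappa)\bls(\Rstar)\sqrt{2}\log(1/p_{00})$, the Euclidean diameter of $\Rstar$. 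Summing over rectangles and dividing by $\bls(\tilde{\As})$ yields $d_1(\bpis|_{\tilde{\As}}, \bls|_{\tilde{\As}}) \le \sqrt{2}\log(1/p_{00})$, which, combined with the factor $(1-e^{-\lambda})(2-e^{-\lambda}) \le 2$ inherited from the smoothness bound, produces the first term $2\sqrt{2}\log(1/p_{00})$.

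On $\As\setminus\tilde{\As}$ the two measures need no longer have matching masses on rectangles, so the mass-balancing argument of Lemma \ref{t: helping_bound_s2} is unavailable. One falls back on the crude estimate $|\upgamma(\xi+\delta_z) - \upgamma(\xi)| \le \Delta_1\upgamma \le s_2(h)(1 \wedge 1.65/\sqrt{\bls(\As)})$ from Lemma \ref{t: Delta_1_upgamma_d2} and bounds the remaining signed-measure integral by the total variation of $\bls - \bpis$ over that region. The main obstacle will be arranging this so that the stated bound $(1 \wedge 1.65/\sqrt{\bls(\As)})\bls(\As\setminus\tilde{\As})$ actually emerges with $\bls$ alone (rather than the symmetric combination $\bls + \bpis$ that a naive triangle inequality on the signed measure would produce) and with $\sqrt{\bls(\As)}$ rather than $\sqrt{\bls(\As\setminus\tilde{\As})}$ in the denominator. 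Both features rely on taking the Stein equation with reference to the \emph{full} equilibrium $\mathrm{PRM}(\bls)$ on $\As$ (not only on the sub-region) and on carefully exploiting the rectangle-by-rectangle construction of $\bpis$ from $\bls$, which constrains the signed measure $\bls - \bpis$ on $\As\setminus\tilde{\As}$ and allows the $\bpis$-contribution to be absorbed into the first, rectangle-diameter term rather than into the second.
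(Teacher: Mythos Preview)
Your overall strategy is exactly the paper's: set up the Stein generator with equilibrium $\mathrm{PRM}(\bls)$ on $\As$, reduce via Palm calculus to $\mathbb{E}\int_{\As}[\upgamma(\Xi+\delta_{\bz})-\upgamma(\Xi)](\bls-\bpis)(d\bz)$, split into $\tilde{A}^\star$ and $\As\setminus\tilde{A}^\star$, apply Lemma~\ref{t: helping_bound_s2} on the first piece (the masses match by Proposition~\ref{t: MO_Geo_same_meas_rectangles}) together with the rectangle-by-rectangle $d_1$ calculation giving $d_1\le\sqrt{2}\log(1/p_{00})$, and invoke the $\Delta_1\upgamma$ estimate of Lemma~\ref{t: Delta_1_upgamma_d2} on the second. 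The factor $(1-e^{-\lambda})(2-e^{-\lambda})\le 2$ and the appearance of $\bls(\As)$ (rather than $\bls(\As\setminus\tilde{A}^\star)$) in the $\Delta_1\upgamma$ bound are handled just as you describe.

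Where you diverge is the very last step on $\As\setminus\tilde{A}^\star$. The paper does not try to absorb any $\bpis$-mass into the rectangle-diameter term; it simply asserts
\[
\int_{\As\setminus\tilde{A}^\star}|\bls-\bpis|(d\bz)\ \le\ \bls(\As\setminus\tilde{A}^\star)
\]
and moves on. Since $\bls$ (a density with respect to two-dimensional Lebesgue measure plus a line density on the diagonal) and $\bpis$ (purely atomic on $E^\star$) are mutually singular, this inequality is equivalent to $\bpis(\As\setminus\tilde{A}^\star)=0$, i.e.\ every lattice point lying in $\As$ already has its entire coordinate rectangle $\Rstar$ inside $\As$. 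That holds automatically for the sets the paper actually uses---quadrants $[u^\star,\infty)^2$, since each $\Rstar$ extends only up and to the right from $(k^\star,l^\star)$---but it need not hold for a completely arbitrary Borel $\As$. So you have correctly identified that the claimed generality is delicate; however, your proposed remedy (pushing the stray $\bpis$-mass back into the first term) is not what the paper does, is not made precise in your outline, and would require its own argument if you wish to pursue it.
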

\begin{proof}
Let $\Xi_{\bpis} \sim \mathrm{PRM}(\bpis)$ and $\Xi_{\bls} \sim \mathrm{PRM}(\bls)$. Suppose that $Z= \{Z_t, t\in \mathbb{R}_+\}$ is an immigration-death process on $A^\star$ with 
immigration intensity $\bls$, unit per-capita death rate, equilibrium distribution $\mathcal{L}(\Xi_{\bls})$, and generator $\mathcal{A}$.
Furthermore, let $\mathcal{H}$ denote the set of functions $h:\, M_p(A^\star) \to \mathbb{R}$ such that (\ref{d: s_2h}) is satisfied
and let $\upgamma:\, M_p(A^\star) \to \mathbb{R}$ be defined by $\upgamma(\xi)$ $= $
$-\int_0^\infty \{\mathbb{E}^\xi h(Z_t) - \mathrm{PRM}(\bls)\}dt$, for any $\xi \in M_p(A^\star)$. By Proposition \ref{t: upgamma_welldefined}, $\upgamma$ is well-defined, 
and by (\ref{d: Stein_eq_d2}), $|\mathrm{PRM}(\bpis)(h)- \mathrm{PRM}(\bls)(h)|$ equals $|\mathbb{E}(\mathcal{A}\upgamma)(\Xi_{\bpis})|$. 
Proceeding as in the proof of Proposition \ref{t: dTV_two_PRM}, we find that
\[
\mathbb{E}(\mathcal{A}\upgamma)(\Xi_{\bpis})= \mathbb{E} \int_{A^\star} \left[ \upgamma(\Xi_{\bpis} + \delta_{\mathbf{z}}) - \upgamma(\Xi_{\bpis})\right]
\left( \bls(d\bz) - \bpis(d\bz)\right), 
\]
and thus 
\[
\frac{|\mathbb{E}(\mathcal{A}\upgamma)(\Xi_{\bpis})|}{s_2(h)} \le
\frac{1}{s_2(h)}\, \mathbb{E} \left| \int_{A^\star} \left[ \upgamma(\Xi_{\bpis} + \delta_{\bz}) - \upgamma(\Xi_{\bpis})\right]
\left( \bls(d\bz) - \bpis(d\bz)\right)\right|, 
\]
where, for any $\xi \in M_p(A^\star)$,
\begin{multline}\label{p: partitionA}
\left|\int_{\As} [\upgamma(\xi+\delta_{\bz})-\upgamma(\xi)](\bls(d\bz)-\bpis(d\bz))\right|\\ 
\le \left|\int_{\tilde{A}^\star} [\upgamma(\xi+\delta_{\bz})-\upgamma(\xi)]
(\bls(d\bz)-\bpis(d\bz)) \right|\\
+\left|\int_{\As\smallsetminus\tilde{A}^\star} 
[\upgamma(\xi+\delta_{\bz})-\upgamma(\xi)](\bls(d\bz)-\bpis(d\bz))\right|.
\end{multline}
The second summand may be bounded by 
\begin{equation} \label{p: summand2}
 \int_{\As \smallsetminus \tilde{A}^\star} |\upgamma(\xi+\delta_{\bz})-\upgamma(\xi)|\cdot 
 |\bls (d\bz)-\bpis(d\bz)| 
 \le \Delta_1\upgamma \int_{\As \smallsetminus \tilde{A}^\star}|\bls(d\bz)-\bpis(d\bz)|.
\end{equation}
Note that
\[
 \int_{\As \smallsetminus \tilde{A}^\star}|\bls(d\bz)-\bpis(d\bz)| \le \bls(\As \setminus \tilde{A}^\star), 
\]
and that Lemma \ref{t: Delta_1_upgamma_d2} gives 
\begin{equation} \label{p: deltaone}
 \Delta_1\upgamma \le s_2(h)\left( 1 \wedge \frac{1.65}{\sqrt{\bls(\As)}} \right).
\end{equation}
By Proposition \ref{t: MO_Geo_same_meas_rectangles}, $\bls(\tilde{A}^\star) = \bpis(\tilde{A}^\star)= nP(\mathbf{X}^\star \in \tilde{A}^\star) < \infty$. 
We may therefore use Lemma \ref{t: helping_bound_s2} to bound the first summand by
\begin{equation} \label{p: summand1}
s_2(h)\left( 1-e^{-\bls(\tilde{A}^\star)} \right) \left( 1+\frac{\bls(\tilde{A}^\star)}{|\xi|+1} \right) 
d_1(\bpis,\bls)|_{\tilde{A}^\star}\,, 	
\end{equation}
where $d_1(.\,,.)|_{\tilde{A}^\star}$ denotes the $d_1$-distance on $\tilde{A}^\star$ (instead of on $A^\star$). 
We have
\begin{equation} \label{p: expected}
\mathbb{E}\left(\frac{1}{|\Xi_{\bpis}|+1}\right)= \frac{1-e^{-\bpis(\As)}}{\bpis(\As)},
\end{equation}
since $|\Xi_{\bpis}|$ $ \sim$ $ \mathrm{Poi}(\bpis(\As))$.
Taking expectations in (\ref{p: partitionA}) and using (\ref{p: summand2}) - (\ref{p: expected}), we obtain
\begin{multline}\label{p: d2boundfirst}
\left |\mathbb{E}(\mathcal{A}\upgamma)(\Xi_{\bpis}) \right|/s_2(h) \\
\le  \left( 1-e^{-\bls(\tilde{A}^\star)} \right) 
\left\{ 1+ \frac{\bls(\tilde{A}^\star)}{\bpis(\As)}\left( 1-e^{-\bpis(\As)}\right)\right\}
d_1(\bpis,\bls)|_{\tilde{A}^\star} \\
+ \left( 1 \wedge \frac{1.65}{\sqrt{\bls(\As)}} \right) 
\bls(\As \setminus  \tilde{A}^\star). 
\end{multline}
We may further simplify by bounding $1-e^{-\bls(\tilde{A}^\star)}$ and $1-e^{-\bpis(\tilde{A}^\star)}$ by $1$
and noting that, since $\bpis(\As)= \bpis(\As \setminus \tilde{A}^\star) + \bls(\tilde{A}^\star) 
\ge \bls(\tilde{A}^\star)$,
we have
\begin{equation} \label{p: smaller2}
 1 + \frac{\bls(\tilde{\As})}{\bpis(\As)} \le 2.
\end{equation}
With the definitions of $\mathcal{K}$ and $s_1(\kappa)$ from Section \ref{Sec: Improved_rates}, the $d_1$-distance between $\bls$  and $\bpis$ on ${\tilde{A}}^\star$ 
is given by 
\[
 d_1(\bpis,\bls)|_{\tilde{A}^\star} = \frac{1}{\bls(\tilde{A}^\star)} \,
\sup_{\kappa \in \mathcal{K}} \frac{\left| 
\int_{\tilde{A}^\star}\kappa d\bpis-\int_{\tilde{A}^\star} \kappa d\bls \right|}{s_1(\kappa)}\,.
\]
As $\tilde{A}^\star$ is a union of coordinate rectangles $\Rstar$, the term
$\int_{\tilde{A}^\star}\kappa d\bpis-\int_{\tilde{A}^\star} \kappa d\bls $ may be expressed as
\begin{equation} \label{p: d1diff}
  \sum_{(k^\star,l^\star) \in \tilde{A}^\star} \left\{ \int_{\Rstar} \kappa(\bz)\bpis(d\bz) 
 -\int_{\Rstar}\kappa(\bz)\bls(d\bz) \right\}.
\end{equation}
Furthermore, again by Proposition \ref{t: MO_Geo_same_meas_rectangles},
\[
 \int_{\Rstar}\kappa(\bz)\bpis(d\bz)  = \kappa((k^\star,l^\star)) \bpis(\Rstar) = \kappa((k^\star,l^\star)) \bls(\Rstar).
\]
Hence, we find the following upper bound for (\ref{p: d1diff}):
\[
 \sum_{(k^\star,l^\star) \in \tilde{A}^\star}  \int_{\Rstar} \left|\kappa((k^\star, l^\star)) -\kappa(\bz)\right|\bls(d\bz),
 \]
which, by definition of the Lipschitz constant $s_1(k)$, is smaller than 
\[
  s_1(\kappa)d_0((k^\star,l^\star),\bz)\bls(\tilde{A}^\star).
\]
The biggest possible Euclidean distance between the lower left corner point $(k^\star, l^\star)$ and any other point $\bz$ in the 
rectangle $\Rstar$ is given by the length $\sqrt{2}\log(1/ p_{00})$ of its diagonal. Thus,
\begin{equation} \label{p: diaglength}
d_1(\bpis,\bls)|_{\tilde{A}^\star} \le \sqrt{2}\log (1/p_{00}).
\end{equation}
(\ref{p: smaller2}) and (\ref{p: diaglength}) give the upper bound $2\sqrt{2}\log(1/ p_{00})$ for the first 
summand of the error term in (\ref{p: d2boundfirst}). This completes the proof.
\end{proof}
\noindent Theorem \ref{t: approx_disc_cont} gives sharp results only if the probability of simultaneous failure, $p_{00} = p_{00n}$ tends to $1$ as $n \to \infty$. This makes sense since $\log(1/p_{00})$, introduced as scaling factor of the original marginal geometric random variables, provides the side lengths of the rescaled lattice squares. The condition $p_{00n} \uparrow 1$  makes the side lengths of the coordinate squares tend to $0$ and thus causes the ``disappearance" of the lattice into the whole real subset $[-\log n, \infty)^2$. The same holds for the area $\As \setminus \tilde{\As}$, thereby also causing the disappearance of the second error term as $n \to \infty$.

For sets $\As$ that are unions of coordinate rectangles, we immediately obtain the following corollary, as there is no left-over area $\As \setminus \tilde{\As}$, and by consequence no second error term. 
\begin{cor}\label{t: approx_disc_cont_rect_set}
Let $\As \in \mathcal{B}([-\log n, \infty)^2)$ be a union of coordinate rectangles, i.e. $A^\star = \cup_{(k^\star, l^\star) \in M^\star} R^\star_{k^\star,l^\star}$ 
where $M^\star \subseteq E^\star$. Then, 
\[
d_2(\mathrm{PRM}(\bpis), \mathrm{PRM}(\bl^\star)) \le 2\sqrt{2} \log(1/ p_{00}) .
\] \qed
\end{cor}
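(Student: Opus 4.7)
The plan is to obtain the corollary as an immediate specialization of Theorem~\ref{t: approx_disc_cont}. The only point to verify is that the second error term in~(\ref{t: approx_disc_cont_eq}) vanishes under the hypothesis that $\As$ is itself a union of coordinate rectangles $R^\star_{k^\star,l^\star}$. Once that is observed, the bound $2\sqrt{2}\log(1/p_{00})$ coming from the first summand is all that remains.

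More concretely, I would first recall the definition of $\tilde{\As}$ from Theorem~\ref{t: approx_disc_cont}: it is the largest subset of $\As$ that can be written as $\bigcup_{(k^\star,l^\star) \in M^\star} R^\star_{k^\star,l^\star}$ for some $M^\star \subseteq E^\star$. If $\As$ is already of this form, i.e.\ $\As = \bigcup_{(k^\star,l^\star) \in M^\star} R^\star_{k^\star,l^\star}$ for some $M^\star \subseteq E^\star$, then by maximality $\tilde{\As} = \As$ and consequently $\As \setminus \tilde{\As} = \emptyset$. In particular, $\bl^\star(\As \setminus \tilde{\As}) = 0$, so the contribution
\[
\left(1 \wedge \frac{1.65}{\sqrt{\bl^\star(\As)}}\right)\bl^\star(\As \setminus \tilde{\As})
\]
from Theorem~\ref{t: approx_disc_cont} is zero.

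The remaining contribution to the error bound in~(\ref{t: approx_disc_cont_eq}) is exactly $2\sqrt{2}\log(1/p_{00})$, which is the bound on the first summand derived in the proof of Theorem~\ref{t: approx_disc_cont} from Lemma~\ref{t: helping_bound_s2} together with the fact that the $d_1$-distance between $\bpis$ and $\bls$ restricted to a union of coordinate rectangles is at most the diagonal length $\sqrt{2}\log(1/p_{00})$ of a single rectangle (cf.\ the bound in~(\ref{p: diaglength})). This yields the claim.

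I do not expect any obstacle here; the corollary is a clean specialization, and the only substantive observation is the maximality argument showing $\tilde{\As}=\As$. The hardest part was really already done in Theorem~\ref{t: approx_disc_cont}, where Proposition~\ref{t: MO_Geo_same_meas_rectangles} was used to ensure that $\bpis$ and $\bls$ assign the same total mass to $\tilde{\As}$, which is what enables the use of Lemma~\ref{t: helping_bound_s2}.
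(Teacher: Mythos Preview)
Your proposal is correct and matches the paper's own reasoning exactly: the paper presents this corollary with no proof beyond the remark that when $\As$ is a union of coordinate rectangles there is no left-over area $\As\setminus\tilde{\As}$, so the second error term in Theorem~\ref{t: approx_disc_cont} vanishes. Your additional comments on maximality and on where the first summand's bound comes from are accurate but more detailed than what the paper provides.
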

\begin{figure}
\begin{center}{\footnotesize \input{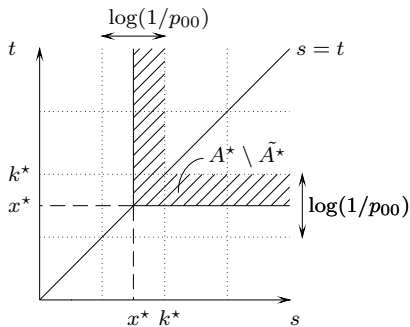}}\end{center}
\caption{The set $\As \setminus \tilde{\As}$.}
\label{f: proof_cor_d2_discr_to_cont}
\end{figure}
We now apply Theorem \ref{t: approx_disc_cont} to the case where $\As =\As_n = [u_n^\star, \infty)^2$ and express the error estimate in terms of the threshold $u_n^\star$ and the probability of simultaneous success $p_{11n}$. 
To achieve this we assume that the distributional parameters $p_{00}, q_{1}$ and $q_2$ are defined as in Section \ref{s: MO_Geo_cont_nicer_int_fct}.
\begin{prop}\label{t: MO_Geo_disc_to_cont_appl}
Let $p_{11n} \in (0,1)$ and assume that $q_{1n}$, $q_{2n}$ and $p_{00n}$ satisfy (\ref{d: conditions}). For any choice of $u_n^\star \ge -\log n$, 
define $A^\star = [u_n^\star,\infty)^2$.  
With the notations from Theorem \ref{t: approx_disc_cont},
\begin{multline*}
d_2(\mathrm{PRM}(\bpis), \mathrm{PRM}(\bl^\star))\\
\le \frac{(\gade)p_{11n}}{[1-(\gade)p_{11n}]^2}\left\{2\sqrt{2} + 3\left(e^{-u_n^\star} \wedge 1.65 e^{-u_n^\star/2} \right)\right\}.
\end{multline*}
\end{prop}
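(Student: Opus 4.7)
The plan is to apply Theorem~\ref{t: approx_disc_cont} directly and then quantify each of the two resulting error terms in terms of $p_{11n}$ using Lemma~\ref{t: Lemma_cond} and an explicit computation of the total mass $\bls(\As)$.

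First I would identify the set $\tilde{\As}$ when $\As = [u_n^\star,\infty)^2$. Let $k_0^\star$ denote the smallest element of $E^\star$ with $k_0^\star \ge u_n^\star$; by definition of the lattice, $0 \le k_0^\star - u_n^\star < \log(1/p_{00n})$. Then $\tilde{\As} = [k_0^\star,\infty)^2$, and the leftover set
\[
\As \setminus \tilde{\As} = \bigl([u_n^\star,k_0^\star) \times [u_n^\star,\infty)\bigr) \cup \bigl([k_0^\star,\infty) \times [u_n^\star,k_0^\star)\bigr)
\]
is an L-shaped region of width at most $\log(1/p_{00n})$ along the axes through $(u_n^\star,u_n^\star)$.

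Next I would compute $\bls(\As)$ explicitly. Splitting the double integral of $\ls$ from (\ref{d: ls}) along $\{s<t\}$ and $\{s>t\}$, and noting the key algebraic identity that the exponents add to $1$ in each half-plane (that is, $\log(p_{00}/q_2)/\log p_{00} + \log q_2/\log p_{00} = 1$, and similarly for $q_1$), the two off-diagonal integrals give $a_+ e^{-u_n^\star}$ and $d_+ e^{-u_n^\star}$ with $a_+ = 1-b_+$ and $d_+ = 1-c_+$, where $b_+ = \log q_2/\log p_{00}$ and $c_+ = \log q_1/\log p_{00}$. The singular (diagonal) contribution is $(-1+b_++c_+)e^{-u_n^\star}$, and these sum to
\[
\bls(\As) \;=\; e^{-u_n^\star}.
\]
By the analogous calculation, $\bls(\tilde{\As}) = e^{-k_0^\star}$, so
\[
\bls(\As \setminus \tilde{\As}) \;=\; e^{-u_n^\star} - e^{-k_0^\star} \;\le\; e^{-u_n^\star}\bigl(1 - e^{-(k_0^\star-u_n^\star)}\bigr)\;\le\; e^{-u_n^\star}\bigl(1-p_{00n}\bigr) \;=\; e^{-u_n^\star}(\gade)p_{11n}.
\]

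For the first term in the bound of Theorem~\ref{t: approx_disc_cont}, I would use $-\log(1-z) \le z/(1-z)$ with $z=(\gade)p_{11n}$ to obtain
\[
\log(1/p_{00n}) \;\le\; \frac{(\gade)p_{11n}}{1-(\gade)p_{11n}}.
\]
Substituting both estimates into Theorem~\ref{t: approx_disc_cont}, and using $\bls(\As) = e^{-u_n^\star}$ to rewrite the factor $1 \wedge 1.65/\sqrt{\bls(\As)}$ as $1 \wedge 1.65\, e^{u_n^\star/2}$, yields
\[
d_2(\mathrm{PRM}(\bpis),\mathrm{PRM}(\bls)) \;\le\; \frac{2\sqrt{2}(\gade)p_{11n}}{1-(\gade)p_{11n}} + (\gade)p_{11n}\bigl(e^{-u_n^\star} \wedge 1.65\, e^{-u_n^\star/2}\bigr).
\]
Finally, since $1 \le 1/[1-(\gade)p_{11n}]^2$ and $1 \le 3$, I can absorb both terms into the common prefactor $(\gade)p_{11n}/[1-(\gade)p_{11n}]^2$ with a factor $3$ in front of the second summand, producing the claimed bound. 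I do not expect any serious obstacle here: the only delicate step is the exact evaluation $\bls(\As) = e^{-u_n^\star}$, which hinges on the fact that the off-diagonal intensities have exponent sums equal to one in each half-plane, but this is a direct consequence of the construction in Section~\ref{s: MO_Geo_cont_int_fct}.
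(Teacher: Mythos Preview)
Your argument is correct and actually cleaner than the paper's. Both proofs start from Theorem~\ref{t: approx_disc_cont} and handle the first term $2\sqrt{2}\log(1/p_{00n})$ in the same way. The difference lies in how $\bls(\As\setminus\tilde{\As})$ is controlled. The paper bounds the L-shaped region by integrating $\ls$ and $\acute{\lambda}^\star$ explicitly over the two strips and the short diagonal segment, taking suprema of the integrand over the thin direction, and then invokes Lemma~\ref{t: Lemma_cond}~(iii)--(v) to turn each of the three resulting pieces into an expression in $p_{11n}$; this is where the factor $3$ arises. Your approach instead exploits the fact that $\tilde{\As}=[k_0^\star,\infty)^2$ is again a square of the same form as $\As$, so the identity $\bls([c,\infty)^2)=e^{-c}$ applies twice and gives $\bls(\As\setminus\tilde{\As})=e^{-u_n^\star}-e^{-k_0^\star}\le e^{-u_n^\star}(1-p_{00n})$ directly, bypassing Lemma~\ref{t: Lemma_cond} entirely for this term. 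This yields the tighter intermediate bound $(\gade)p_{11n}\bigl(e^{-u_n^\star}\wedge 1.65\,e^{-u_n^\star/2}\bigr)$, which you then relax by the unnecessary factor $3/[1-(\gade)p_{11n}]^2$ only to match the statement as printed. The paper's route is more hands-on but shows how each piece of the boundary strip contributes; yours is shorter and gives a sharper constant.
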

\begin{proof}
For ease of notation we omit the subscript $n$. We apply result (\ref{t: approx_disc_cont_eq}) from Theorem \ref{t: approx_disc_cont} to the special case 
$A^\star = [u^\star, \infty)^2$. Due to (\ref{d: conditions}) and $-\log(1-z) \le z/(1-z)$ for $|z| < 1$, we may bound the first of the two error terms in 
(\ref{t: approx_disc_cont_eq}) as follows:
\begin{equation}\label{p: MO_Geo_bubulala}
2\sqrt{2}\log(1/p_{00}) \le \frac{2\sqrt{2}(\gade)p_{11}}{1-(\gade)p_{11}} \le \frac{2\sqrt{2}(\gade)p_{11}}{[1-(\gade)p_{11}]^2}\,.
\end{equation}
Direct computation yields $\bls(\As)= e^{-u^\star}$. As illustrated by Figure \ref{f: proof_cor_d2_discr_to_cont}, 
$\bls(\As \setminus \tilde{\As})$ may be bounded by
\begin{multline*}
\int_{u^\star}^{\infty} \log (1/p_{00}) \sup_{s \in [u^\star,k^\star]} \ls(s,t)dt \\
+ \int_{u^\star}^{\infty} \log (1/p_{00}) \sup_{t \in [u^\star,k^\star]} \ls(s,t)ds
+ \int_{u^\star}^{k^\star}\sqrt{2} \log(1/p_{00}) \sup_{s \in [u^\star,k^\star]} \acute{\lambda}^\star(s) ds.
\end{multline*}
Note that 
\begin{align*}
\sup_{s \in [u^\star,k^\star]} \exp{\left\{-\frac{\log(p_{00}/q_2)}{\log p_{00}}\,s\right\}} &\le \exp{\left\{-\frac{\log(p_{00}/q_2)}{\log p_{00}}\,u^\star\right\}},\\
\sup_{t \in [u^\star,k^\star]} \exp{\left\{-\frac{\log(p_{00}/q_1)}{\log p_{00}}\,t\right\}} &\le \exp{\left\{-\frac{\log(p_{00}/q_1)}{\log p_{00}}\,u^\star\right\}},
\end{align*}
and $\sup_{s \in [u^\star,k^\star]} e^{-s} \le e^{-u^\star}.$ Thus, by definition (\ref{d: ls}) of $\ls(s,t)$,
\begin{multline*}
\int_{u^\star}^{\infty} \log (1/p_{00}) \sup_{s \in [u^\star,k^\star]} \ls(s,t)dt \\
\le \frac{\log(p_{00}/q_2) \log q_2}{\log(1/p_{00})}\, e^{-\frac{\log(p_{00}/q_2)}{\log p_{00}}\,u^\star} \int_{u^\star}^{\infty} e^{-\frac{\log q_2}{\log p_{00}}\,t}dt,
\end{multline*}
which equals $\log(q_2/p_{00})e^{-u^\star}$. Analogously,
\[
\int_{u^\star}^{\infty} \log (1/p_{00}) \sup_{t \in [u^\star,k^\star]} \ls(s,t)ds \le \log(q_1/p_{00})e^{-u^\star},
\]
whereas 
\[
\int_{u^\star}^{k^\star} \sqrt{2}\log(1/p_{00}) \sup_{s \in [u^\star,k^\star]} \acute{\lambda}^\star(s) ds 
 \le  2\log^2(1/p_{00})  \,\frac{\log(p_{00}/q_1q_2)}{\log (1/p_{00})} \,e^{-u^\star},
\]
since $k^\star - u^\star \le \sqrt{2}\log(1/p_{00})$. We obtain
\[
\bls(\As \setminus \tilde{\As}) 
 = e^{-u^\star} \left\{ \log \left(\frac{q_2}{p_{00}}\right) + \log \left(\frac{q_1}{p_{00}}\right) +  2\log \left( \frac{1}{p_{00}}\right)
\log\left( \frac{p_{00}}{q_1 q_2}\right)\right\}.
\]
By Lemma \ref{t: Lemma_cond} (iii)-(v), the term in curly brackets may be bounded by
\[
\frac{(\gamma + \delta)p_{11}}{1-(\gade)p_{11}} + \frac{2(\gade)p_{11}}{\left\{ 1-(\gade)p_{11}\right\}^2} \le  \frac{3(\gade)p_{11}}{\left\{ 1-(\gade)p_{11}\right\}^2}\,.
\]
An upper bound for the second error term in (\ref{t: approx_disc_cont_eq}) is thus given by
\[
\left(e^{-u_n^\star} \wedge 1.65 e^{-u_n^\star/2} \right)\frac{3(\gade)p_{11n}}{[1-(\gade)p_{11n}]^2}\,. 
\]
By adding this to the bound in (\ref{p: MO_Geo_bubulala}) we obtain the result. 
\end{proof}
\noindent The first of the error terms given by Proposition \ref{t: MO_Geo_disc_to_cont_appl}, i.e.
\[
 \frac{2\sqrt{2}(\gade)p_{11n}}{[1-(\gade)p_{11n}]^2},
\]
is a bound on the error $2\sqrt{2}\log(1/p_{00n})$ from Theorem \ref{t: approx_disc_cont}, where we used the assumption from Section \ref{s: MO_Geo_cont_nicer_int_fct} that $p_{00n} = 1-(\gade)p_{11n}$. This error term thus becomes small only if the probability of simultaneous success, $p_{11n}$, tends to $0$ as $n$ increases. The second error term, i.e. 
\[
\left(e^{-u_n^\star} \wedge 1.65 e^{-u_n^\star/2} \right)\frac{3(\gade)p_{11n}}{[1-(\gade)p_{11n}]^2}\,,
\]
is the bigger of the two, and determines the rate at which $p_{11n}$ must converge to $0$. The reason for that is that $p_{11n}$ must converge fast enough in order to offset the effect of the factor $e^{-u_n^\star}$ which we will want to be increasing with increasing $n$, since $e^{-u_n^\star} = \bls(\As_n)$ is 
the expected number of points in $\As_n$ of the approximating Poisson process, as well as more or less the expected number of threshold exceedances of the MPPE, for which we have $e^{-u_n^\star}/ p_{00n} \le \bpis(\As_n) \le e^{-u_n^\star}$. For instance, for a threshold $u_n^\star$ of size $- \log \log n$, the expected number of points in $\As$ of the two Poisson processes is $\log n$, the MPPE captures roughly the biggest $\log n$ points of its sample, and we need $p_{11n}= o(\log^{-1} n)$ for a sharp error bound. Suppose, for example, that $p_{11n} = n^{-1}$. 
Then, by (\ref{d: conditions}), the marginal probabilities of failure of $\mathbf{X}^\star_n$, $q_{1n}$ and $q_{2n}$, as well as the probability of simultaneous failure, $p_{00n}$, tend to $1$ very fast.

The mean measure $\bls$ is by definition dependent on the values of the distributional parameters. Since these need to vary with the sample size $n$ in order to obtain a small error for the approximation of $\mathrm{PRM}(\bpis)$ by $\mathrm{PRM}(\bls)$, it follows that $\bls = \bls_n$ (and of course also $\bpis = \bpis_n$). 
Though we have now achieved the goal of successfully approximating by a Poisson process with a continuous intensity, 
the conditions needed to accomplish this imply that we are not satisfied with our results yet, 
since we prefer to approximate by a Poisson process with continuous intensity that does not vary with $n$. 
As the next section will demonstrate, a suitable candidate is given by the Poisson process with intensity measure $\blsnew$ defined in (\ref{d: blsnew}). 
\subsection{Approximation in $d_2$ and $d_{TV}$ by a Poisson process independent of $n$}\label{s: MO_Geo_nicer}
We determine an error estimate for the approximation of the Poisson process with intensity measure $\bls=\bls_n$ by the Poisson process with intensity measure $\blsnew$, defined in (\ref{d: blsnew}), that does not depend on the sample size $n$. Since both intensities are continuous, there is no special need to use the $d_2$-distance. We give the error in both the total variation and the $d_2$ distances. For the error in total variation we may straightforwardly use Proposition \ref{t: dTV_two_PRM} for the approximation of two Poisson processes. For the $d_2$-error, which will be smaller than the $d_{TV}$, we may additionally use Lemma \ref{t: Delta_1_upgamma_d2} for an upper bound on $\Delta_1 \upgamma$, where $\upgamma$ is the solution of an adequate Stein equation. This bound, containing the factor $\bls(\As)^{-1/2}$ (or $\blsnew({\As})^{-1/2}$), serves in reducing the $d_2$-error. 
The error bounds given by Theorem \ref{t: approx_cont_nicer} will become small for large $n$ due to the 
pointwise convergence of the intensity functions $\ls_n(s,t)$ and $\acute{\lambda}^\star_n(s)$ to the intensity functions 
$\lsnew(s,t)$ and $\acute{\lambda}^\star_{\gamma,\delta}(s)$, respectively, as $n \to \infty$.
\begin{thm}\label{t: approx_cont_nicer}
With the notations from Sections \ref{s: MO_Geo}-\ref{s: MO_Geo_cont_nicer_int_fct}, we obtain, for any set $A^\star \in \mathcal{B}([-\log n, \infty)^2)$,
\begin{align*}
(i) \quad &d_{TV} \left(\mathrm{PRM}(\bl^\star), \mathrm{PRM}(\blsnew) \right) \le \int_{A^\star}| \bl^\star(d \mathbf{z})-\blsnew(d\mathbf{z})|,\\
(ii) \quad&d_2 \left(\mathrm{PRM}(\bl^\star), \mathrm{PRM}(\blsnew) \right) \\
&\le \left( 1 \wedge 1.65 \min \left\{ \bl^\star(A^\star)^{-1/2} \, , \, \blsnew(A^\star)^{-1/2}\right\}\right)
\int_{A^\star}| \bl^\star(d \mathbf{z})-\blsnew(d\mathbf{z})|.
\end{align*}
\end{thm}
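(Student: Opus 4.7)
The strategy is to replay the argument of Proposition \ref{t: dTV_two_PRM} for part (i), using the generator interpretation together with Palm theory, and then to refine it using the sharper smoothness estimate of Lemma \ref{t: Delta_1_upgamma_d2} for part (ii). The key observation is that the argument is symmetric in the two intensity measures, which lets us take the minimum of the two normalising factors in (ii).

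For part (i), let $\Xi := \Xi_{\blsnew} \sim \mathrm{PRM}(\blsnew)$ and let $\mathcal{A}$ be the generator of a spatial immigration-death process on $A^\star$ with immigration intensity $\bl^\star$, unit per-capita death rate, and equilibrium distribution $\mathrm{PRM}(\bl^\star)$. For $h = I_{\{\xi \in R\}}$ with $R \in \mathcal{M}_p(A^\star)$, let $\upgamma$ be the solution of the Stein equation from Proposition \ref{t: upgamma_welldefined}. By (\ref{d: Exp_Stein_process_generator}), it suffices to bound $|\mathbb{E}(\mathcal{A}\upgamma)(\Xi)|$ uniformly in $R$. As in the proof of Proposition \ref{t: dTV_two_PRM}, applying the Palm identity (\ref{d: Palm_equality}) together with the characterisation (\ref{d: Char_PP_Palm}) of Poisson processes to rewrite the $\Xi$-integral, we obtain
\[
\mathbb{E}(\mathcal{A}\upgamma)(\Xi) = \int_{A^\star} \mathbb{E}[\upgamma(\Xi + \delta_{\mathbf{z}}) - \upgamma(\Xi)]\, (\bl^\star - \blsnew)(d\mathbf{z}),
\]
and bounding by $\Delta_1 \upgamma \le 1$ via Lemma \ref{t: dtv_bounds_upgamma} (i) gives the total variation estimate $\int_{A^\star} |\bl^\star - \blsnew|(d\mathbf{z})$.

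For part (ii), the same identity holds, but now $h \in \mathcal{H}$ (the class of Lipschitz test functions from Section \ref{Sec: Improved_rates}) and $\upgamma$ is the corresponding solution of the Stein equation. By (\ref{d: def_d2}) and (\ref{d: Stein_eq_d2}), we need to bound $|\mathbb{E}(\mathcal{A}\upgamma)(\Xi)|/s_2(h)$. Applying Lemma \ref{t: Delta_1_upgamma_d2} to $\upgamma$ (whose equilibrium distribution is $\mathrm{PRM}(\bl^\star)$ with total mass $\bl^\star(A^\star)$) yields
\[
\Delta_1 \upgamma \le s_2(h) \left( 1 \wedge \frac{1.65}{\sqrt{\bl^\star(A^\star)}} \right),
\]
which produces the bound with the factor $\bl^\star(A^\star)^{-1/2}$. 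To get the minimum with $\blsnew(A^\star)^{-1/2}$, simply swap the roles of the two measures: run the whole argument with $\mathcal{A}$ replaced by the generator of the immigration-death process with immigration intensity $\blsnew$, and with $\Xi \sim \mathrm{PRM}(\bl^\star)$. Since $|\bl^\star - \blsnew|$ is symmetric in the two measures, the integral term is unchanged, while the smoothness bound becomes $s_2(h)(1 \wedge 1.65/\sqrt{\blsnew(A^\star)})$. Taking whichever of the two bounds is smaller yields (ii).

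The only subtle point is that part (ii) of Lemma \ref{t: Delta_1_upgamma_d2} is stated for $\bl$ with $\bl(E) = \lambda$, where $E$ is the whole space; here we work on $A^\star$, but this is harmless since the immigration-death process restricted to $A^\star$ satisfies the same hypotheses with $\lambda = \bl^\star(A^\star)$ (respectively $\blsnew(A^\star)$). There is no serious obstacle, as the bulk of the work has already been done in Proposition \ref{t: dTV_two_PRM} and Lemma \ref{t: Delta_1_upgamma_d2}; the proof essentially amounts to combining these two ingredients and exploiting the symmetry of the integrand in $\bl^\star$ and $\blsnew$.
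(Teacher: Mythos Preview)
Your proposal is correct and follows essentially the same approach as the paper: part (i) is exactly the content of Proposition \ref{t: dTV_two_PRM} (which the paper simply cites, while you re-derive it), and part (ii) proceeds by the same identity, the $\Delta_1\upgamma$ bound from Lemma \ref{t: Delta_1_upgamma_d2}, and the symmetry swap of the two intensity measures. The only minor addition in the paper's argument is an explicit check that both $\bl^\star$ and $\blsnew$ are finite measures before invoking Proposition \ref{t: dTV_two_PRM}, which you implicitly assume.
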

\begin{proof}
(i) By Proposition \ref{t: MO_Geo_same_meas_rectangles} (ii), $\bls$ is finite. Moreover, $\blsnew$ is finite since integration of 
$\lsnew$ and $\acute{\lambda}^\star_{\gamma,\delta}$ over $[u^\star, \infty)^2$ 
gives
\begin{align*}
&\int_{u^\star}^\infty dt \int_{u^\star}^t ds\, \frac{\gamma(1+\delta)}{(\gade)^2}\,e^{-\frac{\gamma}{\gade}\,s}e^{-\frac{1 + \delta}{\gade}\,t}\\
&+ \int_{u^\star}^\infty ds \int_{u^\star}^s dt \, \frac{\delta (1 + \gamma)}{(\gade)^2}\,e^{-\frac{1+\gamma}{\gade}\,s} e^{-\frac{\delta}{\gade}\,t}
+ \int_{u^\star}^\infty ds \,\frac{1}{\gade}\, e^{-s} \\
=\,\,&\frac{\gamma}{\gade}\, e^{-u^\star} + \frac{\delta}{\gade}\, e^{-u^\star} + \frac{1}{\gade}\, e^{-u^\star}= e^{-u^\star},
\end{align*}
which equals $n$ for $u^\star = -\log n$. 
Proposition \ref{t: dTV_two_PRM} then immediately gives the result. \\
(ii) Using the same immigration-death process $Z$ and arguments as in the proof of Theorem \ref{t: approx_disc_cont}, we can show that for 
$\Xi^\star_{\gamma,\delta} \sim \mathrm{PRM}(\blsnew)$,
\begin{multline*}
 \mathbb{E}h(\Xi^\star_{\gamma,\delta})-\mathrm{PRM}(\bls)(h)\\ 
=\mathbb{E} \left\{  \int_{\As} [\upgamma(\Xi^\star_{\gamma,\delta} + \delta_{\bz}) - \upgamma(\Xi^\star_{\gamma,\delta})] 
 (\bls(d\bz) - \blsnew(d\bz)) \right\}.
\end{multline*}
Analogously to (\ref{p: summand2}) and (\ref{p: deltaone}), the integrand may be bounded by 
\begin{align*}
\Delta_1\upgamma \int_{\As}|\bls(d\bz)-\blsnew(d\bz)| 
\le s_2(h)\left( 1 \wedge \frac{1.65}{\sqrt{\bls(\As)}}\right) \int_{\As}|\bls(d\bz)-\blsnew(d\bz)|.
\end{align*}
Here, $1.65(\bls(\As))^{-\frac{1}{2}}$ may be replaced by 
$1.65(\blsnew(\As))^{-\frac{1}{2}}$ by going through the same arguments as before, but instead starting
with an immigration-death process over $\As$ with immigration intensity $\blsnew$, unit per-capita death rate, and equilibrium distribution 
$\mathrm{PRM}(\blsnew)$.
\end{proof}
We now again assume that the distributional parameters $p_{00n}, q_{1n}$ and $q_{2n}$ satisfy (\ref{d: conditions}) from Section \ref{s: MO_Geo_cont_nicer_int_fct} and apply Theorem \ref{t: approx_cont_nicer} to the case where $\As =\As_n = [u_n^\star, \infty)^2$.
We express the error bounds in terms of the threshold $u_n^{\star}$ and of the probability of simultaneous success $p_{11n}$.
\begin{prop}\label{t: MO_Geo_cont_to_nicer_appl}
Let $p_{11n} \in (0,1)$ and assume that $q_{1n}$, $q_{2n}$ and $p_{00n}$ satisfy (\ref{d: conditions}). For any choice of $u_n^\star \ge -\log n$, 
define $A^\star = [u_n^\star,\infty)^2$.  
With the notations from Sections \ref{s: MO_Geo}-\ref{s: MO_Geo_cont_nicer_int_fct},
\begin{align*}
(i) \,\,&d_{TV} \left(\mathrm{PRM}(\bl^\star), \mathrm{PRM}(\blsnew) \right) \le \frac{4(\gade)^2 p_{11n}}{[1-(\gade)p_{11n}]^3}\, e^{-u_n^\star}\,,\\
(ii) \,\, &d_2 \left(\mathrm{PRM}(\bl^\star), \mathrm{PRM}(\blsnew) \right)
\le \left( e^{-u_n^\star} \wedge 1.65 e^{-u_n^\star/2}\right) 
\frac{4(\gade)^2 p_{11n}}{[1-(\gade)p_{11n}]^3}\,.
\end{align*}
\end{prop}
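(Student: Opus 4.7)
\bigskip

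\noindent\textbf{Proof proposal.} The starting point is Theorem \ref{t: approx_cont_nicer}, which reduces both statements to bounding the total variation of the signed measure $\bl^\star - \blsnew$ on $\As = [u_n^\star,\infty)^2$. A direct computation (splitting the integration region along the diagonal and using that $\gamma/(\gade) + (1+\delta)/(\gade) = 1$, etc.) yields $\blsnew(\As) = e^{-u_n^\star}$. Hence in part (ii), the prefactor $1 \wedge 1.65\,\blsnew(\As)^{-1/2} = 1 \wedge 1.65\,e^{u_n^\star/2}$ multiplied by the total-variation estimate from (i) reproduces exactly the claimed factor $e^{-u_n^\star} \wedge 1.65\,e^{-u_n^\star/2}$. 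So the real work is proving (i), i.e.\ establishing
\[
\int_{\As}|\bl^\star - \blsnew|(d\bz) \le \frac{4(\gade)^2 p_{11n}}{[1-(\gade)p_{11n}]^3}\,e^{-u_n^\star}.
\]

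\noindent I would split this integral into three pieces corresponding to the supports of $\bl^\star$ and $\blsnew$: the upper off-diagonal region $\{u_n^\star \le s < t\}$, the lower one $\{u_n^\star \le t < s\}$, and the singular part on the diagonal $\{s = t \ge u_n^\star\}$. Writing $a_n = \log(q_{2n}/p_{00n})/\log(1/p_{00n})$ and $b_n = 1-a_n$, so that $\lambda^\star(s,t) = a_nb_n e^{-a_ns-b_nt}$ for $s<t$, and $a = \gamma/(\gade)$, $b = 1-a$, so that $\lsnew(s,t) = ab\,e^{-as-bt}$, the crucial algebraic identity $a_n+b_n = a+b = 1$ will keep all normalising constants from blowing up. Lemma \ref{t: Lemma_cond}(i) then gives $|a_n-a| = |b_n-b| \le \gamma p_{11n}/[1-(\gade)p_{11n}]$, and analogously on the lower triangle one uses Lemma \ref{t: Lemma_cond}(ii) with the parameters $c,d,c_n,d_n$.

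\noindent On each off-diagonal region I would telescope
\[
|a_nb_n e^{-a_ns-b_nt} - ab\,e^{-as-bt}| \le |a_nb_n - ab|\,e^{-a_ns-b_nt} + ab\,|e^{-a_ns-b_nt} - e^{-as-bt}|,
\]
bounding $|a_nb_n - ab| \le a_n|b_n-b| + b|a_n-a|$ and, via the mean value theorem written as $e^{-As(x)-Bt(x)}$ with $A(x)+B(x)\equiv 1$ for $x \in [0,1]$, the exponential difference by $|b_n-b|\,|t-s|\,e^{-\min(a_n,a)s-\min(b_n,b)t}$. The integrals $\int\!\int_{u_n^\star \le s<t} e^{-a_ns-b_nt}\,ds\,dt$ collapse to $e^{-u_n^\star}/b_n$ thanks to $a_n+b_n=1$, and the weighted integrals with an extra factor $t-s$ produce a further $1/(a_n b)$ or similar. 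The diagonal contribution is handled directly: $\acute{\lambda}^\star(s) - \acute{\lambda}^\star_{\gamma,\delta}(s) = [(c_n+b_n-1) - 1/(\gade)]e^{-s}$, so applying Lemma \ref{t: Lemma_cond}(i)--(ii) to $c_n-c$ and $b_n-b$ and integrating $e^{-s}$ from $u_n^\star$ to $\infty$ yields a contribution of order $(\gade)p_{11n}/[1-(\gade)p_{11n}]\cdot e^{-u_n^\star}$, negligible compared to the leading terms.

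\noindent Summing the three contributions, each off-diagonal piece produces a factor $(\gade)$ from the normalisation $1/b$ or $1/d$ (since $b,d \ge \min(\gamma,\delta,1)/(\gade)$), another factor $(\gade)p_{11n}$ from $|a_nb_n - ab|$ or from the mean-value bound on the exponential difference, and a factor $1/[1-(\gade)p_{11n}]^3$ once the bounds in Lemma \ref{t: Lemma_cond} have been applied. The main obstacle will be precisely this constant tracking: the numerical factor $4$ in the final bound is tight enough that one has to combine terms carefully and avoid overly generous estimates (in particular, use $a_n, b_n \le 1$, $a_n+b_n = 1$, and dominate each of the six terms produced by the telescope by the same master bound $(\gade)^2 p_{11n}/[1-(\gade)p_{11n}]^3 \cdot e^{-u_n^\star}$). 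This constant-tracking is bookkeeping rather than deep, but it is the only delicate step.
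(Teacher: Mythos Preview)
Your proposal is correct and follows essentially the same route as the paper. In fact your telescope
\[
|a_nb_n e^{-a_ns-b_nt} - ab\,e^{-as-bt}| \le |a_nb_n - ab|\,e^{-a_ns-b_nt} + ab\,|e^{-a_ns-b_nt} - e^{-as-bt}|
\]
is \emph{identical} to the paper's decomposition $|\ls - \lsnew| \le |\ls - \lsnew e^{h(t-s)}| + \lsnew|e^{h(t-s)}-1|$, once one observes that with $h := b-b_n = a_n-a$ one has $\lsnew e^{h(t-s)} = ab\,e^{-a_ns-b_nt}$. The three-region split, the use of Lemma~\ref{t: Lemma_cond}, and the diagonal computation also match.

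The only substantive difference is your treatment of the second telescope term. You propose the mean-value bound $ab\,h|t-s|\,e^{-\min(a_n,a)s-\min(b_n,b)t}$ and then integrate, producing extra factors $1/b_n^2$ and $e^{-(1-h)u_n^\star}$ in place of $e^{-u_n^\star}$. The paper instead integrates both $\lsnew e^{h(t-s)}$ and $\lsnew$ over $\{u_n^\star\le s<t\}$ \emph{exactly}: since $a_n+b_n=a+b=1$, each integral collapses to a multiple of $e^{-u_n^\star}$ (namely $ab\,e^{-u_n^\star}/b_n$ and $a\,e^{-u_n^\star}$), and the difference is then bounded via the estimate $\log p_{00}/\log q_2 \le (\gade)/[(1+\delta)(1-(\gade)p_{11})]$. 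This is cleaner for reaching the constant~$4$, whereas your MVT route introduces a mild mismatch in the exponent ($a+b_n=1-h$ rather than $1$) that you would have to absorb separately. Both approaches work, but if you find the constant tight, switching to the paper's exact-integration step will spare you some bookkeeping.
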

\begin{proof}
For ease of notation we again omit the subscript $n$. 
(i) By Theorem \ref{t: approx_cont_nicer}, 
\begin{align*}
&d_{TV} \left(\mathrm{PRM}(\bl^\star), \mathrm{PRM}(\blsnew) \right) \\
\le\,\, &\int_{u^\star}^\infty  \int_{u^\star}^t  \left|\ls(s,t) -\lsnew(s,t)\right|dsdt
+ \int_{u^\star}^\infty  \int_{u^\star}^s  \left|\ls(s,t) -\lsnew(s,t)\right|dtds\\
&+ \int_{u^\star}^\infty \left| \acute{\lambda}^\star(s)-\acute{\lambda}^\star_{\gamma,\delta}(s)\right|ds.
\end{align*}
Define 
\[
h := h(p_{11}) := \frac{1+\delta}{\gade} - \frac{\log q_{2}}{\log p_{00}}\, \quad \text{and} \quad
g:= g(p_{11}) := \frac{1+\gamma}{\gade} -\frac{\log q_{1}}{\log p_{00}}\,.
\]
We first consider the case $s=t$. Note that, with definitions (\ref{d: ls}) and (\ref{d: lsnew}),
\begin{align*}
\acute{\lambda}^\star(s)= \frac{\log(p_{00}/q_1q_2)}{\log(1/p_{00})}\, e^{-s}
&= \left[\frac{\log q_1+\log q_2}{\log p_{00}} - \frac{2+ \gamma + \delta}{\gade} + \frac{1}{\gade}\right]e^{-s}\\
&= \left[\frac{1}{\gade} - h(p_{11}) - g(p_{11})\right]e^{-s},
\end{align*}
and that, since $h, g \ge 0$ by Lemma \ref{t: Lemma_cond} (i) and (ii), we thus have $\acute{\lambda}^\star(s) \le \acute{\lambda}^\star_{\gamma,\delta}(s)$.
Hence,
\[
\int_{u^\star}^\infty \left| \acute{\lambda}^\star(s)-\acute{\lambda}^\star_{\gamma,\delta}(s)\right|ds = \int_{u^\star}^\infty \left(h + g\right)e^{-s}ds 
 \le \frac{(\gamma+\delta)p_{11}}{1-(\gade)p_{11}}\, e^{-u^\star},
\]
again by Lemma \ref{t: Lemma_cond} (i) and (ii). For $s < t$, note that 
\begin{align*}
\ls(s,t)
&= \left[ \frac{\gamma}{\gade} + h\right] \left[ \frac{1+\delta}{\gade} - h\right]
e^{- \frac{\gamma}{\gade}\,s} e^{-\frac{1+\delta}{\gade}\,t}e^{h(t-s)}\\
& = \lsnew(s,t) e^{h(t-s)} + \left(\frac{1+ \delta - \gamma}{\gade} \, h - h^2\right)e^{-\frac{\gamma}{\gade}\, s} e^{-\frac{1+\delta}{\gade}\, t} e^{h(t-s)},
\end{align*}
where $\ls(s,t)$ and $ \lsnew(s,t)$ are defined by (\ref{d: ls}) and (\ref{d: lsnew}), respectively.
Thereby,
\[
\left| \ls(s,t) - \lsnew(s,t)e^{h(t-s)}\right| = \left| \frac{1+ \delta - \gamma}{\gade} \, h - h^2\right| 
e^{-\frac{\gamma}{\gade}\, s} e^{-\frac{1+\delta}{\gade}\, t} e^{h(t-s)},
\]
where $\left| \frac{1+ \delta - \gamma}{\gade} \, h - h^2\right|  \le h + h^2 $. 
Note that we have
\begin{equation}\label{p: MO_Geo_nicer_appl_0}
\left| \ls(s,t) - \lsnew(s,t)\right| \le \left| \ls(s,t) - \lsnew(s,t)e^{h(t-s)}\right| + \lsnew(s,t)\left| e^{h(t-s)} - 1\right|.
\end{equation}
We first compute the following integral:
\begin{align}\label{p: MO_Geo_d2_nicer_appl_integral}
\begin{split}
\int_{u^\star}^\infty \int_{u^\star}^t e^{-\frac{\gamma}{\gade}\, s-\frac{1+\delta}{\gade}\, t+ h(t-s)} dsdt
&= \int_{u^\star}^\infty \int_{u^\star}^t e^{-\frac{\log (p_{00}/q_2)}{\log p_{00}}\,s-\frac{\log q_2}{\log p_{00}}\,t}dsdt\\
&= \frac{\log p_{00}}{\log q_2}\, e^{-u^\star},
\end{split}
\end{align}
where, using $z \le -\log(1-z) \le \frac{z}{1-z}$ for all $|z| \le 1$, and (\ref{d: conditions}),
\begin{align}\label{MO_Geo_nicer_appl_2}
\begin{split} 
\frac{\log p_{00}}{\log q_2} &=\frac{-\log[1-(\gade)p_{11}]}{- \log[1-(1+\delta)p_{11}]} \le \frac{\gade}{(1+\delta)[1-(\gade)p_{11}]}\,\\
&\le \frac{\gade}{1+ \delta}\left\{ 1+ \frac{(\gade) p_{11}}{[1-(\gade)p_{11}]^2}\right\}.
\end{split}
\end{align}
Moreover, by Lemma \ref{t: Lemma_cond} (i), 
\[
h+ h^2 \le \frac{\gamma p_{11}}{1-(\gade)p_{11}} +  \left[\frac{\gamma p_{11}}{1-(\gade)p_{11}} \right]^2 \le \frac{2\gamma p_{11}}{[1-(\gade)p_{11}]^2},
\]
since $\gamma p_{11} = p_{10} < 1$, and therefore $(\gamma p_{11})^2 \le \gamma p_{11}$. 
Then,
\begin{align}\label{p: MO_Geo_nicer_appl_4}
\begin{split}
\int_{u^\star}^\infty \int_{u^\star}^t  \left| \ls(s,t) - \lsnew(s,t)e^{h(t-s)}\right|dsdt 
\le \frac{2 \gamma (\gade)p_{11}}{(1+\delta)[1-(\gade)p_{11}]^3}\, e^{-u^\star},
\end{split}
\end{align}
which gives a bound for the integral of the first error term in (\ref{p: MO_Geo_nicer_appl_0}). For the second error term in (\ref{p: MO_Geo_nicer_appl_0}), note first that
$|e^{h(t-s)}-1| = e^{h(t-s)} -1$, since $h \ge 0$ and $t > s$. By (\ref{p: MO_Geo_d2_nicer_appl_integral}) and (\ref{MO_Geo_nicer_appl_2}), 
and with definition (\ref{d: lsnew}) of $\lsnew(s,t)$, we obtain 
\begin{align}\label{t: MO_Geo_nicer_appl_2}
\begin{split}
\int_{u^\star}^\infty \int_{u^\star}^t  \lsnew(s,t)e^{h(t-s)} &= \frac{\gamma(1+\delta)\log p_{00}}{(\gade)^2 \log q_2}\, e^{-u^\star}\\
&\le \frac{\gamma}{ \gade }\left\{ 1+ \frac{(\gade) p_{11}}{[1-(\gade)p_{11}]^2}\right\}\, e^{-u^\star},
\end{split}
\end{align}
whereas
\begin{equation}\label{t: MO_Geo_nicer_appl_3}
\int_{u^\star}^\infty \int_{u^\star}^t \lsnew(s,t)dsdt = \frac{\gamma}{\gade}\, e^{-u^\star}.
\end{equation}
By (\ref{t: MO_Geo_nicer_appl_2}) and (\ref{t: MO_Geo_nicer_appl_3}), we may thus bound the integral of the second error term in (\ref{p: MO_Geo_nicer_appl_0})
as follows:
\begin{align}\label{p: MO_Geo_nicer_appl_5}
\begin{split}
\int_{u^\star}^\infty \int_{u^\star}^t \lsnew(s,t)\left| e^{h(t-s)} - 1\right|dsdt
&\le \frac{\gamma p_{11}}{[1-(\gade)p_{11}]^2}\,e^{-u^\star}.
\end{split}
\end{align}
Hence, for $s < t$, (\ref{p: MO_Geo_nicer_appl_0}), (\ref{p: MO_Geo_nicer_appl_4}) and (\ref{p: MO_Geo_nicer_appl_5}) give
\begin{multline*}
\int_{u^\star}^\infty \int_{u^\star}^t  \left| \ls(s,t) - \lsnew(s,t)\right|dsdt\\
\le  \frac{\gamma p_{11}}{[1-(\gade)p_{11}]^3}\, \left\{ \frac{2(\gade)}{1+ \delta} + 1 \right\}e^{-u^\star}.
\end{multline*}
By proceeding analogously for $s>t$, we obtain
\begin{multline*}
\int_{u^\star}^\infty \int_{u^\star}^s  \left| \ls(s,t) - \lsnew(s,t)\right|dtds\\
\le \frac{\delta p_{11}}{[1-(\gade)p_{11}]^3}\, \left\{ \frac{2(\gade)}{1+ \gamma} + 1 \right\}e^{-u^\star}.
\end{multline*}
The sum of the bounds for the three cases $s=t$, $s<t$ and $s>t$ yields the overall bound
\begin{align*}
&\int_{\As} \left| \bls(d\bz) - \blsnew(d\bz)\right| \\
&\le \frac{2p_{11}}{[1-(\gade)p_{11}]^3}\left\{ \frac{\gamma(\gade)}{1+\delta} + \frac{\delta(\gade)}{1+\gamma} + \gamma + \delta\right\}e^{-u^\star}\\
&\le \frac{4(\gade)^2 p_{11}}{[1-(\gade)p_{11}]^3}\, e^{-u^\star},
\end{align*}
where we used $(1+\gamma)^{-1}, (1+\delta)^{-1} < 1$, and $\gamma + \delta \le \gade \le (\gade)^2$ for the second inequality. \\
(ii) Direct computations give $\bls(\As)= e^{-u^\star} = \blsnew(\As)$. Theorem \ref{t: approx_cont_nicer} (ii), together with the bound from (i), then
immediately gives the result. 
\end{proof}
\noindent The error bounds established in Proposition \ref{t: MO_Geo_cont_to_nicer_appl} are similar to the error bound from Proposition \ref{t: MO_Geo_disc_to_cont_appl}. As before, $p_{11n}$ needs to converge to $0$ fast enough to make up for the factor $e^{-u_n^\star}$ which increases the size of the error as soon as $u_n^\star < 0$. And since $u_n^\star \ge 0$ gives $1$ or no points in $\As$, the mean number of points in $\As$ being given by $e^{-u_n^\star}$ for either process, we would certainly want the threshold $u_n^\star$ to be negative. 

The biggest difference between the $d_2$-bounds from Propositions \ref{t: MO_Geo_disc_to_cont_appl} and  \ref{t: MO_Geo_cont_to_nicer_appl} is that 
the former contains the multiplicative factor $[1-(\gade)p_{11n}]^{-2}$ and the latter the bigger factor $[1-(\gade)p_{11n}]^{-3}$. However, 
since we need $p_{11n} \to 0$ as $n \to \infty$, we will have $(\gade)p_{11n} \le 1/2$ for all $n$ large enough. 
Then $[1-(\gade)p_{11n}]^{-3} \le 2 [1-(\gade)p_{11n}]^{-2}$ so that 
both error bounds will be of the same rate. Hence, for large enough $n$, the approximation by a further Poisson process does not add an error of a bigger size 
than the one that arises from the approximation by only $\mathrm{PRM}(\bls)$.
\subsection{Final bound in the $d_2$-distance}\label{s: MO_Geo_final_bound_d2}
The following corollary summarises the results from Sections \ref{s: MO_Geo_dTV}, \ref{s: MO_Geo_d2} and \ref{s: MO_Geo_nicer}. 
It gives an estimate for the error in the $d_2$-distance of the approximation of the law of an MPPE $\Xi^\star_{A^\star}$ with \iid Marshall-Olkin geometric marks, living on a lattice of points contained in $\As \cap[-\log n, \infty)^2$, by the law of a Poisson process with a continuous intensity measure $\blsnew$ over $\As \cap[-\log n, \infty)^2$, where $\As = [u^\star, \infty)^2$ for some choice of threshold $u^\star \ge -\log n$.
\begin{cor}\label{t: MO_Geo_final_bound_d2} Let $p_{11n} \in (0,1)$ and assume that $q_{1n}$, $q_{2n}$ and $p_{00n}$ satisfy (\ref{d: conditions}). For any choice of $u_n^\star \ge -\log n$, 
define $A^\star = [u_n^\star,\infty)^2$.  
With the notations from Sections \ref{s: MO_Geo}-\ref{s: MO_Geo_cont_nicer_int_fct},
\begin{multline*}
d_2\left(\mathcal{L}(\Xi^\star_{\As}), \mathrm{PRM}(\blsnew)\right) \\
\le \frac{e^{-u^\star_n}}{n} + \frac{(\gade)^2p_{11n}}{[1-(\gade)p_{11n}]^3} \left\{ 2\sqrt{2} + 7\left(  e^{-u_n^\star} \wedge 1.65 e^{-u_n^\star/2}\right) \right\}.
\end{multline*}
\begin{proof}
We have 
\begin{align*}
&d_2\left(\mathcal{L}(\Xi^\star_{\As}), \mathrm{PRM}(\blsnew)\right) \\
&\le  d_2\left(\mathcal{L}(\Xi^\star_{\As}), \mathrm{PRM}(\bpis)\right)
+    d_2\left(\mathrm{PRM}(\bpis), \mathrm{PRM}(\bls)\right)\\
&\phantom{blaaaaaaaaaaaaaaaaaaaaaaaaaaaaaaaaaaaa}+    d_2\left(\mathrm{PRM}(\bls), \mathrm{PRM}(\blsnew)\right).
\end{align*}
By (\ref{t: d2_smaller_dTV}) and Theorem \ref{t: MO_Geo_dTV_lattice},
\[
d_2\left(\mathcal{L}(\Xi^\star_{\As}), \mathrm{PRM}(\bpis)\right) \le d_{TV}\left(\mathcal{L}(\Xi^\star_{\As}), \mathrm{PRM}(\bpis)\right) \le \frac{e^{-u_n^\star}}{n}\,.
\]
Furthermore, with the results from Propositions \ref{t: MO_Geo_disc_to_cont_appl} and \ref{t: MO_Geo_cont_to_nicer_appl}, and using $(\gade)\le(\gade)^2$ and 
$[1-(\gade)p_{11}]^{-2} \le [1-(\gade)p_{11}]^{-3}$, we obtain
\begin{multline*}
d_2\left(\mathrm{PRM}(\bpis), \mathrm{PRM}(\bls)\right) + d_2\left(\mathrm{PRM}(\bls), \mathrm{PRM}(\blsnew)\right) \\
\le \frac{(\gade)^2p_{11n}}{[1-(\gade)p_{11n}]^3} \left\{ 2\sqrt{2} + 7 \left( e^{-{u^\star_n}} \wedge 1.65 e^{-{u^\star_n}/2}\right)\right\}.
\end{multline*}
\end{proof}
\end{cor}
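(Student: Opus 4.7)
The plan is to combine the three intermediate approximations established earlier via the triangle inequality for the $d_2$-distance and to tidy up the resulting expression into the form stated. Concretely, I would insert $\mathrm{PRM}(\bpis)$ and $\mathrm{PRM}(\bls)$ between $\mathcal{L}(\Xi^\star_{\As})$ and $\mathrm{PRM}(\blsnew)$ and write
\[
d_2\bigl(\mathcal{L}(\Xi^\star_{\As}), \mathrm{PRM}(\blsnew)\bigr)
\le d_2\bigl(\mathcal{L}(\Xi^\star_{\As}), \mathrm{PRM}(\bpis)\bigr)
+ d_2\bigl(\mathrm{PRM}(\bpis), \mathrm{PRM}(\bls)\bigr)
+ d_2\bigl(\mathrm{PRM}(\bls), \mathrm{PRM}(\blsnew)\bigr).
\]

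For the first summand I would invoke (\ref{t: d2_smaller_dTV}) to pass from $d_2$ to $d_{TV}$, and then quote Proposition \ref{t: MO_Geo_dTV_lattice} (equation (\ref{t: MO_Geo_dTV_basic_Poiapprox_appl})) to get the bound $e^{-u_n^\star}/n$; this step is essentially free because the indicators of $A^\star$ are admissible test functions for both metrics. The second summand is controlled by Proposition \ref{t: MO_Geo_disc_to_cont_appl}, contributing a term of order $2\sqrt{2} + 3(e^{-u_n^\star} \wedge 1.65 e^{-u_n^\star/2})$ times $(\gade)p_{11n}/[1-(\gade)p_{11n}]^2$. The third summand is controlled by Proposition \ref{t: MO_Geo_cont_to_nicer_appl}(ii), contributing a term of order $4(e^{-u_n^\star} \wedge 1.65 e^{-u_n^\star/2})$ times $(\gade)^2 p_{11n}/[1-(\gade)p_{11n}]^3$.

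To merge the two Poisson–Poisson bounds into the single tidy expression in the statement, I would use two trivial monotonicity inequalities valid under the standing assumption that $\gamma, \delta > 0$ and $(\gade)p_{11n} \in (0,1)$, namely $\gade \le (\gade)^2$ and $[1-(\gade)p_{11n}]^{-2} \le [1-(\gade)p_{11n}]^{-3}$. Pulling out the common factor $(\gade)^2 p_{11n}/[1-(\gade)p_{11n}]^3$, the constant in front of $2\sqrt{2}$ remains $1$ while the coefficient of $(e^{-u_n^\star} \wedge 1.65 e^{-u_n^\star/2})$ becomes $3+4 = 7$, matching the claim.

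There is no real obstacle here: the corollary is a straightforward consolidation step, and all three ingredient bounds have already been established in the preceding propositions. The only minor care needed is keeping track of the different powers of $[1-(\gade)p_{11n}]^{-1}$ and the different coefficients $3$ and $4$ from the two Poisson–Poisson estimates, and checking that the majorisations used to unify them do not swallow any $n$-dependence that matters.
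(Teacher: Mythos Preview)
Your proposal is correct and follows essentially the same approach as the paper's proof: both apply the triangle inequality to split into the three terms $d_2(\mathcal{L}(\Xi^\star_{\As}), \mathrm{PRM}(\bpis))$, $d_2(\mathrm{PRM}(\bpis), \mathrm{PRM}(\bls))$, and $d_2(\mathrm{PRM}(\bls), \mathrm{PRM}(\blsnew))$, bound the first via $d_2 \le d_{TV}$ and Proposition~\ref{t: MO_Geo_dTV_lattice}, the latter two via Propositions~\ref{t: MO_Geo_disc_to_cont_appl} and~\ref{t: MO_Geo_cont_to_nicer_appl}(ii), and then unify the constants using $(\gade)\le(\gade)^2$ and $[1-(\gade)p_{11n}]^{-2} \le [1-(\gade)p_{11n}]^{-3}$ to get $3+4=7$.
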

\noindent By far the smallest component of the error estimate from Corollary \ref{t: MO_Geo_final_bound_d2} is given by $e^{-u_n^\star}/n$, the error arising from approximating $\mathcal{L}(\Xi^\star_{A^\star})$ by $\mathrm{PRM}(\mathbb{E}\Xi^\star_{A^\star})$, which lives on the lattice $\As \cap E^\star$ just as $\Xi^\star_{\As}$. This part of the error corresponds exactly to the overall error estimate that we obtained in Section \ref{s: MO_Exp_joint} for the approximation of an MPPE $\Xi^\star_{A^\star}$ with Marshall-Olkin exponential marks by a Poisson process with mean measure $\mathbb{E}\Xi^\star_{A^\star}$. Yet the Marshall-Olkin exponential is a continuous distribution and the mean measure $\mathbb{E}(\Xi^\star_{A^\star})$ is thereby also continuous.
As for MPPE's with univariate geometric marks (see Section \ref{s: MPPE_geo}), a far bigger error emerges for the MPPE with Marshall-Olkin geometric marks when going from the Poisson process on the lattice to a Poisson process on $\As \cap [-\log n, \infty)^2$ with continuous intensity. This error can only be small if the probability of simultaneous success of the Marshall-Olkin geometric distribution, $p_{11}$, and thereby also the marginal success probabilities $1-q_{1n}$ and $1-q_{2n}$, tend to zero as $n \to \infty$ at a rate fast enough to compensate for the factor $e^{-u_n^\star}$, the (rough) number of points expected in $\As_n$ for each of the processes. For instance, for $\As_n = [-\log \log n, \infty)^2$ and $p_{11n}=1/n$, we expect $\log n$ joint threshold exceedances, and obtain
\begin{align*}
&d_2\left(\mathcal{L}(\Xi^\star_{\As}), \mathrm{PRM}(\blsnew)\right) \\
&\le \frac{\log n}{n} + \frac{(\gade)^2}{n[1-(\gade)/n]^3} \left\{ 2\sqrt{2} + 7\left(  \log n \wedge 1.65 \sqrt{\log n}\right) \right\}\\
& \le \frac{C \log n}{n},
\end{align*}
where $C$ is some constant. With the (very strong) condition $p_{11n}=1/n$, we thus obtain an error of the same size as the error that we obtain when approximating $\mathcal{L}(\Xi^\star_{\As})$ only by $\mathrm{PRM}(\mathbb{E}\Xi^\star_{A^\star})$.
\bibliographystyle{apalike}
\bibliography{Bibliography}
\end{document}